\newtheoremstyle{thmparen}
{}{}{\itshape}{}{\bfseries}{.}{5pt plus 1pt minus 1pt}
{\thmname{#1}\thmnumber{ (#2)}{\normalfont\thmnote{ (#3)}}}
\newtheoremstyle{rmkparen}
{}{}{}{}{\itshape}{.}{5pt plus 1pt minus 1pt}
{\thmname{#1}{\normalfont\thmnumber{ (#2)}\thmnote{ (#3)}}}
\newtheorem{MainThm}{Theorem}
\theoremstyle{thmparen}
\newtheorem*{GroConj}{Grothendieck's conjecture}
\newtheorem{Thm}[equation]{Theorem}
\newtheorem{Prop}[equation]{Proposition}
\newtheorem{Lem}[equation]{Lemma}
\theoremstyle{rmkparen}
\newtheorem{Rmk}[equation]{Remark}
\newtheorem*{Notation}{Notation}
\newtheorem*{Ack}{Acknowledgement}
\newcommand{\Order}{\mathcal{O}}
\newcommand{\into}{\hookrightarrow}
\newcommand{\onto}{\twoheadrightarrow}
\newcommand{\isomto}{\overset{\sim}{\to}}
\newcommand{\isomfrom}{\overset{\sim}{\leftarrow}}
\newcommand{\compose}{\mathrel{\circ}}
\newcommand{\tensor}{\otimes}
\newcommand{\closure}[1]{\overline{#1}}
\newcommand{\N}{\mathbb{N}}
\newcommand{\Z}{\mathbb{Z}}
\newcommand{\Q}{\mathbb{Q}}
\newcommand{\C}{\mathbb{C}}
\newcommand{\F}{\mathbb{F}}
\newcommand{\Affine}{\mathbb{A}}
\newcommand{\sep}{\mathrm{sep}}
\newcommand{\ur}{\mathrm{ur}}
\newcommand{\et}{\mathrm{et}}
\newcommand{\zar}{\mathrm{zar}}
\newcommand{\fppf}{\mathrm{fppf}}
\newcommand{\ffl}{\mathrm{ffl}}
\newcommand{\rat}{\mathrm{rat}}
\newcommand{\perf}{\mathrm{perf}}
\newcommand{\alge}{\mathrm{alg}}
\newcommand{\id}{\mathrm{id}}
\newcommand{\incl}{\mathrm{incl}}
\newcommand{\op}{\mathrm{op}}
\newcommand{\Alg}{\mathrm{Alg}}
\newcommand{\PDual}{\mathrm{PD}}
\newcommand{\CDual}{\mathrm{CD}}
\newcommand{\SDual}{\mathrm{SD}}
\newcommand{\LDual}{\mathrm{LD}}
\newcommand{\sAb}{\mathrm{sAb}}
\newcommand{\Gm}{\mathbf{G}_{m}}
\newcommand{\Ga}{\mathbf{G}_{a}}
\newcommand{\Et}{\mathrm{Et}}
\newcommand{\FEt}{\mathrm{FEt}}
\newcommand{\FGEt}{\mathrm{FGEt}}
\newcommand{\Fin}{\mathrm{Fin}}
\newcommand{\uc}{\mathrm{uc}}
\newcommand{\ind}{\mathrm{ind}}
\newcommand{\pro}{\mathrm{pro}}
\newcommand{\tor}{\mathrm{tor}}
\newcommand{\fp}{\mathrm{fp}}
\newcommand{\sm}{\mathrm{sm}}
\newcommand{\set}{\mathrm{set}}
\newcommand{\sh}{\mathrm{sh}}
\newcommand{\Ind}{\mathrm{I}}
\newcommand{\Pro}{\mathrm{P}}
\newcommand{\Loc}{\mathrm{L}}
\newcommand{\ideal}[1]{\mathfrak{#1}}
\newcommand{\alg}[1]{\mathbf{#1}}
\newcommand{\dirlim}{\varinjlim}
\newcommand{\invlim}{\varprojlim}
\newcommand{\fdirlim}[1][]{\mathop{\text{``$\dirlim$''}}_{#1}}
\newcommand{\finvlim}[1][]{\mathop{\text{``$\invlim$''}}_{#1}}
\newcommand{\fdsum}[1][]{\mathop{\text{``$\bigoplus$''}}_{#1}}
\newcommand{\fprod}[1][]{\mathop{\text{``$\prod$''}}_{#1}}
\mathchardef\mhyphen="2D
\newcommand{\Mod}[1]{#1\mhyphen\mathrm{Mod}}
\DeclareMathOperator{\Gal}{Gal}
\DeclareMathOperator{\Hom}{Hom}
\DeclareMathOperator{\Ker}{Ker}
\DeclareMathOperator{\Coker}{Coker}
\DeclareMathOperator{\Ext}{Ext}
\DeclareMathOperator{\Spec}{Spec}
\DeclareMathOperator{\Ab}{Ab}
\DeclareMathOperator{\Set}{Set}
\DeclareMathOperator{\Res}{Res}
\DeclareMathOperator{\sheafhom}{\alg{Hom}}
\DeclareMathOperator{\sheafext}{\alg{Ext}}
\DeclareMathOperator{\dlog}{dlog}
\newcommand{\BetweenThmAndList}{\leavevmode}
\title[Grothendieck's pairing on N\'eron component groups]
{Grothendieck's pairing on N\'eron component groups: Galois descent from the semistable case}
\author{Takashi Suzuki}
\address{
	Department of Mathematics, Tokyo Institute of Technology, 2-12-1
	Ookayama, Meguro, Tokyo 152-8551, Japan
}
\email{suzuki.t.aw@m.titech.ac.jp}
\date{March 22, 2018}
\subjclass[2010]{Primary: 11G10; Secondary: 11S25, 14F20}
\keywords{Abelian varieties; duality; Grothendieck topologies}
\begin{document}

\begin{abstract}
	In our previous study of duality for complete discrete valuation fields
	with perfect residue field,
	we treated coefficients in finite flat group schemes.
	In this paper, we treat abelian varieties.
	This in particular implies Grothendieck's conjecture on the perfectness of his pairing
	between the N\'eron component groups of an abelian variety and its dual.
	The point is that our formulation is well-suited with Galois descent.
	From the known case of semistable abelian varieties,
	we deduce the perfectness in full generality.
	We also treat coefficients in tori and, more generally, 1-motives.
\end{abstract}

\maketitle

\tableofcontents


\numberwithin{equation}{section}
\section{Introduction}
\label{sec: Introduction}

\numberwithin{equation}{subsection}
\subsection{Main results}
Let $K$ be a complete discrete valuation field with
ring of integers $\Order_{K}$ and perfect residue field $k$ of characteristic $p > 0$.
Let $A$ be an abelian variety over $K$,
$\mathcal{A}$ the N\'eron model of $A$ over $\Order_{K}$
and $\mathcal{A}_{x}$ the special fiber of $\mathcal{A}$ over $x = \Spec k$.
The component group $\pi_{0}(\mathcal{A}_{x})$ of $\mathcal{A}_{x}$
is a finite \'etale group scheme over $k$.
From the dual abelian variety $A^{\vee}$,
we have corresponding objects
$\mathcal{A}^{\vee}$, $\mathcal{A}_{x}^{\vee}$ and $\pi_{0}(\mathcal{A}_{x}^{\vee})$.
In \cite[IX, 1.2.1]{Gro72}, Grothendieck constructed a canonical pairing
	\[
			\pi_{0}(\mathcal{A}_{x}^{\vee}) \times \pi_{0}(\mathcal{A}_{x})
		\to
			\Q / \Z,
	\]
which appears as the obstruction to extending the Poincar\'e biextension on $A^{\vee} \times A$
to the N\'eron models $\mathcal{A}^{\vee} \times \mathcal{A}$.
In this paper, we prove the following conjecture of Grothendieck \cite[IX, Conj.\ 1.3]{Gro72}:

\begin{GroConj}
	The pairing
		$
				\pi_{0}(\mathcal{A}_{x}^{\vee}) \times \pi_{0}(\mathcal{A}_{x})
			\to
				\Q / \Z
		$
	above is perfect.
\end{GroConj}

Some previously known cases include:
$k$ is finite \cite{McC86};
$K$ has mixed characteristic \cite{Beg81};
$A$ has semistable reduction \cite{Wer97};
the prime-to-$p$ part \cite{Ber01};
$A$ is the Jacobian of a curve with a rational point \cite{BL02};
$A$ has potentially multiplicative reduction \cite{Bos97}.%
\footnote{
	The conjecture may fail when $k$ is imperfect \cite{BB00}.
	The results on the semistable case and the prime-to-$p$ part are valid
	for any (possibly imperfect or zero characteristic) residue field $k$.
	In this paper, we only consider the case that $k$ is perfect of positive characteristic.
}
McCallum used (the usual) local class field theory and local Tate duality.
B\'egueri instead used Serre's local class field theory \cite{Ser61},
which is applicable for general perfect residue fields $k$.
To deduce Grothendieck's conjecture,
B\'egueri used some dimension counting argument,
which does not work in equal characteristic
since the corresponding objects are infinite-dimensional.
Werner, on the other hand, used rigid analytic uniformization for semistable abelian varieties.
It does not seem possible to deduce the general case from her result
by a simple application of the semistable reduction theorem,
since N\'eron models are known to behave very badly under base change.

The method we take in this paper is first to formulate a local Tate duality type statement
for the general case.
This is a version for abelian varieties of the author's reformulation of a known duality theorem,
where coefficients were finite flat group schemes
(\cite{Suz13}).
Our formulation crucially relies on the techniques of sheaves on the category of fields
developed in \cite{Suz13}.
We prove that the duality we formulate here is equivalent to the conjunction of
Grothendieck's conjecture and \v{S}afarevi\v{c}'s conjecture,
the latter of which was posed in \cite{Saf61}
and solved by B\'egueri \cite{Beg81} (mixed characteristic case),
Bester \cite{Bes78} (equal characteristic, good reduction) and
Bertapelle \cite{Ber03} (equal characteristic, general reduction).
Our duality is very functorial in an appropriate derived category
that is strong enough to treat the infinite-dimensional groups involved,
and well-suited with Galois descent.
Therefore we are able to prove that if the duality is true over a finite Galois extension of $K$,
then it is true over $K$.
This and the semistable case cited above together imply Grothendieck's conjecture.

Here we sketch our formulation of the duality with coefficients in abelian varieties.
Recall from \cite{Suz13} that a $k$-algebra is said to be \emph{rational}
if it is a finite direct product of the perfections (direct limit along Frobenii) of
finitely generated fields over $k$,
and \emph{ind-rational} if it is a filtered union of rational $k$-subalgebras.
We denote the category of ind-rational $k$-algebras by $k^{\ind\rat}$.
We can endow this category with the pro-\'etale topology
(\cite{BS15}; see \S \ref{sec: Sites and algebraic groups: setup and first properties} below
for the details about the pro-\'etale topology in this ind-rational setting).
We denote the resulting site by $\Spec k^{\ind\rat}_{\pro\et}$.
The derived category of sheaves on $\Spec k^{\ind\rat}_{\pro\et}$ is denoted by $D(k^{\ind\rat}_{\pro\et})$
and the derived sheaf-Hom functor by $R \sheafhom_{k^{\ind\rat}_{\pro\et}}$.
For an object $C \in D(k^{\ind\rat}_{\pro\et})$, we define
	\[
			C^{\SDual}
		=
			R \sheafhom_{k^{\ind\rat}_{\pro\et}}(C, \Z)
	\]
and call it the \emph{Serre dual} of $C$
(cf.\ \cite[8.4, Remarque]{Ser60}, \cite[III, Thm.\ 0.14]{Mil06}).
The double dual $\SDual \SDual$ sends
the perfection (inverse limit along Frobenii) of a pro-unipotent group to itself.
It sends the perfection of a semi-abelian variety $G$ to its profinite Tate module placed in degree $-1$.
Hence the universal covering of $G$ placed in degree $-1$
gives a mapping cone of $G \to G^{\SDual \SDual}$,
which is uniquely divisible.

As in the beginning of the paper,
let $K$ be a complete discrete valuation field
with perfect residue field $k$ of characteristic $p > 0$
and $A$ an abelian variety over $K$.
We can regard the cohomology complex $R \Gamma(A) = R \Gamma(K, A)$ of $K$
as a complex of sheaves on $\Spec k^{\ind\rat}_{\pro\et}$ in a suitable way.
We denote the resulting complex by $R \alg{\Gamma}(A)$
and its $n$-th cohomology by $\alg{H}^{n}(A)$.
The sheaf $\alg{\Gamma}(A) = \alg{H}^{0}(A)$ is represented by
the perfection of the Greenberg transform of the N\'eron model of $A$.
This is a proalgebraic group over $k$,
and the reduction map induces an isomorphism
$\pi_{0}(\alg{\Gamma}(A)) \isomto \pi_{0}(\mathcal{A}_{x})$.
The sheaf $\alg{H}^{1}(A)$ is ind-algebraic,
and all higher cohomology vanishes.
We have
	\[
			R \alg{\Gamma}(A)^{\SDual}
		=
			R \sheafhom_{k^{\ind\rat}_{\pro\et}}(R \alg{\Gamma}(A), \Q / \Z)[-1].
	\]
The trace morphism $R \alg{\Gamma}(\Gm) \to \Z$ given in \cite[Prop.\ 2.4.4]{Suz13}
and the cup product formalism give a pairing
	\[
			R \alg{\Gamma}(A^{\vee})
		\times
			R \alg{\Gamma}(A)
		\to
			\Z[1]
	\]
in $D(k^{\ind\rat}_{\pro\et})$.
This induces morphisms
	\[
			R \alg{\Gamma}(A)
		\to
			R \alg{\Gamma}(A^{\vee})^{\SDual}[1]
		\quad \text{and} \quad
			R \alg{\Gamma}(A^{\vee})^{\SDual \SDual}
		\to
			R \alg{\Gamma}(A)^{\SDual}[1].
	\]
We will prove the following.

\begin{MainThm} \label{mainthm: duality for abelian varieties}
	The above defined morphism
		\[
				R \alg{\Gamma}(A^{\vee})^{\SDual \SDual}
			\to
				R \alg{\Gamma}(A)^{\SDual}[1]
		\]
	in $D(k^{\ind\rat}_{\pro\et})$ is an isomorphism.
\end{MainThm}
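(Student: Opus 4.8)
The plan is to deduce Theorem~\ref{mainthm: duality for abelian varieties} from its validity when $A$ has semistable reduction --- where it is already available --- by a Galois descent carried out entirely inside $D(k)$. If $A$ has semistable reduction over $K$, the theorem follows from the equivalence of our duality with the conjunction of Grothendieck's and \v{S}afarevi\v{c}'s conjectures: \v{S}afarevi\v{c}'s conjecture is known for perfect residue fields by \cite{Beg81}, \cite{Bes78}, \cite{Ber03}, and the semistable case of Grothendieck's conjecture is \cite{Wer97}. So the task is to propagate the statement from a finite Galois extension down to $K$, which is exactly where the formulation over the pro-\'etale site $\Spec k^{\ind\rat}_{\pro\et}$ is designed to help.

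For the descent, let $L/K$ be a finite Galois extension with group $G$, let $l/k$ be its residue field extension --- separable, as $k$ is perfect --- and let $\pi\colon\Spec l^{\ind\rat}_{\pro\et}\to\Spec k^{\ind\rat}_{\pro\et}$ be the induced finite \'etale morphism. The first task is to upgrade the Hochschild--Serre isomorphism $R\Gamma(K,A)\simeq R\Gamma(G,R\Gamma(L,A_{L}))$ to a canonical isomorphism
\[
	R\alg{\Gamma}(A)\;\simeq\;R\Gamma(G,\pi_{*}R\alg{\Gamma}(A_{L}))
\]
in $D(k)$, natural in $A$ and compatible with $\SDual$, with $\SDual\SDual$, and with the cup product pairing built from $R\alg{\Gamma}(\Gm)\to\Z$: for $\SDual$ one uses that $\pi$ is finite \'etale, so $\pi^{!}\Z=\pi^{*}\Z=\Z$ and hence $\SDual_{k}\compose\pi_{*}=\pi_{*}\compose\SDual_{l}$, while for the pairing one must carry the trace map on $\Gm$ through the base change, via the functoriality of the Poincar\'e biextension. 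The key lemma is that $\pi_{*}R\alg{\Gamma}(A_{L})$, together with its $G$-action, is $G$-cohomologically trivial: for a subgroup $G'\leq G$ with fixed field $K'=L^{G'}$, the object $R\Gamma(G',\pi_{*}R\alg{\Gamma}(A_{L}))$ is, by Hochschild--Serre over $K'$, a pushforward of the complex $R\alg{\Gamma}(A_{K'})$ formed over $K'$, which is concentrated in degrees $\leq1$ since $K'$ has perfect residue field and $\alg{H}^{n}(A_{K'})=0$ for $n\geq2$; hence $\mathbb{H}^{n}(G',\pi_{*}R\alg{\Gamma}(A_{L}))=0$ for $n\geq2$ and every $G'$, and two consecutive vanishings over all subgroups force cohomological triviality. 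On cohomologically trivial objects $R\Gamma(G,-)$ agrees with the underived $G$-invariants functor, which --- $\Z[G]$ being self-dual as a $G$-module --- commutes with $\SDual$, with $\SDual\SDual$, and with cup products. Consequently the morphism of Theorem~\ref{mainthm: duality for abelian varieties} for $A$ over $K$ is $R\Gamma(G,\pi_{*}(-))$ applied to the corresponding morphism for $A_{L}$ over $L$; since $R\Gamma(G,-)$ and $\pi_{*}$ preserve quasi-isomorphisms, the theorem for $A_{L}$ over $L$ yields the theorem for $A$ over $K$.

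Finally, by the semistable reduction theorem there is a finite separable extension $L'/K$ over which $A$ acquires semistable reduction; replacing $L'$ by the Galois closure of $L'/K$, and using that semistable reduction persists under further extension, we may assume $L/K$ is finite Galois with $A_{L}$ semistable. The first paragraph then gives the theorem over $L$ and the descent gives it over $K$. The main obstacle is the descent step: constructing the sheafified Hochschild--Serre isomorphism compatibly with $\SDual$, $\SDual\SDual$ --- equivalently, with the uniquely divisible mapping cone of $R\alg{\Gamma}(A^{\vee})\to R\alg{\Gamma}(A^{\vee})^{\SDual\SDual}$ --- and with the pairing, together with the cohomological triviality, especially in the ramified case, where $l=k$ and the single sheaf $\alg{H}^{0}(A)$, the Greenberg transform of the N\'eron model, is not compatible with base change, so that one is forced to work with the whole complex $R\alg{\Gamma}(A)$ and deduce everything from the vanishing of $\alg{H}^{\geq2}$. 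That this works is precisely the robustness built into the $D(k)$-formulation.
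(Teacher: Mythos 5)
Your proposal is correct and follows the same overall architecture as the paper: the semistable case is extracted from the equivalence of the duality with the conjunction of Grothendieck's and \v{S}afarevi\v{c}'s conjectures (Werner plus B\'egueri--Bester--Bertapelle), and the general case follows by Galois descent through the sheafified Hochschild--Serre spectral sequence, the essential input being that both $R \alg{\Gamma}(L, A)$ and $R \alg{\Gamma}(K, A)$ are concentrated in degrees $0$ and $1$.

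The one point where you genuinely diverge is inside the descent lemma. The paper first disposes of the unramified case by noting that the duality morphism over $L$ is literally the restriction of the one over $K$ to $D(k'^{\ind\rat}_{\pro\et})$, then reduces the totally ramified case to cyclic $G$ (totally ramified extensions are solvable) and uses the $2$-periodicity of $R \hat{\Gamma}(G, \;\cdot\;)$ for cyclic $G$ together with boundedness to kill the Tate cohomology (Proposition \ref{prop: Galois descent technique}); compatibility with $\SDual$ then comes from the norm quasi-isomorphism $L \Delta(G, M) \isomto R \Gamma(G, M)$ and the tensor-hom adjunction. You instead keep a general $G$ acting through $\pi_{\ast}$ and invoke a Nakayama-type criterion: vanishing of Tate hypercohomology in two consecutive degrees for all (Sylow) subgroups, which follows from $\alg{H}^{n} = 0$ for $n \ge 2$ over every intermediate field, forces $R \hat{\Gamma}(H, M) = 0$ for all $H$. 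This is precisely the variant the paper records in Remark \ref{rmk: remarks after proof of main theorem}, citing Koya for the complex version; note that for sheaf-valued complexes one still needs the reduction to ordinary $G$-modules via $\Hom_{k^{\ind\rat}_{\pro\et}}(M, I)$ for injective $I$, and that cohomological triviality yields $R \Gamma(G, M) \simeq L \Delta(G, M)$ via the norm rather than literally the underived invariants, which is what actually gives the commutation with $\SDual$. Your route saves the reduction to cyclic groups at the cost of a stronger group-cohomological input; both are sound, and the remaining details you flag (compatibility of the Hochschild--Serre identification with the trace map and the cup product) are left at the same level of implicitness in the paper's own argument.
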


We recall \v{S}afarevi\v{c}'s conjecture,
or a theorem of B\'egueri and Bester-Bertapelle.
In our terminology, \v{S}afarevi\v{c}'s conjecture can be stated as the existence of a canonical isomorphism
	\[
			H^{1}(A^{\vee})
		\cong
			\Ext_{k^{\ind\rat}_{\pro\et}}^{1}(\alg{\Gamma}(A), \Q / \Z)
	\]
for $k$ algebraically closed.
The right-hand side is the Pontryagin dual
the fundamental group $\pi_{1}(\alg{\Gamma}(A))$
of the proalgebraic group $\alg{\Gamma}(A)$ in the sense of Serre \cite{Ser60}.
Hence \v{S}afarevi\v{c}'s conjecture claims the existence of a perfect pairing
	\[
			H^{1}(A^{\vee})
		\times
			\pi_{1}(\alg{\Gamma}(A))
		\to
			\Q / \Z
	\]
between the torsion group and the profinite group.
Note that $\pi_{1}$ is indifferent to component groups
and observe how this conjecture is complementary to Grothendieck's conjecture.

We will prove the following more precise form of the above theorem.

\begin{MainThm} \BetweenThmAndList \label{mainthm: relation with Gro and Sha conjectures}
	\begin{enumerate}
		\item \label{ass: Gro, Sha and main theorem}
			Theorem \ref{mainthm: duality for abelian varieties} is equivalent to the conjunction of
			Grothendieck's conjecture and \v{S}afarevi\v{c}'s conjecture.
		\item \label{ass: Galois descent}
			If Theorem \ref{mainthm: duality for abelian varieties} is true for
			$A \times_{K} L$ for a finite Galois extension $L$ of $K$
				\[
						R \alg{\Gamma}(L, A^{\vee})^{\SDual \SDual}
					\isomto
						R \alg{\Gamma}(L, A)^{\SDual}[1],
				\]
			then it is true for the original $A$.
	\end{enumerate}
\end{MainThm}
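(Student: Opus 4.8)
The plan is to prove the two assertions in turn, extracting from the ``duality'' quasi-isomorphism of Theorem A concrete information on the cohomology sheaves, and conversely.

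For assertion \eqref{ass: Gro, Sha and main theorem}, I would begin by unwinding the complex $R \alg{\Gamma}(A)^{\SDual}[1]$ using the Serre dual. Since $\alg{\Gamma}(A) = \alg{H}^{0}(A)$ is proalgebraic, $\alg{H}^{1}(A)$ is ind-algebraic, and higher cohomology vanishes, the two-term complex $R\alg{\Gamma}(A)$ has, after applying $R\sheafhom_{k}(-,\Q/\Z)[-1]$, a cohomology spectral sequence whose terms are governed by $\Hom_{k}(-,\Q/\Z)$ and $\Ext^{1}_{k}(-,\Q/\Z)$. The key computational point is to identify these: for the proalgebraic group $\alg{\Gamma}(A)$, the identity component contributes only in degree $1$ via $\pi_{1}$ (whence the appearance of \v{S}afarevi\v{c}'s conjecture), while the finite \'etale quotient $\pi_{0}(\alg{\Gamma}(A)) \cong \pi_{0}(\mathcal{A}_{x})$ contributes its Pontryagin dual in degree $0$. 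Running this for both $A$ and $A^{\vee}$, the quasi-isomorphism of Theorem A becomes, degree by degree, a pair of isomorphisms: the one in the ``component group'' degree is exactly Grothendieck's pairing being perfect, and the one in the ``$\pi_{1}$'' degree is exactly \v{S}afarevi\v{c}'s conjecture (together with the vanishing of $\alg{H}^{1}$ of the torus part and the already-known perfectness on uniquely divisible/torsion pieces). I would organize this as: (i) compute both sides of the Theorem A morphism in each cohomological degree; (ii) match the map in each degree with the classical pairings, checking compatibility of the cup-product construction here with Grothendieck's original biextension-obstruction construction and with the \v{S}afarevi\v{c} pairing; (iii) conclude the equivalence. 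Step (ii) --- the compatibility of pairings --- is where I expect the real work to lie, since it requires tracing the trace morphism $R\alg{\Gamma}(\Gm) \to \Z$ and the cup product through to the explicit description of Grothendieck's pairing via obstructions to extending biextensions.

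For assertion \eqref{ass: Galois descent}, the strategy is to exploit the functoriality of the whole setup in $K$. For a finite Galois extension $L/K$ with group $\Gamma = \Gal(L/K)$, restriction-corestriction (or a transfer/norm argument) relates $R\alg{\Gamma}(K,A)$ to the $\Gamma$-homotopy-fixed points, or rather to a direct summand up to $[L:K]$-torsion, of $R\alg{\Gamma}(L,A)$; more precisely I would set up a commutative square whose vertical maps are the Theorem A morphisms over $K$ and over $L$ and whose horizontal maps come from the base-change functor, and show that the $K$-morphism is a retract of the $L$-morphism in an appropriate sense. The point is that $R\sheafhom_{k}(-,\Z)$ and the residue/trace morphism are compatible with these transfer maps --- this is precisely the ``well-suited with Galois descent'' feature advertised in the introduction. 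Since a retract of a quasi-isomorphism is a quasi-isomorphism, the assertion follows. The main obstacle here is the prime $p$: the index $[L:K]$ may be divisible by $p$, so the averaging trick does not directly split the relevant complexes integrally. I would handle this by treating the prime-to-$p$ part by the transfer argument and the $p$-part separately --- either by reducing to the case where $L/K$ is a $p$-extension and using that for such extensions the relevant $\alg{H}^{1}$'s and $\pi_{1}$'s are pro-$p$ and the duality can be checked after the $\SDual\SDual$-completion, or by invoking the torsor/descent structure of the N\'eron model's Greenberg transform under $\Gamma$ to see the $K$-complex directly as $R\Gamma(\Gamma, -)$ of the $L$-complex and using that $\SDual$ commutes with this. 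Combining \eqref{ass: Gro, Sha and main theorem}, \eqref{ass: Galois descent}, the semistable reduction theorem (so that some $L/K$ makes $A\times_K L$ semistable), Werner's theorem (Grothendieck's conjecture in the semistable case), and the Bégueri--Bester--Bertapelle theorem (\v{S}afarevi\v{c}'s conjecture) then yields Theorem A in general, hence Grothendieck's conjecture unconditionally.
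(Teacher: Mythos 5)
Your treatment of assertion (1) follows the paper's own route: decompose $\theta_{A}$ degree by degree using the connected--\'etale structure of $\alg{\Gamma}(A)$ and the vanishing of $\sheafext_{k}^{i}(\,\cdot\,,\Q/\Z)$ for $i\ge 2$ on proalgebraic groups, then match the $\pi_{0}$-component with Grothendieck's pairing and the $H^{1}$-component with the B\'egueri--Bester--Bertapelle isomorphism. You are right that the comparison of pairings (tracing the trace map and cup product against the biextension-obstruction construction) is where the work lies; that is exactly what Sections \ref{sec: Statement of the duality theorem} and \ref{sec: Comparison with other authors' work} do, including the care needed for the non-Serre-reflexive semi-abelian part of $\mathcal{A}_{x}$ via the double dual.

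For assertion (2) there is a genuine gap. Your primary strategy --- restriction--corestriction exhibiting the $K$-morphism as a retract of the $L$-morphism --- cannot work integrally, as you yourself note: $[L:K]$ is typically divisible by $p$, and all the interesting objects here ($\alg{H}^{1}$, the pro-unipotent parts, $\pi_{1}$ of the Greenberg transform) are $p$-primary, so a transfer argument gives nothing. Your fallback, writing $R\alg{\Gamma}(K,A)=R\Gamma(\Gamma,R\alg{\Gamma}(L,A))$ and ``using that $\SDual$ commutes with this,'' assumes precisely the hard point. In general $R\sheafhom_{k}(\,\cdot\,,\Z)$ does \emph{not} commute with group cohomology $R\Gamma(\Gamma,\,\cdot\,)$; by tensor-hom adjunction it commutes with group \emph{homology} $L\Delta(\Gamma,\,\cdot\,)$. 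The paper closes this gap by first reducing to a totally ramified \emph{cyclic} extension (the unramified case is a mere restriction of sites, and totally ramified Galois groups of local fields are solvable), and then proving that for cyclic $G$ and a complex $M$ such that both $M$ and $R\Gamma(G,M)$ are bounded, the Tate cohomology $R\hat{\Gamma}(G,M)$ vanishes: by the $2$-periodicity of the complete resolution it is quasi-isomorphic to its own shift by $2$, and boundedness then forces it to be zero. Consequently the norm map gives a quasi-isomorphism $L\Delta(G,M)\isomto R\Gamma(G,M)$, and the universal coefficient identity $R\Gamma(G,M^{\SDual})=L\Delta(G,M)^{\SDual}$ yields the needed commutation $R\Gamma(G,M^{\SDual})=R\Gamma(G,M)^{\SDual}$. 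Without some such Nakayama--Tate-type input, the descent step does not go through; your sketch names the destination but omits the bridge.
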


Since \v{S}afarevi\v{c}'s conjecture is already a theorem,
these results together with the semistable reduction theorem and Werner's result imply:

\begin{MainThm} \label{mainthm: Gro conj}
	Grothendieck's conjecture is true.
\end{MainThm}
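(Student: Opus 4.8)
The plan is to deduce Theorem~\ref{mainthm: Gro conj} from the preceding structural results by a short chain of reductions. First I would invoke Theorem~\ref{mainthm: relation with Gro and Sha conjectures}\eqref{ass: Gro, Sha and main theorem}: Grothendieck's conjecture for $A/K$ follows once we know Theorem~\ref{mainthm: duality for abelian varieties} for $A/K$, since the latter is equivalent to the conjunction of Grothendieck's conjecture and \v{S}afarevi\v{c}'s conjecture, and the latter conjecture is already a theorem (B\'egueri in mixed characteristic, Bester--Bertapelle in equal characteristic). So it suffices to establish Theorem~\ref{mainthm: duality for abelian varieties} in full generality.

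Next I would apply the semistable reduction theorem of Grothendieck: there is a finite Galois extension $L/K$ such that $A \times_{K} L$ has semistable reduction over $\Order_{L}$. For such $A \times_{K} L$, Werner's theorem gives Grothendieck's conjecture, and \v{S}afarevi\v{c}'s conjecture holds (again a theorem) over the residue field of $L$, which is still perfect of characteristic $p$. Hence, by Theorem~\ref{mainthm: relation with Gro and Sha conjectures}\eqref{ass: Gro, Sha and main theorem} applied over $L$, Theorem~\ref{mainthm: duality for abelian varieties} holds for $A \times_{K} L$, i.e.\ the morphism $R\alg{\Gamma}(L, A^{\vee})^{\SDual\SDual} \to R\alg{\Gamma}(L, A)^{\SDual}[1]$ is a quasi-isomorphism.

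Finally I would invoke the Galois descent statement, Theorem~\ref{mainthm: relation with Gro and Sha conjectures}\eqref{ass: Galois descent}: since Theorem~\ref{mainthm: duality for abelian varieties} holds for $A \times_{K} L$ over the finite Galois extension $L/K$, it holds for the original $A$ over $K$. Running the equivalence of part~\eqref{ass: Gro, Sha and main theorem} backwards once more over $K$ then yields Grothendieck's conjecture for $A/K$. Since $A$ and $K$ were arbitrary (with $K$ complete discrete valued and $k$ perfect of characteristic $p>0$), this proves the conjecture in general.

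The only genuine subtlety to check is compatibility of the hypotheses across the extension $L/K$: one must confirm that Werner's semistable case really supplies Theorem~\ref{mainthm: duality for abelian varieties} for $A\times_K L$ in the precise formulation used here — that is, that the combination of Werner's perfectness result with the (now proved) \v{S}afarevi\v{c} conjecture over the residue field of $L$ matches the conjunction appearing in part~\eqref{ass: Gro, Sha and main theorem}. Granting the earlier theorems of the paper, however, this is essentially bookkeeping: the hard analytic and homological content has been absorbed into Theorems~\ref{mainthm: duality for abelian varieties} and~\ref{mainthm: relation with Gro and Sha conjectures}, and Theorem~\ref{mainthm: Gro conj} is their formal consequence.
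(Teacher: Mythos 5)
Your proposal is correct and follows exactly the paper's own argument: reduce to Theorem~\ref{mainthm: duality for abelian varieties} via the equivalence of Theorem~\ref{mainthm: relation with Gro and Sha conjectures}\eqref{ass: Gro, Sha and main theorem}, establish it over a semistabilizing Galois extension $L/K$ using Werner's result together with the \v{S}afarevi\v{c} conjecture (a theorem of B\'egueri--Bester--Bertapelle), and descend to $K$ by part~\eqref{ass: Galois descent}. The ``bookkeeping'' subtlety you flag is precisely what the paper's Proposition in Section~\ref{sec: End of proof: Grothendieck's conjecture} handles, by stating the equivalence with \v{S}afarevi\v{c}'s conjecture required over $\alg{K}(k')$ for every algebraically closed $k' \in k^{\ind\rat}$ --- which is still covered since that conjecture is known for all such fields.
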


In the next logically optional subsection,
we explain the general picture of our formulation and proof.
The organization of the paper will be explained in the subsection after next.


\numberwithin{equation}{subsection}
\subsection{General picture}
\label{sec: Motivation of our constructions}
We explain the general picture of our proof of Grothendieck's conjecture
without getting into the details or giving precise definitions,
in order to motivate our very peculiar constructions.
For the convenience of the reader,
we repeat the main ideas in our previous paper \cite{Suz13}.
Notation is as in the previous subsection.
Assume that $k$ is algebraically closed for simplicity.

Suppose for the moment that
we have a nice complex $R \alg{\Gamma}(K, A) = R \alg{\Gamma}(A)$
of ind- or proalgebraic groups over $k$
for an abelian variety $A$ over $K$,
so that its $k$-points is $R \Gamma(A)$
(hence $\alg{H}^{n}(A) = 0$ for $n \ge 2$ by a cohomological dimension reason).
The group $\alg{\Gamma}(A)$ is proalgebraic
and $\alg{H}^{1}(A)$ ind-algebraic.
What is nice about $R \alg{\Gamma}(A)$ is that
\emph{it can capture the N\'eron component group $\pi_{0}(\mathcal{A}_{x})$
and behaves well under base change}.

To explain this,
let $\ideal{p}_{K}$ be the maximal ideal of $\Order_{K}$.
We have a surjection
	\[
			\alg{\Gamma}(K, A)
		=
			\alg{\Gamma}(\Order_{K}, \mathcal{A})
		=
			\invlim_{n}
				\alg{\Gamma}(\Order_{K} / \ideal{p}_{K}^{n}, \mathcal{A})
		\stackrel{n = 1}{\onto}
			\mathcal{A}_{x}
	\]
of proalgebraic groups with connected kernel,
so $\pi_{0}(\alg{\Gamma}(A)) = \pi_{0}(\mathcal{A}_{x})$.
This explains how $\pi_{0}(\mathcal{A}_{x})$ can be functorially recovered from $\alg{\Gamma}(A)$.
Here it is essential to put a proalgebraic group structure on $\alg{\Gamma}(A)$
to make sense of its $\pi_{0}$.
On the other hand, the Hochschild-Serre spectral sequence
	\[
			R \Gamma \bigl(
				\Gal(L / K), R \alg{\Gamma}(L, A)
			\bigr)
		=
			R \alg{\Gamma}(K, A)
	\]
for a finite Galois extension $L / K$
shows that $R \alg{\Gamma}(K, A)$ can be recovered from $R \alg{\Gamma}(L, A)$ very cheaply,
while there is no such simple relation for N\'eron models.

We expect some duality between the proalgebraic group $\alg{\Gamma}(A)$
and the ind-algebraic group $\alg{H}^{1}(A^{\vee})$
that takes care of $\pi_{0}(\alg{\Gamma}(A))$.
This should be analogous to the usual local Tate duality in the finite residue field case.
A little more precisely,
let's expect that there should be an isomorphism
	\begin{equation} \label{eq: naive duality morphism}
			R \alg{\Gamma}(A^{\vee})
		\isomto
			R \sheafhom_{k}(R \alg{\Gamma}(A), \Q / \Z)
	\end{equation}
up to some completion of the left-hand side.
Here $R \sheafhom_{k}$ is some internal $R \Hom$ functor
for the category of ind- or proalgebraic groups over $k$.
The actual duality statement needs the completion or double-dual as in previous subsection,
since semi-abelian varieties (appearing in $\mathcal{A}_{x}$)
are not double-dual invariant (while unipotent groups are so).
Let's ignore the double-dual.
This duality statement is robust for Galois descent:
if it is true for $A$ over a finite Galois extension $L / K$,
then it is true for $A$ over $K$,
basically by the Hochschild-Serre spectral sequence above.

The statement is equivalent to the conjunction of
Grothendieck's and \v{S}afarevi\v{c}'s conjectures as follows.
The isomorphism (ignoring the completion) gives a hyperext spectral sequence
	\[
			E_{2}^{i j}
		=
			\sheafext_{k}^{i}(\alg{H}^{-j}(A), \Q / \Z)
		\Longrightarrow
			\alg{H}^{i + j}(A^{\vee}).
	\]
The functor $\sheafext_{k}^{i}(\;\cdot\;, \Q / \Z)$ is dual to the $i$-th homotopy group functor
by \cite[\S 5, Cor.\ to Prop.\ 7]{Ser60},
hence zero for $i \ge 2$ by \cite[\S 10, Thm.\ 2]{Ser60}.
Therefore this spectral sequence degenerates at $E_{2}$
and becomes (ignoring the term $\sheafhom_{k}(\alg{H}^{1}(A), \Q / \Z)$)
an exact sequence and an isomorphism
	\begin{gather*}
				0
			\to
				\sheafext_{k}^{1}(\alg{H}^{1}(A), \Q / \Z)
			\to
				\alg{\Gamma}(A^{\vee})
			\to
				\sheafhom_{k}(\alg{\Gamma}(A), \Q / \Z)
			\to
				0,
		\\
				\alg{H}^{1}(A^{\vee})
			\cong
				\sheafext_{k}^{1}(\alg{\Gamma}(A), \Q / \Z)
	\end{gather*}
(with a completion to $\alg{\Gamma}(A^{\vee})$).
The isomorphism gives \v{S}afarevi\v{c}'s conjecture.
As \cite[III Lem.\ 0.13 (c)]{Mil06} suggests,
the group $\sheafext_{k}^{1}(\alg{H}^{1}(A), \Q / \Z)$ should be connected.
Hence the exact sequence should be a connected-\'etale sequence.
The connected part gives the statement that
the identity component of $\alg{\Gamma}(A^{\vee})$ up to completion
is $\sheafext_{k}^{1}(\alg{H}^{1}(A), \Q / \Z)$.
This is dual to \v{S}afarevi\v{c}'s conjecture.
The \'etale part gives an isomorphism
	\[
			\pi_{0}(\alg{\Gamma}(A^{\vee}))
		\cong
			\sheafhom_{k}(\pi_{0}(\alg{\Gamma}(A)), \Q / \Z).
	\]
Writing the groups as N\'eron component groups,
we get the isomorphism predicted by Grothendieck's conjecture.

Here it is very important to have a \emph{complex}, $R \alg{\Gamma}(A)$,
of ind- or proalgebraic groups, and work in a derived category.
Looking at the terms individually seriously breaks the good behavior under base change.

How can we define such a complex?
A naive approach is the following.
For a perfect $k$-algebra $R$,
we define its canonical lift ``$R \tensor_{k} K$'' to $K$ to be
	\[
			\alg{K}(R)
		=
				(W(R) \hat{\tensor}_{W(k)} \Order_{K})
			\tensor_{\Order_{K}}
				K,
	\]
where $W$ is the affine scheme of Witt vectors of infinite length.
More explicitly, if $K = W(k)[1 / p][x] / (f(x))$ with some Eisenstein polynomial $f(x) \in W(k)[x]$,
then $\alg{K}(R) = W(R)[1 / p][x] / (f(x))$,
and if $K = k((T))$, then $\alg{K}(R) = R[[T]][1 / T]$.
If $R = k'$ is a perfect field, the ring $\alg{K}(k')$ is a complete discrete valuation field
obtained from $K$ by extending the residue field from $k$ to $k'$.
Take an injective resolution $I^{\cdot}$ of $A$ over the fppf site of $K$.
Consider the complex of presheaves
	\[
			R
		\mapsto
			\Gamma(\alg{K}(R), I^{\cdot})
	\]
on the category of perfect $k$-algebras.
Its (pro-)\'etale sheafification is the candidate of our complex.
Its $n$-th cohomology is the sheafification of the presheaf
	\[
			R
		\mapsto
			H^{n}(\alg{K}(R), A).
	\]
But this fppf cohomology group is very difficult to calculate.
It is a classical object only if we restrict the $R$'s to be perfect fields.
This means that we can only obtain the generic behavior of the sheaf from our classical knowledge.

But \emph{generic behavior is sufficient to describe algebraic groups}
in view of Weil's theory of \emph{birational groups}.
This means that there is no information lost by treating proalgebraic groups
as functors on the category of ind-rational $k$-algebras,
which is essentially the \emph{category of perfect fields} over $k$.
Now the sheafification of the complex of presheaves
	\[
			k'
		\mapsto
			\Gamma(\alg{K}(k'), I^{\cdot})
	\]
on our ind-rational pro-\'etale site $\Spec k^{\ind\rat}_{\pro\et}$ yields
the sought-for object $R \alg{\Gamma}(A)$.
This explain why this strange site $\Spec k^{\ind\rat}_{\pro\et}$ is needed in this paper.
Moreover, this type of complexes and cohomology groups looks
very similar to the well-known general description of higher pushforward sheaves.
Therefore we are tempted to define a \emph{morphism of sites}
	\[
			\pi
		\colon
			\Spec K_{\fppf}
		\to
			\Spec k^{\ind\rat}_{\pro\et}
	\]
corresponding to the functor $k' \mapsto \alg{K}(k')$ on the underlying categories,
so that we have a more systematic definition $R \alg{\Gamma}(A) = R \pi_{\ast} A$.
In fact, a little care and modifications are needed for continuity and exactness of pullback.

Now the picture is the following.
The duality we want should be a relative duality for the morphism $\pi$.
The cup product formalism in site theory will give us
the duality morphism \eqref{eq: naive duality morphism}.
The derived sheaf-Hom functor $R \sheafhom_{k^{\ind\rat}_{\pro\et}}$
on the site $\Spec k^{\ind\rat}_{\pro\et}$
should give the internal $R \Hom$ functor on the category of ind-proalgebraic groups.

All we do in this paper is to carry out these ideas rigorously
making necessary corrections to imprecise ideas.
There are three difficulties to overcome.
One is to show that
$R \sheafhom_{k^{\ind\rat}_{\pro\et}}(G, H)$ behaves well
for ind-proalgebraic groups (i.e., ind-objects of proalgebraic groups) $G, H$.
We already know this for proalgebraic $G$ and algebraic $H$
by \cite[Thm.\ 2.1.5]{Suz13}.
To extend this result for ind-proalgebraic groups,
we will need heavy derived limit arguments following Kashiwara-Schapira \cite[Chap.\ 15]{KS06}.
Second, cohomology groups of the form $H^{n}(\alg{K}(k'), A)$
are not completely classical when $k'$ has infinitely many direct factors
and $\Spec k'$ as a topological space is profinite.
The ring $\alg{K}(k')$ is a finite direct product of complete discrete valuation fields
if $k'$ has only finitely many direct factors,
but otherwise it is quite complicated.
We study some site-theoretic properties of the ring $\alg{K}(k')$
by approximation by complete discrete valuation subfields.
In \cite[\S 2.5]{Suz13}, we calculated $H^{n}(\alg{K}(k'), \Gm)$.
We need to further develop the techniques used there
in order to calculate $H^{n}(\alg{K}(k'), A)$ for an abelian variety $A$.
The third point is that we need to redo Bester's work \cite{Bes78}
within the style of this paper.
This is lengthy, but there is essentially no new idea needed in this part.


\numberwithin{equation}{subsection}
\subsection{Organization}
\label{sec: Organization}
This paper is organized as follows.
In \S \ref{sec: Site-theoretic preliminaries},
we study how to treat ind-proalgebraic groups over $k$
as sheaves on $\Spec k^{\ind\rat}_{\pro\et}$.
In \S \ref{sec: Local fields with ind-rational base},
we view cohomology of $K$ with various coefficients as sheaves on $\Spec k^{\ind\rat}_{\pro\et}$
and compute it.
In \S \ref{sec: Statement of the duality theorem},
we construct the duality morphism of Theorem \ref{mainthm: duality for abelian varieties}
and show that it induces two pairings
	\[
			\pi_{0}(\mathcal{A}_{x}^{\vee}) \times \pi_{0}(\mathcal{A}_{x})
		\to
			\Q / \Z,
		\quad
			H^{1}(A^{\vee})
		\times
			\pi_{1}(\alg{\Gamma}(A))
		\to
			\Q / \Z.
	\]
The perfectness of these pairings is equivalent to
Theorem \ref{mainthm: duality for abelian varieties}.
In \S \ref{sec: Comparison with other authors' work},
we show that the first pairing agrees with Grothendieck's pairing
and the second with B\'egueri-Bester-Bertapelle's pairing.
This proves Theorem \ref{mainthm: relation with Gro and Sha conjectures}
\eqref{ass: Gro, Sha and main theorem}
and the semistable case of Theorem \ref{mainthm: duality for abelian varieties}.
In \S \ref{sec: Galois descent},
we prove that Galois descent works for Theorem \ref{mainthm: duality for abelian varieties},
namely Theorem \ref{mainthm: relation with Gro and Sha conjectures} \eqref{ass: Galois descent}.
In \S \ref{sec: End of proof: Grothendieck's conjecture},
we summarize the results of the preceding sections
to conclude the proof of Theorems \ref{mainthm: duality for abelian varieties},
\ref{mainthm: relation with Gro and Sha conjectures}
and \ref{mainthm: Gro conj},
which finishes the proof of Grothendieck's conjecture.

In \S \ref{sec: Duality with coefficients in tori},
we formulate and prove an analogue of Theorem \ref{mainthm: duality for abelian varieties}
for tori.
Most of the results of this section has already been obtained by
B\'egueri (loc.cit.), Xarles \cite{Xar93} and Bertapelle-Gonz\'alez-Avil\'es \cite{BGA15}.
In \S \ref{sec: Duality with coefficients in 1-motives},
the dualities for abelian varieties and tori are combined into
a duality for $1$-motives.
This extends the finite residue field case of the duality shown by Harari-Szamuely \cite{HS05}.
In \S \ref{sec: Connection with classical statements for finite residue fields},
we show, in the finite residue field case,
how to pass from the above sheaf setting to the classical setting.
In Appendix \ref{sec: The pro-fppf topology for proalgebraic proschemes},
we explain how to treat proalgebraic groups that are proschemes but not schemes.
We do this by extending the pro-fppf topology introduced in \cite{Suz13} for affine schemes
to proalgebraic proschemes.

\begin{Ack}
	The author expresses his deep gratitude to Kazuya Kato for his support and helpful discussions.
	This paper is seriously based on his ideas on local duality
	\cite[Introduction]{Kat86}, \cite[\S 3.3]{Kat91}.
	The author is grateful to Clifton Cunningham and David Roe,
	who explained the author relations between their work \cite{CR15} and
	Serre's local class field theory \cite{Ser61}.
	Their invitation, hospitality and useful discussions on tori over local fields at Calgary
	were valuable.
	The author would like to thank Alessandra Bertapelle and Cristian D. Gonz\'alez-Avil\'es,
	who informed the author of their work \cite{BGA15}
	and encouraged to give details on the duality with coefficients in tori.
	Bertapelle also helped the author
	to understand several points in her work on \v{S}afarevi\v{c}'s conjecture.
	Special thanks to C\'edric P\'epin for sharing his interest in Grothendieck's conjecture
	and useful discussions,
	to Michal Bester for acknowledging our treatment of his work in this paper,
	and to the referee for their very careful reading and suggestions to make the paper more readable.
\end{Ack}

\begin{Notation}
	We fix a perfect field $k$ of characteristic $p > 0$.
	A perfect field over $k$ is said to be finitely generated
	if it is the perfection (direct limit along Frobenii) of a finitely generated field over $k$.
	The same convention is applied to morphisms of perfect $k$-algebras or $k$-schemes
	being finite type, finite presentation, etc.
	The categories of sets and abelian groups are denoted by $\Set$ and $\Ab$, respectively.
	Set theoretic issues are omitted for simplicity
	as the main results hold independent of the choice of universes.
	The opposite category of a category $\mathcal{C}$ is denoted by $\mathcal{C}^{\op}$.
	The procategory and indcategory of $\mathcal{C}$ are denoted by
	$\Pro \mathcal{C}$ and $\Ind \mathcal{C}$, respectively,
	so that $\Ind \Pro \mathcal{C} = \Ind (\Pro \mathcal{C})$
	is the ind-procategory.
	All group schemes (except for Galois groups) are assumed to be commutative.
	We say that a (commutative) \'etale group scheme over $k$ is finitely generated
	if its group of geometric points is finitely generated as an abelian group.
	A lattice is a finitely generated \'etale group with no torsion or,
	equivalently, a finite free abelian group with a Galois action.
	For an abelian category $\mathcal{C}$,
	we denote by $D^{b}(\mathcal{C})$, $D^{+}(\mathcal{C})$, 
	$D^{-}(\mathcal{C})$, $D(\mathcal{C})$
	its bounded, bounded below, bounded above and unbounded derived categories, respectively.
	If we say $A \to B \to C$ is a distinguished triangle in a triangulated category,
	we implicitly assume that a morphism $C \to A[1]$ to the shift of $A$ is given
	and $A \to B \to C \to A[1]$ is distinguished.
	For a Grothendieck site $S$ and a category $\mathcal{C}$,
	we denote by $\mathcal{C}(S)$ the category of sheaves on $S$ with values in $\mathcal{C}$.
	For an object $X$ of $S$,
	the category $S / X$ of objects of $S$ over $X$ is equipped with the induced topology
	(\cite[III, \S 3]{AGV72a}),
	which is the localization of $S$ at $X$ (\cite[III, \S 5]{AGV72a}).
	The category $D^{\ast}(\Ab(S))$ for $\ast = b, +, -$ or (blank) is also denoted by $D^{\ast}(S)$.
	By a continuous map $f \colon S' \to S$ between sites $S'$ and $S$,
	we mean a continuous functor from the underlying category of $S$ to that of $S'$,
	i.e., the right composition (or the pushforward $f_{\ast}$)
	sends sheaves on $S'$ to sheaves on $S$.
	By a morphism $f \colon S' \to S$ of sites
	we mean a continuous map whose pullback functor $\Set(S) \to \Set(S')$ is exact.
	For an abelian category $\mathcal{C}$,
	we denote by $\Ext_{\mathcal{C}}^{i}$ the $i$-th Ext functor for $\mathcal{C}$.
	If $\mathcal{C} = \Ab(S)$,
	we also write $\Ext_{S}^{i} = \Ext_{\mathcal{C}}^{i}$.
	The sheaf-Hom and sheaf-Ext functors are denoted by
	$\sheafhom_{S}$ and $\sheafext_{S}^{n}$, respectively.
	For sites such as $\Spec k^{\ind\rat}_{\pro\et}$,
	we also use $\Ext_{k^{\ind\rat}_{\pro\et}}$, $\Ab(k^{\ind\rat}_{\pro\et})$ etc.\
	omitting $\Spec$ from the notation.
	Similarly, cohomology of the site $\Spec k^{\ind\rat}_{\pro\et}$ for example
	is denoted by $R \Gamma(k^{\ind\rat}_{\pro\et}, A)$, where $A$ is a sheaf of abelian groups.
	
	Statements (theorems, propositions etc.), remarks and equations share the numbering system.
	Their numbers are preceded by (sub)(sub)section numbers and surrounded by parentheses.
	Therefore (6.2) for example means the second one of the statements, remarks and equations in \S 6,
	and (5.2.2.3) means the third one of such in \S 5.2.2.
	Listed items are labeled by letters.
	Therefore (b) means the second item in a list.
	As a consequence, (2.2.1) (c) means the third item in the list in (2.2.1).
\end{Notation}


\numberwithin{equation}{section}
\section{Site-theoretic preliminaries}
\label{sec: Site-theoretic preliminaries}
Let $k$ be a perfect field of characteristic $p > 0$.
In this section, we introduce the ind-rational pro-\'etale site $\Spec k^{\ind\rat}_{\pro\et}$.
We also recall the perfect pro-fppf site $\Spec k^{\perf}_{\pro\fppf}$ from \cite{Suz13}.
We call a help from $\Spec k^{\perf}_{\pro\fppf}$
to establish a basic method of treating ind-proalgebraic groups
as sheaves on $\Spec k^{\ind\rat}_{\pro\et}$.
This needs to extend the result \cite[Thm.\ 2.1.5]{Suz13} on Ext groups of proalgebraic groups
to ind-proalgebraic groups.
For this, we develop a general study on
the derived Hom functor $R \Hom$ on ind-procategories.
Then we can embed the derived category of ind-proalgebraic groups
into the derived category of sheaves on $\Spec k^{\ind\rat}_{\pro\et}$.
We also extend Serre duality
(\cite[8.4, Remarque]{Ser60}, \cite[III, Thm.\ 0.14]{Mil06})
on perfect unipotent groups ($=$ quasi-algebraic unipotent groups)
to proalgebraic and ind-algebraic groups.

At the end of this section,
we introduce the notion of P-acyclicity.
In the next section, we will view
the cohomology of complete discrete valuation fields with residue field $k$
as sheaves on the ind-rational \'etale site $\Spec k^{\ind\rat}_{\et}$ first
and then as sheaves on $\Spec k^{\ind\rat}_{\pro\et}$ by sheafification.
The cohomology of the pro-\'etale sheafification of an \'etale sheaf is
difficult to calculate in general.
In the situations we are interested in, however,
we will see that most of these \'etale sheaves are P-acyclic.
This means that the associated pro-\'etale sheaves still remember the original \'etale sheaves
and hence the cohomology of the original complete discrete valuation field.
This notion is also useful when we want to obtain some information
specific to a non-closed residue field $k$.
See \eqref{rmk: where we need P-acyclicity for abelian variety}
and \eqref{rmk: where we need P-acyclicity for tori}.


\numberwithin{equation}{subsection}
\subsection{Sites and algebraic groups: setup and first properties}
\label{sec: Sites and algebraic groups: setup and first properties}

As in \cite[Def.\ 2.1.1]{Suz13},
we say that a perfect $k$-algebra $k'$ is \emph{rational}
if it is a finite direct product of finitely generated perfect fields over $k$,
and \emph{ind-rational} if it is a filtered union of rational $k$-subalgebras.
Since any perfect field over $k$ is ind-rational,
we know that a $k$-algebra is ind-rational if and only if
it can be written as a filtered union of finite products of
(not necessarily finitely generated) perfect fields over $k$.
The rational (resp.\ ind-rational) $k$-algebras form a full subcategory of
the category of perfect $k$-algebras,
which we denote by $k^{\rat}$ (resp.\ $k^{\ind\rat}$).
Since a $k$-algebra homomorphism from $k' \in k^{\rat}$ to $k'' = \bigcup k''_{\lambda} \in k^{\ind\rat}$
factors through some $k''_{\lambda} \in k^{\rat}$,
we know that $k^{\ind\rat}$ is equivalent to the indcategory of $k^{\rat}$.%
\footnote{
	Do not confuse $k^{\ind\rat}$ with another indcategory $\Ind \mathcal{C}$,
	where $\mathcal{C}$ is the category of finite products of
	not necessarily finitely generated perfect fields over $k$
	with $k$-algebra homomorphisms.
	There are natural functors $k^{\ind\rat} \to \Ind \mathcal{C} \to k^{\ind\rat}$.
	The first one is fully faithful and the second one is essentially surjective,
	with composite the identity functor.
	The object of $\mathcal{C}$ given by the perfection of the field $k(x_{1}, x_{2}, \dots)$
	and the ind-object in $\mathcal{C}$ consisting of the increasing family of
	the perfections of the fields $k(x_{1}, \dots, x_{n})$
	are not isomorphic in $\Ind \mathcal{C}$,
	but become isomorphic in $k^{\ind\rat}$.
}
We define the \emph{rational (resp.\ ind-rational) \'etale site} of $k$ to be
the \'etale site on $k^{\rat}$ (resp.\ $k^{\ind\rat}$) (\cite[Def.\ 2.1.3]{Suz13}).
These sites are denoted by $\Spec k^{\rat}_{\et}$ and $\Spec k^{\ind\rat}_{\et}$, respectively.

We also introduce the pro-\'etale topology on $k^{\ind\rat}$.
The pro-\'etale topology for schemes is introduced in \cite{BS15}.
We throughout use the affine variant of the pro-\'etale site \cite[Def.\ 4.2.1, Rmk.\ 4.2.5]{BS15},
which behaves simpler for limit arguments.
Note that any $k$-algebra \'etale over an ind-rational $k$-algebra is
ind-rational \cite[Prop.\ 2.1.2]{Suz13}.
Hence if $k' \in k^{\ind\rat}$ and $R$ is a perfect $k'$-algebra,
then $R$ is ind-\'etale (\cite[Def.\ 2.2.1.5]{BS15}) over $k'$ if and only if
$R \cong \dirlim_{\lambda} k_{\lambda}'$
for some filtered direct system $\{k_{\lambda}'\}$ of \'etale $k'$-algebras ind-rational over $k$.
In particular, such an $R$ itself is ind-rational over $k$.
Therefore we can introduce the pro-\'etale topology on the category $k^{\ind\rat}$.
That is, a covering of an ind-rational $k$-algebra $k'$ is
a finite family $\{k'_{i}\}$ of ind-\'etale $k'$-algebras
such that $\prod k'_{i}$ is faithfully flat over $k'$.
We call the resulting site the \emph{ind-rational pro-\'etale site}
and denote it by $\Spec k^{\ind\rat}_{\pro\et}$.

Some care is needed for localizations (see Notation) of
$\Spec k^{\rat}_{\et}$, $\Spec k^{\ind\rat}_{\et}$ and $\Spec k^{\ind\rat}_{\pro\et}$,
which comes from subtleties of the underlying categories $k^{\rat}$ and $k^{\ind\rat}$.
See \cite[the paragraphs after Def.\ 2.1.3]{Suz13} for the details.
We quickly recall the facts and notation there.
As in Notation in this paper, for $k' \in k^{\ind\rat}$, the category of objects over $k'$ in $k^{\ind\rat}$
(i.e.\ the category of $k'$-algebras ind-rational over $k$) is denoted by $k^{\ind\rat} / k'$,
and the localization of $\Spec k^{\ind\rat}_{\pro\et}$ at $k'$ is denoted by $\Spec k^{\ind\rat}_{\pro\et} / k'$.
If $k'$ is a field that is in $k^{\rat}$ or algebraic over $k$, then
$k'^{\ind\rat} = k^{\ind\rat} / k'$ \cite[loc.\ cit.]{Suz13},
hence
	\[
			\Spec k'^{\ind\rat}_{\et}
		=
			\Spec k^{\ind\rat}_{\et} / k',
		\quad
			\Spec k'^{\ind\rat}_{\pro\et}
		=
			\Spec k^{\ind\rat}_{\pro\et} / k'.
	\]
For any $k' \in k^{\rat}$, we define
	\begin{gather*}
			k'^{\ind\rat} := k^{\ind\rat} / k',
		\\
				\Spec k'^{\ind\rat}_{\et}
			:=
				\Spec k^{\ind\rat}_{\et} / k',
		\quad
				\Spec k'^{\ind\rat}_{\pro\et}
			:=
				\Spec k^{\ind\rat}_{\pro\et} / k'.
	\end{gather*}

Let $k^{\perf}$ be the category of perfect $k$-algebras.
Some general results on perfect schemes in modern language can be found in
\cite[\S 3]{BS17} and \cite{BGA17}.
On $k^{\perf}$, we can introduce the \'etale,
(affine) pro-\'etale \cite[Def.\ 4.2.1, Rmk.\ 4.2.5]{BS15} and pro-fppf \cite[Def.\ 3.1.3]{Suz13} topologies.
We denote the resulting sites by
$\Spec k^{\perf}_{\et}$, $\Spec k^{\perf}_{\pro\et}$, $\Spec k^{\perf}_{\pro\fppf}$, respectively.
A covering in $\Spec k^{\perf}_{\pro\et}$ is a finite family $\{R \to S_{i}\}$
with each $S_{i}$ ind-\'etale over $R$ and $\prod S_{i}$ faithfully flat over $R$.
For $\Spec k^{\perf}_{\pro\fppf}$,
recall from \cite[\S 3.1]{Suz13} that
a homomorphism $R \to S$ in $k^{\perf}$ is said to be flat of finite presentation
if $S$ is the perfection of a flat $R$-algebra of finite presentation.
Also, a homomorphism $R \to S$ in $k^{\perf}$ is said to be flat of ind-finite presentation
if it can be written as a filtered direct limit of
flat homomorphisms $R \to S_{\lambda}$ of finite presentation (\cite[Def.\ 3.1.1]{Suz13}).
A covering in $\Spec k^{\perf}_{\pro\fppf}$ is a finite family $\{R \to S_{i}\}$
with each $S_{i}$ flat of ind-finite presentation
and $\prod S_{i}$ faithfully flat over $R$.
For a perfect $k$-algebra $R$, the category of objects over $R$ in $k^{\perf}$
is nothing but the category of perfect $R$-algebras,
in contrast to the case of the category of ind-rational $k$-algebras $k^{\ind\rat}$.
Hence we will write the localization of $\Spec k^{\perf}_{\pro\fppf}$ at $R$ by
$\Spec k^{\perf}_{\pro\fppf} / R = \Spec R^{\perf}_{\pro\fppf}$.
Similar notation applies to $\Spec k^{\perf}_{\et}$ and $\Spec k^{\perf}_{\pro\et}$.

For $R \in k^{\perf}$,
we define the small pro-\'etale site $\Spec R_{\pro\et}$ of $R$
to be the category $R_{\pro\et}$ of ind-\'etale $R$-algebras (\cite[Def.\ 2.2.1.5]{BS15})
with the topology induced from $\Spec k^{\perf}_{\pro\et}$.
That is, a covering $\{R'_{i}\}$ of an object $R' \in R_{\pro\et}$ is
a finite family of ind-\'etale $R'$-algebras
such that $\prod R'_{i}$ is faithfully flat over $R'$.

We have the following commutative diagram of continuous maps of sites:
	\begin{equation} \label{eq: diagram of sites we use}
		\begin{CD}
				\Spec k^{\perf}_{\pro\fppf}
			@>>>
				\Spec k^{\perf}_{\pro\et}
			@>>>
				\Spec k^{\perf}_{\et}
			\\
			@.
			@VVV
			@VVV
			\\
			@.
				\Spec k^{\ind\rat}_{\pro\et}
			@>>>
				\Spec k^{\ind\rat}_{\et}.
		\end{CD}
	\end{equation}
All the maps are defined by the identity.
The horizontal maps are morphisms of sites
(i.e., have exact pullback functors), but the vertical ones are not \cite[Prop.\ 3.2.3]{Suz13}.

Let $\Alg / k$ be the category of quasi-algebraic groups
(commutative, as assumed throughout the paper) over $k$
in the sense of Serre \cite{Ser60}.
Recall that a quasi-algebraic group is the perfection (inverse limit along Frobenii)
of an algebraic group \cite[\S 1.2 D\'ef.\ 2; \S 1.4, Prop.\ 10]{Ser60}, \cite[Prop.\ 1.2.10]{Pep14}.
The category $\Alg / k$ is an abelian category \cite[\S 1.2, Prop.\ 5]{Ser60}.
We simply call an object of the procategory $\Pro \Alg / k$ a proalgebraic group
following \cite[\S 2.1, D\'ef.\ 1; \S 2.6, Prop.\ 12]{Ser60}.
Similarly, we call an object of the indcategory $\Ind \Alg / k$ an ind-algebraic group
and an object of the ind-procategory
$\Ind \Pro \Alg / k = \Ind (\Pro \Alg / k)$ an ind-proalgebraic group.
(Therefore we will not say $\Z$ ind-algebraic or ind-proalgebraic in this paper.)
Note that the indcategory (and hence also the procategory) of an abelian category is
an abelian category \cite[Thm.\ 8.6.5 (i)]{KS06}.
Following \cite[\S 1.3]{Ser60},
we say that a quasi-algebraic group is a unipotent group, a torus or an abelian variety
if it is the perfection of a unipotent group, a torus or an abelian variety, respectively.
Let $\Alg_{\uc} / k \subset \Alg / k$ be the full subcategory
consisting of those whose identity component is unipotent.
This is the direct product of
the category of (not necessarily connected) unipotent quasi-algebraic groups
and the category of finite \'etale groups of order prime to $p$.
We have fully faithful exact embeddings of abelian categories
	\[
		\begin{CD}
				\Alg / k
			@>> \subset >
				\Pro \Alg / k
			\\
			@VV \cap V
			@V \cap VV
			\\
				\Ind \Alg / k
			@> \subset >>
				\Ind \Pro \Alg / k
		\end{CD}
	\]
by \cite[Thm.\ 8.6.5 (ii)]{KS06}.
Let $\Loc \Alg / k$ be the category of perfections of smooth group schemes over $k$
(which contains $\Z$ but is not abelian).
Let $\Et / k$ be the category of \'etale group schemes over $k$.
Let $\FGEt / k$ be its full subcategory consisting of finitely generated \'etale groups,
namely those whose group of geometric points is a finitely generated abelian group.
Let $\FEt / k$ be the full subcategory consisting of finite \'etale group schemes.
The categories $\FEt / k$, $\FGEt / k$ and $\Et / k$ are abelian.
We have fully faithful embeddings
$\FEt / k \subset \FGEt / k \subset \Et / k \subset \Loc \Alg / k$.
The identity component of $A \in \Loc \Alg / k$ is denoted by $A_{0}$.%
\footnote{
	More customary notation is $A^{0}$ or $A^{\circ}$.
	We prefer the subscript $0$ in this paper, however,
	since we extensively use derived categories and
	$A^{0}$ may be ambiguous
	to the zeroth term of a complex $\cdots \to A^{0} \to A^{1} \to \cdots$.
	A circle $\circ$ in the script size looks too similar to a zero $0$
	and $A^{\circ}$ may still be confusing in this paper.
	Moreover, we use below tons of duality operations
	such as the Serre dual $A^{\SDual}$, the Pontryagin dual $A^{\PDual}$,
	the dual abelian variety $A^{\vee}$ etc.
	We want to distinguish such contravariant operations from the covariant operation $A \mapsto A_{0}$.
}
The formation of $A_{0}$ is functorial,
so the notion and notation of identity component extend to
any $A \in \Ind \Alg / k$, $\Pro \Alg / k$ or $\Ind \Pro \Alg / k$.
We define $\pi_{0}(A) = A / A_{0}$,
which is in $\Ind \FEt / k$, $\Pro \FEt / k$ or $\Ind \Pro \FEt / k$ in each case.
We say that $A \in \Ind \Pro \Alg / k$ is connected if $\pi_{0}(A) = 0$.
We have natural additive functors
$\Ind \Pro \Alg / k, \Loc \Alg / k \to \Ab(k^{\ind\rat}_{\pro\et})$.

We say that a sheaf $A$ of abelian groups on a site $S$ is \emph{acyclic}
if $H^{n}(X, A) = 0$ for any object $X$ of $S$ and any $n \ge 1$.
If $S, S'$ are sites defined by pretopologies,
and if $u$ is a functor from the underlying category of $S$ to that of $S'$
that sends coverings to coverings and
$u(Y \times_{X} Z) = u(Y) \times_{u(X)} u(Z)$
whenever $Y \to X$ appears in a covering family,
then $u$ defines a continuous map $f \colon S' \to S$,
and $f_{\ast}$ sends acyclic sheaves to acyclic sheaves
and hence induces the Leray spectral sequence
$R \Gamma(X, R f_{\ast} A') = R \Gamma(u(X), A')$
for any $X \in S$ and $A' \in \Ab(S')$ \cite[\S II.4]{Art62}.

\begin{Prop} \label{prop: misc on ind-rational pro-etale topology} \BetweenThmAndList
	\begin{enumerate}
		\item \label{ass: quasi-compactness of the sites}
			On any of the sites in the diagram \eqref{eq: diagram of sites we use},
			cohomology of any object of the site commutes with filtered direct limits of coefficient sheaves.
			Products and filtered direct limits of acyclic sheaves are again acyclic.
		\item \label{ass: coherence of perfect sites}
			The sites $\Spec k^{\perf}_{\pro\fppf}$, $\Spec k^{\perf}_{\pro\et}$
			and $\Spec k^{\perf}_{\et}$ are coherent,
			i.e., the objects of their underlying categories are quasi-compact
			and stable under finite inverse limits.
		\item \label{ass: indrat proet and proet cohomology}
			Let $f \colon \Spec k^{\perf}_{\pro\et} \to \Spec k^{\ind\rat}_{\pro\et}$
			be the continuous map defined by the identity.
			Then $f$ induces isomorphisms on cohomology.
			More precisely, $f_{\ast}$ is exact,
			$f_{\ast} f^{\ast} = \id$ on $\Ab(k^{\ind\rat}_{\pro\et})$
			(hence $f^{\ast}$ is a fully faithful embedding), and
				\[
						R \Gamma(k^{\ind\rat}_{\pro\et} / k', f_{\ast} A)
					=
						R \Gamma(k'^{\perf}_{\pro\et}, A)
				\]
			for any $k' \in k^{\ind\rat}$ and $A \in \Ab(k'^{\perf}_{\pro\et})$,
			where the left-hand side is the cohomology of the site
			$\Spec k^{\ind\rat}_{\pro\et}$ at $k'$.
			The same is true for the \'etale version
			$\Spec k^{\perf}_{\et} \to \Spec k^{\ind\rat}_{\et}$.
		\item \label{ass: product is exact}
			In the categories $\Set(k^{\perf}_{\pro\fppf})$, $\Set(k^{\perf}_{\pro\et})$ and
			$\Set(k^{\ind\rat}_{\pro\et})$,
			the product of any family of surjections is a surjection.
			This implies that $\Ab(k^{\perf}_{\pro\fppf})$, $\Ab(k^{\perf}_{\pro\et})$ and
			$\Ab(k^{\ind\rat}_{\pro\et})$ are AB4* categories.
		\item \label{ass: viewing proalg as sheaves is exact}
			The natural functor from $\Pro \Alg / k$ to any of
			$\Ab(k^{\perf}_{\pro\fppf})$, $\Ab(k^{\perf}_{\pro\et})$, $\Ab(k^{\ind\rat}_{\pro\et})$
			is exact.
		\item \label{ass: Rlim vanishes on proalgebraic groups}
			Let $A = \invlim_{\lambda} A_{\lambda} \in \Pro \Alg / k$ with $A_{\lambda} \in \Alg / k$.
			Let $R \invlim_{\lambda}$ be the derived functor of
			$\invlim_{\lambda}$ considered in either
			$D(k^{\perf}_{\pro\fppf})$, $D(k^{\perf}_{\pro\et})$ or $D(k^{\ind\rat}_{\pro\et})$.
			Then we have $R \invlim_{\lambda} A_{\lambda} = A$ in each case.
		\item \label{ass: profppf, proet and et cohom of quasi-algebraic groups etc}
			Let $f \colon \Spec k^{\perf}_{\pro\fppf} \to \Spec k^{\perf}_{\pro\et}$
			be the morphism defined by the identity.
			Let $g$ be either the morphism
			$\Spec k^{\perf}_{\pro\et} \to \Spec k^{\perf}_{\et}$
			or $\Spec k^{\ind\rat}_{\pro\et} \to \Spec k^{\ind\rat}_{\et}$
			defined by the identity.
			If $A \in \Ind \Pro \Alg / k$, the pro-fppf cohomology with coefficients in $A$
			agrees with the pro-\'etale cohomology: $R f_{\ast} A = A$.
			If $A \in \Ind \Alg / k$ or $\Loc \Alg / k$, 
			the pro-fppf cohomology with coefficients in $A$
			agrees with the \'etale cohomology: 
			$R f_{\ast} A = A$ and $R g_{\ast} A = A$.
	\end{enumerate}
\end{Prop}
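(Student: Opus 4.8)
The plan is to establish the seven assertions in the order \eqref{ass: coherence of perfect sites}, \eqref{ass: quasi-compactness of the sites}, \eqref{ass: product is exact}, \eqref{ass: indrat proet and proet cohomology}, \eqref{ass: Rlim vanishes on proalgebraic groups}, \eqref{ass: viewing proalg as sheaves is exact}, \eqref{ass: profppf, proet and et cohom of quasi-algebraic groups etc}, since each rests on the earlier ones. Two structural features drive everything. First, all the sites in \eqref{eq: diagram of sites we use} are \emph{finitary}: every covering is a finite family, and every object is quasi-compact --- an affine scheme in the perfect cases, a finite product of fields (or a filtered union of such) in the ind-rational cases. Second, the topologies are \emph{self-contained}: by \cite[Prop.\ 2.1.2]{Suz13} an ind-\'etale algebra over an ind-rational $k$-algebra is again ind-rational, and the classes ``ind-\'etale'' and ``flat of ind-finite presentation'' (recalled from \cite[\S 3.1]{Suz13}) are closed under finite products, filtered colimits and arbitrary relative tensor products. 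Beyond these, the genuinely non-formal inputs are Serre's structure theory of proalgebraic groups \cite{Ser60} and the $R\sheafhom$/limit computations of \cite{Suz13}.

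For \eqref{ass: coherence of perfect sites}: objects of the perfect sites are affine, hence quasi-compact, and $\Spec k^{\perf}$ admits finite inverse limits because $k^{\perf}$ admits finite colimits (form the colimit of rings, then perfect it --- perfection being left adjoint to the inclusion of perfect rings into all $\F_{p}$-algebras); since pro-fppf and pro-\'etale coverings form pretopologies stable under base change, these limits are compatible with the topology, which is coherence. For \eqref{ass: quasi-compactness of the sites}: as every covering refines to a finite one, \v{C}ech cohomology along coverings commutes with filtered colimits of coefficients, and the \v{C}ech-to-derived-functor spectral sequence (equivalently, local coherence of the topoi) then shows $H^{n}(X,-)$ commutes with filtered colimits for every object $X$; hence filtered colimits of acyclic sheaves are acyclic, and acyclicity of products follows once \eqref{ass: product is exact} is in hand. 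For \eqref{ass: product is exact}, the one substantive point: surjectivity of a map of sheaves is local, so given $\{F_{i}\onto G_{i}\}_{i\in I}$ and a section $(s_{i})$ of $\prod_{i}G_{i}$ over $X$, each $s_{i}$ lifts over a finite covering of $X$, which we replace by a single faithfully flat $U_{i}\to X$; the perfected relative tensor product $V=\bigl(\bigotimes_{\Order(X),\,i}U_{i}\bigr)^{\perf}$, a filtered colimit of its finite subtensor products, is again a single ind-\'etale (resp.\ flat of ind-finite presentation) faithfully flat cover of $X$ lying in the relevant category by the second feature above, and every $s_{i}$ lifts over $V$; hence $\prod_{i}F_{i}\to\prod_{i}G_{i}$ is an epimorphism, and together with left-exactness of $\prod$ this yields AB4*. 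For \eqref{ass: indrat proet and proet cohomology}: since an ind-\'etale algebra over an ind-rational one is ind-rational, the continuous map $f$ restricts to a fully faithful inclusion of underlying categories that is closed under the formation of coverings and their fiber products; from this, exactness of the restriction functor $f_{\ast}$, the identity $f_{\ast}f^{\ast}=\id$ (so $f^{\ast}$ is fully faithful), and the stated agreement of cohomology follow by a standard comparison argument, and the \'etale version is identical.

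For \eqref{ass: Rlim vanishes on proalgebraic groups} and \eqref{ass: viewing proalg as sheaves is exact}, I would first record that the natural functor carries $A=\invlim_{\lambda}A_{\lambda}\in\Pro\Alg/k$ to the objectwise inverse limit of the sheaves $A_{\lambda}$, so $R^{0}\invlim_{\lambda}A_{\lambda}=A$ is automatic and \eqref{ass: Rlim vanishes on proalgebraic groups} amounts to the vanishing of $R^{q}\invlim_{\lambda}A_{\lambda}$ for $q\ge 1$; after replacing the pro-system by one with surjective --- hence faithfully flat, hence epimorphic --- transition maps, this vanishing is essentially the content of \cite[Thm.\ 2.1.5, \S 3]{Suz13}, whose Ext-computations control precisely such derived limits, run together with the AB4* property \eqref{ass: product is exact}. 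Granting \eqref{ass: Rlim vanishes on proalgebraic groups}, assertion \eqref{ass: viewing proalg as sheaves is exact} follows: left-exactness is the objectwise computation of finite limits, and a short exact sequence in $\Pro\Alg/k$ can be represented by a level-wise short exact sequence of pro-systems of quasi-algebraic groups, each level of which is exact as sheaves because surjections of quasi-algebraic groups are faithfully flat; applying $R\invlim_{\lambda}$ and using \eqref{ass: Rlim vanishes on proalgebraic groups} on all three terms then yields the desired short exact sequence of sheaves.

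Finally, for \eqref{ass: profppf, proet and et cohom of quasi-algebraic groups etc}, the base case is $A\in\Alg/k$: that the pro-fppf, pro-\'etale and \'etale cohomology of a quasi-algebraic group over an arbitrary perfect ring coincide is the heart of \cite[\S 3]{Suz13} --- perfection annihilates the infinitesimal part, so quasi-algebraic groups are ``smooth enough'' for fppf descent to reduce to \'etale descent, while the passage from \'etale to pro-\'etale coverings is governed by \eqref{ass: quasi-compactness of the sites}. Granting this: for $A\in\Pro\Alg/k$ write $A=R\invlim_{\lambda}A_{\lambda}$ by \eqref{ass: Rlim vanishes on proalgebraic groups} and use that $Rf_{\ast}$, a right derived functor, commutes with $R\invlim$, so $Rf_{\ast}A=R\invlim_{\lambda}Rf_{\ast}A_{\lambda}=R\invlim_{\lambda}A_{\lambda}=A$, the last equality being \eqref{ass: Rlim vanishes on proalgebraic groups} on the pro-\'etale site; for $A\in\Ind\Pro\Alg/k$ write $A=\dirlim_{j}A_{j}$ and use \eqref{ass: quasi-compactness of the sites}, which forces each $R^{n}f_{\ast}$ to commute with filtered colimits, giving $Rf_{\ast}A=\dirlim_{j}Rf_{\ast}A_{j}=A$; and for $A\in\Ind\Alg/k$ or $\Loc\Alg/k$ the stronger conclusion $Rf_{\ast}A=Rg_{\ast}A=A$ follows because such an $A$ is \'etale-locally, indeed componentwise, a filtered colimit of quasi-algebraic groups, reducing to the base case for $g$ as well. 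The step I expect to be the real obstacle --- where soft site theory does not suffice --- is the conjunction of the higher-$R\invlim$ vanishing in \eqref{ass: Rlim vanishes on proalgebraic groups} over a possibly uncountable filtered index and the base case of \eqref{ass: profppf, proet and et cohom of quasi-algebraic groups etc} for quasi-algebraic groups over perfect rings with profinitely many direct factors; both rely genuinely on the detailed analysis of \cite{Suz13} rather than on any formal manipulation.
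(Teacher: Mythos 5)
Your handling of assertions \eqref{ass: quasi-compactness of the sites}--\eqref{ass: product is exact} and \eqref{ass: profppf, proet and et cohom of quasi-algebraic groups etc} matches the paper's proof: finiteness of coverings for the limit statements, the perfected relative tensor product of the covers $S_{\lambda}$ for the product of surjections, the Leray spectral sequence for the comparison $f_{\ast}$, and the d\'evissage $\Ind\Pro \to \Pro \to \Alg \to$ \cite[Cor.\ 3.3.3]{Suz13} for the last assertion. (Deriving ``products of acyclics are acyclic'' from AB4* rather than from \v{C}ech cohomology as the paper does is a harmless variant.) The problem is the pair \eqref{ass: viewing proalg as sheaves is exact}/\eqref{ass: Rlim vanishes on proalgebraic groups}, where you invert the paper's logical order and, in doing so, lose the one genuinely non-formal step.

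You prove \eqref{ass: Rlim vanishes on proalgebraic groups} first and then deduce \eqref{ass: viewing proalg as sheaves is exact} from it via a level-wise presentation of the short exact sequence plus $R\invlim$. That deduction is fine \emph{granting} \eqref{ass: Rlim vanishes on proalgebraic groups} --- but your proof of \eqref{ass: Rlim vanishes on proalgebraic groups} is only the assertion that the vanishing of $R^{q}\invlim_{\lambda}A_{\lambda}$ for $q\ge 1$ is ``essentially the content of \cite[Thm.\ 2.1.5]{Suz13}.'' That theorem computes $R\Hom_{S}(A,B)$ for $A$ proalgebraic and $B$ algebraic; it controls maps \emph{out of} $A$ in the contravariant variable and gives no handle on $R\invlim$ of the system $\{A_{\lambda}\}$ as an object of $D(S)$. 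Nor does AB4* alone suffice: the transition maps $A/A_{\mu}\onto A/A_{\lambda}$ are epimorphisms of sheaves but are not surjective on sections over a general object, so the Roos complex does not formally collapse. The paper's route is \eqref{ass: product is exact} $\to$ \eqref{ass: viewing proalg as sheaves is exact} $\to$ \eqref{ass: Rlim vanishes on proalgebraic groups}: exactness is proved directly --- in the pro-fppf case because a surjection in $\Pro\Alg/k$ is faithfully flat of profinite presentation, and in the pro-\'etale case by passing to a w-strictly local cover \cite[Cor.\ 2.2.14]{BS13}, noting that each finite-level torsor $Y/A_{\lambda}\to X$ is then a trivial \'etale torsor by \cite[Cor.\ 3.3.3]{Suz13}, and invoking Serre's lemma \cite[2.3, Prop.\ 2]{Ser60} (or \cite[Lem.\ 3.7]{SY12}) to choose a \emph{compatible} system of sections, i.e.\ a section of $Y=\invlim Y/A_{\lambda}$. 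Only then does \eqref{ass: Rlim vanishes on proalgebraic groups} follow formally, via \cite[Lem.\ A.3.2]{Nee01}: the functor $\Pro\Alg/k\to\Ab(S)$ is exact and preserves products, hence preserves $R\invlim$, and higher derived filtered inverse limits vanish in the co-Grothendieck category $\Pro\Alg/k$. As written, your \eqref{ass: viewing proalg as sheaves is exact} and \eqref{ass: Rlim vanishes on proalgebraic groups} each rest on the other; you must supply the w-strictly-local/Serre argument (or an equivalent) for one of them.
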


\begin{proof}
	\eqref{ass: quasi-compactness of the sites}
	This is true for any site defined by finite coverings.
	See \cite[III, Rmk.\ 3.6]{Mil80} for the statement that
	cohomology commutes with filtered direct limits.
	It follows that filtered direct limits of acyclics are acyclic.
	If a family of sheaves has vanishing higher \v{C}ech cohomology,
	then so is the product.
	We can deduce the corresponding statement for the derived functor cohomology
	by \cite[III, Prop.\ 2.12]{Mil80}.
	
	\eqref{ass: coherence of perfect sites}
	Obvious.
	
	\eqref{ass: indrat proet and proet cohomology}
	The exactness of $f_{\ast}$ is obvious.
	We know that $f_{\ast}$ commutes with sheafification
	and $f_{\ast} f^{-1} = \id$, where $f^{-1}$ is the pullback for presheaves.
	Hence $f_{\ast} f^{\ast} = \id$.
	The stated equality is the Leray spectral sequence for $f$,
	which is available by the remark in the paragraph before the proposition.
	The same proof works for the \'etale version
	$\Spec k^{\perf}_{\et} \to \Spec k^{\ind\rat}_{\et}$.
	
	\eqref{ass: product is exact}
	Let $\{G_{\lambda} \onto F_{\lambda}\}$ be a family of surjections in $\Set(k^{\perf}_{\pro\fppf})$.
	Let $R \in k^{\perf}$ and $\{s_{\lambda}\} \in \prod F_{\lambda}(R)$.
	For each $\lambda$, there are an object $S_{\lambda} \in k^{\perf}$
	faithfully flat of ind-finite presentation over $R$
	and a section $t_{\lambda} \in G_{\lambda}(S_{\lambda})$
	that maps to the natural image $s_{\lambda} \in F_{\lambda}(S_{\lambda})$.
	The tensor product $S$ of all the $S_{\lambda}$ over $R$
	(which is the filtered direct limit of the finite tensor products) is
	again faithfully flat of ind-finite presentation over $R$.
	Since $\{t_{\lambda}\} \in \prod G_{\lambda}(S)$ maps to $\{s_{\lambda}\} \in \prod F_{\lambda}(S)$,
	the morphism $\prod G_{\lambda} \to \prod F_{\lambda}$ is surjective.
	The same proof works for $\Set(k^{\perf}_{\pro\et})$ and $\Set(k^{\ind\rat}_{\pro\et})$.
	
	\eqref{ass: viewing proalg as sheaves is exact}
	In either cases, the left exactness is obvious.
	
	For the right exactness of $\Pro \Alg / k \to \Ab(k^{\perf}_{\pro\fppf})$,
	let $0 \to A \to B \to C \to 0$ be an exact sequence in $\Pro \Alg / k$.
	To show that $B \to C$ is a surjection in $\Ab(k^{\perf}_{\pro\fppf})$,
	it is enough to show that $B \to C$ is faithfully flat of profinite presentation.
	Suppose first that $A \in \Alg / k$.
	Let $\{B_{\mu}\}$ be a filtered decreasing family of proalgebraic subgroups of $B$
	such that $B \isomto \invlim B / B_{\mu}$
	(i.e., $\bigcap B_{\mu} = 0$; \cite[\S 2.5, Cor.\ 3 to Prop.\ 10]{Ser60})
	and $B / B_{\mu} \in \Alg / k$ for all $\mu$.
	Then $A \cap B_{\mu} = 0$ for some $\mu$
	since $\Alg / k$ is artinian \cite[\S 1.3, Prop.\ 6]{Ser60}.
	This implies $B = (B / B_{\mu}) \times_{C / B_{\mu}} C$.
	We have an exact sequence $0 \to A \to B / B_{\mu} \to C / B_{\mu} \to 0$ in $\Alg / k$.
	Hence $B / B_{\mu} \to C / B_{\mu}$ is faithfully flat of finite presentation.
	Its base change $B \to C$ is thus faithfully flat of finite presentation.
	Suppose next that $A \in \Pro \Alg / k$.
	Write $A = \invlim A / A_{\lambda}$ with $A / A_{\lambda} \in \Alg / k$ as above.
	Then $B \to C$ can be written as the filtered inverse limit of the morphisms
	$B / A_{\lambda} \to C$.
	The kernel of $B / A_{\lambda} \to C$ is $A / A_{\lambda} \in \Alg / k$.
	Hence the previous case shows that $B / A_{\lambda} \to C$ is faithfully flat of finite presentation.
	
	By \eqref{ass: indrat proet and proet cohomology} (exactness of pushforward),
	the exactness of $\Pro \Alg / k \to \Ab(k^{\ind\rat}_{\pro\et})$ is reduced to
	that of $\Pro \Alg / k \to \Ab(k^{\perf}_{\pro\et})$.
	To show this final statement,
	let $0 \to A \to B \to C \to 0$ be an exact sequence in $\Pro \Alg / k$.
	Suppose that we are given an $X$-valued point of $C$,
	where $X = \Spec R$ with $R \in k^{\perf}$.
	By the exactness of $\Pro \Alg / k \to \Ab(k^{\perf}_{\pro\fppf})$,
	the fiber $Y$ of the morphism $B \to C$ over the $X$-valued point $X \to C$
	is a pro-fppf torsor for $A$ over $X$.
	We want to show that $Y \to X$ admits a section pro-\'etale locally.
	By \cite[Cor.\ 2.2.14]{BS15},
	there exists a pro-\'etale cover $X' \onto X$
	such that $X'$ is w-strictly local \cite[Def.\ 2.2.1]{BS15}.
	In particular, every \'etale cover of $X'$ admits a section.
	Therefore we may assume that $X$ itself is w-strictly local.
	We show that $Y \to X$ then admits a section.
	Let $\{A_{\lambda}\}$ be a filtered decreasing family of proalgebraic subgroups of $A$
	such that $A \isomto \invlim A / A_{\lambda}$
	(i.e., $\bigcap A_{\lambda} = 0$) and $A / A_{\lambda} \in \Alg / k$ for all $\lambda$.
	Then we have $Y \isomto \invlim Y / A_{\lambda}$
	and $Y / A_{\lambda}$ is a pro-fppf torsor for $A / A_{\lambda}$ over $X$.
	Since $A / A_{\lambda}$ is quasi-algebraic,
	we know that $H^{1}(X_{\pro\fppf}, A / A_{\lambda}) = H^{1}(X_{\et}, A / A_{\lambda})$
	by \cite[Cor.\ 3.3.3]{Suz13}.
	Therefore $Y / A_{\lambda}$ is an \'etale torsor for $A / A_{\lambda}$ over $X$.
	For each $\lambda$, this torsor is trivial since $X$ is w-strictly local.
	In this situation, if $R$ is an algebraically closed field,
	then \cite[2.3, Prop.\ 2]{Ser60} or \cite[Lem.\ 3.7]{SY12} says that $Y \to X = \Spec R$ admits a section.
	The arguments there actually work for any w-strictly local $X$.
	
	\eqref{ass: Rlim vanishes on proalgebraic groups}
	The higher $R \invlim$ vanishes on $\Pro \Alg / k$.
	The functors from $\Pro \Alg / k$ to each of the categories preserves $R \invlim$
	by the previous two assertions and \cite[Lem.\ A.3.2]{Nee01}.
	
	\eqref{ass: profppf, proet and et cohom of quasi-algebraic groups etc}
	The statement for the morphism $g$ on the ind-rational sites
	follows from that for the morphism $g$ on the perfect sites
	by \eqref{ass: indrat proet and proet cohomology}.
	The cases $A \in \Ind \Pro \Alg / k, \Ind \Alg / k$ are reduced
	to the cases $A \in \Pro \Alg / k, \Alg / k$, respectively,
	by \eqref{ass: quasi-compactness of the sites}.
	Note that $R \invlim_{\lambda}$ commutes with $R f_{\ast}$.
	With this and \eqref{ass: Rlim vanishes on proalgebraic groups},
	the case $A \in \Pro \Alg / k$ is reduced to the case $A \in \Alg / k$.
	Summing all up, we are reduced to the case $A \in \Loc \Alg / k$.
	This case is \cite[Cor.\ 3.3.3]{Suz13}.
\end{proof}

For the rest of the paper,
we denote the object $R \Gamma(k^{\ind\rat}_{\pro\et} / k', f_{\ast} A)$
appearing in \eqref{ass: indrat proet and proet cohomology}
simply by
	\[
		R \Gamma(k'_{\pro\et}, A).
	\]
This is the same as the cohomology of the small pro-\'etale site $\Spec k'_{\pro\et}$
with coefficients given by the restriction of $A$ to $\Spec k'_{\pro\et}$,
since big sites ($\Spec k'^{\perf}_{\pro\et}$) and small sites ($\Spec k'_{\pro\et}$)
have the same cohomology theory (\cite[III, Rmk.\ 3.2]{Mil80}).
Its \'etale version $R \Gamma(k'_{\et}, A)$ is used in a similar sense.

Note that the sites $\Spec k^{\ind\rat}_{\et}$ and $\Spec k^{\ind\rat}_{\pro\et}$
are not coherent.
If $x$ and $y$ are the generic points of irreducible varieties over $k$,
then their fiber product as a sheaf on these sites is given by
the disjoint union of the points (identified with the $\Spec$ of their residue fields)
of the underlying set of the usual fiber product $x \times_{k} y$.
This is infinite unless $x$ or $y$ is finite over $k$.


\numberwithin{equation}{subsection}
\subsection{Generalities on derived categories of ind-procategories}
\label{sec: Generalities on derived categories of ind-procategories}

A general reference on derived categories of indcategories is \cite[Chap.\ 15]{KS06}.
We need to develop some more here.
For a certain abelian category $\mathcal{A}$,
we describe the derived Hom functor $R \Hom_{\Ind \Pro \mathcal{A}}$
on the ind-procategory $\Ind \Pro \mathcal{A} = \Ind (\Pro \mathcal{A})$
(i.e.\ the ind-category of the pro-category $\Pro \mathcal{A}$)
in terms of $R \Hom_{\mathcal{A}}$.
Recall that $\Hom_{\Ind \Pro \mathcal{A}}$ is defined as
the inverse limit of the direct limit of the inverse limit of the direct limit of $\Hom_{\mathcal{A}}$.
Roughly speaking, the two inverse limits and the one $\Hom$ functor should be derived
to get the required description of $R \Hom_{\Ind \Pro \mathcal{A}}$.
This turns out to be quite complicated both notationally and mathematically.
We organize it by introducing a relatively reasonable notation.
In the next subsection, we will apply these results and notation
to $\mathcal{A} = \Alg / k$ and $\Ab(k^{\ind\rat}_{\pro\et})$.
To clarify the argument, we generalize the situation and
treat small abelian categories and Grothendieck categories in this subsection.

We need notation.
Let $\mathcal{A}$ be an additive category.
A filtered direct system $\{A_{\lambda}\}_{\lambda \in \Lambda} \in \Ind \mathcal{A}$
will occasionally be written as
$\fdirlim[\lambda \in \Lambda] A_{\lambda}$.
This is the direct limit of the $A_{\lambda}$ in $\Ind \mathcal{A}$.
If $\{A_{\lambda}\}_{\lambda \in \Lambda}$ is a family of objects of $\mathcal{A}$,
then its direct sum in $\Ind \mathcal{A}$ (which is the filtered direct limit of finite sums) is denoted by
$\fdsum[\lambda \in \Lambda] A_{\lambda}$.
If $\mathcal{A'}$ is a full additive subcategory of $\mathcal{A}$,
then we denote by $\fdsum \mathcal{A}'$
the full additive subcategory of $\Ind \mathcal{A}$ consisting of objects of the form
$\fdsum[\lambda \in \Lambda] A_{\lambda}$ with $A_{\lambda} \in \mathcal{A}'$.
A similar notation is applied to $\Pro \mathcal{A}$,
for example, objects $\finvlim[\lambda] A_{\lambda}$, $\fprod[\lambda] A_{\lambda}$ of $\Pro \mathcal{A}$
and full additive subcategories $\fprod \mathcal{A}'$ of $\Pro \mathcal{A}$.

Now let $\mathcal{A}$ be an abelian category.
Assume that $\mathcal{A}$ is small.
Then $\Pro \mathcal{A}$ has exact filtered inverse limits
and a set of cogenerators given by objects of $\mathcal{A}$.
Hence $\Pro \mathcal{A}$ is a co-Grothendieck category (i.e.\ the opposite of a Grothendieck category) and,
in particular, has enough projectives.
Therefore the Hom functor on $\Pro \mathcal{A}$ admits a derived functor $R \Hom_{\Pro \mathcal{A}}$.
Its restriction to the bounded derived category of $\mathcal{A}$ is denoted by $R \Hom_{\mathcal{A}}$,
namely we define
	\[
			R \Hom_{\mathcal{A}}(A, B)
		:=
			R \Hom_{\Pro \mathcal{A}}(A, B)
		\quad \text{for} \quad
			A, B \in D^{b}(\mathcal{A}).
	\]
We do not claim that this $R \Hom_{\mathcal{A}}$ is the derived functor of $\Hom_{\mathcal{A}}$.
By \cite[Thm.\ 15.3.1, (i)]{KS06}, the natural functor
$D^{b}(\mathcal{A}) \to D^{b}(\Pro \mathcal{A})$ is fully faithful, hence
	\[
			H^{n} R \Hom_{\mathcal{A}}(A, B)
		=
			\Ext_{\mathcal{A}}^{n}(A, B)
		\quad \text{for} \quad
			A, B \in \mathcal{A},
	\]
where the right-hand side is the usual Ext functor for the abelian category $\mathcal{A}$.
Therefore if the derived functor of $\Hom_{\mathcal{A}}$ on the bounded derived category exists,
then the morphism of functors from this derived functor
to the above $R \Hom_{\mathcal{A}}$ defined by universality is an isomorphism.
In general, we do not assume this.
If $A = \finvlim[\lambda] A_{\lambda} \in \Pro \mathcal{A}$ and $B \in \mathcal{A}$, then
	\[
			\Ext_{\Pro \mathcal{A}}^{n}(A, B)
		=
			\dirlim_{\lambda}
			\Ext_{\mathcal{A}}^{n}(A_{\lambda}, B)
	\]
for all $n$ by \cite[Cor.\ 15.3.9]{KS06}.
We express this result by the equality
	\[
			R \Hom_{\Pro \mathcal{A}}(A, B)
		=
			\dirlim_{\lambda}
			R \Hom_{\mathcal{A}}(A_{\lambda}, B),
	\]
which is intuitive and convenient, but may be confusing
since the $\dirlim_{\lambda}$ in the right-hand side is not a direct limit
in the triangulated category $D(\Ab)$.

Next we want to treat the case that $B$ also is a pro-object and,
more generally, the case $A, B \in \Ind \Pro \mathcal{A}$.
An object $A \in \Ind \Pro \mathcal{A}$ can be written as
$A = \fdirlim[\lambda \in \Lambda] A_{\lambda}$,
where each $A_{\lambda} \in \Pro \mathcal{A}$ can be written as
$A_{\lambda} = \finvlim[\lambda' \in \Lambda'_{\lambda}] A_{\lambda \lambda'}$
with $A_{\lambda \lambda'} \in \mathcal{A}$.
Note that the morphisms $A_{\lambda_{1}} \to A_{\lambda_{2}} \in \Pro \mathcal{A}$
for $\lambda_{1} \le \lambda_{2} \in \Lambda$ are assumed as given,
but no maps $\Lambda'_{\lambda_{2}} \to \Lambda'_{\lambda_{1}}$
between the index sets are assumed.
The Hom functor for $\Ind \Pro \mathcal{A}$ is given by definition as
 	\[
 			\Hom_{\Ind \Pro \mathcal{A}}(A, B)
 		=
 			\invlim_{\lambda} \dirlim_{\mu} \invlim_{\mu'} \dirlim_{\lambda'}
 			\Hom_{\mathcal{A}}(A_{\lambda \lambda'}, B_{\mu \mu'})
 	\]
for $A = \fdirlim[\lambda] \finvlim[\lambda'] A_{\lambda \lambda'} \in \Ind \Pro \mathcal{A}$ and
$B = \fdirlim[\mu] \finvlim[\mu'] B_{\mu \mu'} \in \Ind \Pro \mathcal{A}$,
which is of the form
	\[
			\Hom_{\Ind \Pro \mathcal{A}}
		\colon
			\Pro \Ind (\mathcal{A}^{\op})
		\times
			\Ind \Pro \mathcal{A}
		\to
			\Ab.
	\]
A general method to derive this type of functors is the following.

\begin{Prop} \label{prop: derivation in ind-procategories}
	Let $\mathcal{A}, \mathcal{B}, \mathcal{C}$ be abelian categories
	and $F \colon \mathcal{A} \times \mathcal{B} \to \mathcal{C}$
	an additive bifunctor that is left exact in both variables.
	Let $\mathcal{A}', \mathcal{B}', \mathcal{C}'$ be full additive subcategories of
	$\mathcal{A}, \mathcal{B}, \mathcal{C}$, respectively, such that
	$F(\mathcal{A}' \times \mathcal{B}') \subset \mathcal{C}'$.
	Let
		$
				\Pro F
			\colon
				\Pro \mathcal{A} \times \Pro \mathcal{B}
			\to
				\Pro \mathcal{C}
		$,
		$
				\Ind \Pro F
			\colon
				\Ind \Pro \mathcal{A} \times \Ind \Pro \mathcal{B}
			\to
				\Ind \Pro \mathcal{C}
		$,
		$
				\Pro \Ind \Pro F
			\colon
					\Pro \Ind \Pro \mathcal{A}
				\times
					\Pro \Ind \Pro \mathcal{B}
			\to
				\Pro \Ind \Pro \mathcal{C}
		$
	etc.\ be the natural extensions of $F$.
	\begin{enumerate}
		\item \label{ass: deriving the pro and ind of a bifunctor}
			Assume that the pair $(\mathcal{A}', \mathcal{B}')$ is $F$-injective
			in the sense of \cite[Def.\ 13.4.2]{KS06}
			(which implies the existence of the derived functor of $F$ on the bounded below derived categories).
			Then $(\Ind \mathcal{A}', \Ind \mathcal{B}')$ is $\Ind F$-injective,
			and $(\Pro \mathcal{A}', \Pro \mathcal{B}')$ and $(\fprod \mathcal{A}', \fprod \mathcal{B}')$
			are both $\Pro F$-injective.
			In particular, $\Ind F$ and $\Pro F$ admit derived functors on the bounded below derived categories.
			The diagrams
				\[
					\begin{CD}
							D^{+}(\mathcal{A}) \times D^{+}(\mathcal{B})
						@>> RF >
							D^{+}(\mathcal{C})
						\\
						@VVV
						@VVV
						\\
							D^{+}(\Ind \mathcal{A}) \times D^{+}(\Ind \mathcal{B})
						@> R \Ind F >>
							D^{+}(\Ind \mathcal{C}),
					\end{CD} \qquad
					\begin{CD}
							D^{+}(\mathcal{A}) \times D^{+}(\mathcal{B})
						@>> RF >
							D^{+}(\mathcal{C})
						\\
						@VVV
						@VVV
						\\
							D^{+}(\Pro \mathcal{A}) \times D^{+}(\Pro \mathcal{B})
						@> R \Pro F >>
							D^{+}(\Pro \mathcal{C})
					\end{CD}
				\]
			commute.
			We have natural isomorphisms
			$R^{n} \Ind F \cong \Ind R^{n} F$ and $R^{n} \Pro F \cong \Pro R^{n} F$
			of functors for all $n$.
		\item \label{ass: derived functor of ind-pro limits}
			Assume that: $\mathcal{C}$ has products and exact filtered direct limits;
			$\mathcal{C}'$ is closed by products and filtered direct limits;
			the subcategory $\mathcal{C}' \subset \mathcal{C}$ is cogenerating, i.e.,
			any object of $\mathcal{C}$ has an injection into an object of $\mathcal{C}'$;
			and for any family of exact sequences
			$0 \to C_{\lambda}^{1} \to C_{\lambda}^{2} \to C_{\lambda}^{3} \to 0$ in $\mathcal{C}$
			with $C_{\lambda}^{1}, C_{\lambda}^{2} \in \mathcal{C}'$ for any $\lambda$,
			we have $C_{\lambda}^{3} \in \mathcal{C}'$ for any $\lambda$ and the sequence
			$0 \to \prod C_{\lambda}^{1} \to \prod C_{\lambda}^{2} \to \prod C_{\lambda}^{3} \to 0$ is exact.
			Then the sequence
				\[
						\Pro \Ind \Pro \Ind \mathcal{C}
					\xrightarrow{\Pro \Ind \Pro \dirlim}
						\Pro \Ind \Pro \mathcal{C}
					\xrightarrow{\Pro \Ind \invlim}
						\Pro \Ind \mathcal{C}
					\xrightarrow{\Pro \dirlim}
						\Pro \mathcal{C}
					\xrightarrow{\invlim}
						\mathcal{C}
				\]
			of functors restricts to the sequence
				\[
						\fprod \Ind \fprod \Ind \mathcal{C}'
					\to
						\fprod \Ind \fprod \mathcal{C}'
					\to
						\fprod \Ind \mathcal{C}'
					\to
						\fprod \mathcal{C}'
					\to
						\mathcal{C}'
				\]
			on the subcategories.
			Each category in the latter sequence is injective
			with respect to the functor that follows in the former sequence
			(i.e.\ $\fprod \Ind \fprod \mathcal{C}'$ is $\Pro \Ind \invlim$-injective etc.).
			In particular, the composite of the derived functors of the functors in the former sequence
			gives the derived functor of the ``take-all-the-limits'' functor
				\[
						\invlim \;
						\Pro \dirlim \;
						\Pro \Ind \invlim \;
						\Pro \Ind \Pro \dirlim
					\colon
						\Pro \Ind \Pro \Ind \mathcal{C} \to \mathcal{C}.
				\]
		\item \label{ass: derived functor of pro-ind extensions and limits}
			Under the assumptions of the previous two assertions,
			the composite
				\[
						\Pro \Ind \Pro \Ind \mathcal{A} \times \Pro \Ind \Pro \Ind \mathcal{B}
					\xrightarrow{\Pro \Ind \Pro \Ind F}
						\Pro \Ind \Pro \Ind \mathcal{C}
					\xrightarrow{
						\invlim \;
						\Pro \dirlim \;
						\Pro \Ind \invlim \;
						\Pro \Ind \Pro \dirlim
					}
						\mathcal{C}
				\]
			admits a derived functor on the bounded below derived categories,
			which is given by the composite of the derived functors.
	\end{enumerate}
\end{Prop}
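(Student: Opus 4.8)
The plan is to prove the three assertions in order --- assertion (1) is the engine, and (2)--(3) are then formal, using \cite{KS06}'s calculus of $F$-injective subcategories and of composition of derived functors.

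\emph{Assertion (1).} I would first treat the $\Ind$-case, checking in the sense of \cite[Def.\ 13.4.2]{KS06} that $(\Ind \mathcal{A}', \Ind \mathcal{B}')$ is cogenerating, closed under the relevant extensions, and that $\Ind F$ is exact on short exact sequences with terms there. Cogeneration is the standard fact that the ind-category of a cogenerating subcategory is cogenerating; the closure and exactness conditions reduce to the corresponding statements over $\mathcal{A}, \mathcal{B}, \mathcal{C}$ via the facts that a short exact sequence in an ind-category is, after refining the index, a filtered colimit of short exact sequences over the base, that filtered colimits are exact there, and that $\Ind F$ commutes with filtered colimits in each variable. Then an $F$-injective resolution (with terms in $\mathcal{A}'$, resp.\ $\mathcal{B}'$) of a bounded-below complex over $\mathcal{A}$ (resp.\ $\mathcal{B}$) is simultaneously an $\Ind F$-injective resolution when viewed in $\Ind \mathcal{A}$ (resp.\ $\Ind \mathcal{B}$); since $\Ind F$ restricts to $F$, evaluating $R \Ind F$ on such resolutions returns $R F$, which is the commutativity of the left-hand diagram. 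For $R^{n} \Ind F \cong \Ind R^{n} F$, I would compute $R \Ind F(\fdirlim[\lambda] A_{\lambda}, \fdirlim[\mu] B_{\mu})$ on a resolution of the form $\fdirlim[\lambda] I_{\lambda}^{\bullet}$ with each $I_{\lambda}^{\bullet}$ an $F$-injective resolution of $A_{\lambda}$ --- available after passing to an index category carrying functorial resolutions --- to get $\fdirlim[\lambda, \mu] R F(A_{\lambda}, B_{\mu})$, and then apply $H^{n}$, which commutes with filtered colimits in $\Ind \mathcal{C}$. The $\Pro$-case for $(\Pro \mathcal{A}', \Pro \mathcal{B}')$ is the formal dual via $\Pro \mathcal{C} = (\Ind (\mathcal{C}^{\op}))^{\op}$. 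The one genuinely new case is $(\fprod \mathcal{A}', \fprod \mathcal{B}')$: here $\fprod \mathcal{A}'$ is cogenerating in $\Pro \mathcal{A}$ because, choosing monomorphisms $A_{\lambda} \into I_{\lambda}$ into $\mathcal{A}'$-objects, the induced map $\finvlim[\lambda] A_{\lambda} \to \fprod[\lambda] I_{\lambda}$ is a monomorphism (it is injective on all $\Hom$-groups); $\Pro F$-exactness on short exact sequences from $\fprod \mathcal{A}' \times \fprod \mathcal{B}'$ follows by writing an object of $\fprod \mathcal{B}'$ as $\fprod[j] B'_{j}$, using $\Pro F(-, \fprod[j] B'_{j}) = \fprod[j] \Pro F(-, B'_{j})$, the exactness of each $\Pro F(-, B'_{j})$ on such sequences from the $\Pro \mathcal{A}'$-case, and the exactness of products of short exact sequences in $\Pro \mathcal{C}$ --- symmetrically in the first variable --- while $\Pro F(\fprod \mathcal{A}' \times \fprod \mathcal{B}') \subset \fprod \mathcal{C}'$ is immediate from $F(\mathcal{A}' \times \mathcal{B}') \subset \mathcal{C}'$.

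\emph{Assertion (2).} All four functors in the chain are left exact, and $\Pro \Ind \Pro \dirlim$ and $\Pro \dirlim$ are even exact, since $\dirlim$ is exact in $\mathcal{C}$ by hypothesis and pro/ind-extensions of exact functors are exact; the two $\invlim$-type functors carry the homological content. I would verify the adapted chain of subcategories one link at a time. First, $\fprod \mathcal{C}'$ is $\invlim$-injective: it is cogenerating in $\Pro \mathcal{C}$ as in (1), $\invlim(\fprod[\lambda] C'_{\lambda}) = \prod_{\lambda} C'_{\lambda} \in \mathcal{C}'$ since $\mathcal{C}'$ is closed under products, and the closure under extensions and the exactness of $\invlim$ on short exact sequences from $\fprod \mathcal{C}'$ are exactly the content of the displayed hypothesis on families of short exact sequences and the exactness of their products. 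Next, $\fprod \Ind \mathcal{C}'$ is $\Pro \dirlim$-injective (cogenerating, and acyclic since $\Pro \dirlim$ is exact), and $\Pro \dirlim$ sends it into $\fprod \mathcal{C}'$ because $\dirlim$ sends $\Ind \mathcal{C}'$ into $\mathcal{C}'$ ($\mathcal{C}'$ closed under filtered colimits). Applying the one-variable specialization of the $\Ind$- and $\fprod$-extension mechanism of (1) to the functor $\invlim$ with adapted subcategory $\fprod \mathcal{C}'$ shows that $\fprod \Ind \fprod \mathcal{C}'$ is $\Pro \Ind \invlim$-injective and is carried into $\fprod \Ind \mathcal{C}'$ (using $\invlim(\fprod \mathcal{C}') \subset \mathcal{C}'$), and likewise that $\fprod \Ind \fprod \Ind \mathcal{C}'$ is $\Pro \Ind \Pro \dirlim$-injective and carried into $\fprod \Ind \fprod \mathcal{C}'$. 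Iterating the composition-of-derived-functors lemma (\cite[\S 13.3]{KS06}) along this chain gives that $\invlim \dirlim \invlim \dirlim$ has a derived functor on the bounded-below derived categories, equal to $R \invlim \compose R \dirlim \compose R \invlim \compose R \dirlim$.

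\emph{Assertion (3).} I would splice (1) and (2). By assertion (1), applied along the four nestings of $\Pro \Ind \Pro \Ind F$, this functor has a derived functor on the bounded-below derived categories, $\fprod \Ind \fprod \Ind \mathcal{A}' \times \fprod \Ind \fprod \Ind \mathcal{B}'$ is $\Pro \Ind \Pro \Ind F$-injective, and --- from $F(\mathcal{A}' \times \mathcal{B}') \subset \mathcal{C}'$ --- it is carried into $\fprod \Ind \fprod \Ind \mathcal{C}'$, which by assertion (2) is the head of a chain adapted to $\invlim \dirlim \invlim \dirlim$. One further application of the composition lemma gives the composite a derived functor on the bounded-below derived categories, equal to $R(\invlim \dirlim \invlim \dirlim) \compose R(\Pro \Ind \Pro \Ind F)$, and hence, by the descriptions from (1) and (2), to the composite of the individual derived functors.

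The step I expect to be the main obstacle is the bookkeeping inside assertion (1): arranging the index-category refinements so that the cogeneration and compatible-resolution arguments run uniformly, and --- more importantly --- checking that after each of the operations $\Ind$, $\Pro$, $\fprod$ the closure properties needed at the next layer (cogeneration, exactness of products, exactness of products of short exact sequences, closure under products and under filtered colimits) survive, so that the mechanism endures being iterated four times. Each such check is routine in isolation; it is the layered ind-pro-ind-pro structure that makes the whole delicate.
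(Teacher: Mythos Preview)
Your proposal is correct and aligned with the paper in its overall architecture: assertion (1) carries the content, (2) is the iterated injectivity chain for the limit functors, and (3) is the formal splicing. For (2) and (3) your argument is essentially the paper's, including the key observation that $\fprod \mathcal{C}'$ is $\invlim$-injective (the paper invokes \cite[Prop.~13.3.15]{KS06} here directly) and that the exact functors $\Pro\dirlim$, $\Pro\Ind\Pro\dirlim$ only need cogenerating subcategories, with the rest handled by iterating the single-variable machinery of \cite[Prop.~15.3.2, 15.3.7]{KS06}.

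The genuine difference is in assertion (1). You attempt to verify $\Ind F$-injectivity of $(\Ind\mathcal{A}',\Ind\mathcal{B}')$ directly by checking cogeneration, closure, and exactness on short exact sequences, via the standard fact that short exact sequences in an ind-category are filtered colimits of such after refinement. The paper instead adapts the Kashiwara--Schapira auxiliary-subcategory trick to two variables: it defines $\Tilde{\mathcal{A}}'\subset\Ind\mathcal{A}$ as the objects $A$ with $\Ind R^{n}F(A,B)=0$ for all $B\in\Ind\mathcal{B}'$ and $n\ge1$, then $\Tilde{\Tilde{\mathcal{B}}}'$ similarly relative to $\Tilde{\mathcal{A}}'$, shows $(\Tilde{\mathcal{A}}',\Tilde{\Tilde{\mathcal{B}}}')$ is $\Ind F$-injective and contains $(\Ind\mathcal{A}',\Ind\mathcal{B}')$, and deduces the claim. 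This approach sidesteps the delicate issue you flag as your main obstacle --- arranging compatible index refinements and functorial-resolution bookkeeping across both variables --- by working with derived-functor vanishing conditions that are automatically stable under the operations in play. Your approach works too, but the steps you describe as ``routine in isolation'' (especially the existence of $\fdirlim I_\lambda^{\bullet}$-type resolutions over a cofinal index category) are where the labor concentrates; the paper's route avoids them. For the $\fprod$-case, the paper is again more economical: once $(\Pro\mathcal{A}',\Pro\mathcal{B}')$ is known to be $\Pro F$-injective, it simply notes that $\fprod\mathcal{A}'$ is cogenerating inside $\Pro\mathcal{A}'$ and concludes, rather than redoing the exactness check from scratch.
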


Recall that $(\mathcal{A}', \mathcal{B}')$ being $F$-injective means that:
for any bounded below complex $A$ in $\mathcal{A}$,
there is a quasi-isomorphism $A \to A'$ to a bounded below complex $A'$ in $\mathcal{A}'$;
for any bounded below complex $B$ in $\mathcal{B}$,
there is a quasi-isomorphism $B \to B'$ to a bounded below complex $B'$ in $\mathcal{B}'$;
and $F(A', B')$ is an exact complex
if $A'$ (resp.\ $B'$) is a bounded below complex in $\mathcal{A}'$ (resp.\ $\mathcal{B}'$)
such that either $A'$ or $B'$ is exact.
In this situation, according to \cite[\S 13.4]{KS06}, the derived functor
	\[
			R F
		\colon
			D^{+}(\mathcal{A}) \times D^{+}(\mathcal{B})
		\to
			D^{+}(\mathcal{C})
	\]
of the two-variable functor $F$ is defined by $RF(A, B) = F(A', B')$,
where $A \isomto A'$ is a quasi-isomorphism to a bounded below complex in $A'$
and $B \isomto B'$ is a quasi-isomorphism to a bounded below complex in $B'$.
We need to replace the both variables at the same time.
In the proposition, we chose a pro-ind-pro-indcategory as an example.
There is nothing special about this choice.
There is a corresponding statement for any finite sequence of P's and I's.

\begin{proof}
	\eqref{ass: deriving the pro and ind of a bifunctor}
	This is nothing but a two-variable version of
	\cite[Prop.\ 15.3.2, 15.3.7]{KS06}.
	We merely indicate what should be modified from the original single-variable version.
	Let $\Tilde{\mathcal{A}}'$ be the full subcategory of $\Ind \mathcal{A}$
	consisting of objects $A \in \Ind \mathcal{A}$ such that
	$\Ind R^{n} F(A, B) = 0$ for any $B \in \Ind \mathcal{B}'$ and $n \ge 1$.
	Let $\Tilde{\Tilde{\mathcal{B}}}'$ be the full subcategory of $\Ind \mathcal{B}$
	consisting of objects $B \in \Ind \mathcal{B}$ such that
	$\Ind R^{n} F(A, B) = 0$ for any $A \in \Tilde{\mathcal{A}}'$ and $n \ge 1$.
	Then exactly in the same manner as \cite[loc.cit.]{KS06},
	we can show that $\Tilde{\mathcal{A}}' \times \Tilde{\Tilde{\mathcal{B}}}'$
	contains $\Ind \mathcal{A}' \times \Ind \mathcal{B}'$ and that
	$(\Tilde{\mathcal{A}}', \Tilde{\Tilde{\mathcal{B}}}')$ is $\Ind F$-injective.
	We can deduce from this that
	$(\Ind \mathcal{A}', \Ind \mathcal{B}')$ is $\Ind F$-injective.
	A similar argument for the pro version implies that
	$(\Pro \mathcal{A}', \Pro \mathcal{B}')$ is $\Pro F$-injective.
	Since $\fprod \mathcal{A}'$ is cogenerating in $\Pro \mathcal{A}'$,
	we know that $(\fprod \mathcal{A}', \fprod \mathcal{B}')$ is $\Pro F$-injective.
	We omit the details.
	
	\eqref{ass: derived functor of ind-pro limits}
	The assumptions imply that the subcategory $\fprod \mathcal{C}' \subset \Pro \mathcal{C}$ is
	injective with respect to the functor $\invlim \colon \Pro \mathcal{C} \to \mathcal{C}$
	by \cite[Prop.\ 13.3.15]{KS06}.
	The subcategory $\Ind \mathcal{C}' \subset \Ind \mathcal{C}$ is cogenerating by \cite[Thm.\ 15.2.5]{KS06},
	hence injective with respect to the exact functor
	$\dirlim \colon \Ind \mathcal{C} \to \mathcal{C}$.
	These imply the rest of the statement by an iterated usage of \cite[Prop.\ 15.3.2, 15.3.7]{KS06}
	and the theorem on derived functors of composition
	\cite[Prop.\ 13.3.13]{KS06}.
	
	\eqref{ass: derived functor of pro-ind extensions and limits}
	This follows from the previous two assertions and \cite[Prop.\ 13.3.13]{KS06}.
\end{proof}

Hence, in the situation of the proposition, we have
	\begin{align*}
		&
				R \bigl(
					\invlim \;
					\Pro \dirlim \;
					\Pro \Ind \invlim \;
					\Pro \Ind \Pro \dirlim \;
					\Pro \Ind \Pro \Ind F
				\bigr)(A, B)
		\\
		&	=
				R \invlim \;
				\Pro \dirlim \;
				R \Pro \Ind \invlim \;
				\Pro \Ind \Pro \dirlim \;
				R \Pro \Ind \Pro \Ind F(A, B)
			\in
				D^{+}(\mathcal{C})
	\end{align*}
for $A \in D^{+}(\Pro \Ind \Pro \Ind \mathcal{A})$, $B \in D^{+}(\Pro \Ind \Pro \Ind \mathcal{B})$.
Again, an intuitive and convenient but less rigorous way to denote the painful right-hand side
(when $A \in \Pro \Ind \Pro \Ind \mathcal{A}$, $B \in \Pro \Ind \Pro \Ind \mathcal{B}$) is
	\[
		R \invlim_{\lambda_{1}, \mu_{1}}
		\dirlim_{\lambda_{2}, \mu_{2}}
		R \invlim_{\lambda_{3}, \mu_{3}}
		\dirlim_{\lambda_{4}, \mu_{4}}
		RF(A_{\lambda_{1} \lambda_{2} \lambda_{3} \lambda_{4}},
		B_{\mu_{1} \mu_{2} \mu_{3} \mu_{4}}),
	\]
where
	\begin{gather*}
				A
			=
				\finvlim[\lambda_{1}]
				\fdirlim[\lambda_{2}]
				\finvlim[\lambda_{3}]
				\fdirlim[\lambda_{4}]
				A_{\lambda_{1} \lambda_{2} \lambda_{3} \lambda_{4}}
			\quad \text{with} \quad
				A_{\lambda_{1} \lambda_{2} \lambda_{3} \lambda_{4}} \in \mathcal{A},
		\\
				B
			=
				\finvlim[\mu_{1}]
				\fdirlim[\mu_{2}]
				\finvlim[\mu_{3}]
				\fdirlim[\mu_{4}]
				B_{\mu_{1} \mu_{2} \mu_{3} \mu_{4}}.
			\quad \text{with} \quad
				B_{\mu_{1} \mu_{2} \mu_{3} \mu_{4}} \in \mathcal{B}.
	\end{gather*}
What is rigorously true about this notation is that
there are two spectral sequences.
One (for fixed $\lambda_{1}, \lambda_{2}, \mu_{1}, \mu_{2}$) has $E_{2}^{i j}$-terms given by
	\[
			R^{i} \invlim_{\lambda_{3}, \mu_{3}}
			\dirlim_{\lambda_{4}, \mu_{4}}
			R^{j} F(
				A_{\lambda_{1} \lambda_{2} \lambda_{3} \lambda_{4}},
				B_{\mu_{1} \mu_{2} \mu_{3} \mu_{4}}
			),
	\]
converging to
$R^{i + j} \Bar{F}(A_{\lambda_{1} \lambda_{2}}, B_{\mu_{1} \mu_{2}})$, where
	\begin{gather*}
				A_{\lambda_{1} \lambda_{2}}
			=
				\finvlim[\lambda_{3}]
				\fdirlim[\lambda_{4}]
					A_{\lambda_{1} \lambda_{2} \lambda_{3} \lambda_{4}},
		\quad
				B_{\mu_{1} \mu_{2}}
			=
				\finvlim[\mu_{3}]
				\fdirlim[\mu_{4}]
					B_{\mu_{1} \mu_{2} \mu_{3} \mu_{4}},
		\\
				\Bar{F}
			=
				\invlim \;
				P \dirlim \;
				\Pro \Ind F,
			\quad \text{i.e.,} \quad
				\bar{F}(A_{\lambda_{1} \lambda_{2}}, B_{\mu_{1} \mu_{2}})
			=
				\invlim_{\lambda_{3}, \mu_{3}}
				\dirlim_{\lambda_{4}, \mu_{4}}
				F(
					A_{\lambda_{1} \lambda_{2} \lambda_{3} \lambda_{4}},
					B_{\mu_{1} \mu_{2} \mu_{3} \mu_{4}}
				).
	\end{gather*}
Varying $\lambda_{1}, \lambda_{2}, \mu_{1}, \mu_{2}$,
this spectral sequence takes values in $\Pro \Ind \mathcal{C}$.
The other has $E_{2}^{i j}$-terms given by
	\[
			R^{i} \invlim_{\lambda_{1}, \mu_{1}}
			\dirlim_{\lambda_{2}, \mu_{2}}
			R^{j} \bar{F}(A_{\lambda_{1} \lambda_{2}},
			B_{\mu_{1} \mu_{2}}),
	\]
converging to
$R^{i + j} \Bar{\Bar{F}}(A, B)$, where
	\begin{gather*}
				\Bar{\Bar{F}}
			=
				\invlim \;
				\Pro \dirlim \;
				\Pro \Ind \invlim \;
				\Pro \Ind \Pro \dirlim \;
				\Pro \Ind \Pro \Ind F,
			\quad \text{i.e.,}
		\\
				\Bar{\Bar{F}}(A, B)
			=
				\invlim_{\lambda_{1}, \mu_{1}}
				\dirlim_{\lambda_{2}, \mu_{2}}
				\invlim_{\lambda_{3}, \mu_{3}}
				\dirlim_{\lambda_{4}, \mu_{4}}
				F(
					A_{\lambda_{1} \lambda_{2} \lambda_{3} \lambda_{4}},
					B_{\mu_{1} \mu_{2} \mu_{3} \mu_{4}}
				).
	\end{gather*}

A special case we use below is where $A \in \Pro \Ind \mathcal{A}$ and $B \in \Ind \Pro \mathcal{B}$.
Here we embed $\Pro \Ind \mathcal{A}$ into $\Pro \Ind \Pro \Ind \mathcal{A}$
by adding $I$ and $P$ in the middle
and $\Ind \Pro \mathcal{B}$ into $\Pro \Ind \Pro \Ind \mathcal{B}$
by adding $\Pro$ from the left and $\Ind$ from the right.
These embeddings are exact functors.
The first one takes the subcategory $\fprod \Ind \mathcal{A}'$ into $\fprod \Ind \fprod \Ind \mathcal{A}'$.
The second takes the subcategory $\Ind \fprod \mathcal{B}'$ into $\fprod \Ind \fprod \Ind \mathcal{B}'$.
Hence the derived functor of the restriction of $\Bar{\Bar{F}}$ to
$\Pro \Ind \mathcal{A} \times \Ind \Pro \mathcal{A}$
is the restriction of $R \Bar{\Bar{F}}$ to $D^{+}(\Pro \Ind \mathcal{A}) \times D^{+}(\Ind \Pro \mathcal{A})$,
and we have
	\[
			R \Bar{\Bar{F}}(A, B)
		=
			R \invlim_{\lambda}
			\dirlim_{\mu}
			R \invlim_{\mu'}
			\dirlim_{\lambda'}
			RF(A_{\lambda \lambda'}, B_{\mu \mu'}),
	\]
where $A = \finvlim[\lambda] \fdirlim[\lambda'] A_{\lambda \lambda'} \in \Pro \Ind \mathcal{A}$,
$B = \fdirlim[\mu] \finvlim[\mu'] B_{\mu \mu'} \in \Ind \Pro \mathcal{B}$.
Similar observations apply to adding more or less $\Pro$'s and/or $\Ind$'s in different places.

\begin{Prop} \label{prop: fully faithful embeddings of ind-procateogries}
	Let $\mathcal{A}$ be a small abelian category.
	Then we have natural fully faithful embeddings
		\[
			\begin{CD}
					D^{b}(\mathcal{A})
				@>> \subset >
					D^{b}(\Pro \mathcal{A})
				\\
				@VV \cap V
				@V \cap VV
				\\
					D^{b}(\Ind \mathcal{A})
				@> \subset >>
					D^{b}(\Ind \Pro \mathcal{A})
			\end{CD}
		\]
	 of triangulated categories.
	 We have
	 	\[
	 			R \Hom_{\Pro \mathcal{A}}(A, B)
	 		=
	 			R \invlim_{\mu} \dirlim_{\lambda}
	 			R \Hom_{\mathcal{A}}(A_{\lambda}, B_{\mu})
	 	\]
	 for $A = \finvlim[\lambda] A_{\lambda} \in \Pro \mathcal{A}$ and
	 $B = \finvlim[\mu] B_{\mu} \in \Pro \mathcal{A}$,
	 	\[
	 			R \Hom_{\Ind \mathcal{A}}(A, B)
	 		=
	 			R \invlim_{\lambda} \dirlim_{\mu}
	 			R \Hom_{\mathcal{A}}(A_{\lambda}, B_{\mu})
	 	\]
	 for $A = \fdirlim[\lambda] A_{\lambda} \in \Ind \mathcal{A}$ and
	 $B = \fdirlim[\mu] B_{\mu} \in \Ind \mathcal{A}$, and
	 	\[
	 			R \Hom_{\Ind \Pro \mathcal{A}}(A, B)
	 		=
	 			R \invlim_{\lambda} \dirlim_{\mu} R \invlim_{\mu'} \dirlim_{\lambda'}
	 			R \Hom_{\mathcal{A}}(A_{\lambda \lambda'}, B_{\mu \mu'})
	 	\]
	 for $A = \fdirlim[\lambda] \finvlim[\lambda'] A_{\lambda \lambda'} \in \Ind \Pro \mathcal{A}$ and
	 $B = \fdirlim[\mu] \finvlim[\mu'] B_{\mu \mu'} \in \Ind \Pro \mathcal{A}$.
\end{Prop}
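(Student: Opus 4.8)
The plan is to reduce everything to the machinery already set up in Proposition~\ref{prop: derivation in ind-procategories}, specialized to the bifunctor $F = \Hom_{\mathcal{A}} \colon \mathcal{A}^{\op} \times \mathcal{A} \to \Ab$. First I would record that $\mathcal{A}$ being small forces $\Pro\mathcal{A}$ to be co-Grothendieck (it has enough projectives and exact filtered inverse limits with a cogenerating set given by $\mathcal{A}$), so the relevant $R\Hom$ functors exist on the bounded derived categories. For the $F$-injectivity hypothesis, I would take $\mathcal{A}' = \mathcal{A}^{\op}$ (every object) on the first variable and $\mathcal{B}'$ on the second to be a cogenerating class of objects that are acyclic for $\Hom_{\mathcal{A}}$ against all of $\mathcal{A}$ — e.g.\ injectives if $\mathcal{A}$ has enough of them, or more safely the injectives of $\Ind\mathcal{A}$ restricted appropriately; the key point is only that the pair is $F$-injective in the sense of \cite[Def.\ 13.4.2]{KS06}, which is exactly what lets Proposition~\ref{prop: derivation in ind-procategories}\eqref{ass: deriving the pro and ind of a bifunctor} apply. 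For the limit-side hypotheses of assertion~\eqref{ass: derived functor of ind-pro limits}, I would take $\mathcal{C} = \Ab$ and $\mathcal{C}'$ the class of flasque-like/injective abelian groups closed under products and filtered colimits; since $\Ab$ is AB4 and AB4* these conditions are immediate.

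Next I would assemble the fully faithful embeddings. The square of embeddings $D^{b}(\mathcal{A}) \hookrightarrow D^{b}(\Pro\mathcal{A})$, $D^{b}(\mathcal{A}) \hookrightarrow D^{b}(\Ind\mathcal{A})$, and $D^{b}(\Ind\mathcal{A}) \hookrightarrow D^{b}(\Ind\Pro\mathcal{A})$ is obtained by iterating \cite[Thm.\ 15.3.1]{KS06}, which gives full faithfulness of $D^{b}(\mathcal{D}) \hookrightarrow D^{b}(\Ind\mathcal{D})$ for any abelian $\mathcal{D}$; applying it once with $\mathcal{D} = \mathcal{A}$ (for the left vertical and using the $\op$/pro duality for the top horizontal) and once with $\mathcal{D} = \Pro\mathcal{A}$ (for the bottom horizontal) gives the square, and commutativity is formal since all arrows are the canonical ones. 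Then the three displayed formulas for $R\Hom$ are exactly the instances of the ``painful right-hand side'' discussion following Proposition~\ref{prop: derivation in ind-procategories}: the $\Pro\mathcal{A}$ case comes from viewing $A, B$ as living in the one-P layer (the formula $R\Hom_{\Pro\mathcal{A}}(A,B) = R\invlim_{\mu}\dirlim_{\lambda} R\Hom_{\mathcal{A}}(A_{\lambda},B_{\mu})$ being the derived enhancement of \cite[Cor.\ 15.3.9]{KS06}, noting the $\op$ on the first variable turns the pro in $A$ into the $\dirlim_{\lambda}$); the $\Ind\mathcal{A}$ case is the mirror statement with I's; and the $\Ind\Pro\mathcal{A}$ case is the special case highlighted right after the proof of Proposition~\ref{prop: derivation in ind-procategories}, with $A = \fdirlim[\lambda]\finvlim[\lambda'] A_{\lambda\lambda'}$ sitting in $\Ind\Pro\mathcal{A} = \Pro\Ind\Pro\Ind(\,\cdot\,)^{\op}$ after taking opposites, so that $\Bar{\Bar{\Hom}}$ unwinds to $R\invlim_{\lambda}\dirlim_{\mu}R\invlim_{\mu'}\dirlim_{\lambda'} R\Hom_{\mathcal{A}}(A_{\lambda\lambda'},B_{\mu\mu'})$.

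The main obstacle I expect is bookkeeping the variances correctly: $\Hom_{\Ind\Pro\mathcal{A}}$ is a functor on $\Pro\Ind(\mathcal{A}^{\op}) \times \Ind\Pro\mathcal{A}$, so the contravariant first slot has to be rewritten as a covariant functor on an ind-pro category of $\mathcal{A}^{\op}$ before the machine of Proposition~\ref{prop: derivation in ind-procategories} can be turned on, and one must check that the $F$-injective class on the $\mathcal{A}^{\op}$ side (which really means a projective-type generating class in $\mathcal{A}$) is compatible with the one chosen on the $\mathcal{A}$ side — i.e.\ that $F = \Hom_{\mathcal{A}}$ is acyclic on the product of the two chosen classes. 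A secondary nuisance is justifying that the composite derived functor computed in Proposition~\ref{prop: derivation in ind-procategories}\eqref{ass: derived functor of pro-ind extensions and limits} genuinely coincides with $R\Hom_{\Ind\Pro\mathcal{A}}$ defined intrinsically via projectives in the co-Grothendieck category $\Pro(\cdots)$; this follows from the universality of derived functors once one checks agreement on objects of $\mathcal{A}$, which reduces to the already-cited $\Ext_{\Pro\mathcal{A}}^{n}(A,B) = \dirlim_{\lambda}\Ext_{\mathcal{A}}^{n}(A_{\lambda},B)$ of \cite[Cor.\ 15.3.9]{KS06} and its iterates. Everything else is routine diagram-chasing that I would relegate to ``we omit the details,'' matching the style of the preceding proof.
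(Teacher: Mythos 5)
There is a genuine gap at the very first step of your plan. You propose to run Proposition \ref{prop: derivation in ind-procategories} with the base bifunctor $F = \Hom_{\mathcal{A}} \colon \mathcal{A}^{\op} \times \mathcal{A} \to \Ab$, which requires exhibiting an $F$-injective pair $(\mathcal{A}', \mathcal{B}')$ of full subcategories of $\mathcal{A}^{\op}$ and $\mathcal{A}$. For a general small abelian category $\mathcal{A}$ there is no such pair: $\mathcal{A}$ need not have enough injectives or projectives, so there is no resolving class \emph{inside} $\mathcal{A}$ on either slot, and the injectives of $\Ind\mathcal{A}$ that you fall back on are not objects of $\mathcal{A}$ and hence cannot serve as the required subcategory $\mathcal{B}' \subset \mathcal{A}$. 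This is precisely why the paper never claims that $R\Hom_{\mathcal{A}}$ is the derived functor of $\Hom_{\mathcal{A}}$; it is \emph{defined} as the restriction of $R\Hom_{\Pro\mathcal{A}}$, computed with projectives of the co-Grothendieck category $\Pro\mathcal{A}$. The paper's proof accordingly takes as its base functor $F = \Hom_{\Pro\mathcal{A}}$ on $\mathcal{B} \times \mathcal{A}$ with $\mathcal{B} = (\Pro\mathcal{A})^{\op}$, puts the entire resolving burden on the first slot via $\mathcal{B}' = $ the injectives of $\mathcal{B}$ (i.e.\ projectives of $\Pro\mathcal{A}$), and takes the full category $\mathcal{A}$ on the second slot — this pair is $F$-injective because $\Hom_{\Pro\mathcal{A}}(P, \cdot)$ is exact for projective $P$. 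Only one further $\Pro$ is then added to the first variable and $\Ind\Pro$ to the second; the fourth limit $\dirlim_{\lambda'}$ in the displayed formula does not come from the limit machinery at all but from the already-established identity $R\Hom_{\Pro\mathcal{A}}(\finvlim[\lambda'] A_{\lambda\lambda'}, B) = \dirlim_{\lambda'} R\Hom_{\mathcal{A}}(A_{\lambda\lambda'}, B)$ of \cite[Cor.\ 15.3.9]{KS06}. Your plan, as written, cannot get off the ground without this relocation of the base functor.

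A second, smaller gap: the full faithfulness of the bottom horizontal arrow $D^{b}(\Ind\mathcal{A}) \hookrightarrow D^{b}(\Ind\Pro\mathcal{A})$ is not an instance of \cite[Thm.\ 15.3.1]{KS06}. Applying that theorem with $\mathcal{D} = \Pro\mathcal{A}$ yields the \emph{right vertical} arrow $D^{b}(\Pro\mathcal{A}) \hookrightarrow D^{b}(\Ind\Pro\mathcal{A})$, whereas the bottom arrow is induced by the exact inclusion $\Ind\mathcal{A} \hookrightarrow \Ind\Pro\mathcal{A}$ and is not of the form $D^{b}(\mathcal{D}) \hookrightarrow D^{b}(\Ind\mathcal{D})$. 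The paper deduces its full faithfulness only a posteriori, by comparing the computed formula for $R\Hom_{\Ind\Pro\mathcal{A}}$ restricted to $D^{b}(\Ind\mathcal{A})$ with the formula for $R\Hom_{\Ind\mathcal{A}}$ (itself obtained by dualizing the $\Pro\mathcal{A}$ formula). You would need to supply this comparison rather than cite the embedding theorem directly.
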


\begin{proof}
	The fully faithful embedding $D^{b}(\mathcal{A}) \into D^{b}(\Pro \mathcal{A})$
	has already been mentioned (\cite[Thm.\ 15.3.1 (i)]{KS06}).
	The same implies the fully faithfulness of $D^{b}(\mathcal{A}) \into D^{b}(\Ind \mathcal{A})$
	and $D^{b}(\Pro \mathcal{A}) \into D^{b}(\Ind \Pro \mathcal{A})$.
	The exactness of the embedding $\Ind \mathcal{A} \into \Ind \Pro \mathcal{A}$ yields
	a morphism of functors from $R \Hom_{\Ind \mathcal{A}}$ to
	$R \Hom_{\Ind \Pro \mathcal{A}}$ restricted to $D^{b}(\Ind \mathcal{A})$.
	This will turn out to be an isomorphism and hence
	the fully faithfulness of $D^{b}(\Ind \mathcal{A}) \to D^{b}(\Ind \Pro \mathcal{A})$ will follow
	once we compute these $R \Hom$ functors
	and verify the stated formulas.
	
	We compute $R \Hom_{\Ind \Pro \mathcal{A}}$.
	Let $\mathcal{B} = (\Pro \mathcal{A})^{\op}$, $\mathcal{B}'$ its full subcategory of injectives
	($=$ the opposite of projectives of $\Pro \mathcal{A}$) and
	$\mathcal{C} = \mathcal{C}' = \Ab$.
	We will apply the observations we made before the proposition for
		\[
				F
			:=
				\Hom_{\Pro \mathcal{A}}
			\colon
				(\Pro \mathcal{A})^{\op} \times \mathcal{A}
			=:
				\mathcal{B} \times \mathcal{A}
			\to
				\mathcal{C}
			=
				\Ab
		\]
	and
		\[
				\invlim \;
				\Pro \dirlim \;
				\Pro \Ind \invlim \;
				\Pro \Ind \Pro F
			=
				\Hom_{\Ind \Pro \mathcal{A}}
			\colon
				\Pro \mathcal{B} \times \Ind \Pro \mathcal{A}
			\to
				\Pro \Ind \Pro \mathcal{C}
			\to
				\mathcal{C}
		\]
	(the $\Pro$ in $\Pro \mathcal{B}$ corresponds to the left $\Pro$ in $\Pro \Ind \Pro \mathcal{C}$
	and the $\Ind \Pro$ in $\Ind \Pro \mathcal{A}$ the right $\Ind \Pro$ in $\Pro \Ind \Pro \mathcal{C}$).
	The pair $(\mathcal{B}', \mathcal{A})$ is $F$-injective.
	Obviously $\mathcal{C} = \mathcal{C}' = \Ab$ satisfies the conditions of
	\eqref{prop: derivation in ind-procategories} \eqref{ass: derived functor of ind-pro limits}.
	Hence we have
		\begin{align*}
					R \Hom_{\Ind \Pro \mathcal{A}}(A, B)
		 	&	=
		 			R \invlim_{\lambda} \dirlim_{\mu} R \invlim_{\mu'}
		 			R \Hom_{\Pro \mathcal{A}}(\finvlim[\lambda'] A_{\lambda \lambda'}, B_{\mu \mu'})
		 	\\
		 	&	=
		 			R \invlim_{\lambda} \dirlim_{\mu} R \invlim_{\mu'} \dirlim_{\lambda'}
		 			R \Hom_{\mathcal{A}}(A_{\lambda \lambda'}, B_{\mu \mu'})
		\end{align*}
	for $A = \fdirlim[\lambda] \finvlim[\lambda'] A_{\lambda \lambda'} \in \Ind \Pro \mathcal{A}$ and
	$B = \fdirlim[\mu] \finvlim[\mu'] B_{\mu \mu'} \in \Ind \Pro \mathcal{A}$.
	
	Since $D^{b}(\Pro \mathcal{A}) \into D^{b}(\Ind \Pro \mathcal{A})$ is fully faithful,
	this also verifies, by restriction, the stated formula for $R \Hom_{\Pro \mathcal{A}}$.
	Dualizing, this in turn verifies the stated formula for $R \Hom_{\Ind \mathcal{A}}$.
	This completes the proof.
\end{proof}

Note that this proposition in particular implies that
the restriction of $R \Hom_{\Ind \mathcal{A}}$ to $D^{b}(\mathcal{A})$
agrees with $R \Hom_{\mathcal{A}}$,
which was originally defined as the restriction of $R \Hom_{\Pro \mathcal{A}}$.
The proposition is also true for a Grothendieck category $\mathcal{A}$.
We omit the proof as we do not need this case.

What we need to know about Grothendieck categories is
when direct and derived inverse limits commute with $R \Hom$ in the both variables.
The following shows what is true in general
and what should be checked in specific cases.

\begin{Prop} \label{prop: RHom of ind-pro-limits}
	Let $\mathcal{A}, \mathcal{C}$ be Grothendieck categories
	and $F \colon \mathcal{A}^{\op} \times \mathcal{A} \to \mathcal{C}$
	an additive bifunctor that is left exact in both variables.
	Let $\mathcal{C}'$ be a full additive subcategory of $\mathcal{C}$.
	Assume that the functor $F(\,\cdot\,, I) \colon \mathcal{A}^{\op} \to \mathcal{C}$
	for any injective $I \in \mathcal{A}$ is exact
	with image contained in $\mathcal{C}'$.
	Assume also that $F$ commutes with filtered inverse limits.
	Assume finally that $\mathcal{C}' \subset \mathcal{C}$ satisfies
	the conditions of \eqref{prop: derivation in ind-procategories}
	\eqref{ass: derived functor of ind-pro limits}.
	Then we have canonical morphisms and isomorphisms
		\begin{align*}
			&
					R \invlim_{\lambda} \dirlim_{\mu} R \invlim_{\mu'} \dirlim_{\lambda'}
					RF(A_{\lambda \lambda'}, B_{\mu \mu'})
			\\
			&	\to
					R \invlim_{\lambda} \dirlim_{\mu} R \invlim_{\mu'}
					RF(\invlim_{\lambda'} A_{\lambda \lambda'}, B_{\mu \mu'})
			\\
			&	=
					R \invlim_{\lambda} \dirlim_{\mu}
					RF(\invlim_{\lambda'} A_{\lambda \lambda'},
						R \invlim_{\mu'} B_{\mu \mu'})
			\\
			&	\to
					R \invlim_{\lambda}
					RF(\invlim_{\lambda'} A_{\lambda \lambda'},
						\dirlim_{\mu} R \invlim_{\mu'} B_{\mu \mu'})
			\\
			&	=
					RF(\dirlim_{\lambda} \invlim_{\lambda'} A_{\lambda \lambda'},
						\dirlim_{\mu} R \invlim_{\mu'} B_{\mu \mu'})
		\end{align*}
	in $D^{+}(\mathcal{C})$ for any
	$A = \fdirlim[\lambda] \finvlim[\lambda'] A_{\lambda \lambda'} \in \Ind \Pro \mathcal{A}$ and
	$B = \fdirlim[\mu] \finvlim[\mu'] B_{\mu \mu'} \in \Ind \Pro \mathcal{A}$.
\end{Prop}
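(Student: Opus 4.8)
The plan is to factor the displayed five-term chain into two canonical comparison morphisms interleaved with two applications of the principle that a composite of right-derived functors computes the right-derived functor of the composite, checking each time that the auxiliary resolving subcategories are adapted. All five objects make sense: each is assembled from $RF$ and the (derived) limits $R\invlim_{\lambda}$, $\dirlim_{\mu}$, $R\invlim_{\mu'}$, $\dirlim_{\lambda'}$ exactly as in Propositions \ref{prop: derivation in ind-procategories} and \ref{prop: fully faithful embeddings of ind-procateogries}, whose arguments apply verbatim to Grothendieck categories, so all that must be produced are the four transition arrows. The two non-identity morphisms are pure functoriality. For the first, the structure maps $\invlim_{\lambda'} A_{\lambda \lambda'} \to A_{\lambda \lambda'}$ induce, after deriving $F(\,\cdot\,, B_{\mu \mu'})$, a canonical morphism $\dirlim_{\lambda'} RF(A_{\lambda \lambda'}, B_{\mu \mu'}) \to RF(\invlim_{\lambda'} A_{\lambda \lambda'}, B_{\mu \mu'})$ in $D^{+}(\mathcal{C})$; applying $R\invlim_{\mu'}$, then $\dirlim_{\mu}$, then $R\invlim_{\lambda}$ gives the first arrow. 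For the third, writing $C_{\lambda} = \invlim_{\lambda'} A_{\lambda \lambda'}$ and $D_{\mu} = R\invlim_{\mu'} B_{\mu \mu'}$, the canonical morphism $\dirlim_{\mu} RF(C_{\lambda}, D_{\mu}) \to RF(C_{\lambda}, \dirlim_{\mu} D_{\mu})$ --- available because filtered direct limits are exact in $\mathcal{C}$, so the deriving process in the second variable commutes with $\dirlim_{\mu}$ --- assembled under $R\invlim_{\lambda}$ gives the third arrow. No invertibility is claimed for either; that is the content to be ``checked in specific cases''.

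For the first equality I would prove that $RF(C, -)$ commutes with $R\invlim_{\mu'}$ for any fixed $C = \invlim_{\lambda'} A_{\lambda \lambda'} \in \mathcal{A}$. Pick an injective resolution $\{B_{\mu \mu'}\}_{\mu'} \to \mathbf{J}^{\bullet}$ in the Grothendieck category $\mathrm{Fun}(\Lambda_{\mu'}^{\op}, \mathcal{A})$ with each $\mathbf{J}^{n}$ a retract of a product $\prod_{\nu} \nu_{\ast} J_{\nu}$ of costalks of injectives $J_{\nu}$ of $\mathcal{A}$. Evaluation at $\mu'$ has an exact left adjoint, hence preserves injectives and quasi-isomorphisms, so $RF(C, B_{\mu \mu'}) = F(C, \mathbf{J}^{\bullet}_{\mu'})$. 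Applying $F(C, -)$ degreewise and using that $F(C, -)$ commutes with products (a product being a filtered inverse limit of finite subproducts, so this follows from continuity of $F$) together with $F(C, J_{\nu}) \in \mathcal{C}'$ for injective $J_{\nu}$, one sees that $\{F(C, \mathbf{J}^{n}_{\mu'})\}_{\mu'}$ is a retract of $\prod_{\nu} \nu_{\ast} F(C, J_{\nu})$, which is $\invlim_{\mu'}$-acyclic by the conditions imposed on $\mathcal{C}' \subset \mathcal{C}$ (the very conditions used, through \cite[Prop.\ 13.3.15]{KS06} and \cite[Prop.\ 15.3.2, 15.3.7]{KS06}, in the proof of Assertion \eqref{ass: derived functor of ind-pro limits} of Proposition \ref{prop: derivation in ind-procategories}). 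Hence $R\invlim_{\mu'} RF(C, B_{\mu \mu'}) = \invlim_{\mu'} F(C, \mathbf{J}^{\bullet}_{\mu'}) = F(C, \invlim_{\mu'} \mathbf{J}^{\bullet})$, the last step by continuity of $F$ in the second variable; and since $\invlim_{\mu'} \mathbf{J}^{\bullet}$ represents $R\invlim_{\mu'} B_{\mu \mu'}$, it remains only to identify $F(C, \invlim_{\mu'} \mathbf{J}^{\bullet})$ with $RF(C, R\invlim_{\mu'} B_{\mu \mu'})$, i.e.\ to check that $\invlim_{\mu'} \mathbf{J}^{n}$ --- a retract of a product of injectives of $\mathcal{A}$ --- is $F(C, -)$-acyclic. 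Applying $\dirlim_{\mu}$ and $R\invlim_{\lambda}$ then gives the equality in the statement.

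For the last equality, fix an injective resolution $E \to J^{\bullet}$ in $\mathcal{A}$ (recall $E = \dirlim_{\mu} R\invlim_{\mu'} B_{\mu \mu'} \in D^{+}(\mathcal{A})$), so that $RF(C_{\lambda}, E) = F(C_{\lambda}, J^{\bullet})$ and, by continuity of $F$ in the first variable, $RF(\dirlim_{\lambda} C_{\lambda}, E) = F(\dirlim_{\lambda} C_{\lambda}, J^{\bullet}) = \invlim_{\lambda} F(C_{\lambda}, J^{\bullet})$. It then remains to see that the pro-system $\{F(C_{\lambda}, J^{n})\}_{\lambda}$ is $R\invlim_{\lambda}$-acyclic. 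The standard cochain complex computing $R\invlim_{\lambda}$ of this pro-system is the image under the exact contravariant functor $F(\,\cdot\,, J^{n})$ of the bar complex $\cdots \to \bigoplus_{\lambda_{0} \to \lambda_{1}} C_{\lambda_{0}} \to \bigoplus_{\lambda_{0}} C_{\lambda_{0}} \to \dirlim_{\lambda} C_{\lambda} \to 0$; the latter is exact because filtered colimits are exact in $\mathcal{A}$, so its image is exact, whence the cochain complex has cohomology concentrated in degree $0$, equal to $F(\dirlim_{\lambda} C_{\lambda}, J^{n}) = \invlim_{\lambda} F(C_{\lambda}, J^{n})$. That this cochain complex genuinely computes $R\invlim_{\lambda}$ for a pro-system with terms $F(C_{\lambda}, J^{n}) \in \mathcal{C}'$ is ensured, once more, by the conditions on $\mathcal{C}'$ (closure under products and exactness of products of short exact sequences with terms in $\mathcal{C}'$). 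This yields $R\invlim_{\lambda} RF(C_{\lambda}, E) = RF(\dirlim_{\lambda} C_{\lambda}, E)$ and closes the chain.

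I expect the main obstacle to be exactly the acyclicity bookkeeping in the two equalities: making the injectives of the auxiliary functor and pro-categories explicit enough (products of costalks; cochain complexes) to transport them through $F$ in both variables, and then matching the objects acyclic for $F$ with those acyclic for $\invlim$. Every hypothesis is consumed here --- continuity of $F$, to slide it past products; exactness of $F(\,\cdot\,, J)$ with image in $\mathcal{C}'$ for injective $J$, to land in the adapted subcategory of $\mathcal{C}$; and the closure and exactness properties of $\mathcal{C}' \subset \mathcal{C}$, to certify $\invlim$-acyclicity. I would isolate, as the single most delicate auxiliary point, that a product of injectives of $\mathcal{A}$ is $F(C, -)$-acyclic for every object $C$: this is where one must be most careful about the choice of resolution, and it is automatic in the situations of interest, where $\mathcal{A}$ has exact products (cf.\ Assertion \eqref{ass: product is exact} of Proposition \ref{prop: misc on ind-rational pro-etale topology}).
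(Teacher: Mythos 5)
Your proposal is correct in substance but argues the two equalities by a genuinely different mechanism than the paper. The paper never touches explicit resolutions: for each equality it writes down a commutative square of \emph{underived} functors (e.g.\ $\Pro \Ind \invlim \compose \Pro \Ind \Pro F = \Pro \Ind F \compose (\id \times \Ind \invlim)$, which commutes precisely because $F$ is continuous), checks that the square restricts to the adapted subcategories $(\fdsum \mathcal{A})^{\op} \times \Ind \fprod \mathcal{A}'$, $\fprod \Ind \fprod \mathcal{C}'$, etc.\ supplied by Proposition \ref{prop: derivation in ind-procategories}, and concludes that the square derives. You instead compute both sides on explicit injective resolutions in functor categories and certify $\invlim$-acyclicity by hand via costalks and the (co)bar complex. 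The two routes consume the same hypotheses at the same places (continuity of $F$, to slide it past products and inverse limits; exactness of $F(\,\cdot\,, I)$ with image in $\mathcal{C}'$; the closure and exactness conditions on $\mathcal{C}'$), and your identification of the delicate points --- acyclicity of products of injectives for $F(C, \,\cdot\,)$, and $\invlim$-acyclicity of the systems $\{F(C_{\lambda}, J^{n})\}$ --- is accurate. What your route buys is concreteness; what it costs is two pieces of bookkeeping that the paper's formalism absorbs automatically: (i) the $R \invlim$'s in the statement are derived functors on pro-(ind-pro-)categories, computed via the subcategories $\fprod(\cdot)$ of formal products in the sense of \cite[Prop.\ 13.3.15]{KS06}, so your functor-category resolutions must still be compared with these (a pro-system each of whose terms is a product of objects of $\mathcal{C}'$ is not literally an object of $\fprod \mathcal{C}'$, and one must check that costalks remain acyclic for the pro-category $\invlim$); and (ii) every intermediate object must be produced functorially in the remaining indices $\lambda, \mu$ before the outer $R \invlim_{\lambda}$ and $\dirlim_{\mu}$ can be applied, so the sentence ``applying $\dirlim_{\mu}$ and $R \invlim_{\lambda}$ then gives the equality'' hides a choice of resolutions natural in $\mu$ and $\lambda$. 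Both points are repairable, but they are exactly what the iterated $\Pro$/$\Ind$ machinery of Proposition \ref{prop: derivation in ind-procategories} is designed to avoid.
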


Note that the $R \invlim_{\mu'}$ in the displayed equations are the derived ones
but the $\invlim_{\lambda'}$ are not,
so that all the variables lie in $D^{-}(\mathcal{A})^{\op} \times D^{+}(\mathcal{A})$ where $RF$ is defined.

\begin{proof}
	In the rigorous terms,
	the morphisms and isomorphisms to be constructed are
		\begin{align*}
			&
					R \invlim \;
					\Pro \dirlim \;
					R \Pro \Ind \invlim \;
					\Pro \Ind \Pro \dirlim \;
					R \Pro \Ind \Pro \Ind F(A, B)
			\\
			&	\to
					R \invlim \;
					\Pro \dirlim \;
					R \Pro \Ind \invlim \;
					R \Pro \Ind \Pro F(\Ind \invlim A, B)
			\\
			&	=
					R \invlim \;
					\Pro \dirlim \;
					R \Pro \Ind F(\Ind \invlim A, R \Ind \invlim B)
			\\
			&	\to
					R \invlim \;
					R \Pro F(\Ind \invlim A, \dirlim \, R \Ind \invlim B)
			\\
			&	=
					RF(\dirlim \, \Ind \invlim A, \dirlim \, R \Ind \invlim B)
		\end{align*}
	for $A, B \in \Ind \Pro \mathcal{A}$.
	It suffices to construct a morphism
		\[
				\Pro \Ind \Pro \dirlim \;
				R \Pro \Ind \Pro \Ind F(A, B)
			\to
				R \Pro \Ind \Pro F(\Ind \invlim A, B)
			\quad \text{in} \quad
				D^{+}(\Pro \Ind \Pro \mathcal{C})
		\]
	for $A \in \Ind \Pro \mathcal{A}$ and $B \in D^{+}(\Ind \Pro \mathcal{A})$,
	an isomorphism
		\[
				R \Pro \Ind \invlim \;
				R \Pro \Ind \Pro F(A, B)
			=
				R \Pro \Ind F(A, R \Ind \invlim B)
			\quad \text{in} \quad
				D^{+}(\Pro \Ind \mathcal{C})
		\]
	for $A \in D^{-}(\Ind \mathcal{A})$, $B \in D^{+}(\Ind \Pro \mathcal{A})$,
	a morphism
		\[
				\Pro \dirlim \;
				R \Pro \Ind F(A, B)
			\to
				R \Pro F(A, \dirlim B)
			\quad \text{in} \quad
				D^{+}(\Pro \mathcal{C})
		\]
	for $A \in D^{-}(\Ind \mathcal{A})$, $B \in D^{+}(\Ind \mathcal{A})$,
	and an isomorphism
		\[
				R \invlim \;
				R \Pro F(A, B)
			=
				RF(\dirlim A, B)
			\quad \text{in} \quad
				D^{+}(\mathcal{C})
		\]
	for $A \in D^{-}(\Ind \mathcal{A})$, $B \in D^{+}(\mathcal{A})$.
	
	We construct the first morphism.
	We fix $A \in \Ind \Pro \mathcal{A}$.
	Let $\mathcal{A}' \subset \mathcal{A}$ be the full subcategory of injectives.
	Then $(\mathcal{A}^{\op}, \mathcal{A}')$ is $F$-injective by assumption.
	Hence $((\Ind \Pro \mathcal{A})^{\op}, \Ind \Pro \mathcal{A}')$ is $\Pro \Ind \Pro \Ind F$-injective
	by \eqref{prop: derivation in ind-procategories}
	\eqref{ass: deriving the pro and ind of a bifunctor}.
	This implies that $R \Pro \Ind \Pro \Ind F(A, B)$
	(which is a priori calculated by resolving $A$ and $B$ at the same time)
	is the value at $B$ of the derived functor of
	the single-variable functor $\Pro \Ind \Pro \Ind F(A, \,\cdot\,)$ (\cite[Cor.\ 13.4.5]{KS06}).
	Similarly $R \Pro \Ind \Pro F(\Ind \invlim A, B)$ is the value at $B$ of the derived functor of
	$\Pro \Ind \Pro F(\Ind \invlim A, \,\cdot\,)$.
	Hence we need to construct a morphism
		\[
				\Pro \Ind \Pro \dirlim \;
				R \Pro \Ind \Pro \Ind F(A, \,\cdot\,)
			\to
				R \Pro \Ind \Pro F(\Ind \invlim A, \,\cdot\,)
		\]
	of functors $D^{+}(\Ind \Pro \mathcal{A}) \to D^{+}(\Pro \Ind \Pro \mathcal{C})$.
	Since $\Pro \Ind \Pro \dirlim \colon \Pro \Ind \Pro \Ind \mathcal{C} \to \Pro \Ind \Pro \mathcal{C}$ is exact,
	we have
		\[
				\Pro \Ind \Pro \dirlim \;
				R \Pro \Ind \Pro \Ind F(A, \,\cdot\,)
			=
				R \bigl(
					\Pro \Ind \Pro \dirlim \;
					\Pro \Ind \Pro \Ind F
				\bigr)(A, \,\cdot\,).
		\]
	Deriving the natural morphism
		\[
				\Pro \Ind \Pro \dirlim \;
				\Pro \Ind \Pro \Ind F(A, \,\cdot\,)
			\to
				\Pro \Ind \Pro F(\Ind \invlim A, \,\cdot\,)
		\]
	of functors $\Ind \Pro \mathcal{A} \to \Pro \Ind \Pro \mathcal{C}$,
	we get the required morphism of derived functors.
	
	We construct the second isomorphism.
	Let $\mathcal{A}' \subset \mathcal{A}$ be the full subcategory of injectives.
	Since $F$ commute with filtered inverse limits,
	we have a commutative diagram
		\[
			\begin{CD}
					(\Ind \mathcal{A})^{\op} \times \Ind \Pro \mathcal{A}
				@>> \Pro \Ind \Pro F >
					\Pro \Ind \Pro \mathcal{C}
				\\
				@VV \id \times \Ind \invlim V
				@V \Pro \Ind \invlim VV
				\\
					(\Ind \mathcal{A})^{\op} \times \Ind \mathcal{A}
				@> \Pro \Ind F >>
					\Pro \Ind \mathcal{C}.
			\end{CD}
		\]
	This diagram restricts to the diagram
		\[
			\begin{CD}
					(\fdsum \mathcal{A})^{\op} \times \Ind \fprod \mathcal{A}'
				@>>>
					\fprod \Ind \fprod \mathcal{C}'
				\\
				@VVV
				@VVV
				\\
					(\fdsum \mathcal{A})^{\op} \times \Ind \mathcal{A}'
				@>>>
					\fprod \Ind \mathcal{C}'.
			\end{CD}
		\]
	of full subcategories
	since $F(\mathcal{A}^{\op} \times \mathcal{A}') \subset \mathcal{C}'$
	and products of injectives are injective.
	The category $(\fdsum \mathcal{A})^{\op} \times I \fprod \mathcal{A}'$
	is injective with respect to both $\Pro \Ind \Pro F$ and $\id \times \Ind \invlim$
	by \eqref{prop: derivation in ind-procategories}.
	Therefore the above commutative diagram derives to a commutative diagram
	of the derived functors on the derived categories.
	This gives the required isomorphism.
	
	We construct the third morphism.
	Consider the following (non-commutative) diagram:
		\[
			\begin{CD}
					(\Ind \mathcal{A})^{\op} \times \Ind \mathcal{A}
				@>> \Pro \Ind F >
					\Pro \Ind \mathcal{C}
				\\
				@VV \id \times \dirlim V
				@V \Pro \dirlim VV
				\\
					(\Ind \mathcal{A})^{\op} \times \mathcal{A}
				@> \Pro F >>
					\Pro \mathcal{C}.
			\end{CD}
		\]
	This gives two functors from the left upper term to the right lower term.
	There is a natural morphism of functors from the one factoring through the right upper term
	to the one factoring through the left lower term.
	Since the right vertical arrow is an exact functor,
	this morphism of functors induces a morphism of functors in the derived categories,
	which is the required morphism.
	
	The fourth isomorphism can be constructed in the same way as the second.
\end{proof}

One might expect that
the $\invlim_{\lambda'}$ in the proposition may be replaced by $R \invlim_{\lambda'}$
to make the statement more natural.
This would need to extend the results of \cite[Chap.\ 15]{KS06} on derivation of indcategories
cited here several times to unbounded derived categories.
It is not clear whether such an extension is possible or not.

The following should be a well-known fact
on the commutation of the derived pushforward and direct/derived inverse limits.
We note it here
since we need to treat continuous maps of sites without exact pullbacks
and non-coherent sites,
which do not frequently appear in the literature.

\begin{Prop} \BetweenThmAndList \label{prop: limits and cohomology commute}
	\begin{enumerate}
		\item \label{ass: acyclics are injective for inverse limits}
			Let $S$ be a site defined by finite coverings.
			Let $\mathcal{C}' \subset \Ab(S)$ be the full subcategory of acyclic sheaves.
			Then $\mathcal{C}'$ satisfies the conditions of
			\eqref{prop: derivation in ind-procategories}
			\eqref{ass: derived functor of ind-pro limits}.
		\item \label{ass: limits and pushforward}
			Let $f \colon S_{2} \to S_{1}$ be a continuous map of sites.
			Assume that both $S_{1}$ and $S_{2}$ are defined by finite coverings.
			Assume also that $f_{\ast}$ sends acyclic sheaves to acyclic sheaves.
			Then $R f_{\ast}$ commutes with $\dirlim$ and $R \invlim$ on bounded below derived categories.
			That is, the diagrams
				\[
					\begin{CD}
							D^{+}(\Ind \Ab(S_{2}))
						@>> R \Ind f_{\ast} >
							D^{+}(\Ind \Ab(S_{1}))
						\\
						@VV \dirlim V
						@V \dirlim VV
						\\
							D^{+}(\Ab(S_{2}))
						@> R f_{\ast} >>
							D^{+}(\Ab(S_{1})),
					\end{CD}
					\qquad
					\begin{CD}
							D^{+}(\Pro \Ab(S_{2}))
						@>> R \Pro f_{\ast} >
							D^{+}(\Pro \Ab(S_{1}))
						\\
						@VV R \invlim V
						@V R \invlim VV
						\\
							D^{+}(\Ab(S_{2}))
						@> R f_{\ast} >>
							D^{+}(\Ab(S_{1}))
					\end{CD}
				\]
			commute.
	\end{enumerate}
\end{Prop}

\begin{proof}
	\eqref{ass: acyclics are injective for inverse limits}
	The category $\mathcal{C}'$ is closed by products and filtered direct limits
	by the comments in the proof of 
	\eqref{prop: misc on ind-rational pro-etale topology}
	\eqref{ass: quasi-compactness of the sites}.
	All the other conditions are immediate.
	
	\eqref{ass: limits and pushforward}
	Let $\mathcal{C}_{i}' \subset \Ab(S_{i})$ be the full subcategory of acyclic sheaves, $i = 1, 2$.
	The commutative diagrams
		\[
			\begin{CD}
					\Ind \Ab(S_{2})
				@>> \Ind f_{\ast} >
					\Ind \Ab(S_{1})
				\\
				@VV \dirlim V
				@V \dirlim VV
				\\
					\Ab(S_{2})
				@> f_{\ast} >>
					\Ab(S_{1}),
			\end{CD}
			\qquad
			\begin{CD}
					\Pro \Ab(S_{2})
				@>> \Pro f_{\ast} >
					\Pro \Ab(S_{1})
				\\
				@VV \invlim V
				@V \invlim VV
				\\
					\Ab(S_{2})
				@> f_{\ast} >>
					\Ab(S_{1})
			\end{CD}
		\]
	restricts to their subcategories
		\[
			\begin{CD}
					\Ind \mathcal{C}_{2}'
				@>>>
					\Ind \mathcal{C}_{1}'
				\\
				@VVV
				@VVV
				\\
					\mathcal{C}_{2}'
				@>>>
					\mathcal{C}_{1}',
			\end{CD}
			\qquad
			\begin{CD}
					\fprod \mathcal{C}_{2}'
				@>>>
					\fprod \mathcal{C}_{1}'
				\\
				@VVV
				@VVV
				\\
					\mathcal{C}_{2}'
				@>>>
					\mathcal{C}_{1}'.
			\end{CD}
		\]
	Hence the diagrams derive to commutative diagrams of the derived categories
	by \eqref{prop: derivation in ind-procategories}.
\end{proof}


\numberwithin{equation}{subsection}
\subsection{The derived categories of ind-proalgebraic groups and of sheaves}
\label{sec: The derived categories of ind-proalgebraic groups and of sheaves}

We apply the results of the previous subsection to
$\Alg / k$ and $\Ab(k^{\ind\rat}_{\pro\et})$.
We continue using the notation in the previous subsection
about filtered direct/derived inverse limits in derived categories of ind/pro-objects.
We use, however, the usual $\dirlim$ and $\invlim$ instead of $\fdirlim$ and $\finvlim$ for simplicity
when they are considered in $\Ind \Alg / k$, $\Pro \Alg / k$ or $\Ind \Pro \Alg / k$.
The $\dirlim$ and $\invlim$ for sheaves mean limits as sheaves, not as ind-sheaves or pro-sheaves.

We will use the four operations formalism (about pushforward, pullback, sheaf-Hom and tensor products)
for derived categories of sites throughout the paper.
A good reference (for arbitrary morphisms of sites and unbounded derived categories) is \cite[Chap.\ 18]{KS06}.

The following theorem is a summary of the site-theoretic results proved in \cite{Suz13}.
We clarify below which results of \cite{Suz13} imply each statement.

\begin{Thm} \label{thm: Ext of proalgebraic groups as sheaves}
	Let
		\[
				\Spec k^{\perf}_{\pro\fppf}
			\stackrel{f}{\to}
				\Spec k^{\perf}_{\et}
			\stackrel{g}{\to}
				\Spec k^{\ind\rat}_{\et}
		\]
	be the continuous maps defined by the identity
	and $S$ either one of these sites.
	Let $A = \invlim_{\lambda} A_{\lambda} \in \Pro \Alg / k$ with $A_{\lambda} \in \Alg / k$.
	\begin{enumerate}
		\item \label{ass: RHom for profppf, perf etale and indrat etale}
			If $B \in \Ab(k^{\perf}_{\pro\fppf})$, then
				\begin{align*}
							g_{\ast} R f_{\ast}
							R \sheafhom_{k^{\perf}_{\pro\fppf}}(A, B)
					&	=
							g_{\ast}
							R \sheafhom_{k^{\perf}_{\et}}(A, R f_{\ast} B)
					\\
					&	=
							R \sheafhom_{k^{\ind\rat}_{\et}}(A, g_{\ast} R f_{\ast} B).
				\end{align*}
			(Note that $g_{\ast}$ is an exact functor by
			\eqref{prop: misc on ind-rational pro-etale topology}
			\eqref{ass: indrat proet and proet cohomology}.)
		\item \label{ass: Ext commutes with direct limits}
			If $B = \dirlim_{\mu} B_{\mu}$ with $B_{\mu} \in \Ab(S)$, then
				\[
						R \sheafhom_{S}(A, B)
					=
						\dirlim_{\mu}
						R \sheafhom_{S}(A, B_{\mu})
				\]
			for $S = \Spec k^{\perf}_{\pro\fppf}$ or $\Spec k^{\perf}_{\et}$.
		\item \label{ass: RHom between PAlg and LAlg}
			If $B \in \Loc \Alg / k$, then $R f_{\ast} B = B$ (and $g_{\ast} B = B$ as notation),
			and we have
				\[
						R \sheafhom_{S}(A, B)
					=
						\dirlim_{\lambda}
						R \sheafhom_{S}(A_{\lambda}, B)
				\]
			(with $S = \Spec k^{\ind\rat}_{\et}$ allowed).
			Its $R \Gamma(k', \,\cdot\,)$ for
			$k' = \bigcup k'_{\nu} \in k^{\ind\rat}$ with $k'_{\nu} \in k^{\rat}$ is
				\[
					\dirlim_{\lambda, \nu}
					R \Hom_{S / k'_{\nu}}(A_{\lambda}, B),
				\]
			where $S / k'_{\nu}$ is the localization of $S$ at $k'_{\nu}$
			(see the notation section of \S \ref{sec: Organization}).
			If $k'$ is a field and $S = \Spec k^{\ind\rat}_{\et}$, then this is further isomorphic to
				\[
					R \Hom_{k'^{\ind\rat}_{\et}}(A_{\lambda}, B).
				\]
		\item \label{ass: Ext by uniquely divisible is zero}
			If $B \in \Et / k$ ($\subset \Loc \Alg / k$) is uniquely divisible, then
				\[
						R \sheafhom_{k^{\ind\rat}_{\et}}(A, B)
					=
						0.
				\]
		\item \label{ass: RHom of proalgebraic groups as sheaves}
			If $B \in \Alg / k$, then
				\[
						R \Hom_{S}(A, B)
					=
						R \Hom_{\Pro \Alg / k}(A, B).
				\]
	\end{enumerate}
\end{Thm}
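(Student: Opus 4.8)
The plan is to prove the five assertions as a cascade, reducing all of them to a short list of $\Ext$-computations for a single quasi-algebraic group.

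\emph{Removing the pro-structure of $A$.} First refine the index set so that the transition maps $A_{\mu}\onto A_{\lambda}$ are surjective with connected affine kernel. Then $R\invlim_{\lambda}A_{\lambda}=A$ on each of the three sites by Proposition~\ref{prop: misc on ind-rational pro-etale topology}\eqref{ass: Rlim vanishes on proalgebraic groups}, and $Rf_{\ast}$, $Rg_{\ast}$ commute with $R\invlim_{\lambda}$ by Proposition~\ref{prop: limits and cohomology commute}\eqref{ass: limits and pushforward}. The one non-formal point is that $R\sheafhom_{S}(-,B)$ should carry $\invlim_{\lambda}A_{\lambda}$ to $\dirlim_{\lambda}R\sheafhom_{S}(A_{\lambda},B)$: Proposition~\ref{prop: RHom of ind-pro-limits} supplies a canonical morphism, and I would prove it a quasi-isomorphism here by showing the towers $\{\Ext^{i}_{S}(A_{\lambda},B)\}_{\lambda}$ are Mittag--Leffler, which follows from surjectivity of the transition maps and the smallness of $\sheafhom_{S}$ and $\Ext^{1}_{S}$ out of a connected affine group into a locally algebraic target. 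Granting this, assertions \eqref{ass: RHom between PAlg and LAlg} and \eqref{ass: RHom of proalgebraic groups as sheaves} --- with both displayed $\dirlim_{\lambda}$, and, after writing $R\Gamma(k',-)=\dirlim_{\nu}R\Gamma(k'_{\nu},-)$ and using that the objects are quasi-compact (Proposition~\ref{prop: misc on ind-rational pro-etale topology}\eqref{ass: quasi-compactness of the sites}), the $\dirlim_{\lambda,\nu}$ --- together with assertions \eqref{ass: Ext commutes with direct limits} and \eqref{ass: Ext by uniquely divisible is zero}, all reduce to the case $A\in\Alg/k$; the equality $Rf_{\ast}B=B$ for $B\in\Loc\Alg/k$ is Proposition~\ref{prop: misc on ind-rational pro-etale topology}\eqref{ass: profppf, proet and et cohom of quasi-algebraic groups etc}.

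\emph{The algebraic case.} For $A\in\Alg/k$ I would d\'evissage along the connected--\'etale sequence and along the unipotent/semi-abelian decomposition of its identity component (and take perfections), reducing to $A$ one of $\Ga$, $\Gm$, an abelian variety, a finite flat group scheme. On the pro-fppf site $\Spec k^{\perf}_{\pro\fppf}$ the sheaves $\sheafext^{n}(A,B)$ are then the classical $\Ext$-sheaves --- Artin--Schreier--Witt for $\Ga$, Kummer theory for $\Gm$ and $\mu_{n}$, Weil--Barsotti and Breen for abelian varieties, Dieudonn\'e--Oort theory for finite flat group schemes --- which in particular lie in $\Loc\Alg/k$; applying $f$ (the identity on $\Loc\Alg/k$ by Proposition~\ref{prop: misc on ind-rational pro-etale topology}\eqref{ass: profppf, proet and et cohom of quasi-algebraic groups etc}) and then $g$ identifies them with the \'etale and ind-rational \'etale answers, which gives the first statement of \eqref{ass: RHom of proalgebraic groups as sheaves}. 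The identification $R\Hom_{S}=R\Hom_{\Pro\Alg/k}$ then rests on the facts that $\Alg/k\to\Ab(S)$ is exact (Proposition~\ref{prop: misc on ind-rational pro-etale topology}\eqref{ass: viewing proalg as sheaves is exact}) and fully faithful (Weil's theory of birational groups), that an extension of sheaves of quasi-algebraic groups is again representable in $\Alg/k$ --- this is where the pro-fppf site makes fppf-locally split extensions effective --- and a $\delta$-functor comparison terminating because $\Ext^{i}_{\Alg/k}$ vanishes for large $i$. Assertion \eqref{ass: Ext by uniquely divisible is zero} follows immediately: a uniquely divisible $B\in\Et/k$ is a sheaf of $\Q$-vector spaces, so $\sheafhom_{S}(A,B)=0$, while each $\sheafext^{i}_{S}(A,B)$ is at once a $\Q$-vector space and torsion by the building-block list, hence zero.

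\emph{Site comparisons and the remaining obstacle.} For \eqref{ass: Ext commutes with direct limits}, once $A$ is reduced to an algebraic (hence finitely presented) group, the commutation of $R\sheafhom_{S}(A,-)$ with filtered colimits on the coherent sites $\Spec k^{\perf}_{\pro\fppf}$ and $\Spec k^{\perf}_{\et}$ is immediate from Proposition~\ref{prop: misc on ind-rational pro-etale topology}\eqref{ass: quasi-compactness of the sites} and the local-to-global spectral sequence. For \eqref{ass: RHom for profppf, perf etale and indrat etale}, since $f$ and $g$ are induced by identity functors, $f^{\ast}A=g^{\ast}A=A$ on representable sheaves and both preserve finite products, so the underived identities $f_{\ast}\sheafhom_{k^{\perf}_{\pro\fppf}}(A,-)=\sheafhom_{k^{\perf}_{\et}}(A,f_{\ast}-)$ and $g_{\ast}\sheafhom_{k^{\perf}_{\et}}(A,-)=\sheafhom_{k^{\ind\rat}_{\et}}(A,g_{\ast}-)$ hold; I would derive the first by resolving $B$ by injectives $J^{\bullet}$ on $\Spec k^{\perf}_{\pro\fppf}$, noting each $\sheafhom(A,J^{n})$ is acyclic, $f_{\ast}J^{n}$ injective ($f$ being a morphism of sites), and $f_{\ast}\sheafhom(A,J^{\bullet})$ computes both sides, and the second --- $g$ being only a continuous map --- by checking the identity after $R\Gamma(k',-)$ for each $k'\in k^{\ind\rat}$, using $g_{\ast}g^{\ast}=\id$, exactness of $g_{\ast}$, and that $g_{\ast}$ preserves acyclicity (Proposition~\ref{prop: misc on ind-rational pro-etale topology}\eqref{ass: indrat proet and proet cohomology}), so that both sides become the same $\dirlim_{\lambda,\nu}R\Hom$ of \eqref{ass: RHom between PAlg and LAlg}. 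The hard part is not this bookkeeping but the geometric content of the algebraic case: the $\Ext$-computations for abelian varieties and finite flat group schemes over an arbitrary perfect base, and the effectivity of extensions underlying $R\Hom_{S}=R\Hom_{\Pro\Alg/k}$ --- precisely what forces the argument onto the pro-fppf site rather than the \'etale site --- together with making the Mittag--Leffler estimates of the first paragraph precise (equivalently, showing the derived inverse limit over $\lambda$ contributes nothing beyond its $\invlim^{0}$).
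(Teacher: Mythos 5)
First, a point of context: the paper does not reprove this theorem. It is recalled verbatim from \cite[Thm.\ 2.1.5, Cor.\ 3.3.3, Prop.\ 3.7.4, Prop.\ 3.8.1, Lem.\ 3.8.2]{Suz13}, and the only new content here is the list of remarks following the statement (transporting $R\Hom_{S/k'}$ to $R\sheafhom_S$, bootstrapping assertion (4) from $B=\Q$ to general uniquely divisible $B$, and extending from affine $A$ to arbitrary proalgebraic $A$ via Appendix \ref{sec: The pro-fppf topology for proalgebraic proschemes}). Your overall architecture --- strip the pro-structure from $A$, d\'evissage the algebraic case to $\Ga$, $\Gm$, abelian varieties and finite flat group schemes where Breen-type $\Ext$ computations apply, then handle the site comparisons by acyclicity and adjunction --- does match the architecture of the cited proof, and you correctly locate where the geometric content lives.

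The genuine gap is in the step you yourself call ``the one non-formal point.'' The comparison $\dirlim_{\lambda}R\sheafhom_{S}(A_{\lambda},B)\to R\sheafhom_{S}(\invlim_{\lambda}A_{\lambda},B)$ involves no derived inverse limit: the system $\{\Ext^{i}_{S}(A_{\lambda},B)\}_{\lambda}$ is a \emph{direct} system and filtered colimits are exact, so a Mittag--Leffler condition is not the relevant mechanism, and establishing one (or any stabilization of the system) would neither be necessary nor sufficient. What must actually be proved is that every sheaf homomorphism, and every extension class, out of the limit \emph{sheaf} $\invlim_{\lambda}A_{\lambda}$ into a locally algebraic target factors through a finite stage $A_{\lambda}$, and that a class dying on the limit dies at a finite stage. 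This is exactly the content of \cite[Prop.\ 3.8.1]{Suz13}, and it is proved there not by a limit estimate but by the generic-point/birational-group-law device that reappears in Lemma \ref{lem: criteiron when ind-proalgebraic is ind-algebraic}: $\Hom(B,C)=\Ker(C(\xi_{B})\to C(\xi_{B\times B}))$ together with $C(\xi_{B})=\dirlim_{\mu}C(\xi_{B_{\mu}})$ for $C$ locally of finite presentation. Your proposal never supplies this, and without it assertions (3) and (5) --- and everything downstream of them in your cascade --- remain unproven. Two smaller defects: the normalization ``transition maps surjective with connected affine kernel'' is unachievable in general (e.g.\ $\invlim_{n}\Z/p^{n}\Z$ has finite kernels; surjectivity alone is what you can and should arrange), and your verification of the second equality in assertion (1) by reduction to assertion (3) only covers the case where $Rf_{\ast}B$ is locally algebraic, whereas (1) is asserted for arbitrary $B\in\Ab(k^{\perf}_{\pro\fppf})$; the general case again requires the factorization property of $\Hom$'s out of $A$ over the ind-rational site, i.e.\ \cite[Prop.\ 3.7.4]{Suz13}, not a formal adjunction.
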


The corresponding results in \cite{Suz13} were proved for affine $A$.
(The symbol $\Alg / k$ in \cite{Suz13} denoted the category of affine quasi-algebraic groups.)
The same proof works for abelian varieties as remarked after \cite[Thm.\ 2.1.5]{Suz13}.
It even works for arbitrary proalgebraic groups;
see Appendix \ref{sec: The pro-fppf topology for proalgebraic proschemes}.

\begin{proof}
	\eqref{ass: RHom for profppf, perf etale and indrat etale}
	Note that $f^{\ast}$ is the pro-fppf sheafification functor and hence $f^{\ast} A = A$.
	($A$ is a sheaf even for the fpqc topology by the fpqc descent and the fact that
	inverses limits of sheaves are sheaves.)
	Therefore the sheafified adjunction between $f^{\ast}$ and $R f_{\ast}$ \cite[Thm.\ 18.6.9 (iii)]{KS06}
	gives an isomorphism
		\[
				R f_{\ast}
				R \sheafhom_{k^{\perf}_{\pro\fppf}}(A, B)
			=
				R \sheafhom_{k^{\perf}_{\et}}(A, R f_{\ast} B)
		\]
	in $D(k^{\perf}_{\et})$.
	This proves the first isomorphism.
	For the second,
	if $R f_{\ast} B \isomto J$ is an injective resolution over $\Spec k^{\perf}_{\et}$,
	then we have natural isomorphisms and the functoriality morphism of $g_{\ast}$:
		\begin{align*}
					g_{\ast}
					R \sheafhom_{k^{\perf}_{\et}}(A, R f_{\ast} B)
			&	=
					g_{\ast}
					\sheafhom_{k^{\perf}_{\et}}(A, J)
			\\
			&	\to
					\sheafhom_{k^{\ind\rat}_{\et}}(g_{\ast} A, g_{\ast} J)
			\\
			&	\to
					R \sheafhom_{k^{\ind\rat}_{\et}}(g_{\ast} A, g_{\ast} J)
			\\
			&	=
					R \sheafhom_{k^{\ind\rat}_{\et}}(A, g_{\ast} R f_{\ast} B)
		\end{align*}
	in $D(k^{\ind\rat}_{\et})$,
	where the middle $\sheafhom$'s are the total complexes of the sheaf-Hom double complexes
	and $g_{\ast} A = A$ as notation.
	Applying $R \Gamma(k'_{\et}, \;\cdot\;)$ for any $k' \in k^{\ind\rat}$
	and using the Leray spectral sequence for $g_{\ast}$,
	we have a morphism
		\[
				R \Hom_{k'^{\perf}_{\et}}(A, R f_{\ast} B)
			\to
				R \Hom_{k^{\ind\rat}_{\et} / k'}(A, g_{\ast} R f_{\ast} B)
		\]
	in $D(\Ab)$.
	This is an isomorphism by \cite[Prop.\ 3.7.4]{Suz13}.
	Therefore we obtain the second isomorphism.
	
	\eqref{ass: Ext commutes with direct limits}
	Let $S = \Spec k^{\perf}_{\et}$.
	We have a morphism
		\[
				\dirlim_{\mu}
				R \sheafhom_{k^{\perf}_{\et}}(A, B_{\mu})
			\to
				R \sheafhom_{k^{\perf}_{\et}}(A, B)
		\]
	by \eqref{prop: RHom of ind-pro-limits}.
	(The construction of this morphism given in the proof does not involve the subcategory $\mathcal{C}'$.)
	To see this is an isomorphism,
	we apply $R \Gamma(R_{\et}, \;\cdot\;)$ for any $R \in k^{\perf}$
	and take cohomology in degree $n \ge 0$:
		\[
				\dirlim_{\mu}
				\Ext_{R^{\perf}_{\et}}^{n}(A, B_{\mu})
			\to
				\Ext_{R^{\perf}_{\et}}^{n}(A, B).
		\]
	This is an isomorphism by \cite[Lem.\ 3.8.2 (2)]{Suz13}.
	Hence the result follows.
	
	The same proof (including the proof of \cite[loc.\ cit.]{Suz13}) works
	for $\Spec k^{\perf}_{\pro\fppf}$ in the same manner.
	
	\eqref{ass: RHom between PAlg and LAlg}
	We have a morphism
		\[
				\dirlim_{\lambda}
				R \sheafhom_{S}(A_{\lambda}, B)
			\to
				R \sheafhom_{S}(A, B)
		\]
	by \eqref{prop: RHom of ind-pro-limits}
	(with the same remark about $\mathcal{C}'$ as above).
	Again, by taking cohomology at each object of $S$ and cohomology at each degree,
	we are reduced to showing that the following four induced morphisms are isomorphisms:
		\begin{gather*}
					\dirlim_{\lambda, \nu}
					\Ext_{(R_{\nu})^{\perf}_{\et}}^{n}(A_{\lambda}, B)
				\to
					\Ext_{R^{\perf}_{\et}}^{n}(A, B),
			\\
					\dirlim_{\lambda, \nu}
					\Ext_{(R_{\nu})^{\perf}_{\pro\fppf}}^{n}(A_{\lambda}, B)
				\to
					\Ext_{R^{\perf}_{\pro\fppf}}^{n}(A, B)
		\end{gather*}
	for a filtered direct system $\{R_{\nu}\}$ in $k^{\perf}$ with limit $R$,
		\[
				\dirlim_{\lambda, \nu}
				\Ext_{(k'_{\nu})^{\ind\rat}_{\et}}^{n}(A_{\lambda}, B)
			\to
				\Ext_{k^{\ind\rat}_{\et} / k'}^{n}(A, B)
		\]
	for a filtered direct system $\{k'_{\nu}\}$ in $k^{\rat}$ with limit $k'$, and
		\[
				\Ext_{k^{\ind\rat}_{\et} / k'}^{n}(A, B)
			\to
				\Ext_{k'^{\ind\rat}_{\et}}^{n}(A, B)
		\]
	for a field $k' \in k^{\ind\rat}$.
	The first isomorphism is \cite[Lem.\ 3.8.2 (1)]{Suz13}.
	This lemma also works for the pro-fppf topology,
	so we get the second isomorphism.
	Since $R f_{\ast} B = B$ by
	\eqref{prop: misc on ind-rational pro-etale topology}
	\eqref{ass: profppf, proet and et cohom of quasi-algebraic groups etc},
	\cite[Prop.\ 3.7.4]{Suz13} implies that
		\[
				\Ext_{k'^{\perf}_{\pro\fppf}}^{n}(A, B)
			=
				\Ext_{k'^{\perf}_{\et}}^{n}(A, B)
			=
				\Ext_{k^{\ind\rat}_{\et} / k'}^{n}(A, B)
		\]
	for any $k' \in k^{\ind\rat}$.
	Hence the third isomorphism follows.
	The fourth isomorphism is \cite[Thm.\ 2.1.5]{Suz13}.
	
	\eqref{ass: Ext by uniquely divisible is zero}
	Such a group $B$ can be written as a filtered direct limit of \'etale groups over $k$
	whose groups of geometric points are finite-dimensional $\Q$-vector spaces.
	Hence by \eqref{ass: Ext commutes with direct limits},
	we may assume that $B$ is an \'etale group over $k$
	with $\dim_{\Q} B(\closure{k}) < \infty$.
	Let $k_{1}$ be a finite Galois extension of $k$ with Galois group $G$
	such that $B$ becomes a constant group over $k_{1}$.
	Let $k' \in k^{\ind\rat}$ and $k'_{1} = k' \tensor_{k} k_{1}$.
	The Hochschild-Serre spectral sequence
	for the coefficients in
		$
				R \sheafhom_{(k_{1})^{\ind\rat}_{\et}}(A, B)
			=
				R \sheafhom_{k^{\ind\rat}_{\et} / k_{1}}(A, B)
		$
	yields
		\[
				R \Gamma(G, R \Hom_{k^{\ind\rat}_{\et} / k'_{1}}(A, B))
			=
				R \Hom_{k^{\ind\rat}_{\et} / k'}(A, B).
		\]
	Therefore we may assume that $B$ is constant and, moreover, $B = \Q$.
	The statement to prove is thus
		\[
				\Ext_{k^{\ind\rat}_{\et} / k'}^{n}(A, \Q)
			=
				0
		\]
	for $k' \in k^{\ind\rat}$ and $n \ge 0$.
	This is \cite[Thm.\ 2.1.5]{Suz13}.
	
	\eqref{ass: RHom of proalgebraic groups as sheaves}
	By \cite[Thm.\ 2.1.5, Prop.\ 3.7.4 and Prop.\ 3.8.1]{Suz13}, we have
		\[
				R \Hom_{k^{\perf}_{\pro\fppf}}(A, B)
			=
				R \Hom_{k^{\perf}_{\et}}(A, B)
			=
				R \Hom_{k^{\ind\rat}_{\et}}(A, B)
			=
				R \Hom_{\Pro \Alg / k}(A, B).
		\]
\end{proof}

We generalize this theorem to the case
where both $A$ and $B$ are ind-proalgebraic groups.

\begin{Prop} \label{prop: comparison thm in the pro-etale topology}
	\eqref{thm: Ext of proalgebraic groups as sheaves} remains true
	when all the subscripts $\et$ are replaced by $\pro\et$.
\end{Prop}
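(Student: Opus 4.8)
The plan is to bootstrap the pro-\'etale assertions from the \'etale ones already recorded in Theorem~\ref{thm: Ext of proalgebraic groups as sheaves}, by means of the three continuous maps defined by the identity,
\[
		f \colon \Spec k^{\perf}_{\pro\fppf} \to \Spec k^{\perf}_{\pro\et},
	\quad
		g \colon \Spec k^{\perf}_{\pro\et} \to \Spec k^{\ind\rat}_{\pro\et},
	\quad
		\epsilon \colon \Spec k^{\perf}_{\pro\et} \to \Spec k^{\perf}_{\et},
\]
together with Proposition~\ref{prop: misc on ind-rational pro-etale topology}. Two observations will be used repeatedly: first, every $A \in \Pro \Alg / k$, being a cofiltered limit of affine $k$-schemes, is already a sheaf for the pro-fppf topology, so that $f^{\ast} A = A$, $g^{\ast} A = A$ and $\epsilon^{\ast} A = A$; second, for $B \in \Loc \Alg / k$ --- in particular for $B \in \Alg / k$ --- one has $R \epsilon_{\ast} B = B$ by Assertion~\eqref{ass: profppf, proet and et cohom of quasi-algebraic groups etc}, and the same after localizing over any object of the site, since the input \cite[Cor.\ 3.3.3]{Suz13} is object-wise.

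I would first dispose of Assertion~\eqref{ass: Ext commutes with direct limits}: its pro-fppf half is unchanged, and its $\Spec k^{\perf}_{\pro\et}$ half is proved by the same argument as the $\Spec k^{\perf}_{\et}$ half, which, as noted after Theorem~\ref{thm: Ext of proalgebraic groups as sheaves}, uses only the coherence of the site --- available by Assertion~\eqref{ass: coherence of perfect sites} --- and the coherence of $A$ as a sheaf of sets. For the remaining assertions the idea is to pass from sheaves to sections: via the standard identity $R \Gamma(X, R \sheafhom_{S}(A, B)) = R \Hom_{S / X}(A, B)$ for every object $X$ of $S$, together with the remark that a morphism of bounded below complexes of sheaves inducing an isomorphism on $R \Gamma(X, \,\cdot\,)$ for all $X$ is a quasi-isomorphism, each internal-Hom identity occurring in Assertions~\eqref{ass: RHom for profppf, perf etale and indrat etale}, \eqref{ass: RHom between PAlg and LAlg} and \eqref{ass: Ext by uniquely divisible is zero} becomes a family of global-$R \Hom$ identities --- one per object --- while Assertion~\eqref{ass: RHom of proalgebraic groups as sheaves} and the $R \Gamma(k', \,\cdot\,)$-part of Assertion~\eqref{ass: RHom between PAlg and LAlg} are already of that form. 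All complexes involved lie in $D^{\ge 0}$, or are images of such under the exact $g_{\ast}$ or under $R f_{\ast}$, so boundedness below is not an issue.

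The global-$R \Hom$ identities I would then read off from the derived adjunctions $f^{\ast} \dashv R f_{\ast}$, $g^{\ast} \dashv g_{\ast}$, $\epsilon^{\ast} \dashv R \epsilon_{\ast}$, the Leray spectral sequences for $f$ and $g$, and the two observations above. For the first equality of Assertion~\eqref{ass: RHom for profppf, perf etale and indrat etale}: after applying the exact $g_{\ast}$ it reduces to the corresponding statement on $\Spec k^{\perf}_{\pro\et}$, whose two sides, over an object $X$, compute $R \Hom_{(k^{\perf}_{\pro\fppf}) / X}(A, B)$ and $R \Hom_{(k^{\perf}_{\pro\et}) / X}(A, R f_{\ast} B) = R \Hom_{(k^{\perf}_{\pro\fppf}) / X}(f^{\ast} A, B)$, which agree because $f^{\ast} A = A$; the second equality is the projection formula for the cohomologically transparent $g$ (Assertion~\eqref{ass: indrat proet and proet cohomology}). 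For Assertions~\eqref{ass: RHom between PAlg and LAlg} and \eqref{ass: RHom of proalgebraic groups as sheaves}, where $B \in \Loc \Alg / k$ or $B \in \Alg / k$, the $\epsilon$-adjunction and $R \epsilon_{\ast} B = B$ give $R \Hom_{(k^{\perf}_{\pro\et}) / X}(A, B) = R \Hom_{(k^{\perf}_{\et}) / X}(A, B)$, which by Theorem~\ref{thm: Ext of proalgebraic groups as sheaves} is the asserted value; the $\Spec k^{\ind\rat}_{\pro\et}$-version and the $R \Gamma(k', \,\cdot\,)$-computation in Assertion~\eqref{ass: RHom between PAlg and LAlg} use in addition the transparency of $g$ and of $\Spec k^{\ind\rat}_{\pro\et} \to \Spec k^{\ind\rat}_{\et}$. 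Assertion~\eqref{ass: Ext by uniquely divisible is zero} then follows, exactly as in the \'etale case, from its case $B = \Q$ together with Assertion~\eqref{ass: Ext commutes with direct limits}.

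The one genuinely delicate point is that $\epsilon_{\ast}$ --- and $f_{\ast}$ for $\Spec k^{\perf}_{\pro\fppf} \to \Spec k^{\perf}_{\pro\et}$ --- is merely left exact, pro-\'etale sheafification being a nontrivial operation \cite{BS13}; one therefore cannot compare the pro-\'etale and \'etale internal-Hom \emph{sheaves} directly, which is precisely why the proof must descend to global sections over each object of the site, where the adjunctions alone suffice. Once that reduction is in place, the proposition is bookkeeping on top of Theorem~\ref{thm: Ext of proalgebraic groups as sheaves} and Proposition~\ref{prop: misc on ind-rational pro-etale topology}.
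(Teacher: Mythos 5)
Your proposal is correct and takes essentially the same route as the paper: the paper's entire proof is a one-line citation of Assertion \eqref{ass: profppf, proet and et cohom of quasi-algebraic groups etc} of Proposition \ref{prop: misc on ind-rational pro-etale topology} (that $R f_{\ast}$ and $R g_{\ast}$ act as the identity on the relevant coefficients), which is precisely the engine of your adjunction argument. The reduction to sections over each object and the adjunction bookkeeping you spell out are exactly the steps the paper leaves implicit.
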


\begin{proof}
	This is a straightforward consequence of the fact that
	$A \in \Pro \Alg / k$ is a sheaf for the pro-fppf topology
	(as commented in the proof of the theorem)
	and \eqref{prop: misc on ind-rational pro-etale topology}
	\eqref{ass: profppf, proet and et cohom of quasi-algebraic groups etc}.
	We only explain this for \eqref{ass: RHom for profppf, perf etale and indrat etale}
	and \eqref{ass: RHom between PAlg and LAlg}.
	
	\eqref{ass: RHom for profppf, perf etale and indrat etale}
	We need to show that the morphisms
		\[
				R \Hom_{k'^{\perf}_{\pro\fppf}}(A, B)
			\to
				R \Hom_{k'^{\perf}_{\pro\et}}(A, R \Tilde{f}_{\ast} B)
			\to
				R \Hom_{k^{\ind\rat}_{\pro\et} / k'}(A, \Tilde{g}_{\ast} R \Tilde{f}_{\ast} B)
		\]
	are isomorphisms for $k' \in k^{\ind\rat}$, where
		\[
			\begin{CD}
					\Spec k^{\perf}_{\pro\fppf}
				@>> \Tilde{f} >
					\Spec k^{\perf}_{\pro\et}
				@>> \Tilde{g} >
					\Spec k^{\ind\rat}_{\pro\et}
				\\
				@|
				@VV P V
				@V P VV
				\\
					\Spec k^{\perf}_{\pro\fppf}
				@> f >>
					\Spec k^{\perf}_{\et}
				@> g >>
					\Spec k^{\ind\rat}_{\et}.
			\end{CD}
		\]
	But $P^{\ast} A = A$.
	Hence the adjunction between $P^{\ast}$ and $R P_{\ast}$ reduces the statement to
	the original assertion \eqref{ass: RHom for profppf, perf etale and indrat etale}.
	
	\eqref{ass: RHom between PAlg and LAlg}
	Similarly, we need to show that the morphism
		\[
				\dirlim_{\lambda}
				R \Hom_{k^{\ind\rat}_{\pro\et} / k'}(A_{\lambda}, B)
			\to
				R \Hom_{k^{\ind\rat}_{\pro\et} / k'}(A, B)
		\]
	is an isomorphism for $k' \in k^{\ind\rat}$.
	But $R P_{\ast} B = B$ by
	\eqref{prop: misc on ind-rational pro-etale topology}
	\eqref{ass: profppf, proet and et cohom of quasi-algebraic groups etc}.
	Hence again the adjunction between $P^{\ast}$ and $R P_{\ast}$ reduces the statement to
	the original assertion \eqref{ass: RHom between PAlg and LAlg}.
	The rest can be proven similarly.
\end{proof}

\begin{Prop} \BetweenThmAndList \label{prop: comparison thm for ind-proalgebraic groups}
	Let $S$ be either $\Spec k^{\perf}_{\pro\fppf}$,
	$\Spec k^{\perf}_{\pro\et}$ or
	$\Spec k^{\ind\rat}_{\pro\et}$.
	Let $A = \dirlim A_{\lambda} \in \Ind \Pro \Alg / k$
	with $A_{\lambda} = \invlim A_{\lambda \lambda'} \in \Pro \Alg / k$
	with $A_{\lambda \lambda'} \in \Alg / k$.
	\begin{enumerate}
		\item \label{ass: RHom between IPAlg and IPAlg, sheaf version}
			Let $B = \dirlim B_{\mu} = \dirlim \invlim B_{\mu \mu'} \in \Ind \Pro \Alg / k$ similarly.
			Then
				\[
						R \sheafhom_{S}(A, B)
					=
						R \invlim_{\lambda} \dirlim_{\mu}
						R \invlim_{\mu'} \dirlim_{\lambda'}
						R \sheafhom_{S}(A_{\lambda \lambda'}, B_{\mu \mu'}).
				\]
		\item \label{ass: RHom between IPAlg and IPAlg}
			Its $R \Gamma(k, \,\cdot\,)$ is
				\begin{align*}
					&
							R \Hom_{\Ind \Pro \Alg / k}(A, B)
						=
							R \invlim_{\lambda} \dirlim_{\mu}
							R \Hom_{\Pro \Alg / k}(A_{\lambda}, B_{\mu})
					\\
					&	\quad
						=
							R \invlim_{\lambda} \dirlim_{\mu}
							R \invlim_{\mu'} \dirlim_{\lambda'}
							R \Hom_{\Alg / k}(A_{\lambda \lambda'}, B_{\mu \mu'}).
				\end{align*}
			When already $A_{\lambda}, B_{\mu} \in \Alg / k$ for all $\lambda, \mu$
			so that $A, B \in \Ind \Alg / k$,
			then this further equals to $R \Hom_{\Ind \Alg / k}(A, B)$.
		\item \label{ass: RHom between IPAlg and LAlg}
			If $B \in \Loc \Alg / k$, then
				\[
						R \sheafhom_{S}(A, B)
					=
						R \invlim_{\lambda} \dirlim_{\lambda'}
						R \sheafhom_{S}(A_{\lambda \lambda'}, B).
				\]
		\item \label{ass: RHom between IPAlg and Et}
			If $B \in \Et / k$ is uniquely divisible, then
				\[
						R \sheafhom_{k^{\ind\rat}_{\pro\et}}(A, B)
					=
						0.
				\]
	\end{enumerate}
\end{Prop}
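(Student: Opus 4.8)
The idea is to deduce all four assertions from the abstract machinery of Section~\ref{sec: Generalities on derived categories of ind-procategories} --- Propositions~\ref{prop: derivation in ind-procategories}, \ref{prop: fully faithful embeddings of ind-procateogries} and~\ref{prop: RHom of ind-pro-limits} --- feeding in Theorem~\ref{thm: Ext of proalgebraic groups as sheaves} (and its pro-\'etale form, Proposition~\ref{prop: comparison thm in the pro-etale topology}) at the level of the building blocks $R\sheafhom_{S}(A_{\lambda\lambda'}, B_{\mu\mu'})$ with $A_{\lambda\lambda'}, B_{\mu\mu'} \in \Alg/k$. For Assertion~\eqref{ass: RHom between IPAlg and IPAlg, sheaf version}, I would first treat the coherent sites $S = \Spec k^{\perf}_{\pro\fppf}$ and $S = \Spec k^{\perf}_{\pro\et}$, applying Proposition~\ref{prop: RHom of ind-pro-limits} with $\mathcal{A} = \mathcal{C} = \Ab(S)$, $F = \sheafhom_{S}$, and $\mathcal{C}'$ the subcategory of acyclic sheaves. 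Its hypotheses are quickly checked: $\sheafhom_{S}$ is left exact in both variables and commutes with filtered inverse limits; for injective $I$ the functor $\sheafhom_{S}(\,\cdot\,, I)$ is exact (restriction to a slice preserves injectives) and takes acyclic values, since $H^{n}(U, \sheafhom_{S}(\mathcal{F}, I)) = \Ext^{n}_{S/U}(\mathcal{F}|_{U}, I|_{U}) = 0$ for $n \ge 1$; and acyclic sheaves satisfy the conditions of Proposition~\ref{prop: derivation in ind-procategories}\eqref{ass: derived functor of ind-pro limits} by Proposition~\ref{prop: limits and cohomology commute}\eqref{ass: acyclics are injective for inverse limits}. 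Proposition~\ref{prop: RHom of ind-pro-limits} then provides a canonical chain of morphisms from $R\invlim_{\lambda}\dirlim_{\mu}R\invlim_{\mu'}\dirlim_{\lambda'} R\sheafhom_{S}(A_{\lambda\lambda'}, B_{\mu\mu'})$ to $R\sheafhom_{S}(\dirlim_{\lambda}\invlim_{\lambda'}A_{\lambda\lambda'}, \dirlim_{\mu}R\invlim_{\mu'}B_{\mu\mu'})$, and using Proposition~\ref{prop: misc on ind-rational pro-etale topology}\eqref{ass: viewing proalg as sheaves is exact},\eqref{ass: Rlim vanishes on proalgebraic groups} to identify $\invlim_{\lambda'}A_{\lambda\lambda'} = A_{\lambda}$, $R\invlim_{\mu'}B_{\mu\mu'} = B_{\mu}$ (no higher terms), $\dirlim_{\lambda}A_{\lambda} = A$ and $\dirlim_{\mu}B_{\mu} = B$ as sheaves, the target is exactly $R\sheafhom_{S}(A, B)$.

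What then remains is to verify that the two genuine morphisms in this chain are quasi-isomorphisms. The first one inserts $\invlim_{\lambda'}$ and is a quasi-isomorphism because $\dirlim_{\lambda'} R\sheafhom_{S}(A_{\lambda\lambda'}, B_{\mu\mu'}) = R\sheafhom_{S}(A_{\lambda}, B_{\mu\mu'})$, which is Theorem~\ref{thm: Ext of proalgebraic groups as sheaves}\eqref{ass: RHom between PAlg and LAlg} (available for $\pro\et$ by Proposition~\ref{prop: comparison thm in the pro-etale topology}), applicable since $B_{\mu\mu'} \in \Alg/k \subset \Loc\Alg/k$. The second pulls $\dirlim_{\mu}$ inside and is a quasi-isomorphism because $\dirlim_{\mu} R\sheafhom_{S}(A_{\lambda}, B_{\mu}) = R\sheafhom_{S}(A_{\lambda}, \dirlim_{\mu}B_{\mu})$, which is Theorem~\ref{thm: Ext of proalgebraic groups as sheaves}\eqref{ass: Ext commutes with direct limits} (again via Proposition~\ref{prop: comparison thm in the pro-etale topology}); this needs $S$ coherent and the proalgebraic group $A_{\lambda}$ quasi-compact as a sheaf of sets. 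For $S = \Spec k^{\ind\rat}_{\pro\et}$, which is not coherent, I would not argue directly but transport the formula from $\Spec k^{\perf}_{\pro\et}$ along the pushforward $\Spec k^{\perf}_{\pro\et} \to \Spec k^{\ind\rat}_{\pro\et}$, which is exact, fully faithful on pullbacks, commutes with $\dirlim$ and $R\invlim$ by Proposition~\ref{prop: limits and cohomology commute}\eqref{ass: limits and pushforward}, and is compatible with $R\sheafhom$ by Proposition~\ref{prop: comparison thm in the pro-etale topology}.

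Assertion~\eqref{ass: RHom between IPAlg and IPAlg} follows by applying $R\Gamma(k, \,\cdot\,)$ to~\eqref{ass: RHom between IPAlg and IPAlg, sheaf version}: this functor commutes with $\dirlim$ by Proposition~\ref{prop: misc on ind-rational pro-etale topology}\eqref{ass: quasi-compactness of the sites} and with $R\invlim$, and it sends $R\sheafhom_{S}(A_{\lambda\lambda'}, B_{\mu\mu'})$ to $R\Hom_{S}(A_{\lambda\lambda'}, B_{\mu\mu'}) = R\Hom_{\Alg/k}(A_{\lambda\lambda'}, B_{\mu\mu'})$ by Theorem~\ref{thm: Ext of proalgebraic groups as sheaves}\eqref{ass: RHom of proalgebraic groups as sheaves}; the resulting quadruple limit is $R\Hom_{\Ind\Pro\Alg/k}(A, B)$ by Proposition~\ref{prop: fully faithful embeddings of ind-procateogries}, whose formulas for $R\Hom_{\Pro\Alg/k}$ and (when $A, B \in \Ind\Alg/k$) $R\Hom_{\Ind\Alg/k}$ supply the intermediate expressions. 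For Assertion~\eqref{ass: RHom between IPAlg and LAlg}, $B \in \Loc\Alg/k$ is already a single sheaf, so the same argument is used with only the $A$-side decomposition: Proposition~\ref{prop: RHom of ind-pro-limits} (or a direct telescope resolution of $\dirlim_{\lambda}A_{\lambda}$ using that $\Ab(S)$ is AB4*) gives $R\sheafhom_{S}(A, B) = R\invlim_{\lambda} R\sheafhom_{S}(A_{\lambda}, B)$, and $R\sheafhom_{S}(A_{\lambda}, B) = \dirlim_{\lambda'} R\sheafhom_{S}(A_{\lambda\lambda'}, B)$ by Theorem~\ref{thm: Ext of proalgebraic groups as sheaves}\eqref{ass: RHom between PAlg and LAlg} and Proposition~\ref{prop: comparison thm in the pro-etale topology}. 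Assertion~\eqref{ass: RHom between IPAlg and Et} is then immediate: if $B \in \Et/k$ is uniquely divisible, then $R\sheafhom_{S}(A_{\lambda\lambda'}, B) = 0$ by Theorem~\ref{thm: Ext of proalgebraic groups as sheaves}\eqref{ass: Ext by uniquely divisible is zero}, so~\eqref{ass: RHom between IPAlg and LAlg} forces $R\sheafhom_{S}(A, B) = 0$.

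The main obstacle I expect is the careful bookkeeping in~\eqref{ass: RHom between IPAlg and IPAlg, sheaf version}: matching the abstract ``pull a $\dirlim$ through $RF$'' morphisms produced by Proposition~\ref{prop: RHom of ind-pro-limits} with the concrete commutation and vanishing statements of Theorem~\ref{thm: Ext of proalgebraic groups as sheaves}, keeping the four index directions $\lambda, \lambda', \mu, \mu'$ and their variances straight, together with the reduction to the non-coherent site $\Spec k^{\ind\rat}_{\pro\et}$, where the direct-limit-in-the-second-variable compatibility is not available in situ and must be descended from $\Spec k^{\perf}_{\pro\et}$.
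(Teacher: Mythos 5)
Your proposal is correct and follows essentially the same route as the paper: apply Proposition \ref{prop: RHom of ind-pro-limits} to $F = \sheafhom_{S}$ on the coherent sites, identify the two non-trivial arrows in the resulting chain as quasi-isomorphisms via Assertions \eqref{ass: RHom between PAlg and LAlg} and \eqref{ass: Ext commutes with direct limits} of Theorem \ref{thm: Ext of proalgebraic groups as sheaves} (through Proposition \ref{prop: comparison thm in the pro-etale topology}), descend to $\Spec k^{\ind\rat}_{\pro\et}$ by pushforward using Proposition \ref{prop: limits and cohomology commute}, and obtain the remaining assertions by taking $R\Gamma(k,\,\cdot\,)$ and by specialization. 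The only cosmetic difference is your choice of $\mathcal{C}'$ as the acyclic sheaves rather than all of $\Ab(S)$ (which works directly since $\Ab(S)$ is AB4* by Proposition \ref{prop: misc on ind-rational pro-etale topology}); both verifications are valid.
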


\begin{proof}
	\eqref{ass: RHom between IPAlg and IPAlg, sheaf version}
	We first prove this for $S = \Spec k^{\perf}_{\pro\fppf}$ and $\Spec k^{\perf}_{\pro\et}$.
	We will apply \eqref{prop: RHom of ind-pro-limits} for
		\[
				F
			=
				\sheafhom_{S}
			\colon
					\Ab(S)^{\op}
				\times
					\Ab(S)
			\to
				\Ab(S).
		\]
	Let $\mathcal{C} = \mathcal{C}' = \Ab(S)$.
	This has exact products by
	\eqref{prop: misc on ind-rational pro-etale topology}
	\eqref{ass: product is exact}.
	This implies the conditions of 
	\eqref{prop: derivation in ind-procategories}
	\eqref{ass: derived functor of ind-pro limits}.
	Hence \eqref{prop: RHom of ind-pro-limits} yields morphisms and isomorphisms
		\begin{align*}
			&
					R \invlim_{\lambda} \dirlim_{\mu} R \invlim_{\mu'} \dirlim_{\lambda'}
					R \sheafhom_{S}(
						A_{\lambda \lambda'}, B_{\mu \mu'}
					)
			\\
			&	\to
					R \invlim_{\lambda} \dirlim_{\mu} R \invlim_{\mu'}
					R \sheafhom_{S}(
						\invlim_{\lambda'} A_{\lambda \lambda'}, B_{\mu \mu'}
					)
			\\
			&	=
					R \invlim_{\lambda} \dirlim_{\mu}
					R \sheafhom_{S}(
						\invlim_{\lambda'} A_{\lambda \lambda'},
						R \invlim_{\mu'} B_{\mu \mu'}
					)
			\\
			&	\to
					R \invlim_{\lambda}
					R \sheafhom_{S}(
						\invlim_{\lambda'} A_{\lambda \lambda'},
						\dirlim_{\mu} R \invlim_{\mu'} B_{\mu \mu'}
					)
			\\
			&	=
					R \sheafhom_{S}(
						\dirlim_{\lambda} \invlim_{\lambda'} A_{\lambda \lambda'},
						\dirlim_{\mu} R \invlim_{\mu'} B_{\mu \mu'}
					)
		\end{align*}
	in $D^{+}(S)$ for any
		\begin{gather*}
					A
				= 	\fdirlim[\lambda] \finvlim[\lambda']
						A_{\lambda \lambda'}
				\in
					\Ind \Pro \Ab(S),
			\\
					B
				=
					\fdirlim[\mu] \finvlim[\mu']
						B_{\mu \mu'}
				\in
					\Ind \Pro \Ab(S).
		\end{gather*}
	Assume $A_{\lambda \lambda'}, B_{\mu \mu'} \in \Alg / k$.
	Then $R \invlim_{\mu'} B_{\mu \mu'} = \invlim_{\mu'} B_{\mu \mu'}$
	by \eqref{prop: misc on ind-rational pro-etale topology}
	\eqref{ass: Rlim vanishes on proalgebraic groups}.
	The first and third morphisms are isomorphisms by
	\eqref{thm: Ext of proalgebraic groups as sheaves}
	\eqref{ass: RHom between PAlg and LAlg} and \eqref{ass: Ext commutes with direct limits}, respectively.
	
	For $S = \Spec k^{\ind\rat}_{\pro\et}$,
	let
		\[
				\Spec k^{\perf}_{\pro\fppf}
			\stackrel{f}{\to}
				\Spec k^{\perf}_{\pro\et}
			\stackrel{g}{\to}
				\Spec k^{\ind\rat}_{\pro\et}
		\]
	be the continuous maps defined by the identity.
	We will apply $g_{\ast}$ for the both sides of the isomorphism
	just proved for $S = \Spec k^{\perf}_{\pro\et}$.
	For the left-hand side, we have
		\[
				g_{\ast}
				R \sheafhom_{k^{\perf}_{\pro\et}}(A, B)
			=
				R \sheafhom_{k^{\ind\rat}_{\pro\et}}(A, B)
		\]
	by \eqref{thm: Ext of proalgebraic groups as sheaves}
	\eqref{ass: RHom for profppf, perf etale and indrat etale}
	and \eqref{prop: comparison thm in the pro-etale topology},
	since $B \in \Ind \Pro \Alg / k$ and hence $R f_{\ast} B = B$
	by \eqref{prop: misc on ind-rational pro-etale topology}
	\eqref{ass: profppf, proet and et cohom of quasi-algebraic groups etc}.
	For the right-hand side, we have
		\begin{align*}
			&
					g_{\ast}
					R \invlim_{\lambda} \dirlim_{\mu}
					R \invlim_{\mu'} \dirlim_{\lambda'}
					R \sheafhom_{k^{\perf}_{\pro\et}}(A_{\lambda \lambda'}, B_{\mu \mu'})
			\\
			&	=
					R \invlim_{\lambda} \dirlim_{\mu}
					R \invlim_{\mu'} \dirlim_{\lambda'}
					g_{\ast}
					R \sheafhom_{k^{\perf}_{\pro\et}}(A_{\lambda \lambda'}, B_{\mu \mu'})
			\\
			&	=
					R \invlim_{\lambda} \dirlim_{\mu}
					R \invlim_{\mu'} \dirlim_{\lambda'}
					R \sheafhom_{k^{\ind\rat}_{\pro\et}}(A_{\lambda \lambda'}, B_{\mu \mu'})
		\end{align*}
	by \eqref{prop: limits and cohomology commute}
	and again by \eqref{thm: Ext of proalgebraic groups as sheaves}
	\eqref{ass: RHom for profppf, perf etale and indrat etale}.
	
	\eqref{ass: RHom between IPAlg and IPAlg}.
	We have
		\begin{align*}
			&
					R \Gamma \bigl(
						k,
						R \invlim_{\lambda} \dirlim_{\mu}
						R \invlim_{\mu'} \dirlim_{\lambda'}
						R \sheafhom_{S}(A_{\lambda \lambda'}, B_{\mu \mu'})
					\bigr)
			\\
			&	=
					R \invlim_{\lambda} \dirlim_{\mu}
					R \invlim_{\mu'} \dirlim_{\lambda'}
					R \Hom_{S}(A_{\lambda \lambda'}, B_{\mu \mu'}).
			\\
			&	=
					R \invlim_{\lambda} \dirlim_{\mu}
					R \invlim_{\mu'} \dirlim_{\lambda'}
					R \Hom_{\Alg / k}(A_{\lambda \lambda'}, B_{\mu \mu'})
		\end{align*}
	by \eqref{thm: Ext of proalgebraic groups as sheaves}
	\eqref{ass: RHom of proalgebraic groups as sheaves}
	and \eqref{prop: comparison thm in the pro-etale topology}.
	
	\eqref{ass: RHom between IPAlg and LAlg}
	The proof of \eqref{ass: RHom between IPAlg and IPAlg, sheaf version} also applies.
	
	\eqref{ass: RHom between IPAlg and Et}
	This follows from \eqref{ass: RHom between IPAlg and LAlg} here,
	\eqref{thm: Ext of proalgebraic groups as sheaves}
	\eqref{ass: RHom between IPAlg and Et}
	and \eqref{prop: comparison thm in the pro-etale topology}.
\end{proof}

\begin{Prop} \label{prop: fully faithful embedding for ind-proalgebraic groups}
	We have fully faithful embeddings
		\[
			\begin{CD}
					D^{b}(\Alg / k)
				@>> \subset >
					D^{b}(\Pro \Alg / k)
				@.
				\\
				@VV \cap V
				@V \cap VV
				@.
				\\
					D^{b}(\Ind \Alg / k)
				@> \subset >>
					D^{b}(\Ind \Pro \Alg / k)
				@> \subset >>
					D^{b}(k^{\ind\rat}_{\pro\et}),
			\end{CD}
		\]
		\[
				D^{b}(\FGEt / k)
			\subset
				D^{b}(\Et / k)
			=
				D^{b}(k_{\et})
			\subset
				D^{b}(k^{\ind\rat}_{\pro\et})
		\]
	of triangulated categories.
	The same is true with $D^{b}(k^{\ind\rat}_{\pro\et})$ replaced by
	$D^{b}(k^{\perf}_{\pro\et})$ or $D^{b}(k^{\perf}_{\pro\fppf})$.
\end{Prop}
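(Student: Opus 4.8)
The plan is to reduce everything to a list of $\Ext$-comparisons. First I would observe that each arrow in the two diagrams is induced by an exact functor between abelian categories: for the tower $\Alg/k\subset\Pro\Alg/k$, $\Ind\Alg/k\subset\Ind\Pro\Alg/k$ this is already part of Proposition~\ref{prop: fully faithful embeddings of ind-procategories}; the functor $\Ind\Pro\Alg/k\to\Ab(S)$ (for $S$ one of the three sites) is exact by Proposition~\ref{prop: misc on ind-rational pro-etale topology}\eqref{ass: viewing proalg as sheaves is exact} together with the exactness of filtered colimits of abelian sheaves; $\FGEt/k\hookrightarrow\Et/k$ is the inclusion of a full abelian subcategory, hence exact; and $\Et/k\to\Ab(k^{\ind\rat}_{\pro\et})$ is exact, being realized below as a pullback along a morphism of sites. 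Each therefore extends to the bounded derived categories, and a standard d\'evissage (induction on the amplitude of a bounded complex via the canonical truncation triangles) reduces full faithfulness on $D^{b}$ to the assertion that the natural maps on $\Ext^{n}$-groups between objects of the source abelian category are isomorphisms for all $n\ge0$; commutativity of the squares and triangles will be automatic, all functors being the evident ones. So the whole proposition reduces to those $\Ext$-comparisons.

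Next I would dispatch the $\Alg/k$-tower. The square of $D^{b}(\Alg/k)$, $D^{b}(\Pro\Alg/k)$, $D^{b}(\Ind\Alg/k)$, $D^{b}(\Ind\Pro\Alg/k)$ is precisely Proposition~\ref{prop: fully faithful embeddings of ind-procategories} for the essentially small abelian category $\mathcal{A}=\Alg/k$. For $D^{b}(\Ind\Pro\Alg/k)\hookrightarrow D^{b}(S)$ I would take $A,B\in\Ind\Pro\Alg/k$ and apply $R\Gamma(k,-)$ to the sheaf-level identity of Proposition~\ref{prop: comparison thm for ind-proalgebraic groups}\eqref{ass: RHom between IPAlg and IPAlg, sheaf version}; since $\Spec k$ is the final object of $S$, the left side becomes $R\Hom_{S}(A,B)$, and Proposition~\ref{prop: comparison thm for ind-proalgebraic groups}\eqref{ass: RHom between IPAlg and IPAlg} identifies the resulting complex with $R\Hom_{\Ind\Pro\Alg/k}(A,B)$. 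Taking cohomology gives $\Ext^{n}_{S}(A,B)=\Ext^{n}_{\Ind\Pro\Alg/k}(A,B)$ for all $n$; and since that proposition is stated for all three sites $S$, this simultaneously covers $D^{b}(k^{\ind\rat}_{\pro\et})$, $D^{b}(k^{\perf}_{\pro\et})$ and $D^{b}(k^{\perf}_{\pro\fppf})$.

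For the \'etale tower I would argue: $\FGEt/k$ is a Serre subcategory of $\Et/k$ (a subquotient or extension of finitely generated \'etale group schemes is finitely generated), and every object of $\Et/k$ is a filtered union of objects of $\FGEt/k$ (by continuity the orbit of any section of a discrete Galois module is finite, so any \'etale group scheme is the union of its finitely generated subgroup schemes); the standard criterion for full faithfulness of $D^{b}$ of such a subcategory then gives $D^{b}(\FGEt/k)\hookrightarrow D^{b}(\Et/k)$. The equality $D^{b}(\Et/k)=D^{b}(k_{\et})$ is the identification of \'etale group schemes over $k$ with abelian sheaves on the small \'etale site $\Spec k_{\et}$. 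For the last embedding I would compose the morphism of sites $\Spec k^{\ind\rat}_{\pro\et}\to\Spec k^{\ind\rat}_{\et}$ of~\eqref{eq: diagram of sites we use} with the morphism of sites $\Spec k^{\ind\rat}_{\et}\to\Spec k_{\et}$ given by the inclusion of the small \'etale site of $\Spec k$, obtaining $\varepsilon\colon\Spec k^{\ind\rat}_{\pro\et}\to\Spec k_{\et}$ whose pullback is the natural functor $\Et/k\to\Ab(k^{\ind\rat}_{\pro\et})$; then for $B\in\Et/k\subset\Loc\Alg/k$ one has $R\varepsilon_{\ast}\varepsilon^{\ast}B=B$ by Proposition~\ref{prop: misc on ind-rational pro-etale topology}\eqref{ass: profppf, proet and et cohom of quasi-algebraic groups etc} (for the pro-\'etale-to-\'etale step) together with the comparison between the small and ind-rational \'etale sites for $\Loc\Alg/k$-coefficients, so by adjunction $R\Hom_{k^{\ind\rat}_{\pro\et}}(\varepsilon^{\ast}B,\varepsilon^{\ast}B')=R\Hom_{k_{\et}}(B,R\varepsilon_{\ast}\varepsilon^{\ast}B')=R\Hom_{k_{\et}}(B,B')$ for $B,B'\in\Et/k$. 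The variants with $D^{b}(k^{\perf}_{\pro\et})$ or $D^{b}(k^{\perf}_{\pro\fppf})$ in place of $D^{b}(k^{\ind\rat}_{\pro\et})$ would go verbatim, using the remaining cases of Proposition~\ref{prop: misc on ind-rational pro-etale topology}\eqref{ass: profppf, proet and et cohom of quasi-algebraic groups etc} (which also cover $\Spec k^{\perf}_{\pro\fppf}\to\Spec k^{\perf}_{\pro\et}$) and of Proposition~\ref{prop: comparison thm for ind-proalgebraic groups}.

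The hard part is essentially behind us: the real work sits in Proposition~\ref{prop: comparison thm for ind-proalgebraic groups}, which rests on the derived-limit formalism of the previous subsection. Within the present proof the only step that is not purely formal is the last embedding, where I would need to match \'etale cohomology over the small site of $\Spec k$ with \'etale and pro-\'etale cohomology over the ind-rational (resp.\ perfect) sites for $\Loc\Alg/k$-coefficients; I expect that comparison, rather than the d\'evissage or the bookkeeping assembly of the cited results, to be where a little care is required.
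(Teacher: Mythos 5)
Your proposal is correct and follows essentially the same route as the paper, whose entire proof is a citation of Propositions \ref{prop: fully faithful embeddings of ind-procateogries} and \ref{prop: comparison thm for ind-proalgebraic groups}; your argument is a faithful unpacking of how those two results combine via the standard d\'evissage to $\Ext$-comparisons. The only material you add beyond the paper's citation is the explicit treatment of the \'etale tower and the site comparison $\Spec k^{\ind\rat}_{\pro\et}\to\Spec k_{\et}$, which you handle in the expected way.
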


\begin{proof}
	This follows from \eqref{prop: fully faithful embeddings of ind-procateogries}
	and \eqref{prop: comparison thm for ind-proalgebraic groups}.
\end{proof}


\numberwithin{equation}{subsection}
\subsection{Serre duality and P-acyclicity}
\label{sec: Serre duality and P-acyclicity}

In this subsection, we mainly focus on the site $\Spec k^{\ind\rat}_{\pro\et}$.
From the next section,
it is important to use $\Spec k^{\ind\rat}_{\et}$ and $\Spec k^{\ind\rat}_{\pro\et}$
in order to view cohomology of local fields as sheaves over the residue field and compute it.
(Using $\Spec k^{\perf}_{\pro\et}$ instead might be slightly problematic
for the treatment of Serre duality for semi-abelian varieties.
The group $\Ext_{R^{\perf}_{\pro\et}}^{n}(\Gm, \Gm)$ for example might not be torsion for $n \ge 2$
if $R$ is not the perfection of a regular scheme and
the computation done in \cite{Bre70} does not literally apply.
This implies that the sheaf-Ext $\sheafext_{k^{\perf}_{\pro\et}}^{n}(\Gm, \Gm)$ might not be torsion.
The same problem exists for $\Spec k^{\perf}_{\pro\fppf}$.)

For a complex $A \in D(k^{\ind\rat}_{\pro\et})$,
we define
	\[
			A^{\SDual}
		=
			R \sheafhom_{k^{\ind\rat}_{\pro\et}}(A, \Z).
	\]
We call $A^{\SDual}$ the \emph{Serre dual} of $A$.%
\footnote{
	Milne \cite[III, 0]{Mil06} used the term ``Breen-Serre duality''.
	Breen's work \cite{Bre78} brought Serre's observation \cite[8.4, Remarque]{Ser60}
	to the setting of the perfect \'etale site
	and provided Artin-Milne \cite{AM76} a basis of their ``second context'' of global duality.
	Just to make it notationally parallel to and easily distinguishable with
	Cartier duality $\CDual$ and Pontryagin duality $\PDual$,
	we omit Breen's name and use the symbol $\SDual$ in this paper.
	Of course our treatment of the functor $\SDual$ heavily relies on the work of both Breen and Serre.
}
It defines a contravariant triangulated endofunctor $\SDual$ on $D(k^{\ind\rat}_{\pro\et})$.
If $A \in \Ind \Pro \Alg / k$, then
	\[
			A^{\SDual}
		=
			R \sheafhom_{k^{\ind\rat}_{\pro\et}}(A, \Q / \Z)[-1]
	\]
by \eqref{prop: comparison thm for ind-proalgebraic groups}
\eqref{ass: RHom between IPAlg and Et}.
Note that this is not true for $\Z$, whose Serre dual is itself.
For any $A \in D(k^{\ind\rat}_{\pro\et})$,
we have a natural morphism $A \to (A^{\SDual})^{\SDual} = A^{\SDual \SDual}$.
If this is an isomorphism,
we say that $A$ is \emph{Serre reflexive}.
Serre reflexive complexes form a full triangulated subcategory of $D(k^{\ind\rat}_{\pro\et})$,
where the functor $\SDual$ is a contravariant auto-equivalence with inverse itself.

\begin{Prop} \label{prop: Serre duality} \BetweenThmAndList
	\begin{enumerate}
	\item \label{ass: Ext of a proalg by Q mod Z is zero above degree 2}
		If $A \in \Pro \Alg / k$, then
			\[
					\sheafext_{k^{\ind\rat}_{\pro\et}}^{n}(A, \Q / \Z)
				=
					0
			\]
		for $n \ne 0, 1$.
		We have
			\[
					\sheafhom_{k^{\ind\rat}_{\pro\et}}(A, \Q / \Z)
				=
					\sheafhom_{k^{\ind\rat}_{\pro\et}}(\pi_{0}(A), \Q / \Z).
			\]
	\item \label{ass: Serre duality for unip and etale}
		The objects of the categories
			\[
					D^{b}(\Pro \Alg_{\uc} / k),
					D^{b}(\Ind \Alg_{\uc} / k),
					D^{b}(\FGEt / k)
			\]
		are Serre reflexive.
		The functor $\SDual$ gives an auto-equivalence on $D^{b}(\Alg_{\uc} / k)$,
		which agrees with the Breen-Serre duality on perfect unipotent groups
		(\cite[8.4, Remarque]{Ser60}, \cite[III, Thm.\ 0.14]{Mil06}) shifted by $-1$.%
		\footnote{
			Strictly speaking, the objects of $\Alg_{\uc} / k$ are not exactly unipotent
			since their component groups can be any objects of $\FEt / k$ not necessarily $p$-primary.
			The Breen-Serre duality for finite \'etale $p$-primary groups agrees with
			the Pontryagin duality \cite[III, Lem.\ 0.13 (b)]{Mil06}.
		}
		It also gives an auto-equivalence on $D^{b}(\FEt / k)$,
		which agrees with the Pontryagin duality $\PDual = \sheafhom_{k}(\;\cdot\;, \Q / \Z)$
		shifted by $-1$.
		It further gives an auto-equivalence on $D^{b}(\FGEt / k)$,
		and an equivalence between $D^{b}(\Pro \Alg_{\uc} / k)$ and $D^{b}(\Ind \Alg_{\uc} / k)$.
		For $A \in \Pro \Alg / k$ (resp.\ $A \in \Ind \Alg / k$),
		we have: $H^{1} A^{\SDual} = \pi_{0}(A)^{\PDual}$;
			\[
					H^{2} A^{\SDual}
				=
					\sheafext_{k^{\ind\rat}_{\pro\et}}^{1}(A, \Q / \Z)
			\]
		is a connected ind-unipotent (resp.\ pro-unipotent) group;
		and $H^{n} A^{\SDual} = 0$ for $n \ne 1, 2$.
	\item \label{ass: Serre duality for semi-abelian varieties}
		Let $A$ be (the perfection of) a semi-abelian variety over $k$.
		Let $T A$ be the Tate module of $A$,
		i.e.\ the pro-finite-\'etale group over $k$ given by
		the inverse limit of the kernel of multiplication by $n \ge 1$ on $A$.%
		\footnote{
			This is a reduced scheme due to the perfection.
			A perfect scheme is reduced.
			Hence $T A$ is the Pontryagin dual of the torsion part of the discrete Galois module $A(\closure{k})$.
		}
		Let $\bar{A}$ be the pro-\'etale universal covering group of $A$,
		i.e.\ the inverse limit of multiplication by $n \ge 1$ on $A$.
		Then the morphism $A \to TA[1]$ (where $[1]$ is the shift)
		coming from the natural exact sequence
			\[
				0 \to TA \to \bar{A} \to A \to 0
			\]
		in $\Ab(k^{\ind\rat}_{\pro\et})$ and \eqref{ass: Serre duality for unip and etale}
		gives isomorphisms
			\[
				A^{\SDual \SDual} \isomto TA[1]^{\SDual \SDual} = TA[1].
			\]
		We also have $A^{\SDual} = (TA)^{\PDual}[-2]$.
	\item \label{ass: Serre duality for general proalgebraic groups}
		For any $A \in \Pro \Alg / k$, there is a canonical morphism
		from $A^{\SDual \SDual}$ to the inverse limit $\invlim_{n} A$ of multiplication by $n \ge 1$
		placed in degree $-1$,
		so that we have
			\[
					A^{\SDual \SDual}
				=
					\bigl[
							\invlim_{n} A
						\to
							A
					\bigr],
			\]
		where the morphism $\invlim_{n} A \to A$ is the projection to the $n = 1$ term
		and $[\;\cdot\;]$ denotes the mapping cone.
		To give a more explicit description,
		let $A_{0}$ be the identity component of $A$,
		$A_{\mathrm{u}}$ the maximal pro-unipotent subgroup of $A_{0}$,
		$A_{\mathrm{sAb}} = A_{0} / A_{\mathrm{u}}$ the maximal semi-abelian quotient of $A_{0}$.
		Then we have $\invlim_{n} A = \closure{A_{\mathrm{sAb}}}$,
		whose Serre dual is zero.
		The commutative diagram with exact rows
			\[
				\begin{CD}
						0
					@>>>
						T A_{\mathrm{sAb}}
					@>>>
						\invlim_{n} A
					@>>>
						A_{\mathrm{sAb}}
					@>>>
						0
					\\
					@.
					@VVV
					@VVV
					@VVV
					@.
					\\
						0
					@>>>
						A_{\mathrm{u}}
					@>>>
						A
					@>>>
						A / A_{\mathrm{u}}
					@>>>
						0
				\end{CD}
			\]
		gives a distinguished triangle%
		\footnote{
			Perhaps a more helpful description of $A^{\SDual \SDual} = [\invlim A \to A]$ is that
			its $H^{0}$ is the quotient of $A$ by the maximal semiabelian
			\emph{sub}group $A_{\mathrm{sAb}}'$ of $A$
			(so $H^{0}$ is the maximal quotient of $A$ that belongs to $\Pro \Alg_{\uc}$)
			and $H^{-1}$ the Tate module of $A_{\mathrm{sAb}}'$.
			The composite $A_{\mathrm{sAb}}' \into A_{0} \onto A_{\mathrm{sAb}}$ is an isogeny.
		}
			\[
					\bigl[
							T A_{\mathrm{sAb}}
						\to
							A_{\mathrm{u}}
					\bigr]
				\to
					\bigl[
							\invlim_{n} A
						\to
							A
					\bigr]
				\to
					\pi_{0}(A).
			\]
		In particular, we have $A^{\SDual \SDual} \in D^{b}(\Pro \Alg_{\uc} / k)$.
		We also have $A^{\SDual} \in D^{b}(\Ind \Alg_{\uc} / k)$, which is Serre reflexive.
	\end{enumerate}
\end{Prop}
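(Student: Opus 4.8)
The plan is to dispose of the four assertions in order, reducing each to Breen--Serre duality on perfect unipotent groups, to Serre's computation of $\sheafext^{i}(\,\cdot\,,\Q/\Z)$ in terms of homotopy groups, and to dévissage along the connected--\'etale and unipotent--semi-abelian filtrations. For \eqref{ass: Ext of a proalg by Q mod Z is zero above degree 2} I would write $A=\invlim_{\lambda}A_{\lambda}$ with $A_{\lambda}\in\Alg/k$; by the pro-\'etale form (Proposition \ref{prop: comparison thm in the pro-etale topology}) of Theorem \ref{thm: Ext of proalgebraic groups as sheaves}\eqref{ass: RHom between PAlg and LAlg} one has $R\sheafhom_{k^{\ind\rat}_{\pro\et}}(A,\Q/\Z)=\dirlim_{\lambda}R\sheafhom_{k^{\ind\rat}_{\pro\et}}(A_{\lambda},\Q/\Z)$, so it suffices to treat a single quasi-algebraic group $A_{\lambda}$. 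For such a group the same theorem, part \eqref{ass: RHom of proalgebraic groups as sheaves}, identifies the sections of $\sheafext^{n}_{k^{\ind\rat}_{\pro\et}}(A_{\lambda},\Q/\Z)$ over any $k'\in k^{\ind\rat}$ with the usual $\Ext^{n}$ in quasi-algebraic groups over $k'$, which by \cite[\S5, Cor.\ to Prop.\ 7]{Ser60} is the Pontryagin dual of the homotopy group $\pi_{n}(A_{\lambda})$, hence zero for $n\geq2$ by \cite[\S10, Thm.\ 2]{Ser60}; the case $n<0$ is trivial.

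For \eqref{ass: Serre duality for unip and etale} I would first record that, for $A\in\Ind\Pro\Alg/k$, Proposition \ref{prop: comparison thm for ind-proalgebraic groups}\eqref{ass: RHom between IPAlg and Et} together with the triangle $\Z\to\Q\to\Q/\Z$ gives $A^{\SDual}=R\sheafhom_{k^{\ind\rat}_{\pro\et}}(A,\Q/\Z)[-1]$. If $U$ is perfect unipotent then $\sheafhom(U,\Q/\Z)=0$, $\sheafext^{1}(U,\Q/\Z)$ is the Breen--Serre dual of $U$ (again perfect unipotent), and $\sheafext^{\geq2}=0$ by \eqref{ass: Ext of a proalg by Q mod Z is zero above degree 2}; so on such $U$ the functor $\SDual$ is, up to the shift built into its definition, Breen--Serre duality, which is an anti-equivalence. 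If $E$ is finite \'etale then $R\sheafhom(E,\Q/\Z)=E^{\PDual}$ concentrated in degree $0$ (resolve $\Z/d$ by $\Z\xrightarrow{d}\Z$ and use that $\Q/\Z$ is divisible), so $E^{\SDual}=E^{\PDual}[-1]$ and hence $E$ is Serre reflexive; since $\Z^{\SDual}=\Z$ and any object of $\FGEt/k$ is \'etale-locally an extension of a finite free group by a finite one, reflexivity on $D^{b}(\FGEt/k)$ and the stated form of $\SDual$ there follow from the associated triangle and the five lemma, and combining the unipotent and finite-\'etale parts over the connected--\'etale sequence gives the auto-equivalence on $D^{b}(\Alg_{\uc}/k)$. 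Finally, the passage to the pro- and ind-completions — including the statement that $\SDual$ interchanges $D^{b}(\Pro\Alg_{\uc}/k)$ and $D^{b}(\Ind\Alg_{\uc}/k)$ and preserves reflexivity — is read off from the limit formula of Proposition \ref{prop: comparison thm for ind-proalgebraic groups}\eqref{ass: RHom between IPAlg and IPAlg, sheaf version} together with Proposition \ref{prop: misc on ind-rational pro-etale topology}\eqref{ass: Rlim vanishes on proalgebraic groups}, which replaces the derived inverse limits by ordinary ones on pro-(quasi-algebraic) groups.

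For \eqref{ass: Serre duality for semi-abelian varieties} the universal cover $\bar A=\invlim_{n}A$ is uniquely divisible as a sheaf, so $R\sheafhom(\bar A,\Q)=0$ by Proposition \ref{prop: comparison thm for ind-proalgebraic groups}\eqref{ass: RHom between IPAlg and Et}; writing $R\sheafhom(\bar A,\Q/\Z)=\dirlim_{n}\dirlim_{m}R\sheafhom(A,\frac{1}{m}\Z/\Z)$ via Theorem \ref{thm: Ext of proalgebraic groups as sheaves}\eqref{ass: RHom between PAlg and LAlg},\eqref{ass: Ext commutes with direct limits} and noting that along the diagonal $n=m$ the transition map is induced by multiplication by $m$ on $A$, which annihilates $\frac{1}{m}\Z/\Z$, shows this colimit vanishes; hence $\bar A^{\SDual}=0$. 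The triangle coming from $0\to TA\to\bar A\to A\to0$ then yields $A^{\SDual}\isomto(TA)^{\SDual}[-1]$; since $TA=\invlim_{n}A[n]$ is pro-finite-\'etale, the finite-\'etale computation of \eqref{ass: Serre duality for unip and etale}, applied levelwise and passed to the limit, gives $(TA)^{\SDual}=(TA)^{\PDual}[-1]$ and so $A^{\SDual}=(TA)^{\PDual}[-2]$. Dualizing once more, $(TA)^{\PDual}=\dirlim_{n}(A[n])^{\PDual}$ and $\bigl((TA)^{\PDual}\bigr)^{\SDual}=R\invlim_{n}\bigl((A[n])^{\PDual}\bigr)^{\SDual}=R\invlim_{n}A[n][-1]=TA[-1]$, whence $A^{\SDual\SDual}=TA[1]$; the same computation with $TA$ in place of $A$ shows $TA[1]$ is Serre reflexive, and the morphism of the statement, being the one induced by the triangle, is the asserted quasi-isomorphism.

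For \eqref{ass: Serre duality for general proalgebraic groups} I would dévisse an arbitrary $A\in\Pro\Alg/k$ along $0\to A_{0}\to A\to\pi_{0}(A)\to0$ and $0\to A_{\mathrm{u}}\to A_{0}\to A_{\mathrm{sAb}}\to0$, reducing to the three cases $A\in\{\pi_{0}(A),\,A_{\mathrm{u}},\,A_{\mathrm{sAb}}\}$ settled by \eqref{ass: Serre duality for unip and etale} and \eqref{ass: Serre duality for semi-abelian varieties}. In each of these cases $\invlim_{n}$ of multiplication by $n$ is computed directly: it is $0$ on a pro-finite-\'etale group and on a pro-unipotent group (multiplication by $p$ is there pro-nilpotent), and it is the universal cover $\closure{A_{\mathrm{sAb}}}$ on $A_{\mathrm{sAb}}$, whose projection to the $n=1$ term sits in $0\to TA_{\mathrm{sAb}}\to\closure{A_{\mathrm{sAb}}}\to A_{\mathrm{sAb}}\to0$; matching these against $A^{\SDual\SDual}$ (equal to $\pi_{0}(A)$ resp.\ $A_{\mathrm{u}}$ in degree $0$, resp.\ $TA_{\mathrm{sAb}}[1]$) identifies $A^{\SDual\SDual}$ with the mapping cone $[\invlim_{n}A\to A]$ in each case, and the two four-term exact sequences assemble this into the commutative diagram and distinguished triangle of the statement; since the building blocks of $[\invlim_{n}A\to A]$ all lie in $D^{b}(\Pro\Alg_{\uc}/k)$ and the $A^{\SDual}$ in the reflexive subcategory $D^{b}(\Ind\Alg_{\uc}/k)$ of \eqref{ass: Serre duality for unip and etale}--\eqref{ass: Serre duality for semi-abelian varieties}, the same holds for general $A$. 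The hard part will be making this last dévissage genuinely \emph{functorial}: one must check that the natural transformation $\id\to\SDual\SDual$ and the tower of multiplication-by-$n$ maps are compatible with every distinguished triangle used, so that $A^{\SDual\SDual}=[\invlim_{n}A\to A]$ holds not merely on the building blocks but glues to exactly the displayed commutative diagram; a secondary, fiddly point is the shift bookkeeping in \eqref{ass: Serre duality for unip and etale} and the verification in \eqref{ass: Serre duality for semi-abelian varieties} that the relevant colimit vanishes at the level of complexes, not just of cohomology sheaves.
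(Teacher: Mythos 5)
Your treatment of assertions (1)--(3) follows essentially the paper's own route: reduction to quasi-algebraic groups via the limit formulas, Serre's identification of $\sheafext^{i}(\,\cdot\,,\Q/\Z)$ with Pontryagin duals of homotopy groups for (1), Breen--Serre duality plus $\Z^{\SDual}=\Z$ and the pro/ind limit formulas for (2), and the vanishing of $\bar A^{\SDual}$ together with the universal-covering triangle for (3). (One small remark: $R\sheafhom(\bar A,\Q)=0$ holds for \emph{any} object of $\Ind\Pro\Alg/k$ by the cited assertion --- what matters there is unique divisibility of the target $\Q$, not of $\bar A$ --- but you do not actually use this, since your diagonal argument for $R\sheafhom(\bar A,\Q/\Z)=0$ is self-contained and is the same computation the paper makes.)

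For assertion (4) you take a genuinely different route, and the difficulty you flag at the end is exactly the point at which it diverges from the paper; the paper's organization is designed to avoid it. Instead of proving $A^{\SDual\SDual}=[\invlim_{n}A\to A]$ separately on $\pi_{0}(A)$, $A_{\mathrm{u}}$ and $A_{\mathrm{sAb}}$ and then gluing --- which runs into the non-functoriality of mapping cones, and also into the fact that assertion (3) is stated only for a single semi-abelian variety whereas $A_{\mathrm{sAb}}$ is in general only a \emph{pro}-semi-abelian variety --- the paper first constructs the canonical morphism for arbitrary $A$: since $(\invlim_{n}A)^{\SDual}=\dirlim_{n}(A^{\SDual})=0$, dualizing the triangle $\invlim_{n}A\to A\to[\invlim_{n}A\to A]$ twice gives a canonical identification $[\invlim_{n}A\to A]^{\SDual\SDual}=A^{\SDual\SDual}$, and the morphism in question is then just the evaluation map $[\invlim_{n}A\to A]\to[\invlim_{n}A\to A]^{\SDual\SDual}$. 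The whole of (4) thereby reduces to showing that $[\invlim_{n}A\to A]$ is Serre reflexive, and that is precisely what the displayed triangle $[TA_{\mathrm{sAb}}\to A_{\mathrm{u}}]\to[\invlim_{n}A\to A]\to\pi_{0}(A)$ delivers, since its outer terms lie in $D^{b}(\Pro\Alg_{\uc}/k)$, which is reflexive by (2). No case-by-case computation of double duals, and no compatibility check between the evaluation transformation and the d\'evissage triangles, is needed. As written, the ``hard part'' you identify is a genuine gap rather than a routine verification; reorganizing your argument as above closes it.
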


\begin{proof}
	\eqref{ass: Ext of a proalg by Q mod Z is zero above degree 2}
	For any $n$, the sheaf in question is the pro-\'etale sheafification of the presheaf
		\[
				k' \in k^{\ind\rat}
			\mapsto
				\Ext_{k^{\ind\rat}_{\pro\et} / k'}^{n}(A, \Q / \Z).
		\]
	Let $k' = \bigcup k'_{\nu} \in k^{\ind\rat}$ with $k'_{\nu} \in k^{\rat}$.
	Then the group on the right can also be written as
		\[
				\Ext_{k^{\perf}_{\pro\fppf} / k'}^{n}(A, \Q / \Z)
			=
				\Ext_{k^{\ind\rat}_{\et} / k'}^{n}(A, \Q / \Z)
			=
				\dirlim_{\nu}
				\Ext_{(k'_{\nu})^{\ind\rat}_{\et}}^{n}(A, \Q / \Z)
		\]
	by \eqref{thm: Ext of proalgebraic groups as sheaves}
	\eqref{ass: RHom for profppf, perf etale and indrat etale},
	\eqref{ass: RHom between PAlg and LAlg} and
	\eqref{prop: comparison thm in the pro-etale topology}.
	
	For the statement for $n \ne 0, 1$,
	it is enough, replacing $k'_{\nu}$ with $k$, to show that the \'etale sheafification of
		\[
				k' \in k_{\et}
			\mapsto
				\Ext_{k'^{\ind\rat}_{\et}}^{n}(A, \Q / \Z)
		\]
	is zero (for any perfect field $k$), where $k_{\et}$ is the category of \'etale $k$-algebras.
	Equivalently, it is enough to show that the group
		\[
				\Ext_{\closure{k}^{\ind\rat}_{\et}}^{n}(A, \Q / \Z)
			=
				\Ext_{k^{\ind\rat}_{\et} / \closure{k}}^{n}(A, \Q / \Z)
			=
				\dirlim_{k' / k \text{ finite}}
				\Ext_{k'^{\ind\rat}_{\et}}^{n}(A, \Q / \Z)
		\]
	is zero for $n \ne 0, 1$, where $\closure{k}$ is an algebraic closure of $k$.
	For this, we may assume that $k = \closure{k}$.
	We have
		\[
				\Ext_{k^{\ind\rat}_{\et}}^{n}(A, \Q / \Z)
			=
				\dirlim_{m}
				\Ext_{k^{\ind\rat}_{\et}}^{n}(A, \Z / m \Z)
			=
				\dirlim_{m}
				\Ext_{\Pro \Alg / k}^{n}(A, \Z / m \Z)
		\]
	by \eqref{thm: Ext of proalgebraic groups as sheaves}
	\eqref{ass: Ext commutes with direct limits},
	\eqref{ass: RHom of proalgebraic groups as sheaves}.
	This final group is the Pontryagin dual of the $n$-th homotopy group of $A$ over $k$
	in the sense of Serre \cite[\S 5, Cor.\ to Prop.\ 7]{Ser60}.
	It is zero for $n \ne 0, 1$ by \cite[\S 10, Thm.\ 2]{Ser60}.
	
	The statement for $n = 0$ is proven similarly.
	
	\eqref{ass: Serre duality for unip and etale}
	First assume that $A \in \Alg_{\uc} / k$.
	Let $g \colon \Spec k^{\perf}_{\pro\et} \to \Spec k^{\ind\rat}_{\pro\et}$ be
	the continuous map defined by the identity.
	Then
		\[
				A^{\SDual}
			=
				R \sheafhom_{k^{\ind\rat}_{\pro\et}}(A, \Q / \Z)[-1]
			=
				g_{\ast} R \sheafhom_{k^{\perf}_{\pro\et}}(A, \Q / \Z)[-1]
		\]
	by \eqref{prop: misc on ind-rational pro-etale topology}
	\eqref{ass: profppf, proet and et cohom of quasi-algebraic groups etc},
	\eqref{thm: Ext of proalgebraic groups as sheaves}
	\eqref{ass: RHom for profppf, perf etale and indrat etale}
	and \eqref{prop: comparison thm in the pro-etale topology}.
	Let $P \colon \Spec k^{\perf}_{\pro\et} \to \Spec k^{\perf}_{\et}$
	be the morphism defined by the identity.
	Then $R P_{\ast} \Q / \Z = \Q / \Z$ by
	\eqref{prop: misc on ind-rational pro-etale topology}
	\eqref{ass: profppf, proet and et cohom of quasi-algebraic groups etc}
	and $P^{\ast} A = A$.
	Hence
		\[
				R P_{\ast} R \sheafhom_{k^{\perf}_{\pro\et}}(A, \Q / \Z)
			=
				R \sheafhom_{k^{\perf}_{\et}}(A, \Q / \Z).
		\]
	The $p$-primary part of the right-hand side is the Breen-Serre dual of $A$ by definition
	\cite[III, Thm.\ 0.14]{Mil06},
	which is in $D^{b}(\Alg_{\uc} / k)$.
	The prime-to-$p$ part is the Pontryagin dual of the prime-to-$p$ part of $A$ (or $\pi_{0}(A)$).
	With $P^{\ast} R P_{\ast} = \id$, we have
		\[
				R \sheafhom_{k^{\perf}_{\pro\et}}(A, \Q / \Z)
			=
				P^{\ast} R \sheafhom_{k^{\perf}_{\et}}(A, \Q / \Z),
		\]
	hence
		\[
				A^{\SDual}
			=
				g_{\ast} P^{\ast}
				R \sheafhom_{k^{\perf}_{\et}}(A, \Q / \Z)[-1].
		\]
	Note that the objects in $\Alg / k$ are already pro-\'etale sheaves
	and hence invariant under the pro-\'etale sheafification $P^{\ast}$.
	Hence we know that $A^{\SDual}$ is the Breen-Serre dual of $A$ shifted by $-1$ in the $p$-primary part
	and the Pontryagin dual of $A$ in the prime-to-$p$ part shifted by $-1$.
	In particular, we have by \cite[III, Lem.\ 0.13 (b), (c), (d)]{Mil06}:
	$H^{1} A^{\SDual} = \pi_{0}(A)^{\PDual}$;
	$H^{2} A^{\SDual} \in \Alg / k$ is connected unipotent;
	and $H^{n} = 0$ for $n \ne 1, 2$.
	
	We show that $A^{\SDual \SDual} = A$.
	The Breen-Serre duality \cite[III, Thm.\ 0.14]{Mil06} and the Pontryagin duality show that
		\[
				A
			=
				R \sheafhom_{k^{\perf}_{\et}} \bigl(
					R \sheafhom_{k^{\perf}_{\et}}(A, \Q / \Z),
					\Q / \Z
				\bigr).
		\]
	By $R P_{\ast} \Q / \Z = \Q / \Z$, $P^{\ast} R P_{\ast} = \id$ and $P^{\ast} A = A$, we have
		\[
				A
			=
				R \sheafhom_{k^{\perf}_{\pro\et}} \bigl(
					P^{\ast}
					R \sheafhom_{k^{\perf}_{\et}}(A, \Q / \Z),
					\Q / \Z
				\bigr).
		\]
	Applying $g_{\ast}$, we have a morphism
		\begin{align*}
					A
			&	=
					g_{\ast}
					R \sheafhom_{k^{\perf}_{\pro\et}} \bigl(
						P^{\ast}
						R \sheafhom_{k^{\perf}_{\et}}(A, \Q / \Z),
						\Q / \Z
					\bigr)
			\\
			&	\to
					R \sheafhom_{k^{\ind\rat}_{\pro\et}} \bigl(
						g_{\ast} P^{\ast}
						R \sheafhom_{k^{\perf}_{\et}}(A, \Q / \Z),
						\Q / \Z
					\bigr)
			\\
			&	=
					A^{\SDual \SDual}.
		\end{align*}
	Since $R \sheafhom_{k^{\perf}_{\et}}(A, \Q / \Z) \in D^{b}(\Alg_{\uc} / k)$,
	this morphism is an isomorphism by
	\eqref{thm: Ext of proalgebraic groups as sheaves}
	\eqref{ass: RHom for profppf, perf etale and indrat etale}
	and \eqref{prop: comparison thm in the pro-etale topology}
	(applied to the cohomologies of the complex $R \sheafhom_{k^{\perf}_{\et}}(A, \Q / \Z)$).
	Thus $A^{\SDual \SDual} = A$.
	
	If $A = \invlim A_{\lambda} \in \Pro \Alg_{\uc} / k$,
	then 
		$
				A^{\SDual}
			=
				\dirlim A_{\lambda}^{\SDual}
		$
	by \eqref{prop: comparison thm for ind-proalgebraic groups}
	\eqref{ass: RHom between IPAlg and IPAlg, sheaf version}.
	Since the direct limit of sheaves is exact, we have
	$H^{n} A^{\SDual} = \dirlim H^{n} A_{\lambda}^{\SDual}$ for any $n$.
	Hence $A^{\SDual} \in D^{b}(\Ind \Alg_{\uc} / k)$.
	On the other hand, if $A = \dirlim A_{\lambda} \in \Ind \Alg_{\uc} / k$, then
		$
				A^{\SDual}
			=
				R \invlim A_{\lambda}^{\SDual}
		$
	by the same assertion.
	We have $H^{n} A^{\SDual} = \invlim H^{n} A_{\lambda}^{\SDual}$ for any $n$
	by \eqref{prop: misc on ind-rational pro-etale topology}
	\eqref{ass: Rlim vanishes on proalgebraic groups}.
	Hence $A^{\SDual} \in D^{b}(\Pro \Alg_{\uc} / k)$.
	Therefore the statements about $D^{b}(\Pro \Alg_{\uc} / k)$ and $D^{b}(\Ind \Alg_{\uc} / k)$
	are reduced to the statements about $D^{b}(\Alg_{\uc} / k)$,
	which have already been proved above.
	
	The statement for $D^{b}(\FGEt / k)$ follows from the easy computation $\Z^{\SDual} = \Z$.
	
	\eqref{ass: Serre duality for semi-abelian varieties}
	By \eqref{prop: comparison thm for ind-proalgebraic groups}
	\eqref{ass: RHom between IPAlg and IPAlg, sheaf version},
	we have
		\[
				R \sheafhom_{k^{\ind\rat}_{\pro\et}}(\closure{A}, \Q / \Z)
			=
				\dirlim_{n}
				R \sheafhom_{k^{\ind\rat}_{\pro\et}}(\closure{A}, \Z / n \Z)
			=
				0
		\]
	since $\closure{A} \in \Pro \Alg / k$ is uniquely divisible.
	With the vanishing result \eqref{ass: Ext of a proalg by Q mod Z is zero above degree 2} above,
	we have
		\[
				\sheafext_{k^{\ind\rat}_{\pro\et}}^{1}(A, \Q / \Z)
			=
				\sheafhom_{k^{\ind\rat}_{\pro\et}}(TA, \Q / \Z)
		\]
	by the universal covering exact sequence.
	Hence
		\[
				A^{\SDual}
			=
				R \sheafhom_{k^{\ind\rat}_{\pro\et}}(A, \Q / \Z)[-1]
			=
				\sheafhom_{k^{\ind\rat}_{\pro\et}}(TA, \Q / \Z)[-2]
			=
				(TA)^{\SDual}[-1].
		\]
	Therefore $A^{\SDual \SDual} = TA[1]$.
	
	\eqref{ass: Serre duality for general proalgebraic groups}
	We have
		\[
				A^{\SDual}
			=
				R \sheafhom_{k^{\ind\rat}_{\pro\et}}(A, \Q / \Z)[-1]
			=
				\dirlim_{m}
					R \sheafhom_{k^{\ind\rat}_{\pro\et}}(A, \Z / m \Z)[-1]
		\]
	by \eqref{prop: comparison thm for ind-proalgebraic groups}
	\eqref{ass: RHom between IPAlg and IPAlg, sheaf version},
	which is killed after tensoring with $\Q$.
	Hence the same assertion implies that
		\[
				\Bigl(
					\invlim_{n} A
				\Bigr)^{\SDual}
			=
				\dirlim_{n} (A^{\SDual})
			=
				0.
		\]
	Therefore we have a natural morphism and an isomorphism
		\[
				\bigl[
						\invlim_{n} A
					\to
						A
				\bigr]
			\to
				\bigl[
						\invlim_{n} A
					\to
						A
				\bigr]^{\SDual \SDual}
			=
				A^{\SDual \SDual}.
		\]
	We will know that the first morphism is an isomorphism
	once we show that $[\invlim_{n} A \to A] \in D^{b}(\Pro \Alg_{\uc} / k)$
	(whose objects are Serre reflexive
	by \eqref{ass: Serre duality for unip and etale}).
	
	Note that the exact endofunctor $\invlim_{n}$ on $\Pro \Alg / k$
	kills protorsion groups and hence profinite groups and pro-unipotent groups.
	This implies that
	$\invlim_{n} A = \invlim_{n} A_{\mathrm{sAb}} = \closure{A_{\mathrm{sAb}}}$.
	This yields the stated commutative diagram.
	In particular, the mapping cone $[A_{\mathrm{sAb}} \to A / A_{\mathrm{u}}]$
	is quasi-isomorphic to the mapping cone of the morphism of complexes
	$[T A_{\mathrm{sAb}} \to A_{\mathrm{u}}] \to [\invlim_{n} A \to A]$.
	This induces the mapping cone distinguished triangle
		\[
				[T A_{\mathrm{sAb}} \to A_{\mathrm{u}}]
			\to
				[\invlim_{n} A \to A]
			\to
				[A_{\mathrm{sAb}} \to A / A_{\mathrm{u}}].
		\]
	Since the kernel of $A_{\mathrm{sAb}} \to A / A_{\mathrm{u}}$ is zero
	and the cokernel is $\pi_{0}(A)$,
	we obtain the stated distinguished triangle.
	This implies that $[\invlim_{n} A \to A] \in D^{b}(\Pro \Alg_{\uc} / k)$
	since $[T A_{\mathrm{sAb}} \to A_{\mathrm{u}}], \pi_{0}(A) \in D^{b}(\Pro \Alg_{\uc} / k)$.
	Therefore $A^{\SDual \SDual} = [\invlim_{n} A \to A] \in D^{b}(\Pro \Alg_{\uc} / k)$.
	
	Applying the Serre dual to the stated distinguished triangle
	and using $(\invlim_{n} A)^{\SDual} = 0$, we have a distinguished triangle
			\[
					\pi_{0}(A)^{\SDual}
				\to
					A^{\SDual}
				\to
					\bigl[
							T A_{\mathrm{sAb}}
						\to
							A_{\mathrm{u}}
					\bigr]^{\SDual}.
			\]
	Hence $A^{\SDual} \in D^{b}(\Ind \Alg_{\uc} / k)$.
\end{proof}

As examples, we have
	\begin{gather*}
				W_{n}^{\SDual}
			=
				W_{n}[-2]
			\qquad
				\text{\cite[III, Lem.\ 0.13 (c)]{Mil06}},
		\\
				W^{\SDual}
			=
				\dirlim_{n} W_{n}^{\SDual}
			=
				\dirlim_{n} W_{n}[-2],
		\\
				(\Z / n \Z)^{\SDual}
			=
				\Z / n \Z[-1],
			\quad
				\Z^{\SDual}
			=
				\Z,
		\\
				\Gm^{\SDual}
			=
				\bigoplus_{l \ne p}
					(\Q_{l} / \Z_{l})(-1)[-2],
			\quad
				A^{\SDual}
			=
				(TA)^{\PDual}[-2],
	\end{gather*}
where $n \ge 1$, $W_{n}$ the group of Witt vectors of length $n$,
$W = \invlim W_{n}$,
$(\Q_{l} / \Z_{l})(-1)$ the negative Tate twist
and $A$ an abelian variety over $k$.
(Recall that all the groups here are the perfections of the corresponding group schemes.
In particular, the $p$-th power map on the quasi-algebraic group $\Gm$ is an isomorphism.)

Before introducing the notion of P-acyclicity,
we discuss sheaves locally of finite presentation on $\Spec k^{\ind\rat}_{\et}$.
A sheaf $F \in \Set(k^{\ind\rat}_{\et})$ is said to be locally of finite presentation
if it commutes with filtered direct limits as a functor on $k^{\ind\rat}$.
Such a sheaf is automatically a pro-\'etale sheaf: $F \in \Set(k^{\ind\rat}_{\pro\et})$.
For $k' = \bigcup k'_{\lambda} \in k^{\ind\rat}$ with $k_{\lambda} \in k^{\rat}$, we define
	\[
			F^{\fp}(k')
		=
			\dirlim_{\lambda}
				F(k'_{\lambda}).
	\]
Clearly $F^{\fp}$ is locally of finite presentation.
We see that $F^{\fp} \in \Set(k^{\ind\rat}_{\et})$
since an \'etale covering $k'' / k'$ in $k^{\ind\rat}$ is
a filtered direct limit of \'etale coverings in $k^{\rat}$.
We have a natural morphism $F^{\fp} \to F$,
which is an isomorphism if and only if
$F$ is locally of finite presentation.
If $\{F_{\lambda}\}$ is a filtered direct system of sheaves locally of finite presentation,
then $\dirlim F_{\lambda}$ is locally of finite presentation.
Since objects of $\Loc \Alg / k$ and $\Alg / k$ in particular are locally of finite presentation,
it follows that objects of $\Ind \Alg / k$ are also locally of finite presentation.
If $A \in \Pro \Alg / k$ and $B \in \Loc \Alg / k$,
then $\sheafext_{k^{\ind\rat}_{\et}}^{n}(A, B)$ for any $n \ge 0$ is locally of finite presentation
by \eqref{thm: Ext of proalgebraic groups as sheaves}
\eqref{ass: RHom between PAlg and LAlg}.
In particular, $\sheafext_{k^{\ind\rat}_{\et}}^{n}(A, B)$ and
$\sheafext_{k^{\ind\rat}_{\pro\et}}^{n}(A, B)$ are equal as functors on $k^{\ind\rat}$.
If $f \colon E \to F \in \Set(k^{\ind\rat}_{\et})$ is a morphism
between sheaves locally of finite presentation,
then the property of $f$ being injective, surjective or invertible
can be tested on geometric points
(i.e.\ if $E(k') \into F(k')$ for any algebraically closed field $k'$ over $k$,
then $E \into F$ etc.).
Similarly, if $A \in \Ab(k^{\ind\rat}_{\et})$ is locally of finite presentation
with vanishing geometric points
(i.e.\ $A(k') = 0$ for any algebraically closed field $k'$ over $k$),
then $A = 0$.
This argument has already been used in the proof of
\eqref{prop: Serre duality}
\eqref{ass: Ext of a proalg by Q mod Z is zero above degree 2}.
A sheaf $F \in \Set(k^{\ind\rat}_{\et})$ is representable by an \'etale scheme over $k$
if and only if it is locally of finite presentation
and $F(k')$ does not depend on algebraically closed $k'$ over $k$,
in which case $F = F(\closure{k})$ as $\Gal(\closure{k} / k)$-sets.

Now let $P \colon \Spec k^{\ind\rat}_{\pro\et} \to \Spec k^{\ind\rat}_{\et}$
be the morphism defined by the identity.
We say that a sheaf $A \in \Ab(k^{\ind\rat}_{\et})$ is \emph{P-acyclic}
if the natural morphism
	\[
		A \to R P_{\ast} P^{\ast} A
	\]
is an isomorphism.%
\footnote{
	The ``P'' in ``P-acyclicity'' is just the initial letter of ``pro''
	and not (necessarily) a variable.
	We just avoid naming it as ``pro-acyclicity'',
	since it is not a pro-object version of some acyclicity property.
}
This is equivalent that $A \in \Ab(k^{\ind\rat}_{\pro\et})$
and $H^{n}(k'_{\et}, A) \isomto H^{n}(k'_{\pro\et}, A)$ for all $k' \in k^{\ind\rat}$.
We say that a complex $A \in D^{+}(k^{\ind\rat}_{\et})$ is \emph{P-acyclic}
if its cohomology is P-acyclic in each degree.
This implies that
	\[
			A \isomto R P_{\ast} P^{\ast} A
		\quad \text{in} \quad
			D^{+}(k^{\ind\rat}_{\et})
	\]
by writing down the hypercohomology spectral sequence for $R P_{\ast} P^{\ast} A$.
If $0 \to A \to B \to C \to 0$ is an exact sequence in $\Ab(k^{\ind\rat}_{\et})$
and any two of the terms are P-acyclic, then so is the other.
P-acyclicity of a sheaf is preserved under filtered direct limits.
If $f \colon \Spec k' \to \Spec k$ is a finite \'etale morphism
and $A' \in \Ab(k'^{\ind\rat}_{\et})$ is P-acyclic,
then the Weil restriction $f_{\ast} A' \in \Ab(k^{\ind\rat}_{\et})$ is P-acyclic.
When $A \in D^{+}(k^{\ind\rat}_{\et})$ is P-acyclic,
we will use the same letter $A$ to mean its inverse image
$P^{\ast} A \in D^{+}(k^{\ind\rat}_{\pro\et})$.
Here is a criterion of P-acyclicity.

\begin{Prop} \label{prop: criterion of P-acyclicity} \BetweenThmAndList
	\begin{enumerate}
		\item \label{ass: local finite presentation implies P-acyclic}
			If $A \in \Loc \Alg / k$ or $\Ind \Alg / k$, then $A$ is P-acyclic.
			More generally, any sheaf $A \in \Ab(k^{\ind\rat}_{\et})$ locally of finite presentation
			is P-acyclic.
		\item \label{ass: connected unipotent system is P-acyclic}
			If $A \in \Pro \Alg / k$ can be written as
			$A = \invlim_{n \ge 1} A_{n}$ with $A_{n} \in \Alg / k$
			such that all the transition morphisms $\varphi_{n} \colon A_{n + 1} \to A_{n}$ are surjective
			with connected unipotent kernel,
			then $A$ is P-acyclic.
	\end{enumerate}
\end{Prop}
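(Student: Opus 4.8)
The plan is to verify P-acyclicity objectwise: for each $k' \in k^{\ind\rat}$ one must show that the natural map $R \Gamma(k'_{\et}, A) \to R \Gamma(k'_{\pro\et}, A)$ is a quasi-isomorphism. The first half of Assertion (1), for $A \in \Loc \Alg / k$, is already in hand: it is precisely the identity $R g_{\ast} A = A$ of Assertion \eqref{ass: profppf, proet and et cohom of quasi-algebraic groups etc} of Proposition \ref{prop: misc on ind-rational pro-etale topology}, applied to the map $g \colon \Spec k^{\ind\rat}_{\pro\et} \to \Spec k^{\ind\rat}_{\et}$ defined by the identity. So the substantive point in (1) is the general locally-of-finite-presentation case, and (2) will then be assembled from the $\Loc \Alg / k$ case.

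For a locally finitely presented sheaf $A$, first I would check that $A$ is automatically a pro-\'etale sheaf. A pro-\'etale covering of $k'$ is a finite family of ind-\'etale $k'$-algebras $k'_i = \dirlim_\lambda k'_{i,\lambda}$ with $\prod_i k'_i$ faithfully flat; since $\Spec k'$ is quasi-compact the families $\{k'_{i,\lambda}\}_i$ are \'etale coverings for $\lambda$ large, and since $A$ commutes with the filtered colimits $\dirlim_\lambda$ (and with tensor products, and finite limits commute with filtered colimits) the descent condition for the pro-\'etale covering reduces to \'etale descent for $A$. Hence $A = P_{\ast} P^{\ast} A$, which gives the degree-zero comparison. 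For the higher cohomology I would, over each $k' \in k^{\ind\rat}$, choose a weakly contractible hypercover $Y_{\bullet} \to \Spec k'$ in the sense of Bhatt-Scholze \cite{BS13} (its terms lie in $k^{\ind\rat}$, being ind-\'etale over the absolutely flat ring $k'$); then $R \Gamma(k'_{\pro\et}, A)$ is computed by the cochain complex $m \mapsto A(Y_m)$. Writing each $Y_m$ as a filtered colimit of \'etale $k'$-algebras and using that $A$ is locally finitely presented, this complex becomes a filtered colimit of $A$ evaluated on \'etale hypercovers of $k'$; a standard approximation argument, again using quasi-compactness, shows these run cofinally among all \'etale hypercovers, so $R \Gamma(k'_{\pro\et}, A) = R \Gamma(k'_{\et}, A)$.

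For Assertion (2), write $A = \invlim_n A_n$; each $A_n$ lies in $\Alg / k \subset \Loc \Alg / k$ and is therefore P-acyclic by the first part. By Assertion \eqref{ass: Rlim vanishes on proalgebraic groups} of Proposition \ref{prop: misc on ind-rational pro-etale topology}, $R \invlim_n A_n = A$ in $D(k^{\ind\rat}_{\pro\et})$, and since $R \Gamma$ commutes with $R \invlim$,
\[
		R \Gamma(k'_{\pro\et}, A)
	=
		R \invlim_n R \Gamma(k'_{\pro\et}, A_n)
	=
		R \invlim_n R \Gamma(k'_{\et}, A_n).
\]
To get the matching identity over the \'etale site is where the hypothesis is used: the kernel of $\varphi_n \colon A_{n+1} \to A_n$ is connected unipotent, hence over the perfect base an iterated extension of $\Ga$, so $H^1(k''_{\et}, \Ker \varphi_n) = 0$ for every $k'' \in k^{\ind\rat}$ (as $\Ga$ is acyclic on affines). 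Thus every transition map $A_{n+1} \to A_n$ is surjective on sections over every object, the tower $\{A_n\}$ has epimorphic transition maps, and therefore $R^{\ge 1} \invlim_n A_n = 0$ in the Grothendieck category $\Ab(k^{\ind\rat}_{\et})$. So $R \invlim_n A_n = A$ there as well, whence $R \Gamma(k'_{\et}, A) = R \invlim_n R \Gamma(k'_{\et}, A_n)$, and comparison with the previous display gives $R \Gamma(k'_{\et}, A) \isomto R \Gamma(k'_{\pro\et}, A)$, i.e.\ $A$ is P-acyclic.

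The main obstacle is the general locally-of-finite-presentation case of (1): because $\Spec k^{\ind\rat}_{\et}$ is not coherent one cannot simply invoke a ready-made comparison of \'etale and pro-\'etale cohomology, so the weakly contractible hypercover argument must be run by hand, and the delicate step is checking that the \'etale hypercovers obtained by approximating a pro-\'etale one are genuinely cofinal among \'etale hypercovers of $k'$. In (2), by contrast, the only subtle point is the vanishing of the higher derived inverse limit of the tower $\{A_n\}$ in the \emph{\'etale} category, and that is exactly what connected-unipotency of the kernels provides.
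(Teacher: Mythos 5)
Your Assertion (2) has a genuine gap at the step ``the tower $\{A_{n}\}$ has epimorphic transition maps, and therefore $R^{\ge 1} \invlim_{n} A_{n} = 0$ in the Grothendieck category $\Ab(k^{\ind\rat}_{\et})$.'' Mittag--Leffler vanishing of $R^{1}\invlim$ for towers of epimorphisms is \emph{not} valid in an arbitrary Grothendieck category: it requires control of the derived products (AB4*, or acyclicity of the specific products involved), and $\Ab(k^{\ind\rat}_{\et})$ is precisely a category where products are not exact --- the paper proves AB4* only for the pro-\'etale and pro-fppf variants, and the failure for the \'etale site is one of the reasons the pro-\'etale topology is introduced at all. (Neeman's counterexample shows $R^{1}\invlim$ of a tower of epimorphisms can be nonzero in a Grothendieck category.) Concretely, by Roos's description $R^{m}\prod_{n} A_{n}$ is the sheafification of $k' \mapsto \prod_{n} H^{m}(k'_{\et}, A_{n})$, and for general quasi-algebraic $A_{n}$ (containing an abelian variety or a torus, say) this does not vanish for $m = 1$; so the two-term complex $\prod_{n} A_{n} \to \prod_{n} A_{n}$ does not compute $R\invlim_{n} A_{n}$ and surjectivity on sections alone proves nothing. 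The fix is exactly the paper's preliminary reduction, which you skipped: let $B_{n} \subset A_{n}$ be the maximal connected unipotent subgroup, note that $C := A_{n}/B_{n}$ is independent of $n$ and P-acyclic by part (1), and use the exact sequence $0 \to \invlim B_{n} \to A \to C \to 0$ to reduce to the case where every $A_{n}$ is connected unipotent. Then $H^{m}(k'_{\et}, A_{n}) = 0$ for $m \ge 1$, hence $R^{m}\prod_{n} A_{n} = 0$, the Roos complex applies, and only at that point does surjectivity of $\varphi_{n}$ on sections give $R\invlim_{n} A_{n} = A$ in $D(k^{\ind\rat}_{\et})$.

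On Assertion (1): identifying the $\Loc \Alg / k$ case with Assertion \eqref{ass: profppf, proet and et cohom of quasi-algebraic groups etc} of Proposition \ref{prop: misc on ind-rational pro-etale topology} is fine. For the general locally-of-finite-presentation case the paper simply transports the proof of the fppf/pro-fppf comparison \cite[Prop.\ 3.3.1]{Suz13}, whereas you propose a weakly contractible hypercover argument \`a la \cite{BS13}. That route is plausible (ind-\'etale algebras over an ind-rational base stay ind-rational, so the hypercover lives in the site), but you have only sketched the cofinality of the approximating \'etale hypercovers, which is the entire content of the step; as written this is an outline rather than a proof, and the approximation argument it requires is essentially the one already carried out in \cite{Suz13}.
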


\begin{proof}
	\eqref{ass: local finite presentation implies P-acyclic}
	We already saw above that
	objects of $\Loc \Alg / k$ and $\Ind \Alg / k$ are locally of finite presentation
	and a sheaf locally of finite presentation is in $\Ab(k^{\ind\rat}_{\pro\et})$.
	Let $A \in \Ab(k^{\ind\rat}_{\et})$ be locally of finite presentation
	(so $A \in \Ab(k^{\ind\rat}_{\pro\et})$).
	We need to show that
		$
				H^{n}(k'_{\pro\et}, A)
			=
				H^{n}(k'_{\et}, P_{\ast} A)
		$
	for $n \ge 1$ and $k' \in k^{\ind\rat}$.
	In \cite[Prop.\ 3.3.1]{Suz13},
	a statement corresponding to pro-fppf and fppf cohomology
	instead of pro-\'etale and \'etale cohomology is proved.
	The same proof works.
	
	\eqref{ass: connected unipotent system is P-acyclic}
	Let $B_{n}$ be the maximal connected unipotent subgroup of $A_{n}$.
	Then $C := A_{n} / B_{n} \in \Alg / k$ does not depend on $n$ by assumption
	and $B = \invlim B_{n}$ satisfies the same assumption as $A$.
	If $B$ is P-acyclic,
	then the sequence $0 \to B \to A \to C \to 0$ is exact in $\Ab(k^{\ind\rat}_{\et})$,
	and $A$ is P-acyclic.
	Therefore we may assume that all $A_{n}$ are connected unipotent.
	Then $H^{m}(k'_{\et}, A_{n}) = 0$ for all $m \ge 1$.
	Therefore we have $R^{m} \prod_{n} A_{n} = 0$ for all $m \ge 1$
	by \cite[Prop.\ 1.6]{Roo06}.
	Therefore by \cite[Thm.\ 2.1, Lem.\ 2.2 and Rmk.\ 2.3]{Roo06},
	the object $R \invlim A_{n} \in D(k^{\ind\rat}_{\et})$ is represented by the complex
		\[
				\prod_{n} A_{n}
			\to
				\prod_{n} A_{n},
		\]
	concentrated in degrees $0$ and $1$,
	where the morphism is given by $(a_{n}) \mapsto (a_{n} - \varphi_{n}(a_{n + 1}))$.
	Note that the morphisms $\varphi_{n} \colon A_{n + 1}(k') \to A_{n}(k')$
	is surjective for all $k' \in k^{\ind\rat}$ and $n$
	since $\ker(\varphi_{n})$ is connected unipotent.
	Therefore the evaluated complex
		\[
				\prod_{n} A_{n}(k')
			\to
				\prod_{n} A_{n}(k'),
		\]
	which is $R \invlim A_{n}(k') \in D(\Ab)$, is concentrated in degree $0$.
	Hence $R \invlim A_{n} = A$ in $D(k^{\ind\rat}_{\et})$.
	Therefore
		\[
				R \Gamma(k'_{\et}, A)
			=
				R \Gamma(k'_{\et}, R \invlim A_{n})
			=
				R \invlim R \Gamma(k'_{\et}, A_{n})
			=
				R \invlim A_{n}(k')
			=
				A(k').
		\]
	Since $R \invlim A_{n} = A$ in $D(k^{\ind\rat}_{\pro\et})$
	follows from \eqref{prop: misc on ind-rational pro-etale topology}
	\eqref{ass: Rlim vanishes on proalgebraic groups},
	a similar calculation shows that $R \Gamma(k'_{\pro\et}, A) = A(k')$.
	Hence $A$ is P-acyclic.
\end{proof}

For example, the proalgebraic groups $\Ga^{\N}$, $W$ and $W^{\N}$ are P-acyclic.
The groups $\Z_{l}$ for a prime $l$ and $\Gm^{\N}$ are not.


\numberwithin{equation}{section}
\section{Local fields with ind-rational base}
\label{sec: Local fields with ind-rational base}

Let $K$ be a complete discrete valuation field
with perfect residue field $k$ of characteristic $p > 0$.
In this section, we view cohomology of $K$ as a complex of sheaves on $\Spec k^{\ind\rat}_{\pro\et}$.
As explained in \S \ref{sec: Motivation of our constructions},
the $n$-th cohomology of this complex is the pro-\'etale sheafification of the presheaf
	\[
			k'
		\mapsto
			H^{n}(\alg{K}(k'), A),
	\]
where $A$ is an fppf sheaf on $K$.
To study cohomology of this form,
we introduce a topological space structure
on the sets of rational points of schemes over the ring $\alg{K}(k')$ following \cite{Con12}.
We give some density and openness results
using some Artin type approximation methods and results,
which reduce the problems to classical results on complete discrete valuation fields.

Then we recall the relative fppf site $\Spec K_{\fppf} / k^{\ind\rat}_{\et}$
and the structure morphism
	\[
			\pi
		\colon
			\Spec K_{\fppf} / k^{\ind\rat}_{\et}
		\to
			\Spec k^{\ind\rat}_{\et}
	\]
from \cite{Suz13}.
The complex we want is defined to be
the pro-\'etale sheafification of the derived pushforward by $\pi$.


\numberwithin{equation}{subsection}
\subsection{Basic notions and properties}
\label{sec: Basic notions and properties}

We denote the ring of integers of $K$ by $\Order_{K}$.
The maximal ideal of $\Order_{K}$ is denoted by $\ideal{p}_{K}$.
We denote by $W$ the affine ring scheme of Witt vectors of infinite length.
Since $k$ is a perfect field of characteristic $p > 0$,
the ring $\Order_{K} = \invlim_{n \ge 0} \Order_{K} / \ideal{p}_{K}^{n}$ has
a canonical structure of a $W(k)$-algebra of pro-finite-length
\cite[II, \S 5, Thm.\ 4]{Ser79}
(which is actually finite free over $W(k)$ in the mixed characteristic case
and factors as $W(k) \onto k \into \Order_{K}$ in the equal characteristic case).
We define sheaves of rings on the site $\Spec k^{\ind\rat}_{\et}$
by assigning to each $k' \in k^{\ind\rat}$,
	\begin{gather*}
				\alg{O}_{K}(k')
			=
				W(k') \hat{\tensor}_{W(k)} \Order_{K}
			=
				\invlim_{n} \bigl(
					W_{n}(k') \tensor_{W_{n}(k)} \Order_{K} / \ideal{p}_{K}^{n}
				\bigr),
		\\
				\alg{K}(k')
			=
				\alg{O}_{K}(k') \tensor_{\Order_{K}} K.
	\end{gather*}
The Teichm\"uller lift gives a morphism $\omega \colon \Affine_{k}^{1} \into \alg{O}_{K}$.
Any element of $\alg{K}(k')$ can be written as
$\sum_{n \in \Z} \omega(a_{n}) \pi^{n}$
with a unique sequence of elements $a_{n}$ of $k'$
whose negative terms are zero for all but finitely many $n$,
where $\pi$ is a prime element of $\Order_{K}$.
It belongs to the subring $\alg{O}_{K}(k')$ if and only if $a_{n} = 0$ for all $n < 0$.
The sheaf $\alg{O}_{K}$ has a subsheaf $\alg{O}_{K} \tensor_{\Order_{K}} \ideal{p}_{K}^{n}$
of ideals for each $n \ge 0$,
which we denote by $\alg{p}_{K}^{n}$.
The ring $\alg{O}_{K}(k')$ for any $k' \in k^{\ind\rat}$ is $\alg{p}_{K}(k')$-adically complete Hausdorff.
The sheaves of invertible elements of $\alg{O}_{K}$ and $\alg{K}$ are denoted by
$\alg{U}_{K}$ and $\alg{K}^{\times}$, respectively.
For $X \in \Set(k^{\ind\rat}_{\et})$ and a prime (hence maximal) ideal $\ideal{m}$ of $k' \in k^{\ind\rat}$,
the reduction map $X(k') \to X(k' / \ideal{m})$ is denoted by $x \mapsto x(\ideal{m})$.
Its kernel, when $X \in \Ab(k^{\ind\rat}_{\et})$,
is denoted by $X(\ideal{m})$.
Here are some properties of the rings $\alg{O}_{K}(k')$ and $\alg{K}(k')$.

\begin{Prop} \label{prop: basic properties of local fields with ind-rational base}
	Let $k' \in k^{\ind\rat}$.
	\begin{enumerate}
		\item \label{ass: maximal ideals of local fields with ind-rational base}
			For any $\ideal{m} \in \Spec k'$,
			the ideal $\alg{K}(\ideal{m})$ (resp.\ $\alg{p}_{K}(k') + \alg{O}_{K}(\ideal{m})$) is
			a maximal ideal of $\alg{K}(k')$ (resp.\ $\alg{O}_{K}(k')$).
			The map $\ideal{m} \mapsto \alg{K}(\ideal{m})$
			(resp.\ $\ideal{m} \mapsto \alg{p}_{K}(k') + \alg{O}_{K}(\ideal{m})$) gives a homeomorphism
			from $\Spec k'$ onto the maximal spectrum of $\alg{K}(k')$ (resp.\ $\alg{O}_{K}(k')$).
		\item \label{ass: Zariski topology of local fields with ind-rational base}
			A neighborhood base of the maximal ideal
			$\alg{K}(\ideal{m})$ of $\alg{K}(k')$
			(resp.\ $\alg{p}_{K}(k') + \alg{O}_{K}(\ideal{m})$ of $\alg{O}_{K}(k')$) is
			given by the family of open and closed sets
				\[
						\Spec \alg{K}(k')[1 / \omega(e)]
					=
						\Spec \alg{K}(k'[1 / e])
				\]
			(resp.
				\[
						\Spec \alg{O}_{K}(k')[1 / \omega(e)]
					=
						\Spec \alg{O}_{K}(k'[1 / e])
					)
				\]
			for idempotents $e \in k' \setminus \ideal{m}$.
			In particular, any Zariski covering of
			$\Spec \alg{K}(k')$ (resp.\ $\Spec \alg{O}_{K}(k')$)
			can be refined by a disjoint Zariski covering.
		\item \label{ass: invertibility of series in ind-rational coefficients}
			Let $\pi$ be a prime element of $\Order_{K}$.
			An element $\sum_{n \ge 0} \omega(a_{n}) \pi^{n}$ of $\alg{O}_{K}(k')$
			(with $a_{n} \in k'$) is invertible if and only if $a_{0} \in k'^{\times}$.
			An element $\sum_{n \in \Z} \omega(a_{n}) \pi^{n}$ of $\alg{K}(k')$
			(with $a_{n} = 0$ for sufficiently small $n < 0$) is invertible if and only if
			some finitely many of the $a_{n}$ generate the unit ideal of $k'$.
	\end{enumerate}
\end{Prop}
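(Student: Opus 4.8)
The plan is to reduce everything to two elementary inputs. The first is that $k'$, being a filtered union of finite products of fields, is von Neumann regular: $\Spec k'$ is a profinite space, $k'$ is reduced, every prime of $k'$ is maximal, every principal ideal of $k'$ is generated by an idempotent, and whenever a family of elements generates the unit ideal already finitely many of them do. The second is that $\alg{O}_K(k')$ is $\alg{p}_K(k')$-adically complete and separated with $\alg{O}_K(k')/\alg{p}_K(k') \cong k'$ (read off the $\omega$-expansion; here $\alg{p}_K(k') = \pi\alg{O}_K(k')$ and $\pi$ is a non-zero-divisor). From these, the Teichm\"uller lift sets up a bijection between idempotents of $k'$, of $\alg{O}_K(k')$, and of $\alg{K}(k')$: lifting idempotents along $\alg{O}_K(k') \to k'$ is unique by completeness and separatedness, $\omega(e)$ is such a lift, and an idempotent of $\alg{K}(k') = \alg{O}_K(k')[1/\pi]$ lies in $\alg{O}_K(k')$ because $k'$ is reduced. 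In particular $\alg{K}(k')[1/\omega(e)] = \alg{K}(k')/(1-\omega(e)) = \alg{K}(k'[1/e])$, and likewise for $\alg{O}_K$; this dictionary identifies the sets occurring in Assertion \eqref{ass: Zariski topology of local fields with ind-rational base} with the preimages of the clopen subsets of $\Spec k'$.

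I would prove Assertion \eqref{ass: invertibility of series in ind-rational coefficients} next, as its hard direction is the technical core. For $\alg{O}_K(k')$ the claim is immediate from completeness: $\sum_{n\ge 0}\omega(a_n)\pi^n$ is a unit iff its reduction $a_0 \in k'$ is a unit. For $\alg{K}(k')$, after dividing by a power of $\pi$ we may take $v = \sum_{m\ge 0}\omega(b_m)\pi^m \in \alg{O}_K(k')$. If $v$ is a unit it survives at every maximal ideal, so at each $\mathfrak m \in \Spec k'$ some $b_m \notin \mathfrak m$, whence the $b_m$ generate the unit ideal, and then finitely many do. Conversely, if the $b_m$ generate the unit ideal, quasi-compactness of $\Spec k'$ gives $\Spec k' = \bigcup_{m\le N}\Spec k'[1/b_m]$; stratifying by the least index $j\le N$ with $b_j\notin\mathfrak m$ produces orthogonal idempotents $e_0,\dots,e_N$ summing to $1$, and on the factor $\alg{K}(e_j k')=\omega(e_j)\alg{K}(k')$ one has $\omega(e_j)v = \pi^j\bigl(\sum_{l\ge 0}\omega(e_j b_{j+l})\pi^l\bigr)$ (because $e_j b_m$ vanishes at every point of the reduced ring $k'$ for $m<j$, hence is $0$), with leading coefficient $e_j b_j$ a unit of $e_j k'$; by the $\alg{O}_K$ case this is a unit of $\alg{K}(e_j k')$, and summing over the finite product decomposition $\alg{K}(k')=\prod_{j\le N}\alg{K}(e_j k')$ shows $v$ is a unit.

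For Assertion \eqref{ass: maximal ideals of local fields with ind-rational base}: the ideals $\alg{K}(\mathfrak m)$ and $\alg{p}_K(k')+\alg{O}_K(\mathfrak m)$ are the kernels of the surjections $\alg{K}(k')\to\alg{K}(k'/\mathfrak m)$ and $\alg{O}_K(k')\to\alg{O}_K(k'/\mathfrak m)\to k'/\mathfrak m$, and $\alg{K}(k'/\mathfrak m)$ is a complete discrete valuation field, hence a field ($k'/\mathfrak m$ being again a perfect field over $k$, so in $k^{\ind\rat}$), so both ideals are maximal; for $\alg{O}_K$ one also uses that $\pi$ lies in the Jacobson radical, so every maximal ideal contains $\pi$ and is the preimage of a maximal ideal of $k'=\alg{O}_K(k')/\pi$, the one over $\mathfrak m$ being $\alg{p}_K(k')+\alg{O}_K(\mathfrak m)$ since $\omega(\mathfrak m)\subseteq\alg{O}_K(\mathfrak m)$ maps onto $\mathfrak m$ modulo $\pi$. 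The substantive point is surjectivity of $\mathfrak m\mapsto\alg{K}(\mathfrak m)$ onto the maximal spectrum: a maximal ideal $\mathfrak M$ induces a Boolean-algebra homomorphism on idempotents, i.e.\ a point $\mathfrak m\in\Spec k'$ with $\omega(e)\in\mathfrak M \Leftrightarrow e\in\mathfrak m$, and the same leading-coefficient argument as above shows $v\notin\alg{K}(\mathfrak m)$ forces $v\notin\mathfrak M$ (pass to the clopen stratum where the leading coefficient of $v$ survives: there $\omega(1-e_j)\in\mathfrak M$ while $\omega(e_j)v$ is a unit of $\alg{K}(e_j k')$), so $\mathfrak M\subseteq\alg{K}(\mathfrak m)$ and hence $\mathfrak M=\alg{K}(\mathfrak m)$; injectivity is clear by separating $\mathfrak m_1\ne\mathfrak m_2$ with an idempotent $e$ and noting $\omega(e)$ lies in exactly one of $\alg{K}(\mathfrak m_1),\alg{K}(\mathfrak m_2)$. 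The homeomorphism follows by matching bases: $\Spec\alg{K}(k')[1/v]$ pulls back to the open set $\bigcup_m\Spec k'[1/b_m]$, while $\Spec k'[1/b]$ is the pullback of $\Spec\alg{K}(k')[1/\omega(b)]$, and both families are bases; the argument for $\alg{O}_K$ is identical. Assertion \eqref{ass: Zariski topology of local fields with ind-rational base} is then formal: the sets $\Spec\alg{K}(k'[1/e])$ are precisely the preimages of the clopen subsets of the profinite space $\Spec k'$, hence form a neighbourhood base of each maximal ideal and are clopen in $\Spec\alg{K}(k')$; any Zariski covering restricts on maximal points (which suffice since $\Spec\alg{K}(k')$ is spectral) to an open covering of $\Spec k'$, which a finite clopen partition refines, and likewise over $\alg{O}_K$.

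The main obstacle is the non-trivial direction of \eqref{ass: invertibility of series in ind-rational coefficients} and, equivalently, the surjectivity onto the maximal spectrum in \eqref{ass: maximal ideals of local fields with ind-rational base}: both require combining the quasi-compactness of the profinite space $\Spec k'$ (to collapse an a priori infinite family of $\omega$-coefficients into a finite clopen stratification) with the Teichm\"uller idempotents (to lift the stratum-by-stratum unit analysis from $k'$ up to $\alg{O}_K(k')$). Everything else is routine bookkeeping with $\pi$-adic completeness and von Neumann regularity.
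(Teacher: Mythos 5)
Your proof is correct. For Assertion \eqref{ass: invertibility of series in ind-rational coefficients} it is essentially identical to the paper's argument: the paper also stratifies $\Spec k'$ into the clopen pieces $U_{m} = \bigcap_{l < m} V(\omega(a_{l})) \cap D(\omega(a_{m}))$ indexed by the least surviving coefficient, observes that on each piece the element becomes $\pi^{m}$ times a unit of $\alg{O}_{K}$, and uses quasi-compactness of $\Spec k'$ to reduce to a finite disjoint decomposition. The only real difference is in Assertions \eqref{ass: maximal ideals of local fields with ind-rational base} and \eqref{ass: Zariski topology of local fields with ind-rational base}, where the paper simply cites \cite[Lem.\ 2.5.1--2]{Suz13} rather than giving a proof; your self-contained treatment --- maximality via the surjection onto the complete discrete valuation field $\alg{K}(k'/\ideal{m})$, surjectivity onto the maximal spectrum by restricting a maximal ideal $\mathfrak{M}$ to the Boolean algebra of (Teichm\"uller-lifted) idempotents to locate the point $\ideal{m}$, and then the same leading-coefficient argument to show $\mathfrak{M} \subseteq \alg{K}(\ideal{m})$ --- is sound and makes explicit the von Neumann regularity and idempotent-lifting inputs that the paper leaves implicit. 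Your order of deduction (proving the homeomorphism directly by matching bases and then deducing Assertion \eqref{ass: Zariski topology of local fields with ind-rational base}) is the reverse of the paper's remark that the homeomorphism follows from the second assertion, but both directions work.
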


\begin{proof}
	\eqref{ass: maximal ideals of local fields with ind-rational base}
	and \eqref{ass: Zariski topology of local fields with ind-rational base}
	for $\alg{K}(k')$ were proved in \cite[Lem.\ 2.5.1-2]{Suz13}.
	(To see that the map $\ideal{m} \mapsto \alg{K}(\ideal{m})$ is
	not only a bijection but also a homeomorphism,
	use \eqref{ass: Zariski topology of local fields with ind-rational base}.)
	The same proof works for $\alg{O}_{K}(k')$.
	
	\eqref{ass: invertibility of series in ind-rational coefficients}
	The statement for $\alg{O}_{K}(k')$ is obvious.
	If $\sum \omega(a_{n}) \pi^{n} \in \alg{K}(k')$ is invertible,
	then so is its image in the complete discrete valuation field $\alg{K}(k' / \ideal{m})$
	for any $\ideal{m} \in \Spec k'$,
	hence $a_{n} \notin \ideal{m}$ for some $n$.
	Therefore the $a_{n}$ generate the unit ideal of $k'$.
	Some finite linear combination of them gives $1 \in k'$.
	
	For the converse, denote by $D(\omega(a))$ and $V(\omega(a))$ for $a \in k'$
	the open set $\Spec \alg{K}(k')[1 / \omega(a)]$
	and its complementary closed set $\Spec \alg{K}(k' / (a))$ of $\Spec \alg{K}(k')$, respectively.
	They are both open and closed since $k' \in k^{\ind\rat}$ and hence
	$a$ is a unit times an idempotent.
	For any $m \in \Z$, on the open and closed set
	$U_{m} = \bigcap_{l < m} V(\omega(a_{l})) \cap D(\omega(a_{m}))$,
	the element
		\begin{equation} \label{eq: inverse formula in ind-rational coefficients}
				\sum \omega(a_{n}) \pi^{n}
			\equiv
				\sum_{n \ge m} \omega(a_{n}) \pi^{n}
			=
				 \pi^{m} \sum_{n \ge 0} \omega(a_{n + m}) \pi^{n}
		\end{equation}
	becomes a unit.
	Some finitely many of these disjoint open and closed sets $\{U_{m}\}$ cover $\Spec \alg{K}(k')$ by assumption.
	Hence $\sum \omega(a_{n}) \pi^{n}$ is a unit in $\alg{K}(k')$.
\end{proof}

Some properties of schemes over the rings $\alg{O}_{K}(k'), \alg{K}(k')$:

\begin{Prop} \label{prop: schemes over local fields with ind-rational base}
	Let $k' \in k^{\ind\rat}$.
	\begin{enumerate}
		\item	\label{ass: integral locus is closed, most likely open}
			If $X$ is an affine $\alg{O}_{K}(k')$-scheme
			and $x \in X(\alg{K}(k'))$,
			then the subset
				\[
						\{
							\ideal{m} \in \Spec k'
					\mid
							x(\ideal{m}) \in X(\alg{O}_{K}(k' / \ideal{m}))
						\}
				\]
			of $\Spec k'$ is closed.
			It is open if $X$ is of finite type.
		\item	\label{ass: point-wise integral implies integral}
			Let $X$ be a separated $\alg{O}_{K}(k')$-scheme locally of finite type
			and $x \in X(\alg{K}(k'))$.
			If $x(\ideal{m}) \in X(\alg{O}_{K}(k' / \ideal{m}))$ for all $\ideal{m} \in \Spec k'$,
			then $x \in X(\alg{O}_{K}(k'))$.
		\item	\label{ass: valuative criterion of properness}
			Let $X$ be either a proper $\alg{O}_{K}(k')$-scheme
			or written as $\mathcal{G} \times_{\Order_{K}} \alg{O}_{K}(k')$
			with $\mathcal{G}$ the N\'eron model of a smooth group scheme over $K$.
			Then we have $X(\alg{O}_{K}(k')) = X(\alg{K}(k'))$.
		\item	\label{ass: integral points are inverse limits}
			If $X$ is any $\alg{O}_{K}(k')$-scheme,
			then
				\[
						X(\alg{O}_{K}(k'))
					=
						\invlim_{n}
							X(\alg{O}_{K} / \alg{p}_{K}^{n}(k')).
				\]
	\end{enumerate}
\end{Prop}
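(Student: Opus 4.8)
My plan is to prove the four assertions in the order \ref{ass: integral locus is closed, most likely open}, \ref{ass: integral points are inverse limits}, \ref{ass: point-wise integral implies integral}, \ref{ass: valuative criterion of properness}: the first two are essentially formal and serve as the inputs for the third, which is the real content, and the fourth then follows by descent to residue fields. For Assertion~\ref{ass: integral locus is closed, most likely open} I would write $X = \Spec A$, pick generators $\{t_{i}\}$ of $A$ over $\alg{O}_{K}(k')$, and let $x_{i} \in \alg{K}(k')$ be the images of the $t_{i}$, expanded as $x_{i} = \sum_{n} \omega(a_{i,n}) \pi^{n}$ with $a_{i,n} \in k'$ and only finitely many nonzero negative-degree terms. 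Since $\alg{K}(k' / \ideal{m})$ is a complete discrete valuation field in which Teichm\"uller lifts of nonzero elements are units, $x(\ideal{m}) \in X(\alg{O}_{K}(k' / \ideal{m}))$ holds iff every $x_{i}(\ideal{m})$ has non-negative valuation, i.e.\ $a_{i,n} \in \ideal{m}$ for all $n < 0$; so the locus in question is $\bigcap_{i,\, n < 0} V(a_{i,n}) \subseteq \Spec k'$, which is closed. If $A$ is of finite type one may take finitely many $t_{i}$, the ideals $(a_{i,n} : n < 0)$ are then finitely generated, hence (as $k'$, being ind-rational, is von Neumann regular) generated by idempotents, so each $V(\,\cdot\,)$ is clopen and the finite intersection is clopen, in particular open.

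For Assertion~\ref{ass: integral points are inverse limits} I would use that $\alg{O}_{K}(k')$ is $\alg{p}_{K}(k')$-adically complete and Hausdorff, so $\alg{O}_{K}(k') = \invlim_{n} \alg{O}_{K} / \alg{p}_{K}^{n}(k')$ as a ring, while $\Spec \alg{O}_{K} / \alg{p}_{K}^{n}(k')$ is a nilpotent thickening of the profinite, quasi-compact space $\Spec k'$. Given a compatible system $(x_{n})$, the reduction $x_{1}$ factors through finitely many affine opens of $X$, and cutting $\Spec k'$ into finitely many clopen pieces (legitimate by Assertion~\ref{ass: Zariski topology of local fields with ind-rational base} of Proposition~\ref{prop: basic properties of local fields with ind-rational base} and quasi-compactness) we may assume it factors through a single affine open $U = \Spec B$; then all $x_{n}$ factor through $U$ as their underlying maps coincide, and the maps $B \to \alg{O}_{K} / \alg{p}_{K}^{n}(k')$ assemble into $B \to \alg{O}_{K}(k')$, i.e.\ a point of $U \subseteq X$.

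Assertion~\ref{ass: point-wise integral implies integral} is the main obstacle. The key starting observation is that $\alg{K}(k') = \alg{O}_{K}(k')[1 / \pi]$, so $\Spec \alg{K}(k')$ is a schematically dense open subscheme of $\Spec \alg{O}_{K}(k')$ ($\pi$ being a non-zero-divisor since $k'$ is reduced); hence an extension of $x$ over $\alg{O}_{K}(k')$ is unique once $X$ is separated, and may be built by gluing over the Zariski cover of $\Spec \alg{O}_{K}(k')$ by the distinguished opens $\Spec \alg{O}_{K}(k'[1 / e])$ for idempotents $e \in k'$. It therefore suffices to produce, for each $\ideal{m} \in \Spec k'$, an idempotent $e \notin \ideal{m}$ over which $x$ extends. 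For this I would take the given extension of $x(\ideal{m})$ over $\alg{O}_{K}(k' / \ideal{m})$; its source being local it factors through a finite-type affine open $V = \Spec C \subseteq X$ containing the image of the closed point, so by Assertion~\ref{ass: Zariski topology of local fields with ind-rational base} of Proposition~\ref{prop: basic properties of local fields with ind-rational base} there is an idempotent $e \notin \ideal{m}$ with $\Spec \alg{K}(k'[1 / e]) \subseteq x^{-1}(V)$, yielding a $\alg{K}(k'[1 / e])$-point $y$ of $V$. Now Assertion~\ref{ass: integral locus is closed, most likely open}, applied to the finite-type affine $V$, shows that the locus $L$ of $\ideal{n}$ where $y(\ideal{n})$ is integral is clopen, and $L$ contains $\ideal{m}$ (the extension at $\ideal{m}$ already factors through $V$); shrinking $e$ so that $\Spec k'[1 / e] = L$, the negative-degree Teichm\"uller coefficients of the coordinates of $y$ now lie in every maximal ideal of the reduced ring $k'[1 / e]$, hence vanish, so $y$ extends to an $\alg{O}_{K}(k'[1 / e])$-point of $V \subseteq X$. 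Gluing these local extensions — they agree on overlaps by the uniqueness above, and finitely many $e$ suffice by quasi-compactness of $\Spec k'$ — gives $x \in X(\alg{O}_{K}(k'))$. The delicacy here is that separatedness, the profinite structure of $\Spec k'$, the \emph{openness} half of Assertion~\ref{ass: integral locus is closed, most likely open} (which is exactly what makes $L$ a clopen neighborhood of $\ideal{m}$ rather than merely a closed set), and the reducedness of $k'$ must all be combined; individually none suffices.

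Finally, for Assertion~\ref{ass: valuative criterion of properness} the inclusion $X(\alg{O}_{K}(k')) \subseteq X(\alg{K}(k'))$ is automatic, and $X$ is separated and locally of finite type in both cases, so by Assertion~\ref{ass: point-wise integral implies integral} it is enough to treat $k'$ a perfect field, where $\alg{O}_{K}(k')$ is a complete discrete valuation ring with fraction field $\alg{K}(k')$, unramified over $\Order_{K}$. When $X$ is proper this is the valuative criterion of properness over that discrete valuation ring. When $X = \mathcal{G} \times_{\Order_{K}} \alg{O}_{K}(k')$, I would identify $X(\alg{O}_{K}(k')) = \mathcal{G}(\alg{O}_{K}(k'))$ and $X(\alg{K}(k')) = G(\alg{K}(k'))$ and deduce their equality from the N\'eron mapping property for $\mathcal{G}$ applied to the (formally \'etale, unramified) extension $\Order_{K} \to \alg{O}_{K}(k')$ — writing $\alg{O}_{K}(k')$ as a completion of a filtered union of $\Order_{L}$ with $L / K$ finite unramified and using Assertion~\ref{ass: integral points are inverse limits} to pass through the completion when $k' / k$ is algebraic, and invoking the base-change behavior of N\'eron models otherwise. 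I do not expect this last step to present difficulties beyond bookkeeping.
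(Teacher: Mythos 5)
Your proposal is correct and follows essentially the same route as the paper's proof: Teichm\"uller-coefficient analysis plus the fact that finitely generated ideals of $k'$ are generated by idempotents for Assertion~\eqref{ass: integral locus is closed, most likely open}, reduction to the affine case via disjoint clopen refinements of $\Spec k'$ for Assertions~\eqref{ass: point-wise integral implies integral} and \eqref{ass: integral points are inverse limits}, and reduction to the residue fields $k'/\ideal{m}$ followed by the valuative criterion resp.\ the stability of N\'eron models under the base change $\Order_{K} \to \alg{O}_{K}(k'/\ideal{m})$ for Assertion~\eqref{ass: valuative criterion of properness}. The only differences are cosmetic: the order in which the assertions are proved, and a slightly more explicit spreading-out argument in your treatment of Assertion~\eqref{ass: point-wise integral implies integral}.
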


\begin{proof}
	\eqref{ass: integral locus is closed, most likely open}
	Write $X = \Spec S$.
	Let $\varphi \colon S \to \alg{K}(k')$ be the $\alg{O}_{K}(k')$-algebra homomorphism
	corresponding to $x \in X(\alg{K}(k'))$.
	For $f \in S$, write $\varphi(f) = \sum \omega(a_{n, f}) \pi^{n}$ with $a_{n, f} \in k'$,
	where $\pi$ is a prime element of $\Order_{K}$.
	Then a maximal ideal $\ideal{m} \in \Spec k'$ is in the given subset if and only if
	the composite $S \to \alg{K}(k') \onto \alg{K}(k' / \ideal{m})$ factors through
	$\alg{O}_{K}(k' / \ideal{m})$.
	This is equivalent that $a_{n, f} \in \ideal{m}$ for all $n < 0$ and $f \in S$.
	Therefore the given subset is the closed subset defined by the ideal generated by the elements
	$a_{n, f} \in k'$ for all $n < 0$ and $f \in S$.
	We may consider only those $f$'s in any fixed set of generators of the $\alg{O}_{K}(k')$-algebra $S$.
	Hence the ideal is finitely generated if
	$S$ is finitely generated.
	A finitely generated ideal of $k' \in k^{\ind\rat}$ is generated by a single idempotent.
	Hence the given set is open in this case.
	
	\eqref{ass: point-wise integral implies integral}
	Note that the separatedness of $X$ and the injectivity of
	$\alg{O}_{K}(k') \into \alg{K}(k')$ implies that
	the natural map $X(\alg{O}_{K}(k')) \to X(\alg{K}(k'))$ is injective.
	We first treat the case where $X$ is affine.
	Using the notation in the proof of the previous assertion,
	the assumption implies that $a_{n, f} = 0$ for all $n < 0$ and $f \in S$.
	Hence $\varphi$ factors through $\alg{O}_{K}(k')$
	and therefore $x \in X(\alg{O}_{K}(k'))$ in this case.
	
	Next we treat the general case.
	Let $\{U_{\lambda}\}$ be an affine open cover of $X$.
	For any $\ideal{m} \in \Spec k'$,
	the sets $U_{\lambda}(\alg{O}_{K}(k' / \ideal{m}))$ indexed by $\lambda$
	cover $X(\alg{O}_{K}(k' / \ideal{m}))$
	since $\alg{O}_{K}(k' / \ideal{m})$ is local.
	Hence the assumption implies that the sets
		\[
				\{
					\ideal{m} \in \Spec k'
			\mid
					x(\ideal{m}) \in U_{\lambda}(\alg{O}_{K}(k' / \ideal{m}))
				\}
		\]
	indexed by $\lambda$ cover $\Spec k'$.
	For each $\lambda$, this set is open
	since the affine $\alg{O}_{K}(k')$-scheme $U_{\lambda}$ is of finite type
	and by \eqref{ass: integral locus is closed, most likely open}.
	Refine this open covering of $\Spec k'$ by a disjoint open covering
	$\{\Spec k'[1 / e_{\lambda}]\}$ with $e_{\lambda}$ idempotents.
	Note that all but finitely many $e_{\lambda}$ are zero.
	By considering $k'[1 / e_{\lambda}]$ for each $\lambda$ instead of $k'$,
	we are reduced to the affine case.
	
	\eqref{ass: valuative criterion of properness}
	By the previous assertion,
	it is reduced to showing
	$X(\alg{O}_{K}(k' / \ideal{m})) = X(\alg{K}(k' / \ideal{m}))$
	for all $\ideal{m} \in \Spec k'$.
	The proper case is the valuative criterion.
	For the N\'eron case,
	note that $k' / \ideal{m}$ is a separable (possibly transcendental) field extension of $k$
	in the sense of \cite[Chap.\ V, \S 15, no.\ 2--3]{Bou03}
	by [loc.\ cit., no.\ 5, Thm.\ 3 b)].
	Therefore \cite[10.1, Prop.\ 3]{BLR90} shows that
	$\mathcal{G} \times_{\Order_{K}} \alg{O}(k' / \ideal{m})$ is
	the N\'eron model, over the discrete valuation ring $\alg{O}(k' / \ideal{m})$,
	of its generic fiber.
	Hence the N\'eron mapping property implies that
	$X(\alg{O}_{K}(k' / \ideal{m})) = X(\alg{K}(k' / \ideal{m}))$.
	
	\eqref{ass: integral points are inverse limits}
	This is trivial if $X$ is affine.
	In general, let $\{U_{\lambda}\}$ be an affine open cover of $X$.
	Let $(x_{n})_{n}$ be an element of the limit in question.
	The pullback of $\{U_{\lambda}\}$ by the morphism $x_{1} \colon \Spec k' \to X$
	gives an open covering of $\Spec k'$.
	Refine it by a disjoint covering $\{\Spec k'[1 / e_{\lambda}]\}$
	with idempotents $e_{\lambda}$.
	Then $x_{1}$ restricted to $\Spec k'[1 / e_{\lambda}]$ factors through $U_{\lambda}$.
	Since the the surjection $\alg{O}_{K} / \alg{p}_{K}^{n}(k') \onto k'$ has a nilpotent kernel,
	we know that $x_{n}$ restricted to $\Spec \alg{O}_{K} / \alg{p}_{K}^{n}(k'[1 / e_{\lambda}])$
	factors through $U_{\lambda}$ for all $n$.
	The affine case then implies that $(x_{n})_{n}$ comes from $X(\alg{O}_{K}(k'))$.
\end{proof}


\numberwithin{equation}{subsection}
\subsection{Topology on rational points of varieties}
\label{sec: Topology on rational points of varieties}

We give a topology on the set $X(\alg{K}(k'))$
for any $\alg{K}(k')$-scheme $X$ locally of finite type with $k' \in k^{\ind\rat}$
and on the set $Y(\alg{O}_{K}(k'))$
for any $\alg{O}_{K}(k')$-scheme $Y$ locally of finite type.
We follow \cite[\S 2-3]{Con12}.

First for each $k' \in k^{\ind\rat}$,
the ring $\alg{O}_{K}(k')$ is a topological ring
by the ideals $\alg{p}_{K}^{n}(k')$, $n \ge 0$.
We give a topological ring structure on $\alg{K}(k')$ so that
the subring $\alg{O}_{K}(k')$ is open.
Recall from \cite[Prop.\ 2.1]{Con12} that
the set $X(\alg{K}(k'))$ for an affine $\alg{K}(k')$-scheme $X$ of finite type
has a canonical structure of a topological space.
Explicitly, choose an embedding $X \into \Affine^{n}$ for some $n$,
give the product topology on $\Affine^{n}(\alg{K}(k')) = \alg{K}(k')^{n}$
and give the subspace topology on $X(\alg{K}(k'))$.
This is independent of the choice,
and relies only on the fact that $\alg{K}(k')$ is a topological ring.
Similarly we have a canonical topological space structure on $Y(\alg{O}_{K}(k'))$
for an affine $\alg{O}_{K}(k')$-scheme $Y$ of finite type.

To proceed to the non-affine case, note that the subsets
$\alg{K}^{\times}(k') \subset \alg{K}(k')$ and $\alg{U}_{K}(k') \subset \alg{O}_{K}(k')$ are open,
and the inverse maps on them are continuous,
by \eqref{prop: basic properties of local fields with ind-rational base}
\eqref{ass: invertibility of series in ind-rational coefficients}
and \eqref{eq: inverse formula in ind-rational coefficients}.
Therefore if $X$ is an affine $\alg{K}(k')$-scheme and
$U$ is a basic open affine subset of $X$
(i.e., a localization by one element),
then $U(\alg{K}(k')) \subset X(\alg{K}(k'))$ is an open immersion of topological spaces
as explained in the proof of \cite[Prop.\ 3.1]{Con12}.
If we try to apply the whole cited proposition,
we will need the equality
	\[
			X(\alg{K}(k'))
		=
			\bigcup_{\lambda}
				U_{\lambda}(\alg{K}(k'))
	\]
for a $\alg{K}(k')$-scheme $X$ locally of finite type and
an arbitrary affine open cover $\{U_{\lambda}\}$ of $X$,
which requires that $\alg{K}(k')$ be local.
When $k' \in k^{\ind\rat}$ is a field,
then $\alg{K}(k')$ is a complete discrete valuation field,
so the equality above is true.
If $k' \in k^{\ind\rat}$ is a finite product of fields $\prod k'_{i}$,
then the above equality is not true in general,
but the equality $X(\alg{K}(k')) = \prod X(\alg{K}(k'_{i}))$
gives a product topology on $X(\alg{K}(k'))$.
For a general $k' = \bigcup k'_{\lambda} \in k^{\ind\rat}$ with $k'_{\lambda} \in k^{\rat}$,
it is not true that $\alg{K}(k') = \bigcup \alg{K}(k'_{\lambda})$,
and hence it is not immediately clear
how to use the topologies on the sets $U_{\lambda}(\alg{K}(k'))$
to topologize $X(\alg{K}(k'))$.
The situation is the same for $\alg{O}_{K}(k')$.
To topologize $X(\alg{K}(k'))$ for a general $k' \in k^{\ind\rat}$, we will use the following.

\begin{Prop} \label{prop: top space str from affine to general}
	Let $k' \in k^{\ind\rat}$.
	\begin{enumerate}
		\item \label{ass: K-points split into disjoint unions}
			Let $X$ be a $\alg{K}(k')$-scheme locally of finite type
			and $\{U_{\lambda}\}_{\lambda \in \Lambda}$ any affine open cover of $X$.
			Given a family $\{e_{\lambda}\}$ of disjoint idempotents of $k'$
			indexed by $\Lambda$ such that $\sum e_{\lambda} = 1$
			and a family of $K$-morphisms
			$\Spec \alg{K}(k'[1 / e_{\lambda}]) \to U_{\lambda}$ ($\into X$),
			a trivial patching gives a $K$-morphism $\Spec \alg{K}(k') \to X$.
			The map
				\[
						X(\alg{K}(k'))
					\gets
						\bigcup_{\{e_{\lambda}\}}
							\prod_{\lambda}
							U_{\lambda} \bigl(
								\alg{K}(k'[1 / e_{\lambda}])
							\bigr),
				\]
			thus obtained is bijective.
		\item
			Let $X$ be an affine $\alg{K}(k')$-scheme of finite type
			and $\{U_{i}\}$ any basic affine open finite cover of $X$.
			Let $\{e_{i}\}$ be a family of disjoint idempotents of $k'$
			such that $\sum e_{i} = 1$.
			Then the bijection and the natural map
				\[
						X(\alg{K}(k'))
					=
						\prod X \bigl(
							\alg{K}(k'[1 / e_{i}])
						\bigr)
					\gets
						\prod U_{i} \bigl(
							\alg{K}(k'[1 / e_{i}])
						\bigr)
				\]
			is a homeomorphism and an open immersion, respectively.
	\end{enumerate}
	There are statements for $\alg{O}_{K}(k')$ instead of $\alg{K}(k')$ in the obvious manner.
\end{Prop}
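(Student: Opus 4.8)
The plan is to leverage two facts established above. First, by Assertion \eqref{ass: Zariski topology of local fields with ind-rational base} of Proposition \ref{prop: basic properties of local fields with ind-rational base}, every Zariski covering of $\Spec \alg{K}(k')$ (resp.\ $\Spec \alg{O}_{K}(k')$), which is quasi-compact being an affine scheme, is refined by a finite disjoint covering by open-and-closed pieces $\Spec \alg{K}(k'[1/e])$ (resp.\ $\Spec \alg{O}_{K}(k'[1/e])$) with $e$ an idempotent of $k'$. Second, a family of disjoint idempotents of a ring summing to $1$ has only finitely many nonzero members, and $\alg{K}$ (resp.\ $\alg{O}_{K}$), being built from the Witt ring $W$, a completed tensor product over $W(k)$, and a localization, commutes with the resulting finite product decomposition; since $\alg{K}$ of the zero ring is zero and $U_{\lambda}(\alg{K}(0))$ is a one-point set, all the infinite products in the statement are effectively finite. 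Consequently $\Spec \alg{K}(k') = \coprod_{\lambda} \Spec \alg{K}(k'[1/e_{\lambda}])$ whenever $\{e_{\lambda}\}$ is a family of disjoint idempotents with $\sum e_{\lambda} = 1$, and a $K$-morphism out of this scheme to any scheme is exactly a tuple of $K$-morphisms out of the pieces; this makes the trivial patching map of Assertion \eqref{ass: K-points split into disjoint unions} (and its $\alg{O}_{K}$-analogue) well-defined.

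For the bijectivity in Assertion \eqref{ass: K-points split into disjoint unions}, surjectivity goes as follows. Given $f \in X(\alg{K}(k'))$, pull back the affine open cover $\{U_{\lambda}\}$ along $f$ to a Zariski covering $\{f^{-1}(U_{\lambda})\}$ of $\Spec \alg{K}(k')$ and refine it, by the first fact above, to a finite disjoint covering $\{\Spec \alg{K}(k'[1/e_{j}])\}_{j}$ with each piece contained in some $f^{-1}(U_{\lambda(j)})$; since a scheme-valued point lands in an open subscheme once it does so set-theoretically, the restriction of $f$ to $\Spec \alg{K}(k'[1/e_{j}])$ factors through $U_{\lambda(j)}$. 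Grouping, $e_{\lambda} := \sum_{\lambda(j) = \lambda} e_{j}$ defines a family of disjoint idempotents with $\sum_{\lambda} e_{\lambda} = 1$ indexed by $\Lambda$ such that $f$ restricted to $\Spec \alg{K}(k'[1/e_{\lambda}]) = \coprod_{\lambda(j) = \lambda} \Spec \alg{K}(k'[1/e_{j}])$ factors through $U_{\lambda}$, giving $x_{\lambda} \in U_{\lambda}(\alg{K}(k'[1/e_{\lambda}]))$; the tuple $(x_{\lambda})$ patches back to $f$. For injectivity, if tuples over $\{e_{\lambda}\}$ and over $\{e'_{\mu}\}$ patch to the same $f$, one passes to the common refinement $\{e_{\lambda} e'_{\mu}\}$; on each $\Spec \alg{K}(k'[1/(e_{\lambda} e'_{\mu})])$ both tuples restrict to the same morphism $f$, which there factors through $U_{\lambda} \cap U_{\mu}$, so the two patching data agree after this refinement. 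Reading $\bigcup_{\{e_{\lambda}\}}$ as the filtered colimit along refinements of such idempotent partitions of unity, this says the patching map is injective. The $\alg{O}_{K}$-version is verbatim.

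For the second assertion, the second fact above gives $\alg{O}_{K}(k') = \prod_{i} \alg{O}_{K}(k'[1/e_{i}])$ with $\alg{p}_{K}^{n}(k') = \prod_{i} \alg{p}_{K}^{n}(k'[1/e_{i}])$, hence $\alg{K}(k') = \prod_{i} \alg{K}(k'[1/e_{i}])$ as topological rings, hence $\Affine^{m}(\alg{K}(k')) = \prod_{i} \Affine^{m}(\alg{K}(k'[1/e_{i}]))$ as topological spaces; choosing an embedding $X \into \Affine^{m}$ and the induced subspace topologies, and invoking the independence of the embedding from \cite[Prop.\ 2.1]{Con12}, one gets that $X(\alg{K}(k')) = \prod_{i} X(\alg{K}(k'[1/e_{i}]))$ is a homeomorphism. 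That $\prod_{i} U_{i}(\alg{K}(k'[1/e_{i}])) \to X(\alg{K}(k'))$ is an open immersion reduces, factor by factor, to each $U_{i}(\alg{K}(k'[1/e_{i}])) \into X(\alg{K}(k'[1/e_{i}]))$ being an open immersion; this holds because $U_{i} \into X$ is a localization at one element and $\alg{K}(k'[1/e_{i}])$ is a topological ring whose units form an open subset with continuous inversion (Assertion \eqref{ass: invertibility of series in ind-rational coefficients} of Proposition \ref{prop: basic properties of local fields with ind-rational base} and Equation \eqref{eq: inverse formula in ind-rational coefficients} in its proof) -- exactly the input used in the proof of \cite[Prop.\ 3.1]{Con12} -- together with the fact that a finite product of open immersions of topological spaces is an open immersion. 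The statements for $\alg{O}_{K}(k')$ follow by the same arguments, using the $\alg{O}_{K}$-parts of Propositions \ref{prop: basic properties of local fields with ind-rational base} and \ref{prop: schemes over local fields with ind-rational base} and the topology on $Y(\alg{O}_{K}(k'))$ from \cite[\S 2--3]{Con12}.

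The only real obstacle is keeping the bookkeeping in Assertion \eqref{ass: K-points split into disjoint unions} honest: one must organize $\bigcup_{\{e_{\lambda}\}}$ as a genuine directed system of idempotent partitions of unity indexed by $\Lambda$ (with restriction maps along refinements) so that ``bijective'' is a meaningful statement, and one must apply the refinement statement of Proposition \ref{prop: basic properties of local fields with ind-rational base} to $\Spec \alg{K}(k')$ itself -- quasi-compact as an affine scheme -- rather than to its maximal spectrum $\Spec k'$. The supporting facts (commutation of $\alg{K}(-)$ and $\alg{O}_{K}(-)$ with finite idempotent decompositions, and automatic factorization of a point through an open subscheme) are routine.
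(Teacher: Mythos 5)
Your proposal is correct and follows the same route as the paper's (much terser) proof: Assertion (1) is deduced from the disjoint-refinement property of Zariski coverings of $\Spec \alg{K}(k')$ in Proposition \ref{prop: basic properties of local fields with ind-rational base}, and Assertion (2) from the finite topological-ring decomposition $\alg{K}(k') = \prod_{i} \alg{K}(k'[1/e_{i}])$ together with the previously established fact that basic affine opens induce open immersions on points. You have merely supplied the bookkeeping (finiteness of the idempotent family, grouping and common refinement for bijectivity) that the paper leaves implicit.
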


\begin{proof}
	The first assertion follows from
	\eqref{prop: basic properties of local fields with ind-rational base}
	\eqref{ass: Zariski topology of local fields with ind-rational base}.
	To check that the bijection in the second assertion is a homeomorphism,
	we can use the fact that
	$\alg{K}(k') = \prod \alg{K}(k'[1 / e_{i}])$
	is a homeomorphism.
	The final map is an open immersion as seen before.
	The statements for $\alg{O}_{K}(k')$ can be proven similarly.
\end{proof}

Given this, we introduce a topology on $X(\alg{K}(k'))$ by declaring that
a subset of $X(\alg{K}(k'))$ is open if
its intersection with the product topological space
	$
		\prod_{\lambda}
		U_{\lambda} \bigl(
			\alg{K}(k'[1 / e_{\lambda}])
		\bigr)
	$
is open for any $\{e_{\lambda}\}$.
This is independent of the choice of the affine open covering $\{U_{\lambda}\}$
as in \cite[Prop.\ 3.1]{Con12}.
Similarly we have a topology on $Y(\alg{O}_{K}(k'))$
for an $\alg{O}_{K}(k')$-scheme $Y$ locally of finite type.
Some properties of the topology given in \cite{Con12} also hold for our topology.
We list them.
A $\alg{K}(k')$-morphism $X_{1} \to X_{2}$ induces
a continuous map $X_{1}(\alg{K}(k')) \to X_{2}(\alg{K}(k'))$
for any $k' \in k^{\ind\rat}$.
We have homeomorphisms $\Affine^{n}(\alg{K}(k')) = \alg{K}(k')^{n}$
and
	$
			(X_{2} \times_{X_{1}} X_{3})(\alg{K}(k'))
		=
			X_{2}(\alg{K}(k')) \times_{X_{1}(\alg{K}(k'))} X_{3}(\alg{K}(k'))
	$.
A closed (resp.\ open) immersion $X_{1} \into X_{2}$ corresponds to
a closed (resp.\ open) immersion $X_{1}(\alg{K}(k')) \into X_{2}(\alg{K}(k'))$.
If $X$ is separated, then $X(\alg{K}(k'))$ is Hausdorff.
Similar for $\alg{O}_{K}(k')$.

We need the following three topological propositions.

\begin{Prop} \label{prop: integral points are open}
	Let $k' \in k^{\ind\rat}$.
	Let $X$ be a separated $\alg{O}_{K}(k')$-scheme locally of finite type.
	Then $X(\alg{O}_{K}(k'))$ is an open subset of $X(\alg{K}(k'))$.
\end{Prop}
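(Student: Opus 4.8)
The plan is to reduce the statement to the case where $X$ is affine and of finite type by the same disjoint-refinement argument used in the proof of Assertion~\eqref{ass: integral points are inverse limits} of Proposition~\ref{prop: schemes over local fields with ind-rational base}, and then to settle the affine case directly, by embedding $X$ into an affine space and exploiting that $\alg{O}_{K}(k')$ is, by construction, an \emph{open} subring of $\alg{K}(k')$.

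First I would dispose of the affine case. Let $R \in k^{\ind\rat}$, write $\alg{O} = \alg{O}_{K}(R)$ and $\alg{K} = \alg{K}(R)$, and let $U$ be an affine $\alg{O}$-scheme of finite type. Choosing a closed immersion $U \into \Affine^{n}_{\alg{O}}$ over $\alg{O}$ gives a closed immersion $U(\alg{K}) \into \Affine^{n}(\alg{K}) = \alg{K}^{n}$ of topological spaces, and since $\alg{O} \into \alg{K}$ is injective one has, as subsets of $\alg{K}^{n}$, the equality $U(\alg{O}) = U(\alg{K}) \cap \alg{O}^{n}$. Because $\alg{O}$ is open in $\alg{K}$, the subset $\alg{O}^{n}$ is open in $\alg{K}^{n}$, and since $U(\alg{K})$ carries the subspace topology, $U(\alg{O})$ is open in $U(\alg{K})$.

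Next I would carry out the reduction. Fix $x \in X(\alg{O}_{K}(k'))$; as $X$ is separated, this set is identified with a subset of $X(\alg{K}(k'))$, so it suffices to find, for each such $x$, an open neighborhood in $X(\alg{K}(k'))$ lying inside $X(\alg{O}_{K}(k'))$. Take an affine open cover $\{U_{\lambda}\}$ of $X$. The reduction $\bar{x} \colon \Spec k' \to X$ of $x$ modulo $\alg{p}_{K}(k')$ pulls this cover back to an open cover of $\Spec k'$; since $k' \in k^{\ind\rat}$, the space $\Spec k'$ is quasi-compact and, by Assertion~\eqref{ass: Zariski topology of local fields with ind-rational base} of Proposition~\ref{prop: basic properties of local fields with ind-rational base}, has a basis of open-and-closed sets $\Spec k'[1/e]$ for idempotents $e$; hence the cover refines to a \emph{finite} disjoint open-and-closed cover $\Spec k' = \coprod_{i=1}^{m} \Spec k'[1/f_{i}]$ such that $\bar{x}|_{\Spec k'[1/f_{i}]}$ factors through some $U_{\lambda(i)}$. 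Accordingly $\alg{O}_{K}(k') = \prod_{i} \alg{O}_{K}(k'[1/f_{i}])$ and $\alg{K}(k') = \prod_{i} \alg{K}(k'[1/f_{i}])$ compatibly with the topologies (cf.\ Proposition~\ref{prop: top space str from affine to general}), so $X(\alg{K}(k')) = \prod_{i} X(\alg{K}(k'[1/f_{i}]))$ as a topological space and $X(\alg{O}_{K}(k')) = \prod_{i} X(\alg{O}_{K}(k'[1/f_{i}]))$; write $x = (x_{i})$. Now by Assertion~\eqref{ass: invertibility of series in ind-rational coefficients} of Proposition~\ref{prop: basic properties of local fields with ind-rational base} every element $1 + \pi a$ of $\alg{O}_{K}(k'[1/f_{i}])$ is a unit, so $\alg{p}_{K}(k'[1/f_{i}])$ lies in the Jacobson radical of $\alg{O}_{K}(k'[1/f_{i}])$; consequently the only open subset of $\Spec \alg{O}_{K}(k'[1/f_{i}])$ containing the closed subset $V(\alg{p}_{K}(k'[1/f_{i}])) = \Spec k'[1/f_{i}]$ is the whole space, and therefore $x_{i} \colon \Spec \alg{O}_{K}(k'[1/f_{i}]) \to X$, which sends this closed subscheme into $U_{\lambda(i)}$, actually factors through $U_{\lambda(i)}$; that is, $x_{i} \in U_{\lambda(i)}(\alg{O}_{K}(k'[1/f_{i}]))$ for every $i$.

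Finally I would assemble the pieces: by the affine case $U_{\lambda(i)}(\alg{O}_{K}(k'[1/f_{i}]))$ is open in $U_{\lambda(i)}(\alg{K}(k'[1/f_{i}]))$, which is open in $X(\alg{K}(k'[1/f_{i}]))$ because $U_{\lambda(i)} \into X$ is an open immersion; hence $\prod_{i} U_{\lambda(i)}(\alg{O}_{K}(k'[1/f_{i}]))$ is open in $X(\alg{K}(k')) = \prod_{i} X(\alg{K}(k'[1/f_{i}]))$, it contains $x$, and it is contained in $\prod_{i} X(\alg{O}_{K}(k'[1/f_{i}])) = X(\alg{O}_{K}(k'))$; equivalently, $X(\alg{O}_{K}(k'))$ is the union of all such open sets. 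The step that requires the most care is running the refinement directly over $\alg{O}_{K}(k')$ rather than over a nilpotent thickening as in the proof of Assertion~\eqref{ass: integral points are inverse limits} of Proposition~\ref{prop: schemes over local fields with ind-rational base}: one needs that $\alg{p}_{K}(k')$ sits inside the Jacobson radical of $\alg{O}_{K}(k')$, and this is exactly what Assertion~\eqref{ass: invertibility of series in ind-rational coefficients} of Proposition~\ref{prop: basic properties of local fields with ind-rational base} provides; the remaining ingredients are the already-established formal properties of the point-set topologies on $\alg{K}(k')$- and $\alg{O}_{K}(k')$-points.
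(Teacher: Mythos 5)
Your proof is correct and takes essentially the same route as the paper's: the affine case via a closed immersion into $\Affine^{n}$ together with the openness of $\alg{O}_{K}(k')$ in $\alg{K}(k')$ is exactly the paper's argument. The paper simply says ``we may assume $X$ is affine,'' whereas you spell out that reduction (disjoint refinement of the cover plus the observation that $\alg{p}_{K}(k')$ lies in the Jacobson radical, so an integral point landing set-theoretically in $U_{\lambda(i)}$ actually factors through it) --- a worthwhile elaboration, not a different method.
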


\begin{proof}
	We may assume that $X$ is affine.
	Let $X \into \Affine^{n}$ be a closed immersion.
	Then $X(\alg{O}_{K}(k')) = X(\alg{K}(k')) \cap \Affine^{n}(\alg{O}_{K}(k'))$
	in $\Affine^{n}(\alg{K}(k'))$.
	Since $\alg{O}_{K}(k')$ is open in $\alg{K}(k')$ by definition,
	it follows that $X(\alg{O}_{K}(k'))$ is open in $X(\alg{K}(k'))$.
\end{proof}

Before the next proposition,
recall from \S \ref{sec: Serre duality and P-acyclicity} that
	\[
			\alg{K}^{\fp}(k')
		=
			\dirlim \alg{K}(k'_{\lambda}),
		\quad
			\alg{O}_{K}^{\fp}(k')
		=
			\dirlim \alg{O}_{K}(k'_{\lambda})
	\]
for $k' = \bigcup k'_{\lambda} \in k^{\ind\rat}$ with $k'_{\lambda} \in k^{\rat}$.
The first (resp.\ second) ring is a filtered union
of finite products of complete discrete valuation fields (resp.\ rings).
When $k' = \closure{k}$ (and hence the $k'_{\lambda}$ are finite extensions of $k$),
the $K$-algebra $\alg{K}^{\fp}(k')$ is the maximal unramified extension
$K^{\ur}$ of $K$, whose completion is $\Hat{K}^{\ur} = \alg{K}(\closure{k})$.%
\footnote{
	Do not confuse $\alg{O}_{K}^{\fp}(k')$ with
	the uncompleted tensor product $W(k') \tensor_{W(k)} \Order_{K}$.
	If $K$ has mixed characteristic (and hence $\Order_{K}$ is finite free over $W(k)$) and $k' = \closure{k}$,
	then the former is $\Order_{K}^{\ur}$, which is smaller than the latter $\Hat{\Order}_{K}^{\ur}$.
	If $K$ has equal characteristic and $k'$ is (the perfection of) $k(x)$,
	then the former is $k(x)[[T]]$, which is bigger than latter $k(x) \tensor_{k} (k[[T]])$.
}

\begin{Prop} \label{prop: Greenberg approximation}
	Let $X$ be a $K$-scheme locally of finite type
	and $k' \in k^{\ind\rat}$.
	Then $X(\alg{K}^{\fp}(k'))$ is a dense subset of $X(\alg{K}(k'))$.
	There is a similar statement for $\Order_{K}$ and $\alg{O}_{K}$
	in place of $K$ and $\alg{K}$, respectively.
\end{Prop}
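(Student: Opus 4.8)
The plan is to reduce both statements to the case where $X$ is affine and then invoke Greenberg's approximation theorem over complete discrete valuation rings, after splitting $\alg{O}_{K}(k'_{\lambda})$ into its complete discrete valuation ring factors. For the reduction, fix an affine open cover $\{U_{\lambda}\}$ of $X$. By Proposition \ref{prop: top space str from affine to general}, $X(\alg{K}(k'))$ is the union, over all families $\{e_{\lambda}\}$ of disjoint idempotents of $k'$ with $\sum e_{\lambda} = 1$ (only finitely many being nonzero), of the product spaces $\prod_{\lambda} U_{\lambda}(\alg{K}(k'[1/e_{\lambda}]))$, a subset being open exactly when it meets each such product space in an open set. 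An idempotent localization of an ind-rational $k$-algebra is again ind-rational, $\alg{K}^{\fp}(k'[1/e]) = \alg{K}^{\fp}(k')[1/\omega(e)]$, and $\prod_{\lambda} \alg{K}^{\fp}(k')[1/\omega(e_{\lambda})] = \alg{K}^{\fp}(k')$ for a finite family of disjoint idempotents; hence a point of $\prod_{\lambda} U_{\lambda}(\alg{K}^{\fp}(k'[1/e_{\lambda}]))$ patches to a point of $X(\alg{K}^{\fp}(k'))$. Since a finite product of dense subsets is dense, it suffices to show $U(\alg{K}^{\fp}(k''))$ is dense in $U(\alg{K}(k''))$ for every affine open $U$ and every $k'' \in k^{\ind\rat}$, i.e.\ we may assume $X$ affine. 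The same reduction applies with $\Order_{K}$, $\alg{O}_{K}$ in place of $K$, $\alg{K}$.

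Next I would prove the affine statement over $\Order_{K}$. Let $\mathcal{X} = \Spec \Order_{K}[T_{1},\dots,T_{n}]/(f_{1},\dots,f_{r})$, let $x = (x_{1},\dots,x_{n}) \in \mathcal{X}(\alg{O}_{K}(k'))$ and $N \ge 0$; the sets $\prod_{i}(x_{i} + \alg{p}_{K}^{N}(k'))$ form a neighborhood base of $x$, so I must find a point of $\mathcal{X}(\alg{O}_{K}^{\fp}(k'))$ in one of them. Fix a prime element $\pi$ of $\Order_{K}$ and write $x_{i} = \sum_{m \ge 0} \omega(a_{m,i}) \pi^{m}$ with $a_{m,i} \in k'$. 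Let $\beta$ be a Greenberg function for $\mathcal{X}$ over $\Order_{K}$: for every complete discrete valuation ring local over $\Order_{K}$ in which $\pi$ is still a uniformizer, a point of $\mathcal{X}$ modulo $\alg{p}_{K}^{\beta(N)}$ lifts to an honest point congruent to it modulo $\alg{p}_{K}^{N}$. The finitely many $a_{m,i}$ with $m < \beta(N)$ lie in one rational subalgebra $k'_{\lambda}$ of $k'$; put $\bar{x}_{i} = \sum_{0 \le m < \beta(N)} \omega(a_{m,i}) \pi^{m} \in \alg{O}_{K}(k'_{\lambda})$. Then $\bar{x} \equiv x$ modulo $\alg{p}_{K}^{\beta(N)}(k')$, so $f_{j}(\bar{x}) \in \alg{p}_{K}^{\beta(N)}(k') \cap \alg{O}_{K}(k'_{\lambda}) = \alg{p}_{K}^{\beta(N)}(k'_{\lambda})$ for each $j$; thus $\bar{x}$ gives a point of $\mathcal{X}(\alg{O}_{K}/\alg{p}_{K}^{\beta(N)}(k'_{\lambda}))$. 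As $k'_{\lambda}$ is a finite product of finitely generated perfect fields, $\alg{O}_{K}(k'_{\lambda})$ is a finite product of complete discrete valuation rings, and Greenberg's theorem applied factor by factor lifts $\bar{x}$ to $y \in \mathcal{X}(\alg{O}_{K}(k'_{\lambda})) \subset \mathcal{X}(\alg{O}_{K}^{\fp}(k'))$ with $y \equiv \bar{x} \equiv x$ modulo $\alg{p}_{K}^{N}(k')$. This is the point sought.

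The $K$-statement follows from the $\Order_{K}$-one by clearing denominators. With $X \subset \Affine^{n}_{K}$ affine, $x \in X(\alg{K}(k'))$ and $N$ given, choose $M \ge 0$ with $\pi^{M}x \in \alg{O}_{K}(k')^{n}$ — possible since each $x_{i}$ has only finitely many terms of negative $\pi$-degree — and let $X' \subset \Affine^{n}_{K}$ be the image of $X$ under $T_{i} \mapsto \pi^{-M}T_{i}$, with $\Order_{K}$-model $\mathcal{X}'$ the closed subscheme of $\Affine^{n}_{\Order_{K}}$ defined by the ideal of $X'$ intersected with $\Order_{K}[T_{1},\dots,T_{n}]$. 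Then $\pi^{M}x \in \mathcal{X}'(\alg{O}_{K}(k'))$, so by the $\Order_{K}$-case there is $y' \in \mathcal{X}'(\alg{O}_{K}^{\fp}(k'))$ with $y' \equiv \pi^{M}x$ modulo $\alg{p}_{K}^{N+M}(k')$, whence $\pi^{-M}y' \in X(\alg{K}^{\fp}(k'))$ lies in $\prod_{i}(x_{i} + \alg{p}_{K}^{N}(k'))$, a neighborhood base of $x$ in $X(\alg{K}(k'))$ since $\alg{O}_{K}(k')$ is open in $\alg{K}(k')$.

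The step I expect to require the most care is the uniformity of the Greenberg function: I use a single $\beta$, attached to $\mathcal{X}$ over $\Order_{K}$, simultaneously for all the complete discrete valuation rings $\alg{O}_{K}(k'_{\lambda,i})$ obtained by finitely generated perfect residue field extensions. This invariance of Greenberg's function under such base change is classical — it can be read off from the construction of $\beta$, which involves only the defining equations together with Artin–Rees type constants and discriminants, all unaffected when $\pi$ stays a uniformizer and only the residue field grows — but it is the one input that must be invoked carefully; alternatively one may quote a uniform strong approximation statement over the relevant family of complete discrete valuation rings.
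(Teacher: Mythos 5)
Your proof is correct and follows essentially the same route as the paper's: reduce to an affine $X$ with a model over $\Order_{K}$ (the paper's ``by scaling''), note that the truncation of $x$ modulo $\alg{p}_{K}^{\beta(N)}$ involves only finitely many Teichm\"uller coefficients and hence lives over a rational subalgebra $k'_{\lambda}$, and lift it by Greenberg's theorem applied to the finitely many complete discrete valuation ring factors of $\alg{O}_{K}(k'_{\lambda})$. The only point where you are more careful than the paper is the uniformity of the Greenberg function over the family $\{\alg{O}_{K}(k'_{\lambda})\}$: the paper's phrase ``for any sufficiently large $n$'' tacitly assumes exactly this, and your justification --- invariance of Greenberg's constants under unramified base change of the complete discrete valuation ring --- is the right one.
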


\begin{proof}
	The ring $\alg{K}(k')$ is faithfully flat over $\alg{K}^{\fp}(k')$
	and the map $X(\alg{K}^{\fp}(k')) \to X(\alg{K}(k'))$ is injective.
	We may assume that $X$ is affine and defined over $\Order_{K}$.
	The map $X(\alg{O}_{K}^{\fp}(k')) \to X(\alg{O}_{K}(k'))$ is injective
	by the same reason.
	By scaling, it is enough to show that
	$X(\alg{O}_{K}^{\fp}(k'))$ is dense in $X(\alg{O}_{K}(k'))$.
	
	Recall Greenberg's approximation theorem \cite[Cor.\ 1 to Thm.\ 1]{Gre66}:
	there are integers $N \ge 1$, $c \ge 1$, $s \ge 0$ such that
	for any $n \ge N$ and any $x \in X(\Order_{K} / \ideal{p}_{K}^{n})$,
	the image of $x$ in $X(\Order_{K} / \ideal{p}_{K}^{\lceil n / c \rceil - s})$
	lifts to $X(\Order_{K})$.
	The proof of this theorem works for the following slightly stronger statement:
	there are integers $N \ge 1$, $c \ge 1$, $s \ge 0$ such that
	for any perfect field $k''$ over $k$,
	any $n \ge N$ and
	any $x \in X(\alg{O}_{K}(k'') / \alg{p}_{K}^{n}(k''))$,
	the image of $x$ in
		$
			X \bigl(
				\alg{O}_{K}(k'') / \alg{p}_{K}^{\lceil n / c \rceil - s}(k'')
			\bigr)
		$
	lifts to $X(\alg{O}_{K}(k''))$.
	(Note that $\alg{O}_{K}(k'')$ is a complete discrete valuation ring
	in which a prime element of $\Order_{K}$ remains prime.)
	
	Now let $n \ge N$ and $x \in X(\alg{O}_{K}(k'))$.
	Write $k' = \bigcup k'_{\lambda}$ with $k'_{\lambda} \in k^{\rat}$.
	Since
		$
				(\alg{O}_{K} / \alg{p}_{K}^{n})^{\fp}(k')
			=
				\alg{O}_{K} / \alg{p}_{K}^{n}(k')
			=
				\alg{O}_{K}(k') / \alg{p}_{K}^{n}(k')
		$,
	the image of $x$ in $X(\alg{O}_{K} / \alg{p}_{K}^{n}(k'))$ belongs to
	$X(\alg{O}_{K} / \alg{p}_{K}^{n}(k'_{\lambda}))$ for some $\lambda$.
	Since $k'_{\lambda}$ is a finite product of perfect fields over $k$,
	the image of $x$ in
		\[
				X \bigl(
					\alg{O}_{K} / \alg{p}_{K}^{\lceil n / c \rceil - s}(k'_{\lambda})
				\bigr)
			\subset
				X \bigl(
					\alg{O}_{K} / \alg{p}_{K}^{\lceil n / c \rceil - s}(k')
				\bigr)
		\]
	lifts to
		\[
				X(\alg{O}_{K}(k'_{\lambda}))
			\subset
				X(\alg{O}_{K}^{\fp}(k')).
		\]
	This shows that $X(\alg{O}_{K}^{\fp}(k'))$ is dense in $X(\alg{O}_{K}(k'))$.
\end{proof}

\begin{Prop} \label{prop: smooth morphism induces open map on points}
	Let $k' \in k^{\ind\rat}$ and $X, Y$ $\alg{K}(k')$-schemes locally of finite presentation.
	If $f \colon Y \to X$ is a smooth $\alg{K}(k')$-morphism,
	then the image of $Y(\alg{K}(k'))$ under $f$ is an open subset of $X(\alg{K}(k'))$.
\end{Prop}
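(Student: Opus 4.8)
The plan is to follow the classical strategy of \cite{Con12}: reduce the smooth morphism $f$, working Zariski-locally on $Y$, to the two basic cases of the projection $\Affine^{n}_{X}\to X$ and of a standard \'etale morphism, dispose of the first by the fact that a projection off a product is open, and treat the second by Newton's iteration (Hensel's lemma) over the ring $\alg{K}(k')$. The only genuinely new point is that $\alg{K}(k')$ is neither a field nor a local ring, so the passage between affine charts and the Hensel estimates have to be made uniform over $\Spec k'$.

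First I would fix a point $x=f(y)$ of the image, with $y\in Y(\alg{K}(k'))$, and recall that, by the definition of the topology on $X(\alg{K}(k'))$ given just after Proposition \ref{prop: top space str from affine to general}, openness of the image of $f$ may be checked after an arbitrary disjoint idempotent decomposition $\Spec k'=\bigsqcup_{i}\Spec k'[1/e_{i}]$. Using Proposition \ref{prop: top space str from affine to general} together with the disjoint-refinement statement of Proposition \ref{prop: basic properties of local fields with ind-rational base}, such a decomposition can be chosen so that over each piece $x$ factors through an affine open of $X$ and $y$ through an affine open contained in the preimage of that open; replacing $k'$ by $k'[1/e_{i}]$ we may thus assume $X,Y$ affine of finite presentation over $\alg{K}(k')$. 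Since $f$ is smooth, after shrinking $Y$ Zariski-locally around the image of $y$ (and, once more, decomposing $\Spec k'$ so that $y$ lands in the chosen open) $f$ factors as $Y\xrightarrow{g}\Affine^{n}_{X}\xrightarrow{\mathrm{pr}}X$ with $g$ \'etale, and then, shrinking further, $g$ becomes standard \'etale onto a basic affine open $\Spec C$ of $\Affine^{n}_{X}$, say $Y=\Spec(C[t]/(h))_{h'}$ with $h$ monic and $h'=\partial h/\partial t$. From the product and fibre-product compatibilities of the topology one gets $\Affine^{n}_{X}(\alg{K}(k'))=\alg{K}(k')^{n}\times X(\alg{K}(k'))$, so the map induced by $\mathrm{pr}$ is a projection off a product, hence open, and $(\Spec C)(\alg{K}(k'))\into\Affine^{n}_{X}(\alg{K}(k'))$ is an open immersion; it therefore suffices to show that $g(Y(\alg{K}(k')))$ is open in $(\Spec C)(\alg{K}(k'))$.

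Writing $h^{z}\in\alg{K}(k')[t]$ for the image of $h$ under $z\in(\Spec C)(\alg{K}(k'))$, this last image is exactly the set of $z$ for which there is a $b\in\alg{K}(k')$ with $h^{z}(b)=0$ and $(h^{z})'(b)\in\alg{K}(k')^{\times}$. Given such a $z_{0}$ with witness $b_{0}$, I would note that $z\mapsto h^{z}(b_{0})$ and $z\mapsto(h^{z})'(b_{0})$ are polynomial, hence continuous, in the coordinates of $z$ after a closed embedding $\Spec C\into\Affine^{N}$; since $\alg{K}(k')^{\times}$ is open with continuous inversion (recalled before Proposition \ref{prop: top space str from affine to general}) and $\alg{K}(k')$ is complete and Hausdorff (because $\alg{O}_{K}(k')$ is $\alg{p}_{K}(k')$-adically complete Hausdorff and $\alg{K}(k')=\dirlim_{N}\pi^{-N}\alg{O}_{K}(k')$), Newton's iteration started at $b_{0}$ converges, for all $z$ in a suitable neighbourhood of $z_{0}$, to a root $b$ of $h^{z}$ with $(h^{z})'(b)$ still a unit. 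Hence a whole neighbourhood of $z_{0}$ lies in $g(Y(\alg{K}(k')))$, and the claim follows after unwinding the reductions.

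The part I expect to require the most care is the Hensel step, precisely because $\alg{K}(k')$ is not local: one must check that the Newton estimates are \emph{uniform} over $\Spec k'$, i.e.\ that the valuations of the finitely many coefficients of $h^{z}$, of $b_{0}$, and of $(h^{z_{0}})'(b_{0})^{-1}$ are bounded below on $\Spec k'$, and that division only ever occurs by units of $\alg{K}(k')$, whose $\pi$-adic valuation is locally constant with finitely many values by the description of units in Proposition \ref{prop: basic properties of local fields with ind-rational base}; this is what makes the iteration Cauchy in $\alg{K}(k')$ even when $\Spec k'$ is an infinite profinite space. A possible alternative to running Newton's iteration by hand would be to decompose and reduce pointwise to the classical implicit function theorem over complete discrete valuation fields, using Greenberg's approximation (Proposition \ref{prop: Greenberg approximation}) to pass from $\alg{K}^{\fp}(k')$ to $\alg{K}(k')$, but the same uniformity issue resurfaces there as the problem of bounding the approximation level across $\Spec k'$.
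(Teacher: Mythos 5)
Your proposal is correct, and it rests on the same underlying mechanism as the paper's proof -- a Hensel/Newton approximation made uniform over $\Spec k'$ by finite disjoint idempotent decompositions -- but the technical route is genuinely different. The paper does not pass to a standard \'etale presentation: after reducing to the \'etale affine case it spreads $f$ out to a morphism of affine $\alg{O}_{K}(k')$-schemes, keeps the full polynomial system $(f'-x',P)$ of $m+l$ equations, observes that the ideal of $(m+l)$-minors of its Jacobian contains $\pi^{r(m+l)}$ because $\Omega^{1}_{Y'/X'}$ is killed by $\pi^{r}$, reduces to a single minor via Lemma \ref{lem: one of the elements divides}, and then invokes Toug\'eron's lemma as a black box; the explicit radius $\alg{p}_{K}^{2r(m+l)+1}$ falls out of this. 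You instead shrink to a standard \'etale chart $Y=\Spec(C[t]/(h))_{h'}$ and run Newton's iteration by hand on the single monic $h^{z}$, with uniformity supplied by the locally constant, finitely-valued valuation of the unit $(h^{z_{0}})'(b_{0})$ (Proposition \ref{prop: basic properties of local fields with ind-rational base}~(3)). What the paper's route buys is that all chart-juggling happens only once (reduction to affine \'etale) and the non-locality of $\alg{K}(k')$ is isolated in one clean lemma about minors; what yours buys is a more elementary, self-contained convergence argument with no appeal to Toug\'eron, at the cost of an extra layer of reductions -- the section $y$ does not factor through a single standard \'etale chart, so you must invoke quasi-compactness of $\Spec\alg{K}(k')$ and the disjoint refinement of Proposition \ref{prop: basic properties of local fields with ind-rational base}~(2) once more, as you correctly note. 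Both the uniformity issue you flag and your proposed resolution of it are exactly right; I would only insist that you record explicitly that each of your idempotent decompositions is finite (quasi-compactness of $\Spec k'$), since that is what lets you check openness piece by piece in the product topology of Proposition \ref{prop: top space str from affine to general}.
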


\begin{proof}
	This is obvious if $Y$ is an affine space over $X$.
	Hence we may assume that $f$ is \'etale.
	We may further assume that $X$ and $Y$ are affine.
	Let $y \in Y(\alg{K}(k'))$ and $x = f(y)$.
	We want to show that any elements $x'$ of $X(\alg{K}(k'))$
	sufficiently close to $x$ come from $Y(\alg{K}(k'))$.
	For this, we are going to apply Toug\'eron's lemma \cite[Lem.\ 5.10]{Art69}
	to solve the equation $f(y') = x'$ for an unknown $y' \in Y(\alg{K}(k'))$.
	This lemma is true for any Henselian pair,
	and the pair $(\alg{O}_{K}(k'), \alg{p}_{K}(k'))$ is complete and hence Henselian.
	To apply the lemma, we need to write down all the conditions
	as polynomial equations and approximate solutions.
	
	Let $f' \colon Y' \to X'$ be a morphism between affine $\alg{O}_{K}(k')$-schemes of finite presentation
	whose base change to $\alg{K}(k')$ is $f$.
	Embed $X'$ and $Y'$ to affine spaces
	$\Affine_{\alg{O}_{K}(k')}^{m}$ and $\Affine_{\alg{O}_{K}(k')}^{n}$ over $\alg{O}_{K}(k')$, respectively,
	and extend $f'$ to a morphism $\Affine_{\alg{O}_{K}(k')}^{n} \to \Affine_{\alg{O}_{K}(k')}^{m}$.
	By scaling, we may assume that $y \in Y'(\alg{O}_{K}(k'))$.
	
	Let $P = (P_{1}, \dots, P_{l})$ be a polynomial system defining $Y' \subset \Affine_{\alg{O}_{K}(k')}^{n}$.
	Let $\pi$ be a prime element of $\Order_{K}$.
	Since $f \colon Y \to X$ is \'etale,
	the differential module $\Omega_{Y' / X'}^{1}$ is killed by a power $\pi^{r}$ of $\pi$.
	We want to show that any element $x' \in X'(\alg{O}_{K}(k')) \subset \alg{O}_{K}(k')^{m}$
	with term-wise difference
		\begin{equation} \label{eq: elements within Tougeron lemma range}
				x' - x
			\in
				\alg{p}_{K}^{2 r n + 1}(k')^{m}
		\end{equation}
	comes from $Y'(\alg{O}_{K}(k'))$,
	where $\alg{p}_{K}^{2 r n + 1}(k')^{m} \subset \alg{O}_{K}(k')^{m}$ is
	the set-theoretic product of $m$ copies of
	the $(2 r n + 1)$-st power of the ideal $\alg{p}_{K}(k')$.
	Let $f'^{-1}(x')$ be the fiber of $f' \colon Y' \to X'$ over $x' \in X'(\alg{O}_{K}(k'))$.
	Then $\Omega_{f'^{-1}(x') / \alg{O}_{K}(k')}^{1}$ is killed by $\pi^{r}$.
	We have a commutative diagram with cartesian squares of finitely presented $\alg{O}_{K}(k')$-schemes
		\[
			\begin{CD}
					f'^{-1}(x')
				@>> \incl >
					Y'
				@>> \incl >
					\Affine_{\alg{O}_{K}(k')}^{n}
				@>> P >
					\Affine_{\alg{O}_{K}(k')}^{l}
				\\
				@VVV
				@VV f' V
				@VV f' V
				@.
				\\
					x'
				@> \incl >>
					X'
				@> \incl >>
					\Affine_{\alg{O}_{K}(k')}^{m}.
			\end{CD}
		\]
	
	Let $y' = (y'_{1}, \dots, y'_{n})$ be the coordinates of $\Affine_{\alg{O}_{K}(k')}^{n}$.
	We view $f' \colon \Affine_{\alg{O}_{K}(k')}^{n} \to \Affine_{\alg{O}_{K}(k')}^{m}$
	as a system of $m$ polynomials in the $n$ variables $(y'_{1}, \dots, y'_{n})$
	and $x'$ as an element of $\alg{O}_{K}(k')^{m}$ (or $m$ constant polynomials).
	Define
		\[
				S
			=
				\alg{O}_{K}(k')[y'] / (f'(y') - x', P(y')),
		\]
	where the ideal on the right is generated by
	all the $m + l$ polynomials in the polynomial system $(f'(y') - x', P(y'))$.
	We have $f'^{-1}(x') = \Spec S$.
	Let $J$ be the Jacobian matrix of the system $(f'(y') - x', P(y'))$,
	which has entries in $\alg{O}_{K}(k')[y']$.
	Then $\Omega_{f'^{-1}(x') / \alg{O}_{K}(k')}^{1}$ is given by
	the cokernel of the $S$-module homomorphism $S^{m + l} \to S^{n}$
	corresponding to $J$.
	Let $F$ be the ideal of $\alg{O}_{K}(k')[y']$
	generated by all minors of size $n$ of $J$.
	Then the image $\bar{F}$ of $F$ in $S$ is
	the zeroth Fitting ideal of $\Omega_{f'^{-1}(x') / \alg{O}_{K}(k')}^{1}$ \cite[XIX, \S 2]{Lan02}.
	Therefore $\bar{F}$ contains the $n$-th power of
	the annihilator of $\Omega_{f'^{-1}(x') / \alg{O}_{K}(k')}^{1}$
	by \cite[XIX, \S 2, Prop.\ 2.5]{Lan02}
	and hence contains $\pi^{r n}$.
	Hence
		\[
				\pi^{r n}
			\in
				F + (f'(y') - x', P(y'))
			\subset
				\alg{O}_{K}(k')[y'].
		\]
	Evaluate this at $y' = y$.
	Since $y \in Y'(\alg{O}_{K}(k'))$, we have $P(y) = 0$.
	Also $f'(y) = x$.
	Hence
		\[
				\pi^{r n}
			\in
				F(y) + (x - x')
			\subset
				\alg{O}_{K}(k'),
		\]
	where $F(y)$ is the image of $F$ under the evaluation map
	$\alg{O}_{K}(k')[y'] \onto \alg{O}_{K}(k')$ at $y$.
	With this and using \eqref{eq: elements within Tougeron lemma range},
	we know that $\pi^{r n} \in F(y)$.
	By \eqref{lem: one of the elements divides} below,
	we may assume that the value $\delta(y)$ of a single minor $\delta$ divides
	$\pi^{r n}$ in $\alg{O}_{K}(k')$,
	so $\alg{p}_{K}^{r n}(k') \subset \delta(y) \alg{O}_{K}(k')$.
	Hence
		\begin{equation} \label{eq: valuation of minor in Jacobian}
				\alg{p}_{K}^{2 r n + 1}(k')
			\subset
				\delta(y)^{2} \alg{p}_{K}(k').
		\end{equation}
	
	Now we have
		\[
				f'(y) - x'
			\in
				\delta(y)^{2} \alg{p}_{K}(k')^{m},
			\quad
				P(y) = 0
		\]
	by \eqref{eq: elements within Tougeron lemma range}
	and \eqref{eq: valuation of minor in Jacobian}.
	Hence the system $(f'(y') - x', P)$ has a root $y' \in \alg{O}_{K}(k')^{n}$
	by Toug\'eron's lemma \cite[Lem.\ 5.10]{Art69}.
	For this $y'$, we have $f'(y') = x'$ and $y' \in Y'(\alg{O}_{K}(k'))$.
\end{proof}

\begin{Lem} \label{lem: one of the elements divides}
	If the ideal of $\alg{O}_{K}(k')$ generated by $q$ elements
	$\delta_{1}, \dots, \delta_{q} \in \alg{O}_{K}(k')$
	contains $\pi^{s}$, then there exists a disjoint Zariski covering
		\[
			\Spec k' = \bigsqcup_{i = 1}^{q} \Spec k'[1 / e_{i}]
		\]
	with idempotents $e_{i}$ such that
	the image of $\delta_{i}$ in $\alg{O}_{K}(k'[1 / e_{i}])$ divides $\pi^{s}$ for each $i$.
\end{Lem}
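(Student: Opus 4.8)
The plan is to argue fibrewise over $\Spec k'$ and then glue. Fix a prime element $\pi$ of $\Order_{K}$ and expand $\delta_{j} = \sum_{n \ge 0} \omega(a_{n,j}) \pi^{n}$ with $a_{n,j} \in k'$. For $\ideal{m} \in \Spec k'$ the ring $\alg{O}_{K}(k' / \ideal{m})$ is a complete discrete valuation ring with uniformizer $\pi$ and residue field $k' / \ideal{m}$, so the valuation of $\delta_{j}(\ideal{m})$ is the least $n$ with $a_{n,j} \notin \ideal{m}$ (and $+\infty$ if $\delta_{j}(\ideal{m}) = 0$). Hence the set
\[
		V_{j}
	:=
		\{ \ideal{m} \in \Spec k' \mid \delta_{j} \text{ divides } \pi^{s} \text{ in } \alg{O}_{K}(k' / \ideal{m}) \}
	=
		\Spec k' \setminus V\bigl( (a_{0,j}, \dots, a_{s,j}) \bigr)
\]
is the nonvanishing locus of a finitely generated ideal of $k' \in k^{\ind\rat}$, hence of the idempotent $f_{j}$ generating that ideal, so $V_{j} = \Spec k'[1/f_{j}]$ is open and closed. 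First I would check that the $V_{j}$ cover $\Spec k'$: reducing a relation $\pi^{s} = \sum_{j} c_{j} \delta_{j}$ modulo $\ideal{m}$ shows $\pi^{s}$ lies in the ideal of the discrete valuation ring $\alg{O}_{K}(k' / \ideal{m})$ generated by the $\delta_{j}(\ideal{m})$, which is principal and generated by a $\delta_{j}(\ideal{m})$ of least valuation; so that valuation is $\le s$, i.e. $\ideal{m} \in V_{j}$ for some $j$.

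Next I would disjointify this cover in the usual way: put $e_{1} = f_{1}$ and $e_{j} = (1 - f_{1}) \cdots (1 - f_{j-1}) f_{j}$ for $j \ge 2$. These are pairwise orthogonal idempotents with $e_{j} f_{j} = e_{j}$ (hence $\Spec k'[1/e_{j}] \subseteq \Spec k'[1/f_{j}] = V_{j}$), and $\sum_{j} e_{j} = 1 - \prod_{j} (1 - f_{j}) = 1$ since $\prod_{j}(1 - f_{j})$ is an idempotent vanishing at every $\ideal{m}$ (each $\ideal{m}$ lies in some $V_{j}$, where $f_{j}$ is a unit) and $k'$ is reduced. Thus $\Spec k' = \bigsqcup_{j} \Spec k'[1/e_{j}]$ is the desired disjoint Zariski covering, and it remains to see that $\delta_{j}$ divides $\pi^{s}$ in $\alg{O}_{K}(k'[1/e_{j}])$.

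For this set $R = k'[1/e_{j}]$. Since $e_{j} f_{j} = e_{j}$, the idempotent $f_{j}$ equals $1$ in $R$, so $(a_{0,j}, \dots, a_{s,j})$ is the unit ideal of $R$ and the valuation $\ideal{m} \mapsto v(\delta_{j}(\ideal{m}))$ on $\Spec R$ takes only the values $0, 1, \dots, s$. For each such $m$ the locus $\{ v(\delta_{j}) \ge m \} = V\bigl( (a_{0,j}, \dots, a_{m-1,j}) \bigr)$ is cut out by a finitely generated ideal, hence clopen, so $\{ v(\delta_{j}) = m \}$ is clopen; writing it as $\Spec R[1/h_{m}]$ for an idempotent $h_{m}$ gives a disjoint decomposition $\Spec R = \bigsqcup_{m=0}^{s} \Spec R[1/h_{m}]$. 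On $R[1/h_{m}]$ the elements $a_{l,j}$ with $l < m$ vanish at every point and $a_{m,j}$ vanishes at no point; since $R[1/h_{m}]$ is reduced this forces $a_{l,j} = 0$ for $l < m$, while $a_{m,j}$, lying in no maximal ideal, is a unit. Then $\delta_{j} = \pi^{m}\bigl( \omega(a_{m,j}) + \pi(\cdots) \bigr)$ in $\alg{O}_{K}(R[1/h_{m}])$, whose second factor is invertible by Assertion \eqref{ass: invertibility of series in ind-rational coefficients} of Proposition \ref{prop: basic properties of local fields with ind-rational base}; as $m \le s$ this gives $\delta_{j} \mid \pi^{s}$ on $R[1/h_{m}]$. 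Since $\alg{O}_{K}(R)$ is the (finite) product of the $\alg{O}_{K}(R[1/h_{m}])$, divisibility holds over $R$ as well.

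I expect the main obstacle to be exactly this last passage from pointwise to global divisibility, i.e. producing the clopen decompositions: it relies essentially on the fact that finitely generated ideals of an ind-rational $k$-algebra are idempotent, which upgrades the \textquotedblleft valuation $\ge m$\textquotedblright{} loci from closed to clopen, together with the invertibility criterion applied on each piece. The rest is bookkeeping with orthogonal idempotents.
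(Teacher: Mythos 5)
Your proof is correct and follows essentially the same route as the paper's: cover $\Spec k'$ by the clopen loci where $v(\delta_i(\ideal{m}))\le s$ (the nonvanishing loci of the finitely generated, hence idempotent-generated, ideals $(a_{0,i},\dots,a_{s,i})$), disjointify with orthogonal idempotents, and conclude divisibility on each piece via the invertibility criterion of Proposition \ref{prop: basic properties of local fields with ind-rational base}. The only difference is that you spell out the final pointwise-to-global divisibility step (the further clopen stratification by the exact value of the valuation), which the paper compresses into ``this choice does the job''; your version is a faithful elaboration, not a different argument.
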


\begin{proof}
	Let $v$ be the normalized valuation of $\Order_{K}$.
	For each $\ideal{m} \in \Spec k'$,
	the ideal of $\alg{O}_{K}(k' / \ideal{m})$
	generated by $\delta_{1}(\ideal{m}), \dots, \delta_{q}(\ideal{m})$
	contains $\pi^{s}$.
	Therefore the sets
		\[
				U_{i}
			=
				\{
					\ideal{m} \in \Spec k'
			\mid
					v(\delta_{i}(\ideal{m})) \le s
				\}
		\]
	for $i = 1, \dots q$ cover $\Spec k'$.
	For each $i$, if $\delta_{i} = \sum_{n} \omega(a_{i, n}) \pi^{n}$,
	then $U_{i}$ is the union of the open sets
	$\Spec k'[1 / a_{i, s}], \Spec k'[1 / a_{i, s - 1}], \ldots$.
	Hence $\{U_{i}\}$ is an open covering of $\Spec k'$.
	Refine it by a disjoint Zariski covering
	$\Spec k' = \sqcup_{i = 1}^{q} \Spec k'[1 / e_{i}]$
	with idempotents $e_{i}$.
	This choice does the job.
\end{proof}

Now we can prove the following proposition.
This is useful especially to prove that
the first cohomology of an abelian variety over $K$ is ind-algebraic
and all of the higher cohomology is zero as sheaves over $k$.
The proof below and the use of the above topological statements
are inspired by the proof of \cite[Prop.\ 3.5 (a) and Lem.\ 5.3]{Ces15}.

\begin{Prop} \label{prop: higher cohomology with smooth coefficients}
	Let $A$ be a smooth group scheme over $K$
	and $k' \in k^{\ind\rat}$.
	Then we have
		\[
				H^{n}(\alg{K}(k')_{\et}, A)
			=
				H^{n}(\alg{K}^{\fp}(k')_{\et}, A)
		\]
	for any $n \ge 1$.
	This group is torsion.
\end{Prop}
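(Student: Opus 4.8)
The plan is to prove that the restriction map
\[
    H^{n}(\alg{K}^{\fp}(k')_{\et}, A)
  \to
    H^{n}(\alg{K}(k')_{\et}, A),
\]
induced by the faithfully flat inclusion $\alg{K}^{\fp}(k') \into \alg{K}(k')$, is an isomorphism for all $n \ge 1$. Write $k' = \bigcup_{\lambda} k'_{\lambda}$ with $k'_{\lambda} \in k^{\rat}$, so that $\alg{K}^{\fp}(k') = \dirlim_{\lambda} \alg{K}(k'_{\lambda})$; since étale cohomology commutes with filtered colimits of rings, the source is $\dirlim_{\lambda} H^{n}(\alg{K}(k'_{\lambda})_{\et}, A)$, a filtered colimit of classical Galois cohomology groups, each $\alg{K}(k'_{\lambda})$ being a finite product of complete discrete valuation fields. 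What must be shown is therefore that every cohomology class over $\alg{K}(k')$, and every relation between such classes, already lives over some $\alg{K}(k'_{\lambda})$.

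I would argue with \v{C}ech cocycles; I treat $n = 1$, the higher $n$ being identical with étale hypercoverings in place of a single covering. A class is represented by an étale covering $\alg{K}(k') \to R$ with $R$ of finite presentation and a cocycle $c \in A(R^{\tensor 2})$, the tensor powers taken over $\alg{K}(k')$ and the cocycle identity holding in $A(R^{\tensor 3})$. Using Assertion \eqref{ass: Zariski topology of local fields with ind-rational base} of Proposition \ref{prop: basic properties of local fields with ind-rational base} to split $\Spec k'$ by idempotents, one reduces to the case where $R$ is finite étale over a clopen $\Spec \alg{K}(k'[1/e])$, and by spreading out arises by inverting a prime element $\pi$ from a finitely presented $\alg{O}_{K}(k')$-algebra $\mathcal{R}$ that is étale over $\alg{O}_{K}(k')$ away from the special fibre. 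Then $c$, the cocycle equation, and the coboundary relation are data of finite presentation over $\alg{K}(k')$, hence encoded by rational points of a finitely presented $\alg{K}(k')$-scheme $Z$; because $A$ is smooth, Propositions \ref{prop: integral points are open} and \ref{prop: smooth morphism induces open map on points}, together with Proposition \ref{prop: top space str from affine to general} for compatibility with the idempotent decompositions, show that ``$c'$ is a cocycle cohomologous to $c$'' defines an \emph{open} subset $U$ of $Z(\alg{K}(k'))$ containing $c$. Now Greenberg's approximation (Proposition \ref{prop: Greenberg approximation}) makes $Z(\alg{K}^{\fp}(k'))$ dense in $Z(\alg{K}(k'))$, so $U$ contains a point $c'$ defined over $\alg{K}^{\fp}(k')$, that is over some $\alg{K}(k'_{\lambda})$; as $c'$ is a cocycle cohomologous to $c$ over $\alg{K}(k')$, this gives surjectivity. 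Injectivity follows from the same approximation applied to a $0$-cochain over $\alg{K}(k')$ witnessing that a cocycle pulled back from some $\alg{K}(k'_{\lambda})$ is a coboundary.

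The delicate point, and the reason the topological propositions are needed instead of a formal limit argument, is the spreading-out and descent of the étale covering $R$: since $\alg{K}(k')$ is the $\pi$-adic completion of $\alg{K}^{\fp}(k')$ and not a filtered colimit of the $\alg{K}(k'_{\lambda})$, and since an étale $\alg{K}(k')$-algebra need not extend to a flat $\alg{O}_{K}(k')$-algebra, one cannot descend $R$ directly. Instead one localises $\alg{O}_{K}(k')$ at a carefully chosen element $g$ --- its invertibility over $\alg{K}(k')$ being controlled by Assertion \eqref{ass: invertibility of series in ind-rational coefficients} of Proposition \ref{prop: basic properties of local fields with ind-rational base} and by Lemma \ref{lem: one of the elements divides} --- works over $\alg{O}_{K}(k')[1/g]$, where étaleness and Greenberg approximation are available, and keeps everything compatible with decompositions of $\Spec k'$ into clopen pieces. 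Carrying this bookkeeping through so that the density and openness results apply on the nose is the main obstacle.
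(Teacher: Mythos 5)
Your overall strategy --- encode the \v{C}ech data as rational points of a scheme, then combine density of $\alg{K}^{\fp}(k')$-points (Proposition \ref{prop: Greenberg approximation}) with openness of images of smooth morphisms (Proposition \ref{prop: smooth morphism induces open map on points}) --- is exactly the right one, and it is what the paper does. But the step you defer as ``bookkeeping'' is in fact the crux, and your proposed workaround does not close it. Proposition \ref{prop: Greenberg approximation} applies only to schemes \emph{defined over $K$} (it is Greenberg approximation over the discrete valuation ring $\Order_{K}$, spread out over $k'$); it says nothing about a scheme of finite presentation over $\alg{K}(k')$ that does not descend. In your setup the scheme $Z$ parametrizing cocycles and coboundaries is built from an arbitrary finitely presented \'etale covering $R$ of $\alg{K}(k')$, so $Z$ lives only over $\alg{K}(k')$, and neither the density statement nor the smoothness of the coboundary map (for which one needs a Weil-restriction description over a field, as in Milne's proof that $\check H^{n}$ computes $H^{n}$ for smooth coefficients) is available. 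Localizing $\alg{O}_{K}(k')$ at an element $g$ does not repair this: it still does not descend $R$ to $\alg{K}^{\fp}(k')$, let alone to $K$.

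The missing ingredient is the refinement of \'etale coverings: every \'etale covering of $\alg{K}(k')$ can be refined by the base change of an \'etale covering of $\alg{K}^{\fp}(k')$. This is \cite[Lem.\ 2.5.5]{Suz13} (equivalently, the exactness of $f_{\ast}$ for $f \colon \Spec \alg{K}(k') \to \Spec \alg{K}^{\fp}(k')$), proved by an approximation argument for \'etale algebras over these rings, and it is a genuine theorem, not bookkeeping. Once you have it, $H^{n}(\alg{K}(k')_{\et}, A) = H^{n}(\alg{K}^{\fp}(k')_{\et}, f_{\ast}A)$, and since $\alg{K}^{\fp}(k')$ is a filtered union of finite products of complete discrete valuation fields, everything reduces (after replacing $k$ by some $k'_{\lambda}$) to comparing the \v{C}ech complexes of a single finite Galois extension $L/K$ with coefficients in $g_{\ast}A$ and $h_{\ast}A$. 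These \v{C}ech complexes are the $\alg{K}(k')$- and $\alg{K}^{\fp}(k')$-points of fixed smooth $K$-schemes $C^{n}$ (Weil restrictions of $A$ along $L^{\tensor (n+1)}/K$), with $d^{n-1} \colon C^{n-1} \to Z^{n}$ smooth; only at this point do your density and openness arguments apply on the nose and finish the proof. So the architecture of your argument is sound, but as written it has a gap precisely where you flagged the difficulty, and the gap is filled by a citation you did not make rather than by the localization device you propose.
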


\begin{proof}
	Let $f \colon \Spec \alg{K}(k') \to \Spec \alg{K}^{\fp}(k')$ be the natural morphism.
	Then
		\[
				f_{\ast}
			\colon
				\Ab(\alg{K}(k')_{\et})
			\to
				\Ab(\alg{K}^{\fp}(k')_{\et})
		\]
	is exact by \cite[Lem.\ 2.5.5]{Suz13}, hence
	$H^{n}(\alg{K}(k')_{\et}, A) = H^{n}(\alg{K}^{\fp}(k')_{\et}, f_{\ast} A)$.
	Note that $\alg{K}^{\fp}(k')$ is a filtered union of finite products of complete discrete valuation fields.
	The \'etale cohomology of a field can be calculated by Galois cohomology or \v{C}ech cohomology.
	Hence the cohomology of $\alg{K}^{\fp}(k')$ can be calculated by \v{C}ech cohomology.
	The isomorphism to be proved is thus
		\[
				\dirlim_{L' / \alg{K}^{\fp}(k')}
					\check{H}^{n}(L' / \alg{K}^{\fp}(k'), f_{\ast} A)
			=
				\dirlim_{L' / \alg{K}^{\fp}(k')}
					\check{H}^{n}(L' / \alg{K}^{\fp}(k'), A),
		\]
	where $L'$ runs through faithfully flat \'etale $\alg{K}^{\fp}(k')$-algebras
	and $\check{H}$ denotes \v{C}ech cohomology.
	It is enough to prove the isomorphism for each $L'$ before passing to the limit:
		\[
				\check{H}^{n}(L' / \alg{K}^{\fp}(k'), f_{\ast} A)
			=
				\check{H}^{n}(L' / \alg{K}^{\fp}(k'), A).
		\]
	Write $k' = \bigcup k'_{\lambda}$ with $k'_{\lambda} \in k^{\rat}$.
	For some $\lambda$, such $L'$ can be written as $L \tensor_{\alg{K}(k'_{\lambda})} \alg{K}^{\fp}(k')$
	for some faithfully flat \'etale $\alg{K}(k'_{\lambda})$-algebra $L$.
	Let $g \colon \Spec \alg{K}(k') \to \Spec \alg{K}(k'_{\lambda})$ and
	$h \colon \Spec \alg{K}^{\fp}(k') \to \Spec \alg{K}(k'_{\lambda})$
	be the natural morphisms.
	Then the \v{C}ech complex of $L' / \alg{K}^{\fp}(k')$ with values in $f_{\ast} A$ (resp.\ $A$)
	is the \v{C}ech complex of $L / \alg{K}(k'_{\lambda})$ with values in $g_{\ast} A$ (resp.\ $h_{\ast} A$).
	Hence the isomorphism to be proven is
		\[
				\check{H}^{n}(L / \alg{K}(k'_{\lambda}), g_{\ast} A)
			=
				\check{H}^{n}(L / \alg{K}(k'_{\lambda}), h_{\ast} A).
		\]
	By replacing $k'_{\lambda}$ with $k$, it is enough to show that
		\[
				\check{H}^{n}(L / K, g_{\ast} A)
			=
				\check{H}^{n}(L / K, h_{\ast} A)
		\]
	(for any perfect field $k$),
	where $L$ is a finite Galois extension of $K$
	and $g \colon \Spec \alg{K}(k') \to \Spec K$, $h \colon \Spec \alg{K}^{\fp}(k') \to \Spec K$
	the natural morphisms.
	
	Let $C^{n}$ be the Weil restriction of $A$ from the $(n + 1)$-fold tensor product
	$L \tensor_{K} \cdots \tensor_{K} L$ to $K$,
	which is representable by a smooth $K$-scheme.
	The \v{C}ech complex of $L / K$ with coefficients in
	$g_{\ast} A$ (resp.\ $h_{\ast} A$)
	is the $\alg{K}(k')$-valued (resp.\ $\alg{K}^{\fp}(k')$-valued) points of the complex
	$\{C^{n}\}$ of group schemes over $K$ with the usual coboundary maps $\{d^{n}\}$.
	Let $Z^{n}$ be the kernel of $d^{n} \colon C^{n} \to C^{n + 1}$,
	which is a $K$-scheme locally of finite type.
	We know that $d^{n - 1} \colon C^{n - 1} \to Z^{n}$ is a smooth morphism
	as shown in the proof of \cite[III, Thm.\ 3.9]{Mil80}.
	Consider the commutative diagram
		\[
			\begin{CD}
					C^{n - 1}(\alg{K}^{\fp}(k'))
				@>> d^{n - 1} >
					Z^{n}(\alg{K}^{\fp}(k'))
				\\
				@VVV
				@VVV
				\\
					C^{n - 1}(\alg{K}(k'))
				@> d^{n - 1} >>
					Z^{n}(\alg{K}(k')).
			\end{CD}
		\]
	The vertical maps are injective with dense image by
	\eqref{prop: Greenberg approximation}.
	The horizontal ones have open image by
	\eqref{prop: smooth morphism induces open map on points}.
	Therefore the map
		$
				\check{H}^{n}(L / K, h_{\ast} A)
			\to
				\check{H}^{n}(L / K, g_{\ast} A)
		$
	induced on the cokernels of the horizontal maps is an isomorphism.
	
	Galois cohomology is torsion in positive degrees.
	Hence $\check{H}^{n}(L / K, h_{\ast} A)$ is torsion (killed by $[L : K]$),
	and $H^{n}(\alg{K}^{\fp}(k')_{\et}, A)$ is torsion.
\end{proof}

The quotient in the following proposition will appear in the next subsection
as the first cohomology of $\Order_{K}$
with support on the closed point.

\begin{Prop} \label{prop: K-points mod O-points commutes with direct limits}
	Let $A$ be a separated group scheme locally of finite type over $\Order_{K}$
	and $k' \in k^{\ind\rat}$.
	Then we have
		\[
				A(\alg{K}(k')) / A(\alg{O}_{K}(k'))
			=
				A(\alg{K}^{\fp}(k')) / A(\alg{O}_{K}^{\fp}(k')).
		\]
\end{Prop}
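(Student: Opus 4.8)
I would show that the natural map --- which is a homomorphism of abelian groups, since $A$ is commutative --- is bijective, handling surjectivity and injectivity separately. For \textbf{surjectivity}: given $\bar x \in A(\alg{K}(k'))$, the subset $A(\alg{O}_{K}(k'))$ is open in $A(\alg{K}(k'))$ by Proposition~\ref{prop: integral points are open}, and translation by $\bar x$ is a homeomorphism of $A(\alg{K}(k'))$ because the group operations of $A$ induce continuous maps on $\alg{K}(k')$-points; hence $\bar x + A(\alg{O}_{K}(k'))$ is a nonempty open subset, so by the density assertion of Proposition~\ref{prop: Greenberg approximation} it contains a point $y \in A(\alg{K}^{\fp}(k'))$. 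Then $y$ and $\bar x$ have the same class in $A(\alg{K}(k'))/A(\alg{O}_{K}(k'))$, so the class of $y$ in $A(\alg{K}^{\fp}(k'))/A(\alg{O}_{K}^{\fp}(k'))$ is a preimage.

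For \textbf{injectivity}, since $A$ is commutative this amounts to proving $A(\alg{K}^{\fp}(k')) \cap A(\alg{O}_{K}(k')) = A(\alg{O}_{K}^{\fp}(k'))$ inside $A(\alg{K}(k'))$, where only ``$\subseteq$'' needs work. Writing $k' = \bigcup_{\lambda} k'_{\lambda}$ with $k'_{\lambda} \in k^{\rat}$ rational subalgebras, so that $\alg{K}^{\fp}(k') = \dirlim_{\lambda} \alg{K}(k'_{\lambda})$, and using that $A$ is locally of finite presentation over the noetherian ring $\Order_{K}$, any $z$ in the left-hand side is the image of some $z_{\lambda} \in A(\alg{K}(k'_{\lambda}))$, and it suffices to show $z_{\lambda} \in A(\alg{O}_{K}(k'_{\lambda}))$, as that group maps into $A(\alg{O}_{K}^{\fp}(k'))$. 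The structural facts I would use are that $\alg{O}_{K}(k'_{\lambda})$ is a finite product of complete discrete valuation rings (in which a uniformizer $\pi$ of $\Order_{K}$ is a non-zero-divisor), that $\alg{O}_{K}(k')$ is $\pi$-torsion-free, and that $\alg{O}_{K}(k')/\pi\alg{O}_{K}(k') = k'$ is faithfully flat over $\alg{O}_{K}(k'_{\lambda})/\pi\alg{O}_{K}(k'_{\lambda}) = k'_{\lambda}$; together these give that $\alg{O}_{K}(k'_{\lambda}) \to \alg{O}_{K}(k')$ is faithfully flat. I would then conclude from the descent lemma below, applied with $\Order = \alg{O}_{K}(k'_{\lambda})$, $R = \alg{O}_{K}(k')$, $t = \pi$, $Z = A \times_{\Order_{K}} \alg{O}_{K}(k'_{\lambda})$, and $z_{\lambda} \in Z(\Order[t^{-1}]) = A(\alg{K}(k'_{\lambda}))$, whose image in $Z(R[t^{-1}]) = A(\alg{K}(k'))$ is the given element of $A(\alg{O}_{K}(k')) = Z(R)$.

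\textbf{Descent lemma.} \emph{If $\Order \to R$ is faithfully flat, $t \in \Order$ a non-zero-divisor, $Z$ a separated $\Order$-scheme, and $z \in Z(\Order[t^{-1}])$ has image in $Z(R[t^{-1}])$ lifting to $Z(R)$, then $z$ lifts to $Z(\Order)$.} The plan is fpqc descent of morphisms to the scheme $Z$: given a lift $\tilde z \in Z(R)$, its two pullbacks to $Z(R \otimes_{\Order} R)$ coincide after inverting $t$ --- both equal the image of $z$ under the unique $\Order$-algebra map $\Order[t^{-1}] \to (R \otimes_{\Order} R)[t^{-1}]$ --- and $R \otimes_{\Order} R$ is $t$-torsion-free because it is $\Order$-flat, whence $\Spec (R \otimes_{\Order} R)[t^{-1}]$ is scheme-theoretically dense in $\Spec (R \otimes_{\Order} R)$; since $Z$ is separated the two pullbacks agree, so $\tilde z$ descends to some $z_{0} \in Z(\Order)$, and injectivity of $Z(\Order[t^{-1}]) \to Z(R[t^{-1}])$ forces $z_{0}$ to restrict to $z$.

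The step I expect to be the main obstacle is injectivity. One would like to reduce to an affine open of $A$ containing the generic point of $z_{\lambda}$, but --- unlike in the proper case, where Proposition~\ref{prop: schemes over local fields with ind-rational base} would suffice --- an integral $\alg{O}_{K}(k')$-point extending $z_{\lambda}$ can have special fibre leaving such an open, so integrality of $\bar z$ over $\alg{O}_{K}(k')$ does not confine it there. Faithfully flat descent along $\alg{O}_{K}(k'_{\lambda}) \to \alg{O}_{K}(k')$ circumvents this, the delicate verifications being the faithful flatness of that map (via $\pi$-torsion-freeness and $\alg{O}_{K}(k')/\pi = k'$) and the scheme-theoretic density used in the lemma.
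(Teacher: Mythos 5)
Your proof is correct. The surjectivity half is exactly the paper's argument: the paper writes down the same commutative square, notes that the vertical maps are injective with dense image (Proposition \ref{prop: Greenberg approximation}) and that the horizontal inclusions are open (Proposition \ref{prop: integral points are open}), and concludes that the induced map on cokernels is an isomorphism. Where you genuinely diverge is injectivity: the paper's one\nobreakdash-line conclusion implicitly uses the identity $A(\alg{K}^{\fp}(k'))\cap A(\alg{O}_{K}(k'))=A(\alg{O}_{K}^{\fp}(k'))$, which does not follow from openness and density alone, and your fpqc-descent lemma along the faithfully flat map $\alg{O}_{K}(k'_{\lambda})\to\alg{O}_{K}(k')$ is a legitimate and self-contained way to supply it. (Your flatness verification is fine: $\alg{O}_{K}(k'_{\lambda})$ is a finite product of complete discrete valuation rings, over which $\pi$-torsion-freeness of $\alg{O}_{K}(k')$ already gives flatness, and surjectivity on spectra follows from $\alg{O}_{K}(k')/\pi=k'$ being faithfully flat over $k'_{\lambda}$.) An alternative that stays closer to the paper's toolkit: by Assertion \eqref{ass: point-wise integral implies integral} of Proposition \ref{prop: schemes over local fields with ind-rational base} applied over $k'_{\lambda}$, it suffices to check integrality of $z_{\lambda}$ after reduction at each maximal ideal $\ideal{n}$ of $k'_{\lambda}$; choosing $\ideal{m}\in\Spec k'$ over $\ideal{n}$, the integral point of $A$ over the local ring $\alg{O}_{K}(k'/\ideal{m})$ factors through an affine open $U$ (the spectrum of a local ring maps into any open containing the image of its closed point), $z_{\lambda}(\ideal{n})$ then also factors through $U$, and the affine case reduces to $\alg{K}(k'_{\lambda}/\ideal{n})\cap\alg{O}_{K}(k'/\ideal{m})=\alg{O}_{K}(k'_{\lambda}/\ideal{n})$. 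So the obstacle you flag is real over the non-local ring $\alg{O}_{K}(k')$ but evaporates once one reduces to residue fields; your descent argument buys a cleaner, chart-free treatment at the cost of the flatness verifications, and both routes are valid.
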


\begin{proof}
	Consider the commutative diagram
		\[
			\begin{CD}
					A(\alg{O}_{K}^{\fp}(k'))
				@>>>
					A(\alg{K}^{\fp}(k'))
				\\
				@VVV
				@VVV
				\\
					A(\alg{O}_{K}(k'))
				@>>>
					A(\alg{K}(k')).
			\end{CD}
		\]
	As in the proof of the previous proposition,
	the vertical maps are injective with dense image.
	The horizontal inclusions are open immersions
	by \eqref{prop: integral points are open}.
	Hence the map induced on the cokernels of the horizontal maps is an isomorphism.
\end{proof}

We will use the following to see that
the group $N(\Order_{K})$ for a finite flat group scheme $N$ over $\Order_{K}$
can be viewed as an \'etale group over $k$.
A direct algebraic proof is not difficult,
but a proof using topology is clearer.

\begin{Prop} \label{prop: Gamma of finite schemes over local fields is etale}
	Let $X$ be a locally quasi-finite separated $\Order_{K}$-scheme with locally finite part $X^{f}$.
	Let $K^{\ur}$ be a maximal unramified extension of $K$.
	Let $x$ and $x^{f}$ be the sets $X(K^{\ur})$ and $X^{f}(K^{\ur})$, respectively,
	viewed as \'etale schemes over $k$.
	Then $X(\alg{O}_{K}(k')) = x^{f}(k')$ and $X(\alg{K}(k')) = x(k')$
	for any $k' \in k^{\ind\rat}$.
\end{Prop}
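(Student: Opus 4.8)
\medskip
\noindent\emph{Proof plan.} The strategy is to reduce to the case that $k'$ is a perfect field, and then to read the statement off from the classical theory of complete discrete valuation rings together with the description of a maximal unramified extension $K^{\ur}$ by its residue field.

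First I would reduce to $k' = l$ a perfect field. The maximal spectra of $\alg{O}_{K}(k')$ and of $\alg{K}(k')$ are both $\Spec k'$, and a disjoint Zariski covering of $\Spec k'$ by idempotents induces compatible decompositions of $\alg{O}_{K}(k')$, $\alg{K}(k')$ and of the point sets $X(\alg{O}_{K}(k'))$, $X(\alg{K}(k'))$, $x(k')$ (Assertions \eqref{ass: maximal ideals of local fields with ind-rational base} and \eqref{ass: Zariski topology of local fields with ind-rational base} of Proposition \ref{prop: basic properties of local fields with ind-rational base}, Assertion \eqref{ass: K-points split into disjoint unions} of Proposition \ref{prop: top space str from affine to general}, and the fact that $x$ is an \'etale $k$-scheme); moreover, by Assertions \eqref{ass: point-wise integral implies integral} and \eqref{ass: integral points are inverse limits} of Proposition \ref{prop: schemes over local fields with ind-rational base}, a point of $X(\alg{O}_{K}(k'))$ is already detected by its reductions along the residue fields $k'/\ideal{m}$. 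Since $X(\alg{O}_{K}(\,\cdot\,))$, $X(\alg{K}(\,\cdot\,))$ and $x(\,\cdot\,)$ are all sheaves on $\Spec k^{\ind\rat}_{\et}$ and $x$ is locally of finite presentation, it therefore suffices to compare them on perfect fields. So let $k' = l$ be a perfect field; then $\alg{O}_{K}(l)$ is the ring of integers of the complete discrete valuation field $\alg{K}(l)$ obtained from $K$ by extending the residue field from $k$ to $l$, its residue field is $l$, and a uniformizer $\pi$ of $\Order_{K}$ remains a uniformizer of $\alg{K}(l)$.

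Next, for $k' = l$ a perfect field, I would first reduce $X$ to the case where it is finite, equivalently proper, over $\Order_{K}$ — this being the case directly needed for $N(\Order_{K})$ with $N$ finite flat. By Zariski's main theorem, the image of any $\alg{O}_{K}(l)$- or $\alg{K}(l)$-valued point of $X$ lies in an affine open $U \subseteq X$ that is quasi-finite over $\Order_{K}$ and embeds as an open subscheme of some finite $\Order_{K}$-scheme $\bar{U}$. Granting the reduction, take $X = \Spec B$ finite over $\Order_{K}$. Then $X(\alg{O}_{K}(l)) = X(\alg{K}(l))$ by the valuative criterion, Assertion \eqref{ass: valuative criterion of properness} of Proposition \ref{prop: schemes over local fields with ind-rational base}. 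To identify this with $x(l)$: a $K$-algebra homomorphism $B \tensor_{\Order_{K}} K \to \alg{K}(l)$ kills the nilradical, hence factors through a product $\prod_{i} M_{i}$ with the $M_{i}/K$ finite field extensions; as $\alg{K}(l)$ is a field with uniformizer $\pi$, a $K$-embedding $M_{i} \into \alg{K}(l)$ forces $M_{i}/K$ to be unramified, and for such $M_{i}$ the $K$-embeddings $M_{i} \into \alg{K}(l)$ are canonically the $k$-embeddings of the finite residue field $\kappa_{i}$ of $M_{i}$ into $l$ (and such an embedding carries $\Order_{M_{i}}$ into $\alg{O}_{K}(l)$). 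Running the same analysis with $\alg{K}(l)$ replaced by $K^{\ur}$, whose residue field is $\closure{k}$, exhibits $x = X(K^{\ur})$ as the \'etale $k$-scheme $\coprod_{i} \Spec \kappa_{i}$, the union being over those $i$ with $M_{i}/K$ unramified; evaluating at $l$ gives $\coprod_{i} \Hom_{k}(\kappa_{i}, l) = X(\alg{K}(l))$, which is the claim.

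The step I expect to be the main obstacle is the reduction, for general $X$, to the finite case: under the bijection $\bar{U}(\alg{O}_{K}(l)) = \bar{U}(\alg{K}(l))$ one must check that the $\alg{O}_{K}(l)$- and $\alg{K}(l)$-valued points of the open subscheme $U$ correspond to one another — equivalently, that the unique extension $\Spec \alg{O}_{K}(l) \to \bar{U}$ of a morphism $\Spec \alg{K}(l) \to U$ does not leave $U$. Here one uses that the image point $\zeta$ of such a morphism has residue field finite \emph{unramified} over $K$ (it embeds into $\alg{K}(l)$, over which $\pi$ is still a uniformizer), so that the closure of $\{\zeta\}$ is finite over $\Order_{K}$ and the extension is controlled by the integral closure $\Order_{\kappa(\zeta)} \subseteq \alg{O}_{K}(l)$; making this precise, along with the descent bookkeeping in the reduction to perfect fields, is where the work lies.
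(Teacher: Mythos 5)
Your handling of the case where $k'$ is a perfect field is essentially correct, and is close to the paper's treatment of a connected piece of $X$ (the paper invokes the structure theorem for locally quasi-finite separated schemes over the henselian ring $\Order_{K}$ — $X$ is a disjoint union of $\Spec$'s of finite local $\Order_{K}$-algebras and finite local $K$-algebras — which sidesteps the ``does the extension of a $U$-point to $\bar{U}$ stay in $U$'' bookkeeping you flag as the main obstacle). But the main obstacle is not there; it is in the step you treat as routine, the reduction to perfect fields, and your justification of that step does not work.

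The principle ``these are sheaves on $\Spec k^{\ind\rat}_{\et}$ and $x$ is locally of finite presentation, hence it suffices to compare them on perfect fields'' is invalid: a sheaf on $\Spec k^{\ind\rat}_{\et}$ is not determined by its values on fields, and agreement on fields propagates to general ind-rational $k'$ only when \emph{both} sheaves are known to be locally of finite presentation. The functor $k' \mapsto X(\alg{K}(k'))$ is precisely the one whose local finite presentation is at stake — $\alg{K}$ itself is not locally of finite presentation ($\alg{K} \neq \alg{K}^{\fp}$), and the proposition is in substance the assertion that $X(\alg{K}(k')) = X(\alg{K}^{\fp}(k'))$ for locally quasi-finite $X$ — so the reduction is circular. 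The assertions you cite do not close this gap: \eqref{ass: point-wise integral implies integral} and \eqref{ass: integral points are inverse limits} of Proposition \ref{prop: schemes over local fields with ind-rational base} control integrality and $\alg{p}_{K}$-adic limits, not the reconstruction of a point of $X(\alg{K}(k'))$ from its reductions at the maximal ideals $\ideal{m}$; a compatible family of points over the $k'/\ideal{m}$ that is not ``locally constant on $\Spec k'$'' does not come from a global point, and both sides of the desired equality have this defect, so matching them for $k'$ with infinitely many idempotents requires an argument. The paper supplies it topologically: the finite-level computation you carry out identifies $X(\alg{K}^{\fp}(k'))$ with $x(k')$, and the density statement of Proposition \ref{prop: Greenberg approximation} together with the discreteness of this point set inside the Hausdorff space $X(\alg{K}(k'))$ then forces $X(\alg{K}(k')) = X(\alg{K}^{\fp}(k'))$. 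Without Proposition \ref{prop: Greenberg approximation} (or an equivalent approximation input), your argument does not establish the statement in exactly the case for which it is needed.
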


\begin{proof}
	Note that $X$ is a disjoint union of the $\Spec$'s of
	finite local $\Order_{K}$-algebras and/or finite local $K$-algebras.
	Write $X = X^{f} \sqcup Y$,
	where $Y$ is a locally finite $K$-scheme.
	We first show that for any $k' \in k^{\ind\rat}$,
	any $\Order_{K}$-scheme morphism $\Spec \alg{O}_{K}(k') \to X$ factors through $X^{f}$.
	By \eqref{prop: top space str from affine to general}
	\eqref{ass: K-points split into disjoint unions},
	we can write $k' = k'_{1} \times k'_{2}$ with $k'_{1}, k'_{2} \in k^{\ind\rat}$ such that
	the morphism $\Spec \alg{O}_{K}(k') \to X$ is the disjoint union of
	morphisms $\Spec \alg{O}_{K}(k'_{1}) \to X^{f}$ and
	$\Spec \alg{O}_{K}(k'_{2}) \to Y$.
	Then $\alg{O}_{K}(k'_{2})$ has to be a $K$-algebra.
	Therefore if $k'_{2}$ were non-zero,
	then the discrete valuation ring $\alg{O}_{K}(k'_{2} / \ideal{m})$
	for $\ideal{m} \in \Spec k'_{2}$ would have to be a $K$-algebra, which is absurd.
	Hence $k'_{2} = 0$,
	and $\Spec \alg{O}_{K}(k') \to X$ factors through $X^{f}$.
	
	Hence $X(\alg{O}_{K}(k')) = X^{f}(\alg{O}_{K}(k'))$.
	This is further equal to $X^{f}(\alg{K}(k'))$
	by \eqref{prop: schemes over local fields with ind-rational base}
	\eqref{ass: valuative criterion of properness}.
	Hence the first statement $X(\alg{O}_{K}(k')) = X^{f}(\alg{K}(k')) = x^{f}(k')$
	is reduced to the second $X(\alg{K}(k')) = x(k')$.
	
	To prove $X(\alg{K}(k')) = x(k')$,
	we may assume that $X$ is connected
	by \eqref{prop: top space str from affine to general}
	\eqref{ass: K-points split into disjoint unions}.
	We may further assume that $X$ is reduced,
	so $X = \Spec L$ for some finite extension $L / K$.
	Let $k_{L}$ be the residue field of $L$.
	Then $x = \Spec k_{L}$ if $L / K$ is unramified
	and $x = \emptyset$ otherwise.
	We also have $\Hom_{K}(L, \alg{K}^{\fp}(k')) = \Hom_{k}(k_{L}, k')$
	if $L / K$ is unramified
	and $\Hom_{K}(L, \alg{K}^{\fp}(k')) = \Hom_{k}(0, k')$ otherwise.
	Hence $X(\alg{K}^{\fp}(k')) = x(k')$,
	which is finite discrete.
	Since $X$ is separated, the topological space $X(\alg{K}(k'))$ is Hausdorff.
	By \eqref{prop: Greenberg approximation},
	we know that $X(\alg{K}^{\fp}(k'))$ is dense in $X(\alg{K}(k'))$.
	Therefore
	$X(\alg{K}(k')) = X(\alg{K}^{\fp}(k')) = x(k')$.
\end{proof}


\numberwithin{equation}{subsection}
\subsection{The relative fppf site of a local field}
\label{sec: The relative fppf site of a local field}

We recall from \cite[\S 2.3-4]{Suz13} the relative fppf site of $K$ over $k$,
the fppf structure morphism
and the cup product formalism in this setting.
We provide more on cohomology of $\Order_{K}$ with or without support.
The site used in \cite[loc.\ cit.]{Suz13} was $\Spec k^{\ind\rat}_{\et}$.
We will apply pro-\'etale sheafification to obtain a similar formalism over $\Spec k^{\ind\rat}_{\pro\et}$.

Let $K / k^{\ind\rat}$ be the category of pairs $(S, k_{S})$,
where $k_{S} \in k^{\ind\rat}$ and $S$ a finitely presented $\alg{K}(k_{S})$-algebra.
A morphism $(S, k_{S}) \to (S', k_{S'})$ consists of a morphism $k_{S} \to k_{S'}$ in $k^{\ind\rat}$
and a ring homomorphism $S \to S'$ such that the diagram
	\[
		\begin{CD}
				\alg{K}(k_{S})
			@>>>
				\alg{K}(k_{S'})
			\\
			@VVV
			@VVV
			\\
				S
			@>>>
				S'
		\end{CD}
	\]
commutes.
The composite of two morphisms is defined in the obvious way.
We say that a morphism $(S, k_{S}) \to (S', k_{S'})$ is flat/\'etale if
$k_{S} \to k_{S'}$ is \'etale and $S \to S'$ is flat.
We endow the category $K / k^{\ind\rat}$ with the topology
where a covering of an object $(S, k_{S})$ is a finite family $\{(S_{i}, k_{i})\}$
of objects flat/\'etale over $(S, k_{S})$ such that
$\prod S_{i}$ is faithfully flat over $S$.
The resulting site is the relative fppf site
$\Spec K_{\fppf} / k^{\ind\rat}_{\et}$ of $K$ over $k$ \cite[Def.\ 2.3.2]{Suz13}.
The cohomology of an object $(S, k_{S})$ with coefficients in $A \in \Ab(K_{\fppf} / k^{\ind\rat}_{\et})$
is given by the fppf cohomology of $S$:
	\[
			R \Gamma((S, k_{S}), A)
		=
			R \Gamma(S_{\fppf}, f_{\ast} A),
	\]
where $f \colon \Spec K_{\fppf} / k^{\ind\rat}_{\et} \to \Spec S_{\fppf}$
is given by the functor sending a finitely presented $S$-algebra $S'$
to the object $(S', k_{S})$ (\cite[Prop.\ 2.3.4]{Suz13}).
We also define the category $\Order_{K} / k^{\ind\rat}$ and
the relative fppf site $\Spec \Order_{K, \fppf} / k^{\ind\rat}_{\et}$
of $\Order_{K}$ over $k$ in a similar way,
using $\alg{O}_{K}$ instead of $\alg{K}$.
Its cohomology theory is similarly described.

The functors
	\begin{gather*}
			k^{\ind\rat}
		\to
			\Order_{K} / k^{\ind\rat},
		\quad
			k'
		\mapsto
			(\alg{O}_{K}(k'), k'),
		\\
			\Order_{K} / k^{\ind\rat}
		\to
			K / k^{\ind\rat},
		\quad
			(S, k_{S})
		\mapsto
			(S \tensor_{\Order_{K}} K, k_{S})
	\end{gather*}
and their composite
	\[
			k^{\ind\rat}
		\to
			K / k^{\ind\rat},
		\quad
			k'
		\mapsto
			(\alg{K}(k'), k'),
	\]
define morphisms
	\[
			\pi_{K / k}
		\colon
			\Spec K_{\fppf} / k^{\ind\rat}_{\et}
		\stackrel{j}{\into}
			\Spec \Order_{K, \fppf} / k^{\ind\rat}_{\et}
		\xrightarrow{\pi_{\Order_{K} / k}}
			\Spec k^{\ind\rat}_{\et}
	\]
of sites (see \cite[Def.\ 2.4.2]{Suz13} for $\pi_{K / k}$).
We call $\pi_{K / k}$ (resp.\ $\pi_{\Order_{K} / k}$)
the fppf structure morphism of $K$ (resp.\ $\Order_{K}$) over $k$.
We denote
	\begin{gather*}
				\alg{\Gamma}(K, \;\cdot\;)
			=
				(\pi_{K / k})_{\ast},
			\quad
				\alg{H}^{n}(K, \;\cdot\;)
			=
				R^{n} (\pi_{K / k})_{\ast}
			\colon
				\Ab(K_{\fppf} / k^{\ind\rat}_{\et})
			\to
				\Ab(k^{\ind\rat}_{\et}),
		\\
				R \alg{\Gamma}(K, \;\cdot\;)
			=
				R (\pi_{K / k})_{\ast}
			\colon
				D(K_{\fppf} / k^{\ind\rat}_{\et})
			\to
				D(k^{\ind\rat}_{\et}),
	\end{gather*}
and similarly for $\Order_{K}$.
Again, see \cite[Chap.\ 18]{KS06} for the formalism of
pushforward, pullback, sheaf-Hom and tensor products
in unbounded derived categories and arbitrary morphisms of sites.%
\footnote{
	We work with unbounded derived categories
	when the general theory in \cite[Chap.\ 18]{KS06} allows us to do so
	without extra effort.
	This eliminates some unnecessary assumptions on cohomological or Tor dimensions
	traditionally needed for morphisms of functoriality of derived pushforward
	and derived cup products.
	However, we do not emphasize the use of unbounded derived categories.
	Most of the complexes we actually need turn out to be bounded
	after non-trivial (but essentially classical) calculations,
	as seen in the next subsection.
	Therefore, with some care, it is possible avoid unbounded derived categories
	in order to prove the main theorems of this paper.
}
For any $A \in \Ab(K_{\fppf} / k^{\ind\rat}_{\et})$ and $k' \in k^{\ind\rat}$,
we have
	\begin{equation} \label{eq: cohomology of RGamma}
			R \Gamma \bigl(
				k'_{\et},
				R \alg{\Gamma}(K, A)
			\bigr)
		=
			R \Gamma(\alg{K}(k')_{\fppf}, A),
	\end{equation}
where we view $A$ as an fppf sheaf on $\alg{K}(k')$
by identifying it with the functor that sends a $\alg{K}(k')$-algebra $S$ of finite presentation to
$A(S, k')$.
If $A$ is a group scheme locally of finite type over $K$,
then this identification is consistent with the identification $A(S, k') = A(S)$.
The sheaf $\alg{H}^{n}(K, A)$ for any $A \in \Ab(K_{\fppf} / k^{\ind\rat}_{\et})$
is the \'etale sheafification of the presheaf
	\begin{equation} \label{eq: cohomology as a presheaf over residue field}
			k'
		\mapsto
			H^{i}(\alg{K}(k')_{\fppf}, A),
	\end{equation}
A similar description exists over $\Order_{K}$.

The sheaf-Hom functor for $\Spec K_{\fppf} / k^{\ind\rat}_{\et}$ is simply denoted by $\sheafhom_{K}$.
A morphism of fppf sheaves over $K$ (in the usual sense) induces
a morphism of sheaves over $\Spec K_{\fppf} / k^{\ind\rat}_{\et}$.
The derived versions are similarly denoted: $\sheafext_{K}^{n}$ and $R \sheafhom_{K}$.
There are versions $\sheafhom_{\Order_{K}}$, $\sheafext_{\Order_{K}}^{n}$ and
$R \sheafhom_{\Order_{K}}$ over $\Spec \Order_{K, \fppf} / k^{\ind\rat}_{\et}$.

We define a left exact functor
	\begin{gather*}
				\alg{\Gamma}_{x}(\Order_{K}, \;\cdot\;)
			\colon
				\Ab(\Order_{K, \fppf} / k^{\ind\rat}_{\et})
			\to
				\Ab(k^{\ind\rat}_{\et}),
		\\
				A
			\mapsto
				\Ker \bigl(
						\alg{\Gamma}(\Order_{K}, A)
					\to
						\alg{\Gamma}(K, j^{\ast} A)
				\bigr).
	\end{gather*}
This is the version of $\alg{\Gamma}(\Order_{K}, \;\cdot\;)$
with support on the closed point $x = \Spec k \into \Spec \Order_{K}$.
Its derived functor is denoted by $R \alg{\Gamma}_{x}(\Order_{K}, \;\cdot\;)$
with cohomology $\alg{H}^{n}_{x}(\Order_{K}, \;\cdot\;)$.
As in \cite[\S 2.2]{Bes78}, \cite[III, \S 0, Cohomology with support on closed subscheme]{Mil06},
this fits in the following localization triangle:

\begin{Prop} \label{prop: localization sequence}
	We have a distinguished triangle
		\[
				R \alg{\Gamma}_{x}(\Order_{K}, A)
			\to
				R \alg{\Gamma}(\Order_{K}, A)
			\to
				R \alg{\Gamma}(K, A)
		\]
	in $D(k^{\ind\rat}_{\et})$ for any $A \in D(\Order_{K, \fppf} / k^{\ind\rat}_{\et})$.
	(We frequently omit $j^{\ast}$ from the notation.)
\end{Prop}

\begin{proof}
	The site $\Spec K_{\fppf} / k^{\ind\rat}_{\et}$ is
	the localization \cite[III, \S 5]{AGV72a} of the site $\Spec \Order_{K, \fppf} / k^{\ind\rat}_{\et}$
	at the object $(K, k)$.
	Hence the functor $j^{\ast}$ admits an exact left adjoint functor $j_{!}$
	by \cite[IV, Prop.\ 11.3.1]{AGV72a}.
	The sheaf $\Z_{K} := j_{!} \Z$ is represented by
	the usual \'etale $\Order_{K}$-scheme of extension by zero of $\Z$,
	i.e.\ $\Spec \Order_{K} \sqcup \bigsqcup_{n \in \Z \setminus \{0\}} \Spec K$.
	Recall from \cite[III, \S 0, Lem.\ 0.2]{Mil06} that there is an exact sequence
		\[
			0 \to \Z_{K} \to \Z \to \Z_{x} \to 0
		\]
	of \'etale group schemes over $\Order_{K}$,
	where $\Z_{x}$ is supported on the closed point $x$ with special fiber $\Z$,
	i.e.\ the non-separated scheme obtained by glueing infinitely many copies of $\Spec \Order_{K}$
	by the common open subscheme $\Spec K$.
	For any $A \in \Ab(\Order_{K, \fppf} / k^{\ind\rat}_{\et})$,
	the induced exact sequence
		\[
				0
			\to
				\alg{\Gamma} \bigl(
					\Order_{K},
					\sheafhom_{\Order_{K}}(\Z_{x}, A)
				\bigr)
			\to
				\alg{\Gamma} \bigl(
					\Order_{K},
					\sheafhom_{\Order_{K}}(\Z, A)
				\bigr)
			\to
				\alg{\Gamma} \bigl(
					\Order_{K},
					\sheafhom_{\Order_{K}}(\Z_{K}, A)
				\bigr)
		\]
	can be identified with
		\[
				0
			\to
				\alg{\Gamma}_{x}(\Order_{K}, A)
			\to
				\alg{\Gamma}(\Order_{K}, A)
			\to
				\alg{\Gamma}(K, j^{\ast} A).
		\]
	This extends to term-wise exact sequences of complexes
	when $A$ is a complex in $\Ab(\Order_{K, \fppf} / k^{\ind\rat})$.
	
	We want to pass to the derived category.
	Let $A \isomto I$ be a quasi-isomorphism to a K-injective complex
	(called a homotopically injective complex in \cite[Def.\ 14.1.4 (i)]{KS06};
	for the existence of such $I$, see \cite[Thm.\ 14.3.1 (ii)]{KS06}).
	The existence of the exact left adjoint $j_{!}$ of $j^{\ast}$
	implies that $j^{\ast}$ preserves K-injectives by the definition of K-injectivity.
	Since K-injectives calculate any right derived functors by \cite[Thm.\ 14.3.1 (vi)]{KS06},
	the terms in the sequence of complexes
		\[
				0
			\to
				\alg{\Gamma}_{x}(\Order_{K}, I)
			\to
				\alg{\Gamma}(\Order_{K}, I)
			\to
				\alg{\Gamma}(K, j^{\ast} I)
		\]
	represents
		\[
				R \alg{\Gamma}_{x}(\Order_{K}, A),
			\quad
				R \alg{\Gamma}(\Order_{K}, A),
			\quad
				R \alg{\Gamma}(K, j^{\ast} A),
		\]
	respectively.
	
	On the other hand,
	the object $R \sheafhom_{\Order_{K}}(B, A)$ for any complex $B$ can be represented by
	the total complex $\sheafhom_{\Order_{K}}(B, I)$ of the sheaf-Hom double complex
	by \cite[Prop.\ 18.4.5]{KS06}.
	This complex is K-limp in the sense of \cite[Def.\ 5.11 (a)]{Spa88}
	by \cite[Prop.\ 5.14 and \S 5.12]{Spa88}.%
	\footnote{
		The setting in \cite{Spa88} is over topological spaces,
		but can be generalized to sites in the style of \cite[Chap.\ 18]{KS06}.
		As a quick definition (cf.\ \cite[Cor.\ 5.17]{Spa88}),
		we say that a complex of sheaves of abelian groups $C$ on a site $S$ is K-limp
		if the natural morphism $\Gamma(X, C) \to R \Gamma(X, C)$ in $D(S)$
		is an isomorphism for any object $X$ of $S$,
		where $\Gamma(X, C)$ is the complex $C$ with $\Gamma(X, \;\cdot\;)$ applied term-wise.
		A bounded below complex of acyclic sheaves
		(see the paragraph before \eqref{prop: misc on ind-rational pro-etale topology})
		is K-limp.
	}
	Hence its derived pushforward $R \alg{\Gamma}(\Order_{K}, \;\cdot\;)$ can be calculated
	by applying $\alg{\Gamma}(\Order_{K}, \;\cdot\;)$ term-wise
	by \cite[Prop.\ 6.7 (a)]{Spa88} (generalized to sites).
	Therefore the sequence of complexes
		\[
				0
			\to
				\alg{\Gamma} \bigl(
					\Order_{K},
					\sheafhom_{\Order_{K}}(\Z_{x}, I)
				\bigr)
			\to
				\alg{\Gamma} \bigl(
					\Order_{K},
					\sheafhom_{\Order_{K}}(\Z, I)
				\bigr)
			\to
				\alg{\Gamma} \bigl(
					\Order_{K},
					\sheafhom_{\Order_{K}}(\Z_{K}, I)
				\bigr)
		\]
	represents the distinguished triangle
		\begin{align*}
					R \alg{\Gamma} \bigl(
						\Order_{K},
						R \sheafhom_{\Order_{K}}(\Z_{x}, A)
					\bigr)
			&	\to
					R \alg{\Gamma} \bigl(
						\Order_{K},
						R \sheafhom_{\Order_{K}}(\Z, A)
					\bigr)
			\\
			&	\to
					R \alg{\Gamma} \bigl(
						\Order_{K},
						R \sheafhom_{\Order_{K}}(\Z_{K}, A)
					\bigr)
		\end{align*}
	coming from $0 \to \Z_{K} \to \Z \to \Z_{x} \to 0$.
	
	The above identification and distinguished triangle yield the required distinguished triangle.
\end{proof}

Relative fppf sites have \'etale counterparts (\cite[Introduction]{Suz13}).
The category $K_{\et} / k^{\ind\rat}$ is the full subcategory of $K / k^{\ind\rat}$
consisting of objects $(L, k_{L})$ with $L$ \'etale over $\alg{K}(k_{L})$.
An \'etale morphism $(L, k_{L}) \to (L', k_{L'})$ is a morphism with
$k_{L} \to k_{L'}$ (hence also $L \to L'$ this case) is \'etale.
This defines \'etale coverings in the category $K_{\et} / k^{\ind\rat}$
and hence the relative \'etale site $\Spec K_{\et} / k^{\ind\rat}_{\et}$ of $K$ over $k$.
The relative \'etale site
$\Spec \Order_{K, \et} / k^{\ind\rat}_{\et}$ of $\Order_{K}$ over $k$
is defined similarly.
We have the \'etale versions
	\[
			\Spec K_{\et} / k^{\ind\rat}_{\et}
		\into
			\Spec \Order_{K, \et} / k^{\ind\rat}_{\et}
		\to
			\Spec k^{\ind\rat}_{\et}.
	\]
of the structure morphisms.
The notation for their pushforward functors are
$R \alg{\Gamma}(K_{\et}, \;\cdot\;)$ and $R \alg{\Gamma}(\Order_{K, \et}, \;\cdot\;)$.
We mostly consider fppf cohomology for $K$ and $\Order_{K}$,
so when there is no subscript, it means fppf cohomology.

Recall from \cite[Prop.\ 2.4.3]{Suz13} that we have a natural morphism
	\[
			R \alg{\Gamma} \bigl(
				K,
				R \sheafhom_{K}(A, B)
			\bigr)
		\to
			R \sheafhom_{k^{\ind\rat}_{\et}} \bigl(
				R \alg{\Gamma}(K, A), R \alg{\Gamma}(K, B)
			\bigr)
	\]
of functoriality of $R \alg{\Gamma}$ in $D(k^{\ind\rat}_{\et})$
for any $A, B \in D(K_{\fppf} / k^{\ind\rat}_{\et})$.
Here $R \sheafhom_{K}$ is the derived sheaf-Hom for the site $\Spec K_{\fppf} / k^{\ind\rat}_{\et}$.
As we saw in loc.cit.\ the morphism of functoriality is equivalent to the cup-product pairing
	\[
				R \alg{\Gamma}(K, A)
			\tensor_{k}^{L}
				R \alg{\Gamma}(K, C)
		\to
				R \alg{\Gamma}(K, A \tensor_{K}^{L} C)
	\]
(where $\tensor_{k}^{L}$ and $\tensor_{K}^{L}$ denote
the derived tensor products over
$\Spec k^{\ind\rat}_{\et}$ and $\Spec K_{\fppf} / k^{\ind\rat}_{\et}$, respectively)
by the derived tensor-hom adjunction \cite[Thm.\ 18.6.4 (vii)]{KS06}
via the change of variables
$R \sheafhom_{K}(A, B) \leadsto C$ and
$A \tensor_{K}^{L} C \leadsto B$.

We need a version for $R \alg{\Gamma}_{x}$.
The proof of its existence is slightly different from \cite[Prop.\ 2.4.3]{Suz13},
so we prove it here.

\begin{Prop}
	The functoriality of $R \alg{\Gamma}_{x}$ induces natural morphisms
		\begin{gather}
				\label{eq: functoriality of cohom with support}
					R \alg{\Gamma} \bigl(
						\Order_{K},
						R \sheafhom_{\Order_{K}}(A, B)
					\bigr)
				\to
					R \sheafhom_{k^{\ind\rat}_{\et}} \bigl(
						R \alg{\Gamma}_{x}(\Order_{K}, A),
						R \alg{\Gamma}_{x}(\Order_{K}, B)
					\bigr),
			\\
				\label{eq: functoriality of cohom with support, interchanged}
					R \alg{\Gamma}_{x} \bigl(
						\Order_{K},
						R \sheafhom_{\Order_{K}}(A, B)
					\bigr)
				\to
					R \sheafhom_{k^{\ind\rat}_{\et}} \bigl(
						R \alg{\Gamma}(\Order_{K}, A),
						R \alg{\Gamma}_{x}(\Order_{K}, B)
					\bigr),
		\end{gather}
	for any $A, B \in D(\Order_{K, \fppf} / k^{\ind\rat}_{\et})$,
	where $R \sheafhom_{\Order_{K}}$ denotes derived sheaf-Hom
	for the site $\Spec \Order_{K, \fppf} / k^{\ind\rat}_{\et}$.
\end{Prop}

\begin{proof}
	For \eqref{eq: functoriality of cohom with support},
	let $A \isomto I$ and $B \isomto J$ be quasi-isomorphisms
	to K-injective complexes.
	As we saw in the proof of \eqref{prop: localization sequence}, the object
		$
			R \alg{\Gamma} \bigl(
				\Order_{K},
				R \sheafhom_{\Order_{K}}(A, B)
			\bigr)
		$
	is represented by
		$
			\alg{\Gamma} \bigl(
				\Order_{K},
				\sheafhom_{\Order_{K}}(I, J)
			\bigr)
		$.
	The functoriality of $\alg{\Gamma}_{x}$ induces a natural morphism
		\[
					\alg{\Gamma} \bigl(
						\Order_{K},
						\sheafhom_{\Order_{K}}(I, J)
					\bigr)
				\to
					\sheafhom_{k^{\ind\rat}_{\et}} \bigl(
						\alg{\Gamma}_{x}(\Order_{K}, I),
						\alg{\Gamma}_{x}(\Order_{K}, J)
					\bigr)
		\]
	of double complexes in $\Ab(k^{\ind\rat}_{\et})$.
	In $D(k^{\ind\rat}_{\et})$, there is a natural morphism
	from the right-hand side to
		\[
					R \sheafhom_{k^{\ind\rat}_{\et}} \bigl(
						\alg{\Gamma}_{x}(\Order_{K}, I),
						\alg{\Gamma}_{x}(\Order_{K}, J)
					\bigr)
				=
					R \sheafhom_{k^{\ind\rat}_{\et}} \bigl(
						R \alg{\Gamma}_{x}(\Order_{K}, A),
						R \alg{\Gamma}_{x}(\Order_{K}, B)
					\bigr).
		\]
	By composing, we obtain the required morphism.
	
	For \eqref{eq: functoriality of cohom with support, interchanged}, we have
		\begin{align*}
					R \alg{\Gamma}(\Order_{K}, A)
			&	\to
					R \alg{\Gamma} \Bigl(
						\Order_{K},
						R \sheafhom_{k^{\ind\rat}_{\et}} \bigl(
							R \sheafhom_{k^{\ind\rat}_{\et}}(A, B),
							B
						\bigr)
					\Bigr)
			\\
			&	\to
					R \sheafhom_{k^{\ind\rat}_{\et}} \Bigl(
						R \alg{\Gamma}_{x} \bigl(
							\Order_{K},
							R \sheafhom_{k^{\ind\rat}_{\et}}(A, B)
						\bigr),
						R \alg{\Gamma}_{x}(\Order_{K}, B)
					\Bigr),
		\end{align*}
	where the first morphism is the natural evaluation morphism
	and the second morphism is \eqref{eq: functoriality of cohom with support}.
	In general, morphisms of the form
	$C \to R \sheafhom_{k^{\ind\rat}_{\et}}(D, E)$ and
	$D \to R \sheafhom_{k^{\ind\rat}_{\et}}(C, E)$
	are both equivalent to
	$C \tensor^{L} D \to E$
	by the derived tensor-hom adjunction \cite[Thm.\ 18.6.4 (vii)]{KS06}.
	Hence the above yields the desired morphism.
\end{proof}

These morphisms of functoriality are compatible in the following sense.

\begin{Prop} \label{prop: localization and functoriality}
	Let $A, B \in D(\Order_{K, \fppf} / k^{\ind\rat}_{\et})$.
	To simplify the notation,
	we denote
		\[
					[\;\cdot\;, \;\cdot\;]_{\Order_{K}}
				=
					R \sheafhom_{\Order_{K}},
			\quad
					[\;\cdot\;, \;\cdot\;]_{K}
				=
					R \sheafhom_{K},
			\quad
					[\;\cdot\;, \;\cdot\;]_{k}
				=
					R \sheafhom_{k^{\ind\rat}_{\et}},
		\]
		\[
					R \alg{\Gamma}_{x}
				=
					R \alg{\Gamma}_{x}(\Order_{K}, \;\cdot\;),
			\quad
					R \alg{\Gamma}_{\Order_{K}}
				=
					R \alg{\Gamma}(\Order_{K}, \;\cdot\;),
			\quad
					R \alg{\Gamma}_{K}
				=
					R \alg{\Gamma}(K, \;\cdot\;).
		\]
	Then we have a morphism of distinguished triangles
		\[
			\begin{CD}
					R \alg{\Gamma}_{x} [A, B]_{\Order_{K}}
				@>>>
					R \alg{\Gamma}_{\Order_{K}} [A, B]_{\Order_{K}}
				@>>>
					R \alg{\Gamma}_{K} [A, B]_{K}
				\\
				@VVV
				@VVV
				@VVV
				\\
					[R \alg{\Gamma}_{\Order_{K}} A,
					R \alg{\Gamma}_{x} B]_{k}
				@>>>
					[R \alg{\Gamma}_{x} A,
					R \alg{\Gamma}_{x} B]_{k}
				@>>>
					[R \alg{\Gamma}_{K} A,
					R \alg{\Gamma}_{x} B]_{k}[1]
			\end{CD}
		\]
	in $D(k^{\ind\rat}_{\et})$,
	where the horizontal triangles are the localization triangles
	in \eqref{prop: localization sequence},
	the left two vertical morphisms are
	the morphisms of functoriality of $R \alg{\Gamma}_{x}$,
	and the right vertical morphism is the morphisms of functoriality of $R \alg{\Gamma}_{K}$
	composed with the connecting morphism
	$R \alg{\Gamma}_{K} B \to R \alg{\Gamma}_{x} B[1]$
	of the localization triangle.
\end{Prop}

\begin{proof}
	For complexes $C, D$ in $\Ab(\Order_{K, \fppf} / k^{\ind\rat}_{\et})$,
	we denote the total complex of the sheaf-Hom double complex $\sheafhom_{\Order_{K}}(C, D)$ by
	$[C, D]_{\Order_{K}}^{c}$.
	Note that there is a natural morphism
	$[C, D]_{\Order_{K}}^{c} \to [C, D]_{\Order_{K}}$
	in $D(k^{\ind\rat}_{\et})$.
	We use the notation $[\;\cdot\;, \;\cdot\;]_{K}^{c}$, $[\;\cdot\;, \;\cdot\;]_{k}^{c}$ similarly.
	We denote by $\alg{\Gamma}_{\Order_{K}} C$ the complex $\alg{\Gamma}(\Order_{K}, C)$,
	where $\alg{\Gamma}(\Order_{K}, \;\cdot\;)$ is applied term-wise.
	We use the notation $\alg{\Gamma}_{K}$ similarly.
	We denote the mapping fiber of the morphism
	$\alg{\Gamma}_{\Order_{K}} C \to \alg{\Gamma}_{K} C$ of complexes in $\Ab(k^{\ind\rat}_{\et})$
	by $\alg{\Gamma}_{x}^{c} C$.
	
	Let $A \isomto I$ and $B \isomto J$ be K-injective replacements.
	Then $[A, B]_{\Order_{K}}$ and $[A, B]_{K}$ can be represented by
	$[I, J]_{\Order_{K}}^{c}$ and $[I, J]_{K}^{c}$, respectively.
	Hence $R \alg{\Gamma}_{\Order_{K}} [A, B]_{\Order_{K}}$,
	$R \alg{\Gamma}_{K} [A, B]_{K}$ and therefore
	$R \alg{\Gamma}_{x} [A, B]_{\Order_{K}}$ can be represented by
	$\alg{\Gamma}_{\Order_{K}} [I, J]_{\Order_{K}}^{c}$,
	$\alg{\Gamma}_{K} [I, J]_{K}^{c}$ and
	$\alg{\Gamma}_{x}^{c} [I, J]_{\Order_{K}}^{c}$, respectively.
	Hence, if we show that the diagram
		\[
			\begin{CD}
					\alg{\Gamma}_{x}^{c} [I, J]_{\Order_{K}}^{c}
				@>>>
					\alg{\Gamma}_{\Order_{K}} [I, J]_{\Order_{K}}^{c}
				@>>>
					\alg{\Gamma}_{K} [I, J]_{K}^{c}
				@>>>
					\alg{\Gamma}_{x}^{c} [I, J]_{\Order_{K}}^{c}[1]
				\\
				@VVV
				@VVV
				@VVV
				@VVV
				\\
					[\alg{\Gamma}_{\Order_{K}} I,
					\alg{\Gamma}_{x}^{c} J]_{k}^{c}
				@>>>
					[\alg{\Gamma}_{x}^{c} I,
					\alg{\Gamma}_{x}^{c} J]_{k}^{c}
				@>>>
					[\alg{\Gamma}_{K} I,
					\alg{\Gamma}_{x}^{c} J]_{k}^{c}[1]
				@>>>
					[\alg{\Gamma}_{\Order_{K}} I,
					\alg{\Gamma}_{x}^{c} J]_{k}^{c}[1]
			\end{CD}
		\]
	of complexes in $\Ab(k^{\ind\rat}_{\et})$ is commutative up to homotopy,
	then we get the result by passing to the derived category and using the morphism 
	$[\;\cdot\;, \;\cdot\;]_{k}^{c} \to [\;\cdot\;, \;\cdot\;]_{k}$.
	On each square, it is routine to check that the square is commutative,
	or to construct a homotopy up to which the square is commutative.
\end{proof}

Now we pass to the pro-\'etale topology.
Let $P \colon \Spec k^{\ind\rat}_{\pro\et} \to \Spec k^{\ind\rat}_{\et}$
be the morphism defined by the identity.
The pullback $P^{\ast}$ is the pro-\'etale sheafification functor.
We denote the composite functor $P^{\ast} R \alg{\Gamma}(K, \;\cdot\;)$
by $R \Tilde{\alg{\Gamma}}(K, \;\cdot\;)$.%
\footnote{
	We cannot define this functor as a certain pushforward functor without sheafification.
	The functor $k' \mapsto (\alg{K}(k'), k')$ does not define a continuous map
	$\Spec K_{\fppf} / k^{\ind\rat}_{\et} \to \Spec k^{\ind\rat}_{\pro\et}$,
	and it is not clear whether there is a nice definition of
	the fppf site of $K$ relative to $\Spec k^{\ind\rat}_{\pro\et}$.
	For a faithfully flat ind-\'etale morphism $k' \to k''$ in $k^{\ind\rat}$,
	the corresponding morphism $\alg{O}_{K}(k') \to \alg{O}_{K}(k'')$ is not finitely presented,
	not even ind-\'etale,
	unless $k' \to k''$ is \'etale.
}
If $A$ is an object of $\Ab(K_{\fppf} / k^{\ind\rat}_{\et})$
such that $R \alg{\Gamma}(K, A)$ is P-acyclic, then we have
	\[
			R \Gamma(k'_{\pro\et}, R \Tilde{\alg{\Gamma}}(K, A))
		=
			R \Gamma(\alg{K}(k'), A)
	\]
for any $k' \in k^{\ind\rat}$,
the sheaf $\Tilde{\alg{H}}^{n}(K, A)$ is the \'etale (not pro-\'etale) sheafification of the presheaf
	\[
			k'
		\mapsto
			H^{n}(\alg{K}(k'), A),
	\]
and we have
	\[
			\Gamma(k', \Tilde{\alg{H}}^{n}(K, A))
		=
			H^{n}(\alg{K}(k'), A)
	\]
for any algebraically closed $k' \in k^{\ind\rat}$.
With this in mind, if $R \alg{\Gamma}(K, A)$ for $A \in D^{+}(K_{\fppf} / k^{\ind\rat}_{\et})$ is P-acyclic,
then we simply write
$R \Tilde{\alg{\Gamma}}(K, A) = R \alg{\Gamma}(K, A)$ following the convention made in
\S \ref{sec: Serre duality and P-acyclicity}.
Similar notation and convention will be applied to
$R \alg{\Gamma}(\Order_{K}, \;\cdot\;)$ and
$R \alg{\Gamma}_{x}(\Order_{K}, \;\cdot\;)$.

\begin{Prop} \label{prop: localization and functoriality in pro-etale}
	There is a localization distinguished triangle
		\[
				R \Tilde{\alg{\Gamma}}_{x}(\Order_{K}, A)
			\to
				R \Tilde{\alg{\Gamma}}(\Order_{K}, A)
			\to
				R \Tilde{\alg{\Gamma}}(K, A)
		\]
	in $D(k^{\ind\rat}_{\pro\et})$
	for $A \in D(\Order_{K, \fppf} / k^{\ind\rat}_{\et})$.
	The functoriality of $R \Tilde{\alg{\Gamma}}$ and $R \Tilde{\alg{\Gamma}}_{x}$ induce morphisms
		\[
				R \Tilde{\alg{\Gamma}} \bigl(
					K,
					R \sheafhom_{K}(A, B)
				\bigr)
			\to
				R \sheafhom_{k^{\ind\rat}_{\pro\et}} \bigl(
					R \Tilde{\alg{\Gamma}}(K, A), R \Tilde{\alg{\Gamma}}(K, B)
				\bigr)
		\]
	in $D(k^{\ind\rat}_{\pro\et})$ for $A, B \in D(K_{\fppf} / k^{\ind\rat}_{\et})$ and
		\begin{gather*}
					R \Tilde{\alg{\Gamma}} \bigl(
						\Order_{K},
						R \sheafhom_{\Order_{K}}(A, B)
					\bigr)
				\to
					R \sheafhom_{k^{\ind\rat}_{\pro\et}} \bigl(
						R \Tilde{\alg{\Gamma}}_{x}(\Order_{K}, A),
						R \Tilde{\alg{\Gamma}}_{x}(\Order_{K}, B)
					\bigr),
			\\
					R \Tilde{\alg{\Gamma}}_{x} \bigl(
						\Order_{K},
						R \sheafhom_{\Order_{K}}(A, B)
					\bigr)
				\to
					R \sheafhom_{k^{\ind\rat}_{\pro\et}} \bigl(
						R \Tilde{\alg{\Gamma}}(\Order_{K}, A),
						R \Tilde{\alg{\Gamma}}_{x}(\Order_{K}, B)
					\bigr),
		\end{gather*}
	in $D(k^{\ind\rat}_{\pro\et})$ for $A, B \in D(\Order_{K, \fppf} / k^{\ind\rat}_{\et})$.
	With a similar set of notation to
	\eqref{prop: localization and functoriality},
	we have a morphism of distinguished triangles
		\[
			\begin{CD}
					R \Tilde{\alg{\Gamma}}_{x} [A, B]_{\Order_{K}}
				@>>>
					R \Tilde{\alg{\Gamma}}_{\Order_{K}} [A, B]_{\Order_{K}}
				@>>>
					R \Tilde{\alg{\Gamma}}_{K} [A, B]_{K}
				\\
				@VVV
				@VVV
				@VVV
				\\
					[R \Tilde{\alg{\Gamma}}_{\Order_{K}} A,
					R \Tilde{\alg{\Gamma}}_{x} B]_{k}
				@>>>
					[R \Tilde{\alg{\Gamma}}_{x} A,
					R \Tilde{\alg{\Gamma}}_{x} B]_{k}
				@>>>
					[R \Tilde{\alg{\Gamma}}_{K} A,
					R \Tilde{\alg{\Gamma}}_{x} B]_{k}[1]
			\end{CD}
		\]
	in $D(k^{\ind\rat}_{\pro\et})$
	for $A, B \in D(\Order_{K, \fppf} / k^{\ind\rat}_{\et})$
	(with $[\;\cdot\;, \;\cdot\;]_{k}$ this times being
	$R \sheafhom_{k^{\ind\rat}_{\pro\et}}$),
	where the horizontal triangles are localization triangles
	and the vertical morphisms are the functoriality morphisms
	together with the connecting morphism
	$R \Tilde{\alg{\Gamma}}_{K} B \to R \Tilde{\alg{\Gamma}}_{x} B[1]$
	on the right lower term.
\end{Prop}

\begin{proof}
	We only prove the existence of the functoriality morphism of $R \Tilde{\alg{\Gamma}}$.
	The others are treated similarly.
	By adjunction, we have
		\begin{align*}
					R \alg{\Gamma} \bigl(
						K,
						R \sheafhom_{K}(A, B)
					\bigr)
			&	\to
					R \sheafhom_{k^{\ind\rat}_{\et}} \bigl(
						R \alg{\Gamma}(K, A),
						R \alg{\Gamma}(K, B)
					\bigr)
			\\
			&	\to
					R \sheafhom_{k^{\ind\rat}_{\et}} \bigl(
						R \alg{\Gamma}(K, A),
						R P_{\ast} P^{\ast} R \alg{\Gamma}(K, B)
					\bigr)
			\\
			&	=
					R P_{\ast}
					R \sheafhom_{k^{\ind\rat}_{\pro\et}} \bigl(
						P^{\ast} R \alg{\Gamma}(K, A),
						P^{\ast} R \alg{\Gamma}(K, B)
					\bigr),
		\end{align*}
	in $D(k^{\ind\rat}_{\et})$.
	Adjoining again gives the result.
\end{proof}


\numberwithin{equation}{subsection}
\subsection{Cohomology as sheaves on the residue field}
\label{sec: Cohomology as sheaves on the residue field}
We compute $R \alg{\Gamma}$ of several group schemes over $K$ and $\Order_{K}$.
In most cases below, we obtain P-acyclic ind-proalgebraic groups.
In some cases, the groups are in $\Pro \Alg_{\uc} / k$ or $\Ind \Alg_{\uc} / k$ (having unipotent connected part),
so that they are Serre reflexive.
In this subsection, all sheaves over $k$,
their exact sequences and distinguished triangles
are considered in $\Ab(k^{\ind\rat}_{\et})$, $D(k^{\ind\rat}_{\et})$
unless the pro-\'etale topology is explicitly mentioned.
As soon as objects are proved to be P-acyclic,
we can regard them as objects on $\Spec k^{\ind\rat}_{\pro\et}$
without losing any information
and apply the results of the previous section.

\begin{Prop} \label{prop: integral etale cohomology vanishes}
	The pushforward functor $\Ab(\Order_{K, \et} / k^{\ind\rat}_{\et}) \to \Ab(k^{\ind\rat}_{\et})$
	for the morphism
		$		\Spec \Order_{K, \et} / k^{\ind\rat}_{\et}
			\to
				\Spec k^{\ind\rat}_{\et}
		$
	is exact.
	Hence $\alg{H}^{n}(\Order_{K, \et}, \;\cdot\;) = 0$ as functors for all $n \ge 1$.
\end{Prop}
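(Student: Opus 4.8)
The plan is to prove the apparently stronger fact that the structure morphism $\Spec \Order_{K,\et}/k^{\ind\rat}_{\et} \to \Spec k^{\ind\rat}_{\et}$ is an equivalence of topoi, so that the pushforward functor of the statement is not merely exact but an equivalence of categories. Let $u \colon k^{\ind\rat} \to \Order_{K,\et}/k^{\ind\rat}$ be the continuous functor $k' \mapsto (\alg{O}_K(k'),k')$ inducing this morphism. First I would record two formal facts about $u$. It is fully faithful: in a morphism of pairs between objects of the form $(\alg{O}_K(k'),k')$, the commutativity constraint forces the ring component to be $\alg{O}_K$ of the $k^{\ind\rat}$-component, and the latter is recovered by reduction modulo $\alg{p}_K$. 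And it preserves and reflects coverings: for $k'_i$ \'etale over $k'$, the family $\{\alg{O}_K(k') \to \alg{O}_K(k'_i)\}$ is faithfully flat exactly when $\{k' \to k'_i\}$ is, since $\alg{O}_K(-)$ preserves flatness and \'etaleness (as used already in the construction of the structure morphism in \cite{Suz13}) and faithful flatness is detected on the closed fibres $\alg{O}_K(k'_i)/\alg{p}_K(k'_i) = k'_i$ via the homeomorphism of Proposition~\ref{prop: basic properties of local fields with ind-rational base}~\eqref{ass: maximal ideals of local fields with ind-rational base}. Hence the essential image of $u$ is a full subcategory of $\Order_{K,\et}/k^{\ind\rat}$ carrying exactly the topology of $\Spec k^{\ind\rat}_{\et}$.

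The substantive step is to show that every object $(L,k_L)$ of $\Order_{K,\et}/k^{\ind\rat}$ is covered by objects in the essential image of $u$. Since $\alg{O}_K(k_L)$ is $\alg{p}_K(k_L)$-adically complete, the pair $(\alg{O}_K(k_L),\alg{p}_K(k_L))$ is henselian, so reduction modulo $\alg{p}_K(k_L)$ is an equivalence from the category of \'etale $\alg{O}_K(k_L)$-algebras to that of \'etale $k_L$-algebras. Put $\bar L = L/\alg{p}_K(k_L)L$: it is \'etale over $k_L$, hence lies in $k^{\ind\rat}$ by \cite[Prop.\ 2.1.2]{Suz13}, while $\alg{O}_K(\bar L)$ is again \'etale over $\alg{O}_K(k_L)$ with closed fibre $\bar L$. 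Thus $L$ and $\alg{O}_K(\bar L)$ are \'etale $\alg{O}_K(k_L)$-algebras with the same reduction, so $L \cong \alg{O}_K(\bar L)$ and $(L,k_L) \cong (\alg{O}_K(\bar L),k_L)$. The tautological morphism $(\alg{O}_K(\bar L),k_L) \to (\alg{O}_K(\bar L),\bar L) = u(\bar L)$ --- the identity on rings, the structure map $k_L \to \bar L$ on the base --- is then a one-element covering of $(L,k_L)$ with target in the essential image of $u$.

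These two points are precisely the hypotheses of the comparison lemma for sites (cf.\ SGA~4, Exp.~III, Th.~4.1): restriction along $u$ is an equivalence $\Ab(\Order_{K,\et}/k^{\ind\rat}_{\et}) \isomto \Ab(k^{\ind\rat}_{\et})$. Since restriction along $u$ is exactly the pushforward functor of the statement, that functor is an equivalence, in particular exact, whence $\alg{H}^n(\Order_{K,\et},\;\cdot\;) = 0$ for all $n \ge 1$.

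I expect the only genuine obstacle to be in the second paragraph: checking that $\alg{O}_K(-)$ interacts correctly with the henselian-pair formalism --- that it carries \'etale $k_L$-algebras to \'etale $\alg{O}_K(k_L)$-algebras whose closed fibre is the given algebra, and that $\bar L$ remains ind-rational --- none of which is hard but all of which is needed; the remainder is the standard site-theoretic comparison machinery. (This proposition is essentially the \'etale shadow of the computations in \cite[\S 2]{Suz13}, from which it could alternatively be quoted.)
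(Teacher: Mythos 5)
Your argument has a fatal gap in the second paragraph, precisely at the step you flagged as the ``only genuine obstacle.'' For a henselian pair $(A,I)$, reduction modulo $I$ is an equivalence on \emph{finite} \'etale algebras (Elkik--Gabber); it is \emph{not} an equivalence on all \'etale algebras. The underlying category of $\Spec \Order_{K,\et}/k^{\ind\rat}_{\et}$ consists of pairs $(L,k_L)$ with $L$ \'etale over $\alg{O}_K(k_L)$ in the usual sense, and this includes non-finite \'etale algebras such as the localization $L = \alg{K}(k_L) = \alg{O}_K(k_L)[1/\pi]$ (an open immersion on $\Spec$, hence \'etale and finitely presented). For this object $\bar{L} = L/\alg{p}_K(k_L)L = 0$, so $L \not\cong \alg{O}_K(\bar{L})$. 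Worse, $(\alg{K}(k_L),k_L)$ cannot be covered by \emph{any} family of objects in the essential image of $u$: a morphism $(\alg{K}(k_L),k_L) \to (\alg{O}_K(k''),k'')$ requires a ring map $\alg{K}(k_L) \to \alg{O}_K(k'')$ under $\Order_K$, forcing $\pi$ to be invertible in $\alg{O}_K(k'')$, i.e.\ $k''=0$, and the zero ring is not faithfully flat over $\alg{K}(k_L)$. So the hypothesis of the comparison lemma fails, and the conclusion fails too: the topoi are genuinely inequivalent (if they were equivalent, the open--closed localization triangle $R\alg{\Gamma}_x(\Order_K,\cdot) \to R\alg{\Gamma}(\Order_K,\cdot) \to R\alg{\Gamma}(K,\cdot)$ used throughout Section~\ref{sec: Local fields with ind-rational base} would degenerate).

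The fix is to aim at the weaker statement that the proposition actually needs: exactness of $f_\ast$ only requires that every covering of an object \emph{of the form} $(\alg{O}_K(k'),k')$ be refinable by ($u$ applied to) an \'etale covering of $k'$ --- the refining covering may freely discard the generic-fibre components of the given one, so nothing forces arbitrary objects of the site to lie in, or be covered by, the essential image of $u$. This is how the paper proceeds: by the $\alg{O}_K$-version of \cite[Lem.\ 2.5.3]{Suz13}, an \'etale covering of $\alg{O}_K(k')$ descends, after refinement, to a faithfully flat \'etale covering of $\alg{O}_K^{\fp}(k')$, which is a filtered union of finite products of complete discrete valuation rings; there the henselian-local structure theory applies correctly and yields a refinement of the form $\alg{O}_K^{\fp}(k'')$ with $k''$ faithfully flat \'etale over $k'$. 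One then composes with the covering $(\alg{O}_K(k''),k_S) \to (\alg{O}_K(k''),k'')$ of \cite[Prop.\ 2.3.3]{Suz13} --- a step your proposal essentially has. Your first paragraph (full faithfulness of $u$ and preservation of coverings) is fine but is not the missing ingredient; the approximation input replacing your henselian-pair claim is.
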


\begin{proof}
	We need to show that an \'etale covering $(S, k_{S})$ of an object of the form $(\alg{O}_{K}(k'), k')$
	with $k' \in k^{\ind\rat}$ can be refined by a covering of the form
	$(\alg{O}_{K}(k''), k'')$ with $k''$ faithfully flat \'etale over $k'$.
	By the $\alg{O}_{K}(k')$-version of \cite[Lem.\ 2.5.3]{Suz13},
	the \'etale covering $S$ of $\alg{O}_{K}(k')$ can be refined by a covering of the form
	$\Order'' \tensor_{\alg{O}_{K}^{\fp}(k')} \alg{O}_{K}(k')$,
	where $\Order''$ is a faithfully flat \'etale $\alg{O}_{K}^{\fp}(k')$-algebra.
	We can write $\Order'' = \Order_{0}'' \tensor_{\alg{O}_{K}(k'_{0})} \alg{O}_{K}^{\fp}(k')$
	with $k'_{0}$ a rational $k$-subalgebra of $k'$ and
	$\Order_{0}''$ a faithfully flat \'etale $\alg{O}_{K}(k'_{0})$-algebra.
	Since $\alg{O}_{K}(k'_{0})$ is a finite product of complete discrete valuation rings
	with residue ring $k'_{0}$,
	we may assume (by refining) that $\Order_{0}''$ is finite over $\alg{O}_{K}(k'_{0})$
	and hence written as $\alg{O}_{K}(k''_{0})$ with $k''_{0}$ faithfully flat \'etale over $k'_{0}$.
	Then we have $\Order'' = \alg{O}_{K}^{\fp}(k'')$
	with $k'' = k''_{0} \tensor_{k'_{0}} k'$ a faithfully flat \'etale $k'$-algebra.
	Hence we have a refinement $(S, k_{S}) \to (\alg{O}_{K}(k''), k_{S})$
	of coverings of $(\alg{O}_{K}(k'), k')$.
	The $k'$-algebra homomorphism $k_{S} \to k''$ is \'etale
	since both $k_{S}$ and $k''$ are \'etale over $k'$.
	Therefore the morphism $(\alg{O}_{K}(k''), k_{S}) \to (\alg{O}_{K}(k''), k'')$ is an \'etale covering,
	even though $k_{S} \to k''$ is not necessarily faithfully flat
	(\cite[Prop.\ 2.3.3]{Suz13}).
	The composite $(S, k_{S}) \to (\alg{O}_{K}(k''), k_{S}) \to (\alg{O}_{K}(k''), k'')$
	gives a desired refinement.
\end{proof}

\begin{Prop} \BetweenThmAndList \label{prop: cohomology of integers}
	\begin{enumerate}
		\item \label{ass: cohomology of smooth group over integers}
			Let $A$ be a smooth group scheme over $\Order_{K}$
			and $A_{x}$ its special fiber.
			Let $\alg{\Gamma}(\ideal{p}_{K}, A)$ be the kernel of the reduction morphism
			$\alg{\Gamma}(\Order_{K}, A) \to A_{x}$.
			Then the sequence
				\[
						0
					\to
						\alg{\Gamma}(\ideal{p}_{K}, A)
					\to
						\alg{\Gamma}(\Order_{K}, A)
					\to
						A_{x}
					\to
						0
				\]
			in $\Ab(k^{\ind\rat}_{\et})$ is exact.
			All the terms are P-acyclic.
			The group $\alg{\Gamma}(\ideal{p}_{K}, A)$ is connected pro-unipotent.
			In particular, we have $\pi_{0}(\alg{\Gamma}(\Order_{K}, A)) = \pi_{0}(A_{x})$,
			and if $A_{x}$ is of finite type,
			then $\alg{\Gamma}(\Order_{K}, A) \in \Pro \Alg / k$.
			We have $\alg{H}^{n}(\Order_{K}, A) = 0$ for all $n \ge 1$.
			In particular, $R \alg{\Gamma}(\Order_{K}, A)$ is P-acyclic.
		\item \label{ass: cohomology of finite flats over integers}
			Let $N$ be a finite flat group scheme over $\Order_{K}$.
			Then $\alg{\Gamma}(\Order_{K}, N) = \alg{\Gamma}(K, N) \in \FEt / k$,
			$\alg{H}^{1}(\Order_{K}, N)$ is connected pro-unipotent,
			and $\alg{H}^{n}(\Order_{K}, N) = 0$ for all $n \ge 2$.
			The complex $R \alg{\Gamma}(\Order_{K}, N)$ is P-acyclic and Serre reflexive.
	\end{enumerate}
\end{Prop}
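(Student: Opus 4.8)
The plan is to reduce both parts to the structure of the Greenberg transforms of $A$, and, for Assertion \eqref{ass: cohomology of finite flats over integers}, to a smooth affine resolution of $N$ over $\Order_{K}$.

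\emph{Assertion \eqref{ass: cohomology of smooth group over integers}.} By Assertion \eqref{ass: integral points are inverse limits} of Proposition \ref{prop: schemes over local fields with ind-rational base}, $\alg{\Gamma}(\Order_{K}, A) = \invlim_{n} \alg{\Gamma}(\Order_{K}/\alg{p}_{K}^{n}, A)$, where $\alg{\Gamma}(\Order_{K}/\alg{p}_{K}^{n}, A)$ is the (perfection of the) Greenberg transform of $A \bmod \alg{p}_{K}^{n}$ and $\alg{\Gamma}(\Order_{K}/\alg{p}_{K}, A) = A_{x}$. Since $A$ is smooth, each $\alg{\Gamma}(\Order_{K}/\alg{p}_{K}^{n}, A)$ is smooth over $k$ and each transition map is a smooth surjection whose kernel is the vector group attached to $\mathrm{Lie}(A_{x}) \tensor_{k} (\alg{p}_{K}^{n}/\alg{p}_{K}^{n+1})$, hence connected unipotent (of finite type if $A_{x}$ is). Consequently $\alg{\Gamma}(\alg{p}_{K}, A) = \invlim_{n} \Ker(\alg{\Gamma}(\Order_{K}/\alg{p}_{K}^{n}, A) \to A_{x})$ is connected pro-unipotent, presented as an inverse limit of connected unipotent groups along surjections with connected unipotent kernels, so it is P-acyclic by Assertion \eqref{ass: connected unipotent system is P-acyclic} of Proposition \ref{prop: criterion of P-acyclicity} (when $A_{x}$ is not of finite type one first reduces to that case by writing each vector group as a filtered inverse limit of its finite-dimensional direct factors). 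Lifting a section of $A_{x}$ step by step along this tower, the successive obstructions lie in $H^{1}(\Spec k', \text{vector group}) = 0$; hence $\alg{\Gamma}(\Order_{K}, A) \to A_{x}$ is surjective on $k'$-sections for every $k' \in k^{\ind\rat}$ and the three-term sequence is exact in $\Ab(k^{\ind\rat}_{\et})$. Since $A_{x} \in \Loc \Alg / k$ is P-acyclic (Assertion \eqref{ass: local finite presentation implies P-acyclic} of Proposition \ref{prop: criterion of P-acyclicity}), the two-out-of-three property for P-acyclicity gives P-acyclicity of $\alg{\Gamma}(\Order_{K}, A)$. Connectedness of $\alg{\Gamma}(\alg{p}_{K}, A)$ gives $\pi_{0}(\alg{\Gamma}(\Order_{K}, A)) = \pi_{0}(A_{x})$, and if $A_{x}$ is of finite type then each $\alg{\Gamma}(\Order_{K}/\alg{p}_{K}^{n}, A) \in \Alg / k$, so $\alg{\Gamma}(\Order_{K}, A) \in \Pro \Alg / k$. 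Finally, for the relative \'etale site the pushforward is exact by Proposition \ref{prop: integral etale cohomology vanishes}, and fppf and \'etale cohomology agree for the smooth group scheme $A$; hence $R\alg{\Gamma}(\Order_{K}, A) = R\alg{\Gamma}(\Order_{K, \et}, A) = \alg{\Gamma}(\Order_{K}, A)$ and $\alg{H}^{n}(\Order_{K}, A) = 0$ for $n \ge 1$.

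\emph{Assertion \eqref{ass: cohomology of finite flats over integers}.} The scheme $N$ is proper over $\Order_{K}$, so $N(\alg{O}_{K}(k')) = N(\alg{K}(k'))$ by Assertion \eqref{ass: valuative criterion of properness} of Proposition \ref{prop: schemes over local fields with ind-rational base}, giving $\alg{\Gamma}(\Order_{K}, N) = \alg{\Gamma}(K, N)$; as $N$ is finite over $\Order_{K}$, Proposition \ref{prop: Gamma of finite schemes over local fields is etale} identifies this with $N(K^{\ur})$ viewed as an \'etale $k$-scheme, which is a finite commutative group with continuous Galois action, i.e.\ an object of $\FEt / k$. Next choose an exact sequence $0 \to N \to \mathcal{G}_{0} \to \mathcal{G}_{1} \to 0$ of commutative group schemes over $\Order_{K}$ with $\mathcal{G}_{0}, \mathcal{G}_{1}$ smooth, affine and of finite type: take $\mathcal{G}_{0} = \Res_{N^{\vee}/\Order_{K}} \Gm$, into which $N$ embeds as the closed subgroup of homomorphisms $N^{\vee} \to \Gm$, and $\mathcal{G}_{1} = \mathcal{G}_{0}/N$, which is representable and inherits smoothness, affineness and finite type. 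By Assertion \eqref{ass: cohomology of smooth group over integers}, $R\alg{\Gamma}(\Order_{K}, \mathcal{G}_{i}) = \alg{\Gamma}(\Order_{K}, \mathcal{G}_{i}) \in \Pro \Alg / k$, so the distinguished triangle attached to the resolution gives $R\alg{\Gamma}(\Order_{K}, N) = [\alg{\Gamma}(\Order_{K}, \mathcal{G}_{0}) \to \alg{\Gamma}(\Order_{K}, \mathcal{G}_{1})]$ concentrated in degrees $0$ and $1$; in particular $\alg{H}^{n}(\Order_{K}, N) = 0$ for $n \ge 2$, and $\alg{H}^{1}(\Order_{K}, N)$ is the cokernel of $\alg{\Gamma}(\Order_{K}, \mathcal{G}_{0}) \to \alg{\Gamma}(\Order_{K}, \mathcal{G}_{1})$.

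To identify this cokernel, apply the snake lemma in $\Pro \Alg / k$ to the morphism of the Assertion-\eqref{ass: cohomology of smooth group over integers} sequences $0 \to \alg{\Gamma}(\alg{p}_{K}, \mathcal{G}_{i}) \to \alg{\Gamma}(\Order_{K}, \mathcal{G}_{i}) \to (\mathcal{G}_{i})_{x} \to 0$. Restricting the resolution to the special fibre gives $0 \to N_{x} \to (\mathcal{G}_{0})_{x} \to (\mathcal{G}_{1})_{x} \to 0$ (exact since $\mathcal{G}_{1}$ is flat), so $(\mathcal{G}_{0})_{x} \to (\mathcal{G}_{1})_{x}$ is an epimorphism in $\Alg / k$ (perfection being exact) and its cokernel vanishes; hence $\alg{H}^{1}(\Order_{K}, N)$ is a quotient of $\Coker(\alg{\Gamma}(\alg{p}_{K}, \mathcal{G}_{0}) \to \alg{\Gamma}(\alg{p}_{K}, \mathcal{G}_{1}))$, which is itself a quotient of the connected pro-unipotent group $\alg{\Gamma}(\alg{p}_{K}, \mathcal{G}_{1})$, so $\alg{H}^{1}(\Order_{K}, N)$ is connected pro-unipotent. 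For P-acyclicity, factor $\alg{\Gamma}(\Order_{K}, \mathcal{G}_{0}) \to \alg{\Gamma}(\Order_{K}, \mathcal{G}_{1}) \to \alg{H}^{1}(\Order_{K}, N) \to 0$ through the image sheaf $I$ as $0 \to \alg{\Gamma}(\Order_{K}, N) \to \alg{\Gamma}(\Order_{K}, \mathcal{G}_{0}) \to I \to 0$ and $0 \to I \to \alg{\Gamma}(\Order_{K}, \mathcal{G}_{1}) \to \alg{H}^{1}(\Order_{K}, N) \to 0$; since $\alg{\Gamma}(\Order_{K}, N) \in \FEt / k$ and the $\alg{\Gamma}(\Order_{K}, \mathcal{G}_{i})$ are P-acyclic, two applications of two-out-of-three give P-acyclicity of $I$ and then of $\alg{H}^{1}(\Order_{K}, N)$; and, being P-acyclic, $\alg{H}^{1}(\Order_{K}, N)$ agrees as a sheaf with the cokernel computed in $\Pro \Alg / k$ by Assertion \eqref{ass: viewing proalg as sheaves is exact} of Proposition \ref{prop: misc on ind-rational pro-etale topology}, confirming the structural claim.

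\emph{Main obstacle.} The crux is the structural part of Assertion \eqref{ass: cohomology of smooth group over integers}: controlling the Greenberg tower $\{\alg{\Gamma}(\Order_{K}/\alg{p}_{K}^{n}, A)\}$ so that every transition map is a smooth surjection with vector-group kernel — which makes $\alg{\Gamma}(\alg{p}_{K}, A)$ connected pro-unipotent and feeds the P-acyclicity criterion — and so that the lifting obstructions vanish over an arbitrary ind-rational base, not merely over fields. Once this is in hand, Assertion \eqref{ass: cohomology of finite flats over integers} is pure formalism (inverse limits, two-out-of-three, the snake lemma), modulo the routine construction of the smooth affine resolution $0 \to N \to \mathcal{G}_{0} \to \mathcal{G}_{1} \to 0$ and the verification that $\mathcal{G}_{0}/N$ is again smooth, affine and of finite type over $\Order_{K}$.
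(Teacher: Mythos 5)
Your proof is correct and follows essentially the same route as the paper: Greenberg transforms with vector-group transition kernels plus the P-acyclicity criterion for assertion (1), and the smooth affine resolution $0 \to N \to \Res_{N^{\CDual}/\Order_{K}}\Gm \to \mathcal{G}_{1} \to 0$ together with two-out-of-three for P-acyclicity for assertion (2). The only (harmless) divergence is at the end: you obtain pro-unipotence of $\alg{H}^{1}(\Order_{K}, N)$ via the snake lemma, exhibiting it as a quotient of the connected pro-unipotent group $\alg{\Gamma}(\ideal{p}_{K}, \mathcal{G}_{1})$, whereas the paper first deduces connectedness from that of $\alg{\Gamma}(\Order_{K}, \mathcal{G}_{1})$ and then kills the semi-abelian part using that $\alg{H}^{1}(\Order_{K}, N)$ is annihilated by the order of $N$.
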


\begin{proof}
	\eqref{ass: cohomology of smooth group over integers}.
	For each $m \ge 1$,
	the functor
		$
				\alg{\Gamma}(\Order_{K} / \ideal{p}_{K}^{m}, A)
			\colon
				k' \mapsto
			A(\alg{O}_{K} / \alg{p}_{K}^{m}(k'))
		$
	is represented by the perfection of the Greenberg transform of $A$ of level $m$ (\cite{Gre61}).
	\eqref{prop: schemes over local fields with ind-rational base}
	\eqref{ass: integral points are inverse limits}
	implies that
		$
				\alg{\Gamma}(\Order_{K}, A)
			=
				\invlim_{m}
					\alg{\Gamma}(\Order_{K} / \ideal{p}_{K}^{m}, A).
		$
	The reduction map $A(\alg{O}_{K}(k')) \to A_{x}(k')$ is surjective
	for any $k' \in k^{\ind\rat}$ by smoothness.
	Hence the reduction morphism $\alg{\Gamma}(\Order_{K}, A) \to A_{x}$ is surjective.
	The kernel of the surjection
		$
				\alg{\Gamma}(\Order_{K} / \ideal{p}_{K}^{m + 1}, A)
			\onto
				\alg{\Gamma}(\Order_{K} / \ideal{p}_{K}^{m}, A)
		$
	is the perfection of a vector group by \cite[Lem.\ 4.1.1, 2]{Beg81}%
	\footnote{
		Note that this lemma by B\'egueri is true only after perfection.
		See \cite[Rmk.\ 14.22, 15.9]{BGA13}.
		In our case, it is enough to check this lemma for perfect-field-valued points only
		by the following reason.
		The sheaf
		$\alg{\Gamma}(\Order_{K} / \ideal{p}_{K}^{m}, A)$
		on $\Spec k^{\ind\rat}_{\et}$ is locally of finite presentation
		and the kernel of the surjection mentioned here is also locally of finite presentation.
		Hence it is enough to treat them as sheaves on $\Spec k^{\rat}_{\et}$.
		For perfect-field-valued points, the lemma is classical.
	}
	and the proof of \cite[\S 1.1, Lem.\ 1.1 (ii)]{Bes78}.
	Therefore
		\[
				\alg{\Gamma}(\ideal{p}_{K}, A)
			=
				\invlim_{m}
					\Ker \bigl(
							\alg{\Gamma}(\Order_{K} / \ideal{p}_{K}^{m}, A)
						\to
							A_{x}
					\bigr)
		\]
	is connected pro-unipotent and P-acyclic by \eqref{prop: criterion of P-acyclicity}.
	The quasi-algebraic group $A_{x}$ is P-acyclic by the same proposition.
	Being an extension of P-acyclics, the group $\alg{\Gamma}(\Order_{K}, A)$ is P-acyclic.
	We have
		$
				\alg{H}^{n}(\Order_{K}, A)
			=
				\alg{H}^{n}(\Order_{K, \et}, A)
		$
	for all $n$ since $A$ is smooth and
	the fppf cohomology with smooth group scheme coefficients agrees with the \'etale cohomology
	by \cite[III, Rmk.\ 3.11 (b)]{Mil80}.
	We have $\alg{H}^{n}(\Order_{K, \et}, A) = 0$ for $n \ge 1$
	by the previous proposition.
	
	\eqref{ass: cohomology of finite flats over integers}
	The functors $\alg{\Gamma}(\Order_{K}, N) \colon k' \mapsto N(\alg{O}_{K}(k'))$
	and $\alg{\Gamma}(K, N) \colon k' \mapsto N(\alg{K}(k'))$
	for finite flat $N$ are represented by the same finite \'etale $k$-scheme by
	\eqref{prop: Gamma of finite schemes over local fields is etale}.
	
	For cohomology of degree $\ge 1$, recall from \cite[Prop.\ 2.2.1]{Beg81} that
	there is an exact sequence
	$0 \to N \to G \to H \to 0$
	of group schemes over $\Order_{K}$ with $G, H$ smooth affine with connected fibers
	(more specifically, $G$ is the Weil restriction of $\Gm$
	from the Cartier dual $N^{\CDual}$ to $\Spec \Order_{K}$
	and $N \into G$ is the natural embedding).
	The long exact sequence shows that $\alg{H}^{n}(\Order_{K}, N) = 0$ for $n \ge 2$
	and yields an exact sequence
		\[
				0
			\to
				\alg{\Gamma}(\Order_{K}, N)
			\to
				\alg{\Gamma}(\Order_{K}, G)
			\to
				\alg{\Gamma}(\Order_{K}, H)
			\to
				\alg{H}^{1}(\Order_{K}, N)
			\to
				0
		\]
	(in $\Ab(k^{\ind\rat}_{\et})$ a priori).
	The first three terms are P-acyclic.
	Hence so is the fourth.
	Therefore the sequence is exact also in $\Ab(k^{\ind\rat}_{\pro\et})$.
	Since the first term is finite and
	the second and third terms are connected proalgebraic,
	the fourth term is connected proalgebraic.
	Since $\alg{H}^{1}(\Order_{K}, N)$ is killed by the order of $N$,
	it does not have semi-abelian part.
	Hence $\alg{H}^{1}(\Order_{K}, N)$ is pro-unipotent.
	Therefore $R \alg{\Gamma}(\Order_{K}, N)$ is Serre reflexive
	by \eqref{prop: Serre duality}
	\eqref{ass: Serre duality for unip and etale}.
\end{proof}

\begin{Prop} \BetweenThmAndList \label{prop: cohomology of local fields}
	\begin{enumerate}
		\item \label{ass: higher cohomology of smooth group over local field is P-acyclic}
			Let $A$ be a smooth group scheme over $K$.
			Then $\alg{H}^{n}(K, A)$ for any $n \ge 1$ is torsion and locally of finite presentation
			as a functor on $k^{\ind\rat}$.
			In particular, it is P-acyclic.
		\item \label{ass: cohomology of finite flats over local fields}
			Let $N$ be a finite flat group scheme over $K$.
			Then $\alg{\Gamma}(K, N) \in \FEt / k$,
			$\alg{H}^{1}(K, N) \in \Ind \Pro \Alg_{\uc} / k$,
			$\alg{H}^{n}(K, N) = 0$ for all $n \ge 2$,
			and $R \alg{\Gamma}(K, N)$ is P-acyclic and Serre reflexive.
			The group $\alg{H}^{1}(K, N)$ is in $\Ind \Alg_{\uc} / k$ if $N$ is \'etale
			and in $\Pro \Alg_{\uc} / k$ if $N$ is multiplicative.
			If $K$ has mixed characteristic,
			then $\alg{H}^{1}(K, N) \in \Alg_{\uc} / k$.
		\item \label{ass: cohomology of finite free etale over local fields}
			Let $Y$ be a lattice over $K$
			(i.e., a finite free abelian group with a Galois action of $K$).
			Then $\alg{\Gamma}(K, Y)$ is a lattice over $k$,
			$\alg{H}^{1}(K, Y) \in \FEt / k$,
			$\alg{H}^{2}(K, Y) \in \Ind \Alg_{\uc} / k$,
			$\alg{H}^{n}(K, Y) = 0$ for $n \ge 3$,
			and $R \alg{\Gamma}(K, Y)$ is P-acyclic and Serre reflexive.
		\item \label{ass: cohomology of abelian varieties over local fields}
			Let $A$ be an abelian variety over $K$ with N\'eron model $\mathcal{A}$.
			Then $\alg{\Gamma}(K, A) = \alg{\Gamma}(\Order_{K}, \mathcal{A}) \in \Pro \Alg / k$,
			which is described by the previous proposition,
			$\alg{H}^{1}(K, A) \in \Ind \Alg_{\uc} / k$,
			$\alg{H}^{n}(K, A) = 0$ for all $n \ge 2$,
			and $R \alg{\Gamma}(K, A)$ is P-acyclic.
		\item \label{ass: cohomology of tori over local fields}
			Let $T$ be a torus over $K$ with N\'eron model $\mathcal{T}$.
			Then $\alg{\Gamma}(K, T) = \alg{\Gamma}(\Order_{K}, \mathcal{T})$
			is an extension of an \'etale group by a P-acyclic proalgebraic group
			described by the previous proposition.
			We have $\alg{H}^{n}(K, T) = 0$ for all $n \ge 1$,
			and $R \alg{\Gamma}(K, T)$ is P-acyclic.
	\end{enumerate}
\end{Prop}

\begin{proof}
	\eqref{ass: higher cohomology of smooth group over local field is P-acyclic}
	\eqref{prop: higher cohomology with smooth coefficients} shows that
	the functor $k' \mapsto H^{n}(\alg{K}(k')_{\et}, A)$ is locally of finite presentation for $n \ge 1$
	and takes values in torsion groups.
	Hence so is its \'etale sheafification $\alg{H}^{n}(K, A)$.
	Therefore $\alg{H}^{n}(K, A)$ is torsion and P-acyclic by
	\eqref{prop: criterion of P-acyclicity}
	\eqref{ass: local finite presentation implies P-acyclic}.
	
	\eqref{ass: cohomology of finite flats over local fields}
	By \eqref{prop: Gamma of finite schemes over local fields is etale},
	the sheaf $\alg{\Gamma}(K, N)$ is a finite \'etale group over $k$
	whose $\closure{k}$-points is given by $\Gamma(K^{\ur}, N)$.
	(Note that $N$ does not have to extend to $\Order_{K}$
	and hence we are not using \eqref{prop: cohomology of integers} here.)
	
	Suppose that $N$ has order prime to $p$.
	Then it is classical to see that $H^{n}(K^{\ur}, N)$ is zero for $n \ge 2$
	and $H^{1}(K^{\ur}, N)$ is finite.
	These groups do not change under residue field extensions,
	i.e.\ for any algebraically closed field $k'$ over $k$,
	we have $H^{n}(\alg{K}(k'), N) = H^{n}(K^{\ur}, N)$ for any $n \ge 1$.
	Since $N$ is \'etale,
	the sheaf $\alg{H}^{n}(K, N)$ is locally of finite presentation for any $n \ge 1$
	by the previous assertion.
	Hence it is determined by $k'$-points for various algebraically closed fields $k'$.
	Therefore $\alg{H}^{n}(K, N) = 0$ for $n \ge 2$ and
	$\alg{H}^{1}(K, N)$ is the finite \'etale group given by $H^{1}(K^{\ur}, N)$.
	
	Suppose that $N$ is $p$-primary.
	Suppose also that $K$ has equal characteristic.
	We first show the statements for $N = \Z / p \Z$, $\mu_{p}$ and $\alpha_{p}$.
	If $N = \Z / p \Z$, then $\alg{H}^{n}(K, \Z / p \Z) = 0$ for $n \ge 2$ and
		\[
				\alg{H}^{1}(K, \Z / p \Z)
			=
				\alg{K} / \wp(\alg{K})
			\cong
				\Ga^{\bigoplus \N}
			\in
				\Ind \Alg_{\uc} / k
		\]
	by the Artin-Schreier sequence
	$0 \to \Z / p \Z \to \Ga \stackrel{\wp}{\to} \Ga \to 0$
	and that $\alg{H}^{n}(K, \Ga) = 0$ for all $n \ge 1$.
	In particular, $\alg{H}^{1}(K, \Z / p \Z)$ is P-acyclic and Serre reflexive
	by \eqref{prop: Serre duality}
	\eqref{ass: Serre duality for unip and etale}
	and \eqref{prop: criterion of P-acyclicity}
	\eqref{ass: local finite presentation implies P-acyclic}.
	For $N = \mu_{p}$, the paragraph before \cite[Lem.\ 2.7.4]{Suz13}
	and the proof of the cited lemma show that
	$\alg{H}^{n}(K, \mu_{p}) = 0$ for $n \ge 2$ and
		\[
				\alg{H}^{1}(K, \mu_{p})
			=
				\alg{K}^{\times} / (\alg{K}^{\times})^{p}
			\cong
				\Z / p \Z \times \Ga^{\N}
			\in
				\Pro \Alg_{\uc} / k.
		\]
	In particular, $\alg{H}^{1}(K, \mu_{p})$ is P-acyclic
	by \eqref{prop: criterion of P-acyclicity}
	and Serre reflexive by 
	\eqref{prop: Serre duality}
	\eqref{ass: Serre duality for unip and etale}.
	For $N = \alpha_{p}$, a calculation similar to the case of $\Z / p \Z$ shows that
	$\alg{H}^{n}(K, \alpha_{p}) = 0$ for $n \ge 2$ and
		\[
				\alg{H}^{1}(K, \alpha_{p})
			=
				\alg{K} / \alg{K}^{p}
			\cong
				\Ga^{\bigoplus \N} \times \Ga^{\N}
			\in
				\Ind \Pro \Alg_{\uc} / k.
		\]
	Each factor is P-acyclic and Serre reflexive by the same propositions.
	Hence so is the product.
	
	Next, if $0 \to N_{1} \to N_{2} \to N_{3} \to 0$ is an exact sequence of
	finite flat group schemes over $K$,
	and if we know that $N_{1}$ and $N_{3}$ satisfy the statements,
	then the long exact sequence
		\begin{align*}
			&
					0
				\to
					\alg{\Gamma}(K, N_{1})
				\to
					\alg{\Gamma}(K, N_{2})
				\to
					\alg{\Gamma}(K, N_{3})
			\\
			&	\to
					\alg{H}^{1}(K, N_{1})
				\to
					\alg{H}^{1}(K, N_{2})
				\to
					\alg{H}^{1}(K, N_{3})
				\to
					0
		\end{align*}
	and the finiteness of $\alg{\Gamma}(K, N_{i})$ ($i = 1, 2, 3$) implies that
	$N_{3}$ satisfies the statements.
	Therefore if $N$ has a filtration whose successive subquotients are $\Z / p \Z$, $\mu_{p}$ or $\alpha_{p}$,
	then $N$ satisfies the statements.
	
	Now, let $N$ be any $p$-primary finite flat group scheme over $K$.
	Then $N$ has a filtration whose successive subquotients are
	\'etale, multiplicative or $\alpha_{p}$
	by \cite[IV, \S 3, 5.8, 5.9]{DG70b}.
	Let $L$ be a finite Galois extension of $K$
	that trivializes the Galois actions on the \'etale part
	and the Cartier dual of the multiplicative part.
	It is enough to show the statements after a finite unramified extension,
	so we may assume that $L / K$ is totally ramified.
	Let $M$ be the intermediate field of $L / K$
	that corresponds to a (or the) $p$-Sylow subgroup of $\Gal(L / K)$.
	If a $p$-group acts on a non-zero $\F_{p}$-vector space, then it has a non-zero fixed part
	by \cite[IX, Lem.\ 4]{Ser79}.
	Hence the base-changed group $N \times_{K} M$ over $M$ has a filtration
	whose successive subquotients are $\Z / p \Z$, $\mu_{p}$ or $\alpha_{p}$.
	Therefore $N \times_{K} M$ satisfies the statements,
	i.e.\ $\alg{H}^{1}(M, N) \in \Ind \Pro \Alg / k$
	($\Ind \Alg / k$ if $N$ is \'etale,
	$\Pro \Alg / k$ if multiplicative),
	$\alg{H}^{n}(M, N) = 0$ for $n \ge 2$ and
	$R \alg{\Gamma}(M, N)$ is P-acyclic and Serre reflexive.
	We have $R \alg{\Gamma}(M, N) = R \alg{\Gamma}(K, \Res_{M / K} N)$,
	where $\Res_{M / K}$ denotes the Weil restriction functor.
	The composite of the inclusion map $N \into \Res_{M / K} N$
	and the norm map $\Res_{M / K} N \to N$ is the multiplication by $[M : K]$,
	which is an isomorphism since $N$ is $p$-primary and $[M : K]$ is prime to $p$.
	Hence $N$ is a direct summand of $\Res_{M / K} N$.
	Therefore $N$ (over $K$) satisfies the statements.
	
	Suppose next that $K$ has mixed characteristic.
	If $N = \mu_{p}$, then
	$\alg{H}^{1}(K, \mu_{p}) = \alg{K}^{\times} / (\alg{K}^{\times})^{p}$ similarly.
	We have $\alg{K}^{\times} \cong \Z \times \alg{U}_{K}$
	by \cite[the paragraph before Prop.\ 2.4.4]{Suz13}.
	The group $\alg{U}_{K} = \alg{O}_{K}^{\times} = \alg{\Gamma}(\Order_{K}, \Gm)$
	is P-acyclic by \eqref{prop: cohomology of integers}
	\eqref{ass: cohomology of smooth group over integers}.
	Since $\alg{\Gamma}(K, \mu_{p}) = 0$ or $\Z / p \Z$,
	The exact sequence
		\[
				0
			\to
				\alg{\Gamma}(K, \mu_{p})
			\to
				\alg{K}^{\times}
			\stackrel{p}{\to}
				\alg{K}^{\times}
			\to
				\alg{K}^{\times} / (\alg{K}^{\times})^{p}
			\to
				0
		\]
	shows that $\alg{K}^{\times} / (\alg{K}^{\times})^{p}$
	and $(\alg{K}^{\times})^{p}$ are P-acyclic.
	The logarithm map shows that $(\alg{K}^{\times})^{p}$ contains
	the group $\alg{U}_{K}^{m} = 1 + \alg{p}_{K}^{m}$ of $m$-th principal units for some $m$.
	Since $\alg{U}_{K} / \alg{U}_{K}^{m}$ is an $m$-dimensional quasi-algebraic group,
	its quotient
		$
				\alg{U}_{K} / (\alg{U}_{K})^{p}
			=
				\alg{U}_{K}^{1} / (\alg{U}_{K}^{1})^{p}
		$
	is quasi-algebraic unipotent.
	Hence $\alg{H}^{1}(K, \mu_{p}) \in \Alg_{\uc} / k$.
	Then the same process as the equal characteristic case shows that
	$\alg{H}^{1}(K, N) \in \Alg_{\uc} / k$ for any finite flat (hence \'etale) $N$
	and $N$ satisfies the statements.
	
	\eqref{ass: cohomology of finite free etale over local fields}
	We have an exact sequence $0 \to Y \to Y \tensor \Q \to Y \tensor \Q / \Z \to 0$.
	We have $H^{n}(K^{\ur}, Y \tensor \Q) = 0$ for $n \ge 1$.
	From this, since $Y \tensor \Q$ is \'etale and hence smooth,
	we know that $\alg{H}^{n}(K, Y \tensor \Q) = 0$ for $n \ge 1$
	by the same method as the second paragraph of the proof of the previous assertion.
	Therefore
		\[
				\alg{H}^{n}(K, Y)
			=
				\alg{H}^{n - 1}(K, Y \tensor \Q / \Z)
			=
				\dirlim_{m}
				\alg{H}^{n - 1}(K, Y \tensor \Z / m \Z)
		\]
	for $n \ge 2$.
	This sheaf is zero for $n \ge 3$ and in $\Ind \Alg_{\uc} / k$ for $n = 2$
	by the previous assertion.
	
	By \eqref{prop: Gamma of finite schemes over local fields is etale},
	we know that $\alg{\Gamma}(K, Y)$ is a lattice over $k$.
	Let $I$ be the inertia group of a finite Galois extension of $K$
	over which $Y$ becomes trivial.
	The same proposition shows that the exact sequence
		\[
				0
			\to
				\alg{\Gamma}(K, Y)
			\to
				\alg{\Gamma}(K, Y \tensor \Q)
			\to
				\alg{\Gamma}(K, Y \tensor \Q / \Z)
			\to
				\alg{H}^{1}(K, Y)
			\to
				0
		\]
	is identified with the exact sequence
		\[
				0
			\to
				H^{0}(I, Y)
			\to
				H^{0}(I, Y \tensor \Q)
			\to
				H^{0}(I, Y \tensor \Q / \Z)
			\to
				H^{1}(I, Y)
			\to
				0
		\]
	of group cohomology groups.
	Since $I$ is finite and $Y$ finitely generated,
	we know that $H^{1}(I, Y)$ is finite
	\cite[VIII, \S 2, Cor.\ 2]{Ser79}.
	Hence $\alg{H}^{1}(K, Y)$ is finite \'etale.
	
	Therefore $R \alg{\Gamma}(K, Y)$ has Serre reflexive cohomologies
	by \eqref{prop: Serre duality}
	\eqref{ass: Serre duality for unip and etale}
	and hence itself is Serre reflexive.
	
	\eqref{ass: cohomology of abelian varieties over local fields}
	We have $\alg{\Gamma}(K, A) = \alg{\Gamma}(\Order_{K}, \mathcal{A})$
	by \eqref{prop: schemes over local fields with ind-rational base}
	\eqref{ass: valuative criterion of properness}.
	This is in $\Pro \Alg / k$ by the previous proposition.
	We know that $\alg{H}^{n}(K, A)$ is torsion for any $n \ge 1$
	by \eqref{ass: higher cohomology of smooth group over local field is P-acyclic}.
	For each $m \ge 1$, the exact sequence
	$0 \to A[m] \to A \to A \to 0$ yields an exact sequence
		\[
				0
			\to
				\alg{\Gamma}(K, A) / m \alg{\Gamma}(K, A)
			\to
				\alg{H}^{1}(K, A[m])
			\to
				\alg{H}^{1}(K, A)[m]
			\to
				0.
		\]
	The first term is P-acyclic since $\alg{\Gamma}(K, A)$ and its $m$-torsion part
	$\alg{\Gamma}(K, A[m])$ are so.
	It is in $\Pro \Alg_{\uc} / k$,
	since its semi-abelian part is divisible and killed by $m$, hence zero.
	The middle term is in $\Ind \Pro \Alg_{\uc} / k$ and P-acyclic by the finite flat case.
	Therefore the right term is in $\Ind \Pro \Alg_{\uc} / k$ and P-acyclic.
	So is the filtered union $\alg{H}^{1}(K, A)$.
	We know that $\alg{H}^{1}(K, A)$ is locally of finite presentation by
	\eqref{ass: higher cohomology of smooth group over local field is P-acyclic}.
	Hence \eqref{lem: criteiron when ind-proalgebraic is ind-algebraic} below implies that
	$\alg{H}^{1}(K, A) \in \Ind \Alg_{\uc} / k$.
	Similarly the vanishing $\alg{H}^{n}(K, A[m]) = 0$ of the finite flat case
	implies $\alg{H}^{n}(K, A) = 0$ for $n \ge 2$.
	
	\eqref{ass: cohomology of tori over local fields}
	For $\alg{\Gamma}$, it is similar to abelian varieties
	(though in this case $\mathcal{T}$ is only locally of finite type).
	To show $\alg{H}^{n}(K, T) = 0$ for $n \ge 1$,
	it is enough to see that
	$H^{n}(\alg{K}(k'), T) = 0$ for any algebraically closed field $k'$ over $k$
	by \eqref{ass: higher cohomology of smooth group over local field is P-acyclic}.
	This is given in \cite[X \S 7, Application]{Ser79}.
	
\end{proof}

\begin{Lem} \label{lem: criteiron when ind-proalgebraic is ind-algebraic}
	If $A \in \Ind \Pro \Alg / k$ is
	locally of finite presentation as a functor on $k^{\ind\rat}$,
	then we have $A \in \Ind \Alg / k$.
\end{Lem}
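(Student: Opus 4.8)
The plan is to realise $A$ as a directed union of quasi-algebraic subgroups. Write $A = \dirlim_{\lambda} A^{(\lambda)}$ in $\Ind \Pro \Alg / k$ with $A^{(\lambda)} \in \Pro \Alg / k$. The embedding $\Ind \Pro \Alg / k \into \Ab(k^{\ind\rat}_{\pro\et})$ is fully faithful (Proposition \ref{prop: fully faithful embedding for ind-proalgebraic groups}) and exact, being the ind-extension of the exact embedding of $\Pro \Alg / k$ given in Assertion \eqref{ass: viewing proalg as sheaves is exact} of Proposition \ref{prop: misc on ind-rational pro-etale topology}; hence the subgroup $B^{(\lambda)} := \Im(A^{(\lambda)} \to A)$ of $A$, formed in either category, is ind-proalgebraic. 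Since every object of $\Spec k^{\ind\rat}_{\pro\et}$ is quasi-compact (Assertion \eqref{ass: quasi-compactness of the sites} of Proposition \ref{prop: misc on ind-rational pro-etale topology}), every section of $A$ over any $k' \in k^{\ind\rat}$ factors through some $A^{(\lambda)}$, so $A = \dirlim_{\lambda} B^{(\lambda)}$ with the $B^{(\lambda)}$ a directed family of subgroups. It therefore suffices to show that for each $\lambda$ the canonical morphism $A^{(\lambda)} \to A$ factors through a quasi-algebraic quotient of $A^{(\lambda)}$: then $B^{(\lambda)}$, being the image of a quasi-algebraic group, is quasi-algebraic ($\Alg / k$ being abelian and embedded exactly), and $A = \dirlim_{\lambda} B^{(\lambda)} \in \Ind \Alg / k$.

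Fix $\lambda$; write $P := A^{(\lambda)} = \invlim_{i} P_{i}$ with $P_{i} \in \Alg / k$ and surjective transition maps, let $f \colon P \to A$ be the canonical morphism, and set $K_{i} := \Ker(P \to P_{i})$, so $\bigcap_{i} K_{i} = 0$. I must show $K_{i} \subseteq \Ker f$ for some $i$. Suppose not, so $f(K_{i}) \ne 0$, hence $K_{i} \ne 0$, for all $i$; combined with $\bigcap_{i} K_{i} = 0$ this forces, for each $i$, some $j \ge i$ with $K_{j} \subsetneq K_{i}$, and iterating produces a strictly decreasing chain $K_{i_{0}} \supsetneq K_{i_{1}} \supsetneq \cdots$. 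I will contradict the hypothesis that $A$ is locally of finite presentation by producing a section of $A$ over an ind-rational $k$-algebra with infinitely many connected components that descends to no finitely generated subalgebra.

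For each $n \ge 1$ the morphism $f|_{K_{i_{n}}} \colon K_{i_{n}} \to A$ is nonzero, so some element of $K_{i_{n}}(X)$ has nonzero image in $A(X)$ for some $X \in k^{\ind\rat}$; restricting to a component of a rational approximation of $X$ yields a finitely generated perfect field together with an element there, and enlarging we obtain an increasing chain $\kappa_{1} \subseteq \kappa_{2} \subseteq \cdots$ in $k^{\rat}$ and elements $a^{(n)} \in K_{i_{n}}(\kappa_{n})$ with $f(a^{(n)}) \ne 0$ in $A(\kappa_{n})$. Put $\kappa_{\infty} = \bigcup_{n} \kappa_{n}$ and let $k' = \bigcup_{\nu} k'_{\nu}$ with $k'_{\nu} = \prod_{n \le \nu} \kappa_{n} \in k^{\rat}$, the transition maps arranged so that $\Spec k'$ has underlying space the one-point compactification $\{1 < 2 < \cdots < \infty\}$ of $\N$, with residue field $\kappa_{n}$ at $n$ and $\kappa_{\infty}$ at $\infty$. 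Since each $P_{i}$ is quasi-algebraic, hence locally of finite presentation, and since $a^{(n)} \in K_{i_{n}}$ while $\bigcap_{m} K_{i_{m}} = 0$, the family $(a^{(n)})_{n}$ together with the value $0$ at $\infty$ assembles into a section $a \in P(k') = \invlim_{i} P_{i}(k')$; by construction the image $f(a) \in A(k')$ has component $f(a^{(n)}) \ne 0$ at the point $n$ and component $0$ at $\infty$. If $f(a)$ came from $A(k'_{\nu})$ for some $\nu$, then comparing the component at $\infty$ forces the corresponding section of $A(k'_{\nu}) = \prod_{n \le \nu} A(\kappa_{n})$ to vanish at the point $\nu$, whence the component of $f(a)$ at $\nu + 1$ would be $0$, contradicting $f(a^{(\nu+1)}) \ne 0$. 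Thus $A(k') \ne \dirlim_{\nu} A(k'_{\nu})$, contradicting local finite presentation of $A$. Hence $f$ factors through some $P_{i}$, and the first paragraph concludes the proof.

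The crux is the construction of the third paragraph: one has to convert the ``infinite pro-depth'' of $P$ recorded by the strictly decreasing chain $\{K_{i_{n}}\}$, together with the nonvanishing of $f$ on each $K_{i_{n}}$, into a genuine failure of finite presentation over an explicit profinite base. The routine points are the extraction of the chain $\{K_{i_{n}}\}$ when the defining pro-system of $P$ has uncountable cofinality (handled above via $\bigcap_{i} K_{i} = 0$), and the verification that $a$ really lies in $P(k')$ and that its image over $k'_{\nu}$ behaves as claimed, which is elementary once the residue fields of $k'$ and the compatibility with the presentation $k' = \bigcup_{\nu} k'_{\nu}$ are spelled out.
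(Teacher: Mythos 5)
Your first two paragraphs are fine and follow the same reduction as the paper (everything comes down to showing that the canonical morphism from $P=\invlim_{i\in I}P_i$ to $A$ kills some $K_i$), but the third paragraph has a gap at its central step, the assembly of $a\in P(k')=\invlim_i P_i(k')$. To produce the component $b_i\in P_i(k')=\dirlim_\nu\prod_{n\le\nu}P_i(\kappa_n)$ whose value at the point $n$ is $a^{(n)}_i$ and whose value at $\infty$ is $0$, you need $a^{(n)}_i=0$ for all large $n$, i.e.\ for \emph{every} $i\in I$ you need $K_{i_n}\subseteq K_i$ for some $n$. Your chain $K_{i_0}\supsetneq K_{i_1}\supsetneq\cdots$ is merely strictly decreasing; it need not be cofinal in $\{K_i\}_{i\in I}$, and when $I$ has uncountable cofinality (the typical case for a proalgebraic group in Serre's sense, e.g.\ an uncountable product of finite groups) no countable chain can be. In particular the equality $\bigcap_m K_{i_m}=0$ that you invoke is not established — only $\bigcap_{i\in I}K_i=0$ over the full index set is — and even it would not suffice: for an index $j$ dominating all the $i_n$ there is no reason for the images $a^{(n)}_j$ to vanish eventually, so $b_j$ need not exist and $a\notin P(k')$. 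This is not the "routine" point you defer in your last paragraph; it is where the argument breaks, and I do not see how to repair it without a criterion for factoring $P\to A$ that does not pass through a countable chain.

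A second, more repairable, gap is the extraction of the pairs $(\kappa_n,a^{(n)})$. The kernel $K_{i_n}$ is proalgebraic, not locally of finite presentation, so its sections over $X\in k^{\ind\rat}$ do not descend to rational subalgebras; "restricting to a component of a rational approximation of $X$" therefore does not yield an element of $K_{i_n}$ over a finitely generated perfect field, and the claim $k'_\nu\in k^{\rat}$ fails. (One can instead restrict to a residue field $X/\ideal{m}$: using local finite presentation of $A$ and a compactness argument over the profinite space $\Spec X$ one finds $\ideal{m}$ with $f(a)|_{\ideal{m}}\ne 0$, and then $\kappa_n:=X/\ideal{m}$ is an ind-rational field, which is enough for the rest since local finite presentation applies to arbitrary filtered colimits in $k^{\ind\rat}$ — but none of this is in your text.) The paper avoids both difficulties by testing morphisms out of a proalgebraic group on its generic point: by Weil's extension theorem for birational group homomorphisms, $\Hom(B,C)=\Ker(C(\xi_B)\to C(\xi_{B\times B}))$ for $B=\invlim B_\mu\in\Pro\Alg/k$ with surjective transitions, and since $\xi_B=\bigcup_\mu\xi_{B_\mu}$ is ind-rational, local finite presentation of $A$ gives $\Hom(B,A)=\dirlim_\mu\Hom(B_\mu,A)$ in one stroke, with no countability or descent issues. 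You should either adopt that route or find a genuine substitute for the cofinality your construction requires.
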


\begin{proof}
	Let $A = \dirlim A_{\lambda}$ with $A_{\lambda} \in \Pro \Alg / k$.
	We need to show that any morphism $B \to A$ from an object $B \in \Pro \Alg / k$
	factors through an object of $\Alg / k$.
	Let $B = \invlim B_{\mu}$ with $B_{\mu} \in \Alg / k$.
	We may assume that the transition morphisms $B_{\mu'} \to B_{\mu}$ are surjective.
	Let $\xi_{B}$ be the generic point of $B$
	in the sense of \cite[Def.\ 3.2.1]{Suz13},
	namely $\xi_{B} = \invlim \xi_{B_{\mu}}$
	and $\xi_{B_{\mu}}$ is the disjoint union of
	the generic points of the irreducible components of $B_{\mu}$.
	As a scheme, $\xi_{B}$ is the $\Spec$ of an ind-rational $k$-algebra $k'_{B}$.
	For any $C \in \Ab(k^{\ind\rat}_{\pro\et})$, we denote $C(\xi_{B}) = C(k'_{B})$.
	We define a group homomorphism $\sigma \colon C(\xi_{B}) \to C(\xi_{B \times_{k} B})$
	by sending a morphism $f \colon \xi_{B} \to C$
	to the morphism $\xi_{B \times B} \to C$ given by
	$(b_{1}, b_{2}) \mapsto f(b_{1}) + f(b_{2}) - f(b_{1} + b_{2})$.
	Then for $C \in \Pro \Alg / k$, we have
		\begin{equation} \label{eq: birational group morphisms extend}
				\Hom(B, C)
			=
				\Ker(C(\xi_{B}) \stackrel{\sigma}{\to} C(\xi_{B \times B})),
		\end{equation}
	which means that a homomorphism of birational groups extends to
	a everywhere regular group homomorphism
	(\cite[V, \S 1.5, Lem.\ 6]{Ser88} plus a limit argument).
	Now let $C = A$.
	Then using the assumption, we have
		\begin{align*}
					\Hom(B, A)
			&	=
					\dirlim_{\lambda}
						\Hom(B, A_{\lambda})
			\\
			&	=
					\dirlim_{\lambda}
						\Ker(A_{\lambda}(\xi_{B}) \to A_{\lambda}(\xi_{B \times B}))
			\\
			&	=
					\dirlim_{\lambda, \mu}
						\Ker(A_{\lambda}(\xi_{B_{\mu}}) \to A_{\lambda}(\xi_{B_{\mu} \times B_{\mu}}))
			\\
			&	=
					\dirlim_{\lambda, \mu}
						\Hom(B_{\mu}, A_{\lambda})
			\\
			&	=
					\dirlim_{\mu}
						\Hom(B_{\mu}, A).
		\end{align*}
	Hence a morphism $B \to A$ factors through some $B_{\mu} \in \Alg / k$.
\end{proof}

\begin{Prop} \BetweenThmAndList \label{prop: cohomology with support}
	Let $N$ be a finite flat group scheme over $\Order_{K}$.
	Then $\alg{H}_{x}^{n}(\Order_{K}, N) = 0$ for $n \ne 2$.
	We have $\alg{H}_{x}^{2}(\Order_{K}, N) \in \Ind \Alg_{\uc} / k$,
	and $R \alg{\Gamma}_{x}(\Order_{K}, N)$ is P-acyclic and Serre reflexive.
	If $K$ has mixed characteristic,
	then $\alg{H}_{x}^{2}(\Order_{K}, N) \in \Alg_{\uc} / k$.
\end{Prop}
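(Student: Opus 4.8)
The plan is to exploit the localization distinguished triangle for $N$, which reduces the statement to the already available computations of $R\alg{\Gamma}(\Order_{K},N)$ and $R\alg{\Gamma}(K,N)$ in Propositions \ref{prop: cohomology of integers}\,\eqref{ass: cohomology of finite flats over integers} and \ref{prop: cohomology of local fields}\,\eqref{ass: cohomology of finite flats over local fields}, together with B\'egueri's resolution for the finer structural claim. Concretely, start from
\[ R\alg{\Gamma}_{x}(\Order_{K},N) \to R\alg{\Gamma}(\Order_{K},N) \to R\alg{\Gamma}(K,N) . \]
First I would note that $\alg{\Gamma}(\Order_{K},N)\to\alg{\Gamma}(K,N)$ is an isomorphism: both are the finite \'etale group $\alg{\Gamma}(K,N)$, and the natural map is bijective because $N$ is proper over every $\alg{O}_{K}(k')$, so $N(\alg{O}_{K}(k'))=N(\alg{K}(k'))$ by Assertion \eqref{ass: valuative criterion of properness} of Proposition \ref{prop: schemes over local fields with ind-rational base}. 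Next, $\alg{H}^{1}(\Order_{K},N)\to\alg{H}^{1}(K,N)$ is injective: an fppf $N$-torsor over $\alg{O}_{K}(k')$ is again finite flat, hence proper, over $\alg{O}_{K}(k')$, so a point over $\alg{K}(k')$ extends to one over $\alg{O}_{K}(k')$ by the same criterion; thus a torsor trivial over $\alg{K}(k')$ is trivial, and sheafifying the resulting injectivity of presheaves gives the claim. Combining these with $\alg{H}^{n}(\Order_{K},N)=0$ for $n\ge2$ and $\alg{H}^{n}(K,N)=0$ for $n\ge2$ in the long exact sequence yields $\alg{H}^{n}_{x}(\Order_{K},N)=0$ for $n\ne2$, together with a short exact sequence $0\to\alg{H}^{1}(\Order_{K},N)\to\alg{H}^{1}(K,N)\to\alg{H}^{2}_{x}(\Order_{K},N)\to0$.

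It then remains to see $\alg{H}^{2}_{x}(\Order_{K},N)\in\Ind\Alg_{\uc}/k$. From the exact sequence it is a quotient of $\alg{H}^{1}(K,N)\in\Ind\Pro\Alg_{\uc}/k$; since the embedding of $\Ind\Pro\Alg/k$ into sheaves (Proposition \ref{prop: fully faithful embedding for ind-proalgebraic groups}) is exact and fully faithful, this cokernel is represented by an object of $\Ind\Pro\Alg/k$, it has unipotent identity component, and (being a cokernel in an exact sequence with P-acyclic outer terms) it is P-acyclic; hence $\alg{H}^{2}_{x}(\Order_{K},N)\in\Ind\Pro\Alg_{\uc}/k$. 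By Lemma \ref{lem: criteiron when ind-proalgebraic is ind-algebraic} it now suffices to prove that $\alg{H}^{2}_{x}(\Order_{K},N)$ is locally of finite presentation as a functor on $k^{\ind\rat}$. For this I would invoke B\'egueri's exact sequence $0\to N\to G\to H\to0$ over $\Order_{K}$ with $G,H$ smooth affine with connected fibers, used already in the proof of Proposition \ref{prop: cohomology of integers}\,\eqref{ass: cohomology of finite flats over integers}. Smoothness and Proposition \ref{prop: cohomology of integers}\,\eqref{ass: cohomology of smooth group over integers} give $\alg{H}^{n}(\Order_{K},G)=\alg{H}^{n}(\Order_{K},H)=0$ for $n\ge1$, so the localization triangles for $G$ and $H$ give $\alg{H}^{0}_{x}=0$, $\alg{H}^{1}_{x}(\Order_{K},G)=\alg{\Gamma}(K,G)/\alg{\Gamma}(\Order_{K},G)$ and $\alg{H}^{2}_{x}(\Order_{K},G)=\alg{H}^{1}(K,G)$, and similarly for $H$. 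The long exact sequence of the triangle for $0\to N\to G\to H\to0$ then presents $\alg{H}^{2}_{x}(\Order_{K},N)$ as an extension of $\Ker(\alg{H}^{1}(K,G)\to\alg{H}^{1}(K,H))$ by $\Coker\bigl(\alg{\Gamma}(K,G)/\alg{\Gamma}(\Order_{K},G)\to\alg{\Gamma}(K,H)/\alg{\Gamma}(\Order_{K},H)\bigr)$. Here the quotients $\alg{\Gamma}(K,G)/\alg{\Gamma}(\Order_{K},G)$ and $\alg{\Gamma}(K,H)/\alg{\Gamma}(\Order_{K},H)$ are locally of finite presentation by Proposition \ref{prop: K-points mod O-points commutes with direct limits}, while $\alg{H}^{1}(K,G),\alg{H}^{1}(K,H)$ are so by Proposition \ref{prop: cohomology of local fields}\,\eqref{ass: higher cohomology of smooth group over local field is P-acyclic}. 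Since sheaves on $\Spec k^{\ind\rat}_{\et}$ locally of finite presentation are stable under kernels, cokernels and extensions (filtered colimits being exact, and sheafification on this site commuting with them, cf.\ the treatment of $F^{\fp}$ in Section \ref{sec: Serre duality and P-acyclicity}), $\alg{H}^{2}_{x}(\Order_{K},N)$ is locally of finite presentation, hence lies in $\Ind\Alg/k$ and, being in $\Ind\Pro\Alg_{\uc}/k$ as well, in $\Ind\Alg_{\uc}/k$.

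Finally, when $K$ has mixed characteristic, $\alg{H}^{1}(K,N)\in\Alg_{\uc}/k$ by Proposition \ref{prop: cohomology of local fields}\,\eqref{ass: cohomology of finite flats over local fields}, so its quotient $\alg{H}^{2}_{x}(\Order_{K},N)$ is a quasi-algebraic group with unipotent identity component, i.e.\ lies in $\Alg_{\uc}/k$ (a subobject of a quasi-algebraic group being quasi-algebraic). I expect the local-finite-presentation step to be the main obstacle; the subtlety there is purely that all the kernels, cokernels and extensions above must be taken as sheaves on $\Spec k^{\ind\rat}_{\et}$, after which their stability under these operations on this particular site is routine.
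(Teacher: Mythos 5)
Your proposal is correct and follows essentially the same route as the paper: injectivity of $\alg{H}^{1}(\Order_{K},N)\to\alg{H}^{1}(K,N)$ via the valuative criterion for the (finite, hence proper) torsors, the resulting identification $\alg{H}^{2}_{x}(\Order_{K},N)=\alg{H}^{1}(K,N)/\alg{H}^{1}(\Order_{K},N)\in\Ind\Pro\Alg_{\uc}/k$, and then B\'egueri's resolution $0\to N\to G\to H\to 0$ combined with Propositions \ref{prop: higher cohomology with smooth coefficients} and \ref{prop: K-points mod O-points commutes with direct limits} and Lemma \ref{lem: criteiron when ind-proalgebraic is ind-algebraic} to upgrade to $\Ind\Alg_{\uc}/k$, with the mixed-characteristic case read off from $\alg{H}^{1}(K,N)\in\Alg_{\uc}/k$. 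Your explicit kernel--cokernel presentation of $\alg{H}^{2}_{x}(\Order_{K},N)$ and the remark on stability of local finite presentation under kernels, cokernels and extensions only make explicit what the paper leaves implicit.
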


\begin{proof}
	We have $\alg{\Gamma}(\Order_{K}, N) = \alg{\Gamma}(K, N)$
	and $\alg{H}^{n}(\Order_{K}, N) = \alg{H}^{n}(K, N) = 0$ for $n \ge 2$
	by the previous two propositions.
	We show that the morphism $\alg{H}^{1}(\Order_{K}, N) \to \alg{H}^{1}(K, N)$
	is injective.
	Let $k' \in k^{\ind\rat}$ and $X$ an fppf $N$-torsor over $\Spec \alg{O}_{K}(k')$.
	Since $N$ is finite, we have $X(\alg{O}_{K}(k')) = X(\alg{K}(k'))$
	by \eqref{prop: schemes over local fields with ind-rational base}
	\eqref{ass: valuative criterion of properness}.
	Hence if $X$ maps to zero under $H^{1}(\alg{O}_{K}(k'), N) \to H^{1}(\alg{K}(k'), N)$,
	then it is zero.
	Therefore $H^{1}(\alg{O}_{K}(k'), N) \to H^{1}(\alg{K}(k'), N)$ is injective
	and $\alg{H}^{1}(\Order_{K}, N) \to \alg{H}^{1}(K, N)$ is injective.
	Hence the localization triangle \eqref{prop: localization sequence}
		\[
				R \alg{\Gamma}_{x}(\Order_{K}, N)
			\to
				R \alg{\Gamma}(\Order_{K}, N)
			\to
				R \alg{\Gamma}(K, N)
		\]
	in $D(k^{\ind\rat}_{\et})$ reduces to an exact sequence
		\[
				0
			\to
				\alg{H}^{1}(\Order_{K}, N)
			\to
				\alg{H}^{1}(K, N)
			\to
				\alg{H}_{x}^{2}(\Order_{K}, N)
			\to
				0
		\]
	in $\Ab(k^{\ind\rat}_{\et})$.
	Since the first two terms are P-acyclic, Serre reflexive and in $\Ind \Pro \Alg_{\uc} / k$ by
	\eqref{prop: cohomology of integers}
	\eqref{ass: cohomology of finite flats over integers}
	and \eqref{prop: cohomology of local fields}
	\eqref{ass: cohomology of finite flats over local fields},
	so is the third $\alg{H}_{x}^{2}(\Order_{K}, N)$.
	
	To deduce $\alg{H}_{x}^{2}(\Order_{K}, N) \in \Ind \Alg_{\uc} / k$,
	let $0 \to N \to G \to H \to 0$ be an exact sequence
	of group schemes over $\Order_{K}$ with $G, H$ smooth affine with connected fibers,
	as we took in the proof of
	\eqref{prop: cohomology of integers}
	\eqref{ass: cohomology of finite flats over integers}.
	Since $R \alg{\Gamma}(\Order_{K}, G) = R \alg{\Gamma}(\Order_{K, \et}, G)$
	is concentrated in degree $0$ by
	\eqref{prop: integral etale cohomology vanishes}
	and the morphism $\alg{\Gamma}(\Order_{K}, G) \to \alg{\Gamma}(K, G)$ is injective,
	we know that
		\begin{gather*}
					\alg{\Gamma}_{x}(\Order_{K}, G)
				=
					0,
				\quad
					\alg{H}_{x}^{1}(\Order_{K}, G)
				=
					\alg{\Gamma}(K, G) / \alg{\Gamma}(\Order_{K}, G),
			\\
					\alg{H}_{x}^{n}(\Order_{K}, G)
				=
					\alg{H}^{n - 1}(K, G),
				\quad
					n \ge 2.
		\end{gather*}
	These are locally of finite presentation
	by \eqref{prop: higher cohomology with smooth coefficients}
	and \eqref{prop: K-points mod O-points commutes with direct limits}.
	Similarly, $\alg{H}_{x}^{n}(\Order_{K}, H)$ is locally of finite presentation for any $n$.
	The distinguished triangle
		$
				R \alg{\Gamma}_{x}(\Order_{K}, N)
			\to
				R \alg{\Gamma}_{x}(\Order_{K}, G)
			\to
				R \alg{\Gamma}_{x}(\Order_{K}, H)
		$
	then shows that
	$\alg{H}_{x}^{2}(\Order_{K}, N)$ is
	locally of finite presentation.
	We saw above that $\alg{H}_{x}^{2}(\Order_{K}, N) \in \Ind \Pro \Alg_{\uc} / k$,
	Hence \eqref{lem: criteiron when ind-proalgebraic is ind-algebraic} implies that
	$\alg{H}_{x}^{2}(\Order_{K}, N) \in \Ind \Alg_{\uc} / k$.
	
	If $K$ has mixed characteristic,
	then $\alg{H}^{1}(K, N) \in \Alg_{\uc} / k$ implies
	$\alg{H}^{1}(\Order_{K}, N) \in \Alg_{\uc} / k$,
	so $\alg{H}_{x}^{2}(\Order_{K}, N) \in \Alg_{\uc} / k$.
\end{proof}

\begin{Rmk}
	Another method to compute $R \alg{\Gamma}(\Order_{K}, N)$,
	$R \alg{\Gamma}(K, N)$ and $R \alg{\Gamma}_{x}(\Order_{K}, N)$ for finite flat $N$
	over equal characteristic $K$
	is to use the two exact sequences of \cite[III, \S 5]{Mil06}
	(see also \S \ref{sec: Proof of the finite flat duality} of this paper).
	Then these cohomology complexes can be calculated by
	the cohomology with coefficients in vector groups
	in the case $N$ or the Cartier dual $N^{\CDual}$ has height $1$.
	The general case follows by d\'evissage.
	This method is due to Artin-Milne \cite{AM76} in the global situation
	and Bester \cite{Bes78} in the local situation.
\end{Rmk}


\numberwithin{equation}{section}
\section{Statement of the duality theorem}
\label{sec: Statement of the duality theorem}
From now on throughout the paper,
all sheaves over $k$, their exact sequences and distinguished triangles
are considered in $\Ab(k^{\ind\rat}_{\pro\et})$, $D(k^{\ind\rat}_{\pro\et})$
unless otherwise noted.
We denote $R \alg{\Gamma}(\;\cdot\;) = R \alg{\Gamma}(K, \;\cdot\;)$
when there is no confusion.


\numberwithin{equation}{subsection}
\subsection{Formulation}
\label{sec: Formulation}

We formulate the duality theorem with coefficients in abelian varieties.
First, $R \alg{\Gamma}(\Gm)$ is P-acyclic by 
\eqref{prop: cohomology of local fields}
\eqref{ass: cohomology of tori over local fields},
so we write $R \Tilde{\alg{\Gamma}}(\Gm) = R \alg{\Gamma}(\Gm)$ in $D(k^{\ind\rat}_{\pro\et})$.
The same assertion or \cite[Prop.\ 2.4.4]{Suz13} shows that
	\[
			R \alg{\Gamma}(\Gm)
		=
			\alg{\Gamma}(\Gm)
		=
			\alg{K}^{\times}.
	\]
Recall from the paragraph before \cite[Prop.\ 2.4.4]{Suz13} that
there are the valuation map $\alg{K}^{\times} \to \Z$ as a morphism of sheaves
and a split exact sequence
	\[
			0
		\to
			\alg{U}_{K}
		\to
			\alg{K}^{\times}
		\to
			\Z
		\to
			0,
	\]
where $\alg{U}_{K} = \alg{O}_{K}^{\times}$.
An alternative definition of this sequence and the valuation map is the exact sequence
	\[
			0
		\to
			\alg{\Gamma}(\Order_{K}, \Gm)
		\to
			\alg{\Gamma}(\Order_{K}, \mathcal{G}_{m})
		=
			\alg{\Gamma}(K, \Gm)
		\to
			\alg{\Gamma}(\Order_{K}, \Z_{x})
		\to
			0
	\]
coming from the exact sequence
$0 \to \Gm \to \mathcal{G}_{m} \to \Z_{x} \to 0$
of group schemes over $\Order_{K}$,
where $\mathcal{G}_{m}$ is the N\'eron (lft) model of $\Gm$
and $\Z_{x}$ the \'etale group with support on $x = \Spec k$ and special fiber $\Z$,
and we used \eqref{prop: schemes over local fields with ind-rational base}
\eqref{ass: valuative criterion of properness} for the middle isomorphism
and \eqref{prop: integral etale cohomology vanishes} for the exactness.
On $k$-points, it is the usual sequence $0 \to U_{K} \to K^{\times} \to \Z \to 0$,
where $U_{K} = \Order_{K}^{\times}$.
We call the composite
	\begin{equation} \label{eq: trace morphism}
		R \alg{\Gamma}(K, \Gm) = \alg{K}^{\times} \onto \Z
	\end{equation}
the \emph{trace morphism}.

Let $A$ be an abelian variety over $K$ with dual $A^{\vee}$.
Recall from \eqref{prop: cohomology of local fields}
\eqref{ass: cohomology of abelian varieties over local fields}
that $\alg{\Gamma}(A) \in \Pro \Alg / k$,
$\alg{H}^{1}(A) \in \Ind \Alg_{\uc} / k$,
$\alg{H}^{n}(A) = 0$ for $n \ge 2$,
and $R \alg{\Gamma}(A)$ is P-acyclic.
In particular, we write $R \Tilde{\alg{\Gamma}}(A) = R \alg{\Gamma}(A)$.
With the morphism of functoriality of $R \Tilde{\alg{\Gamma}}$
in \eqref{prop: localization and functoriality in pro-etale}
and the trace morphism above,
we have morphisms
	\begin{align*}
				R \alg{\Gamma}(A^{\vee})
		&	\to
				R \Tilde{\alg{\Gamma}}
				R \sheafhom_{K}(A, \Gm)[1]
		\\
		&	\to
				R \sheafhom_{k^{\ind\rat}_{\pro\et}}(
					R \alg{\Gamma}(A), R \alg{\Gamma}(\Gm)
				)[1]
		\\
		&	\to
				R \sheafhom_{k^{\ind\rat}_{\pro\et}}(
					R \alg{\Gamma}(A), \Z
				)[1]
			=
				R \alg{\Gamma}(A)^{\SDual}[1]
	\end{align*}
in $D(k^{\ind\rat}_{\pro\et})$.
We denote by $\vartheta_{A}$ the composite morphism:
	\[
			\vartheta_{A}
		\colon
			R \alg{\Gamma}(A^{\vee})
		\to
			R \alg{\Gamma}(A)^{\SDual}[1]
		\quad
			(=
				R \sheafhom_{k^{\ind\rat}_{\pro\et}}(
					R \alg{\Gamma}(A), \Q / \Z
				)
			).
	\]
Recall from \eqref{prop: cohomology of local fields}
\eqref{ass: cohomology of abelian varieties over local fields}
and \eqref{prop: cohomology of integers}
\eqref{ass: cohomology of smooth group over integers}
that there are surjections
	\[
			\alg{\Gamma}(A)
		\onto
			\mathcal{A}_{x}
		\onto
			\pi_{0}(\mathcal{A}_{x})
	\]
with connected kernels,
where $\mathcal{A}$ is the N\'eron model of $A$ and $\mathcal{A}_{x}$ its special fiber.
In particular, $\alg{\Gamma}(A)$ is not Serre reflexive in general
due to the possibly non-zero semi-abelian part of $\mathcal{A}_{x}$.
We take the Serre dual of $\vartheta_{A}$.
Replacing $A$ with $A^{\vee}$, we have a morphism
	\[
			\theta_{A}
		\colon
			R \alg{\Gamma}(A^{\vee})^{\SDual \SDual}
		\to
			R \alg{\Gamma}(A)^{\SDual}[1]
	\]
in $D(k^{\ind\rat}_{\pro\et})$.
Then $\vartheta_{A}$ can be written as the composite of the natural evaluation morphism
$\id \to \SDual \SDual$ and $\theta_{A}$:
	\[
			\vartheta_{A}
		\colon
			R \alg{\Gamma}(A^{\vee})
		\to
			R \alg{\Gamma}(A^{\vee})^{\SDual \SDual}
		\stackrel{\theta_{A}}{\to}
			R \alg{\Gamma}(A)^{\SDual}[1].
	\]
We have $\alg{\Gamma}(A)^{\SDual} \in D^{b}(\Ind \Alg_{\uc} / k)$ and
$\alg{H}^{1}(A)^{\SDual} \in D^{b}(\Pro \Alg_{\uc} / k)$
by \eqref{prop: Serre duality},
which are both Serre reflexive.
Therefore $R \alg{\Gamma}(A)^{\SDual}$ is Serre reflexive,
so is its Serre dual.
Therefore $\theta_{A}$ is a morphism between Serre reflexive complexes.
Now we can precisely state our duality theorem for abelian varieties.

\begin{Thm} \label{thm: duality for abelian varieties}
	The morphism $\theta_{A}$ defined above is an isomorphism.
\end{Thm}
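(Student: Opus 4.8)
The proof occupies the remainder of the paper and combines three ingredients, exactly as announced in the introduction: a formal reduction of the assertion that $\theta_A$ is a quasi-isomorphism to the perfectness of two concrete pairings, the identification of those pairings with the classical pairings of Grothendieck and of B\'egueri--Bester--Bertapelle, and the known semistable case together with Galois descent.

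First I would unwind $\vartheta_A$ and $\theta_A$ using the structure of $R\alg{\Gamma}(A)$ recorded in Assertion~\eqref{ass: cohomology of abelian varieties over local fields} of Proposition~\ref{prop: cohomology of local fields}: it is concentrated in degrees $0$ and $1$, with $\alg{\Gamma}(A)\in\Pro\Alg/k$ P-acyclic (and $\pi_0(\alg{\Gamma}(A))=\pi_0(\mathcal{A}_x)$) and $\alg{H}^1(A)\in\Ind\Alg_{\uc}/k$. Feeding this into the hyperext spectral sequence
\[
	E_2^{ij}=\sheafext_{k^{\ind\rat}_{\pro\et}}^{i}(\alg{H}^{-j}(A),\Q/\Z)\Longrightarrow\alg{H}^{i+j}(A^{\vee})
\]
--- valid up to a completion of $R\alg{\Gamma}(A^{\vee})$, and degenerating at $E_2$ because $\sheafext^{i}(\,\cdot\,,\Q/\Z)$ vanishes for $i\ge2$ by Assertion~\eqref{ass: Ext of a proalg by Q mod Z is zero above degree 2} of Proposition~\ref{prop: Serre duality} --- and using the explicit Serre-reflexivity computations of Proposition~\ref{prop: Serre duality}, one reads off from $\theta_A$ exactly two maps on cohomology sheaves: a pairing $\pi_0(\mathcal{A}_{x}^{\vee})\times\pi_0(\mathcal{A}_x)\to\Q/\Z$ coming from the ``\'etale part'' of $\alg{\Gamma}(A^{\vee})$, and a pairing $H^1(A^{\vee})\times\pi_1(\alg{\Gamma}(A))\to\Q/\Z$ coming from the ``connected part'', via the fact that $\sheafext^1(\,\cdot\,,\Q/\Z)$ is Pontryagin-dual to $\pi_1$. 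The point is that the only obstruction to $\alg{\Gamma}(A)$ being Serre reflexive --- its semi-abelian part --- is precisely neutralised by the double dual $\SDual\SDual$ on the left-hand side of $\theta_A$, so that $\theta_A$ is a quasi-isomorphism if and only if both pairings are perfect. This is Assertion~\eqref{ass: Gro, Sha and main theorem} of Theorem~\ref{mainthm: relation with Gro and Sha conjectures}, whose proof I would carry out in Sections~\ref{sec: Statement of the duality theorem} and~\ref{sec: Comparison with other authors' work}.

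I would then identify these two abstract pairings with classical ones in Section~\ref{sec: Comparison with other authors' work}. For the first, one unwinds the definition of $\vartheta_A$ --- the cup product induced by the Poincar\'e biextension $\mathcal{P}$ on $A^{\vee}\times A$, followed by the trace $R\alg{\Gamma}(\Gm)\to\Z$ --- and compares it with Grothendieck's construction of his pairing as the obstruction to extending $\mathcal{P}$ to the N\'eron models $\mathcal{A}^{\vee}\times\mathcal{A}$; the two give the same pairing on component groups. For the second, one matches the cup product restricted to the connected/completed part with the B\'egueri--Bester--Bertapelle pairing incarnating \v{S}afarevi\v{c}'s conjecture. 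Both comparisons are diagram chases transporting the site-theoretic cup-product formalism to the biextension, respectively the local-class-field-theoretic, setting; as a by-product, invoking Werner's theorem gives the semistable case of Theorem~\ref{thm: duality for abelian varieties}. The remaining step is Galois descent, Assertion~\eqref{ass: Galois descent} of Theorem~\ref{mainthm: relation with Gro and Sha conjectures}, treated in Section~\ref{sec: Galois descent}: if $\theta_{A\times_K L}$ is a quasi-isomorphism for a finite Galois extension $L/K$, then so is $\theta_A$. The mechanism is the Hochschild--Serre identification $R\Gamma(\Gal(L/K),R\alg{\Gamma}(L,A))=R\alg{\Gamma}(K,A)$ --- and likewise for $A^{\vee}$ and for $\Gm$ --- together with the observation that forming $R\Gamma$ of the \emph{finite} group $\Gal(L/K)$ is a self-dual operation, so that it commutes with the Serre dual and with the double dual up to a controlled shift, while the trace map and the cup product are $\Gal(L/K)$-equivariant; one then checks that $\theta_A$ is, under these identifications, $R\Gamma(\Gal(L/K),\theta_{A\times_K L})$. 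Finally, every abelian variety over $K$ has semistable reduction over some finite Galois extension $L/K$ by the semistable reduction theorem, and over $L$ the theorem holds by the semistable case together with \v{S}afarevi\v{c}'s conjecture --- now a theorem of B\'egueri, Bester and Bertapelle --- so Galois descent yields the theorem over $K$.

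The hard part will be the Galois descent step, and within it the verification that $\theta_A$ is genuinely compatible with Hochschild--Serre: the functors $\SDual$ and $\SDual\SDual$ do not obviously commute with $R\Gamma(\Gal(L/K),\,\cdot\,)$ on unbounded complexes of ind-proalgebraic sheaves, and reconciling this requires both the finiteness of the Galois group --- which makes $R\Gamma(\Gal(L/K),\,\cdot\,)$ a perfect, hence dualizable, operation --- and the structural results on $R\sheafhom$ of ind-proalgebraic groups from Section~\ref{sec: Site-theoretic preliminaries}, applied through Propositions~\ref{prop: comparison thm for ind-proalgebraic groups} and~\ref{prop: limits and cohomology commute} and the AB4* and $R\invlim$-vanishing statements of Proposition~\ref{prop: misc on ind-rational pro-etale topology}; one must also track the non-exactness of the vertical maps in~\eqref{eq: diagram of sites we use}. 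A secondary obstacle is the identification of the site-theoretic cup product with Grothendieck's biextension pairing, which is delicate precisely because N\'eron models behave badly under the base changes implicit in the two constructions.
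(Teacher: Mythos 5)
Your overall strategy is the paper's: reduce the assertion to the perfectness of the two pairings on $\pi_{0}$ and on $H^{1}$, identify these with Grothendieck's pairing and with the B\'egueri--Bester--Bertapelle isomorphism, and then combine Werner's semistable case and \v{S}afarevi\v{c}'s conjecture with Galois descent. The reduction and the comparison steps as you describe them are essentially what Sections \ref{sec: Reduction to components groups and the first cohomology} and \ref{sec: Comparison with other authors' work} carry out (the paper organizes the reduction by morphisms of distinguished triangles and a uniqueness lemma for extending morphisms of triangles, rather than literally by the hyperext spectral sequence, but the content is the same).

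The gap is in your justification of the Galois descent step. It is not true that $R\Gamma(\Gal(L/K),\,\cdot\,)$ is a ``perfect, hence dualizable'' operation merely because the group is finite: the trivial module $\Z$ admits no bounded resolution by finitely generated projective $\Z[G]$-modules for nontrivial finite $G$, and $R\Gamma(G,M)$ does not in general commute with $R\sheafhom_{k^{\ind\rat}_{\pro\et}}(\,\cdot\,,\Z)$ --- the tensor-hom adjunction naturally identifies $R\Gamma(G,M^{\SDual})$ with the Serre dual of group \emph{homology} $L\Delta(G,M)=\Z\tensor_{\Z[G]}^{L}M$, not of group cohomology. What the paper actually does (Proposition \ref{prop: Galois descent technique} and the proof of Proposition \ref{prop: reduction over a finite extension}) is: first dispose of the unramified part, where descent is just restriction of sites, then use that a totally ramified Galois extension has solvable Galois group to reduce to $G$ cyclic; for cyclic $G$ the Tate cohomology $R\hat{\Gamma}(G,M)$ is $2$-periodic, and since both $M=R\alg{\Gamma}(L,A)$ and $R\Gamma(G,M)=R\alg{\Gamma}(K,A)$ are bounded (concentrated in degrees $0$ and $1$ by Hochschild--Serre and Proposition \ref{prop: cohomology of local fields}), periodicity forces $R\hat{\Gamma}(G,M)=0$, so the norm map $L\Delta(G,M)\to R\Gamma(G,M)$ is a quasi-isomorphism and homology may be traded for cohomology. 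Only then does the adjunction yield $R\Gamma(G,M^{\SDual})=R\Gamma(G,M)^{\SDual}$, which must be applied twice (using that $R\alg{\Gamma}(L,A)^{\SDual}$ is again bounded, in degrees $-1$ to $1$) to handle the $\SDual\SDual$ on the source of $\theta_{A}$. Without the reduction to the cyclic case and these boundedness inputs, your argument for the compatibility of $\theta_{A}$ with Hochschild--Serre does not close.
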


We start proving this from the next subsection.
The proof finishes at \S \ref{sec: End of proof: Grothendieck's conjecture}.


\numberwithin{equation}{subsection}
\subsection{Reduction to components groups and the first cohomology}
\label{sec: Reduction to components groups and the first cohomology}

First, the sheaf
$\alg{H}^{1}(A) \in \Ind \Alg / k$ is P-acyclic as we saw.
Since $\alg{\Gamma}(A) \in \Pro \Alg / k$,
the sheaf $\sheafext_{k^{\ind\rat}_{\et}}^{1}(\alg{\Gamma}(A), \Q / \Z)$ is
locally of finite presentation
by what we saw after \eqref{prop: Serre duality}.
In particular, $\sheafext_{k^{\ind\rat}_{\et}}^{1}(\alg{\Gamma}(A), \Q / \Z)$ is P-acyclic
by \eqref{prop: criterion of P-acyclicity}
\eqref{ass: local finite presentation implies P-acyclic},
and $\sheafext_{k^{\ind\rat}_{\et}}^{1}(\alg{\Gamma}(A), \Q / \Z)$
and $\sheafext_{k^{\ind\rat}_{\pro\et}}^{1}(\alg{\Gamma}(A), \Q / \Z)$
are equal as functors on $k^{\ind\rat}$.
Therefore the $k'$-points of $\alg{H}^{1}(A)$ and
$\sheafext_{k^{\ind\rat}_{\pro\et}}^{1}(\alg{\Gamma}(A), \Q / \Z)$
for any algebraically closed field $k' \in k^{\ind\rat}$ are
$H^{1}(\alg{K}(k'), A)$ and
$\Ext_{k'^{\ind\rat}_{\pro\et}}^{1}(\alg{\Gamma}(A), \Q / \Z)$,
respectively.

\begin{Prop} \label{prop: two parts of the duality morphism}
	The morphism $\theta_{A}$ induces two morphisms
		\begin{gather*}
					\theta_{A}^{+0}
				\colon
					\pi_{0}(\mathcal{A}_{x}^{\vee})
				\to
					\pi_{0}(\mathcal{A}_{x})^{\PDual}
				\quad \text{in} \quad
					\FEt / k,
			\\
					\theta_{A}^{+1}
				\colon
					\alg{H}^{1}(A^{\vee})
				\to
					\sheafext_{k^{\ind\rat}_{\pro\et}}^{1}(\alg{\Gamma}(A), \Q / \Z)
				\quad \text{in} \quad
					\Ind \Alg_{\uc} / k.
		\end{gather*}
	For any algebraically closed field $k' \in k^{\ind\rat}$,
	denote by $\theta_{A}^{+1}(k')$ the morphism $\theta_{A}^{+1}$ induced on the $k'$-points:
		\[
				\theta_{A}^{+1}(k')
			\colon
				H^{1}(\alg{K}(k'), A^{\vee})
			\to
				\Ext_{k'^{\ind\rat}_{\pro\et}}^{1}(\alg{\Gamma}(A), \Q / \Z).
		\]
	Then the following are equivalent:
	\begin{itemize}
		\item
			$\theta_{A}$ is an isomorphism.
		\item
			$\theta_{A}^{+0}$, $\theta_{A}^{+1}$, $\theta_{A^{\vee}}^{+1}$ are isomorphisms.
		\item
			$\theta_{A}^{+0}$, $\theta_{A}^{+1}(k')$, $\theta_{A^{\vee}}^{+1}(k')$ are isomorphisms
			for any algebraically closed field $k' \in k^{\ind\rat}$.
	\end{itemize}
\end{Prop}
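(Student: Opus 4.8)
The plan is to compute the cohomology sheaves of the source $R\alg{\Gamma}(A^{\vee})^{\SDual\SDual}$ and the target $R\alg{\Gamma}(A)^{\SDual}[1]$ of $\theta_{A}$, to observe that both complexes are concentrated in degrees $-1,0,1$, and then to identify the maps $H^{i}(\theta_{A})$ with the maps in the statement. For the target, recall from Proposition \ref{prop: cohomology of local fields} that $R\alg{\Gamma}(A)$ has cohomology $\alg{\Gamma}(A) \in \Pro\Alg/k$ in degree $0$, $\alg{H}^{1}(A) \in \Ind\Alg_{\uc}/k$ in degree $1$, and nothing else. Hence the hyperext spectral sequence
\[
	E_{2}^{ij} = \sheafext_{k^{\ind\rat}_{\pro\et}}^{i}(\alg{H}^{-j}(A), \Q/\Z) \Longrightarrow H^{i+j}\bigl(R\alg{\Gamma}(A)^{\SDual}[1]\bigr)
\]
(recall $R\alg{\Gamma}(A)^{\SDual}[1] = R\sheafhom_{k^{\ind\rat}_{\pro\et}}(R\alg{\Gamma}(A),\Q/\Z)$) has $E_{2}^{ij}=0$ unless $j \in \{0,-1\}$ and $i \in \{0,1\}$, the restriction on $i$ coming from Proposition \ref{prop: Serre duality}\eqref{ass: Ext of a proalg by Q mod Z is zero above degree 2} applied to $\alg{\Gamma}(A)$ and from the analogous vanishing, via Serre's theory of homotopy groups, applied to $\alg{H}^{1}(A)$. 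Therefore it degenerates at $E_{2}$; writing sheaf-Hom and sheaf-Ext over $k^{\ind\rat}_{\pro\et}$ without the subscript, it gives $H^{-1} = \sheafhom(\alg{H}^{1}(A),\Q/\Z)$ and $H^{1} = \sheafext^{1}(\alg{\Gamma}(A),\Q/\Z)$, together with a short exact sequence
\[
	0 \to \sheafext^{1}(\alg{H}^{1}(A),\Q/\Z) \to H^{0} \to \pi_{0}(\mathcal{A}_{x})^{\PDual} \to 0,
\]
where I used $\sheafhom(\alg{\Gamma}(A),\Q/\Z) = \sheafhom(\pi_{0}(\mathcal{A}_{x}),\Q/\Z) = \pi_{0}(\mathcal{A}_{x})^{\PDual}$ since $\alg{\Gamma}(A) \onto \pi_{0}(\mathcal{A}_{x})$ has connected kernel. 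By Breen--Serre duality (Proposition \ref{prop: Serre duality}\eqref{ass: Serre duality for unip and etale}) the term $\sheafext^{1}(\alg{H}^{1}(A),\Q/\Z)$ is connected, so this is the connected--étale sequence of $H^{0}$.

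For the source I would apply the triangulated functor $\SDual\SDual$ to the triangle $\alg{\Gamma}(A^{\vee}) \to R\alg{\Gamma}(A^{\vee}) \to \alg{H}^{1}(A^{\vee})[-1]$. Since $\alg{H}^{1}(A^{\vee})$ is Serre reflexive and $\alg{\Gamma}(A^{\vee})^{\SDual\SDual}$ is described by Proposition \ref{prop: Serre duality}\eqref{ass: Serre duality for general proalgebraic groups} — it lies in degrees $-1,0$ and sits in a distinguished triangle $[T\alg{\Gamma}(A^{\vee})_{\mathrm{sAb}} \to \alg{\Gamma}(A^{\vee})_{\mathrm{u}}] \to \alg{\Gamma}(A^{\vee})^{\SDual\SDual} \to \pi_{0}(\mathcal{A}_{x}^{\vee})$ — the long exact sequences show that $R\alg{\Gamma}(A^{\vee})^{\SDual\SDual}$ has $H^{1} = \alg{H}^{1}(A^{\vee})$, has $H^{-1} = \Ker(T\alg{\Gamma}(A^{\vee})_{\mathrm{sAb}} \to \alg{\Gamma}(A^{\vee})_{\mathrm{u}})$, and has $H^{0}$ sitting in a connected--étale sequence with connected part $\Coker(T\alg{\Gamma}(A^{\vee})_{\mathrm{sAb}} \to \alg{\Gamma}(A^{\vee})_{\mathrm{u}})$ and étale part $\pi_{0}(\mathcal{A}_{x}^{\vee})$. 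In particular both complexes are concentrated in degrees $-1,0,1$, so $\theta_{A}$ is a quasi-isomorphism if and only if $H^{-1}(\theta_{A})$, $H^{0}(\theta_{A})$ and $H^{1}(\theta_{A})$ are all isomorphisms.

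Next I would match these three maps. The map $H^{1}(\theta_{A}) \colon \alg{H}^{1}(A^{\vee}) \to \sheafext^{1}(\alg{\Gamma}(A),\Q/\Z)$, a morphism in $\Ind\Alg_{\uc}/k$, is the map called $\theta_{A}^{+1}$. On $H^{0}$, $\theta_{A}$ induces a morphism of connected--étale sequences, which by the five lemma is an isomorphism iff it is so on connected and on étale parts; the étale part is a morphism $\pi_{0}(\mathcal{A}_{x}^{\vee}) \to \pi_{0}(\mathcal{A}_{x})^{\PDual}$ in $\FEt/k$, namely $\theta_{A}^{+0}$. To deal with $H^{-1}(\theta_{A})$ and the connected part of $H^{0}(\theta_{A})$, I would use the symmetry of the construction: because $\theta_{A}$ and $\theta_{A^{\vee}}$ come from the cup-product pairing $R\alg{\Gamma}(A^{\vee}) \times R\alg{\Gamma}(A) \to \Z[1]$ attached to the symmetric Poincaré biextension, they are Serre duals of one another up to shift; concretely $\theta_{A} = (\vartheta_{A^{\vee}})^{\SDual}[1]$ with $\vartheta_{A^{\vee}} \colon R\alg{\Gamma}(A) \to R\alg{\Gamma}(A^{\vee})^{\SDual}[1]$ inducing $\theta_{A^{\vee}}^{+1}$ on $H^{1}$. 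Applying $\SDual$ and the cohomology computations above, together with Breen--Serre reflexivity of the unipotent pieces, identifies $H^{-1}(\theta_{A})$ with the Pontryagin dual of $\theta_{A^{\vee}}^{+1}$ on its prime-to-$p$ (étale) summand and the connected part of $H^{0}(\theta_{A})$ with $\sheafext^{1}(\theta_{A^{\vee}}^{+1},\Q/\Z)$ on its $p$-primary (connected) summand, so both are isomorphisms iff $\theta_{A^{\vee}}^{+1}$ is. This gives the equivalence of the first two displayed conditions. Finally, $\theta_{A}^{+0}$ is a morphism of finite étale group schemes over $k$, hence an isomorphism iff it is one over an algebraic closure of $k$ (which lies in $k^{\ind\rat}$); and $\theta_{A}^{+1}$, $\theta_{A^{\vee}}^{+1}$ are morphisms in the abelian category $\Ind\Alg_{\uc}/k$, whose kernels and cokernels lie in $\Ind\Alg/k$, so — a nonzero ind-algebraic group over an algebraically closed field has points over it — they are isomorphisms iff they are isomorphisms on $k'$-points for every algebraically closed $k' \in k^{\ind\rat}$; P-acyclicity of $\alg{H}^{1}$ and of $\sheafext^{1}(\alg{\Gamma}(A),\Q/\Z)$ (Theorem \ref{thm: Ext of proalgebraic groups as sheaves}\eqref{ass: Ext commutes with direct limits}) identifies these $k'$-points with the groups in the statement and the induced morphism with $\theta_{A}^{+1}(k')$.

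The step I expect to be the main obstacle is the degree-and-sign bookkeeping in the last paragraph: making precise the symmetry $\theta_{A} = (\vartheta_{A^{\vee}})^{\SDual}[1]$ (which rests on the symmetry of the Poincaré biextension and the compatibility of the cup product with Serre duality) and checking that, under the spectral-sequence identifications of the cohomology sheaves, $H^{-1}(\theta_{A})$ and the connected part of $H^{0}(\theta_{A})$ are exactly the Serre dual of $\theta_{A^{\vee}}^{+1}$ — which also requires the vanishing of $\sheafext^{i}(\,\cdot\,,\Q/\Z)$ for $i \ge 2$ that makes the hyperext spectral sequences degenerate.
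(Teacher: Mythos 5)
Your proposal is correct and follows essentially the same route as the paper: concentration of both sides in degrees $-1,0,1$, the connected--\'etale splitting of $H^{0}$ (with connectedness of $\sheafext^{1}(\alg{H}^{1}(A),\Q/\Z)$), reduction of $H^{-1}$ and the connected part of $H^{0}$ to $\theta_{A^{\vee}}^{+1}$ via the self-duality of the construction, and the passage to algebraically closed points by the lemma on ind-algebraic groups. The step you flag as the main obstacle is exactly what the paper supplies: it packages the bookkeeping as a (unique) morphism of distinguished triangles $(\theta_{A}^{-},\theta_{A},\theta_{A}^{+})$ and proves the identities $\theta_{A^{\vee}}=(\theta_{A})^{\SDual}$ and $\theta_{A^{\vee}}^{-}=(\theta_{A}^{+1})^{\SDual}$ by a short formal compatibility of the functoriality morphisms with double dualization, which then makes your degree-by-degree identifications (where, incidentally, the relevant splitting of $\alg{H}^{1}(A)^{\SDual}$ is the connected--\'etale one, not literally $p$-primary versus prime-to-$p$) automatic.
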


We prove this in this subsection.
Basically the morphisms will be obtained
by writing down the spectral sequence associated with $\theta_{A}$,
as we did in \S \ref{sec: Motivation of our constructions}.
More precisely, $\theta_{A}^{+0}$ is the morphism
induced on the $\pi_{0}$ of $H^{0}$ of the both sides of $\theta_{A}$
and $\theta_{A}^{+1}$ is the morphism
induced on the $H^{1}$.
The part for $H^{-1}$ and the identity component of $H^{0}$ contain no additional information by symmetry,
and $H^{n} = 0$ for the both sides for $n \ne -1, 0, 1$.
To clarify the symmetry mentioned and the treatment of the double-dual,
we split the construction and the proof into several steps.

Consider the morphisms
	\[
			\invlim_{n} \alg{\Gamma}(A)
		\to
			\alg{\Gamma}(A)
		\to
			R \alg{\Gamma}(A),
	\]
where the (non-derived) limit on the left is over multiplication by $n \ge 1$.
Note that $\invlim_{n} \alg{\Gamma}(A)$ is no longer P-acyclic.
There is a canonical choice of a mapping cone of
the composite
	$
			\invlim_{n} \alg{\Gamma}(A)
		\to
			R \alg{\Gamma}(A)
	$
in $D(k^{\ind\rat}_{\pro\et})$
since the former is concentrated in degree $0$ and the latter is concentrated in non-negative degrees.
We denote this mapping cone by
	$
		\bigl[
				\invlim_{n} \alg{\Gamma}(A)
			\to
				R \alg{\Gamma}(A)
		\bigr]
	$.

\begin{Prop} \label{prop: double dual of R Gamma of abelian variety}
	There is a canonical isomorphism
		\[
				\bigl[
						\invlim_{n} \alg{\Gamma}(A)
					\to
						R \alg{\Gamma}(A)
				\bigr]
			=
				R \alg{\Gamma}(A)^{\SDual \SDual}.
		\]
	The induced morphism
	$R \alg{\Gamma}(A) \to R \alg{\Gamma}(A)^{\SDual \SDual}$
	is the natural evaluation morphism.
\end{Prop}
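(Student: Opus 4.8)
The plan is to derive the identification by applying the evaluation natural transformation $\mathrm{ev}\colon \id \to \SDual\SDual$ to the defining distinguished triangle of $R\alg{\Gamma}(A)$ and analysing the triangle formed by the mapping fibers of its three components.

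First I would collect the inputs. By Assertion \eqref{ass: cohomology of abelian varieties over local fields} of Proposition \ref{prop: cohomology of local fields}, $\alg{\Gamma}(A) \in \Pro \Alg / k$, $\alg{H}^{1}(A) \in \Ind \Alg_{\uc} / k$, and $\alg{H}^{n}(A) = 0$ for $n \neq 0, 1$, so that there is a distinguished triangle
\[
	\alg{\Gamma}(A)
	\xrightarrow{\iota}
	R\alg{\Gamma}(A)
	\to
	\alg{H}^{1}(A)[-1]
	\to
	\alg{\Gamma}(A)[1].
\]
Since $\alg{H}^{1}(A)$ is Serre reflexive (Assertion \eqref{ass: Serre duality for unip and etale} of Proposition \ref{prop: Serre duality}), the evaluation morphism $\mathrm{ev}_{\alg{H}^{1}(A)[-1]}$ is a quasi-isomorphism, so its mapping fiber is $0$. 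By Assertion \eqref{ass: Serre duality for general proalgebraic groups} of Proposition \ref{prop: Serre duality} (in the form its proof provides), the evaluation morphism $\mathrm{ev}_{\alg{\Gamma}(A)}$ fits into a distinguished triangle
\[
	\invlim_{n} \alg{\Gamma}(A)
	\xrightarrow{\mathrm{pr}}
	\alg{\Gamma}(A)
	\xrightarrow{\mathrm{ev}}
	\alg{\Gamma}(A)^{\SDual \SDual}
	\to
	\bigl( \invlim_{n} \alg{\Gamma}(A) \bigr)[1],
\]
where $\mathrm{pr}$ is the projection to the $n = 1$ term; equivalently, the mapping fiber of $\mathrm{ev}_{\alg{\Gamma}(A)}$ is $\invlim_{n} \alg{\Gamma}(A)$ with fiber map $\mathrm{pr}$.

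Next, because $\mathrm{ev}$ is a natural transformation of triangulated endofunctors of $D(k^{\ind\rat}_{\pro\et})$, applying it to the triangle above gives a morphism of distinguished triangles whose three vertical arrows are $\mathrm{ev}_{\alg{\Gamma}(A)}$, $\mathrm{ev}_{R\alg{\Gamma}(A)}$ and $\mathrm{ev}_{\alg{H}^{1}(A)[-1]}$. By the $3 \times 3$-lemma (a standard consequence of the octahedral axiom, cf.\ \cite{Nee01}), their mapping fibers fit into a distinguished triangle, compatibly with the fiber maps into $\alg{\Gamma}(A)$, $R\alg{\Gamma}(A)$, $\alg{H}^{1}(A)[-1]$ and with the maps of the original triangle. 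As the two outer fibers are $\invlim_{n} \alg{\Gamma}(A)$ and $0$, the first arrow $\invlim_{n} \alg{\Gamma}(A) \to (\text{fiber of } \mathrm{ev}_{R\alg{\Gamma}(A)})$ is an isomorphism, and tracing the compatibilities identifies the fiber map of $\mathrm{ev}_{R\alg{\Gamma}(A)}$ with the composite $\invlim_{n} \alg{\Gamma}(A) \xrightarrow{\mathrm{pr}} \alg{\Gamma}(A) \xrightarrow{\iota} R\alg{\Gamma}(A)$. Thus $\mathrm{ev}_{R\alg{\Gamma}(A)}$ sits in a distinguished triangle
\[
	\invlim_{n} \alg{\Gamma}(A)
	\xrightarrow{\iota \compose \mathrm{pr}}
	R\alg{\Gamma}(A)
	\xrightarrow{\mathrm{ev}}
	R\alg{\Gamma}(A)^{\SDual \SDual}
	\to
	\bigl( \invlim_{n} \alg{\Gamma}(A) \bigr)[1],
\]
which exhibits $R\alg{\Gamma}(A)^{\SDual \SDual}$ as the mapping cone $[\invlim_{n} \alg{\Gamma}(A) \to R\alg{\Gamma}(A)]$, with the structural map from $R\alg{\Gamma}(A)$ being the evaluation morphism, as asserted.

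The step I expect to be the main obstacle is this last triangulated-category bookkeeping: one must check that the middle object produced by the $3 \times 3$-construction is canonically $\invlim_{n} \alg{\Gamma}(A)$ — which it is, precisely because one of the two outer fibers vanishes — and that the map $\invlim_{n} \alg{\Gamma}(A) \to R\alg{\Gamma}(A)$ one lands on is genuinely the stated composite, not merely some map with an isomorphic cone; the signs intrinsic to rotations of triangles are harmless here and can be absorbed.
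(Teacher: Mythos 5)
Your proof is correct, and it uses exactly the same inputs as the paper's: the vanishing of $(\invlim_{n} \alg{\Gamma}(A))^{\SDual}$, the identification $\alg{\Gamma}(A)^{\SDual \SDual} = [\invlim_{n} \alg{\Gamma}(A) \to \alg{\Gamma}(A)]$ from Assertion \eqref{ass: Serre duality for general proalgebraic groups} of Proposition \ref{prop: Serre duality}, the Serre reflexivity of $\alg{H}^{1}(A) \in \Ind \Alg_{\uc} / k$, and the truncation triangle $\alg{\Gamma}(A) \to R \alg{\Gamma}(A) \to \alg{H}^{1}(A)[-1]$. The mechanics differ: you compute the mapping fiber of $\mathrm{ev}_{R \alg{\Gamma}(A)}$ by applying the evaluation transformation to the truncation triangle and invoking the $3 \times 3$-lemma, whereas the paper runs the argument from the other end — it first observes that $[\invlim_{n} \alg{\Gamma}(A) \to R \alg{\Gamma}(A)]^{\SDual \SDual} = R \alg{\Gamma}(A)^{\SDual \SDual}$ (again because the Serre dual of $\invlim_{n}$ vanishes) and then shows the mapping cone itself is Serre reflexive by sandwiching it between $\alg{\Gamma}(A)^{\SDual \SDual}$ and $\alg{H}^{1}(A)[-1]$, so that its evaluation morphism is a quasi-isomorphism. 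The paper's route avoids the $3 \times 3$ bookkeeping (and in particular the non-uniqueness of the third arrow in a morphism of triangles) entirely; your route makes the identification of the fiber map with $\iota \compose \mathrm{pr}$, and hence the second sentence of the statement, completely automatic. As you note, your only delicate point is forced by the vanishing of one outer fiber, so both arguments close correctly.
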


\begin{proof}
	Since $\alg{\Gamma}(A) \in \Pro \Alg / k$,
	we have $(\invlim_{n} \alg{\Gamma}(A))^{\SDual} = 0$
	as seen in the proof of \eqref{prop: Serre duality}
	\eqref{ass: Serre duality for general proalgebraic groups}.
	Therefore we have a natural morphism and an isomorphism
		\[
				\bigl[
						\invlim_{n} \alg{\Gamma}(A)
					\to
						R \alg{\Gamma}(A)
				\bigr]
			\to
				\bigl[
						\invlim_{n} \alg{\Gamma}(A)
					\to
						R \alg{\Gamma}(A)
				\bigr]^{\SDual \SDual}
			=
				R \alg{\Gamma}(A)^{\SDual \SDual}.
		\]
	We need to show that the left term is Serre reflexive.
	Since $\alg{H}^{n}(A) = 0$ for $n \ge 2$,
	we have a distinguished triangle
		\[
				\bigl[
						\invlim_{n} \alg{\Gamma}(A)
					\to
						\alg{\Gamma}(A)
				\bigr]
			\to
				\bigl[
						\invlim_{n} \alg{\Gamma}(A)
					\to
						R \alg{\Gamma}(A)
				\bigr]
			\to
				\alg{H}^{1}(A)[-1].
		\]
	The left mapping cone is isomorphic to
	$\alg{\Gamma}(A)^{\SDual \SDual} \in D^{b}(\Pro \Alg_{\uc} / k)$
	by \eqref{prop: Serre duality}
	\eqref{ass: Serre duality for general proalgebraic groups},
	which is Serre reflexive.
	We have $\alg{H}^{1}(A) \in \Ind \Alg_{\uc} / k$, which is Serre reflexive.
	Therefore the middle term is Serre reflexive.
	Thus we get the required isomorphism.
\end{proof}

Let $\alg{\Gamma}(A^{\vee})_{0}$ be the identity component of $\alg{\Gamma}(A^{\vee})$.
We have two distinguished triangles
	\[
		\begin{CD}
				\alg{\Gamma}(A^{\vee})_{0}
			@>>>
				R \alg{\Gamma}(A^{\vee})
			@>>>
				\bigl[
						\alg{\Gamma}(A^{\vee})_{0}
					\to
						R \alg{\Gamma}(A^{\vee})
				\bigr]
			\\
				\alg{H}^{1}(A)^{\SDual}[2]
			@>>>
				R \alg{\Gamma}(A)^{\SDual}[1]
			@>>>
				\alg{\Gamma}(A)^{\SDual}[1]
		\end{CD}
	\]
and the morphism
	$
			\vartheta_{A}
		\colon
			R \alg{\Gamma}(A^{\vee})
		\to
			R \alg{\Gamma}(A)^{\SDual}[1]
	$
between the middle terms.

\begin{Prop} \label{prop: extend duality morphism to the plus-minus parts}
	There is a unique way to extend $\vartheta_{A}$ to a morphism of triangles
		\[
			\begin{CD}
					\alg{\Gamma}(A^{\vee})_{0}
				@>>>
					R \alg{\Gamma}(A^{\vee})
				@>>>
					\bigl[
							\alg{\Gamma}(A^{\vee})_{0}
						\to
							R \alg{\Gamma}(A^{\vee})
					\bigr]
				\\
				@VV \vartheta_{A}^{-} V
				@VV \vartheta_{A} V
				@VV \vartheta_{A}^{+} V
				\\
					\alg{H}^{1}(A)^{\SDual}[2]
				@>>>
					R \alg{\Gamma}(A)^{\SDual}[1]
				@>>>
					\alg{\Gamma}(A)^{\SDual}[1]
			\end{CD}
		\]
	(which in particular means that
	$\vartheta_{A}^{+}$ and $\vartheta_{A}^{-}$ are compatible
	with the connecting morphisms of the triangles).
	This diagram can further be extended uniquely to a morphism of triangles
		\begin{equation} \label{eq: two parts of the duality morphism}
			\begin{CD}
					\alg{\Gamma}(A^{\vee})_{0}^{\SDual \SDual}
				@>>>
					R \alg{\Gamma}(A^{\vee})^{\SDual \SDual}
				@>>>
					\bigl[
							\alg{\Gamma}(A^{\vee})_{0}
						\to
							R \alg{\Gamma}(A^{\vee})
					\bigr]
				\\
				@VV \theta_{A}^{-} V
				@VV \theta_{A} V
				@VV \theta_{A}^{+} V
				\\
					\alg{H}^{1}(A)^{\SDual}[2]
				@>>>
					R \alg{\Gamma}(A)^{\SDual}[1]
				@>>>
					\alg{\Gamma}(A)^{\SDual}[1]
			\end{CD}
		\end{equation}
	(where the middle $\theta_{A}$ has been defined earlier.)
\end{Prop}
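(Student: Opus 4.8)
The plan is to obtain both completions from the standard lemma on extending a morphism between the middle terms of two distinguished triangles to a morphism of triangles, the key observation being that the relevant Hom-groups between the \emph{outer} terms vanish for amplitude and connectedness reasons, so that the extensions exist and are forced.

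First I would record the amplitudes involved. By Assertion~\eqref{ass: Ext of a proalg by Q mod Z is zero above degree 2} of Proposition~\ref{prop: Serre duality}, the complex $\alg{\Gamma}(A)^{\SDual} = R \sheafhom_{k}(\alg{\Gamma}(A), \Q/\Z)[-1]$ is concentrated in degrees $1$ and $2$, its $H^{1}$ being the finite \'etale group $\sheafhom_{k}(\alg{\Gamma}(A), \Q/\Z) = \pi_{0}(\mathcal{A}_{x})^{\PDual}$ and its $H^{2}$ being $\sheafext_{k^{\ind\rat}_{\pro\et}}^{1}(\alg{\Gamma}(A), \Q/\Z) \in \Ind \Alg_{\uc} / k$. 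On the other side, $\alg{\Gamma}(A^{\vee})_{0}$ is a connected proalgebraic group placed in degree $0$; being connected, its $\pi_{0}$ vanishes and its maximal pro-unipotent subgroup and the Tate module of its semi-abelian quotient are connected, so Assertion~\eqref{ass: Serre duality for general proalgebraic groups} of the same proposition shows $\alg{\Gamma}(A^{\vee})_{0}^{\SDual \SDual} \in D^{b}(\Pro \Alg_{\uc} / k)$ is concentrated in degrees $-1, 0$ with connected pro-unipotent cohomology in each. Since $\Ext^{n}_{k^{\ind\rat}_{\pro\et}}$ vanishes in negative degrees and there is no nonzero map from a connected proalgebraic group to a finite \'etale group, the truncation triangles of $\alg{\Gamma}(A)^{\SDual}$ then give
\[
\Hom_{D(k)}\!\bigl(\alg{\Gamma}(A^{\vee})_{0}, \alg{\Gamma}(A)^{\SDual}[j]\bigr)
= \Hom_{D(k)}\!\bigl(\alg{\Gamma}(A^{\vee})_{0}^{\SDual \SDual}, \alg{\Gamma}(A)^{\SDual}[j]\bigr) = 0
\qquad (j = 0, 1).
\]

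Now I would invoke the completion lemma: for distinguished triangles $X_{1} \to X_{2} \to X_{3} \xrightarrow{+1}$ and $Y_{1} \to Y_{2} \to Y_{3} \xrightarrow{+1}$ and $g \colon X_{2} \to Y_{2}$, the composite $X_{1} \to X_{2} \xrightarrow{g} Y_{2} \to Y_{3}$ lies in $\Hom(X_{1}, Y_{3})$; if it vanishes, $g$ extends to a morphism of triangles, uniquely once $\Hom(X_{1}, Y_{3}[-1]) = 0$. For the first assertion I take the two triangles of the statement, so $X_{1} = \alg{\Gamma}(A^{\vee})_{0}$, $Y_{3} = \alg{\Gamma}(A)^{\SDual}[1]$, and $g = \vartheta_{A}$: existence follows from $\Hom_{D(k)}(\alg{\Gamma}(A^{\vee})_{0}, \alg{\Gamma}(A)^{\SDual}[1]) = 0$ and uniqueness from $\Hom_{D(k)}(\alg{\Gamma}(A^{\vee})_{0}, \alg{\Gamma}(A)^{\SDual}) = 0$ (the case $Y_{3}[-1] = \alg{\Gamma}(A)^{\SDual}$). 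For the second, I apply the lemma to the triangles in~\eqref{eq: two parts of the duality morphism}: the bottom one is unchanged, and the top one is the image of the first triangle under the triangulated endofunctor $\SDual \SDual$, which is legitimate because the cone $[\alg{\Gamma}(A^{\vee})_{0} \to R\alg{\Gamma}(A^{\vee})]$ is Serre reflexive (its cohomology is finite \'etale in degree $0$ and in $\Ind \Alg_{\uc} / k$ in degree $1$, cf.\ Proposition~\ref{prop: double dual of R Gamma of abelian variety}). With $g = \theta_{A}$, existence and uniqueness are again governed by $\Hom_{D(k)}(\alg{\Gamma}(A^{\vee})_{0}^{\SDual \SDual}, \alg{\Gamma}(A)^{\SDual}[j]) = 0$ for $j = 0, 1$.

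There is no genuine obstacle in this argument; it is amplitude bookkeeping on top of Proposition~\ref{prop: Serre duality}. The one point requiring attention is checking that $\alg{\Gamma}(A^{\vee})_{0}$ and $\alg{\Gamma}(A^{\vee})_{0}^{\SDual \SDual}$ have connected cohomology sheaves, since this is exactly what annihilates $\Hom$ into the finite \'etale group $\pi_{0}(\mathcal{A}_{x})^{\PDual}$ sitting in $H^{1}(\alg{\Gamma}(A)^{\SDual})$, and this is where passing to identity components is used. Alternatively, the second diagram can be deduced from the first applied to $A^{\vee}$ by applying $\SDual$ together with the compatibility $\vartheta_{A} = \theta_{A} \circ \mathrm{ev}$ of the pairing with Serre double-dualization; I prefer the direct route above since the required Hom-vanishing is available outright.
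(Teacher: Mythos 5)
Your proposal is correct and follows essentially the same route as the paper's proof: the same completion lemma for extending a morphism between the middle terms of two distinguished triangles, the same Hom-vanishing obtained from the amplitude of $\alg{\Gamma}(A)^{\SDual}[1]$ (degrees $0,1$ with finite \'etale $H^{0}$) against the connectedness of $\alg{\Gamma}(A^{\vee})_{0}$, and the same appeal to Serre reflexivity of the outer terms to pass to the second diagram. One small slip: $H^{-1}$ of $\alg{\Gamma}(A^{\vee})_{0}^{\SDual\SDual}$ involves the profinite Tate module of the semi-abelian part and is not connected pro-unipotent, but this is harmless since, as your own computation shows, that term's contribution to the relevant $\Hom$ groups already vanishes for degree reasons; only the connectedness of $H^{0}$ is actually needed.
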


\begin{proof}
	Since $\alg{\Gamma}(A) \in \Pro \Alg / k$, the complex
		$
				\alg{\Gamma}(A)^{\SDual}[1]
			=
				R \sheafhom_{k^{\ind\rat}_{\pro\et}}(
					\alg{\Gamma}(A), \Q / \Z
				)
		$
	is concentrated in degrees $0$ and $1$
	with $H^{0} = \pi_{0}(\alg{\Gamma}(A))^{\PDual} = \pi_{0}(\mathcal{A}_{x})^{\PDual} \in \FEt / k$
	by \eqref{prop: Serre duality}
	\eqref{ass: Ext of a proalg by Q mod Z is zero above degree 2},
	\eqref{ass: Serre duality for unip and etale}
	and \eqref{ass: Serre duality for general proalgebraic groups}.
	In particular, its ($-1$)-shift $\alg{\Gamma}(A)^{\SDual}$ is concentrated in degrees $1$ and $2$.
	Therefore
		\[
				\Hom_{D(k^{\ind\rat}_{\pro\et})} \bigl(
					\alg{\Gamma}(A^{\vee})_{0},
					\alg{\Gamma}(A)^{\SDual}[1]
				\bigr)
			=
				\Hom_{D(k^{\ind\rat}_{\pro\et})} \bigl(
					\alg{\Gamma}(A^{\vee})_{0},
					\alg{\Gamma}(A)^{\SDual}
				\bigr)
			=
				0,
		\]
	since a morphism in $D(k^{\ind\rat}_{\pro\et})$
	from an object of $\Ab(k^{\ind\rat}_{\pro\et})$
	to a complex concentrated in non-negative degrees factors through $H^{0}$.
	With this, we can uniquely extend $\vartheta_{A}$ to the first diagram
	by the general lemma below.
	For the second diagram, it is sufficient to note that the right upper term
	(whose $H^{0} = \pi_{0}(\alg{\Gamma}(A^{\vee}))$ is finite),
	the left bottom term, the middle bottom term and the right lower term
	are all Serre reflexive
	by \eqref{prop: Serre duality}
	\eqref{ass: Serre duality for unip and etale},
	\eqref{ass: Serre duality for general proalgebraic groups}.
\end{proof}

\begin{Lem}
	Let $\mathcal{D}$ be a triangulated category.
	Let $X \to Y \to Z$ and $X' \to Y' \to Z'$ be two distinguished triangles in $\mathcal{D}$.
	Assume that the homomorphisms
	$\Hom(X, X') \to \Hom(X, Y')$ and $\Hom(Z, Z') \to \Hom(Y, Z')$ are isomorphisms.
	Then any morphism $f \colon Y \to Y'$ can uniquely be extended to a morphism of the triangles.
	The assumption is satisfied if
	$\Hom(X, Z') = \Hom(X, Z'[-1]) = 0$.
\end{Lem}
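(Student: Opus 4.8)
The plan is to reduce everything to the exactness of the long exact $\Hom$-sequences attached to the two triangles together with the completion axiom (TR3) for a triangulated category. Write the triangles as $X \xrightarrow{u} Y \xrightarrow{v} Z \xrightarrow{w} X[1]$ and $X' \xrightarrow{u'} Y' \xrightarrow{v'} Z' \xrightarrow{w'} X'[1]$, and recall that a morphism of triangles is a triple $(g,f,h)$ with $fu = u'g$, $hv = v'f$ and $g[1]w = w'h$. We are given $f \colon Y \to Y'$ and must produce $g$ and $h$.

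First I would construct $g$. Since the assumed isomorphism $\Hom(Z,Z') \to \Hom(Y,Z')$, which is composition with $v$, is in particular surjective, there is some $h_{0} \colon Z \to Z'$ with $h_{0}v = v'f$; then $v'fu = h_{0}vu = 0$ because $vu = 0$. Hence $fu \colon X \to Y'$ lies in the kernel of $\Hom(X,Y') \to \Hom(X,Z')$, which by exactness of the sequence obtained by applying $\Hom(X,-)$ to $X' \to Y' \to Z' \to X'[1]$ equals the image of $\Hom(X,X') \to \Hom(X,Y')$. So there is $g \colon X \to X'$ with $u'g = fu$, unique because $\Hom(X,X') \to \Hom(X,Y')$ is injective (being an isomorphism). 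Now the left square commutes, so axiom (TR3) furnishes $h \colon Z \to Z'$ with $hv = v'f$ and $w'h = g[1]w$; thus $(g,f,h)$ is a morphism of triangles. For uniqueness of the whole triple: if $(g_{1},f,h_{1})$ and $(g_{2},f,h_{2})$ both work, then $u'(g_{1}-g_{2}) = 0$ forces $g_{1}=g_{2}$ by injectivity of $\Hom(X,X') \to \Hom(X,Y')$, and $(h_{1}-h_{2})v = 0$ forces $h_{1}=h_{2}$ by injectivity of $\Hom(Z,Z') \to \Hom(Y,Z')$.

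Finally, for the last sentence I would deduce the two isomorphism hypotheses from the vanishing of $\Hom(X,Z')$ and $\Hom(X,Z'[-1])$. Applying $\Hom(X,-)$ to $X' \to Y' \to Z' \to X'[1]$ gives an exact sequence $\Hom(X,Z'[-1]) \to \Hom(X,X') \to \Hom(X,Y') \to \Hom(X,Z')$ whose outer terms vanish, so $\Hom(X,X') \to \Hom(X,Y')$ is an isomorphism; applying $\Hom(-,Z')$ to $X \to Y \to Z \to X[1]$ gives an exact sequence $\Hom(X[1],Z') \to \Hom(Z,Z') \to \Hom(Y,Z') \to \Hom(X,Z')$ with $\Hom(X[1],Z') \cong \Hom(X,Z'[-1]) = 0$ and $\Hom(X,Z') = 0$, so $\Hom(Z,Z') \to \Hom(Y,Z')$ is an isomorphism.

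The only point that is not a mechanical diagram chase is the commutativity of the third square $g[1]w = w'h$: this does not follow formally from the first two squares commuting, so rather than trying to force it by hand I would invoke (TR3) directly once $g$ has been constructed, obtaining an $h$ for which it holds, and then identify this $h$ as the unique completion of the middle square. Everything else is exactness of $\Hom$-sequences plus injectivity and surjectivity read off from the hypotheses, so I do not expect any genuine obstacle here.
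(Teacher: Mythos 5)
Your proof is correct and follows essentially the same route as the paper's: construct $g$ from $fu$ via the isomorphism $\Hom(X,X')\cong\Hom(X,Y')$, complete to a morphism of triangles by the axiom (TR3), and get uniqueness of $h$ from the injectivity of $\Hom(Z,Z')\to\Hom(Y,Z')$. The only difference is your slight detour through the exact $\Hom$-sequence to place $fu$ in the image of $\Hom(X,X')$, which is redundant given that the map is already assumed surjective, but harmless.
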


\begin{proof}
	This is elementary and well-known.
	(See also \cite[Prop.\ 10.1.17]{KS06}.)
	We recall its proof.
	Let $g \colon X \to X'$ be the morphism corresponding to the composite of
	$X \to Y$ and $f \colon Y \to Y'$ under the isomorphism
	$\Hom(X, X') \isomto \Hom(X, Y')$.
	Then by an axiom of triangulated categories,
	there is a morphism $h \colon Z \to Z'$ such that
	the triple $(g, f, h)$ is a morphism of the triangles.
	This in particular means that the image of $h$
	under the isomorphism $\Hom(Z, Z') \isomto \Hom(Y, Z')$ is
	the composite of $f \colon Y \to Y'$ and $Y' \to Z'$.
	Hence such a morphism $h$ is unique.
	This shows that $f$ is uniquely extended to a morphism of the triangles.
	If $\Hom(X, Z') = \Hom(X, Z'[-1]) = 0$,
	then the assumption follows from the long exact sequence of $\Hom$.
\end{proof}

\begin{Prop} \label{prop: two parts of theta A plus}
	The morphism $\theta_{A}^{+}$ in \eqref{eq: two parts of the duality morphism} induces morphisms
		\begin{gather*}
					\theta_{A}^{+0}
				\colon
					\pi_{0}(\mathcal{A}_{x}^{\vee})
				\to
					\pi_{0}(\mathcal{A}_{x})^{\PDual}
				\quad \text{in} \quad
					\FEt / k,
			\\
					\theta_{A}^{+1}
				\colon
					\alg{H}^{1}(A^{\vee})
				\to
					\sheafext_{k^{\ind\rat}_{\pro\et}}^{1}(\alg{\Gamma}(A), \Q / \Z)
				\quad \text{in} \quad
					\Ind \Alg_{\uc} / k,
		\end{gather*}
	on $H^{0}$, $H^{1}$, respectively.
\end{Prop}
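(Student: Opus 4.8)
The plan is to observe that both complexes joined by $\theta_{A}^{+}$ are concentrated in degrees $0$ and $1$, so that $\theta_{A}^{+}$ automatically induces morphisms on $H^{0}$ and $H^{1}$, and then to identify these cohomology sheaves with the objects named in the statement and check they lie in the advertised categories. So I would define $\theta_{A}^{+0}$ and $\theta_{A}^{+1}$ to be $H^{0}(\theta_{A}^{+})$ and $H^{1}(\theta_{A}^{+})$ transported along those identifications.

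For the target, recall $\alg{\Gamma}(A) \in \Pro \Alg / k$ by Assertion \eqref{ass: cohomology of abelian varieties over local fields} of Proposition \ref{prop: cohomology of local fields}, so Assertion \eqref{ass: Ext of a proalg by Q mod Z is zero above degree 2} of Proposition \ref{prop: Serre duality} shows that $\alg{\Gamma}(A)^{\SDual}[1] = R \sheafhom_{k^{\ind\rat}_{\pro\et}}(\alg{\Gamma}(A), \Q / \Z)$ has cohomology only in degrees $0$ and $1$. Its $H^{1}$ is $\sheafext^{1}_{k^{\ind\rat}_{\pro\et}}(\alg{\Gamma}(A), \Q / \Z)$, which lies in $\Ind \Alg_{\uc} / k$ because $\alg{\Gamma}(A)^{\SDual} \in D^{b}(\Ind \Alg_{\uc} / k)$ by Assertion \eqref{ass: Serre duality for general proalgebraic groups} of Proposition \ref{prop: Serre duality}. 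For $H^{0}$, the reduction surjections $\alg{\Gamma}(A) \onto \mathcal{A}_{x} \onto \pi_{0}(\mathcal{A}_{x})$ recalled before Theorem \ref{thm: duality for abelian varieties} (from Assertion \eqref{ass: cohomology of abelian varieties over local fields} of Proposition \ref{prop: cohomology of local fields} and Assertion \eqref{ass: cohomology of smooth group over integers} of Proposition \ref{prop: cohomology of integers}) have connected kernels, so the kernel $\alg{\Gamma}(A)_{0}$ of $\alg{\Gamma}(A) \onto \pi_{0}(\mathcal{A}_{x})$ is a connected proalgebraic group; since a sheaf-homomorphism from a connected proalgebraic group to the \'etale sheaf $\Q / \Z$ vanishes, the exact sequence $0 \to \alg{\Gamma}(A)_{0} \to \alg{\Gamma}(A) \to \pi_{0}(\mathcal{A}_{x}) \to 0$ gives $\sheafhom_{k^{\ind\rat}_{\pro\et}}(\alg{\Gamma}(A), \Q / \Z) = \sheafhom_{k}(\pi_{0}(\mathcal{A}_{x}), \Q / \Z) = \pi_{0}(\mathcal{A}_{x})^{\PDual}$, which lies in $\FEt / k$.

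For the source, I would read off the cohomology of $[\alg{\Gamma}(A^{\vee})_{0} \to R \alg{\Gamma}(A^{\vee})]$ from the long exact sequence of the distinguished triangle $\alg{\Gamma}(A^{\vee})_{0} \to R \alg{\Gamma}(A^{\vee}) \to [\alg{\Gamma}(A^{\vee})_{0} \to R \alg{\Gamma}(A^{\vee})]$: since $\alg{\Gamma}(A^{\vee})_{0}$ sits in degree $0$ and injects into $\alg{\Gamma}(A^{\vee}) = H^{0}(R \alg{\Gamma}(A^{\vee}))$ with quotient $\pi_{0}(\mathcal{A}_{x}^{\vee})$, and $\alg{H}^{n}(A^{\vee}) = 0$ for $n \ge 2$, the cone is concentrated in degrees $0$ and $1$ with $H^{0} = \pi_{0}(\mathcal{A}_{x}^{\vee}) \in \FEt / k$ and $H^{1} = \alg{H}^{1}(A^{\vee}) \in \Ind \Alg_{\uc} / k$ (the membership by Assertion \eqref{ass: cohomology of abelian varieties over local fields} of Proposition \ref{prop: cohomology of local fields}). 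This yields the morphisms $\theta_{A}^{+0} \colon \pi_{0}(\mathcal{A}_{x}^{\vee}) \to \pi_{0}(\mathcal{A}_{x})^{\PDual}$ in $\FEt / k$ and $\theta_{A}^{+1} \colon \alg{H}^{1}(A^{\vee}) \to \sheafext^{1}_{k^{\ind\rat}_{\pro\et}}(\alg{\Gamma}(A), \Q / \Z)$ in $\Ind \Alg_{\uc} / k$; moreover, since $R \alg{\Gamma}(A^{\vee}) \to [\alg{\Gamma}(A^{\vee})_{0} \to R \alg{\Gamma}(A^{\vee})]$ is an isomorphism on $H^{1}$, $\theta_{A}^{+1}$ is also $H^{1}(\vartheta_{A})$. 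There is no genuine obstacle here beyond bookkeeping; the single point that needs a word of care is the vanishing $\sheafhom_{k^{\ind\rat}_{\pro\et}}(\alg{\Gamma}(A)_{0}, \Q / \Z) = 0$ (which must be checked at the level of sheaves, using that the identity component is stable under base change to any $k' \in k^{\ind\rat}$), as this is what pins down the target's $H^{0}$ as the finite \'etale group $\pi_{0}(\mathcal{A}_{x})^{\PDual}$.
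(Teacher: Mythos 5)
Your proposal is correct and follows essentially the same route as the paper's (much terser) proof: identify $\alg{\Gamma}(A)^{\SDual}[1]$ with $R \sheafhom_{k^{\ind\rat}_{\pro\et}}(\alg{\Gamma}(A), \Q / \Z)$, use $\pi_{0}(\alg{\Gamma}(A)) = \pi_{0}(\mathcal{A}_{x})$ together with the vanishing of homomorphisms from the connected identity component to $\Q/\Z$ to pin down the $H^{0}$ and $H^{1}$ of the target, and read off the cohomology of the mapping cone on the source side. The membership claims you cite (Propositions \ref{prop: Serre duality} and \ref{prop: cohomology of local fields}) are exactly what the paper relies on.
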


\begin{proof}
	This follows from 
		$
				\alg{\Gamma}(A)^{\SDual}[1]
			=
				R \sheafhom_{k^{\ind\rat}_{\pro\et}}(
					\alg{\Gamma}(A), \Q / \Z
				)
		$,
	$\pi_{0}(\alg{\Gamma}(A)) =  \pi_{0}(\mathcal{A}_{x})$,
	and \eqref{prop: Serre duality}
	\eqref{ass: Serre duality for unip and etale}.
\end{proof}

\begin{Prop}
	The morphism $\theta_{A}$ is an isomorphism
	if and only if $\theta_{A}^{+}$ and $\theta_{A}^{-}$ are so.
\end{Prop}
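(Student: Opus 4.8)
\emph{Plan.} The idea is to extract both directions directly from the morphism of distinguished triangles \eqref{eq: two parts of the duality morphism}. The \emph{if} direction is the routine one: if $\theta_{A}^{-}$ and $\theta_{A}^{+}$ are quasi-isomorphisms, then in \eqref{eq: two parts of the duality morphism} the mapping cones of the left and right vertical arrows vanish, so the cone of the middle arrow $\theta_{A}$ vanishes too; equivalently, apply the five lemma to the ladder of long exact sequences of cohomology sheaves attached to \eqref{eq: two parts of the duality morphism}.

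For the \emph{only if} direction I would first pin down the degree positions of the four corner terms of \eqref{eq: two parts of the duality morphism}. Since $\alg{\Gamma}(A^{\vee})_{0}, \alg{\Gamma}(A) \in \Pro \Alg / k$ and $\alg{H}^{1}(A) \in \Ind \Alg_{\uc} / k$, Proposition \ref{prop: Serre duality} shows that $\alg{\Gamma}(A^{\vee})_{0}^{\SDual \SDual}$ and $\alg{H}^{1}(A)^{\SDual}[2]$ are concentrated in degrees $-1, 0$, while $[\alg{\Gamma}(A^{\vee})_{0} \to R \alg{\Gamma}(A^{\vee})]$ (whose $H^{-1}$ vanishes because $\alg{\Gamma}(A^{\vee})_{0} \into \alg{\Gamma}(A^{\vee})$ is injective) and $\alg{\Gamma}(A)^{\SDual}[1]$ are concentrated in degrees $0, 1$. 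Feeding these vanishings into the ladder of long exact sequences degree by degree: in degrees $\ge 1$ the maps from the middle column to the right column are isomorphisms on cohomology, so $H^{n}(\theta_{A})$ is an isomorphism if and only if $H^{n}(\theta_{A}^{+})$ is; symmetrically, in degrees $\le -1$ the maps from the left column to the middle column are isomorphisms on cohomology, so $H^{n}(\theta_{A})$ is an isomorphism if and only if $H^{n}(\theta_{A}^{-})$ is. Since $\theta_{A}^{+}$ lives in degrees $0, 1$ and $\theta_{A}^{-}$ lives in degrees $-1, 0$, it follows that, assuming $\theta_{A}$ is a quasi-isomorphism, $\theta_{A}^{+}$ and $\theta_{A}^{-}$ are quasi-isomorphisms as soon as we handle degree $0$.

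For degree $0$, the same vanishings collapse \eqref{eq: two parts of the duality morphism} into a morphism of short exact sequences
\begin{gather*}
		0 \to H^{0}(\alg{\Gamma}(A^{\vee})_{0}^{\SDual \SDual}) \to H^{0}(R \alg{\Gamma}(A^{\vee})^{\SDual \SDual}) \to \pi_{0}(\mathcal{A}_{x}^{\vee}) \to 0, \\
		0 \to \sheafext_{k^{\ind\rat}_{\pro\et}}^{1}(\alg{H}^{1}(A), \Q / \Z) \to H^{0}(R \alg{\Gamma}(A)^{\SDual}[1]) \to \pi_{0}(\mathcal{A}_{x})^{\PDual} \to 0,
\end{gather*}
whose three vertical arrows are $H^{0}(\theta_{A}^{-})$, $H^{0}(\theta_{A})$ and $H^{0}(\theta_{A}^{+})$ (the last being $\theta_{A}^{+0}$). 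Here I would use that, by Proposition \ref{prop: Serre duality} and $\pi_{0}(\alg{\Gamma}(A)) = \pi_{0}(\mathcal{A}_{x})$, the subobject $H^{0}(\alg{\Gamma}(A^{\vee})_{0}^{\SDual \SDual})$ is connected pro-unipotent and $\sheafext_{k^{\ind\rat}_{\pro\et}}^{1}(\alg{H}^{1}(A), \Q / \Z)$ is connected pro-unipotent, while $\pi_{0}(\mathcal{A}_{x}^{\vee})$ and $\pi_{0}(\mathcal{A}_{x})^{\PDual}$ are finite \'etale; hence both rows are the connected-\'etale sequences of their middle terms. Consequently, if $\theta_{A}$ is a quasi-isomorphism then $H^{0}(\theta_{A})$ is an isomorphism of proalgebraic groups, which automatically carries identity component to identity component, hence restricts to isomorphisms on the sub and on the quotient, i.e.\ $H^{0}(\theta_{A}^{-})$ and $H^{0}(\theta_{A}^{+})$ are isomorphisms. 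Combined with the degree $\ne 0$ computation, $\theta_{A}^{-}$ and $\theta_{A}^{+}$ are quasi-isomorphisms.

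The step I expect to require the most care is the identification of the two degree-$0$ sequences with connected-\'etale sequences: one must verify, using the explicit descriptions of $\SDual \SDual$ on $\Pro \Alg / k$ (Proposition \ref{prop: Serre duality}, Assertions on general proalgebraic groups) and of $\SDual$ on $\Ind \Alg_{\uc} / k$, that the subobjects are genuinely connected (so they are exactly the identity components of the middle terms) and the quotients genuinely finite \'etale, so that any isomorphism of the middle terms is forced to respect the two filtration steps. The remaining bookkeeping — the boundedness claims and the degree-wise vanishing in the long exact sequences — is routine given Propositions \ref{prop: Serre duality} and \ref{prop: comparison thm for ind-proalgebraic groups}.
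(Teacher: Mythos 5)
Your proposal is correct and follows essentially the same route as the paper: both reduce to the morphism of triangles \eqref{eq: two parts of the duality morphism}, use the degree bounds (outer terms in degrees $-1,0$ and $0,1$, middle in $-1,0,1$) to get isomorphisms in degrees $\pm 1$ and a morphism of short exact sequences in degree $0$, and then observe that both degree-$0$ sequences are connected-\'etale sequences (connectedness of $\sheafext_{k^{\ind\rat}_{\pro\et}}^{1}(\alg{H}^{1}(A), \Q/\Z)$ via \cite[III Lem.\ 0.13 (c)]{Mil06}), so an isomorphism of the middle terms forces isomorphisms on sub and quotient. The only difference is presentational: you spell out the degree-by-degree bookkeeping that the paper compresses into three displayed identities.
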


\begin{proof}
	The ``if'' part is trivial.
	We show the converse.
	We have $\invlim_{n} \alg{\Gamma}(A^{\vee}) = \invlim_{n} \alg{\Gamma}(A^{\vee})_{0}$
	since finite groups are killed by $\invlim_{n}$.
	By \eqref{prop: Serre duality}
	\eqref{ass: Serre duality for general proalgebraic groups} and
	\eqref{prop: double dual of R Gamma of abelian variety},
	we have
		\begin{gather*}
					\alg{\Gamma}(A^{\vee})_{0}^{\SDual \SDual}
				=
					\bigl[
							\invlim_{n} \alg{\Gamma}(A^{\vee})
						\to
							\alg{\Gamma}(A^{\vee})_{0}
					\bigr],
			\\
					R \alg{\Gamma}(A)^{\SDual \SDual}
				=
					\bigl[
							\invlim_{n} \alg{\Gamma}(A)
						\to
							R \alg{\Gamma}(A)
					\bigr].
		\end{gather*}
	From these, we can see that
	the upper triangle in the diagram \eqref{eq: two parts of the duality morphism}
	is of the form $X \to Y \to Z$ with:
	$X$ concentrated in degrees $-1, 0$;
	$Y$ in $-1, 0, 1$;
	and $Z$ in $0, 1$.
	For such a distinguished triangle, we have isomorphisms and an exact sequence
		\begin{gather*}
					H^{-1}(X) = H^{-1}(Y),
				\quad
					H^{1}(Y) = H^{1}(Z),
			\\
				0 \to H^{0}(X) \to H^{0}(Y) \to H^{0}(Z) \to 0.
		\end{gather*}
	The exact sequence in the second line is a connected-\'etale sequence in $\Pro \Alg_{\uc}$.
	
	We show a similar statement
	for the lower triangle in the diagram \eqref{eq: two parts of the duality morphism}:
	it has cohomologies in the same ranges with the same property
	(i.e.\ $H^{0}$ is a connected-\'etale sequence).
	Since $\alg{H}^{1}(A) \in \Ind \Alg_{\uc} / k$,
	we know that $\alg{H}^{1}(A)^{\SDual}[2]$ is concentrated in degrees $-1$, $0$
	whose $H^{0}$ is connected  pro-unipotent
	by \eqref{prop: Serre duality}
	\eqref{ass: Serre duality for unip and etale}.
	We already saw that $\alg{\Gamma}(A)^{\SDual}[1]$ is concentrated in degrees $0, 1$
	whose $H^{0}$ is finite \'etale.
	Therefore $R \alg{\Gamma}(A)^{\SDual}[1]$ is concentrated in degrees $-1, 0, 1$
	whose $H^{0}$ is in $\Pro \Alg_{\uc}$.
	Thus the lower triangle in \eqref{eq: two parts of the duality morphism}
	has the expected properties.
	
	Now if the morphism on the $Y$'s of the two triangles in \eqref{eq: two parts of the duality morphism}
	is an isomorphism,
	then so are the morphisms on the $X$'s and $Z$'s.
\end{proof}

Before the next proposition,
note that, by \eqref{prop: Serre duality}
\eqref{ass: Ext of a proalg by Q mod Z is zero above degree 2},
	\begin{align*}
				\alg{\Gamma}(A)_{0}^{\SDual}
		&	=
				R \sheafhom_{k^{\ind\rat}_{\pro\et}}(
					\alg{\Gamma}(A)_{0}, \Q / \Z
				)[-1]
		\\
		&	=
				\sheafext_{k^{\ind\rat}_{\pro\et}}^{1}(
					\alg{\Gamma}(A)_{0}, \Q / \Z
				)[-2].
		\\
		&	=
				\sheafext_{k^{\ind\rat}_{\pro\et}}^{1}(
					\alg{\Gamma}(A), \Q / \Z
				)[-2].
	\end{align*}
Hence the Serre dual (up to shift) of $\theta_{A}^{+1}$
in \eqref{prop: two parts of theta A plus}
can be written as a morphism
$\alg{\Gamma}(A)_{0}^{\SDual \SDual} \to \alg{H}^{1}(A^{\vee})^{\SDual}[2]$.
Since $\theta_{A^{\vee}}^{-}$ is also a morphism
$\alg{\Gamma}(A)_{0}^{\SDual \SDual} \to \alg{H}^{1}(A^{\vee})^{\SDual}[2]$,
it is meaningful to compare $\theta_{A^{\vee}}^{-}$ and $(\theta_{A}^{+1})^{\SDual}$.

\begin{Prop}
	We have $\theta_{A^{\vee}} = (\theta_{A})^{\SDual}$,
	$\theta_{A^{\vee}}^{-} = (\theta_{A}^{+1})^{\SDual}$
	and $\theta_{A^{\vee}}^{+0} = (\theta_{A}^{+0})^{\PDual}$.
\end{Prop}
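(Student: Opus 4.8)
The plan is to derive all three identities from a single fact: the symmetry of the Poincar\'e biextension under biduality $(A^{\vee})^{\vee} = A$. Recall that $\vartheta_{A}$ is the adjoint, for the closed symmetric monoidal structure on $D(k^{\ind\rat}_{\pro\et})$ given by $\dtensor$ and $R \sheafhom_{k^{\ind\rat}_{\pro\et}}$, of a cup product pairing
	\[
			\langle\,\cdot\,,\,\cdot\,\rangle_{A}
		\colon
			R \alg{\Gamma}(A^{\vee})
			\dtensor
			R \alg{\Gamma}(A)
		\to
			\Z[1]
	\]
assembled from the class of the Poincar\'e biextension (i.e.\ $A^{\vee} = \sheafext_{K}^{1}(A, \Gm)$), the functoriality morphism of $R \alg{\Gamma}$ of \cite[Prop.\ 2.4.3]{Suz13}, and the trace map $R \alg{\Gamma}(\Gm) \to \Z$; and that $\theta_{A}$ is, by construction, the Serre dual $(\vartheta_{A^{\vee}})^{\SDual}$, shifted so as to go from $R \alg{\Gamma}(A^{\vee})^{\SDual \SDual}$ to $R \alg{\Gamma}(A)^{\SDual}[1]$.

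First I would prove that, viewing $\langle\,\cdot\,,\,\cdot\,\rangle_{A^{\vee}}$ through the identification $(A^{\vee})^{\vee} = A$ as a pairing $R \alg{\Gamma}(A) \dtensor R \alg{\Gamma}(A^{\vee}) \to \Z[1]$, it is the composite of $\langle\,\cdot\,,\,\cdot\,\rangle_{A}$ with the commutativity constraint of $\dtensor$. This reduces, via the compatibility of the transposition isomorphism with the trace on $\Gm$ and with the functoriality morphism, to the classical fact (\cite[VII--VIII]{Gro72}) that the Poincar\'e biextension of $A^{\vee} \times A$ is canonically the transpose, under biduality, of that of $A^{\vee} \times (A^{\vee})^{\vee}$. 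Granting this, I would use the purely formal fact that the two adjoints $\phi \colon M \to R \sheafhom(N, T)$, $\psi \colon N \to R \sheafhom(M, T)$ of a pairing $M \dtensor N \to T$ satisfy $\psi = R \sheafhom(\phi, T) \compose \mathrm{ev}_{N}$, where $\mathrm{ev}_{N} \colon N \to R \sheafhom(R \sheafhom(N, T), T)$ is the evaluation. Applied with $T = \Z[1]$ and the symmetry above, this yields $\vartheta_{A} = \theta_{A} \compose \mathrm{ev}_{R \alg{\Gamma}(A^{\vee})}$, together with its analogue with $A$ and $A^{\vee}$ exchanged; thus $\theta_{A}$ is compatible with $\vartheta_{A}$ via the evaluation morphism of Proposition \ref{prop: double dual of R Gamma of abelian variety}, in accordance with Proposition \ref{prop: extend duality morphism to the plus-minus parts}.

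To finish the first identity, I would take the Serre dual of $\vartheta_{A} = \theta_{A} \compose \mathrm{ev}_{R \alg{\Gamma}(A^{\vee})}$ and use that the Serre dual of the evaluation $\mathrm{ev}_{C}$ is the canonical isomorphism $C^{\SDual \SDual \SDual} \cong C^{\SDual}$ (the triangle identity, valid here because $C^{\SDual}$ is always Serre reflexive); this gives $(\vartheta_{A})^{\SDual} = (\theta_{A})^{\SDual}$ under that identification, hence $\theta_{A^{\vee}} = (\vartheta_{A})^{\SDual}[1] = (\theta_{A})^{\SDual}$. For the remaining two identities, I would apply $\SDual$ to the morphism of distinguished triangles \eqref{eq: two parts of the duality morphism} for $A$: by the first identity its middle column is $\theta_{A^{\vee}}$, and by the uniqueness of the extension (Proposition \ref{prop: extend duality morphism to the plus-minus parts} together with the elementary lemma on morphisms of distinguished triangles above), after inserting the canonical identifications of Propositions \ref{prop: Serre duality} and \ref{prop: double dual of R Gamma of abelian variety} (for instance $\alg{\Gamma}(A)_{0}^{\SDual} = \sheafext_{k^{\ind\rat}_{\pro\et}}^{1}(\alg{\Gamma}(A), \Q / \Z)[-2]$, $R \alg{\Gamma}(A)^{\SDual \SDual} = [\invlim_{n} \alg{\Gamma}(A) \to R \alg{\Gamma}(A)]$ and $\pi_{0}(\alg{\Gamma}(A)) = \pi_{0}(\mathcal{A}_{x})$), it is identified with the morphism of triangles defining $\theta_{A^{\vee}}^{\pm}$ with its two outer columns interchanged. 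Reading this off on the $\alg{H}^{1}$-piece gives $\theta_{A^{\vee}}^{-} = (\theta_{A}^{+1})^{\SDual}$, and on the $\alg{H}^{0}$-piece, using that $\SDual$ restricts on $\FEt / k$ to Pontryagin duality shifted by $-1$ (Proposition \ref{prop: Serre duality}), it gives $\theta_{A^{\vee}}^{+0} = (\theta_{A}^{+0})^{\PDual}$.

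The hardest step is the first one: transporting the symmetry of the Poincar\'e biextension through the isomorphism $A^{\vee} = \sheafext_{K}^{1}(A, \Gm)$ that defines the pairing, through the functoriality morphism of $R \alg{\Gamma}$, and through the trace map, so that the symmetry $\langle\,\cdot\,,\,\cdot\,\rangle_{A^{\vee}}$ holds on the nose in $D(k^{\ind\rat}_{\pro\et})$. Once that identity is available, everything that follows is formal manipulation in the triangulated category $D(k^{\ind\rat}_{\pro\et})$ using the propositions already established in this section.
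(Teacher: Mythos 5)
Your proposal is correct and follows essentially the same route as the paper: the key identity is that $\vartheta_{A}$ equals $(\vartheta_{A^{\vee}})^{\SDual}$ precomposed with the evaluation morphism $R \alg{\Gamma}(A^{\vee}) \to R \alg{\Gamma}(A^{\vee})^{\SDual \SDual}$, which the paper obtains from a formal compatibility of the functoriality morphism of $R \alg{\Gamma}$ with evaluation plus the biduality $A = A^{\vee \vee}$ (the same content as your symmetry of the Poincar\'e biextension), and the remaining two identities are then read off on cohomology via the triangle decomposition exactly as you describe.
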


\begin{proof}
	We first show $\theta_{A^{\vee}} = (\theta_{A})^{\SDual}$,
	or $\theta_{A} = (\theta_{A^{\vee}})^{\SDual}$.
	The morphisms $A^{\vee} \tensor^{L} A \to \Gm[1]$ and 
	$A^{\vee} \tensor^{L} A^{\vee \vee} \to \Gm[1]$ in $D(K_{\fppf})$
	coming from the Poincar\'e bundle (or $\Gm$-biextension)
	are compatible with the biduality isomorphism $A \isomto A^{\vee \vee}$.
	Hence we have a commutative diagram
		\[
			\begin{CD}
					A^{\vee} \tensor^{L} A
				@>>>
					\Gm[1]
				\\
				@VV \wr V
				@|
				\\
					A^{\vee} \tensor^{L} A^{\vee \vee}
				@>>>
					\Gm[1]
			\end{CD}
		\]
	in $D(K_{\fppf} / k^{\ind\rat}_{\et})$.
	Recall from the second paragraph after
	\eqref{prop: localization sequence}
	that the morphism of functoriality and the cup product pairing are equivalent
	under the derived tensor-hom adjunction \cite[Thm.\ 18.6.4 (vii)]{KS06}.
	This relation also holds after pro-\'etale sheafification.
	Applying $R \Tilde{\alg{\Gamma}}$ to the above diagram and using the cup product pairing,
	we have a commutative diagram
		\[
			\begin{CD}
						R \alg{\Gamma}(A^{\vee})
					\tensor^{L}
						R \alg{\Gamma}(A)
				@>>>
					R \alg{\Gamma}(\Gm)[1]
				@>>>
					\Z[1]
				\\
				@VV \wr V
				@|
				@|
				\\
						R \alg{\Gamma}(A^{\vee})
					\tensor^{L}
						R \alg{\Gamma}(A^{\vee \vee})
				@>>>
					R \alg{\Gamma}(\Gm)[1]
				@>>>
					\Z[1]
			\end{CD}
		\]
	in $D(k^{\ind\rat}_{\pro\et})$.
	The upper morphisms give a morphism
	$R \alg{\Gamma}(A^{\vee})^{\SDual \SDual} \to R \alg{\Gamma}(A)^{\SDual}$
	via the derived tensor-Hom adjunction,
	which is $\theta_{A}$.
	Similarly, the lower morphisms give a morphism
	$R \alg{\Gamma}(A^{\vee \vee}) \to R \alg{\Gamma}(A^{\vee})^{\SDual}$,
	which is $\vartheta_{A^{\vee}}$,
	and a morphism
	$R \alg{\Gamma}(A^{\vee})^{\SDual \SDual} \to R \alg{\Gamma}(A^{\vee \vee})^{\SDual}$,
	which is $(\theta_{A^{\vee}})^{\SDual}$.
	The commutativity of the diagram implies that
	$\theta_{A} = (\theta_{A^{\vee}})^{\SDual}$.
	
	For $\theta_{A^{\vee}}^{-} = (\theta_{A}^{+1})^{\SDual}$,
	or $(\theta_{A}^{-})^{\SDual} = \theta_{A^{\vee}}^{+1}$,
	apply $\SDual$ to the second diagram of
	\eqref{prop: extend duality morphism to the plus-minus parts}:
		\[
			\begin{CD}
					\alg{\Gamma}(A)^{\SDual \SDual}
				@>>>
					R \alg{\Gamma}(A)^{\SDual \SDual}
				@>>>
					\alg{H}^{1}(A)[-1]
				\\
				@VV (\theta_{A}^{+})^{\SDual} V
				@VV (\theta_{A})^{\SDual} V
				@VV (\theta_{A}^{-})^{\SDual} V
				\\
					\bigl[
							\alg{\Gamma}(A^{\vee})_{0}
						\to
							R \alg{\Gamma}(A^{\vee})
					\bigr]^{\SDual}[1]
				@>>>
					R \alg{\Gamma}(A^{\vee})^{\SDual}[1]
				@>>>
					\alg{\Gamma}(A^{\vee})_{0}^{\SDual}[1].
			\end{CD}
		\]
	The morphism $(\theta_{A}^{-})^{\SDual}$ is a morphism
	between complexes concentrated in degree $1$.
	The complex $\alg{\Gamma}(A)^{\SDual \SDual}$ is concentrated in degrees $-1, 0$
	by \eqref{prop: Serre duality}
	\eqref{ass: Serre duality for general proalgebraic groups}.
	Consider the distinguished triangle
		\[
				\alg{H}^{1}(A^{\vee})^{\SDual}[2]
			\to
				\bigl[
						\alg{\Gamma}(A^{\vee})_{0}
					\to
						R \alg{\Gamma}(A^{\vee})
				\bigr]^{\SDual}[1]
			\to
				\pi_{0}(\alg{\Gamma}(A^{\vee}))^{\PDual}.
		\]
	By $\alg{H}^{1}(A^{\vee}) \in \Ind \Alg_{\uc} / k$
	and \eqref{prop: Serre duality}
	\eqref{ass: Serre duality for unip and etale},
	we know that $\alg{H}^{1}(A^{\vee})^{\SDual}[2]$ is concentrated in degrees $-1, 0$.
	Hence the middle term
		$
			\bigl[
					\alg{\Gamma}(A^{\vee})_{0}
				\to
					R \alg{\Gamma}(A^{\vee})
			\bigr]^{\SDual}[1]
		$
	is concentrated in degrees $-1, 0$.
	Therefore $H^{1}((\theta_{A})^{\SDual}) = (\theta_{A}^{-})^{\SDual}$.
	By a similar observation, we have $H^{1}(\theta_{A^{\vee}}) = \theta_{A^{\vee}}^{+1}$.
	Hence $(\theta_{A})^{\SDual} = \theta_{A^{\vee}}$ proved above
	implies $(\theta_{A}^{-})^{\SDual} = \theta_{A^{\vee}}^{+1}$.
	
	Finally, the equality $\theta_{A^{\vee}} = (\theta_{A})^{\SDual}$
	induces the equality $\theta_{A^{\vee}}^{+0} = (\theta_{A}^{+0})^{\PDual}$
	on $\pi_{0} H^{0}$.
\end{proof}

The previous two propositions show that
the first two assertions in \eqref{prop: two parts of the duality morphism}
are equivalent.
To connect them to the third,
it is enough to show the following general lemma.

\begin{Lem} \label{lem: isom on indalgebraics can be checked on closed fields}
	Let $\varphi \colon B \to C$ be a morphism in $\Ind \Alg / k$.
	Assume that $\varphi \colon B(k') \to C(k')$ is an isomorphism
	for any algebraically closed field $k'$ over $k$.
	Then $\varphi \colon B \to C$ is an isomorhism.
\end{Lem}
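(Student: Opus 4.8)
The plan is to prove \Lem\ \ref{lem: isom on indalgebraics can be checked on closed fields} by reducing from the ind-categorical statement to a statement about honest quasi-algebraic groups, where one knows that a homomorphism inducing bijections on all algebraically closed points is an isomorphism. First I would write $B = \dirlim B_{\lambda}$ and $C = \dirlim C_{\mu}$ with $B_{\lambda}, C_{\mu} \in \Alg / k$. By the standard description of morphisms in an ind-category (\cite[\S 15.1]{KS06}, or the limit formula in \Prop\ \ref{prop: fully faithful embeddings of ind-procateogries}), a two-sided inverse of $\varphi$ need not exist term-by-term, so the clean approach is instead to check that $\varphi$ is both a monomorphism and an epimorphism in $\Ind \Alg / k$: since $\Ind \Alg / k$ is an abelian category, these two together force $\varphi$ to be an isomorphism.

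The key point is then to express mono/epi in terms of the kernel and cokernel sheaves on $\Spec k^{\ind\rat}_{\pro\et}$ (or $\Spec k^{\ind\rat}_{\et}$, which is enough since everything here is P-acyclic by \Prop\ \ref{prop: criterion of P-acyclicity}\eqref{ass: local finite presentation implies P-acyclic}, $\Alg/k \subset \Loc\Alg/k$). Via the fully faithful embedding $\Ind \Alg / k \into \Ab(k^{\ind\rat}_{\pro\et})$ of \Prop\ \ref{prop: fully faithful embedding for ind-proalgebraic groups}, it suffices to show $\Ker(\varphi) = 0$ and $\Coker(\varphi) = 0$ as sheaves. Now $\Ker(\varphi)$ and $\Coker(\varphi)$, computed in $\Ind \Alg / k$, are again objects of $\Ind \Alg / k$ (filtered colimits of quasi-algebraic groups, using that $\Alg/k$ has kernels and cokernels and that filtered colimits are exact). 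A nonzero object $D \in \Ind \Alg / k$ has a nonzero point over some algebraically closed field: indeed writing $D = \dirlim D_{\nu}$ with $D_{\nu} \in \Alg/k$, nonzero means some structure map factors through a nonzero $D_{\nu}$, and a nonzero quasi-algebraic group has a nonzero $\bar k$-point (after base change to its generic point and then to an algebraic closure); more precisely, for an algebraically closed $k'$ containing the residue field of a closed point of $D_{\nu}$ in the complement of the kernel, $D(k') \supseteq \Im(D_{\nu}(k') \to D(k'))$ is nonzero. Since taking $k'$-points is exact on $\Ind\Alg/k$ for $k'$ a field (sections over $k'$ is a filtered colimit of left-exact functors, and here one gets exactness on the nose because all the $D_{\nu}$ are smooth, hence $H^1(k'_{\et}, D_{\nu}) = 0$), we get $\Ker(\varphi)(k') = \Ker(\varphi(k')) = 0$ and $\Coker(\varphi)(k') = \Coker(\varphi(k')) = 0$ for every algebraically closed $k'$. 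Hence $\Ker(\varphi) = \Coker(\varphi) = 0$, so $\varphi$ is an isomorphism.

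I expect the main obstacle to be the bookkeeping around exactness of the point functor $D \mapsto D(k')$ on $\Ind\Alg/k$ and the claim that a nonzero ind-algebraic group is detected on algebraically closed points: one must be careful that $\Ker$ and $\Coker$ are taken in the correct category (and that the embedding into sheaves is exact, which is part of the content of \Prop\ \ref{prop: viewing proalg as sheaves is exact} / \Prop\ \ref{prop: fully faithful embedding for ind-proalgebraic groups}), and that passing to $k'$-points commutes with these filtered colimits and with finite (co)kernels. An alternative, perhaps cleaner, route avoiding the exactness subtleties is: reduce directly to the quasi-algebraic case by the limit formula $\Hom_{\Ind\Alg/k}(B_\lambda, C) = \dirlim_\mu \Hom_{\Alg/k}(B_\lambda, C_\mu)$ and a diagram chase to produce a cofinal system of isomorphisms $B_\lambda \isomto C_{\mu(\lambda)}$ — but this requires $\varphi$ to be termwise ``eventually iso'', which is false in general, so I would not pursue it and would stick with the mono-epi argument above.
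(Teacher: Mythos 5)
Your overall strategy --- reduce, via exactness of $\Gamma(k',\,\cdot\,)$ over algebraically closed $k'$, to showing that $D \in \Ind \Alg / k$ with $D(k') = 0$ for all algebraically closed $k'$ must vanish --- is exactly the paper's first step, and that part is fine. Where you diverge is in how you prove this vanishing claim, and your version has a genuine gap. You locate a nonzero structure map $D_{\nu} \to D$ and want a \emph{closed} point of $D_{\nu}$ outside $\Ker(D_{\nu} \to D)$. But that kernel is only the increasing union $\bigcup_{\nu'} \Ker(D_{\nu} \to D_{\nu'})$ of proper closed subgroups, and this union need not stabilize: $\Alg / k$ does not satisfy the ascending chain condition on subobjects ($\mu_{2} \subset \mu_{4} \subset \cdots \subset \Gm$). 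Worse, the complement of the union can contain no closed point at all. Take $k = \closure{\F}_{p}$, all $D_{\nu} = \Gm$, and transition maps given by multiplication by integers prime to $p$ chosen so that the kernels $\mu_{N_{m}}$ exhaust all prime-to-$p$ torsion of $\Gm$. Then $D \neq 0$ (no transition map is zero), yet every closed point of $\Gm$ over $\closure{\F}_{p}$ is a torsion point and hence lies in the kernel; indeed $D(\closure{\F}_{p}) = 0$ even though $D \neq 0$. So the ``more precisely'' clause of your argument fails exactly in the residue-field situations the paper cares about, and it also shows why the lemma must quantify over \emph{all} algebraically closed fields over $k$ rather than just $\closure{k}$.

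The repair is the one your parenthetical already hints at but your main clause contradicts: use generic points, not closed points. Choose a connected component $C$ of $D_{\nu}$ contained in no $\Ker(D_{\nu} \to D_{\nu'})$ (possible because there are finitely many components and the kernels form a filtered family); the tautological point of $D_{\nu}$ over $\xi_{C}$ then has nonzero image in $D(\xi_{C})$, hence in $D(\closure{\xi_{C}})$ by injectivity of points along field extensions. This is essentially what the paper does, packaged more slickly: it shows $\Hom(E, D) = 0$ for every $E \in \Alg / k$ using the birational-group formula
$\Hom(E, D) = \Ker(D(\xi_{E}) \to D(\xi_{E \times_{k} E}))$
from the proof of Lemma \ref{lem: criteiron when ind-proalgebraic is ind-algebraic}, together with $D(\xi_{E}) \into D(\closure{\xi_{E}}) = 0$. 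Either way, the input missing from your argument as stated is that a nonzero ind-algebraic group is detected at generic points of quasi-algebraic groups (and their algebraic closures), not at closed points.
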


\begin{proof}
	By considering the kernel and cokernel,
	it is enough to show that if $D \in \Ind \Alg / k$ satisfies $D(k') = 0$
	for any algebraically closed field $k'$ over $k$,
	then $D = 0$.
	For this, it is enough to show that $\Hom(E, D) = 0$
	for any $E \in \Alg / k$.
	Let $\xi_{E}$ be the generic point of $E$.
	Then by \eqref{eq: birational group morphisms extend}, we have
		\[
				\Hom(E, D)
			=
				\Ker(
						D(\xi_{E})
					\stackrel{\sigma}{\to}
						D(\xi_{E \times_{k} E})
				)
		\]
	(with the same notation as the cited proof).
	But the assumption implies  that $D(\xi_{E}) = 0$.
	Hence $\Hom(E, D) = 0$ and $D = 0$.
\end{proof}

\begin{Rmk}
	Here is another method to obtain \eqref{prop: two parts of the duality morphism} or,
	more precisely, \eqref{prop: extend duality morphism to the plus-minus parts}.
	The Poincar\'e biextension $A^{\vee} \times A \to \Gm[1]$ canonically extends to
	a biextension $\mathcal{A}_{0}^{\vee} \times \mathcal{A} \to \Gm[1]$
	by \cite[IX, 1.4.3]{Gro72},
	where $\mathcal{A}_{0}^{\vee}$ is the maximal open subgroup scheme of $\mathcal{A}^{\vee}$
	with connected special fiber.
	Hence we have a morphism $\mathcal{A}_{0}^{\vee} \to R \sheafhom_{\Order_{K}}(\mathcal{A}, \Gm[1])$.
	We have the trace isomorphism $R \alg{\Gamma}_{x}(\Order_{K}, \Gm[1]) = \Z$;
	see the second paragraph of \S \ref{sec: Bester's finite flat duality}.
	Hence we have a morphism
		\begin{align*}
					R \alg{\Gamma}_{x}(\Order_{K}, \mathcal{A}_{0}^{\vee})
			&	\to
					R \sheafhom_{k^{\ind\rat}_{\pro\et}} \bigl(
						R \alg{\Gamma}(\Order_{K}, \mathcal{A}),
						R \alg{\Gamma}_{x}(\Order_{K}, \Gm[1])
					\bigr)
			\\
			&	=
					R \sheafhom_{k^{\ind\rat}_{\pro\et}} \bigl(
						R \alg{\Gamma}(\Order_{K}, \mathcal{A}),
						\Z
					\bigr)
				=
					R \alg{\Gamma}(\Order_{K}, \mathcal{A})^{\SDual}.
		\end{align*}
	The localization exact sequences give a morphism
		\[
			\begin{CD}
					R \alg{\Gamma}(\Order_{K}, \mathcal{A}_{0}^{\vee})
				@>>>
					R \alg{\Gamma}(K, A^{\vee})
				@>>>
					R \alg{\Gamma}_{x}(\Order_{K}, \mathcal{A}_{0}^{\vee})[1]
				\\
				@VVV
				@VVV
				@VVV
				\\
					R \alg{\Gamma}_{x}(\Order_{K}, \mathcal{A})^{\SDual}
				@>>>
					R \alg{\Gamma}(K, A)^{\SDual}[1]
				@>>>
					R \alg{\Gamma}(\Order_{K}, \mathcal{A})^{\SDual}[1]
			\end{CD}
		\]
	between distinguished triangles
	(see \eqref{prop: localization and Serre dual, for finite flats}
	for the corresponding statement for finite flat group schemes).
	We can identify this diagram with the first diagram in
	\eqref{prop: extend duality morphism to the plus-minus parts}.
	We somewhat avoided the use of cohomology of $\Order_{K}$ in this subsection
	and constructed the diagram purely from the structure of $R \alg{\Gamma}(K, A)$
	as a complex of sheaves over $k$.
\end{Rmk}


\numberwithin{equation}{section}
\section{Relation to Grothendieck's and \v{S}afarevi\v{c}'s conjectures}
\label{sec: Comparison with other authors' work}

We compare our morphisms $\theta_{A}^{+0}, \theta_{A}^{+1}$
with Grothendieck's pairing, B\'egueri's isomorphism
and Bester-Bertapelle's isomorphism.


\numberwithin{equation}{subsection}
\subsection{Grothendieck's pairing}
\label{sec: Grothendieck's pairing}

\begin{Prop} \label{prop: comparison with Grothendieck's pairing}
	The morphism
		\[
				\theta_{A}^{+0}
			\colon
				\pi_{0}(\mathcal{A}_{x}^{\vee})
			\to
				\pi_{0}(\mathcal{A}_{x})^{\PDual},
		\]
	of \eqref{prop: two parts of the duality morphism}
	coincides with Grothendieck's pairing
	\cite[IX, 1.2.1]{Gro72}.
\end{Prop}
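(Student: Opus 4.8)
The plan is to show that the two pairings agree by tracing both of them back to the same source, namely the Poincar\'e biextension on $A^{\vee}\times A$ and its obstruction to extending to the N\'eron models. The key point is that Grothendieck's pairing, by its very construction in \cite[IX, 1.2.1]{Gro72}, measures exactly this obstruction, and our morphism $\theta_{A}^{+0}$ was reinterpreted in the last Remark of the previous section precisely in terms of the extended biextension $\mathcal{A}_{0}^{\vee}\times\mathcal{A}\to\Gm[1]$ of \cite[IX, 1.4.3]{Gro72}. So the first step is to recall in detail Grothendieck's definition: starting from the Poincar\'e biextension, one obtains a canonical pairing $\mathcal{A}^{\vee}\times\mathcal{A}\to\Gm[1]$ after restricting to $\mathcal{A}_{0}^{\vee}$, and the failure of this to extend over all of $\mathcal{A}^{\vee}$ is encoded by a connecting map whose cohomological incarnation lands in $\Hom(\pi_0(\mathcal{A}_x^\vee)\times\pi_0(\mathcal{A}_x),\Q/\Z)$. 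Concretely, Grothendieck's pairing arises from the boundary map in the localization sequence for $R\alg{\Gamma}(\Order_K,-)$ versus $R\alg{\Gamma}(K,-)$ applied to the biextension, combined with the trace isomorphism $R\alg{\Gamma}_x(\Order_K,\Gm[1])=\Z$.

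Next I would set up the comparison diagram. Using the last Remark of Section \ref{sec: Reduction to components groups and the first cohomology}, the morphism $\vartheta_A^{+}$ (hence $\theta_A^{+0}$ after dualizing) is identified with the map of distinguished triangles induced by $\mathcal{A}_0^\vee\to R\sheafhom_{\Order_K}(\mathcal{A},\Gm[1])$ together with the trace $R\alg{\Gamma}_x(\Order_K,\Gm[1])=\Z$. The plan is to observe that the $H^0$-level map on the ``support'' part of this diagram, i.e.\ the map $\pi_0(\mathcal{A}_x^\vee)=\alg{H}^0(\mathcal{A}_0^\vee) \to \alg{H}^1_x(\Order_K,\Gm)^{\text{-dual of }}\pi_0(\mathcal{A}_x)$, is by construction the cup product against the extended biextension class, paired through the trace. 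One then matches this term-by-term with the boundary-map description of Grothendieck's pairing. The essential ingredient here is \cite[IX, 1.4.3]{Gro72}: the biextension extends over $\mathcal{A}_0^\vee\times\mathcal{A}$ but not further, and it is exactly this extendability statement that makes both constructions go through the same complex. The identification of the cup-product/trace formalism with Grothendieck's original construction is then a matter of unwinding definitions: both are the image of the Poincar\'e class under the same composite of functorial maps.

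The main obstacle I expect is bookkeeping: the sign conventions, shifts by $[1]$, and the precise identification of the trace map $R\alg{\Gamma}_x(\Order_K,\Gm[1])=\Z$ with Grothendieck's ``residue'' or valuation normalization. One must check that the trace used here (coming from $R\alg{\Gamma}(\Gm)=\alg{K}^\times\onto\Z$ via the valuation, as recalled in Section \ref{sec: Formulation}) is compatible with the one implicit in Grothendieck's formula, which amounts to the statement that $R\alg{\Gamma}_x(\Order_K,\Gm)=R\alg{\Gamma}(K,\Gm)/R\alg{\Gamma}(\Order_K,\Gm)=\alg{K}^\times/\alg{U}_K=\Z$ with the obvious generator. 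A secondary technical point is that Grothendieck works with the fppf site over $\Order_K$ directly, whereas we have sheafified over $k^{\ind\rat}_{\pro\et}$; one has to note that all the objects involved ($\mathcal{A}$, $\mathcal{A}_0^\vee$, $\Gm$, and their relevant cohomology) are P-acyclic, so no information is lost in passing between the two settings, and the pairing on $k'$-points for $k'$ algebraically closed recovers the classical pairing.

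I would conclude by citing the explicit comparison: after these identifications, the map $\theta_A^{+0}$ on geometric points sends a component of $\mathcal{A}_x^\vee$ to the functional on $\pi_0(\mathcal{A}_x)$ given by pairing with the obstruction cocycle of the biextension, which is the definition of Grothendieck's pairing \cite[IX, 1.2.1]{Gro72}. Hence $\theta_A^{+0}$ agrees with Grothendieck's pairing, possibly up to a sign which one fixes by the normalization of the trace map; I would remark that the sign is harmless for the perfectness statement but should be tracked for the precise identification.
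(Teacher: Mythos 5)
Your route is genuinely different from the paper's and is viable in outline, but as written it rests on two compatibilities that you assert rather than establish, and these are exactly where the content of the proposition lies. First, you invoke the Remark at the end of Section \ref{sec: Reduction to components groups and the first cohomology} to identify $\theta_{A}^{+}$ (hence $\theta_{A}^{+0}$) with the map built from the extended biextension $\mathcal{A}_{0}^{\vee}\times\mathcal{A}\to\Gm[1]$, the localization triangle and the trace $R\alg{\Gamma}_{x}(\Order_{K},\Gm[1])=\Z$. That identification is itself only \emph{asserted} in the Remark (``we can identify this diagram with the first diagram in Proposition \ref{prop: extend duality morphism to the plus-minus parts}''); it requires comparing the cup product over $K$ used to define $\vartheta_{A}$ with the cup product over $\Order_{K}$ and the restriction of the extended biextension to the generic fibre, so it cannot be taken as a black box here. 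Second, your description of Grothendieck's pairing as ``the boundary map in the localization sequence applied to the biextension, combined with the trace isomorphism'' is a reformulation (essentially B\'egueri's), not the definition in \cite[IX, 1.2.1]{Gro72}; declaring the match with the cup-product formalism to be ``a matter of unwinding definitions'' is precisely the unwinding the proposition asks for, and you never perform it.

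The paper avoids both issues by working with the concrete extension-theoretic description of Grothendieck's pairing from \cite[Appendix C]{Mil06} (after reducing to $k$ algebraically closed): an element of $\Gamma(K,A^{\vee})=\Ext_{K}^{1}(A,\Gm)$ gives an extension $0\to\Gm\to X\to A\to 0$, whose N\'eron models form an exact sequence $0\to\mathcal{G}_{m}\to\mathcal{X}\to\mathcal{A}\to 0$; pushing out along $\mathcal{G}_{m}\onto i_{\ast}\Z$ and restricting to the special fibre yields the element of $\Ext^{1}(\mathcal{A}_{x},\Z)=\pi_{0}(\mathcal{A}_{x})^{\PDual}$ defining Grothendieck's pairing. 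On the other side, $\theta_{A}^{+0}$ is computed by applying $R\Tilde{\alg{\Gamma}}$ to the same extension, giving $0\to\alg{K}^{\times}\to\alg{\Gamma}(X)\to\alg{\Gamma}(A)\to 0$ (which is the $\alg{\Gamma}(\Order_{K},\,\cdot\,)$ of the N\'eron-model sequence), and pushing out along the valuation $\alg{K}^{\times}\onto\Z$. The two pushouts visibly agree, and no sign ambiguity arises because both constructions use the same valuation map. If you want to keep the biextension route you must actually prove the Remark's identification and the equivalence of the localization description with \cite[IX, 1.2.1]{Gro72}; otherwise the element-by-element extension comparison is the shorter and complete path.
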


We prove this by writing down $\theta_{A}^{+0}$ explicitly.
Recall that the morphism $\theta_{A}$ is defined
by the functoriality of $R \Tilde{\alg{\Gamma}}$
applied to $R \sheafhom_{K}(A, \Gm)$
and the trace morphism $R \alg{\Gamma}(\Gm) \to \Z$
or the valuation map $\alg{K}^{\times} \onto \Z$.
To write $\theta_{A}^{+0}$,
we give an arbitrary element of $\Ext_{K}^{1}(A, \Gm)$,
which is an extension class,
and apply $R \Tilde{\alg{\Gamma}}$.

\begin{proof}
	We may assume that $k$ is algebraically closed.
	We first recall the construction of Grothendieck's pairing following \cite[Appendix C]{Mil06}.
	We denote by
	$j \colon \Spec K_{\sm} \into \Spec \Order_{K, \sm}$
	the natural morphism between the smooth sites.
	Let $0 \to \Gm \to X \to A \to 0$ be
	an element of $\Gamma(K, A^{\vee}) = \Ext_{K}^{1}(A, \Gm)$.
	Let $\mathcal{G}_{m} = j_{\ast} \Gm$,
	$\mathcal{X} = j_{\ast} X$, $\mathcal{A} = j_{\ast} A$ be the N\'eron models
	(or more precisely, the N\'eron lft models).
	Then the sequence
	$0 \to \mathcal{G}_{m} \to \mathcal{X} \to \mathcal{A} \to 0$
	in $\Ab(\Order_{K, \sm})$ is exact since $R^{1} j_{\ast} \Gm = 0$
	(see the proof of \cite[loc.cit., Lem.C.10]{Mil06}).
	We can view this as an exact sequence of group schemes over $\Order_{K}$.
	Hence we have an exact sequence
	$0 \to \mathcal{G}_{m} \to \mathcal{X} \to \mathcal{A} \to 0$
	in $\Ab(\Order_{K, \fppf})$
	(though these no longer represent the pushforward sheaves
	of the original algebraic groups over $K_{\fppf}$).
	We also have an exact sequence
	$0 \to \Gm \to \mathcal{G}_{m} \to i_{\ast} \Z \to 0$
	in $\Ab(\Order_{K, \fppf})$,
	where $i \colon \Spec k_{\fppf} \into \Spec \Order_{K, \fppf}$
	is the natural morphism.
	By pushing out the extension
	$0 \to \mathcal{G}_{m} \to \mathcal{X} \to \mathcal{A} \to 0$
	by the morphism $\mathcal{G}_{m} \onto i_{\ast} \Z$,
	we get an extension
	$0 \to i_{\ast} \Z \to \mathcal{X} / \Gm \to \mathcal{A} \to 0$
	in $\Ab(\Order_{K, \fppf})$.
	Pulling back by $i$, we have an exact sequence
	$0 \to \Z \to \mathcal{X}_{x} / \Gm \to \mathcal{A}_{x} \to 0$
	in $\Ab(k_{\fppf})$,
	where the subscript $x$ denotes the special fiber.
	Therefore we get an element of
		\[
				\Ext_{k_{\fppf}}^{1}(\mathcal{A}_{x}, \Z)
			=
				\Hom_{k_{\fppf}}(\mathcal{A}_{x}, \Q / \Z)
			=
				\pi_{0}(\mathcal{A}_{x})^{\PDual}.
		\]
	This defines a homomorphism
	$\Gamma(K, A^{\vee}) = \Gamma(\Order_{K}, \mathcal{A}^{\vee}) \to \pi_{0}(\mathcal{A}_{x})^{\PDual}$,
	which turns out to factor through $\pi_{0}(\mathcal{A}_{x}^{\vee})$
	(\cite[loc.cit., Lem.\ C.11]{Mil06}).
	The homomorphism $\pi_{0}(\mathcal{A}_{x}^{\vee}) \to \pi_{0}(\mathcal{A}_{x})^{\PDual}$
	thus obtained is Grothendieck's pairing.
	
	We next describe $\theta_{A}^{+0}$.
	By definition,
	the morphism $\vartheta_{A}$ after taking $R \Gamma(k, \;\cdot\;)$ and $H^{0}$ is
		\begin{align*}
				\Gamma(K, A^{\vee})
		&	\to
				\Hom_{D(K_{\fppf} / k^{\ind\rat}_{\et})}(A, \Gm[1])
		\\
		&	\stackrel{R \Tilde{\alg{\Gamma}}}{\longrightarrow}
				\Hom_{D(k^{\ind\rat}_{\pro\et})} \bigl(
					R \alg{\Gamma}(A), R \alg{\Gamma}(\Gm)[1]
				\bigr)
		\\
		&	\stackrel{\text{trace}}{\longrightarrow}
				\Hom_{D(k^{\ind\rat}_{\pro\et})} \bigl(
					\alg{\Gamma}(A), \Z[1]
				\bigr).
		\end{align*}
	Since $R \Hom_{k^{\ind\rat}_{\pro\et}}(\alg{\Gamma}(A), \Q) = 0$
	by \eqref{prop: comparison thm for ind-proalgebraic groups}
	\eqref{ass: RHom between IPAlg and Et},
	the final group in the above displayed equation is further isomorphic to
		\[
				\Hom_{k^{\ind\rat}_{\pro\et}} \bigl(
					\alg{\Gamma}(A), \Q / \Z
				\bigr)
			=
				\Hom_{k^{\ind\rat}_{\pro\et}} \bigl(
					\pi_{0}(\mathcal{A}_{x}), \Q / \Z
				\bigr).
		\]
	The composite
		$
				\Gamma(K, A^{\vee})
			\to
				\pi_{0}(\mathcal{A}_{x})^{\PDual}
		$
	factors through $\pi_{0}(\mathcal{A}_{x}^{\vee})$
	and the resulting homomorphism
	$\pi_{0}(\mathcal{A}_{x}^{\vee}) \to \pi_{0}(\mathcal{A}_{x})^{\PDual}$
	is the definition of $\theta_{A}^{+0}$.
	Therefore an explicit description of $\theta_{A}^{+0}$ is given as follows.
	Let $0 \to \Gm \to X \to A \to 0$ be as above
	(which corresponds to a morphism $A \to \Gm[1]$ in $D(K_{\fppf} / k^{\ind\rat}_{\et})$).
	We apply $R \Tilde{\alg{\Gamma}}$.
	We have $\alg{H}^{1}(\Gm) = 0$ by
	\eqref{prop: cohomology of local fields}
	\eqref{ass: cohomology of tori over local fields}.
	Hence we have an exact sequence
		\[
				0
			\to
				\alg{K}^{\times}
			\to
				\alg{\Gamma}(X)
			\to
				\alg{\Gamma}(A)
			\to
				0
		\]
	in $\Ab(k^{\ind\rat}_{\pro\et})$,
	which gives an element of
		$
			\Ext_{k^{\ind\rat}_{\pro\et}}^{1}(
				\alg{\Gamma}(A),
				\alg{K}^{\times}
			)
		$.
	By pushing it out by the valuation map $\alg{K}^{\times} \onto \Z$,
	we have an exact sequence
		$
				0
			\to
				\Z
			\to
				\alg{\Gamma}(X) / \alg{U}_{K}
			\to
				\alg{\Gamma}(A)
			\to
				0
		$.
	This gives a morphism $\alg{\Gamma}(A) \to \Z[1]$ in $D(k^{\ind\rat}_{\pro\et})$
	and hence a morphism $\pi_{0}(\mathcal{A}_{x}) \to \Q / \Z$,
	which is the value of $\theta_{A}^{+0}$.
	
	Now we compare the two constructions.
	The sequence
		$
				0
			\to
				\alg{K}^{\times}
			\to
				\alg{\Gamma}(X)
			\to
				\alg{\Gamma}(A)
			\to
				0
		$
	above can also be obtained by applying $\alg{\Gamma}(\Order_{K}, \;\cdot\;)$
	to the sequence $0 \to \mathcal{G}_{m} \to \mathcal{X} \to \mathcal{A} \to 0$ of N\'eron models
	by \eqref{prop: schemes over local fields with ind-rational base}
	\eqref{ass: valuative criterion of properness}.
	We have a commutative diagram with exact rows
		\[
			\begin{CD}
					0
				@>>>
					\alg{K}^{\times}
				@>>>
					\alg{\Gamma}(X)
				@>>>
					\alg{\Gamma}(A)
				@>>>
					0
				\\
				@.
				@VVV
				@VVV
				@VVV
				@.
				\\
					0
				@>>>
					(\mathcal{G}_{m})_{x}
				@>>>
					\mathcal{X}_{x}
				@>>>
					\mathcal{A}_{x}
				@>>>
					0
			\end{CD}
		\]
	in $\Ab(k^{\ind\rat}_{\pro\et})$,
	where the vertical morphisms are the reduction maps of the N\'eron models.
	The valuation map $\alg{K}^{\times} \onto \Z$ and the morphism $(\mathcal{G}_{m})_{x} \onto \Z$
	are compatible.
	Hence the pushouts by them yield the same morphism
	$\pi_{0}(\alg{\Gamma}(A)) = \pi_{0}(\mathcal{A}_{x}) \to \Q / \Z$.
	Thus our $\theta_{A}^{+0}$ coincides with Grothendieck's pairing.
\end{proof}

\begin{Rmk}
	In the paragraph before the cited lemma \cite[loc.cit., Lem.\ C.11]{Mil06},
	the cases $r = 0, 1$ of the equality
		\[
				\sheafext_{\Order_{K}}^{r}(
					\mathcal{A},
					i_{\ast} \Z
				)
			=
				i_{\ast}
				\sheafext_{k}^{r}(
					i^{\ast} \mathcal{A},
					\Z
				)
		\]
	are stated and used.
	In this remark, we show, as an illustration of our methods in \cite{Suz13},
	that this equality is true for any $r$.
	The tricky point is that
	the topologies on $\Order_{K}$ and $k$ here should be the smooth topologies
	and the $i$ here should be the natural continuous map
	$i_{\sm} \colon \Spec k_{\sm} \to \Spec \Order_{K, \sm}$.
	Hence, if $r \ge 2$,
	the equality above might look as if it required that the pullback $i_{\sm}^{\ast}$ be exact or,
	equivalently, the pushforward $i_{\sm, \ast}$ send injectives to injectives.
	However, $i_{\sm}^{\ast}$ is not exact,
	since the category of smooth schemes over $\Order_{K}$ is not closed under finite inverse limits.
	In fact, the morphism $\Ga \to \Ga$ over $\Order_{K}$
	given by multiplication by a prime element is injective in $\Ab(\Order_{K, \sm})$,
	while it becomes the zero map $\Ga \to \Ga$ after applying $i_{\sm}^{\ast}$.
	In the above proof of \eqref{prop: comparison with Grothendieck's pairing},
	we used the fact that $j_{\ast} X$ is representable (by the N\'eron model $\mathcal{X}$)
	to pass from the smooth site to the fppf site.
	Without the exactness of $i_{\sm}^{\ast}$,
	we can still prove the above equality for general $r$ as follows.
	
	The last paragraph before \eqref{prop: misc on ind-rational pro-etale topology}
	tells us that $i_{\sm, \ast}$ sends acyclic sheaves to acyclic sheaves.
	By \cite[Lem.\ 3.7.2]{Suz13},
	we know that
	$i_{\sm}^{\ast} \colon \Ab(\Order_{K, \sm}) \to \Ab(k_{\sm})$ admits a left derived functor
	$L i_{\sm}^{\ast} \colon D(\Order_{K, \sm}) \to D(k_{\sm})$,
	which is left adjoint to $R i_{\ast} = i_{\ast}$.
	It is enough to show that
	$L_{n} i^{\ast} \mathcal{A} = 0$ for any $n \ge 1$.
	Let $M(\mathcal{A})$ be Mac Lane's resolution of $\mathcal{A}$ in $\Ab(\Order_{K, \sm})$
	(\cite{Mac57}, \cite[\S 3.4]{Suz13}).
	Its $n$-th term is a direct summand of a direct sum of sheaves of the form
	$\Z[\mathcal{A}^{m}]$ for various $m \ge 0$,
	where $\Z[\mathcal{A}^{m}]$ is the sheafification of the presheaf
	that sends a smooth $\Order_{K}$-algebra $S$
	to the free abelian group generated by the set $\mathcal{A}^{m}(S)$
	(\cite[\S 3.4, 3.5]{Suz13}).
	Hence we have 
	$L i_{\sm}^{\ast} \mathcal{A} = i_{\sm}^{\ast} M(\mathcal{A})$,
	where $i_{\sm}^{\ast}$ on the right is applied term-wise.
	We have $i_{\sm}^{\ast} \Z[\mathcal{A}^{m}] = \Z[i_{\sm}^{\ast \set} (\mathcal{A}^{m})]$,
	where $i_{\sm}^{\ast \set} \colon \Set(\Order_{K, \sm}) \to \Set(k_{\sm})$
	is the pullback for sheaves of sets.
	Since $\mathcal{A}^{m}$ is in the underlying category of $\Spec \Order_{K, \sm}$,
	we know that $i_{\sm}^{\ast \set} (\mathcal{A}^{m})$ is the special fiber
	$\mathcal{A}^{m}_{x} = \mathcal{A}^{m} \times_{\Order_{K}} k$.
	Therefore
	$i_{\sm}^{\ast} M(\mathcal{A}) = M(\mathcal{A}_{x}) = M(i^{\ast} \mathcal{A})$
	is Mac Lane's resolution of $\mathcal{A}_{x}$ in $\Ab(k_{\sm})$,
	which is acyclic outside degree zero.
	Therefore $L_{n} i^{\ast} \mathcal{A} = 0$ for any $n \ge 1$.
	
	In the same way,
	we can prove that if $X$ is a scheme
	and $f$ is the continuous map from the big \'etale site of $X$
	to the smooth site of $X$ defined by the identity,
	then $f^{\ast}$ admits a left derived functor $L f^{\ast}$,
	and we have $L_{n} f^{\ast} T = 0$ for any smooth group scheme $T$ over $X$ and $n \ge 1$.
	This implies that \cite[Thm.\ 4.11]{Mil06} is true
	with the big \'etale site replaced by the smooth site
	by the same argument as above.
	This answers Milne's question made right after the cited theorem.
\end{Rmk}


\numberwithin{equation}{subsection}
\subsection{Bester-Bertapelle's isomorphism}
\label{sec: Bester-Bertapelle's isomorphism}

First note that
	\[
			\Ext_{k^{\ind\rat}_{\pro\et}}^{1}(\alg{\Gamma}(A), \Q / \Z)
		=
			\dirlim_{n}
			\Ext_{\Pro \Alg / k}^{1}(\alg{\Gamma}(A), \Z / n \Z)
	\]
by \eqref{prop: comparison thm for ind-proalgebraic groups}
\eqref{ass: RHom between IPAlg and IPAlg}.

\begin{Prop} \label{prop: comparison with Bester-Bertapelle}
	Assume that $k$ is algebraically closed.
	The morphism
		\[
				\theta_{A}^{+1}(k)
			\colon
				H^{1}(K, A^{\vee})
			\to
				\Ext_{k^{\ind\rat}_{\pro\et}}^{1}(\alg{\Gamma}(A), \Q / \Z)
		\]
	of \eqref{prop: two parts of the duality morphism}
	coincides with Bester-Bertapelle's isomorphism
	\cite[\S 2.7, Thm.\ 7.1]{Bes78}, \cite[Thm.\ 3]{Ber03}
	when $K$ has equal characteristic
	and $A$ has semistable reduction.
\end{Prop}

We prove this in this subsection.
In \cite[\S 2.3, Thm.\ 3.1]{Bes78},
Bester proved a duality theorem for cohomology of $\Order_{K}$
with coefficients in finite flat group schemes.
Based on this result, Bertapelle proved its generalization
for coefficients in the quasi-finite flat group scheme $\mathcal{A}[n]$
of torsion points of the N\'eron model of any semistable abelian variety $A$ (\cite[Thm.\ 1]{Ber03}).
Then she deduced the existence of the above isomorphism for such $A$ (\cite[Thm.\ 2]{Ber03}),
and deduced the existence of the above isomorphism for a general abelian variety.

In this subsection, we first give another proof of Bester's finite flat duality,
by giving another construction of the duality morphism and directly showing that it is an isomorphism.
The idea of proof is the same as Bester's (which is the local version of \cite{AM76}),
but we need to make it work within the formulation of this paper.
Then we use our finite flat duality isomorphism (instead of Bester's isomorphism)
as the input for Bertapelle's constructions.
This outputs an isomorphism
		\[
				\psi_{A}
			\colon
				H^{1}(K, A^{\vee})
			=
				\Ext_{k^{\ind\rat}_{\pro\et}}^{1}(\alg{\Gamma}(A), \Q / \Z)
		\]
as in the proof of \cite[Thm.\ 2]{Ber03}.
Finally we show that $\theta_{A}^{+1}(k)$ and $\psi_{A}$ are equal.
Hence we do not actually compare our isomorphism and Bester's isomorphism
(see \eqref{rmk: Bester duality} below for the reason).
Therefore the statement of \eqref{prop: comparison with Bester-Bertapelle} is
slightly imprecise in this sense.
See \eqref{prop: comparison of Bertapelle isomorphism and our isomorphism} below
for the precise statement.
We do not have to compare Bertapelle's isomorphism with $\theta_{A}^{+1}(k)$
for non-semistable abelian varieties
for the purpose of proving our main theorems.
But this can be done;
see \eqref{rmk: remarks after proof of main theorem}
\eqref{rmk: comparison with Bertapelle in non-semistable case} below.

Throughout this subsection, we assume that $K$ has equal characteristic.
Some statements can be formulated without assuming the residue field $k$ to be algebraically closed.
Therefore we assume that $k$ is a general perfect field.


\numberwithin{equation}{subsubsection}
\subsubsection{Bester's finite flat duality}
\label{sec: Bester's finite flat duality}

Let $N$ be a finite flat group scheme over $\Order_{K}$
with Cartier dual $N^{\CDual}$.
In this subsubsection,
we first formulate a duality theorem that relates the two complexes
	\[
			R \alg{\Gamma}_{x}(\Order_{K}, N^{\CDual}),
		\quad
			R \alg{\Gamma}(\Order_{K}, N)
	\]
each other.
This is written in \cite[Rmk.\ 2.7.6]{Suz13} without proof.
We show that this implies Bester's duality.
The proof of the duality theorem itself will be given in the next subsubsection.

We have $R \alg{\Gamma}(K, \Gm) = \alg{K}^{\times}$ as recalled in the previous section,
and $R \alg{\Gamma}(\Order_{K}, \Gm) = \alg{\Gamma}(\Order_{K}, \Gm) = \alg{U}_{K}$ by
\eqref{prop: cohomology of integers}
\eqref{ass: cohomology of smooth group over integers}.
Hence we have
	\begin{equation} \label{eq: trace isomorphism}
		R \alg{\Gamma}_{x}(\Order_{K}, \Gm) = \Z[-1],
	\end{equation}
which we call the \emph{trace isomorphism}.

Let $N$ be a finite flat group scheme over $\Order_{K}$
with Cartier dual $N^{\CDual}$.
Recall from \eqref{prop: cohomology of integers}
\eqref{ass: cohomology of finite flats over integers}
and \eqref{prop: cohomology with support} that
$R \alg{\Gamma}(\Order_{K}, N) \in D^{b}(\Pro \Alg_{\uc} / k)$,
$R \alg{\Gamma}_{x}(\Order_{K}, N) \in D^{b}(\Ind \Alg_{\uc} / k)$ are P-acyclic and Serre reflexive,
$\alg{\Gamma}(\Order_{K}, N)$ is finite,
$\alg{H}^{1}(\Order_{K}, N)$ is connected pro-unipotent,
$\alg{H}_{x}^{2}(\Order_{K}, N) \in \Ind \Alg_{\uc} / k$,
and the cohomology of $R \alg{\Gamma}(\Order_{K}, N)$ and $R \alg{\Gamma}_{x}(\Order_{K}, N)$
vanishes at all other degrees.
The morphism of functoriality of $R \Tilde{\alg{\Gamma}}$ and $R \Tilde{\alg{\Gamma}}_{x}$
in \eqref{prop: localization and functoriality in pro-etale}
and the trace isomorphism give morphisms
	\begin{align*}
		&
				R \alg{\Gamma}_{x}(\Order_{K}, N^{\CDual})
			\to
				R \Tilde{\alg{\Gamma}}_{x} \bigl(
					\Order_{K},
					R \sheafhom_{\Order_{K}}(N, \Gm)
				\bigr)
		\\
		&	\quad
			\to
				R \sheafhom_{k^{\ind\rat}_{\pro\et}} \bigl(
					R \alg{\Gamma}(\Order_{K}, N),
					\Z[-1]
				\bigr)
			=
				R \alg{\Gamma}(\Order_{K}, N)^{\SDual}[-1]
	\end{align*}

\begin{Thm} \label{thm: duality for finite flats over integers}
	The morphism
		\[
				R \alg{\Gamma}_{x}(\Order_{K}, N^{\CDual})
			\to
				R \alg{\Gamma}(\Order_{K}, N)^{\SDual}[-1]
		\]
	in $D(k^{\ind\rat}_{\pro\et})$ defined above is an isomorphism.
\end{Thm}

This is stated in \cite[Rmk.\ 2.7.6]{Suz13} without proof.
We will prove this in the next subsubsection.
We show here that this implies Bester's duality.

The both sides are in $D^{b}(\Ind \Alg_{\uc} / k)$.
The functor $R \Gamma(\closure{k}_{\pro\et}, \;\cdot\;)$ restricted to $D^{b}(\Ind \Alg_{\uc} / k)$
takes values in the derived category of torsion discrete $\Gal(\closure{k} / k)$-modules,
since unipotent quasi-algebraic groups in positive characteristic are torsion.
Applying this functor to the morphism in the theorem, we obtain a morphism
	\[
			R \Gamma_{x}(\Order_{K}^{\ur}, N^{\CDual})
		\to
			R \Hom_{\closure{k}^{\ind\rat}_{\pro\et}} \bigl(
				R \alg{\Gamma}(\Order_{K}, N),
				\Q / \Z
			\bigr)[-2]
	\]
in the derived category of torsion discrete $\Gal(\closure{k} / k)$-modules,
where $\Order_{K}^{\ur}$ is the maximal unramified extension of $\Order_{K}$.
We have
	\begin{align*}
				R \Hom_{\closure{k}^{\ind\rat}_{\pro\et}} \bigl(
					R \alg{\Gamma}(\Order_{K}, N),
					\Q / \Z
				\bigr)
		&	=
				R \Hom_{\Pro \Alg / \closure{k}} \bigl(
					R \alg{\Gamma}(\Order_{K}, N),
					\Q / \Z
				\bigr)
		\\	
		&	\Bigl(:=
				\dirlim_{n}
				R \Hom_{\Pro \Alg / \closure{k}} \bigl(
					R \alg{\Gamma}(\Order_{K}, N),
					\Z / n \Z
				\bigr)
			\Bigr)
	\end{align*}
by \eqref{prop: comparison thm for ind-proalgebraic groups}
\eqref{ass: RHom between IPAlg and IPAlg}.
Let $\Pro \FEt / k$ be the category of pro-finite-\'etale group schemes over $k$ and
$\pi_{0} \colon \Pro \Alg / k \to \Pro \FEt / k$ the maximal pro-finite-\'etale quotient functor as before.
We saw in \S \ref{sec: Generalities on derived categories of ind-procategories} that
$\Pro \Alg / k$ (and also $\Pro \FEt / k$) is the opposite of a Grothendieck category
and hence has enough (K-)projectives.
Hence $\pi_{0}$ admits a left derived functor
$L \pi_{0} \colon D(\Pro \Alg / k) \to D(\Pro \FEt / k)$.
The $n$-th functor $\pi_{n} := L_{n} \pi_{0}$ is
the $n$-th homotopy group functor defined by Serre \cite[\S 5.3]{Ser60}.
The functor $\pi_{0}$ is left adjoint to the inclusion functor
$\Pro \FEt / k \into \Pro \Alg / k$, which is exact.
Hence $\pi_{0}$ sends projectives to projectives.
By \cite[Prop.\ 13.3.13 (ii)]{KS06}, we have
	\[
			R \Hom_{\Pro \Alg / \closure{k}} \bigl(
				R \alg{\Gamma}(\Order_{K}, N),
				\Q / \Z
			\bigr)
		=
			R \Hom_{\Pro \FEt / \closure{k}} \bigl(
				L \pi_{0}
				R \alg{\Gamma}(\Order_{K}, N),
				\Q / \Z
			\bigr).
	\]
We have a distinguished triangle
	\[
			L \pi_{0} \alg{\Gamma}(\Order_{K}, N)
		\to
			L \pi_{0} R \alg{\Gamma}(\Order_{K}, N)
		\to
			L \pi_{0} \alg{H}^{1}(\Order_{K}, N)[-1].
	\]
Recall from \cite[\S 10.2, Thm.\ 2]{Ser60} that
$\pi_{n} = L_{n} \pi_{0} = 0$ for all $n \ge 2$.
The finiteness of $\alg{\Gamma}(\Order_{K}, N)$ implies that
$\pi_{1} \alg{\Gamma}(\Order_{K}, N) = 0$ and hence
$L \pi_{0} \alg{\Gamma}(\Order_{K}, N) = \alg{\Gamma}(\Order_{K}, N)$.
The connectedness of $\alg{H}^{1}(\Order_{K}, N)$ implies that
$L \pi_{0} \alg{H}^{1}(\Order_{K}, N) = \pi_{1} \alg{H}^{1}(\Order_{K}, N)[1]$.
Therefore $L \pi_{0} R \alg{\Gamma}(\Order_{K}, N)$ is concentrated in degree $0$
and the above triangle reduces to an exact sequence
	\[
			0
		\to
			\alg{\Gamma}(\Order_{K}, N)
		\to
			H^{0}
			L \pi_{0}
			R \alg{\Gamma}(\Order_{K}, N)
		\to
			\pi_{1} \alg{H}^{1}(\Order_{K}, N)
		\to
			0.
	\]
As in the proof of \eqref{prop: cohomology of integers}
\eqref{ass: cohomology of finite flats over integers},
let $0 \to N \to G \to H \to 0$ be an exact sequence
of group schemes over $\Order_{K}$ with $G, H$ smooth affine with connected fibers.
Then \eqref{prop: cohomology of integers}
\eqref{ass: cohomology of smooth group over integers} shows that
$R \alg{\Gamma}(\Order_{K}, G)$ is concentrated in degree zero
and $\alg{\Gamma}(\Order_{K}, G) \in \Pro \Alg / k$ is connected.
Hence
$L \pi_{0} R \alg{\Gamma}(\Order_{K}, G) = \pi_{1} \alg{\Gamma}(\Order_{K}, G)[1]$
and $L \pi_{0} R \alg{\Gamma}(\Order_{K}, H) = \pi_{1} \alg{\Gamma}(\Order_{K}, H)[1]$.
Therefore the distinguished triangle
	\[
			L \pi_{0}
			R \alg{\Gamma}(\Order_{K}, N)
		\to
			L \pi_{0}
			\alg{\Gamma}(\Order_{K}, G)
		\to
			L \pi_{0}
			\alg{\Gamma}(\Order_{K}, H)
	\]
is reduced to an exact sequence
	\[
			0
		\to
			\pi_{1} \alg{\Gamma}(\Order_{K}, G)
		\to
			\pi_{1} \alg{\Gamma}(\Order_{K}, H)
		\to
			H^{0}
			L \pi_{0}
			R \alg{\Gamma}(\Order_{K}, N)
		\to
			0.
	\]
This description shows the following intermediate result:

\begin{Prop} \label{prop: Bester group is the homotopy of cohomology}
	The complex $L \pi_{0} R \alg{\Gamma}(\Order_{K}, N)$ is concentrated in degree $0$
	whose cohomology is canonically identified with Bester's group $\mathscr{F}(N)$
	\cite[\S 1.3]{Bes78}.
\end{Prop}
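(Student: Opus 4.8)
The plan is to unwind Bester's definition of $\mathscr{F}(N)$ from \cite[\S 1.3]{Bes78} and match it, term by term, with the exact sequence displayed just above. The assertion that $L \pi_{0} R \alg{\Gamma}(\Order_{K}, N)$ is concentrated in degree $0$ has in fact already been obtained in the paragraphs preceding the statement, from the vanishing $\pi_{n} = L_{n} \pi_{0} = 0$ for $n \ge 2$ \cite[\S 10.2, Thm.\ 2]{Ser60}, the finiteness of $\alg{\Gamma}(\Order_{K}, N)$ and the connectedness of $\alg{H}^{1}(\Order_{K}, N)$; so what remains is the canonical identification of $H^{0} L \pi_{0} R \alg{\Gamma}(\Order_{K}, N)$ with $\mathscr{F}(N)$.

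Recall that Bester builds $\mathscr{F}(N)$ from a resolution $0 \to N \to G \to H \to 0$ of $N$ by smooth affine $\Order_{K}$-group schemes with connected special fibers --- one may take the very resolution used above, with $G$ the Weil restriction of $\Gm$ from $N^{\CDual}$ --- by passing to the Greenberg transforms over $k$ and forming the cokernel of the induced homomorphism on Serre fundamental groups. The translation I would carry out is the identification of the relevant Greenberg transform with the pushforward: for smooth affine $A$ over $\Order_{K}$, the inverse limit over $m$ of the level-$m$ Greenberg transforms is $\alg{\Gamma}(\Order_{K}, A)$, a connected proalgebraic group, by Assertion \eqref{ass: cohomology of smooth group over integers} of Proposition \ref{prop: cohomology of integers} and $\alg{\Gamma}(\Order_{K}, A) = \invlim_{m} \alg{\Gamma}(\Order_{K} / \ideal{p}_{K}^{m}, A)$ (Assertion \eqref{ass: integral points are inverse limits} of Proposition \ref{prop: schemes over local fields with ind-rational base}). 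Under this identification, and since $L \pi_{0} C = \pi_{1}(C)[1]$ for connected proalgebraic $C$, Bester's $\mathscr{F}(N)$ is, by its very construction, $\Coker\bigl(\pi_{1} \alg{\Gamma}(\Order_{K}, G) \to \pi_{1} \alg{\Gamma}(\Order_{K}, H)\bigr)$ --- which is precisely the group the displayed exact sequence identifies with $H^{0} L \pi_{0} R \alg{\Gamma}(\Order_{K}, N)$. I would then check that both sides are independent of the chosen resolution (our side intrinsically; Bester's by his own argument; and two resolutions are compared through a common dominating one in the standard way) and that the isomorphism is functorial in $N$, which follows from functoriality of the Greenberg transform, of $L \pi_{0}$, and of the comparison map between pushforward and transform; this functoriality is what the d\'evissage in the sequel requires.

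The main obstacle I anticipate is purely a matter of reconciling conventions with \cite{Bes78}: pinning down Bester's indexing and normalization of the resolution, verifying that his $\mathscr{F}$ is phrased as this cokernel of $\pi_{1}$'s rather than, equivalently, as $L \pi_{0}$ of his own version of the cohomology complex (possibly with a degree shift), and checking compatibility with the base change to an algebraic closure of $k$, over which Bester works. Once the dictionary --- Greenberg transform against $\alg{\Gamma}(\Order_{K}, \cdot)$, and $L \pi_{0}$ of a connected proalgebraic group against $\pi_{1}$ placed in degree $-1$ --- is in place, nothing substantive remains: the content of the proposition is exactly that the sheaf-theoretic complex $R \alg{\Gamma}(\Order_{K}, N)$ recovers, after applying $L \pi_{0}$, Bester's classical invariant.
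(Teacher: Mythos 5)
Your proposal is correct and follows essentially the same route as the paper: the concentration in degree $0$ is exactly the content of the preceding paragraph (vanishing of $\pi_{n}$ for $n \ge 2$, finiteness of $\alg{\Gamma}(\Order_{K}, N)$, connectedness of $\alg{H}^{1}(\Order_{K}, N)$), and the identification with $\mathscr{F}(N)$ is read off from the exact sequence $0 \to \pi_{1} \alg{\Gamma}(\Order_{K}, G) \to \pi_{1} \alg{\Gamma}(\Order_{K}, H) \to H^{0} L \pi_{0} R \alg{\Gamma}(\Order_{K}, N) \to 0$ obtained by applying $L \pi_{0}$ to the smooth resolution, which matches Bester's definition of $\mathscr{F}(N)$ as that cokernel. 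Your additional remarks on independence of the resolution and functoriality are sensible bookkeeping that the paper leaves implicit.
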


Therefore our morphism after $R \Gamma(\closure{k}_{\pro\et}, \;\cdot\;)$
takes the form
	\[
			R \Gamma_{x}(\Order_{K}^{\ur}, N^{\CDual})
		\to
			\Hom_{\Pro \FEt / \closure{k}} \bigl(
				\mathscr{F}(N),
				\Q / \Z
			\bigr)[-2].
	\]
Recall again that the left-hand side is concentrated in degree $2$.

\begin{Thm} \label{thm: Bester's isomorphism}
	The morphism
		\[
				H_{x}^{2}(\Order_{K}^{\ur}, N^{\CDual})
			\to
				\mathscr{F}(N)^{\PDual}
		\]
	of torsion discrete $\Gal(\closure{k} / k)$-modules thus obtained is an isomorphism.
\end{Thm}

This is stated in \cite[\S 2.3, Thm.\ 3.1]{Bes78}.
We will directly prove this by proving \eqref{thm: duality for finite flats over integers}
in the next subsubsection.

\begin{Rmk} \label{rmk: Bester duality}
	Comparing our finite flat duality isomorphism above and Bester's isomorphism,
	which we omit here, is highly technical and complicated.
	We have to bring many constructions in Bester's work into derived categories of sheaves.
	Several technical fixes that we do not discuss here in detail are also necessary.
	In this paper, we avoid the comparison and fixes to save pages and prove Bester's duality independently.
	
	Below we merely indicate one issue that needs a fix.
	Assume for simplicity that the residue field $k$ is algebraically closed.
	Recall that the finite flat site $\Spec \Order_{K, \ffl}$ of $\Order_{K}$ is
	the category of finite flat $\Order_{K}$-algebras
	where a cover of an object $S$ is a finite family $\{S_{i}\}$ of finite flat $S$-algebras
	such that $S \to \prod S_{i}$ is faithfully flat.
	For a finite flat group scheme $N$ over $\Order_{K}$,
	Bester defined his group $\mathscr{F}(N)$ in \cite[\S 1.3, Def.\ 3.3]{Bes78}
	not only as a pro-finite-\'etale group over $k$
	but also as a \emph{sheaf} on the finite flat site $\Spec \Order_{K, \ffl}$.
	Let us denote this sheaf by $\mathscr{F}_{\sh}(N)$ for clarity.
	He stated the existence of a trace isomorphism
		\[
				H_{x}^{2}(\Order_{K}, \mathscr{F}_{\sh}(\mu_{p^{n}}))
			=
				\Z / p^{n} \Z
		\]
	for any $n \ge 1$ in \cite[\S 2.6, Lem.\ 6.2]{Bes78} and its proof.
	Here $\mathscr{F}_{\sh}(\mu_{p^{n}})$ is pulled back to $\Spec \Order_{K, \fppf}$
	(\cite[\S 1.2, Rmk.\ 2.2]{Bes78})
	and then taken the fppf cohomology with support (\cite[\S 2.2]{Bes78}).
	However, we can show that $\mathscr{F}_{\sh}(\mu_{p^{n}}) = 0$
	as a sheaf on $\Spec \Order_{K, \ffl}$,
	which contradicts to his statement.
	
	To prove $\mathscr{F}_{\sh}(\mu_{p^{n}}) = 0$,
	we first recall the definition of this sheaf.
	For a flat group scheme $G$ over $\Order_{K}$,
	recall from \cite[\S 1.2, i)]{Bes78} that
	another sheaf denoted by $\pi_{1}(\Tilde{G})$ on $\Spec \Order_{K, \ffl}$ is defined as follows.
	Assume for simplicity that $G$ is affine.
	For a finite flat local $\Order_{K}$-algebra $S$,
	let $\Res_{S / \Order_{K}}$ be the Weil restriction functor.
	Consider the Greenberg transform $\alg{\Gamma}(\Order_{K}, \Res_{S / \Order_{K}} G)$
	of $\Res_{S / \Order_{K}} G$
	and denote it by $\alg{\Gamma}(S, G)$.
	This is a proalgebraic group over $k$ representing the functor
		\[
				R
			\mapsto
				\Gamma(R[[T]], \Res_{S / \Order_{K}} G)
			=
				\Gamma(R[[T]] \tensor_{k[[T]]} S, G)
		\]
	on the category of $k$-algebras,
	where we identified $\Order_{K} = k[[T]]$.
	We have
		\[
				\alg{\Gamma}(S, G)
			=
				\invlim_{n} \Res_{(S / T^{n} S) / k} G.
		\]
	Define $\pi_{1}(\Tilde{G})$ to be the presheaf
	that assigns to each a finite flat $\Order_{K}$-algebra $S$
	and its decomposition $\prod S_{i}$ into local rings the group
		\[
			\prod_{i} \pi_{1} \alg{\Gamma}(S_{i}, G).
		\]
	This is in fact a sheaf by the faithfully flat descent and
	the left exactness of $\pi_{1}$ (\cite[\S 1.2, Lem.\ 2.1]{Bes78}).
	Here we treat $\pi_{1}(\Tilde{G})$ as a single set of notation
	and do not decompose it into the $\pi_{1}$ of anything in the usual sense.
	Let $N$ be a finite flat group scheme over $\Order_{K}$.
	Take an exact sequence $0 \to N \to G \to H \to 0$ of group schemes over $\Order_{K}$
	with $G$, $H$ smooth affine with connected fibers,
	as we took in the proof of \eqref{prop: cohomology of integers}
	\eqref{ass: cohomology of finite flats over integers}.
	Then the sheaf $\mathscr{F}_{\sh}(N) \in \Ab(\Order_{K, \ffl})$,
	as defined in \cite[\S 1.3, Def.\ 3.3]{Bes78},
	is the cokernel of the morphism of sheaves
		\begin{equation} \label{eq: definition of Bester sheaf}
				\pi_{1}(\Tilde{G}) \to \pi_{1}(\Tilde{H})
			\quad \text{in} \quad
				\Ab(\Order_{K, \ffl}).
		\end{equation}
	
	We show that $\mathscr{F}_{\sh}(\mu_{p^{n}}) = 0$ for any $n \ge 1$
	as a sheaf on $\Spec \Order_{K, \ffl}$.
	For this, it is enough to show that for any finite flat local $\Order_{K}$-algebra $S$,
	there exists a finite faithfully flat local $S$-algebra $S'$
	such that the homomorphism from the cokernel of
		\[
				\pi_{1} \alg{\Gamma}(S, \Gm)
			\stackrel{p^{n}}{\to}
				\pi_{1} \alg{\Gamma}(S, \Gm)
		\]
	to the cokernel of
		\[
				\pi_{1} \alg{\Gamma}(S', \Gm)
			\stackrel{p^{n}}{\to}
				\pi_{1} \alg{\Gamma}(S', \Gm)
		\]
	is zero.
	The group $\alg{\Gamma}(S, \Gm)$ is connected.
	Since $\Res_{R / k} \mu_{p^{n}}$ is connected for any finite $k$-algebra $R$,%
	\footnote{
		To see this,
		it is enough to show that
		the scheme-theoretically isomorphic group $\Res_{R / k} \alpha_{p^{n}}$ is connected.
		Let $\{r_{i}\}_{i = 1}^{d}$ be a $k$-basis of $R$.
		Write $r_{j}^{p^{n}} = \sum_{i} c_{i j} r_{i}$ with $c_{i j} \in k$.
		Let $C \colon \Ga^{d} \to \Ga^{d}$ be the $k$-linear morphism
		given by the matrix with entries $c_{i j}$.
		Then $\Res_{R / k} \alpha_{p^{n}} = \Ker(C F^{n})$ as a subgroup of $\Ga^{d} = \Res_{R / k} \Ga$,
		where $F \colon \Ga^{d} \to \Ga^{d}$ is the Frobenius over $k$.
		Hence $\Res_{R / k} \alpha_{p^{n}}$ is an extension of the vector group $\Ker C$ by $\alpha_{p^{n}}^{d}$,
		thus connected.
	}
	it follows that $\alg{\Gamma}(S, \mu_{p^{n}})$ is also connected
	(cf.\ \cite[\S 1.1, Lem.\ 1.2]{Bes78}).
	This implies
		\[
				\bigl(
					\pi_{1} \alg{\Gamma}(S, \Gm)
				\bigr) / p^{n}
			=
				\pi_{1} \bigl(
					\alg{\Gamma}(S, \Gm) / p^{n}
				\bigr),
		\]
	where $/ p^{n}$ denotes the cokernel of multiplication by $p^{n}$.
	Similar for $\alg{\Gamma}(S', \;\cdot\;)$.
	Hence it is enough to show that
	the morphism $\alg{\Gamma}(S, \Gm) \to \alg{\Gamma}(S', \Gm)$
	factors through the image $D$
	of the $p^{n}$-th power map $\alg{\Gamma}(S', \Gm) \to \alg{\Gamma}(S', \Gm)$.
	The groups $\alg{\Gamma}(S, \Gm)$, $\alg{\Gamma}(S', \Gm)$ and $D$ are
	inverse limits of smooth algebraic groups over $k$.
	Therefore it is enough to check the statement for $k$-points
	($k$ being algebraically closed).
	The requirement for $S'$ is now that
	every element of $\Gamma(S, \Gm)$ should become a $p^{n}$-th power in $\Gamma(S', \Gm)$.
	If $S = k[[T]][x_{1}, \dots, x_{n}] / (f_{1}, \dots, f_{m})$
	for some polynomials $f_{1}, \dots, f_{m}$ of the polynomial ring,
	then $S' = k[[T^{1 / p^{n}}]][x_{1}^{1 / p^{n}}, \dots, x_{n}^{1 / p^{n}}] / (f_{1}, \dots, f_{m})$
	does the job.
	This choice of $S'$ was suggested by Bertapelle.
	
	We should instead consider the mapping cone
	of the morphism \eqref{eq: definition of Bester sheaf} in $D(\Order_{K, \ffl})$.
	This is the first step for a fix,
	but we should stop here.
\end{Rmk}

\numberwithin{equation}{subsubsection}
\subsubsection{Proof of the finite flat duality}
\label{sec: Proof of the finite flat duality}

Let $N$ be a finite flat group scheme over $K$.
The morphism of functoriality of $R \Tilde{\alg{\Gamma}}$
\eqref{prop: localization and functoriality in pro-etale}
and the trace morphism \eqref{eq: trace morphism} give morphisms
	\begin{align*}
		&
				R \alg{\Gamma}(K, N^{\CDual})
			\to
				R \Tilde{\alg{\Gamma}} \bigl(
					K,
					R \sheafhom_{K}(N, \Gm)
				\bigr)
		\\
		&	\quad
			\to
				R \sheafhom_{k^{\ind\rat}_{\pro\et}} \bigl(
					R \alg{\Gamma}(K, N),
					\Z
				\bigr)
			=
				R \alg{\Gamma}(K, N)^{\SDual}
	\end{align*}

\begin{Thm} \label{thm: duality for finite flats over local fields}
	The morphism
		\[
				R \alg{\Gamma}(K, N^{\CDual})
			\to
				R \alg{\Gamma}(K, N)^{\SDual}
		\]
	in $D(k^{\ind\rat}_{\pro\et})$ defined above is an isomorphism.
\end{Thm}

This is proved in \cite[Thm.\ 2.7.1]{Suz13}
for the case $N$ does not have connected unipotent part.
We prove \eqref{thm: duality for finite flats over local fields} for general $N$,
in addition to and at the same time proving \eqref{thm: duality for finite flats over integers}.
Note that P\'epin \cite[\S 7.6.1]{Pep14} also proved a result equivalent to
\eqref{thm: duality for finite flats over local fields}
for general $N$.

\begin{Prop} \label{prop: localization and Serre dual, for finite flats}
	For a finite flat group scheme $N$ over $\Order_{K}$ with Cartier dual $N^{\CDual}$,
	we have a morphism of distinguished triangles
		\[
			\begin{CD}
					R \alg{\Gamma}(\Order_{K}, N^{\CDual})
				@>>>
					R \alg{\Gamma}(K, N^{\CDual})
				@>>>
					R \alg{\Gamma}_{x}(\Order_{K}, N^{\CDual})[1]
				\\
				@VVV
				@VVV
				@VVV
				\\
					R \alg{\Gamma}_{x}(\Order_{K}, N)^{\SDual}[-1]
				@>>>
					R \alg{\Gamma}(K, N)^{\SDual}
				@>>>
					R \alg{\Gamma}(\Order_{K}, N)^{\SDual}
			\end{CD}
		\]
	in $D(k^{\ind\rat}_{\pro\et})$.
\end{Prop}

\begin{proof}
	Apply the diagram in \eqref{prop: localization and functoriality in pro-etale}
	for $A = N$ and $B = \Gm$
	and note that the trace (iso)morphisms
	$R \alg{\Gamma}(K, \Gm) \to \Z$ and $R \alg{\Gamma}_{x}(\Order_{K}, \Gm)[1] \to \Z$
	are compatible with the morphism
	$R \alg{\Gamma}(K, \Gm) \to R \alg{\Gamma}_{x}(\Order_{K}, \Gm)[1]$
	by construction.
\end{proof}

\begin{Prop}
	To prove \eqref{thm: duality for finite flats over integers}
	and \eqref{thm: duality for finite flats over local fields},
	it is enough to show \eqref{thm: duality for finite flats over integers}
	for the case that the finite flat group scheme $N$ or $N^{\CDual}$
	has height $1$,
	i.e.\ has zero Frobenius.
\end{Prop}

\begin{proof}
	First note that if $0 \to N_{1} \to N_{2} \to N_{3} \to 0$ is
	an exact sequence of finite flat group schemes (over $\Order_{K}$ or $K$)
	and if the statements are true for any two of them, then so is for the other
	by the five lemma.
	The statements are classical and elementary if $N$ has order prime to $p$
	(cf.\ \cite[\S 3.2]{Ber03}).
	Assume $N$ (over $\Order_{K}$ or $K$) has $p$-power order.
	Then it has a filtration whose successive subquotients are either height $1$
	or have Cartier dual of height $1$ (\cite[\S 2.6, Lem.\ 6.1]{Bes78}).
	A finite flat group scheme over $K$ of height $1$ can be extended
	to a finite flat group scheme over $\Order_{K}$ of height $1$
	by \cite[III, Prop.\ B.4]{Mil06}.
\end{proof}

Let $\Omega_{\Order_{K}}^{1} = \Omega_{\Order_{K} / k}^{1}$ and $\Omega_{K}^{1} = \Omega_{K / k}^{1}$ be
the first differential modules.
We identify them as group schemes isomorphic to $\Ga$.
We have $R \alg{\Gamma}(\Order_{K}, \Ga) = \alg{O}_{K}$,
$R \alg{\Gamma}(K, \Ga) = \alg{K}$, and hence
$R \alg{\Gamma}_{x}(\Order_{K}, \Ga) = \alg{K} / \alg{O}_{K}[-1]$.
The residue map $\sum a_{n} T^{n} d T / T \mapsto a_{0}$ may be viewed as a morphism
	\[
			\Res
		\colon
			R \alg{\Gamma}(K, \Omega_{K}^{1})
		\to
			R \alg{\Gamma}_{x}(\Order_{K}, \Omega_{\Order_{K}}^{1})[1]
		\to
			\Ga
	\]
in $D(k^{\ind\rat}_{\pro\et})$.
Together with the coboundary map $\Ga \to \Z / p \Z[1]$ of the Artin-Schreier sequence
$0 \to \Z / p \Z \to \Ga \to \Ga \to 0$
and the coboundary map $\Z / p \Z[1] \to \Z[2]$ of the sequence
$0 \to \Z \to \Z \to \Z / p \Z \to 0$,
we have morphisms
	\[
			R \alg{\Gamma}(K, \Omega_{K}^{1})
		\to
			R \alg{\Gamma}_{x}(\Order_{K}, \Omega_{\Order_{K}}^{1})[1]
		\to
			\Z[2]
	\]
in $D(k^{\ind\rat}_{\pro\et})$,
which we call the \emph{additive trace morphisms}
(as opposed to the \emph{multiplicative} trace morphisms
$R \alg{\Gamma}(K, \Gm) \to R \alg{\Gamma}_{x}(\Order_{K}, \Gm)[1] = \Z$).

\begin{Prop} \label{prop: additive duality for each term}
	Let $M \times L \to \Omega_{\Order_{K}}^{1}$
	(resp.\ $M \times L \to \Omega_{K}^{1}$) be a perfect pairing of
	finite free $\Order_{K}$-modules (resp.\ finite free $K$-modules).
	Consider the morphisms
		\begin{align*}
					R \alg{\Gamma}_{x}(\Order_{K}, M)
			&	\to
					R \alg{\Gamma}_{x} \bigl(
						\Order_{K},
						R \sheafhom_{\Order_{K}}(L, \Omega_{\Order_{K}}^{1})
					\bigr)
			\\
			&	\to
					R \sheafhom_{k^{\ind\rat}_{\pro\et}} \bigl(
						R \alg{\Gamma}(\Order_{K}, L),
						R \alg{\Gamma}_{x}(\Order_{K}, \Omega_{\Order_{K}}^{1})
					\bigr)
			\\
			&	\to
					R \alg{\Gamma}(\Order_{K}, L)^{\SDual}[1]
		\end{align*}
	and similarly
		\[
				R \alg{\Gamma}(\Order_{K}, M)
			\to
				R \alg{\Gamma}_{x}(\Order_{K}, L)^{\SDual}[1]
		\]
	(resp.\
		\begin{align*}
					R \alg{\Gamma}(K, M)
			&	\to
					R \alg{\Gamma} \bigl(
						K,
						R \sheafhom_{K}(L, \Omega_{K}^{1})
					\bigr)
			\\
			&	\to
					R \sheafhom_{k^{\ind\rat}_{\pro\et}} \bigl(
						R \alg{\Gamma}(K, L),
						R \alg{\Gamma}(K, \Omega_{K}^{1})
					\bigr)
			\\
			&	\to
					R \alg{\Gamma}(K, L)^{\SDual}[2])
		\end{align*}
	in $D(k^{\ind\rat}_{\pro\et})$
	defined by the functoriality of $R \Tilde{\alg{\Gamma}}_{x}(\Order_{K}, \;\cdot\;)$
	(resp.\ $R \Tilde{\alg{\Gamma}}(K, \;\cdot\;)$)
	and the additive trace morphism.
	These are isomorphisms.
\end{Prop}

\begin{proof}
	The latter isomorphism (for cohomology of $K$)
	follows from the former two isomorphisms (for cohomology of $\Order_{K}$)
	by a morphism of distinguished triangles similar to the one in
	\eqref{prop: localization and Serre dual, for finite flats}.
	For the former two, it is enough to treat the morphism
		$
				R \alg{\Gamma}_{x}(\Order_{K}, M)
			\to
				R \alg{\Gamma}(\Order_{K}, L)^{\SDual}[1]
		$
	since $\alg{O}_{K} \cong \Ga^{\N}$, $\alg{K} / \alg{O}_{K} \cong \Ga^{\bigoplus \N}$ and
	$\alg{K} \cong \Ga^{\N} \oplus \Ga^{\bigoplus \N}$ are all Serre reflexive
	by \eqref{prop: Serre duality}
	\eqref{ass: Serre duality for unip and etale}.
	We may assume that $M = \Order_{K}$, $L = \Omega_{\Order_{K}}^{1}$
	and the pairing $M \times L \to \Omega_{\Order_{K}}^{1}$ is the multiplication.
	Then the morphism can be written as the composite
		\[
				\alg{K} / \alg{O}_{K}
			\to
				\sheafhom_{k^{\ind\rat}_{\pro\et}} \bigl(
					\alg{\Gamma}(\Order_{K}, \Omega_{\Order_{K}}^{1}),
					\Ga
				\bigr)
			\to
				\alg{\Gamma}(\Order_{K}, \Omega_{\Order_{K}}^{1})^{\SDual}[2],
		\]
	where the first morphism comes from the $k$-linear map
	$K / \Order_{K} \to \Hom_{\text{$k$-mod}}(\Omega_{\Order_{K}}^{1}, k)$,
	the residue map.
	We have
		\[
				\sheafhom_{k^{\ind\rat}_{\pro\et}} \bigl(
					\alg{\Gamma}(\Order_{K}, \Omega_{\Order_{K}}^{1}),
					\Ga
				\bigr)
			=
				\dirlim_{n \ge 1}
				\sheafhom_{k^{\ind\rat}_{\pro\et}} \bigl(
					\alg{\Gamma}(\Order_{K} / \ideal{p}_{K}^{n}, \Omega_{\Order_{K}}^{1}),
					\Ga
				\bigr)
		\]
	by \eqref{prop: comparison thm for ind-proalgebraic groups}
	\eqref{ass: RHom between IPAlg and LAlg}
	and \eqref{prop: schemes over local fields with ind-rational base}
	\eqref{ass: integral points are inverse limits}.
	Hence the mentioned morphism can also be written as the direct limit in $n \ge 1$ of
	the morphisms
		\[
				\alg{p}_{K}^{- n} / \alg{O}_{K}
			\to
				\sheafhom_{k^{\ind\rat}_{\pro\et}} \bigl(
					\alg{\Gamma}(\Order_{K} / \ideal{p}_{K}^{n}, \Omega_{\Order_{K}}^{1}),
					\Ga
				\bigr),
		\]
	where $\alg{p}_{K}^{- n}(k') = \ideal{p}_{K}^{- n} \tensor_{\Order_{K}} \alg{O}_{K}(k')$
	for $k' \in k^{\ind\rat}$.
	For each $n$, this morphism comes from the $k$-linear morphism
		\[
				\Res
			\colon
				\ideal{p}_{K}^{- n} / \Order_{K}
			\to
				\Hom_{\text{$k$-mod}} \bigl(
					\Gamma(\Order_{K} / \ideal{p}_{K}^{n}, \Omega_{\Order_{K}}^{1}),
					k
				\bigr),
		\]
	which is well-known (and easily seen) to be an isomorphism.
	Hence the Breen-Serre duality \cite[III, Lem.\ 0.13 (c)]{Mil06} (where the perfect \'etale site is used)
	and \eqref{prop: Serre duality}
	\eqref{ass: Serre duality for unip and etale} shows that
		\[
				\alg{p}_{K}^{- n} / \alg{O}_{K}
			\isomto
				\alg{\Gamma} \bigl(
					\Order_{K} / \ideal{p}_{K}^{n},
					\Omega_{\Order_{K}}^{1}
				\bigr)^{\SDual}[2].
		\]
	The direct limit in $n$ gives the result.
\end{proof}

We denote $K' = K^{1 / p}$ and $\Order_{K}' = \Order_{K}^{1 / p}$,
which are viewed as a $K$-algebra and an $\Order_{K}$-algebra, respectively,
via inclusions $K \into K'$ and $\Order_{K} \into \Order_{K}'$.
Let $F \colon \Spec K' \to \Spec K$ and $F \colon \Spec \Order_{K}' \to \Spec \Order_{K}$
be the natural morphisms.

Let $N$ be a finite flat group scheme over $\Order_{K}$ of height $1$.
Recall from \cite[Prop.\ 1.1, Lem.\ 2.2]{AM76} that there exist canonical exact sequences
	\[
			0
		\to
			N^{\CDual}
		\to
			V^{0}(N^{\CDual})
		\to
			V^{1}(N^{\CDual})
		\to
			0,
	\]
	\[
			0
		\to
			N
		\to
			F_{\ast} N
		\to
			U^{0}(N)
		\to
			U^{1}(N)
		\to
			0
	\]
of group schemes over $\Order_{K}$.
The terms $U^{i}(N)$, $V^{i}(N^{\CDual})$ are vector groups.
The second exact sequence above for $N = \mu_{p}$ is explicitly given by
	\[
			0
		\to
			\mu_{p}
		\to
			F_{\ast} \mu_{p}
		\to
			F_{\ast} \Omega_{\Order_{K}'}^{1}
		\to
			\Omega_{\Order_{K}}^{1}
		\to
			0,
	\]
where the middle morphism is $F_{\ast} \dlog$
and the right morphism is the $\Order$-linear Cartier operator $C$
minus the formal $p$-th power $W^{\ast}$
(\cite[Lem.\ 2.1, 2.2]{AM76}).
Let $U(N)$ be the complex $U^{0}(N) \to U^{1}(N)$ in degrees $0$ and $1$
and $V(N^{\CDual})$ the complex $V^{0}(N^{\CDual}) \to V^{1}(N^{\CDual})$ in degrees $0$ and $1$.
The morphism $U(N) \to N[1]$ induces isomorphisms
	\begin{gather*}
				R \Gamma(\alg{O}_{K}(k'), U(N))
			\isomto
				R \Gamma(\alg{O}_{K}(k'), N)[1],
		\\
				R \Gamma(\alg{K}(k'), U(N))
			\isomto
				R \Gamma(\alg{K}(k'), N)[1]
	\end{gather*}
for any $k' \in k^{\ind\rat}$ by \cite[Prop.\ 2.4]{AM76}.
(One checks that the proof there also works for our situation
$\Spec \alg{O}_{K}(k') \to \Spec k'$ and $\Spec \alg{K}(k') \to \Spec k'$.)
Therefore we have
	\begin{equation} \label{eq: resolution by vector groups for height one}
		\begin{gathered}
					R \alg{\Gamma}(\Order_{K}, U(N))
				\isomto
					R \alg{\Gamma}(\Order_{K}, N)[1],
			\\
					R \alg{\Gamma}(K, U(N))
				\isomto
					R \alg{\Gamma}(K, N)[1],
			\\
					R \alg{\Gamma}_{x}(\Order_{K}, U(N))
				\isomto
					R \alg{\Gamma}_{x}(\Order_{K}, N)[1].
		\end{gathered}
	\end{equation}
Also
	\begin{equation} \label{eq: resolution by vector groups for dual height one}
		\begin{gathered}
					R \alg{\Gamma}(\Order_{K}, V(N^{\CDual}))
				\cong
					R \alg{\Gamma}(\Order_{K}, N^{\CDual}),
			\\
					R \alg{\Gamma}(K, V(N^{\CDual}))
				\cong
					R \alg{\Gamma}(K, N^{\CDual}),
			\\
					R \alg{\Gamma}_{x}(\Order_{K}, V(N^{\CDual}))
				\cong
					R \alg{\Gamma}_{x}(\Order_{K}, N^{\CDual}).
		\end{gathered}
	\end{equation}

By \cite[Prop.\ 3.4]{AM76}, there exists a canonical pairing
	\[
		V(N^{\CDual}) \times U(N) \to U(\mu_{p})
	\]
of complexes of group schemes over $\Order_{K}$.
The parts
	\[
				V^{1}(N^{\CDual}) \times U^{0}(N)
			\to
				U^{1}(\mu_{p})
			=
				\Omega_{\Order_{K}}^{1},
		\quad
				V^{0}(N^{\CDual}) \times U^{1}(N)
			\to
				U^{1}(\mu_{p})
			=
				\Omega_{\Order_{K}}^{1}
	\]
are given by perfect pairings of finite free $\Order_{K}$-modules.
Let $Z(N)$ be the complex $F_{\ast} N \to U^{0}(N) \to U^{1}(N)$ of group schemes in degrees $0, 1, 2$,
which is a resolution of $N$.
By \cite[(4.7)]{AM76}, the diagram
	\begin{equation} \label{eq: Cartier dual pairing and coherent pairing}
		\begin{array}{ccccc}
			N & \times & N^{\CDual} & \longrightarrow & \mu_{p} \\
			\bigg \downarrow & & \bigg \| & & \bigg \downarrow \\
			Z(N) & \times & N^{\CDual} & \longrightarrow & Z(\mu_{p}) \\
			\bigg \uparrow & & \bigg \downarrow & & \bigg \uparrow \\
			U(N)[-1] & \times & V(N^{\CDual}) & \longrightarrow & U(\mu_{p})[-1]
		\end{array}
	\end{equation}
of pairings of complexes of group schemes over $\Order_{K}$ is commutative.

The complete discrete valuation field $K' = K^{1 / p}$ has residue field $k$.
We can define a sheaf of rings
$\alg{K}' = \alg{\Gamma}(K', \Ga) = \alg{\Gamma}(K, F_{\ast} \Ga)$ on $\Spec k^{\ind\rat}_{\pro\et}$
in a way similar to $\alg{K}$.
We also have a residue map
$\alg{\Gamma}(K, F_{\ast} \Omega_{K'}^{1}) = \alg{\Gamma}(K', \Omega_{K'}^{1}) \to \Ga$ for $K'$.

\begin{Prop} \label{prop: compatibility of additive and multiplicative trace morphisms}
	The residue map gives a morphism of complexes from
		\[
				R \alg{\Gamma}_{x}(\Order_{K}, U(\mu_{p}))
			=
				\left[
						\frac{
							\alg{\Gamma}(K, F_{\ast} \Omega_{K'}^{1})
						}{
							\alg{\Gamma}(\Order_{K}, F_{\ast} \Omega_{\Order_{K}'}^{1})
						}
					\to
						\frac{
							\alg{\Gamma}(K, \Omega_{K}^{1})
						}{
							\alg{\Gamma}(\Order_{K}, \Omega_{\Order_{K}}^{1})
						}
				\right][-2]
		\]
	to $[\Ga \stackrel{1 - F}{\to} \Ga][-2] \cong \Z / p \Z [-1]$.
	The morphism
		\[
				R \alg{\Gamma}_{x}(\Order_{K}, U(\mu_{p}))
			\to
				\Z / p \Z[-1]
		\]
	thus obtained and the morphism
		\[
				R \alg{\Gamma}_{x}(\Order_{K}, \mu_{p})[1]
			\to
				\Z / p \Z[-1]
		\]
	coming from $R \alg{\Gamma}(\Order_{K}, \mu_{p}) \cong \alg{U}_{K} / (\alg{U}_{K})^{p}[-1]$
	and
		$
				R \alg{\Gamma}(K, \mu_{p})
			\cong
				\alg{K}^{\times} / (\alg{K}^{\times})^{p}[-1]
			\stackrel{v_{K}}{\to}
				\Z / p \Z[-1]
		$
	are compatible with the identification
	\eqref{eq: resolution by vector groups for height one}.
\end{Prop}

\begin{proof}
	By \cite[Lem.\ 2.1]{AM76}, we have an exact sequence
		\[
				0
			\to
				\Gm
			\to
				F_{\ast} \Gm
			\stackrel{\dlog}{\to}
				F_{\ast} \Omega_{K'}^{1}
			\stackrel{C - W^{\ast}}{\to}
				\Omega_{K}^{1}
			\to
				0
		\]
	of group schemes over $K$.
	Since $\alg{H}^{1}(K, \Gm) = 0$,
	applying $R \alg{\Gamma}(K, \;\cdot\;)$ gives the exact sequence
	in the top row of the following diagram:
		\[
			\begin{CD}
					0
				@>>>
					\alg{K}^{\times}
				@>> \text{incl} >
					\alg{K}'^{\times}
				@>> \dlog >
					\alg{\Gamma}(K', \Omega_{K'}^{1})
				@>> C - W^{\ast} >
					\alg{\Gamma}(K, \Omega_{K}^{1})
				\\
				@.
				@VVV
				@VV v_{K'} V
				@VV \Res V
				@VV \Res V
				@.
				\\
				@.
					0
				@>>>
					\Z / p \Z
				@>>>
					\Ga
				@> 1 - F >>
					\Ga
				@>>>
					0.
			\end{CD}
		\]
	Here $v_{K'}$ is the normalized valuation for $K'$
	and $\alg{\Gamma}(K', \Omega_{K'}^{1}) \to \Ga$ is the residue map for $K'$.
	The result follows if we check that the squares are commutative.
	The left two squares are easily seen to be commutative.
	For the right square, let $T$ be a prime element of $K$ and write
		\[
				F_{\ast} \Omega_{K}^{1}
			=
				\bigoplus_{i = 0}^{p - 1} \Ga T^{1 / p} d T^{1 / p} / T^{1 / p}
			\cong
				\Ga^{p}
		\]
	as group schemes over $K$.
	The morphism $C - W^{\ast}$ sends an element
	$\sum f_{i} T^{i / p} d T^{1 / p} / T^{1 / p}$ to
		\[
				f_{0} d T / T
			-
				\sum (f_{i}^{p} T^{i}) d T / T.
		\]
	Its residue is $a_{0} - a_{0}^{p}$, where $a_{0}$ is the constant term of $f_{0}$,
	or the residue of $f_{0}$.
	This proves the commutativity.
\end{proof}

We also call the morphism
	\[
			R \alg{\Gamma}_{x}(\Order_{K}, U(\mu_{p}))
		\to
			\Z / p \Z[-1]
		\to
			\Z
	\]
thus obtained the additive trace morphism.

\begin{Prop} \label{prop: duality for vector group resolutions}
	Let $N$ be a finite flat group scheme of height $1$ over $\Order_{K}$.
	Then the morphisms
		\begin{align*}
					R \alg{\Gamma}_{x}(\Order_{K}, V(N^{\CDual}))
			&	\to
					R \sheafhom_{k^{\ind\rat}_{\pro\et}} \bigl(
						R \alg{\Gamma}(\Order_{K}, U(N)),
						R \alg{\Gamma}_{x}(\Order_{K}, U(\mu_{p}))
					\bigr)
			\\
			&	\to
					R \alg{\Gamma}(\Order_{K}, U(N))^{\SDual}
		\end{align*}
	and
		\begin{align*}
					R \alg{\Gamma}(\Order_{K}, V(N^{\CDual}))
			&	\to
					R \sheafhom_{k^{\ind\rat}_{\pro\et}} \bigl(
						R \alg{\Gamma}_{x}(\Order_{K}, U(N)),
						R \alg{\Gamma}_{x}(\Order_{K}, U(\mu_{p}))
					\bigr)
			\\
			&	\to
					R \alg{\Gamma}(\Order_{K}, U(N))^{\SDual}
		\end{align*}
	in $D(k^{\ind\rat}_{\pro\et})$ defined by the pairing
	$V(N^{\CDual}) \times U(N) \to U(\mu_{p})$,
	the functoriality of $R \Tilde{\alg{\Gamma}}_{x}$ and the additive trace morphism
	are isomorphisms.
\end{Prop}

\begin{proof}
	We only treat the first morphism as the second morphism can be treated similarly.
	We have a morphism between distinguished triangles
		\[
			\begin{CD}
					R \alg{\Gamma}_{x}(\Order_{K}, V(N^{\CDual}))
				@>>>
					R \alg{\Gamma}_{x}(\Order_{K}, V^{0}(N^{\CDual}))
				@>>>
					R \alg{\Gamma}_{x}(\Order_{K}, V^{1}(N^{\CDual}))
				\\
				@VVV
				@VVV
				@VVV
				\\
					R \alg{\Gamma}(\Order_{K}, U(N))^{\SDual}
				@>>>
					R \alg{\Gamma}(\Order_{K}, U^{1}(N))^{\SDual}[1]
				@>>>
					R \alg{\Gamma}(\Order_{K}, U^{0}(N))^{\SDual}[1]
			\end{CD}
		\]
	The right two morphisms are isomorphisms by
	\eqref{prop: additive duality for each term}.
	So is the left one.
\end{proof}

\begin{Prop}
	Let $N$ be a finite flat group scheme of height $1$ over $\Order_{K}$.
	The isomorphisms
		\[
					R \alg{\Gamma}_{x}(\Order_{K}, V(N^{\CDual}))
				\to
					R \alg{\Gamma}(\Order_{K}, U(N))^{\SDual},
			\quad
					R \alg{\Gamma}(\Order_{K}, V(N^{\CDual}))
				\to
					R \alg{\Gamma}(\Order_{K}, U(N))^{\SDual}
		\]
	in the previous proposition and the morphisms
		\[
					R \alg{\Gamma}_{x}(\Order_{K}, N^{\CDual})
				\to
					R \alg{\Gamma}(\Order_{K}, N)^{\SDual}[-1],
			\quad
					R \alg{\Gamma}(\Order_{K}, N^{\CDual})
				\to
					R \alg{\Gamma}(\Order_{K}, N)^{\SDual}[-1]
		\]
	given in \eqref{thm: duality for finite flats over integers}
	are compatible under the identifications
	\eqref{eq: resolution by vector groups for height one}
	and \eqref{eq: resolution by vector groups for dual height one}.
\end{Prop}

\begin{proof}
	Applying the functoriality of $R \Tilde{\alg{\Gamma}}_{x}$
	to the diagram \eqref{eq: Cartier dual pairing and coherent pairing},
	we have commutative diagrams
		\[
			\begin{CD}
					R \alg{\Gamma}_{x}(\Order_{K}, N^{\CDual})
				@>>>
					R \sheafhom_{k} \bigl(
						R \alg{\Gamma}(\Order_{K}, N),
						R \alg{\Gamma}_{x}(\Order_{K}, \mu_{p})
					\bigr)
				\\
				@|
				@|
				\\
					R \alg{\Gamma}_{x}(\Order_{K}, V(N^{\CDual}))
				@>>>
					R \sheafhom_{k} \bigl(
						R \alg{\Gamma}(\Order_{K}, U(N)),
						R \alg{\Gamma}_{x}(\Order_{K}, U(\mu_{p}))
					\bigr)
			\end{CD}
		\]
	and
		\[
			\begin{CD}
					R \alg{\Gamma}(\Order_{K}, N^{\CDual})
				@>>>
					R \sheafhom_{k} \bigl(
						R \alg{\Gamma}_{x}(\Order_{K}, N),
						R \alg{\Gamma}_{x}(\Order_{K}, \mu_{p})
					\bigr)
				\\
				@|
				@|
				\\
					R \alg{\Gamma}(\Order_{K}, V(N^{\CDual}))
				@>>>
					R \sheafhom_{k} \bigl(
						R \alg{\Gamma}_{x}(\Order_{K}, U(N)),
						R \alg{\Gamma}_{x}(\Order_{K}, U(\mu_{p}))
					\bigr)
			\end{CD}
		\]
	in $D(k^{\ind\rat}_{\pro\et})$.
	The morphism in \eqref{thm: duality for finite flats over integers}
	is given by applying the morphism
		\[
				R \alg{\Gamma}_{x}(\Order_{K}, \mu_{p})[1]
			\to
				\Z / p \Z[-1].
		\]
	The morphism in \eqref{prop: duality for vector group resolutions}
	is given by applying the morphism
		\[
				R \alg{\Gamma}_{x}(\Order_{K}, U(\mu_{p}))
			\to
				\Z / p \Z[-1].
		\]
	As they are compatible
	by \eqref{prop: compatibility of additive and multiplicative trace morphisms},
	the result follows.
\end{proof}

This finishes the proof of
\eqref{thm: duality for finite flats over integers}
and \eqref{thm: duality for finite flats over local fields}.


\numberwithin{equation}{subsubsection}
\subsubsection{Bertapelle's isomorphism}
\label{sec: Bertapelle's isomorphism}

\begin{Prop} \label{prop: comparison of Bertapelle isomorphism and our isomorphism}
	Assume that $K$ has equal characteristic and $k$ is algebraically closed.
	Let $A$ be a semistable abelian variety over $K$.
	Suppose that we take the morphism constructed in \eqref{thm: Bester's isomorphism}
	as the definition of Bester's isomorphism
	that Bertapelle used in \cite[Thm.\ 1]{Ber03}.
	Then the resulting isomorphism
		\[
				\psi_{A}
			\colon
				H^{1}(K, A^{\vee})
			\isomto
				\Ext_{k^{\ind\rat}_{\pro\et}}^{1}(\alg{\Gamma}(A), \Q / \Z)
		\]
	Bertapelle constructed in \cite[Thm.\ 2]{Ber03} coincides
	with our morphism $\theta_{A}^{+1}(k)$.
\end{Prop}

We prove this below.
For the moment, $k$ is assumed to be a general perfect field.
First we need a preparation about the finite flat site of $\Order_{K}$.

\begin{Prop} \label{prop: local rings are Henselian}
	Let $k' \in k^{\ind\rat}$.
	The local ring (for the Zariski topology) of $\alg{O}_{K}(k')$
	at any maximal ideal is Henselian.
\end{Prop}

\begin{proof}
	By \eqref{prop: basic properties of local fields with ind-rational base}
	\eqref{ass: maximal ideals of local fields with ind-rational base},
	a maximal ideal $\ideal{n}$ of $\alg{O}_{K}(k')$ is
	of the form $\alg{p}_{K}(k') + \alg{O}_{K}(\ideal{m})$
	for some $\ideal{m} \in \Spec k'$, with residue field $k' / \ideal{m}$.
	By \eqref{prop: basic properties of local fields with ind-rational base}
	\eqref{ass: Zariski topology of local fields with ind-rational base},
	the local ring $\alg{O}_{K}(k')_{\ideal{n}}$
	is given by the filtered direct limit of $\alg{O}_{K}(k'[1 / e]) = \alg{O}_{K}(k' / (1 - e))$
	for the idempotents $e \in k' \setminus \ideal{m}$.
	
	Let $f_{1}, \dots, f_{n} \in \alg{O}_{K}(k')[x_{1}, \dots, x_{n}]$
	and $\alpha = (\alpha_{1}, \dots, \alpha_{n}) \in \alg{O}_{K}(k')^{n}$.
	Suppose that the images $\Bar{f}_{i}(\alpha) \in k' / \ideal{m}$ are zero for all $i$
	and $\det (\partial \Bar{f}_{i} / \partial x_{j})(\alpha) \ne 0$.
	We need to show that the polynomial system $(f_{1}, \dots, f_{n})$ has a root
	in $\alg{O}_{K}(k')_{\ideal{n}}$ whose reduction is $\Bar{\alpha} \in k' / \ideal{m}$.
	It is enough to show the existence of a root in $\alg{O}_{K}(k')$
	after replacing $k'$ by $k'[1 / e]$ for an idempotent $e \in k' \setminus \ideal{m}$.
	Since the element $\det (\partial f_{i} / \partial x_{j})(\alpha) \in \alg{O}_{K}(k')$
	has non-zero image in $k' / \ideal{m}$,
	\eqref{prop: basic properties of local fields with ind-rational base}
	\eqref{ass: invertibility of series in ind-rational coefficients}
	shows that $\det (\partial f_{i} / \partial x_{j})(\alpha) \in \alg{U}_{K}(k')$
	after replacing $k'$.
	Since $\alg{O}_{K}(k')$ is $\alg{p}_{K}(k')$-adically complete,
	Hensel's lemma shows the existence of a desired root.
\end{proof}

Recall that a morphism of schemes is finite locally free
if and only if it is finite flat locally of finite presentation
(\cite[Chap.\ II, \S 5, No.\ 2, Cor.\ 2 to Thm.\ 1]{Bou98},
\cite[Prop.\ 1.4.7]{Gro64}).
For a commutative ring $S$,
we denote by $\Spec S_{\ffl}$ the finite flat site of $S$,
namely the category of finite locally free $S$-algebras
where a covering of an object $S'$ is
a finite family $\{S'_{i}\}$ of finite locally free $S'$-algebras
such that $\prod S'_{i}$ is faithfully flat over $S'$.

\begin{Prop} \label{prop: push from fppf to ffl is exact}
	Let $k' \in k^{\ind\rat}$
	and $S$ a finite $\alg{O}_{K}(k')$-algebra.
	Then any fppf covering of $\Spec S$ can be refined by a finite locally free covering.
	That is, for any faithfully flat $S$-algebra $S'$ of finite presentation,
	there exist a faithfully flat finite locally free $S$-algebra $S''$
	and an $S$-algebra homomorphism $S' \to S''$.
	In particular, the continuous map $g \colon \Spec S_{\fppf} \to \Spec S_{\ffl}$ of sites
	defined by the identity induces an exact pushforward functor
	$g_{\ast} \colon \Set(S_{\fppf}) \to \Set(S_{\ffl})$.
\end{Prop}

\begin{proof}
	Refine $\Spec S'$ by a quasi-finite flat covering $\Spec S'''$ of finite presentation
	(\cite[17.16.2]{Gro67}).
	Since $S$ is finite, the previous proposition shows that
	the local ring of $S$ at any maximal ideal of $S$ is Henselian.
	Hence $\Spec S'''$ can be refined by a finite locally free covering.
\end{proof}

The following is stated in \cite[Rmk.\ 2.7.6 (3)]{Suz13}.

\begin{Prop}
	Let $j \colon \Spec K_{\fppf} / k^{\ind\rat} \into \Spec O_{K, \fppf} / k^{\ind\rat}$
	be the morphism induced by the open immersion $\Spec K \into \Spec \Order_{K}$.
	Denote by $j_{!}$ the zero extension functor by $j$ (\cite[Exp.\ III, 5.3, 3)]{AGV72a}).
	Then $R \alg{\Gamma}(\Order_{K}, j_{!} A) = 0$
	for any $A \in \Ab(K_{\fppf} / k^{\ind\rat}_{\et})$.
\end{Prop}

\begin{proof}
	It is enough to show that
	$R \Gamma(\alg{O}_{K}(k'), j_{!} A) = 0$ for any $k' \in k^{\ind\rat}$.
	Let $g \colon \Spec \alg{O}_{K}(k')_{\fppf} \to \Spec \alg{O}_{K}(k')_{\ffl}$
	be the continuous map defined by the identity.
	We have $R \Gamma(\alg{O}_{K}(k'), j_{!} A) = R \Gamma(\alg{O}_{K}(k')_{\ffl}, g_{\ast} j_{!} A)$
	by the previous proposition.
	The $j$ here should be understood as the open immersion
	$\Spec \alg{K}(k') \into \Spec \alg{O}_{K}(k')$.
	Let $S$ be a non-zero finite locally free $\alg{O}_{K}(k')$-algebra of (locally constant) rank $r \ge 1$.
	Then the norm $N_{S / \alg{O}_{K}(k')} \pi = \pi^{r}$
	does not belong to $\alg{U}_{K}(k')$.
	Hence the homomorphism $\alg{O}_{K}(k') \to S$ cannot factor through $\alg{K}(k')$.
	Therefore $g_{\ast} j_{!} = 0$
	by the construction of the zero-extension functor $j_{!}$.
\end{proof}

Bertapelle extended Bester's group $\mathscr{F}(N)$
for quasi-finite flat separated group schemes $N$ over $\Order_{K}$.
For such a group $N$, we denote its finite part by $N^{f}$.
By \cite[\S 3.1 Example]{Ber03}, we have $\mathscr{F}(N) = \mathscr{F}(N^{f})$.

\begin{Prop} \label{prop: cohomology of quasi-finite flats is given by the finite part}
	Let $N$ be a quasi-finite flat separated group scheme over $\Order_{K}$
	with finite part $N^{f}$.
	Then we have $R \alg{\Gamma}(\Order_{K}, N) = R \alg{\Gamma}(\Order_{K}, N^{f}) \in D(\Pro \Alg / k)$
	and it is P-acyclic.
	In particular, there exists a canonical isomorphism
	$L \pi_{0} R \alg{\Gamma}(\Order_{K}, N) \cong \mathscr{F}(N)$
	compatible with the isomorphism
	$L \pi_{0} R \alg{\Gamma}(\Order_{K}, N^{f}) \cong \mathscr{F}(N^{f})$
	of \eqref{prop: Bester group is the homotopy of cohomology}.
\end{Prop}

\begin{proof}
	Let $Y = N \setminus N^{f}$ be the part of $N$ finite over $K$.
	Let $k' \in k^{\ind\rat}$ and $S$ a finite locally free $\alg{O}_{K}(k')$-algebra.
	If there is an $\alg{O}_{K}(k')$-scheme morphism $\Spec S \to Y$,
	then $S$ has to be a $\alg{K}(k')$-algebra,
	which happens only when $S = 0$ by
	the same reasoning as the proof of the previous proposition.
	Hence $\Gamma(S, N^{f}) = \Gamma(S, N)$.
	Therefore $N^{f} = N$ as sheaves on $\Spec \alg{O}_{K}(k')_{\ffl}$.
	Hence \eqref{prop: push from fppf to ffl is exact} implies that
	$R \Gamma(\alg{O}_{K}(k'), N^{f}) = R \Gamma(\alg{O}_{K}(k'), N)$,
	and we have $R \alg{\Gamma}(\Order_{K}, N^{f}) = R \alg{\Gamma}(\Order_{K}, N)$.
\end{proof}

Now we recall the definition of Bertapelle's isomorphism
in the case the abelian variety $A$ has semistable reduction.
Assume that $k$ is algebraically closed.
Let $\mathcal{A}$ be the N\'eron model of $A$
and $\mathcal{A}_{0}$ the maximal open subgroup scheme with connected special fiber.
Let $n \ge 1$ be an integer that kills $\pi_{0}(\mathcal{A}_{x})$.
Let $\mathcal{A}[n]^{f}$ be the finite part of the $n$-torsion part $\mathcal{A}[n]$
and $(\mathcal{A}[n]^{f})_{K} = \mathcal{A}[n]^{f} \times_{\Order_{K}} K$.
The inclusion $(\mathcal{A}[n]^{f})_{K} \into A[n]$ defines a surjection
$A[n]^{\CDual} \onto (\mathcal{A}[n]^{f})_{K}^{\CDual}$.
By \cite[Lem.\ 14]{Ber03}, this canonically extends to a morphism
	\[
			\mathcal{A}^{\vee}[n]
		\to
			(\mathcal{A}[n]^{f})^{\CDual}
	\]
of group schemes over $\Order_{K}$, which induces an isomorphism
	\[
			H_{x}^{2}(\Order_{K}, \mathcal{A}^{\vee}[n])
		\isomto
			H_{x}^{2}(\Order_{K}, (\mathcal{A}[n]^{f})^{\CDual}).
	\]
We denote by $\theta_{\mathcal{A}[n]^{f}}^{2}$ the isomorphism
	\[
			H_{x}^{2}(\Order_{K}, (\mathcal{A}[n]^{f})^{\CDual})
		\isomto
			\mathscr{F}(\mathcal{A}[n]^{f})^{\PDual}
	\]
in \eqref{thm: Bester's isomorphism} for $N = \mathcal{A}[n]^{f}$.
Combining the above two isomorphisms and $\mathscr{F}(\mathcal{A}[n]^{f}) \cong \mathscr{F}(\mathcal{A}[n])$,
we have an isomorphism
	\[
			H_{x}^{2}(\Order_{K}, \mathcal{A}^{\vee}[n])
		\isomto
			\mathscr{F}(\mathcal{A}[n])^{\PDual}.
	\]
We denote this isomorphism by $\psi_{\mathcal{A}[n]}$.
Note that
	\[
			H_{x}^{2}(\Order_{K}, \mathcal{A}_{0}^{\vee})
		=
			H_{x}^{2}(\Order_{K}, \mathcal{A}^{\vee})
		=
			H^{1}(K, A^{\vee}).
	\]
Hence the exact sequence
	$
			0
		\to
			\mathcal{A}^{\vee}[n]
		\to
			\mathcal{A}^{\vee}
		\stackrel{n}{\to}
			\mathcal{A}_{0}^{\vee}
		\to
			0
	$
induces a surjection
	\[
			H_{x}^{2}(\Order_{K}, \mathcal{A}^{\vee}[n])
		\onto
			H^{1}(K, A^{\vee})[n].
	\]
By \cite[\S 3.1, Examples]{Ber03},
the map $\mathscr{F}(\mathcal{A}_{0}[n]) \to \mathscr{F}(\mathcal{A}[n])$ is injective,
and we have
	\begin{align*}
				\mathscr{F}(\mathcal{A}_{0}[n])
		&	=
				\Coker \bigl(
						\pi_{1} \alg{\Gamma}(\Order_{K}, \mathcal{A}_{0})
					\stackrel{n}{\to}
						\pi_{1} \alg{\Gamma}(\Order_{K}, \mathcal{A}_{0})
				\bigr)
		\\
		&	=
				\Coker \bigl(
						\pi_{1} \alg{\Gamma}(\Order_{K}, \mathcal{A})
					\stackrel{n}{\to}
						\pi_{1} \alg{\Gamma}(\Order_{K}, \mathcal{A})
				\bigr)
		\\
		&	=
				\Coker \bigl(
						\pi_{1} \alg{\Gamma}(K, A))
					\stackrel{n}{\to}
						\pi_{1} \alg{\Gamma}(K, A))
				\bigr).
	\end{align*}
By the proof of \cite[Thm.\ 2]{Ber03}, the isomorphism
	$
			\psi_{\mathcal{A}[n]}
		\colon
			H_{x}^{2}(\Order_{K}, \mathcal{A}^{\vee}[n])
		\isomto
			\mathscr{F}(\mathcal{A}[n])^{\PDual}
	$
induces an isomorphism
	\[
			H^{1}(K, A^{\vee})[n]
		\isomto
			(\pi_{1} \alg{\Gamma}(K, A))^{\PDual}[n]
	\]
on the quotients.
The resulting isomorphism
	\[
			H^{1}(K, A^{\vee})
		\isomto
			(\pi_{1} \alg{\Gamma}(K, A))^{\PDual}
	\]
is the definition of Bertapelle's isomorphism in the semistable case.
We denote it by $\psi_{A}$.

Hence, to prove \eqref{prop: comparison of Bertapelle isomorphism and our isomorphism},
it is enough to show the following.

\begin{Prop}
	Let $A$ and $n$ be as above.
	There exists a diagram (commutativity to be mentioned below)
		\[
			\begin{CD}
					H^{1}(K, A^{\vee}[n])
				@>>>
					H_{x}^{2}(\Order_{K}, \mathcal{A}^{\vee}[n])
				@>>>
					H^{1}(K, A^{\vee})[n]
				\\
				@VV \theta_{A[n]}^{1} V
				@VV \psi_{\mathcal{A}[n]} V
				@VV \theta_{A}^{+1} \text{ or } \psi_{A} V
				\\
					H^{0} \bigl(
						\mathscr{F}(A[n])^{\PDual}
					\bigr)
				@>>>
					\mathscr{F}(\mathcal{A}[n])^{\PDual}
				@>>>
					(\pi_{1} \alg{\Gamma}(K, A))^{\PDual}[n].
			\end{CD}
		\]
	Here we define
		\[
				\mathscr{F}(A[n])^{\PDual}
			=
				R \Hom_{k}(R \alg{\Gamma}(K, A[n]), \Q / \Z).
		\]
	The left vertical morphism $\theta_{A[n]}^{1}$ is
	the isomorphism induced on $H^{1}$ by the isomorphism given in
	\eqref{thm: duality for finite flats over local fields}.
	We denoted $\theta_{A}^{+1} = \theta_{A}^{+1}(k)$.
	The horizontal homomorphisms in the left square are to be constructed below,
	and those in the right square are already mentioned.
	The upper horizontal homomorphisms are surjective.
	
	The left square is commutative.
	If we use Bertapelle's isomorphism $\psi_{A}$ for the right vertical arrow,
	then the right square is commutative (by definition).
	If we use our morphism $\theta_{A}^{+1}$,
	then the total square (omitting the middle $\psi_{\mathcal{A}[n]}$) is commutative.
\end{Prop}

As a consequence, we have $\theta_{A}^{+1} = \psi_{A}$
and the right square is commutative.

\begin{proof}
	The upper horizontal homomorphism in the left square is defined as the coboundary map
	of the localization sequence for $\mathcal{A}^{\vee}[n]$.
	It is surjective since
		\[
				R \Gamma(\Order_{K}, \mathcal{A}^{\vee}[n])
			=
				R \Gamma(\Order_{K}, \mathcal{A}^{\vee}[n]^{f})
		\]
	by \eqref{prop: cohomology of quasi-finite flats is given by the finite part}
	and this is concentrated in degrees $0$ and $1$.
	The composite of the upper two horizontal homomorphisms is induced by
	the inclusion $A^{\vee}[n] \into A^{\vee}$.
	By \eqref{prop: cohomology of quasi-finite flats is given by the finite part}
	and by the same calculation as the second paragraph after
	\eqref{thm: duality for finite flats over integers},
	we have
		\begin{align*}
					\mathscr{F}(\mathcal{A}[n])^{\PDual}
			&	=
					R \Hom_{\Pro \FEt / k} \bigl(
						L \pi_{0} R \alg{\Gamma}(\Order_{K}, \mathcal{A}[n]),
						\Q / \Z
					\bigr)
			\\
			&	=
					R \Hom_{k} \bigl(
						R \alg{\Gamma}(\Order_{K}, \mathcal{A}[n]),
						\Q / \Z
					\bigr).
		\end{align*}
	The lower horizontal homomorphism in the left square is defined by dualizing
		$
				R \alg{\Gamma}(\Order_{K}, \mathcal{A}[n])
			\to
				R \alg{\Gamma}(K, A[n])
		$.
	The left square is decomposed into
		\[
			\begin{CD}
					H^{1}(K, A^{\vee}[n])
				@>>>
					H^{1} \bigl(
						K, (\mathcal{A}[n]^{f})_{K}^{\CDual}
					\bigr)
				@>>>
					H_{x}^{2} \bigl(
						\Order_{K}, (\mathcal{A}[n]^{f})^{\CDual}
					\bigr)
				\\
				@VV \theta_{A[n]}^{1} V
				@VV \theta_{(\mathcal{A}[n]^{f})_{K}}^{1} V
				@VV \theta_{\mathcal{A}[n]^{f}}^{2} V
				\\
					\mathscr{F}(A[n])^{\PDual}
				@>>>
					\mathscr{F} \bigl(
						(\mathcal{A}[n]^{f})_{K}
					\bigr)^{\PDual}
				@>>>
					\mathscr{F} \bigl(
						\mathcal{A}[n]^{f}
					\bigr)^{\PDual}.
			\end{CD}
		\]
	The commutativity of the left square is the functoriality of
	the isomorphisms $\theta_{N}^{1}$ for finite flat group schemes $N$ over $K$,
	which is easy to check.
	That of the right square follows from
	\eqref{prop: localization and Serre dual, for finite flats}.
	
	We show the commutativity of the total square
	with $\theta_{A}^{+1}$ used for the right vertical arrow.
	With the identification
	$\mathscr{F}(\mathcal{A}_{0}[n])^{\PDual} = \pi_{1} \alg{\Gamma}(K, A)^{\PDual}[n]$
	used earlier,
	the lower two horizontal homomorphisms can be written as
	the $H^{0}$ of the Pontryagin dual of the morphisms
		\[
				L \pi_{0} R \alg{\Gamma}(\Order_{K}, \mathcal{A}_{0}[n])
			\to
				L \pi_{0} R \alg{\Gamma}(\Order_{K}, \mathcal{A}[n])
			\to
				L \pi_{0} R \alg{\Gamma}(K, A[n]).
		\]
	The $H^{0}$ of the composite of these morphisms is
		\[
				\Coker \bigl(
						\pi_{1} \alg{\Gamma}(K, A)
					\stackrel{n}{\to}
						\pi_{1} \alg{\Gamma}(K, A)
				\bigr)
			\to
				H^{0} L \pi_{0} R \alg{\Gamma}(K, A[n]).
		\]
	This comes from the $H^{0}$ of the distinguished triangle
		\[
				L \pi_{0} R \alg{\Gamma}(K, A)[-1]
			\stackrel{n}{\to}
				L \pi_{0} R \alg{\Gamma}(K, A)[-1]
			\to
				L \pi_{0} R \alg{\Gamma}(K, A[n]).
		\]
	Therefore it is enough to construct a morphism of distinguished triangles
		\[
			\begin{CD}
					R \alg{\Gamma}(K, A^{\vee}[n])
				@>>>
					R \alg{\Gamma}(K, A^{\vee})
				@>> n >
					R \alg{\Gamma}(K, A^{\vee})
				\\
				@VVV
				@VVV
				@VVV
				\\
					R \alg{\Gamma}(K, A[n])^{\SDual}
				@>>>
					R \alg{\Gamma}(K, A)^{\SDual}[1]
				@> n >>
					R \alg{\Gamma}(K, A)^{\SDual}[1].
			\end{CD}
		\]
	(The left square gives the total square in the statement.)
	For this, note that we have a morphism of triangles from the short exact sequence
	$0 \to A^{\vee}[n] \to A^{\vee} \to A^{\vee} \to 0$
	to
		\[
				R \sheafhom_{K}(A[n], \Gm)
			\to
				R \sheafhom_{K}(A, \Gm)[1]
			\to
				R \sheafhom_{K}(A, \Gm)[1].
		\]
	Applying the morphism of functoriality of $R \alg{\Gamma}$ to it
	and using the trace morphism $R \alg{\Gamma}(K, \Gm) \to \Z$,
	we get the desired morphism of the distinguished triangles.
\end{proof}

This completes the proof of
\eqref{prop: comparison of Bertapelle isomorphism and our isomorphism}
and hence \eqref{prop: comparison with Bester-Bertapelle}.


\numberwithin{equation}{subsection}
\subsection{B\'egueri's isomorphism}
\label{sec: Begueri's isomorphism}

\begin{Prop} \label{prop: comparison with Begueri}
	Assume that $k$ is algebraically closed.
	The morphism
		\[
				\theta_{A}^{+1}(k)
			\colon
				H^{1}(K, A^{\vee})
			\to
				\Ext_{k^{\ind\rat}_{\pro\et}}^{1}(\alg{\Gamma}(A), \Q / \Z)
		\]
	of \eqref{prop: two parts of the duality morphism}
	coincides with B\'egueri's isomorphism
	\cite[Thm.\ 8.3.6]{Beg81}
	when $K$ has mixed characteristic.
\end{Prop}

\begin{proof}
	Let $n \ge 1$.
	We have morphisms
		\begin{align*}
					R \Hom_{K}(\Z / n \Z, A^{\vee})
			&	\to
					R \Hom_{K} \bigl(
						\Z / n \Z, R \sheafhom_{K}(A, \Gm)
					\bigr)[1]
			\\
			&	\to
					R \Hom_{K} \bigl(
						A, R \sheafhom_{K}(\Z / n \Z, \Gm)
					\bigr)[1]
			\\
			&	=
					R \Hom_{K}(A, \mu_{n})[1]
			\\
			&	\to
					R \Hom_{k^{\ind\rat}_{\pro\et}}(
						R \alg{\Gamma}(A), R \alg{\Gamma}(\mu_{n})
					)[1]
			\\
			&	\to
					R \Hom_{k^{\ind\rat}_{\pro\et}}(
						R \alg{\Gamma}(A), \Z / n \Z
					)
		\end{align*}
	in $D(\Ab)$, where the last morphism comes from the Kummer sequence
	and the trace morphism $R \alg{\Gamma}(\Gm) = \alg{K}^{\times} \onto \Z$.
	Using the sequence $0 \to \Z \to \Z \to \Z / n \Z \to 0$,
	this and our duality morphism fit in the following commutative diagram:
		\[
			\begin{CD}
					R \Hom_{K}(\Z / n \Z, A^{\vee})
				@>>>
					R \Hom_{k^{\ind\rat}_{\pro\et}}(
						R \alg{\Gamma}(A), \Z / n \Z
					)
				\\
				@VVV
				@VVV
				\\
					R \Gamma(A^{\vee})
				@>>>
					R \Hom_{k^{\ind\rat}_{\pro\et}}
						(R \alg{\Gamma}(A), \Z
					)[1].
			\end{CD}
		\]
	
	By construction, the top horizontal morphism in degree $1$ is given as follows.
	First, for $B, C \in \Ab(K_{\fppf} / k^{\ind\rat}_{\et})$,
	the sheaf
		$
				\Hom_{\alg{K}}(B, C)
			:=
				\Tilde{\alg{\Gamma}} \sheafhom_{K}(B, C)
		$
	on $\Spec k^{\ind\rat}_{\pro\et}$ is the pro-\'etale sheafification of the \'etale sheaf
	$k' \mapsto \Hom_{(\alg{K}(k'), k')}(B, C)$,
	where $\Hom_{(\alg{K}(k'), k')}$ is the Hom functor for the localization of
	$\Spec K_{\fppf} / k^{\ind\rat}_{\et}$ at the object $(\alg{K}(k'), k')$.
	For any $n$, the $n$-th cohomology $\Ext_{\alg{K}}^{n}(B, C)$ of the derived functor
		$
				R \Hom_{\alg{K}}(B, C)
			=
				R \Tilde{\alg{\Gamma}} R \sheafhom_{K}(B, C)
		$
	is the pro-\'etale sheafification of the presheaf
	$k' \mapsto \Ext_{(\alg{K}(k'), k')}^{n}(B, C)$.
	Now let $0 \to A^{\vee} \to X \to \Z / n \Z \to 0$ be an extension over $K$.
	The long exact sequence for $R \Hom_{\alg{K}}$ gives
		\[
				0
			\to
				\Ext_{\alg{K}}^{1}(\Z / n \Z, \Gm)
			\to
				\Ext_{\alg{K}}^{1}(X, \Gm)
			\to
				\Ext_{\alg{K}}^{1}(A^{\vee}, \Gm)
			\to
				0,
		\]
	or
		\[
				0
			\to
				\alg{H}^{1}(\mu_{n})
			\to
				\Ext_{\alg{K}}^{1}(X, \Gm)
			\to
				\alg{\Gamma}(A)
			\to
				0,
		\]
	hence an element of $\Ext_{k^{\ind\rat}_{\pro\et}}^{1}(\alg{\Gamma}(A), \alg{H}^{1}(\mu_{n}))$,
	hence an element of $\Ext_{k^{\ind\rat}_{\pro\et}}^{1}(\alg{\Gamma}(A), \Z / n \Z)$.
	This homomorphism fits in the following commutative diagram:
		\[
			\begin{CD}
					\Ext_{K}^{1}(\Z / n \Z, A^{\vee})
				@>>>
					\Ext_{k^{\ind\rat}_{\pro\et}}^{1}(\alg{\Gamma}(A), \Z / n \Z)
				\\
				@VVV
				@VVV
				\\
					H^{1}(A^{\vee})
				@>>>
					\Ext_{k^{\ind\rat}_{\pro\et}}^{1}(\alg{\Gamma}(A), \Q / \Z),
			\end{CD}
		\]
	where the bottom arrow is our homomorphism $\theta_{A}^{+1}(k)$.
	The description above shows that the top arrow is the same as B\'egueri's homomorphism
	(\cite[Lem.\ 8.2.2, Thm.\ 8.3.6]{Beg81}).
	This completes the comparison.
\end{proof}


\numberwithin{equation}{section}
\section{Galois descent}
\label{sec: Galois descent}

\begin{Prop} \label{prop: reduction over a finite extension}
	Let $A$ be an abelian variety over $K$
	and $L / K$ a finite Galois extension.
	If \eqref{thm: duality for abelian varieties} is for $A$ over $L$,
	then so is for $A$ over $K$.
\end{Prop}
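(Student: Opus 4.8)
The plan is to exploit the Hochschild–Serre spectral sequence relating cohomology over $K$ to that over $L$, which is the whole point of working with $R\alg{\Gamma}$ as a complex of sheaves rather than with N\'eron models directly. Write $G = \Gal(L/K)$. The key input is the identity
	\[
			R \alg{\Gamma}(K, A)
		=
			R \Gamma\bigl(G, R \alg{\Gamma}(L, A)\bigr)
	\]
in $D(k^{\ind\rat}_{\pro\et})$, and similarly for $A^{\vee}$ and for $\Gm$; here $R\Gamma(G, -)$ is the derived functor of $G$-invariants, i.e.\ $R\Hom_{\Z[G]}(\Z, -)$. Since $|G|$ may be divisible by $p$, this functor does not simplify, but it is still a finite homotopy-limit-type operation that is compatible with the Serre dual in the appropriate sense: for a bounded complex $C$ with a $G$-action, one has a canonical isomorphism $R\Gamma(G,C)^{\SDual} \cong R\Gamma(G, C^{\SDual})$ built from the self-duality $R\Hom_{\Z[G]}(\Z,-) \leftrightarrow (-)\dtensor_{\Z[G]}\Z$ together with the fact that $\Z[G]$ is a Frobenius algebra (so $\Z[G] \cong \Z[G]^{\vee}$ as bimodules). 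This is the technical heart of the argument.

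First I would check that the trace map and the duality morphism $\theta$ are compatible with Galois descent: the pairing $R\alg{\Gamma}(A^{\vee}) \times R\alg{\Gamma}(A) \to \Z[1]$ over $K$ is obtained from the corresponding pairing over $L$ by applying $R\Gamma(G,-)$ and using the norm/trace identification $R\Gamma(G, R\alg{\Gamma}(L,\Gm)) = R\alg{\Gamma}(K,\Gm) \to \Z$ (the valuation map over $K$ factors through the valuation over $L$ composed with division by the ramification index, but on the level of the trace morphism this is already built into Proposition~2.4.4 of \cite{Suz13}, so I would simply invoke functoriality of $R\alg{\Gamma}$ in Section~\ref{sec: The relative fppf site of a local field}). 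Granting this, $\theta_{A/K}$ is identified with $R\Gamma(G, \theta_{A/L})$ under the spectral sequence and the duality-compatibility isomorphism above. Since $\theta_{A/L}$ is a quasi-isomorphism by hypothesis, applying the exact triangulated functor $R\Gamma(G,-)$ to it yields a quasi-isomorphism, and after transporting along the identifications we conclude $\theta_{A/K}$ is a quasi-isomorphism.

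The main obstacle I expect is the careful construction of the isomorphism $R\Gamma(G,C)^{\SDual} \cong R\Gamma(G,C^{\SDual})$ and the verification that it is compatible with the cup-product pairings, so that $\theta_{A/K} = R\Gamma(G,\theta_{A/L})$ on the nose rather than merely up to an unspecified automorphism. Concretely: the Serre dual $R\sheafhom_{k^{\ind\rat}_{\pro\et}}(-,\Z)$ commutes with the finite homotopy limit $R\Gamma(G,-)$ only because $G$ is finite and $\Z[G]$ is self-dual; one must write $R\Gamma(G,C) = R\sheafhom_{\Z[G]}(\Z, C)$, dualize to get $R\sheafhom_{k}(R\sheafhom_{\Z[G]}(\Z,C),\Z)$, and then move the $\Z[G]$-structure across using $\Z \dtensor_{\Z[G]} C^{\SDual}$ and the perfectness of the pairing $\Z[G]\times\Z[G]\to\Z$. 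I would phrase this as a short lemma about dualizing $G$-equivariant complexes in $D(k^{\ind\rat}_{\pro\et})$ and then feed it into the diagram. A secondary, more routine point is that $R\alg{\Gamma}(L,A)$ is a complex of P-acyclic ind-proalgebraic groups with a $G$-action (this follows from Proposition~\ref{prop: cohomology of local fields} applied over $L$, noting $G$ acts through $k$-algebra automorphisms compatible with the topology), so that everything takes place in the Serre-reflexive subcategory where $\SDual$ is an honest auto-equivalence and the formal manipulations are legitimate; I do not anticipate difficulty there, but it should be stated explicitly before the dualization argument.
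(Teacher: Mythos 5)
Your overall strategy --- Hochschild--Serre to identify $R\alg{\Gamma}(K,A)$ with $R\Gamma(G,R\alg{\Gamma}(L,A))$, then commute $R\Gamma(G,-)$ past the Serre dual and apply it to $\theta_{A\times_K L}$ --- is the same as the paper's. But the key lemma you rely on, a canonical isomorphism $R\Gamma(G,C)^{\SDual}\cong R\Gamma(G,C^{\SDual})$ for an \emph{arbitrary} finite group $G$ and arbitrary bounded $C$, deduced from $\Z[G]$ being a Frobenius algebra, is false as stated. What the tensor-hom adjunction (universal coefficients) actually gives is
\[
R\Gamma\bigl(G, C^{\SDual}\bigr)
= R\sheafhom_{k^{\ind\rat}_{\pro\et}}\bigl(\Z\dtensor_{\Z[G]}C,\ \Z\bigr)
= L\Delta(G,C)^{\SDual},
\]
i.e.\ the dual of group \emph{homology}, not cohomology. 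The discrepancy between $L\Delta(G,C)$ and $R\Gamma(G,C)$ is exactly Tate cohomology $R\hat{\Gamma}(G,C)$, which is generically nonzero (take $C=\Z$ with trivial $G$-action: $R\Gamma(G,\Z)$ is unbounded and certainly not $L\Delta(G,\Z)$). The self-duality of $\Z[G]$ identifies the source and target of the norm map but does not make the norm map a quasi-isomorphism, so no amount of Frobenius-algebra formalism alone will close this gap.

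The missing ingredients are precisely the two reductions the paper performs before dualizing. First, one reduces to $L/K$ totally ramified (the unramified case is just restriction of the base site) and then, since such an extension is solvable, to $G$ cyclic. Second, one observes that both $M=R\alg{\Gamma}(L,A)$ \emph{and} $R\Gamma(G,M)=R\alg{\Gamma}(K,A)$ are bounded (concentrated in degrees $0,1$); for cyclic $G$ Tate cohomology is $2$-periodic, so boundedness forces $R\hat{\Gamma}(G,M)=0$, the norm map $L\Delta(G,M)\to R\Gamma(G,M)$ is a quasi-isomorphism, and only then does $R\Gamma(G,M^{\SDual})=R\Gamma(G,M)^{\SDual}$ follow. (Alternatively one can invoke a Nakayama-type vanishing criterion for general finite $G$, but that still requires verifying acyclicity of Tate cohomology in two consecutive degrees for each Sylow subgroup --- it is a hypothesis to be checked, not a formal consequence of $\Z[G]\cong\Z[G]^{\vee}$.) Your secondary concerns (compatibility of the trace maps and cup products with descent, P-acyclicity of $R\alg{\Gamma}(L,A)$) are handled correctly by functoriality and are not where the difficulty lies.
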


We prove this below.
We need notation and lemmas on group (co)homology.
The basic reference is \cite[VII, VIII, IX]{Ser79}.
We need Tate cohomology in the setting of derived categories of sheaves of $G$-modules on sites.
For the purpose of this section,
available literature seems to be either less general
or more topological (in the sense of equivariant stable homotopy theory)
than what is needed here.
Hence we include some basics for the convenience of the reader.

Let $G$ be a finite (abstract) group and $S$ a site.
We denote the category of (left) $G$-modules (or $\Z[G]$-modules) by $\Mod{G}$,
so that $\Mod{G}(S)$ is
the (abelian) category of sheaves of $G$-modules on $S$.
For a complex $M \in D(\Mod{G}(S))$,
we define its group cohomology, group homology by
	\begin{gather*}
				R \Gamma(G, M)
			=
				R \sheafhom_{\Mod{G}(S)}(\Z, M),
			\quad
				L \Delta(G, M)
			=
				\Z \tensor_{\Z[G]}^{L} M,
	\end{gather*}
respectively.%
\footnote{
	Using $\Delta$, the Greek letter next to $\Gamma$, to denote homology is non-standard.
	There seems to be no widely used notation for homology in derived categories
	that is parallel to cohomology $R \Gamma$.
	Perhaps $\Lambda$ instead,
	in accordance with the definition of the homology $X \mapsto E \wedge X$ of a spectrum $E$?
}
They are objects of $D(S) = D(\Ab(S))$.
Let $C(G)$ be the standard resolution
	\[
			\cdots
		\to
			\Z[G^{3}]
		\to
			\Z[G^{2}]
		\to
			\Z[G]
	\]
of the trivial $G$-module $\Z$
(\cite[VII, \S 3]{Ser79}).
This is viewed as a complex $\{C(G)^{i}\}_{i \le 0}$
concentrated in non-positive degrees
(in cohomological grading, as for all the complexes in this paper).
Let $C^{\vee}(G) = \Hom_{\Ab}(C(G), \Z)$.
Define the standard complete resolution $\hat{C}(G)$ of $\Z$
\cite[I, \S 0, ``Tate (modified) cohomology groups'']{Mil06}
to be the complex
	\[
			\cdots
		\to
			\Z[G^{2}]
		\to
			\Z[G]
		\stackrel{N}{\to}
			\Z[G]
		\to
			\Z[G^{2}]
		\to
			\cdots,
	\]
where the map $N$ from the degree $0$ term to the degree $1$ term
is the norm map $\sum_{\sigma \in G} \sigma$,
and the non-positive degree part of the complex is $C(G)$
and the positive degree part $C^{\vee}(G)[-1]$.
The morphism of complexes
	$
			C(G)
		\to
			C^{\vee}(G)
	$
induced by $N \colon \Z[G] \to \Z[G]$ in degree zero
is also denoted by $N$.
Then $\hat{C}(G)$ is the mapping fiber of $N \colon C(G) \to C^{\vee}(G)$.
For a bounded complex $M$ of sheaves of $G$-modules on $S$,
we define the Tate cohomology as the sheaf-Hom (total) complex
	\begin{equation} \label{eq: Tate cohomology}
			R \hat{\Gamma}(G, M)
		=
			\sheafhom_{\Mod{G}(S)}(\hat{C}(G), M)
	\end{equation}
viewed as an object of $D(S)$.
Note that $R \sheafhom_{\Mod{G}(S)}(\hat{C}(G), M) = 0$
since $\hat{C}(G)$ is an exact complex.

We relate the three functors $R \Gamma$, $L \Delta$ and $R \Hat{\Gamma}$
and show that $R \Hat{\Gamma}$ factors through the derived category.
We have
	$
			R \Gamma(G, M)
		=
			R \sheafhom_{\Mod{G}(S)}(C(G), M)
	$.
Consider the hyperext spectral sequence
	\[
			E_{1}^{i j}
		=
			\prod_{i' + i'' = i}
			\sheafext_{\Mod{G}(S)}^{j}(C(G)^{- i'}, M^{i''})
		\Longrightarrow
			H^{i + j} R \sheafhom_{\Mod{G}(S)}(C(G), M),
	\]
constructed in the same way as \cite[Thm.\ 12.2]{ML63}.
This is convergent since $C(G)$ is bounded above and $M$ bounded below.
Since each term of $C(G)$ is finite free over $\Z[G]$,
we have $E_{1}^{i j} = 0$ for any $i, j$ with $j \ge 1$.
Hence $R \sheafhom_{\Mod{G}(S)}(C(G), M)$ is represented by
the total complex $\sheafhom_{\Mod{G}(S)}(C(G), M)$.
Thus
	\begin{equation} \label{eq: group cohomology using standard complex}
			R \Gamma(G, M)
		=
			\sheafhom_{\Mod{G}(S)}(C(G), M).
	\end{equation}
Similarly we have
	$
			L \Delta(G, M)
		=
			C(G) \tensor_{\Z[G]} M
	$.
The $n$-th term of $C(G) \tensor_{\Z[G]} M$ for each $n$ is
	\begin{align*}
				\bigoplus_{i + j = n}
					C(G)^{i} \tensor_{\Z[G]} M^{j}
		&	=
				\bigoplus_{i + j = n}
					\sheafhom_{\Mod{G}(S)}(C^{\vee}(G)^{-i}, M^{j})
		\\
		&	=
				\prod_{i + j = n}
					\sheafhom_{\Mod{G}(S)}(C^{\vee}(G)^{-i}, M^{j}),
	\end{align*}
where the second equality comes from the property that
$C^{\vee}(G)$ is bounded below and $M$ bounded above.
The last term is the $n$-th term of
$\sheafhom_{\Mod{G}(S)}(C^{\vee}(G), M)$.
Thus
	\begin{equation} \label{eq: group homology using dual standard complex}
			L \Delta(G, M)
		=
			\sheafhom_{\Mod{G}(S)}(C^{\vee}(G), M).
	\end{equation}
In particular, the norm map $N \colon C(G) \to C^{\vee}(G)$
induces a morphism
$L \Delta(G, M) \to R \Gamma(G, M)$ in $D(S)$,
which we denote by the same symbol $N$.
Combining \eqref{eq: Tate cohomology}, \eqref{eq: group cohomology using standard complex},
\eqref{eq: group homology using dual standard complex}
and the mapping fiber distinguished triangle
	\[
		\hat{C}(G) \to C(G) \stackrel{N}{\to} C^{\vee}(G),
	\]
we have a distinguished triangle
	\begin{equation} \label{eq: norm-cofiber sequence}
			L \Delta(G, M)
		\stackrel{N}{\to}
			R \Gamma(G, M)
		\to
			R \hat{\Gamma}(G, M).
	\end{equation}
In particular, if $M$ is an exact complex,
then $L \Delta(G, M)$, $R \Gamma(G, M)$ and hence $R \hat{\Gamma}(G, M)$ are all zero in $D(S)$.
Therefore the assignment $M \mapsto R \hat{\Gamma}(G, M)$ defines
a well-defined triangulated functor $D^{b}(\Mod{G}(S)) \to D(S)$
by \cite[Prop.\ 10.3.3]{KS06}.%
\footnote{
	If $M$ is unbounded,
	the above definition of $R \hat{\Gamma}(G, M)$ does not factor through the derived category
	$D(\Mod{G}(S))$ and hence is not ``correct''.
	In this case, we need to define $R \hat{\Gamma}(G, M)$ to be the mapping cone of the norm map
	$C(G) \tensor_{\Z[G]} I \to \sheafhom_{\Mod{G}(S)}(C(G), I)$,
	where $M \isomto I$ is a K-injective replacement in $\Mod{G}(S)$.
	Below we use bounded $M$ only.
}

We need to know how dual of group cohomology and group cohomology of dual are related.
Let $M \in D^{b}(\Mod{G}(S))$ and $P \in D(S)$.
The complex $R \sheafhom_{S}(M, P)$ can be viewed as an object of $D(\Mod{G}(S))$
by giving $M$ a right $G$-action by $m g := g^{-1} m$
and $P$ a trivial $G$-action.
We have
	\begin{align*}
				R \sheafhom_{\Mod{G}(S)} \bigl(
					\Z, R \sheafhom_{S}(M, P)
				\bigr)
		&	=
				R \sheafhom_{S}(
					M \tensor_{\Z[G]}^{L} \Z, P
				)
		\\
		&	=
				R \sheafhom_{S}(
					\Z \tensor_{\Z[G]}^{L} M, P
				)
	\end{align*}
by the derived tensor-hom adjunction \cite[Rmk.\ 18.6.11]{KS06}
and interchanging the tensor factors.
In our notation, this means
	\begin{equation} \label{eq: universal coefficient theorem}
			R \Gamma \bigl(
				G, R \sheafhom_{S}(M, P)
			\bigr)
		=
			R \sheafhom_{S} \bigl(
				L \Delta(G, M), P
			\bigr)
	\end{equation}
(cf.\ the universal coefficient theorem).
Hence the triangle \eqref{eq: norm-cofiber sequence} induces a distinguished triangle
	\begin{equation} \label{eq: Tate cohomology and Serre dual}
		\begin{split}
				R \sheafhom_{S} \bigl(
					R \Hat{\Gamma}(G, M), P
				\bigr)
		&	\to
				R \sheafhom_{S} \bigl(
					R \Gamma(G, M), P
				\bigr)
		\\
		&	\to
				R \Gamma \bigl(
					G, R \sheafhom_{S}(M, P)
				\bigr).
		\end{split}
	\end{equation}

\begin{Prop} \label{prop: Galois descent technique}
	Let $G$ be a finite group, $S$ a site and
	$M \in D^{b}(\Mod{G}(S))$.
	Assume that $G$ is cyclic and
	$R \Gamma(G, M)$ is bounded.
	Then we have $R \hat{\Gamma}(G, M) = 0$,
	and the norm map gives an isomorphism
	$L \Delta(G, M) \isomto R \Gamma(G, M)$
	between homology and cohomology.
	In particular, the triangle \eqref{eq: Tate cohomology and Serre dual} for any $P \in D(S)$
	reduces to an isomorphism
		\[
				R \sheafhom_{S} \bigl(
					R \Gamma(G, M), P
				\bigr)
			=
				R \Gamma \bigl(
					G, R \sheafhom_{S}(M, P)
				\bigr)
		\]
	in $D(S)$.
\end{Prop}

\begin{proof}
	Let $\sigma$ be a generator of the cyclic group $G$.
	Then $\hat{C}(G)$ is chain homotopic to
	the periodic complex
		$
			\cdots \stackrel{\sigma  - 1}{\to} \Z[G]
			\stackrel{N}{\to} \Z[G] \stackrel{\sigma - 1}{\to}
			\Z[G] \stackrel{N}{\to} \cdots
		$
	by \cite[VIII, Prop.\ 6]{Ser79}
	(see also \cite[VI, Prop.\ 3.3]{Bro82}).
	Therefore we have $R \hat{\Gamma}(G, M) = R \hat{\Gamma}(G, M)[2]$.
	On the other hand, the boundedness of $M$ and $R \Gamma(G, M)$ implies that
	$R \hat{\Gamma}(G, M)$ is acyclic in large degrees.
	Therefore $R \hat{\Gamma}(G, M) = 0$.
	Hence $L \Delta(G, M) \isomto R \Gamma(G, M)$.
\end{proof}

When $S = \Spec k^{\ind\rat}_{\pro\et}$,
the last isomorphism in the above proposition for $P = \Z$ may be written as
$R \Gamma(G, M)^{\SDual} = R \Gamma(G, M^{\SDual})$.

Next we give a variant of the Hochschild-Serre spectral sequence.
Let $L$ be a totally ramified (for simplicity) finite Galois extension of $K$ with Galois group $G$.
For any $A \in \Ab(K_{\fppf} / k^{\ind\rat}_{\et})$ and $k' \in k^{\ind\rat}$,
the $G$-action on $L$ induces a $G$-action on $\Gamma((\alg{L}(k'), k'), A)$.
Hence the functor
$\alg{\Gamma}(L, \;\cdot\;) \colon \Ab(K_{\fppf} / k^{\ind\rat}_{\et}) \to \Ab(k^{\ind\rat}_{\et})$
factors through $\Mod{G}(k^{\ind\rat}_{\et})$
and the functor 
$R \alg{\Gamma}(L, \;\cdot\;) \colon D(K_{\fppf} / k^{\ind\rat}_{\et}) \to D(k^{\ind\rat}_{\et})$
factors through $D(\Mod{G}(k^{\ind\rat}_{\et}))$.
An object of $\Ab(k^{\ind\rat}_{\et})$ can be viewed as an object of
$\Mod{G}(k^{\ind\rat}_{\et})$ by putting the trivial $G$-action.
The resulting functor $D(k^{\ind\rat}_{\et}) \to D(\Mod{G}(k^{\ind\rat}_{\et}))$
is left adjoint to the derived $G$-invariants $R \Gamma(G, \;\cdot\;)$.
For any $A \in D(K_{\fppf} / k^{\ind\rat}_{\et})$,
the natural morphism $R \alg{\Gamma}(K, A) \to R \alg{\Gamma}(L, A)$ in $D(k^{\ind\rat}_{\et})$
factors through $R \Gamma(G, R \alg{\Gamma}(L, A))$ by adjunction.
Similarly, the inclusion
$R \Tilde{\alg{\Gamma}}(K, A) \to R \Tilde{\alg{\Gamma}}(L, A)$ in $D(k^{\ind\rat}_{\pro\et})$
factors through $R \Gamma(G, R \Tilde{\alg{\Gamma}}(L, A))$.

\begin{Prop} \label{prop: Hochschild-Serre}
	Let $L$ be a totally ramified finite Galois extension of $K$ with Galois group $G$.
	Let $A \in D(K_{\fppf} / k^{\ind\rat}_{\et})$.
	The morphism
		\[
				R \Gamma \bigl(
					G, R \alg{\Gamma}(L, A)
				\bigr)
			\gets
				R \alg{\Gamma}(K, A)
		\]
	in $D(k^{\ind\rat}_{\et})$ defined above is an isomorphism.
	The morphism
		\[
				R \Gamma \bigl(
					G, R \Tilde{\alg{\Gamma}}(L, A)
				\bigr)
			\gets
				R \Tilde{\alg{\Gamma}}(K, A)
		\]
	in $D(k^{\ind\rat}_{\pro\et})$ defined above is an isomorphism if $A$ is bounded below.
\end{Prop}

\begin{proof}
	We first treat the first morphism.
	For any $k' \in k^{\ind\rat}$,
	the right-hand side after applying $R \Gamma(k'_{\et}, \;\cdot\;)$ is
		$
				R \Gamma(\alg{K}(k'), A)
		$,
	where we view $A$ as an fppf sheaf on $\Spec \alg{K}(k')$
	as in \eqref{eq: cohomology of RGamma}.
	The left-hand side after applying $R \Gamma(k'_{\et}, \;\cdot\;)$ is
		\begin{align*}
					R \Gamma \Bigl(
						k'_{\et},
						R \Gamma \bigl(
							G, R \alg{\Gamma}(L, A)
						\bigr)
					\Bigr)
			&	=
					R \Gamma \Bigl(
						G,
						R \Gamma \bigl(
							k'_{\et}, R \alg{\Gamma}(L, A)
						\bigr)
					\Bigr)
			\\
			&	=
					R \Gamma \bigl(
						G, R \Gamma(\alg{L}(k'), A)
					\bigr),
		\end{align*}
	where the $R \Gamma(G, \;\cdot\;)$ in the second and third terms are the usual group cohomology.
	Hence the first morphism in the statement after applying $R \Gamma(k'_{\et}, \;\cdot\;)$ is
		\[
				R \Gamma \bigl(
					G, R \Gamma(\alg{L}(k'), A)
				\bigr)
			\gets
				R \Gamma(\alg{K}(k'), A)
		\]
	in $D(\Ab)$.
	This is an isomorphism by the usual Hochschild-Serre spectral sequence
	\cite[III, Rmk.\ 2.21 (a)]{Mil80}
	since the morphism $\Spec \alg{L}(k') \to \Spec \alg{K}(k')$ is a $G$-covering.
	This implies that the first morphism in the proposition is an isomorphism.
	
	For the second, it is enough to show that
	$R \Gamma(G, M)^{\sim} = R \Gamma(G, \Tilde{M})$
	if $M \in D^{+}(\Mod{G}(k^{\ind\rat}_{\et}))$,
	where $\sim$ denotes pro-\'etale sheafification.
	By \eqref{eq: group cohomology using standard complex}
	(which does not require $M$ to be bounded above),
	the $n$-th term of $R \Gamma(G, M)$ is given by
		\[
				\prod_{i + j = n}
					\sheafhom_{k^{\ind\rat}_{\et}}(C(G)^{-i}, M^{j})
			=
				\prod_{i + j = n}
					C^{\vee}(G)^{i} \tensor_{\Z[G]} M^{j}.
		\]
	The final product is a finite product since $C^{\vee}(G)$ and $M$ are bounded below.
	Hence the $n$-th term of $R \Gamma(G, M)^{\sim}$ is
		\[
			\prod_{i + j = n}
				\sheafhom_{k^{\ind\rat}_{\pro\et}}(C(G)^{-i}, \Tilde{M}^{j}),
		\]
	which is the $n$-th term of $R \Gamma(G, \Tilde{M})$.
\end{proof}

\begin{proof}[Proof of \eqref{prop: reduction over a finite extension}]
	Let $A$ be an abelian variety over $K$.
	Assume that \eqref{thm: duality for abelian varieties} is true for $A$
	over a finite Galois extension $L$ of $K$:
		\[
				\theta_{A \times_{K} L}
			\colon
				R \alg{\Gamma}(L, A^{\vee})^{\SDual \SDual}
			\isomto
				R \alg{\Gamma}(L, A)^{\SDual}.
		\]
	in $D(k'^{\ind\rat}_{\pro\et})$,
	where $k'$ is the residue field of $L$.
	We want to deduce the corresponding statement for $A$ over $K$.
	If $L / K$ is unramified, then the morphism above for $A$ over $L$
	is nothing but the morphism for $A$ over $K$
	restricted from $D(k^{\ind\rat}_{\pro\et})$ to $D(k'^{\ind\rat}_{\pro\et})$.
	Therefore the invertibility of these morphisms is equivalent.
	Hence we may assume that $L / K$ is totally ramified.
	Since it is a solvable extension \cite[IV, Cor.\ 5 to Prop.\ 7]{Ser79},
	we may further assume that $L / K$ is cyclic.
	
	Let $G = \Gal(L / K)$.
	As we saw,
	the complexes $R \alg{\Gamma}(L, A)$ and $R \alg{\Gamma}(L, A^{\vee})$
	may be viewed as objects of $D(\Mod{G}(k^{\ind\rat}_{\pro\et}))$.
	We show that the morphism $\theta_{A \times_{K} L}$ is $G$-equivariant,
	i.e.\ a morphism in $D(\Mod{G}(k^{\ind\rat}_{\pro\et}))$.
	From the construction of the (normalized) functorial valuation map
	$v_{L} \colon \alg{L}^{\times} \to \Z$ for $L$
	given in the paragraph before \cite[Prop.\ 2.4.4]{Suz13},
	we see that $v_{L}$ is $G$-equivariant,
	where we put a trivial $G$-action on $\Z$.
	Recall from \S \ref{sec: Formulation} that
	the morphism $\theta_{A \times_{K} L}$ is defined as the Serre dual of the composite
		\begin{align*}
					R \alg{\Gamma}(L, A)
			&	\to
					R \Tilde{\alg{\Gamma}} \bigl(
						L, R \sheafhom_{L}(A^{\vee}, \Gm)
					\bigr)[1]
			\\
			&	\to
					R \sheafhom_{k^{\ind\rat}_{\pro\et}} \bigl(
						R \alg{\Gamma}(L, A^{\vee}), R \alg{\Gamma}(L, \Gm)
					\bigr)[1]
			\\
			&	\to
					R \sheafhom_{k^{\ind\rat}_{\pro\et}} \bigl(
						R \alg{\Gamma}(L, A^{\vee}), \Z
					\bigr)[1]
				=
					R \alg{\Gamma}(L, A^{\vee})^{\SDual}[1].
		\end{align*}
	All these morphisms,
	including the trace morphism
	$R \alg{\Gamma}(L, \Gm) = \alg{L}^{\times} \stackrel{v_{L}}{\to} \Z$ over $L$,
	are $G$-equivariant.
	
	We apply $R \Gamma(G, \;\cdot\;)$ to $\theta_{A \times_{K} L}$.
	We have
		\[
				R \Gamma \bigl(
					G,
					R \alg{\Gamma}(L, A)
				\bigr)
			=
					R \alg{\Gamma}(K, A)
		\]
	in $D(k^{\ind\rat}_{\pro\et})$ by the Hochschild-Serre spectral sequence
	\eqref{prop: Hochschild-Serre}.
	The both complexes $R \alg{\Gamma}(L, A)$ and $R \alg{\Gamma}(K, A)$ are
	concentrated in degrees $0$ and $1$.
	Hence we can apply
	\eqref{prop: Galois descent technique}
	for $M = R \alg{\Gamma}(L, A)$ and $P = \Z$.
	This yields
		\[
				R \Gamma \bigl(
					G,
					R \alg{\Gamma}(L, A)^{\SDual}
				\bigr)
			=
				R \alg{\Gamma}(K, A)^{\SDual}.
		\]
	The both complexes $R \alg{\Gamma}(L, A)^{\SDual}$ and $R \alg{\Gamma}(K, A)^{\SDual}$ are
	concentrated in degrees $0$ to $2$.
	Applying the same proposition again, we have
		\[
				R \Gamma \bigl(
					G,
					R \alg{\Gamma}(L, A)^{\SDual \SDual}
				\bigr)
			=
				R \alg{\Gamma}(K, A)^{\SDual \SDual}.
		\]
	Now we apply $R \Gamma(G, \;\cdot\;)$ to the both sides of $\theta_{A \times_{K} L}$
	to get an isomorphism
		\[
				\theta_{A \times_{K} L}^{G}
			\colon
				R \alg{\Gamma}(K, A^{\vee})^{\SDual \SDual}
			\isomto
				R \alg{\Gamma}(K, A)^{\SDual}.
		\]
	
	We show that this isomorphism $\theta_{A \times_{K} L}^{G}$ is equal to $\theta_{A}$.
	Consider the morphism
		\begin{equation} \label{eq: cup product over L}
				R \alg{\Gamma}(L, A^{\vee})
			\tensor^{L}
				R \alg{\Gamma}(L, A)
			\to
				R \alg{\Gamma}(L, \Gm)[1]
		\end{equation}
	in $D(\Mod{G}(k^{\ind\rat}_{\pro\et}))$.
	Apply $R \Gamma(G, \;\cdot\;)$.
	The same proof as \cite[Prop.\ 2.4.3 and the paragraph after]{Suz13} shows that
	there is a cup product pairing
		\begin{equation} \label{eq: group cohomology of cup product for L}
				R \Gamma \bigl(
					G,
					R \alg{\Gamma}(L, A^{\vee})
				\bigr)
			\tensor^{L}
				R \Gamma \bigl(
					G,
					R \alg{\Gamma}(L, A)
				\bigr)
			\to
				R \Gamma \bigl(
					G,
					R \alg{\Gamma}(L, \Gm)
				\bigr)[1],
		\end{equation}
	which can be identified with
		\[
				R \alg{\Gamma}(K, A^{\vee})
			\tensor^{L}
				R \alg{\Gamma}(K, A)
			\to
				R \alg{\Gamma}(K, \Gm)[1].
		\]
	This and the trace morphism
	$R \alg{\Gamma}(K, \Gm)[1] = \alg{K}^{\times} \stackrel{v_{K}}{\to} \Z[1]$
	for $K$ leads to the morphism $\theta_{A}$
	via the derived tensor-hom adjunction (used before).
	On the other hand, \eqref{eq: cup product over L} gives morphisms
		\begin{align*}
					R \alg{\Gamma}(L, A^{\vee})
			&	\to
					R \sheafhom_{k^{\ind\rat}_{\pro\et}} \bigl(
						R \alg{\Gamma}(L, A),
						R \alg{\Gamma}(L, \Gm)
					\bigr)[1]
			\\
			&	\to
					R \sheafhom_{k^{\ind\rat}_{\pro\et}} \Bigl(
						R \alg{\Gamma}(L, A),
						L \Delta \bigl(
							G, R \alg{\Gamma}(L, \Gm)
						\bigr)
					\Bigr)[1]
		\end{align*}
	in $D(\Mod{G}(k^{\ind\rat}_{\pro\et}))$,
	where the second morphism is the natural morphism
	(i.e.\ $M \to \Z \tensor_{\Z[G]}^{L} M$ given by $m \mapsto 1 \tensor m$)
	and $L \Delta(G, R \alg{\Gamma}(L, \Gm))$ here is given the trivial $G$-action.
	Applying $R \Gamma(G, \;\cdot\;)$
	and using \eqref{eq: universal coefficient theorem}
	for
		$
				P
			=
				L \Delta \bigl(
					G,
					R \alg{\Gamma}(L, \Gm)
				\bigr)
		$,
	we have a morphism
		\begin{align*}
			&
					R \Gamma \bigl(
						G,
						R \alg{\Gamma}(L, A^{\vee})
					\bigr)
			\\
			&	\to
					R \sheafhom_{k^{\ind\rat}_{\pro\et}} \Bigl(
						L \Delta \bigl(
							R \alg{\Gamma}(L, A)
						\bigr),
						L \Delta \bigl(
							G, R \alg{\Gamma}(L, \Gm)
						\bigr)
					\Bigr)[1].
		\end{align*}
	By the derived tensor-hom adjunction,
	we have a morphism
		\begin{equation} \label{eq: group homology of cup product for L}
				R \Gamma \bigl(
					G,
					R \alg{\Gamma}(L, A^{\vee})
				\bigr)
			\tensor^{L}
				L \Delta \bigl(
					G,
					R \alg{\Gamma}(L, A)
				\bigr)
			\to
				L \Delta \bigl(
					G,
					R \alg{\Gamma}(L, \Gm)
				\bigr)[1].
		\end{equation}
	This and the trace morphism
	$L \Delta(G, R \alg{\Gamma}(L, \Gm)) = L \Delta(G, \alg{L}^{\times}) \stackrel{v_{L}}{\to} \Z$
	for $L$ leads to the morphism $\theta_{A \times_{K} L}^{G}$.
	The two morphisms \eqref{eq: group cohomology of cup product for L}
	and \eqref{eq: group homology of cup product for L} and the two trace morphisms are compatible:
		\[
			\begin{CD}
					R \Gamma \bigl(
						G,
						R \alg{\Gamma}(L, A^{\vee})
					\bigr)
				\tensor^{L}
					L \Delta \bigl(
						G,
						R \alg{\Gamma}(L, A)
					\bigr)
			@>>>
				L \Delta \bigl(
					G,
					R \alg{\Gamma}(L, \Gm)
				\bigr)[1]
			@>>>
				\Z
			\\
			@VV \id \tensor N V
			@VV N V
			@|
			\\
					R \Gamma \bigl(
						G,
						R \alg{\Gamma}(L, A^{\vee})
					\bigr)
				\tensor^{L}
					R \Gamma \bigl(
						G,
						R \alg{\Gamma}(L, A)
					\bigr)
			@>>>
				R \Gamma \bigl(
					G,
					R \alg{\Gamma}(L, \Gm)
				\bigr)[1]
			@>>>
				\Z.
			\end{CD}
		\]
	The commutativity of the left square is trivial.
	That of the right comes from the equality
	$v_{L} = v_{K} \compose N: \alg{L}^{\times} \to \alg{K}^{\times} \to \Z$
	of the normalized valuations of $K$ and $L$,
	which is true as $L / K$ is totally ramified.
	This implies $\theta_{A} = \theta_{A \times_{K} L}^{G}$.
	Hence the invertibility of $\theta_{A \times_{K} L}$ implies that of $\theta_{A}$.
\end{proof}

\begin{Rmk} \BetweenThmAndList \label{rmk: remarks after proof of main theorem}
	\begin{enumerate}
		\item
			The assumptions in
			\eqref{prop: Galois descent technique} can be weakened to make it parallel to
			Nakayama's theorem (\cite[IX, \S 5, Thm.\ 8]{Ser79}).
			The group $G$ may be any finite group.
			The complex $M$ may just be bounded,
			with no restrictions on the number of non-zero terms or their positions.
			The assumption for $R \hat{\Gamma}(G, M)$ should now be that
			for any prime number $l$ and an $l$-Sylow subgroup $G_{l}$ of $G$,
			the complex $R \hat{\Gamma}(G_{l}, M)$ be acyclic in two consecutive degrees.
			The conclusion is that $R \hat{\Gamma}(H, M) = 0$ for any subgroup $H$ of $G$.
			To prove this, first notice that the statement is reduced to the corresponding statement
			for a complex of usual $G$-modules (not sheaves of $G$-modules),
			by considering the Hom-complex $\Hom_{k^{\ind\rat}_{\pro\et}}(M, I)$
			for any injective sheaf $I \in \Ab(k^{\ind\rat}_{\pro\et})$.
			Then the statement was proved by Koya \cite[Prop.\ 4]{Koy93}.
			Alternatively, we may assume that $M$ is concentrated in degree zero,
			since the bounded derived category of $\Z[G]$-modules modulo bounded perfect complexes
			and the stable category of $\Z[G]$-modules are equivalent (\cite[Prop.\ 8.2]{BIK13}).
			Then the statement is equivalent to Nakayama's theorem.
		\item \label{rmk: comparison with Bertapelle in non-semistable case}
			\eqref{prop: comparison with Bester-Bertapelle} is true
			without assuming $A$ to be semistable.
			We can see this by the following Galois descent argument.
			Assume that $k$ is algebraically closed and
			$A$ has semistable reduction over a finite Galois extension $L$ of $K$.
			Let $G = \Gal(L / K)$.
			Let $\mathcal{A}_{L}$ be the N\'eron model of $A_{L} = A \times_{K} L$.
			The group $\mathcal{A}_{L}$ has a $G$-equivariant sheaf structure
			coming from the descent data of $A_{L}$.
			Therefore, for an integer $n \ge 1$ that kills $\pi_{0}((\mathcal{A}_{L})_{x})$,
			the finite part $\mathcal{A}_{L}[n]^{f}$ of the $n$-torsion part of $\mathcal{A}_{L}$
			also has a $G$-equivariant sheaf structure.
			Since the pairing
				$
						(\mathcal{A}_{L}[n]^{f})^{\CDual}
					\times_{\Order_{L}}
						\mathcal{A}_{L}[n]^{f}
					\to
						\Gm
				$
			is $G$-equivariant and the trace isomorphism
			$R \alg{\Gamma}_{x}(\Order_{L}, \Gm) = \Z[-1]$ is $G$-invariant,
			the isomorphism
				\[
						R \alg{\Gamma}_{x}(\Order_{K}, (\mathcal{A}_{L}[n]^{f})^{\CDual})
					\to
						R \alg{\Gamma}(\Order_{K}, \mathcal{A}_{L}[n]^{f})^{\SDual}[-1]
				\]
			in \eqref{thm: duality for finite flats over integers}
			is an isomorphism in $D(\Mod{G}(k^{\ind\rat}_{\pro\et}))$.
			From this, we can see that all the steps in the construction of Bertapelle's isomorphism
				\[
						\psi_{A_{L}}
					\colon
						H^{1}(L, A^{\vee})
					\isomto
						\Ext_{k}^{1}(\alg{\Gamma}(L, A), \Q / \Z)
				\]
			is $G$-equivariant.
			Hence $\psi_{A_{L}}$ itself is $G$-equivariant.
			For $n \ge 0$, let $L^{\tensor n}$ be the tensor product of $n$ copies of $L$ over $K$
			and $\Res_{L^{\tensor n} / K}$ be the Weil restriction functor.
			Then we have an exact sequence
				\[
						0 \to A \to \Res_{L / K} A
					\to
						\Res_{L^{\tensor 2} / K} A
					\to
						\Res_{L^{\tensor 3} / K} A
					\to
						\cdots
				\]
			of abelian varieties over $K$ coming from the \v{C}ech complex
			(cf.\ the proof of \cite[III, Thm.\ 3.9]{Mil80}).
			The cokernel of any morphism in the sequence is an abelian variety.
			Dualizing, we have an exact sequence
				\[
						\cdots
					\to
						\Res_{L^{\tensor 3} / K} A^{\vee}
					\to
						\Res_{L^{\tensor 2} / K} A^{\vee}
					\to
						\Res_{L / K} A^{\vee} \to A^{\vee} \to 0
				\]
			of abelian varieties over $K$.
			Since $H^{2}(K, \text{abelian variety}) = 0$,
			this induces an exact sequence
				\[
						H^{1}(K, \Res_{L^{\tensor 2} / K} A^{\vee})
					\to
						H^{1}(K, \Res_{L / K} A^{\vee})
					\to
						H^{1}(K, A^{\vee})
					\to
						0,
				\]
			or
				\[
						\prod_{\sigma \in G}
						H^{1}(L, A^{\vee})
					\stackrel{\prod \sigma}{\longrightarrow}
						H^{1}(L, A^{\vee})
					\stackrel{\mathrm{Cores}}{\longrightarrow}
						H^{1}(K, A^{\vee})
					\to
						0.
				\]
			Hence the corestriction identifies $H^{1}(K, A^{\vee})$
			as the $G$-coinvariants of $H^{1}(L, A^{\vee})$.
			Similarly, by \eqref{prop: Serre duality}
			\eqref{ass: Ext of a proalg by Q mod Z is zero above degree 2},
			the inclusion $\alg{\Gamma}(K, A) \into \alg{\Gamma}(L, A)$
			identifies $\Ext_{k}^{1}(\alg{\Gamma}(K, A), \Q / \Z)$
			as the $G$-coinvariants of $\Ext_{k}^{1}(\alg{\Gamma}(L, A), \Q / \Z)$.
			The construction of Bertapelle's isomorphism
				\[
						\psi_{A}
					\colon
						H^{1}(K, A^{\vee})
					\isomto
						\Ext_{k^{\ind\rat}_{\pro\et}}^{1}(\alg{\Gamma}(K, A), \Q / \Z)
				\]
			given in \cite[Thm.\ 3]{Ber03} is the same
			as to take the $G$-coinvariants of $\psi_{A_{L}}$.
			The same is true for our morphisms $\theta_{A_{L}}^{+1}(k)$ and $\theta_{A}^{+1}(k)$
			by the functoriality of our constructions.
			Hence the equality $\theta_{A_{L}}^{+1}(k) = \psi_{A_{L}}$ implies
			the equality $\theta_{A}^{+1}(k) = \psi_{A}$.
	\end{enumerate}
\end{Rmk}


\numberwithin{equation}{section}
\section{End of proof: Grothendieck's conjecture}
\label{sec: End of proof: Grothendieck's conjecture}
Now we prove \eqref{thm: duality for abelian varieties} and Grothendieck's conjecture
by summarizing what we have done so far.
Recall from Introduction that
\v{S}afarevi\v{c}'s conjecture,
proved by B\'egueri \cite[Thm.\ 8.3.6]{Beg81},
Bester \cite[\S 2.7, Thm.\ 7.1]{Bes78} and Bertapelle \cite[Thm.\ 3]{Ber03},
states that there exists a canonical isomorphism
	\[
			H^{1}(A^{\vee})
		\to
			\Ext_{k}^{1}(\alg{\Gamma}(A), \Q / \Z)
	\]
for an abelian variety $A$ over $K$ when $k$ is algebraically closed.
To be clear, in the proposition, the theorem and their proofs below,
we mean by ``\v{S}afarevi\v{c}'s conjecture for $A$'' the statement that
the morphism $\theta_{A}^{+1}$ in
\eqref{prop: two parts of the duality morphism}
is an isomorphism.

\begin{Prop}
	Let $A$ be an abelian variety over $K$.
	Then \eqref{thm: duality for abelian varieties} for $A$ is equivalent to
	the conjunction of the following three statements:
	\begin{itemize}
		\item
			Grothendieck's conjecture for $A$,
		\item
			\v{S}afarevi\v{c}'s conjecture for $A \times_{K} \alg{K}(k')$
			for any algebraically closed field $k' \in k^{\ind\rat}$, and
		\item
			 \v{S}afarevi\v{c}'s conjecture for $A^{\vee} \times_{K} \alg{K}(k')$
			for any algebraically closed field $k' \in k^{\ind\rat}$.
	\end{itemize}
\end{Prop}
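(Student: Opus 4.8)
The plan is to read off the statement by combining Proposition~\ref{prop: two parts of the duality morphism} with the comparison results of Section~\ref{sec: Comparison with other authors' work}. By Proposition~\ref{prop: two parts of the duality morphism}, Theorem~\ref{thm: duality for abelian varieties} for $A$ — that is, $\theta_{A}$ being a quasi-isomorphism — holds if and only if the three morphisms $\theta_{A}^{+0}$, $\theta_{A}^{+1}(k')$ and $\theta_{A^{\vee}}^{+1}(k')$ are isomorphisms, the latter two for every algebraically closed $k' \in k^{\ind\rat}$. It therefore suffices to match each of these three conditions with one of the three bulleted statements.

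First I would treat $\theta_{A}^{+0}$. By Proposition~\ref{prop: comparison with Grothendieck's pairing}, the map $\theta_{A}^{+0} \colon \pi_{0}(\mathcal{A}_{x}^{\vee}) \to \pi_{0}(\mathcal{A}_{x})^{\PDual}$ in $\FEt / k$ is the one induced by Grothendieck's pairing. Hence $\theta_{A}^{+0}$ is an isomorphism exactly when Grothendieck's pairing for $A$ is perfect, i.e.\ exactly when Grothendieck's conjecture holds for $A$. (The comparison of Proposition~\ref{prop: comparison with Grothendieck's pairing} is stated over $k$, so no passage to $\bar{k}$ is needed; if one wished, one could also argue that a morphism in $\FEt / k$ is an isomorphism iff it is so over $\bar{k}$ and that Grothendieck's pairing commutes with residue field extension.)

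Next I would treat $\theta_{A}^{+1}(k')$ for a fixed algebraically closed $k' \in k^{\ind\rat}$. Restricting along the continuous map $\Spec k'^{\ind\rat}_{\pro\et} \to \Spec k^{\ind\rat}_{\pro\et}$, the complex $R\alg{\Gamma}(K, A)$ becomes $R\alg{\Gamma}(\alg{K}(k'), A \times_{K} \alg{K}(k'))$, the source of $\theta_{A}^{+1}(k')$ becomes $H^{1}(\alg{K}(k'), A^{\vee} \times_{K} \alg{K}(k'))$, and its target becomes $\Ext^{1}_{k'}(\alg{\Gamma}(\alg{K}(k'), A), \Q/\Z)$ by Theorem~\ref{thm: Ext of proalgebraic groups as sheaves}; moreover the whole construction of $\theta_{A}$ is compatible with this base change, so $\theta_{A}^{+1}(k')$ is precisely the morphism $\theta^{+1}$ attached to the abelian variety $A \times_{K} \alg{K}(k')$ over the complete discrete valuation field $\alg{K}(k')$, whose residue field $k'$ is algebraically closed. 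Since $\alg{K}(k')$ is of mixed characteristic exactly when $K$ is, Proposition~\ref{prop: comparison with Bester-Bertapelle} in the equal characteristic case, and the comparison with B\'egueri's isomorphism \cite[Thm.\ 8.3.6]{Beg81} of Section~\ref{sec: Begueri's isomorphism} in the mixed characteristic case, identify $\theta_{A}^{+1}(k')$ with the canonical map of \v{S}afarevi\v{c}'s conjecture for $A \times_{K} \alg{K}(k')$. Thus $\theta_{A}^{+1}(k')$ is an isomorphism for every algebraically closed $k' \in k^{\ind\rat}$ if and only if \v{S}afarevi\v{c}'s conjecture holds for $A \times_{K} \alg{K}(k')$ for every such $k'$; replacing $A$ by $A^{\vee}$ throughout handles $\theta_{A^{\vee}}^{+1}(k')$ and the third bullet.

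Putting these three equivalences together yields the proposition. No essentially new difficulty arises: the substantial content — the identification of $\theta^{+0}$ with Grothendieck's pairing and of $\theta^{+1}$ with B\'egueri's and Bester--Bertapelle's isomorphisms — has already been established in Section~\ref{sec: Comparison with other authors' work}. The one point that requires care, and which I would spell out, is the base-change bookkeeping used above: one must verify that restriction of $R\alg{\Gamma}(K, A)$, of the Serre-duality functor $\SDual$, and of the morphism $\theta_{A}$ along $\Spec k'^{\ind\rat}_{\pro\et} \to \Spec k^{\ind\rat}_{\pro\et}$ reproduces the corresponding objects for $A \times_{K} \alg{K}(k')$ over $k'$, so that the comparison propositions — proved over a possibly algebraically closed residue field — may legitimately be applied with $(K, k)$ replaced by $(\alg{K}(k'), k')$.
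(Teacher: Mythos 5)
Your proposal is correct and follows exactly the paper's own route: the paper's proof simply reads ``Immediate from Propositions \ref{prop: two parts of the duality morphism}, \ref{prop: comparison with Grothendieck's pairing} and \ref{prop: comparison with Bester-Bertapelle}.'' The base-change bookkeeping you spell out at the end is the only content left implicit in the paper, and your treatment of it is consistent with how Proposition \ref{prop: two parts of the duality morphism} already packages the condition pointwise over algebraically closed $k' \in k^{\ind\rat}$.
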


\begin{proof}
	Immediate from
	\eqref{prop: two parts of the duality morphism}
	and \eqref{prop: comparison with Grothendieck's pairing}.
\end{proof}

\begin{Thm}
	\eqref{thm: duality for abelian varieties}
	and Grothendieck's conjecture are both true for any abelian variety $A$ over $K$.
	That is, the morphism
		\[
				\theta_{A}
			\colon
				R \alg{\Gamma}(A^{\vee})^{\SDual \SDual}
			\to
				R \alg{\Gamma}(A)^{\SDual}
		\]
	in $D(k)$ is an isomorphism, and Grothendieck's pairing
		\[
				\pi_{0}(\mathcal{A}_{x}^{\vee})
			\times
				\pi_{0}(\mathcal{A}_{x})
			\to
				\Q / \Z
		\]
	is perfect.
\end{Thm}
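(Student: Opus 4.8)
The plan is to deduce both assertions from Theorem~\ref{thm: duality for abelian varieties} for $A$ over $K$, to establish the latter first over a finite Galois extension where $A$ becomes semistable, and then to descend. By the preceding proposition, Theorem~\ref{thm: duality for abelian varieties} for $A$ is equivalent to the conjunction of Grothendieck's conjecture for $A$ and \v{S}afarevi\v{c}'s conjecture for $A \times_{K} \alg{K}(k')$ and for $A^{\vee} \times_{K} \alg{K}(k')$, with $k'$ ranging over the algebraically closed objects of $k^{\ind\rat}$. Since $\alg{K}(k')$ is a complete discrete valuation field with algebraically closed residue field $k'$, these two \v{S}afarevi\v{c} statements are covered by the theorems of B\'egueri \cite{Beg81} in mixed characteristic and of Bester \cite{Bes78} and Bertapelle \cite{Ber03} in equal characteristic; hence they always hold, and it remains only to prove Theorem~\ref{thm: duality for abelian varieties} for $A$ over $K$. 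Once that is in hand, Propositions~\ref{prop: two parts of the duality morphism} and~\ref{prop: comparison with Grothendieck's pairing} show that $\theta_{A}^{+0}$ is an isomorphism and identify it with Grothendieck's pairing, which is therefore perfect.

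First I would invoke the semistable reduction theorem \cite[IX]{Gro72} to find a finite separable extension of $K$ over which $A$ has semistable reduction, and pass to its Galois closure — semistability being preserved under further base change — so as to obtain a finite Galois extension $L / K$, with residue field $k'$ a finite, hence perfect, extension of $k$, such that $A \times_{K} L$ has semistable reduction. For this semistable abelian variety over $L$, Grothendieck's conjecture is Werner's theorem \cite{Wer97}, and, as already noted, \v{S}afarevi\v{c}'s conjecture for $A \times_{K} L$ and its dual over $\alg{L}(k'')$ holds for every algebraically closed $k''$. Applying the preceding proposition over $L$ then gives Theorem~\ref{thm: duality for abelian varieties} for $A \times_{K} L$, i.e.\ a quasi-isomorphism
	\[
			\theta_{A \times_{K} L}
		\colon
			R \alg{\Gamma}(L, A^{\vee})^{\SDual \SDual}
		\isomto
			R \alg{\Gamma}(L, A)^{\SDual}.
	\]

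Next I would apply Proposition~\ref{prop: reduction over a finite extension} (Galois descent) to conclude that Theorem~\ref{thm: duality for abelian varieties} holds for $A$ over $K$, and combine this with the first paragraph to obtain simultaneously the quasi-isomorphism $\theta_{A}$ and the perfectness of Grothendieck's pairing.

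I do not expect any real obstacle in this final assembly: the genuine content lies in the earlier sections — the comparison of $\theta_{A}^{+1}$ with the B\'egueri--Bester--Bertapelle isomorphism (Proposition~\ref{prop: comparison with Bester-Bertapelle} and its mixed-characteristic companion), which lets the now-known \v{S}afarevi\v{c} theorem settle the $H^{1}$-part for \emph{every} $A$; Werner's uniformization argument for the component-group part in the semistable case; and, above all, the Galois descent of Proposition~\ref{prop: reduction over a finite extension}, which is precisely what makes it legitimate to pass from $L$ back to $K$ despite the bad behaviour of N\'eron models under base change. The one point in the present step that deserves a moment's care is that enlarging the separable extension furnished by semistable reduction to its Galois closure keeps the reduction semistable, so that the semistable case may be invoked over the extended — still perfect — residue field $k'$.
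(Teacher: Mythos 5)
Your proposal is correct and follows the same route as the paper: semistable reduction plus Werner's theorem and the (now known) Šafarevič conjecture give Theorem \ref{thm: duality for abelian varieties} over a suitable finite Galois extension $L/K$ via the preceding equivalence, Proposition \ref{prop: reduction over a finite extension} descends it to $K$, and the equivalence then yields Grothendieck's conjecture over $K$. The extra remark about passing to the Galois closure of the extension furnished by semistable reduction is a harmless elaboration of what the paper takes for granted.
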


\begin{proof}
	By the semistable reduction theorem,
	there exists a finite Galois extension $L / K$ such that $A$ has semistable reduction over $L$.
	Then Grothendieck's conjecture is true for $A$ over $L$ by Werner's result \cite{Wer97}.
	Also \v{S}afarevi\v{c}'s conjecture is true for $A$ over $L$
	by \eqref{prop: comparison with Bester-Bertapelle}
	and \eqref{prop: comparison with Begueri}
	and the results of B\'egueri, Bester and Bertapelle cited above.
	Hence \eqref{thm: duality for abelian varieties} is true for $A$ over $L$
	by the previous proposition.
	Hence it is true for $A$ over $K$
	by \eqref{prop: reduction over a finite extension}.
	Hence Grothendieck's conjecture is true for $A$ over $K$
	by the previous proposition.
\end{proof}

This completes the proof of
Theorems \ref{mainthm: duality for abelian varieties},
\ref{mainthm: relation with Gro and Sha conjectures}
and \ref{mainthm: Gro conj}.

\begin{Rmk} \label{rmk: where we need P-acyclicity for abelian variety}
		Since $R \alg{\Gamma}(A^{\vee})$ is P-acyclic,
		we know that its $R \Gamma(k_{\pro\et}, \;\cdot\;)$ is the usual cohomology complex
		$R \Gamma(A^{\vee}) = R \Gamma(K, A^{\vee})$.
		Therefore the duality
			\[
					\Bigl[
							\invlim_{n} \alg{\Gamma}(A^{\vee})
						\to
							R \alg{\Gamma}(A^{\vee})
					\Bigr]
				\isomto
					R \sheafhom_{k^{\ind\rat}_{\pro\et}} \bigl(
						R \alg{\Gamma}(A),
						\Q / \Z
					\bigr),
			\]
		true in $D(k^{\ind\rat}_{\pro\et})$,
		gives a statement about $R \Gamma(A^{\vee})$
		by taking the $R \Gamma(k_{\pro\et}, \;\cdot\;)$ of the both sides,
		even if $k$ is not algebraically closed.
		
		To make this explicit,
		let $(\mathcal{A}_{x}^{\vee})_{\sAb}$ be the maximal semi-abelian quotient of
		$(\mathcal{A}_{x}^{\vee})_{0}$.
		and $\closure{(\mathcal{A}_{x}^{\vee})_{\sAb}}$ its universal covering.
		Then
			\[
					\invlim_{n}
						\alg{\Gamma}(A^{\vee})
				=
					\invlim_{n}
						(\mathcal{A}_{x}^{\vee})_{\sAb}
				=
					\closure{(\mathcal{A}_{x}^{\vee})_{\sAb}}
			\]
		by \eqref{prop: Serre duality}
		\eqref{ass: Serre duality for general proalgebraic groups},
		where $\invlim_{n}$ denotes the inverse limit for multiplication by $n \ge 1$.
		These groups do not change under replacing $\invlim_{n}$ by $R \invlim_{n}$
		by \eqref{prop: misc on ind-rational pro-etale topology}
		\eqref{ass: Rlim vanishes on proalgebraic groups}.
		We have
			\[
					R \Gamma(
						k_{\pro\et},
						R \invlim_{n}
						(\mathcal{A}_{x}^{\vee})_{\sAb}
					)
				=
					R \invlim_{n}
					R \Gamma(
						k_{\pro\et},
						(\mathcal{A}_{x}^{\vee})_{\sAb}
					)
				=
					R \invlim_{n}
					R \Gamma(
						k_{\et},
						(\mathcal{A}_{x}^{\vee})_{\sAb}
					)
			\]
		by \eqref{prop: limits and cohomology commute}
		\eqref{ass: limits and pushforward}
		and \eqref{prop: misc on ind-rational pro-etale topology}
		\eqref{ass: profppf, proet and et cohom of quasi-algebraic groups etc}.
		Hence
			\[
					R \Gamma(
						k_{\pro\et},
						\invlim_{n} \alg{\Gamma}(A^{\vee})
					)
				=
					R \Gamma(
						k_{\pro\et},
						\closure{(\mathcal{A}_{x}^{\vee})_{\sAb}}
					)
				=
					R \invlim_{n}
					R \Gamma(
						k_{\et},
						(\mathcal{A}_{x}^{\vee})_{\sAb}
					).
			\]
		Denote these isomorphic objects by $Q_{k, A^{\vee}}$.
		It is a complex of $\Q$-vector spaces.
		We have
			\begin{align*}
						R \Gamma \Bigl(
							k_{\pro\et},
							R \sheafhom_{k^{\ind\rat}_{\pro\et}} \bigl(
								R \alg{\Gamma}(A),
								\Q / \Z
							\bigr)
						\Bigr)
				&	=
						R \Hom_{k^{\ind\rat}_{\pro\et}} \bigl(
							R \alg{\Gamma}(A),
							\Q / \Z
						\bigr)
				\\
				&	=
						R \Hom_{\Ind \Pro \Alg / k} \bigl(
							R \alg{\Gamma}(A),
							\Q / \Z
						\bigr)
			\end{align*}
		by \eqref{prop: comparison thm for ind-proalgebraic groups}.
		Therefore the duality isomorphism in $D(k^{\ind\rat}_{\pro\et})$ above gives an isomorphism
			\[
					\Bigl[
							Q_{k, A^{\vee}}
						\to
							R \Gamma(A^{\vee})
					\Bigr]
				\isomto
					R \Hom_{\Ind \Pro \Alg / k} \bigl(
						R \alg{\Gamma}(A),
						\Q / \Z
					\bigr)
			\]
		in $D(\Ab)$.
\end{Rmk}


\numberwithin{equation}{section}
\section{Duality with coefficients in tori}
\label{sec: Duality with coefficients in tori}

In this section, we give an analogue of \eqref{thm: duality for abelian varieties} for tori
and describe it.
In this and the next sections,
we write $\sheafhom_{k} = \sheafhom_{k^{\ind\rat}_{\pro\et}}$
and similarly for $\sheafext$ and $R \sheafhom$,
and $R \alg{\Gamma}(\;\cdot\;) = R \alg{\Gamma}(K, \;\cdot\;)$,
when there is no confusion.

Let $T$ be a torus over $K$
with N\'eron model $\mathcal{T}$ and Cartier dual $T^{\CDual}$.
Let $\mathcal{T}_{x}$ be the special fiber of $\mathcal{T}$.
By \eqref{prop: cohomology of local fields}
\eqref{ass: cohomology of tori over local fields},
we know that
$\alg{\Gamma}(T)$ is an extension of an \'etale group by a P-acyclic proalgebraic group,
$\alg{H}^{n}(T) = 0$ for $n \ge 1$,
and $R \alg{\Gamma}(T) = \alg{\Gamma}(T)$ is P-acyclic.
Also by \eqref{prop: cohomology of local fields}
\eqref{ass: cohomology of finite free etale over local fields},
$\alg{\Gamma}(T^{\CDual})$ is a lattice over $k$,
$\alg{H}^{1}(T^{\CDual}) \in \FEt / k$,
$\alg{H}^{2}(T^{\CDual}) \in \Ind \Alg_{\uc} / k$,
$\alg{H}^{n}(T^{\CDual}) = 0$ for $n \ge 3$,
and $R \alg{\Gamma}(T^{\CDual})$ is P-acyclic and Serre reflexive.
Therefore we will write
$R \Tilde{\alg{\Gamma}}(T) = R \alg{\Gamma}(T)$ and
$R \Tilde{\alg{\Gamma}}(T^{\CDual}) = R \alg{\Gamma}(T^{\CDual})$.
The morphism of functoriality of $R \Tilde{\alg{\Gamma}}$
\eqref{prop: localization and functoriality in pro-etale}
and the trace morphism $R \alg{\Gamma}(\Gm) \to \Z$ \eqref{eq: trace morphism}
yield morphisms
	\begin{align*}
				R \alg{\Gamma}(T^{\CDual})
		&	\to
				R \Tilde{\alg{\Gamma}}
				R \sheafhom_{K}(T, \Gm)
		\\
		&	\to
				R \sheafhom_{k}(
					R \alg{\Gamma}(T), R \alg{\Gamma}(\Gm)
				)
		\\
		&	\to
				R \sheafhom_{k}(
					R \alg{\Gamma}(T), \Z
				)
			=
				R \alg{\Gamma}(T)^{\SDual}
	\end{align*}
in $D(k^{\ind\rat}_{\pro\et})$.
The following is the duality with coefficients in tori.
See also \cite[7.2]{Beg81} in the case of mixed characteristic $K$ with algebraically closed $k$.

\begin{Thm} \label{thm: duality with coefficients in tori}
	The morphism
		\[
				\theta_{T}
			\colon
				R \alg{\Gamma}(T^{\CDual})
			\to
				R \alg{\Gamma}(T)^{\SDual}
		\]
	in $D(k^{\ind\rat}_{\pro\et})$ defined above is an isomorphism.%
	\footnote{
		Since $R \alg{\Gamma}(T^{\CDual})$ is Serre reflexive as we saw above,
		there is no point putting $\SDual \SDual$ to it
		and using a different letter $\vartheta_{T}$ to denote this morphism,
		as opposed to the case of abelian varieties.
	}
	Its Serre dual yields isomorphisms
		\[
				\theta_{T^{\CDual}}
			\colon
				\bigl[
						\invlim_{n} \alg{\Gamma}(T)
					\to
						R \alg{\Gamma}(T)
				\bigr]
			=
				R \alg{\Gamma}(T)^{\SDual \SDual}
			=
				R \alg{\Gamma}(T^{\CDual})^{\SDual}.
		\]
\end{Thm}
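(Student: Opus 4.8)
The plan is to prove that $\theta_{T}$ is a quasi-isomorphism and then to obtain the statements about $\theta_{T^{\CDual}}$ formally. For the formal part: since $\alg{\Gamma}(T)$ is an extension of a finitely generated \'etale group by a P-acyclic proalgebraic group and $R\alg{\Gamma}(T)=\alg{\Gamma}(T)$ sits in degree $0$, the argument of Proposition \ref{prop: double dual of R Gamma of abelian variety}, together with Assertion \eqref{ass: Serre duality for general proalgebraic groups} of Proposition \ref{prop: Serre duality} and the identity $\Z^{\SDual}=\Z$, shows that the natural morphism $[\invlim_{n}\alg{\Gamma}(T)\to R\alg{\Gamma}(T)]\to R\alg{\Gamma}(T)^{\SDual\SDual}$ is a quasi-isomorphism between Serre reflexive complexes. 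Granting that $\theta_{T}$ is a quasi-isomorphism and recalling that $R\alg{\Gamma}(T^{\CDual})$ is Serre reflexive, its Serre dual $\theta_{T^{\CDual}}=(\theta_{T})^{\SDual}$ is then a quasi-isomorphism $R\alg{\Gamma}(T)^{\SDual\SDual}\isomto R\alg{\Gamma}(T^{\CDual})^{\SDual}$, which is the remaining chain of identifications.

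To prove that $\theta_{T}$ is a quasi-isomorphism I would descend along a splitting field, exactly as in Section \ref{sec: Galois descent}. Fix a finite Galois extension $L/K$ splitting $T$, with group $G=\Gal(L/K)$; then $T\times_{K}L\cong\Gm^{r}$ and $(T\times_{K}L)^{\CDual}$ is the constant sheaf $\Z^{r}$ over $L$. By functoriality of the cup-product construction of $\theta$ and of the trace map, $\theta_{T\times_{K}L}$ is $G$-equivariant, and under the Hochschild--Serre identifications $R\Gamma(G,R\alg{\Gamma}(L,T))=R\alg{\Gamma}(K,T)$ and $R\Gamma(G,R\alg{\Gamma}(L,T^{\CDual}))=R\alg{\Gamma}(K,T^{\CDual})$ its image under $R\Gamma(G,\;\cdot\;)$ is $\theta_{T}$. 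The unramified case is immediate, and the totally ramified case reduces by solvability to $L/K$ cyclic, where Proposition \ref{prop: Galois descent technique} applies since $R\alg{\Gamma}(L,T)$ (degree $0$), $R\alg{\Gamma}(L,T^{\CDual})$ (degrees $0,1,2$), $R\alg{\Gamma}(L,T)^{\SDual}$ and $R\alg{\Gamma}(L,T)^{\SDual\SDual}$ are all bounded. We are thus reduced to $T=\Gm$ (finite products being trivial).

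For $T=\Gm$ the morphism $\theta_{\Gm}\colon R\alg{\Gamma}(\Z)\to R\alg{\Gamma}(\Gm)^{\SDual}=(\alg{K}^{\times})^{\SDual}$ is induced by $\id\in\sheafhom_{K}(\Gm,\Gm)$ and the trace map. From $0\to\Z\to\Q\to\Q/\Z\to0$, the vanishing of $\alg{H}^{n}(\Q)$ for $n\ge1$, the surjectivity of $\alg{\Gamma}(\Q)\onto\alg{\Gamma}(\Q/\Z)$, and $\alg{H}^{n}(\Z/m\Z)=0$ for $n\ge2$ (Assertion \eqref{ass: cohomology of finite flats over local fields} of Proposition \ref{prop: cohomology of local fields}), one finds that $R\alg{\Gamma}(\Z)$ has cohomology $\Z$ in degree $0$, zero in degree $1$, and $\dirlim_{m}\alg{H}^{1}(\Z/m\Z)$ in degree $2$. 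On the other side $\alg{K}^{\times}\cong\Z\times\alg{U}_{K}$ with $\alg{U}_{K}=\alg{\Gamma}(\Order_{K},\Gm)$ a connected P-acyclic proalgebraic group, so $\sheafhom_{k}(\alg{U}_{K},\Q/\Z)=0$ and, by Assertion \eqref{ass: Ext of a proalg by Q mod Z is zero above degree 2} of Proposition \ref{prop: Serre duality}, $(\alg{K}^{\times})^{\SDual}$ has cohomology $\Z$ in degree $0$, zero in degree $1$, and $\sheafext_{k}^{1}(\alg{U}_{K},\Q/\Z)$ in degree $2$. Hence $\theta_{\Gm}$ is a quasi-isomorphism if and only if it induces the tautological identification $\Z=\Z$ in degree $0$ (the pairing of the valuation lattice of $\alg{K}^{\times}$ against $\alg{\Gamma}(\Z)=\Z$) and an isomorphism $\dirlim_{m}\alg{H}^{1}(\Z/m\Z)\isomto\sheafext_{k}^{1}(\alg{U}_{K},\Q/\Z)$ in degree $2$.

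The degree-$2$ statement is the main obstacle: it is a form of Serre's geometric local class field theory \cite{Ser61}. Both sides are P-acyclic, so by Lemma \ref{lem: isom on indalgebraics can be checked on closed fields}, applied after base change along $K\to\alg{K}(k')$, it suffices to treat algebraically closed residue field, where one must match the tame contribution (coming from the residue $\Gm$ in $\alg{U}_{K}$, computed via the Kummer sequence as in \cite[\S 2.5]{Suz13}) and the wild contribution (pro-unipotent, infinite-dimensional in equal characteristic, computed via the Artin--Schreier--Witt sequences as in \cite[\S 2.7]{Suz13}) on the two sides. Concretely this is where the isomorphisms of B\'egueri \cite[\S 7.2]{Beg81} in mixed characteristic and of Xarles \cite{Xar93} and Bertapelle--Gonzalez-Aviles \cite{BGA13} in general enter, reinterpreted through the present sheaf-theoretic formalism and identified with $\theta_{\Gm}$ by the comparison method of Section \ref{sec: Comparison with other authors' work}.
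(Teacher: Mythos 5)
Your proposal follows the paper's proof in essentially the same way: Galois descent along a splitting field via Proposition \ref{prop: Galois descent technique} reduces to $T=\Gm$, whose content is Serre's local class field theory (the paper simply cites \cite[Thm.\ 2.6.1]{Suz13} at this point, whereas you unwind the degree-by-degree reduction of $\theta_{\Gm}$ before deferring to the same input), and the statement for $\theta_{T^{\CDual}}$ is obtained by the same Serre-reflexivity argument resting on the finite generation of $\pi_{0}(\mathcal{T}_{x})$. The only cosmetic differences are that you reduce explicitly to the cyclic totally ramified case, while the paper applies the descent proposition directly (implicitly using its non-cyclic generalization from Remark \ref{rmk: remarks after proof of main theorem}), and that you take the finite generation of $\pi_{0}(\mathcal{T}_{x})$ as known rather than extracting it from the descent as the paper does.
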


\begin{proof}
	We first treat $\theta_{T}$.
	If $T = \Gm$, then it is an isomorphism by \cite[Thm.\ 2.6.1]{Suz13}
	(which is essentially Serre's local class field theory \cite{Ser61}).
	For general $T$, let $L / K$ be a finite Galois extension with Galois group $G$ that splits $T$,
	so that we have
		\[
				\theta_{T \times_{K} L}
			\colon
				R \alg{\Gamma}(L, T^{\CDual})
			\isomto
				R \alg{\Gamma}(L, T)^{\SDual}.
		\]
	We may assume that $L / K$ is totally ramified and cyclic
	as in the proof of
	\eqref{prop: reduction over a finite extension}.
	The complexes $R \alg{\Gamma}(L, T)$ and
		\[
				R \Gamma \bigl(
					G, R \alg{\Gamma}(L, T)
				\bigr)
			=
				R \alg{\Gamma}(K, T)
		\]
	are both concentrated in degree zero.
	Therefore we can apply \eqref{prop: Galois descent technique},
	obtaining
		\[
				R \Gamma \bigl(
					G, R \alg{\Gamma}(L, T)^{\SDual}
				\bigr)
			=
				R \alg{\Gamma}(K, T)^{\SDual}.
		\]
	Applying $R \Gamma(G, \;\cdot\;)$ to $\theta_{T \times_{K} L}$,
	we obtain an isomorphism
		$
				R \alg{\Gamma}(K, T^{\CDual})
			\isomto
				R \alg{\Gamma}(K, T)^{\SDual}
		$.
	We can see that this isomorphism is equal to $\theta_{T}$
	by the same method as the last part of the proof of
	\eqref{prop: reduction over a finite extension}.
	
	Before treating $\theta_{T^{\CDual}}$,
	we show that $\pi_{0}(\mathcal{T}_{x})$ is a finitely generated \'etale group over $k$.
	This is well-known
	(see \cite[Prop.\ 3.5]{HN11} for example),
	but can be deduced as a corollary of the method of proof in the previous paragraph, as follows.
	By \eqref{prop: Galois descent technique},
	we know that the norm map gives an isomorphism
		\[
				L \Delta(G, \alg{\Gamma}(L, T))
			\stackrel{N}{\isomto}
				R \Gamma(G, \alg{\Gamma}(L, T))
			=
				\alg{\Gamma}(K, T)
		\]
	if $L / K$ is finite Galois totally ramified and cyclic.
	The cyclicity can be removed by d\'evissage.
	Since the zeroth group homology is the coinvariants,
	we know that the $G$-coinvariants of $\alg{\Gamma}(L, T)$ is $\alg{\Gamma}(K, T)$.
	The group $\pi_{0}(\alg{\Gamma}(K, T))$ is the N\'eron component group of $T$.
	Hence the $G$-coinvariants of the N\'eron component group of $T \times_{K} L$
	is the N\'eron component group of $T$.
	Since the N\'eron component group of a split torus is obviously finite free,
	this implies that $\pi_{0}(\mathcal{T}_{x})$ is finitely generated.
	
	For $\theta_{T^{\CDual}}$, recall again that $R \alg{\Gamma}(T) = \alg{\Gamma}(T)$.
	Hence the only non-trivial part (after the invertibility of $\theta_{T}$ shown above) is
		\[
				\alg{\Gamma}(T)^{\SDual \SDual}
			=
				\bigl[
						\invlim_{n} \alg{\Gamma}(T)
					\to
						\alg{\Gamma}(T)
				\bigr].
		\]
	Let $\alg{\Gamma}(T)_{0} \in \Pro \Alg / k$ be the identity component of $\alg{\Gamma}(T)$.
	By \eqref{prop: Serre duality}
	\eqref{ass: Serre duality for general proalgebraic groups}, we have
		\[
				\alg{\Gamma}(T)_{0}^{\SDual \SDual}
			=
				\bigl[
						\invlim_{n} \alg{\Gamma}(T)_{0}
					\to
						\alg{\Gamma}(T)_{0}
				\bigr]
			\in
				D^{b}(\Pro \Alg_{\uc}).
		\]
	Since $\pi_{0}(\alg{\Gamma}(T))$ is finitely generated,
	we have $\invlim_{n} \alg{\Gamma}(T) = \invlim_{n} \alg{\Gamma}(T)_{0}$,
	which is a uniquely divisible proalgebraic group.
	Hence
		\[
				(\invlim_{n} \alg{\Gamma}(T))^{\SDual}
			=
				(\invlim_{n} \alg{\Gamma}(T)_{0})^{\SDual}
			=
				0
		\]
	as seen in the proof of
	\eqref{prop: Serre duality}
	\eqref{ass: Serre duality for general proalgebraic groups}.
	Therefore
		\[
				\alg{\Gamma}(T)^{\SDual \SDual}
			=
				\bigl[
						\invlim_{n} \alg{\Gamma}(T)
					\to
						\alg{\Gamma}(T)
				\bigr]^{\SDual \SDual}.
		\]
	We do not need $\SDual \SDual$ on the right-hand side,
	since the distinguished triangle
		\[
				\bigl[
						\invlim_{n} \alg{\Gamma}(T)_{0}
					\to
						\alg{\Gamma}(T)_{0}
				\bigr]
			\to
				\bigl[
						\invlim_{n} \alg{\Gamma}(T)
					\to
						\alg{\Gamma}(T)
				\bigr]
			\to
				\pi_{0}(\alg{\Gamma}(T))
		\]
	shows that the middle term is Serre reflexive.
\end{proof}

We deduce concrete consequences from this theorem,
as in \eqref{prop: two parts of the duality morphism}.
Note that if $G$ is a connected proalgebraic group over $k$,
then $\sheafext_{k}^{n}(G, \Z) = 0$ for $n = 0, 1$ since
$R \sheafhom_{k}(G, \Z) = R \sheafhom_{k}(G, \Q / \Z)[-1]$
by \eqref{prop: comparison thm for ind-proalgebraic groups}
\eqref{ass: RHom between IPAlg and Et}
and $\sheafhom_{k}(G, \Q / \Z) = 0$ by
\eqref{prop: Serre duality}
\eqref{ass: Ext of a proalg by Q mod Z is zero above degree 2}.
This implies that
	\begin{equation} \label{eq: ext of proalg by Z in deg 0 and 1}
			\sheafext_{k}^{n}(\alg{\Gamma}(T), \Z)
		=
			\sheafext_{k}^{n} \bigl(
				\pi_{0}(\alg{\Gamma}(T)), \Z
			\bigr)
		=
			\sheafext_{k}^{n}(\pi_{0}(\mathcal{T}_{x}), \Z)
	\end{equation}
for $n = 0, 1$.
Hence the isomorphism $\theta_{T}$ in the above theorem induces,
in degrees $0$ and $1$, isomorphisms
	\begin{gather*}
				\theta_{T}^{0}
			\colon
				\alg{\Gamma}(T^{\CDual})
			\isomto
				\sheafhom_{k}(\pi_{0}(\mathcal{T}_{x}), \Z),
		\\
				\theta_{T}^{1}
			\colon
				\alg{H}^{1}(T^{\CDual})
			\isomto
				\sheafext_{k}^{1}(\pi_{0}(\mathcal{T}_{x}), \Z)
	\end{gather*}
in $\FGEt / k$.
We will put them together, and make explicit the isomorphism in degree $2$.
We need notation to do this.

Let $I_{K}$ be the inertia group of the absolute Galois group of $K$,
$X^{\ast}(T) = T^{\CDual}$ the character lattice,
$X_{\ast}(T) = \sheafhom_{K}(\Gm, T)$ the cocharacter lattice and
$\hat{T}_{\tor} = X^{\ast}(T) \tensor_{\Z} \Q / \Z$ the torsion part of the dual torus.%
\footnote{
	Here we treat the torsion part of the dual torus algebraically
	without using $\C$ or $\closure{\Q}_{l}$.
	Therefore we ignore the Tate twist $\Q / \Z(1)$,
	which has nothing to do with the Galois action of $K$.
}
For an \'etale group $X$ over $K$,
the $I_{K}$-coinvariants $X(K^{\sep}) / I_{K}$ is a discrete $\Gal(\closure{k} / k)$-module.
We denote the corresponding \'etale group over $k$
by $X / I_{K}$ by abuse of notation.

\begin{Prop} \label{prop: content of duality of tori}
	From $H^{0}$ and $H^{1}$ of the isomorphism $\theta_{T}$,
	we can deduce an isomorphism
		\[
				\theta_{T}^{01}
			\colon
				\pi_{0}(\mathcal{T}_{x})
			\isomto
				X_{\ast}(T) / I_{K}
		\]
	in $\FGEt / k$,
	and from $H^{2}$, an isomorphism
		\[
				\theta_{T}^{2}
			\colon
				\alg{H}^{1}(\hat{T}_{\tor})
			\isomto
				\sheafext_{k}^{1}(\alg{\Gamma}(T), \Q / \Z)
		\]
	in $\Ind \Alg_{\uc} / k$.
\end{Prop}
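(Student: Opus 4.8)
The plan is to compute the truncations of the quasi-isomorphism $\theta_{T}$ in degrees $\le 1$ and in degree $2$ separately and then Serre-dualize. First I would analyze the target side. Since $\alg{H}^{n}(T)=0$ for $n\ge 1$ we have $R\alg{\Gamma}(T)^{\SDual}=R\sheafhom_{k}(\alg{\Gamma}(T),\Z)$, and by Proposition \ref{prop: cohomology of integers}\eqref{ass: cohomology of smooth group over integers} and Proposition \ref{prop: cohomology of local fields}\eqref{ass: cohomology of tori over local fields} there is an exact sequence $0\to\alg{\Gamma}(T)_{0}\to\alg{\Gamma}(T)\to\pi_{0}(\mathcal{T}_{x})\to 0$ with $\alg{\Gamma}(T)_{0}\in\Pro\Alg/k$ connected and $\pi_{0}(\mathcal{T}_{x})\in\FGEt/k$ (finitely generated, by the proof of Theorem \ref{thm: duality with coefficients in tori}). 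As $\alg{\Gamma}(T)_{0}$ is a connected proalgebraic group, $\alg{\Gamma}(T)_{0}^{\SDual}=R\sheafhom_{k}(\alg{\Gamma}(T)_{0},\Q/\Z)[-1]$ (Proposition \ref{prop: comparison thm for ind-proalgebraic groups}\eqref{ass: RHom between IPAlg and Et}) is concentrated in degrees $1,2$ by Proposition \ref{prop: Serre duality}\eqref{ass: Ext of a proalg by Q mod Z is zero above degree 2}, with $H^{1}=\sheafhom_{k}(\alg{\Gamma}(T)_{0},\Q/\Z)=0$, hence it equals $\sheafext_{k}^{1}(\alg{\Gamma}(T)_{0},\Q/\Z)[-2]=\sheafext_{k}^{1}(\alg{\Gamma}(T),\Q/\Z)[-2]$. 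Feeding this into the triangle $R\sheafhom_{k}(\pi_{0}(\mathcal{T}_{x}),\Z)\to R\alg{\Gamma}(T)^{\SDual}\to\alg{\Gamma}(T)_{0}^{\SDual}$ yields $\tau_{\le 1}R\alg{\Gamma}(T)^{\SDual}=\pi_{0}(\mathcal{T}_{x})^{\SDual}$ and $H^{2}(R\alg{\Gamma}(T)^{\SDual})=\sheafext_{k}^{1}(\alg{\Gamma}(T),\Q/\Z)$.

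Next I would analyze the source side. By Proposition \ref{prop: cohomology of local fields}\eqref{ass: cohomology of finite free etale over local fields} and its proof, $\alg{\Gamma}(T^{\CDual})=H^{0}(I_{K},X^{\ast}(T))\in\FGEt/k$ is torsion-free and $\alg{H}^{1}(T^{\CDual})=H^{1}(I_{K},X^{\ast}(T))\in\FEt/k$; moreover the sequence $0\to X^{\ast}(T)\to X^{\ast}(T)\tensor\Q\to\hat{T}_{\tor}\to 0$ together with $\alg{H}^{n}(K,X^{\ast}(T)\tensor\Q)=0$ for $n\ge1$ identifies $\alg{H}^{2}(T^{\CDual})=\alg{H}^{1}(\hat{T}_{\tor})$. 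Thus $C:=\tau_{\le 1}R\alg{\Gamma}(T^{\CDual})\in D^{b}(\FGEt/k)$ is Serre reflexive by Proposition \ref{prop: Serre duality}\eqref{ass: Serre duality for unip and etale}, as is $\pi_{0}(\mathcal{T}_{x})^{\SDual}$. Applying $\tau_{\le 1}$ to $\theta_{T}$ gives a quasi-isomorphism $C\isomto\pi_{0}(\mathcal{T}_{x})^{\SDual}$, whose Serre dual is a quasi-isomorphism $C^{\SDual}\isomfrom\pi_{0}(\mathcal{T}_{x})$; and taking $H^{2}$ of $\theta_{T}$ directly gives an isomorphism $\theta_{T}^{2}\colon\alg{H}^{1}(\hat{T}_{\tor})\isomto\sheafext_{k}^{1}(\alg{\Gamma}(T),\Q/\Z)$, both sides lying in $\Ind\Alg_{\uc}/k$. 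This settles $\theta_{T}^{2}$, and reduces the $H^{0},H^{1}$ statement to producing a canonical isomorphism $X_{\ast}(T)/I_{K}\isomto C^{\SDual}$.

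The identification $C^{\SDual}\cong X_{\ast}(T)/I_{K}$ is the main obstacle. One route is to dualize the truncation triangle $H^{0}(I_{K},X^{\ast}(T))\to C\to H^{1}(I_{K},X^{\ast}(T))[-1]$: using that the Serre dual of a finite free étale group $E$ is $E^{\vee}=\Hom_{\Z}(E,\Z)$ (from $\Z^{\SDual}=\Z$) and that of a finite étale group $F$ is $F^{\PDual}[-1]$, one finds $C^{\SDual}$ concentrated in degree $0$ and fitting in $0\to H^{1}(I_{K},X^{\ast}(T))^{\PDual}\to C^{\SDual}\to(X^{\ast}(T)^{I_{K}})^{\vee}\to 0$. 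I would match this with the sequence $0\to(X_{\ast}(T)/I_{K})_{\tor}\to X_{\ast}(T)/I_{K}\to X_{\ast}(T)/I_{K}\ \mathrm{mod}\ \text{torsion}\to 0$ by two standard facts: $(X^{\ast}(T)^{I_{K}})^{\vee}$ is the torsion-free quotient of $X_{\ast}(T)/I_{K}$ (a lattice computation, since $X^{\ast}(T)^{I_{K}}$ is a pure sublattice and $X_{\ast}(T)=\Hom_{\Z}(X^{\ast}(T),\Z)$ as Galois modules), and $H^{1}(I_{K},X^{\ast}(T))^{\PDual}$ is the torsion subgroup of $X_{\ast}(T)/I_{K}$ (Tate–Nakayama duality for the finite quotient of $I_{K}$ acting on $X^{\ast}(T)$).

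Alternatively — and I expect this to be the cleanest path, since $\theta_{T}$ itself was proved by Galois descent via Proposition \ref{prop: Galois descent technique} — one can prove $C^{\SDual}\cong X_{\ast}(T)/I_{K}$ by the same descent. For a split torus $T=\Gm^{\oplus r}$ it is the tautology $(\Z^{r})^{\vee}=\Z^{r}=X_{\ast}(T)=X_{\ast}(T)/I_{K}$ (note $\alg{H}^{1}(K,\Z)=0$), matching also the classical fact $\pi_{0}$ of the Néron lft model of a split torus equals its cocharacter lattice; for general $T$, pick a finite Galois extension $L/K$ splitting $T$ with group $G$, and both $C^{\SDual}$ and $X_{\ast}(T)/I_{K}$ arise from the split case over $L$ by taking $G$-coinvariants, compatibly, exactly as $\pi_{0}(\mathcal{T}_{x})$ is the $G$-coinvariants of the cocharacter lattice (recorded in the proof of Theorem \ref{thm: duality with coefficients in tori}). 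Composing $X_{\ast}(T)/I_{K}\isomto C^{\SDual}$ with $C^{\SDual}\isomfrom\pi_{0}(\mathcal{T}_{x})$ produces $\theta_{T}^{01}$, completing the proof.
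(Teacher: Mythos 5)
Your treatment of $\theta_{T}^{2}$ and of the target side is essentially the paper's: the paper also obtains $\theta_{T}^{2}$ by comparing $\theta_{T}$ with its rationalization $R \alg{\Gamma}(T^{\CDual} \tensor \Q) \isomto R \sheafhom_{k}(R\alg{\Gamma}(T), \Q)$ along the triangle $\Z \to \Q \to \Q/\Z$, which produces the quasi-isomorphism $R\alg{\Gamma}(\hat{T}_{\tor}) \isomto R\sheafhom_{k}(\alg{\Gamma}(T), \Q/\Z)$ and hence your degree-$2$ identification; and it likewise computes $\tau_{\le 1}(R\alg{\Gamma}(T)^{\SDual}) = R\sheafhom_{k}(\pi_{0}(\mathcal{T}_{x}), \Z)$. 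Where the paper does its real work is exactly the point you flag as the main obstacle: it proves a separate lemma asserting a \emph{canonical} quasi-isomorphism $\tau_{\le 1} R\alg{\Gamma}\, R\sheafhom_{K}(A, \Z) = R\sheafhom_{k}(A / I_{K}, \Z)$ for every finitely generated \'etale $A$ over $K$, by resolving $A$ by finite free $\Z[G]$-modules $0 \to B \to F \to A \to 0$ (for which $\alg{H}^{1}(K, \Z[G]) = \alg{H}^{1}(L,\Z) = 0$ reduces everything to a two-term complex of objects of $\FGEt/k$, with the finiteness of $H_{1}(I_{G}, A)$ absorbing the discrepancy). Applying $\SDual$ inside $D^{b}(\FGEt / k)$ then gives $\theta_{T}^{01}$ with no further matching needed.

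Your first route for $C^{\SDual} \cong X_{\ast}(T)/I_{K}$ has a genuine gap as written: you produce two short exact sequences, $0 \to H^{1}(I_{K}, X^{\ast}(T))^{\PDual} \to C^{\SDual} \to (X^{\ast}(T)^{I_{K}})^{\vee} \to 0$ and the torsion sequence of $X_{\ast}(T)/I_{K}$, and identify the outer terms via Tate--Nakayama and a lattice computation. But two extensions with isomorphic sub and quotient need not have isomorphic middle terms, and even when they do the isomorphism is not canonical; since the proposition asserts a specific isomorphism $\theta_{T}^{01}$ deduced from $\theta_{T}$, you must exhibit a map of extensions, not merely match isomorphism classes of the ends. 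Your second route (reduce to the split case over $L$ and take $G$-coinvariants via Proposition \ref{prop: Galois descent technique}) is sound in outline and is morally what the paper's lemma accomplishes, but it too is left unexecuted: one has to check that $\tau_{\le 1}$ interacts correctly with $R\Gamma(G, \;\cdot\;)$ and that the split-case identification descends compatibly. The free-resolution lemma is the device that makes all of this canonical in one stroke, and some version of it (or of your descent argument, carried out) is needed to close the proof.
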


We need a lemma.

\begin{Lem}
	For any finitely generated \'etale group $A$ over $K$, we have a natural isomorphism
		\[
				\tau_{\le 1}
				R \alg{\Gamma}
				R \sheafhom_{K}(A, \Z)
			=
				R \sheafhom_{k}(A / I_{K}, \Z)
		\]
	in $D(k)$, where $\tau$ denotes the truncation functor.
\end{Lem}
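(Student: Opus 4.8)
The plan is to make $R\sheafhom_{K}(A,\Z)$ explicit, push it down by $R\alg{\Gamma}(K,-)$, truncate, and compare with $R\sheafhom_{k}(A/I_{K},\Z)$ cohomology sheaf by cohomology sheaf. First I would note that for a finitely generated \'etale group $A$ over $K$ the internal derived Hom into the constant sheaf $\Z$ is just the derived $\Z$-linear dual $R\Hom_{\Ab}(A,\Z)$ of the corresponding finitely generated Galois module, with its natural action; it is concentrated in degrees $0$ and $1$, with $\sheafhom_{K}(A,\Z)$ the dual lattice $L^{\vee}$ of $L:=A/A_{\tor}$ (a lattice over $K$) and $\sheafext^{1}_{K}(A,\Z)=(A_{\tor})^{\PDual}$ (a finite \'etale group over $K$). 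Hence $\tau_{\le 1}R\alg{\Gamma}(K,R\sheafhom_{K}(A,\Z))$ is concentrated in degrees $0$ and $1$, as is $R\sheafhom_{k}(A/I_{K},\Z)$; all cohomology complexes that occur are P-acyclic, so it suffices to work over the \'etale site. I would then produce the natural comparison morphism between the two complexes (from the functoriality of $R\alg{\Gamma}$ recalled in Section~\ref{sec: The relative fppf site of a local field} together with $\tau_{\le 1}R\alg{\Gamma}(K,\Z)=\Z$, equivalently from the edge maps of the hyperext spectral sequence below) and reduce the lemma to checking it is an isomorphism on $H^{0}$ and on $H^{1}$.

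On $H^{0}$ both sides are canonically $\Hom_{\Ab}(A,\Z)^{I_{K}}=\Hom_{\Ab}(A/I_{K},\Z)$: on the left $\alg{H}^{0}(K,\sheafhom_{K}(A,\Z))=\alg{\Gamma}(K,L^{\vee})=(L^{\vee})^{I_{K}}$ by Proposition~\ref{prop: Gamma of finite schemes over local fields is etale} (which gives $\alg{\Gamma}(K,B)=B(K^{\ur})=B^{I_{K}}$ for any finite or finite free \'etale $B$), and $(L^{\vee})^{I_{K}}=\Hom_{\Ab}(L_{I_{K}},\Z)=\Hom_{\Ab}(A_{I_{K}},\Z)$; on the right $\sheafhom_{k}(A/I_{K},\Z)$ is the dual lattice of $(A/I_{K})/\mathrm{tors}$, which is the same group. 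These identifications are manifestly $\Gal(k)$-equivariant.

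For $H^{1}$ I would first dispose of the two extreme cases. If $A$ is finite then $R\sheafhom_{K}(A,\Z)=A^{\PDual}[-1]$, and by Propositions~\ref{prop: cohomology of local fields} and~\ref{prop: Gamma of finite schemes over local fields is etale} the complex $R\alg{\Gamma}(K,A^{\PDual})$ has cohomology $(A/I_{K})^{\PDual}\in\FEt/k$ in degree $0$ and a P-acyclic object of $\Ind \Pro \Alg_{\uc}/k$ in degree $1$; the shift by $[-1]$ and $\tau_{\le 1}$ kill the latter, leaving $(A/I_{K})^{\PDual}[-1]=R\sheafhom_{k}(A/I_{K},\Z)$. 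If $A=L$ is a lattice then $R\sheafhom_{K}(L,\Z)=L^{\vee}$, and the proof of Proposition~\ref{prop: cohomology of local fields}\eqref{ass: cohomology of finite free etale over local fields} gives $\alg{\Gamma}(K,L^{\vee})=(L^{\vee})^{I_{K}}$, $\alg{H}^{1}(K,L^{\vee})=H^{1}(I_{K},L^{\vee})$ (finite \'etale over $k$), and $\alg{H}^{n}(K,L^{\vee})$ for $n\ge 2$, which $\tau_{\le 1}$ discards; the elementary identity $H^{1}(I_{K},L^{\vee})=((L/I_{K})_{\tor})^{\PDual}$ — read off from $0\to L^{\vee}\to L^{\vee}\tensor\Q\to L^{\vee}\tensor\Q/\Z\to 0$ and $(L^{\vee}\tensor M)^{I_{K}}=\Hom_{\Ab}(L_{I_{K}},M)$ for trivial $M$ — identifies it with $\sheafext^{1}_{k}(L/I_{K},\Z)$. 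For general $A$ I would run the hyperext spectral sequence $E_{2}^{p,q}=\alg{H}^{p}(K,\sheafext^{q}_{K}(A,\Z))\Rightarrow H^{p+q}(R\alg{\Gamma}(K,R\sheafhom_{K}(A,\Z)))$ (only $q\in\{0,1\}$ occur), with $E_{2}^{1,0}=((L/I_{K})_{\tor})^{\PDual}$, $E_{2}^{0,1}=(A_{\tor}/I_{K})^{\PDual}$ and $E_{2}^{2,0}=\alg{H}^{2}(K,L^{\vee})=H_{1}(I_{K},L)^{\PDual}$ (by a computation as above and Pontryagin duality for group (co)homology). The crucial point is that the differential $d_{2}\colon E_{2}^{0,1}\to E_{2}^{2,0}$ is the Pontryagin dual of the connecting homomorphism $H_{1}(I_{K},L)\to A_{\tor}/I_{K}$ in the $I_{K}$-coinvariants long exact sequence of $0\to A_{\tor}\to A\to L\to 0$. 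Granting this, $H^{1}$ of the left-hand side is an extension of $((A_{\tor}/I_{K})/\kappa)^{\PDual}$ — $\kappa$ the image of that connecting map — by $((L/I_{K})_{\tor})^{\PDual}$, which is exactly the Pontryagin dual of the short exact sequence $0\to (A_{\tor}/I_{K})/\kappa\to (A/I_{K})_{\tor}\to (L/I_{K})_{\tor}\to 0$ extracted from the coinvariants sequence, i.e.\ $\sheafext^{1}_{k}(A/I_{K},\Z)$; and one checks the comparison morphism realizes this identification.

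The main obstacle is precisely this compatibility of $d_{2}$ with the coinvariants connecting map. I expect the cleanest route is to rewrite everything through the derived tensor--hom adjunction $R\alg{\Gamma}(K,R\Hom_{\Ab}(-,\Z))=R\sheafhom_{k}(L\Delta(I_{K},-),\Z)$ (with $\Z$ the trivial module, carrying the residual $\Gal(k)$-action throughout, where $L\Delta(I_{K},-)$ denotes derived $I_{K}$-coinvariants, extending the notation of Section~\ref{sec: Galois descent} from finite groups), under which the lemma reads $\tau_{\le 1}R\sheafhom_{k}(L\Delta(I_{K},A),\Z)=R\sheafhom_{k}(A/I_{K},\Z)$; this holds because $L\Delta(I_{K},A)$ sits in homological degrees $0$ and $1$ — the homological counterpart of the vanishing of $\alg{H}^{n}(K,N)$ for $n\ge 2$ and $N$ finite flat from Proposition~\ref{prop: cohomology of local fields} — with $H_{1}(I_{K},A)$ profinite, so that $\Hom_{\Ab}(H_{1}(I_{K},A),\Z)=0$ and the contribution of $H_{1}(I_{K},A)$ to $R\sheafhom_{k}(-,\Z)$ lies in degrees $\ge 2$ and is removed by $\tau_{\le 1}$. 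The care then goes into making this adjunction legitimate for the profinite $I_{K}$ (working in the ind-pro framework, or passing to a limit over finite quotients) and into the boundedness of $L\Delta(I_{K},A)$; a more hands-on alternative is to keep the spectral-sequence formulation and nail $d_{2}$ by a direct naturality chase, at the price of a longer argument.
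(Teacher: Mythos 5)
Your reduction to $H^{0}$ and $H^{1}$, the computation of $H^{0}$, and the two extreme cases ($A$ finite, $A$ a lattice) are correct, and you have correctly located the crux: the compatibility of the degree-one part with the $I_{K}$-coinvariants long exact sequence. But neither of your routes actually crosses that point. The spectral-sequence route defers the identification of $d_{2}$ with the dual of the connecting map (``Granting this\dots''), and the adjunction route defers the legitimization of $R\alg{\Gamma}(K, R\sheafhom_{K}(A,\Z)) \simeq R\sheafhom_{k}(L\Delta(I_{K},A),\Z)$ for the profinite $I_{K}$. So the proposal is incomplete at its only nontrivial step.

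The missing device, which makes your second route trivial, is a resolution: choose a finite Galois $L/K$ with group $G$ splitting $A$ and an exact sequence $0 \to B \to F \to A \to 0$ with $F$ finite free over $\Z[G]$ (so $B$ is again a lattice). Then $R\sheafhom_{K}(A,\Z) = [\sheafhom_{K}(F,\Z) \to \sheafhom_{K}(B,\Z)][-1]$, and since $\sheafhom_{K}(F,\Z)$ is a sum of copies of $\Z[G]$ one has $\alg{H}^{1}(K,\sheafhom_{K}(F,\Z)) = \alg{H}^{1}(L,\Z) = 0$, so $\tau_{\le 1} R\alg{\Gamma}$ of this complex is the two-term complex $[\alg{\Gamma}\sheafhom_{K}(F,\Z) \to \alg{\Gamma}\sheafhom_{K}(B,\Z)][-1] = [\sheafhom_{k}(F/I_{G},\Z) \to \sheafhom_{k}(B/I_{G},\Z)][-1]$, using only the non-derived identity $\alg{\Gamma}(K, M^{\vee}) = \sheafhom_{k}(M_{I_{K}},\Z)$ for lattices $M$ (Proposition \ref{prop: Gamma of finite schemes over local fields is etale}). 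The four-term sequence $0 \to H_{1}(I_{G},A) \to B/I_{G} \to F/I_{G} \to A/I_{G} \to 0$, with $H_{1}(I_{G},F)=0$ and $H_{1}(I_{G},A)$ finite (hence with vanishing $\sheafhom(-,\Z)$), then identifies this two-term complex with $R\sheafhom_{k}(A/I_{K},\Z)$ by an elementary chase --- exactly the $d_{2}$-compatibility you left open, with no spectral sequence and no derived coinvariants of a profinite group. One caution if you pursue your adjunction instead: $B/I_{G}$ can have torsion, so the naive dual two-term complex computes $R\sheafhom_{k}(A/I_{K},\Z)$ on the nose but differs from $R\sheafhom_{k}(L\Delta(I_{K},A),\Z)$ in degree $2$ by $H_{1}(I_{G},A)^{\PDual}$; your observation that this discrepancy is killed by $\tau_{\le 1}$ is correct, but the identification must be organized in the order above for the terms to stay dualizable.
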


\begin{proof}
	First note that $R \sheafhom_{K}$ between \'etale groups over $K$
	is concentrated in degrees $0$ and $1$
	since \'etale groups are locally just abelian groups
	and $\Ext_{\Ab}^{\ge 2} = 0$.
	The same is true for $R \sheafhom_{k}$.
	Let $0 \to B \to F \to A \to 0$ be an exact sequence
	with $F$ a finite free $\Z[G]$-module
	for some finite Galois extension $L / K$ and $G = \Gal(L / K)$.
	Let $I_{G}$ be the inertia subgroup of $G$.
	
	First we show that the distinguished triangle
		\[
				R \alg{\Gamma} R \sheafhom_{K}(A, \Z)
			\to
				R \alg{\Gamma} R \sheafhom_{K}(F, \Z)
			\to
				R \alg{\Gamma} R \sheafhom_{K}(B, \Z)
		\]
	induces a canonical morphism
		\[
				\bigl[
					\alg{\Gamma} \sheafhom_{K}(F, \Z) \to \alg{\Gamma} \sheafhom_{K}(B, \Z)
				\bigr][-1]
			\to
				R \alg{\Gamma} R \sheafhom_{K}(A, \Z),
		\]
	where $[\,\cdot\,]$ denotes the mapping cone.
	Let $\Z \isomto J$ be an injective resolution in $\Ab(K_{\fppf} / k^{\ind\rat}_{\et})$.
	Then in the category of complexes in $\Ab(k^{\ind\rat}_{\pro\et})$,
	we have natural morphism and isomorphism
		\begin{align*}
			&
					\bigl[
						\alg{\Gamma} \sheafhom_{K}(F, \Z) \to \alg{\Gamma} \sheafhom_{K}(B, \Z)
					\bigr][-1]
			\\
			&	\to
					\bigl[
							\Tilde{\alg{\Gamma}} \sheafhom_{K}(F, J)
						\to
							\Tilde{\alg{\Gamma}} \sheafhom_{K}(B, J)
					\bigr][-1]
			\\
			&	=
					\Tilde{\alg{\Gamma}} \sheafhom_{K} \bigl(
						[B \to F], J
					\bigr).
		\end{align*}
	For another injective resolution $\Z \isomto J'$ and a homotopy equivalence $J \isomto J'$ over $\Z$,
	we have a commutative diagram
		\[
			\begin{CD}
					\bigl[
						\alg{\Gamma} \sheafhom_{K}(F, \Z) \to \alg{\Gamma} \sheafhom_{K}(B, \Z)
					\bigr][-1]
				@>>>
					\Tilde{\alg{\Gamma}} \sheafhom_{K} \bigl(
						[B \to F], J
					\bigr)
				\\
				@|
				@V \wr VV
				\\
					\bigl[
						\alg{\Gamma} \sheafhom_{K}(F, \Z) \to \alg{\Gamma} \sheafhom_{K}(B, \Z)
					\bigr][-1]
				@>>>
					\Tilde{\alg{\Gamma}} \sheafhom_{K} \bigl(
						[B \to F], J'
					\bigr),
			\end{CD}
		\]
	where the right vertical arrow is a homotopy equivalence.
	All homotopy equivalences $J \isomto J'$ over $\Z$ are homotopic to each other,
	so they induce the same homotopy class for the right vertical arrow.
	We saw in the proof of \eqref{prop: localization sequence} that
	the complex $\alg{\Gamma} \sheafhom_{K}([B \to F], J)$ represents
	$R \alg{\Gamma} R \sheafhom_{K}([B \to F], \Z)$,
	or $R \alg{\Gamma} R \sheafhom_{K}(A, \Z)$.
	Hence in $D(k^{\ind\rat}_{\pro\et})$, we have a canonical morphism
		\[
				\bigl[
					\alg{\Gamma} \sheafhom_{K}(F, \Z) \to \alg{\Gamma} \sheafhom_{K}(B, \Z)
				\bigr][-1]
			\to
				R \alg{\Gamma}
				R \sheafhom_{K}(A, \Z).
		\]
	
	Since the left-hand side of this morphism is concentrated in degrees $\le 1$,
	the morphism factors through the truncation $\tau_{\le 1}$ of the right-hand side:
		\[
				\bigl[
					\alg{\Gamma} \sheafhom_{K}(F, \Z) \to \alg{\Gamma} \sheafhom_{K}(B, \Z)
				\bigr][-1]
			\to
				\tau_{\le 1}
				R \alg{\Gamma}
				R \sheafhom_{K}(A, \Z).
		\]
	This is an isomorphism
	since $F$ is finite free over $\Z[G]$,
	$\alg{H}^{1}(K, \Z[G]) = \alg{H}^{1}(L, \Z) = 0$ and hence
	$R \alg{\Gamma} R \sheafhom_{K}(F, \Z) = R \alg{\Gamma} \sheafhom_{K}(F, \Z)$ has trivial $H^{1}$.
	The sheaf $\alg{\Gamma} \sheafhom_{K}(B, \Z)$ is a lattice over $k$ by
	\eqref{prop: cohomology of local fields}
	\eqref{ass: cohomology of finite free etale over local fields}.
	Its $\closure{k}$-points is $\Hom_{K^{\ur}}(B, \Z) = \Hom_{\closure{k}}(B / I_{G}, \Z)$
	by \eqref{prop: Gamma of finite schemes over local fields is etale}.
	Therefore $\alg{\Gamma} \sheafhom_{K}(B, \Z) = \sheafhom_{k}(B / I_{G}, \Z)$.
	Similarly we have $\alg{\Gamma} \sheafhom_{K}(F, \Z) = \sheafhom_{k}(F / I_{G}, \Z)$.
	Therefore we have
		\begin{align*}
					\tau_{\le 1}
					R \alg{\Gamma}
					R \sheafhom_{K}(A, \Z)
			&	=
					\bigl[
						\alg{\Gamma} \sheafhom_{K}(F, \Z) \to \alg{\Gamma} \sheafhom_{K}(B, \Z)
					\bigr][-1]
			\\
			&	=
					\bigl[
						\sheafhom_{k}(F / I_{G}, \Z) \to \sheafhom_{k}(B / I_{G}, \Z)
					\bigr][-1].
		\end{align*}
	Consider the following part of the long exact sequence of group homology:
		\[
				H_{1}(I_{G}, A)
			\to
				B / I_{G}
			\to
				F / I_{G}
			\to
				A / I_{G}
			\to
				0.
		\]
	This is an exact sequence of \'etale groups over $k$.
	Let $X$ be the kernel of $F / I_{G} \to A / I_{G}$.
	In the short exact sequence $0 \to X \to F / I_{G} \to A / I_{G} \to 0$,
	the middle term $F / I_{G}$ is a lattice over $k$.
	Hence the same argument as above yields an isomorphism
		\[
				\bigl[
					\sheafhom_{k}(F / I_{G}, \Z) \to \sheafhom_{k}(X, \Z)
				\bigr][-1]
			=
				R \sheafhom_{k}(A / I_{G}, \Z).
		\]
	(No truncation $\tau_{\le 1}$ is necessary on the right.)
	Consider the exact sequence
		\[
				0
			\to
				\sheafhom_{k}(X, \Z)
			\to
				\sheafhom_{k}(B / I_{G}, \Z)
			\to
				\sheafhom_{k} \bigl(
					H_{1}(I_{G}, A), \Z
				\bigr).
		\]
	Since $G$ is finite,
	the group $H_{1}(I_{G}, A)$ is torsion (\cite[VIII, Cor.\ 1 to Prop.\ 4]{Ser79}),
	so the final term is zero.
	With $A / I_{G} = A / I_{K}$, we get
		\[
				\bigl[
					\sheafhom_{k}(F / I_{G}, \Z) \to \sheafhom_{k}(B / I_{G}, \Z)
				\bigr][-1]
			=
				R \sheafhom_{k}(A / I_{K}, \Z).
		\]
	Combining all the above, we get the required isomorphism.
	This isomorphism is independent of the choice of the sequence
	$0 \to B \to F \to A \to 0$.
\end{proof}

\begin{proof}[Proof of \eqref{prop: content of duality of tori}]
	We construct $\theta_{T}^{01}$.
	By \eqref{eq: ext of proalg by Z in deg 0 and 1}, the natural morphism
		\[
				R \sheafhom_{k} \bigl(
					\pi_{0}(\alg{\Gamma}(T)),
					\Z
				\bigr)
			\to
				\tau_{\le 1}
				R \sheafhom_{k} \bigl(
					\alg{\Gamma}(T),
					\Z
				\bigr)
		\]
	is an isomorphism.
	Hence we have
		\[
				\tau_{\le 1} (
					R \alg{\Gamma}(T)^{\SDual}
				)
			=
				R \sheafhom_{k} \bigl(
					\pi_{0}(\alg{\Gamma}(T)),
					\Z
				\bigr)
			=
				R \sheafhom_{k} \bigl(
					\pi_{0}(\mathcal{T}_{x}),
					\Z
				\bigr).
		\]
	Since $T^{\CDual} = X^{\ast}(T) = \sheafhom_{K}(X_{\ast}(T), \Z)$, we have
		\[
				\tau_{\le 1}
				R \alg{\Gamma}(T^{\CDual})
			=
				\tau_{\le 1}
				R \alg{\Gamma}
				\sheafhom_{K}(X_{\ast}(T), \Z)
			=
				R \sheafhom_{k}(X_{\ast}(T) / I_{K}, \Z)
		\]
	using the lemma.
	Hence the isomorphism $\theta_{T}$ induces an isomorphism
		\[
				R \sheafhom_{k}(X_{\ast}(T) / I_{K}, \Z)
			\isomto
				R \sheafhom_{k} \bigl(
					\pi_{0}(\mathcal{T}_{x}),
					\Z
				\bigr)
		\]
	in $D^{b}(\FGEt / k)$.
	Apply $\SDual$ to the both sides.
	Then \eqref{prop: Serre duality}
	\eqref{ass: Serre duality for unip and etale} gives an isomorphism
		$
				\theta_{T}^{01}
			\colon
				\pi_{0}(\mathcal{T}_{x})
			\isomto
				X_{\ast}(T) / I_{K}
		$.
	
	We construct $\theta_{T}^{2}$.
	Applying the direct limit by multiplication by $n \ge 1$ for $\theta_{T}$, we have
		\[
				\theta_{T} \tensor \Q
			\colon
				\dirlim_{n}
				R \alg{\Gamma}(T^{\CDual})
			\isomto
				\dirlim_{n}
				R \sheafhom_{k}(R \alg{\Gamma}(T), \Z).
		\]
	For any $k' \in k^{\ind\rat}$, we have
		\[
				\dirlim_{n} R \Gamma(\alg{K}(k'), T^{\CDual})
			=
				R \Gamma(\alg{K}(k'), \dirlim_{n} T^{\CDual})
			=
				R \Gamma(\alg{K}(k'), T^{\CDual} \tensor_{\Z} \Q)
		\]
	by \cite[III, Rmk.\ 3.6]{Mil80},
	so $\dirlim_{n} R \alg{\Gamma}(T^{\CDual}) = R \alg{\Gamma}(T^{\CDual} \tensor_{\Z} \Q)$.
	Also
		\[
				\dirlim_{n}
				R \sheafhom_{k}(\alg{\Gamma}(T)_{0}, \Z)
			=
				R \sheafhom_{k}(\alg{\Gamma}(T)_{0}, \Q)
			=
				0
		\]
	by \eqref{thm: Ext of proalgebraic groups as sheaves}
	\eqref{ass: Ext commutes with direct limits},
	\eqref{ass: Ext by uniquely divisible is zero}
	and \eqref{prop: comparison thm in the pro-etale topology}
	since $\alg{\Gamma}(T)_{0} \in \Pro \Alg / k$,
	and
		\[
				\dirlim_{n}
				R \sheafhom_{k} \bigl(
					\pi_{0}(\alg{\Gamma}(T)), \Z
				\bigr)
			=
				R \sheafhom_{k} \bigl(
					\pi_{0}(\alg{\Gamma}(T)), \Q
				\bigr)
		\]
	since $\pi_{0}(\alg{\Gamma}(T)) \in \FGEt / k$
	and $\Ext_{\Ab}^{n}(G, \;\cdot\;)$ for finitely generated $G$ commutes with filtered direct limits.
	Hence
		$
				\dirlim_{n}
				R \sheafhom_{k}(\alg{\Gamma}(T), \Z)
			=
				R \sheafhom_{k}(\alg{\Gamma}(T), \Q)
		$.
	Therefore
		\[
				\theta_{T} \tensor \Q
			\colon
				R \alg{\Gamma}(T^{\CDual} \tensor_{\Z} \Q)
			\isomto
				R \sheafhom_{k}(R \alg{\Gamma}(T), \Q).
		\]
	Together with $\theta_{T}$, we have a morphism
		\[
			\begin{CD}
					R \alg{\Gamma}(T^{\CDual})
				@>>>
					R \alg{\Gamma}(T^{\CDual} \tensor_{\Z} \Q)
				@>>>
					R \alg{\Gamma}(T^{\CDual} \tensor_{\Z} \Q / \Z)
				\\
				@VVV
				@VVV
				@VVV
				\\
					R \sheafhom_{k}(\alg{\Gamma}(T), \Z)
				@>>>
					R \sheafhom_{k}(\alg{\Gamma}(T), \Q)
				@>>>
					R \sheafhom_{k}(\alg{\Gamma}(T), \Q / \Z).
			\end{CD}
		\]
	of distinguished triangles,
	whose left two vertical arrows are isomorphisms.
	Thus we get an isomorphism
		\begin{equation} \label{eq: torsion version of duality for tori}
				R \alg{\Gamma}(\hat{T}_{\tor})
			\isomto
				R \sheafhom_{k}(\alg{\Gamma}(T), \Q / \Z).
		\end{equation}
	In degree $1$, it gives $\theta_{T}^{2}$.
\end{proof}

The existence of an isomorphism
	$
			\pi_{0}(\mathcal{T}_{x})
		\cong
			X_{\ast}(T) / I_{K}
	$
was also obtained by Bertapelle and Gonz\'alez-Avil\'es \cite{BGA15} in another method.
Note that the result of Xarles \cite{Xar93} has been recovered in the course of the above proof.

We describe $\theta_{T}^{01}$.
First we need a preparation about not necessarily totally ramified extensions
(cf.\ \cite[\S 4.2]{SY12}).
For a finite Galois extension $L / K$ with residue extension $k' / k$,
let $\alg{L}^{\times} \in \Ab(k'^{\ind\rat}_{\pro\et})$ be
the sheaf defined in the same way as $\alg{K}^{\times}$:
	\[
			\alg{L}^{\times}(k'')
		=
			\bigl(
				(W(k'') \Hat{\tensor}_{W(k')} \Order_{L}) \tensor_{\Order_{L}} L
			\bigr)^{\times}
	\]
for $k'' \in k'^{\ind\rat}$.
On the other hand, we denote
$\alg{L}_{k}^{\times} = \alg{\Gamma}(K, \Res_{L / K} \Gm)$,
where $\Res_{L / K}$ is the Weil restriction.
These sheaves are related in the following way:
for any $k'' \in k^{\ind\rat}$, we have
	\[
			\alg{L}_{k}^{\times}(k'')
		=
			(\alg{K}(k'') \tensor_{K} L)^{\times}
		=
			\alg{L}^{\times}(k'' \tensor_{k} k')
	\]
(see \cite[\S 4.2]{SY12}; the notation there for $\alg{L}^{\times}$ was $\alg{L}_{k'}^{\times}$).
Hence $\alg{L}_{k}^{\times} = \Res_{k' / k} \alg{L}^{\times}$.
Let $\Res_{k' / k} \Z = \Z[\Gal(k' / k)]$ be the group ring viewed as an \'etale group over $k$.
Applying $\Res_{k' / k}$ to the split exact sequence
$0 \to \alg{U}_{L} \to \alg{L}^{\times} \to \Z \to 0$ in $\Ab(k'^{\ind\rat}_{\pro\et})$
(where $\alg{U}_{L}$ is defined similarly as $\alg{U}_{K}$),
we have a split exact sequence
	\[
			0
		\to
			\Res_{k' / k} \alg{U}_{L}
		\to
			\alg{L}_{k}^{\times}
		\to
			\Z[\Gal(k' / k)]
		\to
			0
	\]
in $\Ab(k^{\ind\rat}_{\pro\et})$.
The group $\Res_{k' / k} \alg{U}_{L}$ is connected,
so $\pi_{0}(\alg{L}_{k}^{\times}) = \Z[\Gal(k' / k)]$.

Now we define a map that is going to be inverse to $\theta_{T}^{01}$.
Let $\lambda \colon \Gm \to T$ be a morphism over $L$.
The duality for the finite \'etale morphism $\Spec L \to \Spec K$ gives a morphism
$\Res_{L / K} \Gm \to T$ over $K$.
In other words, this is the composite of $\lambda \colon \Res_{L / K} \Gm \to \Res_{L / K} T$
with the norm map $\Res_{L / K} T \to T$ for $T$.
Applying $\alg{\Gamma}$, we have a morphism
$\alg{L}_{k}^{\times} \to \alg{\Gamma}(K, T)$ over $k$,
Applying $\pi_{0}$, we have a morphism
$\Z[\Gal(k' / k)] \to \pi_{0}(\alg{\Gamma}(T)) = \pi_{0}(\mathcal{T}_{x})$ over $k$.
The image of $1 \in \Z[\Gal(k' / k)]$ corresponds to a $k'$-point of $\pi_{0}(\mathcal{T}_{x})$.
Thus we have a homomorphism
$\Hom_{L}(\Gm, T) \to \Gamma(k', \pi_{0}(\mathcal{T}_{x}))$.
This is compatible with the action of $\Gal(L / K)$
(which factors through $\Gal(k' / k)$ on the right).
It is also compatible with extending $L$.
Therefore we obtain a homomorphism
$X_{\ast}(T) / I_{K} \to \pi_{0}(\mathcal{T}_{x})$ of \'etale groups over $k$,
which we denote by $\varphi$.

\begin{Prop} \label{prop: description of Bitan isomorphism}
	The homomorphism $\varphi \colon X_{\ast}(T) / I_{K} \to \pi_{0}(\mathcal{T}_{x})$ just defined
	is the inverse map of the isomorphism $\theta_{T}^{01}$.
\end{Prop}

\begin{proof}
	We may assume that $k$ is algebraically closed,
	so $G_{K} := I_{K}$ is the absolute Galois group of $K$.
	It is enough to compare the homomorphisms induced on the $\Q / \Z$-duals
	$\Hom_{\Ab}(X_{\ast}(T) / I_{K}, \Q / \Z)$
	and $\Hom_{\Ab}(\pi_{0}(\mathcal{T}_{x}), \Q / \Z)$.
	
	By \eqref{eq: torsion version of duality for tori},
	we have
		\[
				R \Gamma(\hat{T}_{\tor})
			\isomto
				R \Hom_{k}(\alg{\Gamma}(T), \Q / \Z).
		\]
	Therefore
		\begin{equation} \label{eq: cup for tori in degree zero}
				\Gamma(\hat{T}_{\tor})
			\isomto
				\Hom_{k}(\alg{\Gamma}(T), \Q / \Z)
			\isomto
				\Hom_{\Ab} \bigl(
					\pi_{0}(\alg{\Gamma}(T)), \Q / \Z
				\bigr).
		\end{equation}
	Also, since $\Hat{T}_{\tor} = X^{\ast}(T) \tensor_{\Z} \Q / \Z = \sheafhom_{K}(X_{\ast}(T), \Q / \Z)$,
	we have a natural isomorphism
		\begin{align} \label{eq: char-cochar duality}
					\Gamma(\hat{T}_{\tor})
			&	\isomto
					\Hom_{K} \bigl(
						X_{\ast}(T), \Q / \Z
					\bigr)
			\\ \notag
			&	\isomto
					\Hom_{\Ab} \bigl(
						X_{\ast}(T) / G_{K}, \Q / \Z
					\bigr).
		\end{align}
	By construction, 
	the $\Q / \Z$-dual of $\theta_{T}^{01}$
	is given by the composite
		\[
				\Hom_{\Ab} \bigl(
					X_{\ast}(T) / G_{K}, \Q / \Z
				\bigr)
			\isomfrom
				\Gamma(\hat{T}_{\tor})
			\isomto
				\Hom_{\Ab} \bigl(
					\pi_{0}(\alg{\Gamma}(T)), \Q / \Z
				\bigr).
		\]
	It is enough to show that the morphism
		\[
				\Gamma(\hat{T}_{\tor})
			\stackrel{\eqref{eq: cup for tori in degree zero}}{\isomto}
				\Hom_{\Ab} \bigl(
					\pi_{0}(\alg{\Gamma}(T)), \Q / \Z
				\bigr)
			\stackrel{\varphi}{\to}
				\Hom_{\Ab} \bigl(
					X_{\ast}(T) / G_{K}, \Q / \Z
				\bigr)
		\]
	coincides with \eqref{eq: char-cochar duality}.
	
	Let $\chi \tensor r \in \hat{T}_{\tor} = X^{\ast}(T) \tensor \Q / \Z$ be $G_{K}$-invariant,
	where $\chi \colon T \to \Gm$ is a morphism over a finite Galois extension $L / K$
	and $r \in \Q / \Z$.
	Then we have a morphism
		\[
				\alg{\Gamma}(L, T)
			\stackrel{\chi}{\to}
				\alg{L}^{\times}
			\stackrel{v_{L}}{\onto}
				\Z
			\stackrel{r}{\to}
				\Q / \Z
		\]
	over $k$.
	The norm map $N \colon \alg{\Gamma}(L, T) \to \alg{\Gamma}(K, T)$ identifies
	$\alg{\Gamma}(K, T)$ as the $G_{K}$-coinvariants of $\alg{\Gamma}(L, T)$
	as we saw in the second paragraph of the proof of
	\eqref{thm: duality with coefficients in tori}.
	The $G_{K}$-invariance of $\chi \tensor r$ implies that
	the above $r v_{L} \compose \chi$ factors through $\alg{\Gamma}(T) = \alg{\Gamma}(K, T)$
	and hence through its $\pi_{0}$.
	This defines a homomorphism
		\[
				r v_{L} \compose \chi \compose N^{-1}
			\colon
				\pi_{0}(\mathcal{T}_{x})
			\to
				\Q / \Z.
		\]
	This is the element given by \eqref{eq: cup for tori in degree zero}.
	
	We calculate the image (or pullback) of this element by $\varphi$.
	If $\lambda \colon \Gm \to T$ is a morphism over $L$,
	then $\varphi(\lambda) = N \lambda(\pi_{L})$
	by the construction of $\varphi$,
	where $\pi_{L}$ is a prime element of $L$.
	Hence the pullback by $\varphi$ of $r v_{L} \compose \chi \compose N^{-1}$
	sends $\lambda$ to
		\begin{align*}
					r v_{L} \compose \chi \compose N^{-1} \compose N \compose \lambda(\pi_{L})
			&	=
					r v_{L} \compose \chi \compose \lambda(\pi_{L})
			\\
			&	=
					r v_{L}(\pi_{L}^{\langle \chi, \lambda \rangle})
			\\
			&	=
					r \langle \chi, \lambda \rangle
				\in
					\Q / \Z,
		\end{align*}
	where $\langle \chi, \lambda \rangle \in \Z$ is the natural pairing between a character and a cocharacter.
	The obtained assignment
	$\lambda \mapsto r \langle \chi, \lambda \rangle$ agrees with
	the image of $\chi \tensor r$ under the homomorphism
	\eqref{eq: char-cochar duality}.
\end{proof}

We describe $\theta_{T}^{2}$.
One method to do this is to go back to the proof of \eqref{thm: duality with coefficients in tori},
namely use Galois descent to reduce to Serre's local class field theory;
cf.\ \eqref{rmk: remarks after proof of main theorem}
\eqref{rmk: comparison with Bertapelle in non-semistable case}.
Another method is to use the sequence $0 \to T[n] \to T \stackrel{n}{\to} T \to 0$
for each $n \ge 1$ to reduce it to the description of the duality
with coefficients in finite flat group schemes $T[n]$.
We give here yet another, more direct method.
It is enough to describe the morphism induced on $k$-points
since the groups involved with $\theta_{T}^{2}$ are ind-algebraic and $k$ can be any perfect field.

An element of
	\[
			H^{1}(\hat{T}_{\tor})
		=
			H^{2}(T^{\CDual})
		=
			\Ext_{K}^{2}(\Z, T^{\CDual})
	\]
corresponds to an extension class
	\[
		E \colon 0 \to T^{\CDual} \to X_{2} \to X_{1} \to \Z \to 0.
	\]
The terms $X_{i}$ can be chosen more explicitly as follow.
If the element lives in $H^{2}(G, \Gamma(L, T^{\CDual}))$
with $G = \Gal(L / K)$ for a finite Galois extension $L / K$,
then it gives a morphism of complexes of $G$-modules
from the standard resolution of $\Z$ to $\Gamma(L, T^{\CDual})[2]$.
By pushing out the standard resolution
	\[
			\cdots
		\to
			\Z[G^{4}]
		\to
			\Z[G^{3}]
		\to
			\Z[G^{2}]
		\to
			\Z[G]
		\to
			\Z
		\to
			0
	\]
(of homogeneous chains) at the degree $-2$ term $\Z[G^{3}]$ to $\Gamma(L, T^{\CDual})$,
we get a choice of (the $L$-valued points of) the extension class $E$:
	\[
			0
		\to
			\Gamma(L, T^{\CDual})
		\to
			X
		\to
			\Z[G]
		\to
			\Z
		\to
			0.
	\]
All these terms are lattices over $K$.
The image of $E$ in $\Ext_{K}^{2}(T, \Gm)$ is given by taking its Cartier dual
	\[
			0
		\to
			\Gm
		\to
			\Res_{L / K} \Gm
		\to
			X^{\CDual}
		\to
			T
		\to
			0.
	\]
Apply $R \Tilde{\alg{\Gamma}}$.
Since the terms are all tori, we have an exact sequence
	\[
			0
		\to
			\alg{K}^{\times}
		\to
			\alg{L}_{k}^{\times}
		\to
			\alg{\Gamma}(X^{\CDual})
		\to
			\alg{\Gamma}(T)
		\to
			0
	\]
in $\Ab(k^{\ind\rat}_{\pro\et})$
by \eqref{prop: cohomology of local fields}.
By pushing it out by the valuation map $\alg{K}^{\times} \onto \Z$,
we get an extension
	\[
			0
		\to
			\Z
		\to
			\alg{L}_{k}^{\times} / \alg{U}_{K}
		\to
			\alg{\Gamma}(X^{\CDual})
		\to
			\alg{\Gamma}(T)
		\to
			0.
	\]
This is the element of
	$
		\Ext_{k}^{2} \bigl(
			\alg{\Gamma}(T),
			\Z
		\bigr)
	$
corresponding to $\theta_{T}$.
Let $\psi$ be the middle morphism
	$
			\alg{L}_{k}^{\times}
		\to
			\alg{\Gamma}(X^{\CDual})
	$.
Recall the morphism $\alg{L}_{k}^{\times} \onto \Z[\Gal(k' / k)]$
defined before \eqref{prop: description of Bitan isomorphism}.
The maximal constant quotient of the \'etale group $\Z[\Gal(k' / k)]$ is $\Z$.
Let $\alg{U}_{L, \, k' / k}$ be the kernel of the composite
$\alg{L}_{k}^{\times} \onto \Z[\Gal(k' / k)] \onto \Z$.
Let $n = [L : K]$.
Then the above extension class comes from the extension
	\[
			0
		\to
			\Z / n \Z
		\to
				\alg{\Gamma}(X^{\CDual})
			/
				\psi(\alg{U}_{L, \, k' / k})
		\to
			\alg{\Gamma}(T)
		\to
			0
	\]
in
	$
		\Ext_{k}^{1} \bigl(
			\alg{\Gamma}(T),
			\Z / n \Z
		\bigr)
	$.
By pushing it out by $\Z / n \Z \into \Q / \Z$,
we get an corresponding extension class in
	$
		\Ext_{k}^{1} \bigl(
			\alg{\Gamma}(T),
			\Q / \Z
		\bigr)
	$.
This is the element given by $\theta_{T}^{2}$.

\begin{Rmk} \label{rmk: where we need P-acyclicity for tori}
	As in \eqref{rmk: where we need P-acyclicity for abelian variety},
	we can give a statement on $R \Gamma(T)$ and $R \Gamma(T^{\CDual})$
	even if $k$ is not algebraically closed,
	thanks to the P-acyclicity of $R \alg{\Gamma}(T)$ and $R \alg{\Gamma}(T^{\CDual})$.
	Let $(\mathcal{T}_{x})_{\mathrm{t}}$ be the maximal torus of
	the special fiber $\mathcal{T}_{x}$
	and set $Q_{k, T} = R \invlim_{n} R \Gamma(k_{\et}, (\mathcal{T}_{x})_{\mathrm{t}})$.
	Then by the same reasoning as the cited remark, we have
		\begin{gather*}
					R \Gamma(T^{\CDual})
				=
					R \Hom_{k^{\ind\rat}_{\et}} \bigl(
						\alg{\Gamma}(T),
						\Z
					\bigr),
			\\
					\bigl[
							Q_{k, T}
						\to
							R \Gamma(T)
					\bigr]
				=
					R \Hom_{k^{\ind\rat}_{\et}} \bigl(
						R \alg{\Gamma}(T^{\CDual}),
						\Z
					\bigr)
		\end{gather*}
	in $D(\Ab)$.
	By the isomorphism \eqref{eq: torsion version of duality for tori},
	we have
		\[
				R \Gamma(\hat{T}_{\tor})
			=
				R \Hom_{k^{\ind\rat}_{\et}} \bigl(
					\alg{\Gamma}(T),
					\Q / \Z
				\bigr).
		\]
	If we look at the maximal subgroups of $\alg{\Gamma}(T)$ and $\alg{\Gamma}(T^{\CDual})$
	whose quotients are lattices,
	then the corresponding parts of the right-hand sides of these isomorphisms
	may be written by $R \Hom_{\Pro \Alg / k}$ and $R \Hom_{\Ind \Alg / k}$.
\end{Rmk}


\numberwithin{equation}{section}
\section{Duality with coefficients in $1$-motives}
\label{sec: Duality with coefficients in 1-motives}

Next we treat the case of coefficients in $1$-motives.
We only formulate and prove the duality,
without giving its full description.
Let $M = [Y \to G]$ be a $1$-motive over $K$ in the sense of Deligne,
where $Y$ is a lattice placed in degree $-1$ and $G$ a semi-abelian variety.
We can naturally regard $M$ as an object in $D(K_{\fppf})$
and hence in $D(K_{\fppf} / k^{\ind\rat}_{\et})$.
We calculate $R \alg{\Gamma}(M)$ by
\eqref{prop: cohomology of local fields}.
Applying $R \alg{\Gamma}$ on $M$, we have a long exact sequence
	\begin{align*}
		&
				0
			\to
				\alg{H}^{-1}(M)
			\to
				\alg{\Gamma}(Y)
			\to
				\alg{\Gamma}(G)
			\to
				\alg{H}^{0}(M)
		\\
		&	\to
				\alg{H}^{1}(Y)
			\to
				\alg{H}^{1}(G)
			\to
				\alg{H}^{1}(M)
			\to
				\alg{H}^{2}(Y)
			\to
				0
	\end{align*}
in $\Ab(k^{\ind\rat}_{\et})$
and $\alg{H}^{n}(M) = 0$ for $n \ne -1, 0, 1$.
We know that $\alg{\Gamma}(Y)$ is a lattice over $k$,
$\alg{\Gamma}(G)$ is an extension of a lattice
by a P-acyclic proalgebraic group
(which can be checked by reducing it to the case of an abelian variety and a torus,
using the fact that an extension of a proalgebraic group by $\Q$ splits
by \eqref{prop: comparison thm for ind-proalgebraic groups}),
$\alg{H}^{1}(Y) \in \FEt / k$, and
$\alg{H}^{1}(G), \alg{H}^{2}(Y) \in \Ind \Alg_{\uc} / k$.
Hence $\alg{H}^{-1}(M)$ is a lattice over $k$
and $\alg{H}^{1}(M) \in \Ind \Alg_{\uc} / k$.
Since all the terms but $\alg{H}^{0}(M)$ are P-acyclic,
we know that $\alg{H}^{0}(M)$ is P-acyclic,
Therefore $R \alg{\Gamma}(M)$ is P-acyclic,
and we will write $R \Tilde{\alg{\Gamma}}(M) = R \alg{\Gamma}(M) \in D(k^{\ind\rat}_{\pro\et})$.

Let $M^{\vee}$ be the dual $1$-motive of $M$.
In $D(K_{\fppf})$,
the complex $M^{\vee}$ is identified with $\tau_{\le 0} R \sheafhom_{K}(M, \Gm[1])$.
Note the shift: if $M$ is a torus $T$,
then $M^{\vee}$ is the character lattice $T^{\CDual}$ placed in degree $-1$.
We have a morphism
	\begin{align*}
				R \alg{\Gamma}(M^{\vee})
		&	\to
				R \Tilde{\alg{\Gamma}}
				R \sheafhom_{K}(M, \Gm[1])
		\\
		&	\to
				R \sheafhom_{k} \bigl(
					R \alg{\Gamma}(M),
					\Z
				\bigr)[1]
			=
				R \alg{\Gamma}(M)^{\SDual}[1]
	\end{align*}
in $D(k^{\ind\rat}_{\pro\et})$ and hence a morphism
	\[
			R \alg{\Gamma}(M^{\vee})^{\SDual \SDual}
		\to
			R \alg{\Gamma}(M)^{\SDual}[1]
	\]
by taking $\SDual$ of the both sides and replacing $M$ with $M^{\vee}$.

\begin{Thm} \label{thm: duality for one-motives}
	The above defined morphism
		\[
				R \alg{\Gamma}(M^{\vee})^{\SDual \SDual}
			\to
				R \alg{\Gamma}(M)^{\SDual}[1]
		\]
	in $D(k^{\ind\rat}_{\pro\et})$ is an isomorphism.
\end{Thm}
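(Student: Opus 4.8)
The plan is to reduce the duality for a $1$-motive $M = [L \to G]$ to the already-established dualities for tori (Theorem \ref{thm: duality with coefficients in tori}) and abelian varieties (Theorem \ref{thm: duality for abelian varieties}), together with the duality for finite free \'etale groups, by d\'evissage along the weight filtration. First I would decompose $M$ into its constituents: the semi-abelian part $G$ sits in an exact sequence $0 \to T \to G \to A \to 0$ with $T$ a torus and $A$ an abelian variety, and $M$ itself is an extension (in $D(K_{\fppf})$) of $L[1]$ by $G$. This gives two distinguished triangles
	\[
			G
		\to
			M
		\to
			L[1],
		\qquad
			T
		\to
			G
		\to
			A
	\]
in $D(K_{\fppf} / k^{\ind\rat}_{\et})$. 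Dually, $M^{\vee} = [L^{\vee} \to G^{\vee}]$ where $L^{\vee} = T^{\CDual}$ placed in degree $-1$ is the character lattice of $T$, $G^{\vee}$ is the semi-abelian variety that is an extension of $A^{\vee}$ by the torus dual to $L$, and one has the corresponding triangles for $M^{\vee}$. The morphism $R \alg{\Gamma}(M^{\vee})^{\SDual \SDual} \to R \alg{\Gamma}(M)^{\SDual}[1]$ is induced by the Poincar\'e-type pairing $M^{\vee} \times M \to \Gm[1]$ and the functoriality of $R \Tilde{\alg{\Gamma}}$; by naturality of the cup product with respect to the weight filtration, this morphism is compatible with the triangles above, so it suffices to prove that the analogous morphisms for $G$ (against $G^{\vee}$), for $A$ (against $A^{\vee}$), for $T$ (against $T^{\CDual}$), and for the finite free \'etale groups $L$, $L^{\vee}$ are quasi-isomorphisms.

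The case of $A$ is Theorem \ref{thm: duality for abelian varieties}, the case of $T$ is Theorem \ref{thm: duality with coefficients in tori}, and the case of a finite free \'etale group $L$ — that $R \alg{\Gamma}(L^{\vee})^{\SDual \SDual} \to R \alg{\Gamma}(L)^{\SDual}[1]$ is a quasi-isomorphism, where $L^{\vee}$ is the Cartier/Pontryagin dual lattice placed in degree $-1$ — should follow from the computation of $R \alg{\Gamma}(K, L)$ in Assertion \eqref{ass: cohomology of finite free etale over local fields} of Proposition \ref{prop: cohomology of local fields} combined with the finite-flat duality Proposition \ref{prop: duality for finite flats over integers} applied to the torsion subquotients $L/nL$, much as in the proof of Theorem \ref{thm: duality with coefficients in tori}; alternatively one reduces directly to local Tate duality via Galois descent (Proposition \ref{prop: Galois descent technique}) from a splitting field. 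Granting these, the case of $G$ follows from the triangle $T \to G \to A$ and the five lemma in the triangulated category $D(k^{\ind\rat}_{\pro\et})$: the Serre dual $\SDual$ is a triangulated functor and $\SDual \SDual$ takes distinguished triangles to distinguished triangles, so the three vertical maps fit into a morphism of triangles, two of which are isomorphisms, forcing the third to be an isomorphism. Here I must check that the biextension pairing $G^{\vee} \times G \to \Gm[1]$ restricts to the Weil pairings $A^{\vee} \times A \to \Gm[1]$ and $T^{\CDual} \times T \to \Gm[1]$ on the respective sub/quotient, which is standard from the theory of the weight filtration on $1$-motives and biextensions (\cite[IX]{Gro72}).

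Finally, with $G$ and $L$ handled, the case of $M$ follows from the triangle $G \to M \to L[1]$ by the same five-lemma argument, provided the duality morphism is compatible with this triangle on both the $M$ and the $M^{\vee}$ side; for the latter one uses the triangle $(\text{torus dual of }L)[?] \to \text{(part of }G^{\vee}) \to \dots$ induced by $\tau_{\le 0} R \sheafhom_{K}(-, \Gm[1])$ applied to $G \to M \to L[1]$. The main obstacle I anticipate is not the triangulated bookkeeping but verifying the \emph{compatibility of the pairing with the weight-filtration triangles}: one must show that the morphism $R \alg{\Gamma}(M^{\vee}) \to R \Tilde{\alg{\Gamma}} R \sheafhom_{K}(M, \Gm[1]) \to R \alg{\Gamma}(M)^{\SDual}[1]$, when restricted along $G \hookrightarrow M$ and composed with the projection to the $L$-part, agrees with the corresponding morphisms for $G$ and $L$ up to the connecting maps — this is a diagram chase at the level of the cup-product formalism of \cite[Prop.\ 2.4.3]{Suz13}, and care is needed because $M^{\vee}$ is defined via a truncation $\tau_{\le 0}$, so the triangle for $M^{\vee}$ is not literally the $R \sheafhom_{K}(-, \Gm[1])$ of the triangle for $M$ but only its truncation; one checks that the extra cohomology killed by $\tau_{\le 0}$ contributes only uniquely divisible terms, whose Serre duals vanish by Assertion \eqref{ass: RHom between IPAlg and Et} of Proposition \ref{prop: comparison thm for ind-proalgebraic groups}, so the truncation does no harm. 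A secondary technical point is confirming that $R \alg{\Gamma}(M^{\vee})$, $R \alg{\Gamma}(M)$ and their Serre duals and double-duals are Serre reflexive where needed, so that $\SDual$ behaves as a duality; this follows from the structure results just cited (each cohomology sheaf lies in $\FGEt/k$, $\Pro \Alg_{\uc}/k$, or $\Ind \Alg_{\uc}/k$ up to the semi-abelian part, exactly as in the abelian-variety and torus cases, and the semi-abelian obstruction is absorbed into the $\bigl[\invlim_n \to \;\cdot\;\bigr]$ replacement via Proposition \ref{prop: Serre duality}).
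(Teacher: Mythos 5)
Your proposal is correct and follows essentially the same route as the paper: d\'evissage along the weight filtration $W_{-2}M \to W_{-1}M \to [0 \to A]$ and $W_{-1}M \to M \to [L \to 0]$, reducing to Theorems \ref{mainthm: duality for abelian varieties} and \ref{thm: duality with coefficients in tori} (the lattice case being the $\theta_{T^{\CDual}}$ half of the latter). The compatibility issues you discuss at length are dispatched in the paper by the single observation that both sides of the duality morphism are triangulated functors of $M$, so the two-out-of-three argument applies directly.
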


\begin{proof}
	Note that the both sides are triangulated functors on $M$.
	Therefore if $M_{1} \to M_{2} \to M_{3}$ is a distinguished triangle of $1$-motives
	and the theorem is true for any two of them,
	then so is the third.
	Let $M = [Y \to G]$ with $Y$ a lattice over $K$,
	$G$ semi-abelian with torus part $T$ and $A = G / T$.
	We have the weight filtration $W_{n} M$ of $M$,
	where
	$W_{0} M = M$, $W_{-1} M = [0 \to G]$,
	$W_{-2} M = [0 \to T]$ and $W_{-3} M = 0$.
	The distinguished triangles
		\[
				W_{-1} M
			\to
				M
			\to
				[Y \to 0],
			\quad
				W_{-2} M
			\to
				W_{-1} M
			\to
				[0 \to A]
		\]
	show that the statement for $M$ is reduced to that of $A$, $T$ and $Y$.
	These are \eqref{thm: duality for abelian varieties}
	and \eqref{thm: duality with coefficients in tori}.
\end{proof}

\begin{Rmk}
	In the proof of \eqref{thm: duality for abelian varieties},
	we have used the case of semistable abelian varieties.
	The above formulation for $1$-motives allows us to deduce this case
	from the case of good reduction abelian varieties, as follows.
	
	Let $A$ be a semistable abelian variety over $K$.
	Then by rigid analytic uniformization,
	we have an exact sequence
	$0 \to Y \to G \to A \to 0$ of rigid $K$-groups,
	where $Y$ is a lattice with unramified Galois action
	and $G$ an extension of a good reduction abelian variety by an unramified torus.
	Set $M = [Y \to G]$.
	Let $K^{\fp}_{\et} / k^{\ind\rat}$ be the modification of the category $K_{\et} / k^{\ind\rat}$
	where objects are pairs $(S, k_{S})$ with $k_{S} \in k^{\ind\rat}$ and
	$S$ an \'etale $\alg{K}^{\fp}(k_{S})$-algebra.
	Denote $\hat{S} = S \tensor_{\alg{K}^{\fp}(k_{S})} \alg{K}(k_{S})$.
	Let $\Spec K^{\fp}_{\et} / k^{\ind\rat}_{\et}$ be the \'etale site on $K^{\fp}_{\et} / k^{\ind\rat}$.
	The functors $k' \mapsto (\alg{K}^{\fp}(k'), k')$ and
	$(S, k_{S}) \mapsto (\hat{S}, k_{S})$ define morphisms
		\[
				\Spec K_{\et} / k^{\ind\rat}_{\et}
			\stackrel{f}{\to}
				\Spec K^{\fp}_{\et} / k^{\ind\rat}_{\et}
			\stackrel{\pi^{\fp}}{\to}
				\Spec k^{\ind\rat}_{\et}
		\]
	of sites.
	The pushforward $f_{\ast}$ is exact by \cite[Lem.\ 2.5.5]{Suz13}.
	Note that the ring $\hat{S}$
	for any \'etale $\alg{K}^{\fp}(k_{S})$-algebra $S$ is a complete topological $K$-algebra.
	We can show that
	the sequence $0 \to Y \to G \to A \to 0$ of rigid $K$-groups
	induces an exact sequence
	$0 \to f_{\ast} Y \to f_{\ast} G \to f_{\ast} A \to 0$ in $\Ab(K^{\fp}_{\et} / k^{\ind\rat}_{\et})$,
	by a rigid analytic version of the approximation argument used in the proof of the cited lemma.
	Hence we have $f_{\ast} M \isomto f_{\ast} A$ in $\Ab(K^{\fp}_{\et} / k^{\ind\rat}_{\et})$.
	Applying $R \pi^{\fp}_{\ast}$ and using the exactness of $f_{\ast}$,
	we have an isomorphism
		\[
				R \alg{\Gamma}(M)
			\isomto
				R \alg{\Gamma}(A).
		\]
	Hence \eqref{thm: duality for abelian varieties} for $A$ is reduced to
	\eqref{thm: duality for one-motives} for $M$.
	The cases of $\Gm$ and $\Z$ were explained in \eqref{thm: duality with coefficients in tori},
	which was essentially Serre's local class field theory.
	Hence we are reduced to the case of abelian varieties with good reduction.
	
	Note that the case of abelian varieties with good reduction
	is further reduced to the case of finite flat group schemes over $\Order_{K}$
	obtained as the torsion part,
	as explained in B\'egueri \cite[\S 7.1]{Beg81} and Bester \cite[\S 2.7]{Bes78}.
	With this argument, we can avoid Bertapelle's and Werner's results,
	and prove Grothendieck's and \v{S}afarevi\v{c}'s conjectures directly
	from the B\'egueri-Bester duality for $\Order_{K}$ with coefficients in finite flat group schemes.
\end{Rmk}


\numberwithin{equation}{section}
\section{Connection with classical statements for finite residue fields}
\label{sec: Connection with classical statements for finite residue fields}

We show that the duality theorems in this paper induce classical duality statements
when the residue field $k$ is finite.
Basically this can be done as follows.
Take the derived global section $R \Gamma(k_{\pro\et}, \;\cdot\;)$
of the both sides of our duality
to translate $R \alg{\Gamma}(K, A) \in D(k^{\ind\rat}_{\pro\et})$ into
$R \Gamma(K, A) \in D(\Ab)$.
Then use Lang's theorem (see below)
	\[
			\Ext_{k}^{1}(C, \Q / \Z)
		=
			\Hom_{\Ab}(C(k), \Q / \Z)
	\]
for a connected quasi-algebraic group $C$ over $k$
to translated the Serre dual to the Pontryagin dual.
We can actually treat not only Galois group cohomology but also Weil group cohomology,
and thanks to our setting of the pro-\'etale topology,
recover classical dualities that take profinite group structures on cohomology groups into account.

We need to define and study several notions for this.
Assume that $k = \F_{q}$.
Let $\closure{k}$ be an algebraic closure of $k$.
We denote by $F$ the $q$-th power Frobenius homomorphism on any $k$-algebra.
It induces an action on any object of $\Ab(k^{\ind\rat}_{\pro\et})$.
For $A \in D(k^{\ind\rat}_{\pro\et})$,
we denote the mapping cone of $F - 1 \colon A \to A$ by
$[A \stackrel{F - 1}{\to} A]$.
With a shift, we have a triangulated functor
	\begin{equation} \label{eq: Frobenius mapping cone}
			D(k^{\ind\rat}_{\pro\et})
		\to
			D(k^{\ind\rat}_{\pro\et}),
		\quad
			A
		\mapsto
			[A \to A][-1].
	\end{equation}
This is the mapping fiber of $F - 1 \colon A \to A$.

Next, as in \S \ref{sec: Sites and algebraic groups: setup and first properties},
let $k_{\pro\et}$ be the category of ind-\'etale $k$-algebras
and $\Spec k_{\pro\et}$ the pro-\'etale site on $k_{\pro\et}$.
Let $k_{\pro\zar}$ be the full subcategory of $k_{\pro\et}$
consisting of filtered unions $\bigcup k'_{\lambda}$
where each $k'_{\lambda}$ is a finite product of copies of $k$.
For $k' \in k_{\pro\zar}$,
we say that a finite family $\{k'_{i}\}$ of objects of $k_{\pro\zar}$ over $k'$ is a covering
if $k' \to \prod_{i} k'_{i}$ is faithfully flat.
This defines a site, which we call the pro-Zariski site of $k$
and denote by $\Spec k_{\pro\zar}$.
This is equivalent to the pro-\'etale site
$\Spec \closure{k}_{\pro\et}$ of $\closure{k}$
via the functor $k' \in k_{\pro\zar} \mapsto k' \tensor_{k} \closure{k} \in \closure{k}_{\pro\et}$.
It is also equivalent to the site of profinite sets given in \cite[Ex.\ 4.1.10]{BS15}.
By composing the continuous map of sites
	$
			\Spec k^{\ind\rat}_{\pro\et}
		\to
			\Spec \closure{k}_{\pro\et}
	$
defined by the identity with the identification
$\Spec \closure{k}_{\pro\et} \cong \Spec k_{\pro\zar}$,
we have a continuous map of sites
	\[
			f
		\colon
			\Spec k^{\ind\rat}_{\pro\et}
		\to
			\Spec k_{\pro\zar},
	\]
which is defined by the functor
$k_{\pro\zar} \to k_{\pro\et}$
given by $k' \mapsto k' \tensor_{k} \closure{k}$.
Its pushforward functor $f_{\ast} \colon \Ab(k^{\ind\rat}_{\pro\et}) \to \Ab(k_{\pro\zar})$ is exact,
which induces a triangulated functor
	\begin{equation} \label{eq: geometric points functor}
			f_{\ast}
		\colon
			D(k^{\ind\rat}_{\pro\et})
		\to
			D(k_{\pro\zar}).
	\end{equation}

The functor $f_{\ast}$ is a sort of a geometric points functor in the following sense.
For any $A \in \Ab(k^{\ind\rat}_{\pro\et})$ we have
$(f_{\ast} A)(k) = A(\closure{k})$.
If $A$ is locally of finite presentation,
we may view the abelian group $A(\closure{k})$ as a constant sheaf on $\Spec k_{\pro\zar}$,
and for any $k' = \bigcup k'_{\lambda} \in k_{\pro\zar}$ with $k_{\lambda}$ a finite product of copies of $k$,
we have
	\[
			(f_{\ast} A)(k')
		=
			\dirlim A(k'_{\lambda} \tensor_{k} \closure{k})
		=
			\dirlim A(\closure{k})(k'_{\lambda})
		=
			A(\closure{k})(k').
	\]
Thus $f_{\ast} A = A(\closure{k})$ in $\Ab(k_{\pro\zar})$ if $A$ is locally of finite presentation.
Note that $f_{\ast}$ commutes with filtered direct limits and arbitrary inverse limits.
Hence if $A = \dirlim_{\lambda} \invlim_{\lambda'} A_{\lambda \lambda'} \in \Ind \Pro \Alg / k$
with $A_{\lambda \lambda'} \in \Alg / k$, then
$f_{\ast} A = \dirlim_{\lambda} \invlim_{\lambda'} (A_{\lambda \lambda'}(\closure{k}))$
in $\Ab(k_{\pro\zar})$.

Now we define a functor $R \Gamma(k_{W}, \;\cdot\;)$
to be the composite of \eqref{eq: Frobenius mapping cone} and
\eqref{eq: geometric points functor}:
	\begin{gather*}
				R \Gamma(k_{W}, \;\cdot\;)
			\colon
				D(k^{\ind\rat}_{\pro\et})
			\to
				D(k_{\pro\zar}),
		\\
				R \Gamma(k_{W}, A)
			=
				f_{\ast} [A \stackrel{F - 1}{\to} A].
	\end{gather*}
For $A \in \Ab(k^{\ind\rat}_{\pro\et})$,
we define $H^{n}(k_{W}, A) \in \Ab(k_{\pro\zar})$ to be the $n$-th cohomology of
$R \Gamma(k_{W}, A)$,
and set $\Gamma(k_{W}, A) = H^{0}(k_{W}, A)$.
Obviously $H^{n}(k_{W}, A) = 0$ for $n \ne 0, 1$.
We have an exact sequence
	\[
			0
		\to
			\Gamma(k_{W}, A)(k)
		\to
			A(\closure{k})
		\stackrel{F - 1}{\to}
			A(\closure{k})
		\to
			H^{1}(k_{W}, A)(k)
		\to
			0,
	\]
so that the global section of $R \Gamma(k_{W}, A) \in D(k_{\pro\zar})$ is
$R \Gamma(W_{k}, A(\closure{k})) \in D(\Ab)$,
where $W_{k} \cong \Z$ is the Weil group of $k$.
Thus $R \Gamma(k_{W}, \;\cdot\;)$ is an enhancement of the Weil group cohomology of $k$
as a functor valued in $D(k_{\pro\zar})$.

Let $\Fin$ be the category of finite abelian groups.
Let $\Pro \Fin$, $\Ind \Fin$, $\Ind \Pro \Fin$
its procategory, indcategory, ind-procategory, respectively.
Note that $\Ind \Fin$ is equivalent to the category of torsion abelian groups.
A finite set $X$ can be identified with a finite $k$-scheme $\bigsqcup_{x \in X} \Spec k$
and hence a finite product $\prod_{x \in X} k$ of copies of $k$.
This extends to a functor from the category of profinite sets to $k_{\pro\zar} \subset k^{\ind\rat}$.
Hence we have an additive functor
$\Ind \Pro \Fin \to \Ab(k_{\pro\zar}) \subset \Ab(k^{\ind\rat}_{\pro\et})$.
There is also an obvious functor $\Ab \to \Ab(k_{\pro\zar})$
that sends an abelian group $A$ to $\bigsqcup_{a \in A} \Spec k$.

\begin{Prop} \BetweenThmAndList \label{prop: ind-profinite groups and ind-proalgebraic groups}
	\begin{enumerate}
		\item \label{ass: fully faithful embedding for ind-profinite groups}
			We have fully faithful embeddings
				\[
					\begin{CD}
							D^{b}(\Fin)
						@>> \subset >
							D^{b}(\Ind \Fin)
						@>> \subset >
							D^{b}(\Ab)
						\\
						@VV \cap V
						@V \cap VV
						@V \cap VV
						\\
							D^{b}(\Pro \Fin)
						@> \subset >>
							D^{b}(\Ind \Pro \Fin)
						@> \subset >>
							D^{b}(k_{\pro\zar}),
					\end{CD}
				\]
			of triangulated categories.
		\item \label{ass: Weil group cohomology of ind-proalgebraic groups}
			If $A \in \Alg / k$, $\Pro \Alg / k$, $\Ind \Alg / k$ or $\Ind \Pro \Alg / k$,
			then the kernel and cokernel of $F - 1 \colon A \to A$ are in
			$\Fin$, $\Pro \Fin$, $\Ind \Fin$ or $\Ind \Pro \Fin$, respectively.
			We have $H^{n}(k_{W}, A) \in \Fin$, $\Pro \Fin$, $\Ind \Fin$ or $\Ind \Pro \Fin$,
			respectively, for any $n$.
	\end{enumerate}
\end{Prop}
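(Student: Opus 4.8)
The plan is to prove the two assertions essentially independently, with assertion \eqref{ass: fully faithful embedding for ind-profinite groups} being a formal consequence of the general machinery already set up for ind-procategories, and assertion \eqref{ass: Weil group cohomology of ind-proalgebraic groups} resting on Lang's theorem together with the exactness properties of the relevant pushforward functors.

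For \eqref{ass: fully faithful embedding for ind-profinite groups}, I would proceed exactly as in the proof of Proposition \ref{prop: fully faithful embedding for ind-proalgebraic groups}. The embeddings $D^{b}(\Fin) \into D^{b}(\Pro \Fin)$, $D^{b}(\Fin) \into D^{b}(\Ind \Fin)$ and $D^{b}(\Pro \Fin) \into D^{b}(\Ind \Pro \Fin)$ are fully faithful by \cite[Thm.\ 15.3.1 (i)]{KS06} applied to the small abelian category $\Fin$, exactly as in Proposition \ref{prop: fully faithful embeddings of ind-procateogries}. The remaining nontrivial point is that $D^{b}(\Ind \Pro \Fin) \into D^{b}(k_{\pro\zar})$ is fully faithful. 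For this I would compute $R \Hom_{\Ind \Pro \Fin}$ by Proposition \ref{prop: fully faithful embeddings of ind-procateogries} and compare it with $R \sheafhom_{k_{\pro\zar}}$, using that $k_{\pro\zar}$ is (equivalent to) the pro-\'etale site of $\closure{k}$, where a finite group $C$ satisfies $R\sheafhom(C,-)$ computable degreewise since $\closure{k}$ is separably closed, so that $\Ext^{n}_{k_{\pro\zar}}(C, C') = \Hom_{\Ab}(C,C')$ for $n=0$ and vanishes for $n \geq 1$ between finite groups; then the ind-pro-limit formula of Proposition \ref{prop: fully faithful embeddings of ind-procateogries} matches the sheaf computation via the AB4* property of $\Ab(k_{\pro\zar})$ (Assertion \eqref{ass: product is exact} of Proposition \ref{prop: misc on ind-rational pro-etale topology} applies verbatim since $k_{\pro\zar}$ is a site defined by finite faithfully flat coverings) and Proposition \ref{prop: limits and cohomology commute}. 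The structure is entirely parallel to the proalgebraic case, so this is a matter of transcription rather than new ideas.

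For \eqref{ass: Weil group cohomology of ind-proalgebraic groups}, the key input is Lang's theorem: for a connected quasi-algebraic group $C$ over the finite field $k$, the map $F - 1 \colon C(\closure{k}) \to C(\closure{k})$ is surjective, and more functorially $F-1 \colon C \to C$ is an epimorphism of sheaves on $\Spec \closure{k}_{\pro\et} \cong \Spec k_{\pro\zar}$ with kernel $C(k)$, a finite group. For a general $A \in \Alg / k$, write the connected-\'etale sequence $0 \to A_{0} \to A \to \pi_{0}(A) \to 0$; applying the geometric-points functor \eqref{eq: geometric points functor} and the snake lemma to $F-1$ acting on this sequence, the kernel and cokernel of $F-1$ on $A$ are extensions of the (finite) kernel/cokernel of $F-1$ on the \'etale group $\pi_{0}(A)$ by the finite kernel $A_{0}(k)$ and the zero cokernel of $F-1$ on $A_{0}$. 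Hence both lie in $\Fin$. The cases $\Pro \Alg / k$, $\Ind \Alg / k$, $\Ind \Pro \Alg / k$ then follow by passing $F-1$ through the limits: since the pushforward functor $\Ab(k^{\ind\rat}_{\pro\et}) \to \Ab(k_{\pro\zar})$ of \eqref{eq: geometric points functor} is exact and commutes with filtered direct limits and with the limits defining pro-objects, and since the formation of $\ker$ and $\coker$ of $F-1$ is compatible with these (using that $F-1$ on a connected proalgebraic group is still surjective, by writing it as an inverse limit of connected quasi-algebraic groups with surjective transition maps and applying Lang degreewise — here the Mittag-Leffler condition gives exactness of the inverse limit), we conclude $\ker(F-1), \coker(F-1) \in \Pro\Fin$, $\Ind\Fin$, $\Ind\Pro\Fin$ accordingly. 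The statement about $H^{n}(k_{W}, A)$ then follows since $H^{0}(k_{W},A) = \ker(F-1)$ and $H^{1}(k_{W},A) = \coker(F-1)$ in $\Ab(k_{\pro\zar})$, with the targets landing in the claimed subcategories via \eqref{ass: fully faithful embedding for ind-profinite groups}.

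The main obstacle I anticipate is the pro-object case of \eqref{ass: Weil group cohomology of ind-proalgebraic groups}: one must check that $F - 1$ remains surjective on a connected \emph{pro}algebraic group $C = \invlim C_{\lambda}$, i.e.\ that $R^1\invlim$ of the system $\{C_{\lambda}(k)\}$ does not obstruct surjectivity of $F-1$ after passing to the limit. This is handled by noting that the $C_{\lambda}$ may be arranged with surjective transition maps and connected, so the kernels $C_{\lambda}(k)$ form a surjective (hence Mittag-Leffler) inverse system of finite groups, killing $R^1\invlim$; combined with Lang at each finite level this yields surjectivity of $F-1$ on $C$ and identifies its kernel with the profinite group $\invlim C_{\lambda}(k)$. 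Everything else is bookkeeping with exact functors and the already-established embedding results, so once this limiting argument is in place the proof is complete. I would also remark, for the benefit of the reader, that $H^{n}(k_W, A) = 0$ for $n \neq 0,1$ is immediate from the description of $R\Gamma(k_W, \cdot)$ as a mapping fibre of a single morphism.
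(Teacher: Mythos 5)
Your overall strategy coincides with the paper's for both assertions: part \eqref{ass: fully faithful embedding for ind-profinite groups} is obtained by establishing a base case for finite groups and then running the ind-pro limit machinery of Proposition \ref{prop: fully faithful embeddings of ind-procateogries} exactly as in Proposition \ref{prop: fully faithful embedding for ind-proalgebraic groups}, and part \eqref{ass: Weil group cohomology of ind-proalgebraic groups} reduces to $A \in \Alg / k$ and Lang's theorem (the paper simply quotes Lang in the form of the four-term exact sequence $0 \to A(k) \to A \to A \to \pi_{0}(A)_{F} \to 0$, which packages your connected-\'etale d\'evissage; your Mittag-Leffler discussion for the pro case is a legitimate way to justify the reduction that the paper leaves to the already-established exactness of $\invlim$ on $\Pro \Alg / k$).

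There is, however, one concrete error in your base case for part \eqref{ass: fully faithful embedding for ind-profinite groups}. You assert that $\Ext_{k_{\pro\zar}}^{n}(C, C')$ vanishes for $n \ge 1$ when $C, C'$ are finite. This is false: for instance $R \sheafhom_{k_{\pro\zar}}(\Z / p \Z, \Z / p \Z)$ is the mapping fibre of multiplication by $p$ on $\Z / p \Z$, so $\sheafext^{1}_{k_{\pro\zar}}(\Z / p \Z, \Z / p \Z) = \Z / p \Z \ne 0$, and since higher cohomology of the point vanishes one gets $\Ext_{k_{\pro\zar}}^{1}(\Z / p \Z, \Z / p \Z) = \Z / p \Z$. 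What the separable closedness of $\closure{k}$ kills is the Galois-cohomological contribution, not the $\sheafext^{1}$ coming from the group structure. The correct base case — and the one the paper uses — is $R \Hom_{k_{\pro\zar}}(C, C') = R \Hom_{\Fin}(C, C')$, i.e.\ the usual $\Ext^{\bullet}$ of abelian groups, concentrated in degrees $0$ and $1$. This matters for your argument: full faithfulness requires matching $R \Hom_{\Ind \Pro \Fin}$ (whose degree-$1$ part is generically nonzero) against $R \Hom_{k_{\pro\zar}}$, and with your stated vanishing the two limit formulas would disagree in degree $1$ rather than coincide. With the corrected identification the rest of your transcription of the proalgebraic argument goes through; the paper also records an alternative shortcut via the adjunction for $\Bar{f} \colon \Spec \closure{k}^{\ind\rat}_{\pro\et} \to \Spec \closure{k}_{\pro\et}$ and the embedding $D^{b}(\Ind \Pro \Fin) \subset D^{b}(\Ind \Pro \Alg / \closure{k})$, which avoids redoing the limit computation.
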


\begin{proof}
	\eqref{ass: fully faithful embedding for ind-profinite groups}
	For any $B \in \Ab$ and $n \ge 1$,
	we have $\Ext_{k_{\pro\zar}}^{n}(\Z, B) = H^{n}(k_{\pro\zar}, B) = 0$
	by the same proof as \cite[Prop.\ 3.3.1]{Suz13}
	(Zariski cohomology of a field is trivial).
	It follows that $R \Hom_{k_{\pro\zar}}(A, B) = R \Hom_{\Fin}(A, B)$ if $A, B \in \Fin$.
	From this, we can deduce the rest of the statements
	in the same way as we did in the proof of
	\eqref{prop: fully faithful embedding for ind-proalgebraic groups}
	(which ultimately will need the cubical construction
	for the limit arguments \cite[Lem.\ 3.8.2]{Suz13}).
	
	Alternatively,
	a (finite) set also corresponds to a (finite) \'etale $\closure{k}$-scheme.
	Therefore it is enough to show the corresponding statement
	that replaces $k_{\pro\zar}$ with $\closure{k}_{\pro\et}$,
	$\Fin$ with $\FEt / \closure{k}$
	and $\Ab$ with $\Et / \closure{k}$.
	Let $\Bar{f} \colon \Spec \closure{k}^{\ind\rat}_{\pro\et} \to \Spec \closure{k}_{\pro\et}$
	be the morphism defined by the identity.
	Then $\Bar{f}_{\ast} \Bar{f}^{\ast} = \id$ on $D(\closure{k}_{\pro\et})$,
	so $\Bar{f}^{\ast} \colon D(\closure{k}_{\pro\et}) \to D(\closure{k}^{\ind\rat}_{\pro\et})$ is fully faithful.
	The fully faithful embeddings
		\[
				D^{b}(\Ind \Pro \Alg / \closure{k}),
				D^{b}(\Et / \closure{k})
			\into
				D^{b}(\closure{k}^{\ind\rat}_{\pro\et})
		\]
	in \eqref{prop: fully faithful embedding for ind-proalgebraic groups}
	then restrict to fully faithful embeddings
		\[
				D^{b}(\Ind \Pro \FEt / \closure{k}),
				D^{b}(\Et / \closure{k})
			\into
				D^{b}(\closure{k}_{\pro\et}).
		\]
	
	\eqref{ass: Weil group cohomology of ind-proalgebraic groups}
	It is enough to show the statements for $A \in \Alg / k$.
	By Lang's theorem \cite[VI, Prop.\ 3]{Ser88},
	we have an exact sequence
		\[
				0
			\to
				A(k)
			\to
				A
			\stackrel{F - 1}{\to}
				A
			\to
				\pi_{0}(A)_{F}
			\to
				0,
		\]
	where $\pi_{0}(A)_{F}$ is the coinvariants under $F$.
	The groups $A(k), \pi_{0}(A)_{F}$ are finite constant groups and,
	in particular, locally of finite presentation.
	Hence $\Gamma(k_{W}, A) = A(k) \in \Fin$
	and $H^{1}(k_{W}, A) = \pi_{0}(A)_{F} \in \Fin$.
\end{proof}

Since $F = 1$ on $\Z$, we have
$R \Gamma(k_{W}, \Z) = \Z \oplus \Z[-1]$.
Hence for any $A \in D(k^{\ind\rat}_{\pro\et})$, we have natural morphisms
	\begin{align*}
				R \Gamma(k_{W}, A^{\SDual})
		&	=
				R \Gamma \bigl(
					k_{W},
					R \sheafhom_{k^{\ind\rat}_{\pro\et}}(A, \Z)
				\bigr)
		\\
		&	\to
				R \sheafhom_{k_{\pro\zar}} \bigl(
					R \Gamma(k_{W}, A),
					R \Gamma(k_{W}, \Z)
				\bigr)
		\\
		&	\to
				R \sheafhom_{k_{\pro\zar}} \bigl(
					R \Gamma(k_{W}, A), \Z)
				\bigr)[-1]
		\\
		&	=
				R \Gamma(k_{W}, A)^{\LDual}[-1]
		\\
		&	\gets
				R \Gamma(k_{W}, A)^{\PDual}[-2],
	\end{align*}
where we denote $B^{\LDual} = R \sheafhom_{k_{\pro\zar}}(B, \Z)$
(linear dual)
and $B^{\PDual} = R \sheafhom_{k_{\pro\zar}}(B, \Q / \Z)$
(Pontryagin dual) for $B \in D(k_{\pro\zar})$.

\begin{Prop} \label{prop: Weil group cohomology of Serre dual}
	If $A \in \Ind \Pro \Alg / k$,
	then the above defined morphisms
		\[
				R \Gamma(k_{W}, A^{\SDual})
			\to
				R \Gamma(k_{W}, A)^{\LDual}[-1]
			\gets
				R \Gamma(k_{W}, A)^{\PDual}[-2]
		\]
	are isomorphisms.
	If $A \in \FGEt / k$, then the morphism
		\[
				R \Gamma(k_{W}, A^{\SDual})
			\to
				R \Gamma(k_{W}, A)^{\LDual}[-1]
		\]
	is an isomorphism.
\end{Prop}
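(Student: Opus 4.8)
The plan is to reduce everything to the two cases $A \in \Pro\Alg/k$ with $A^{\SDual} \in \Ind\Alg_{\uc}/k$ (and the dual situation $A \in \Ind\Alg/k$) and $A \in \FGEt/k$, since both sides of the claimed quasi-isomorphisms are triangulated functors of $A$ and both $\Ind\Pro\Alg/k$ and $\FGEt/k$ are generated under extensions, shifts, and filtered limits/colimits by $\Alg/k$ (resp.\ $\Z$, $\Z/n\Z$). In fact by Proposition \ref{prop: Serre duality} and Proposition \ref{prop: comparison thm for ind-proalgebraic groups} the functors $R\Gamma(k_{W},\,\cdot\,)$, $\SDual$, $\LDual$, $\PDual$ all commute with the relevant filtered limits and colimits (using Assertion \eqref{ass: quasi-compactness of the sites} of Proposition \ref{prop: misc on ind-rational pro-etale topology} and the $R\invlim$ statements there), so after devissage it suffices to treat $A = C \in \Alg/k$ and $A = \Z$, $A = \Z/n\Z$.

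First I would record the elementary input $R\Gamma(k_{W},\Z) = \Z \oplus \Z[-1]$ and, more generally, that for a finite group $C$ the exact sequence in Proposition \ref{prop: ind-profinite groups and ind-proalgebraic groups}\eqref{ass: Weil group cohomology of ind-proalgebraic groups} identifies $H^{0}(k_{W},C)$ and $H^{1}(k_{W},C)$ as (Pontryagin-)dual finite groups; this handles $A = \Z/n\Z$ directly since $(\Z/n\Z)^{\SDual} = \Z/n\Z[-1]$. For $A = C \in \Alg/k$ connected, the key point is Lang's theorem: $F-1\colon C \to C$ is surjective on $\closure{k}$-points with kernel $C(k)$, a finite group, so $R\Gamma(k_{W},C) = C(k)$ concentrated in degree $0$, a finite group. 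On the other side, $C^{\SDual} = R\sheafhom_{k}(C,\Q/\Z)[-1]$ lives in $D^{b}(\Alg_{\uc}/k)$ by Proposition \ref{prop: Serre duality}, and I would compute $R\Gamma(k_{W},C^{\SDual})$ via the same Lang-theorem mechanism applied to each cohomology sheaf $\sheafext^{i}_{k}(C,\Q/\Z)$; the crucial arithmetic identity is that for a connected quasi-algebraic group the Breen--Serre/Pontryagin pairing is compatible with taking $F$-(co)invariants, which is exactly Lang's theorem on the dual side, and reduces the whole statement to the duality $H^{1}(k_{W},\mu_{n}) \cong \Z/n\Z$ and the perfectness of the resulting finite pairing. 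For the \'etale case $A \in \FGEt/k$, by Proposition \ref{prop: Serre duality}\eqref{ass: Serre duality for unip and etale} we have $A^{\SDual} = A^{\PDual}[-1]$ with $A^{\PDual} \in \FGEt/k$, and I would reduce to $A = \Z$ (giving $A^{\SDual} = \Z$) and $A = \Z/n\Z$, both immediate from $R\Gamma(k_{W},\Z) = \Z\oplus\Z[-1]$.

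The main obstacle I anticipate is the middle map $R\Gamma(k_{W},A)^{\LDual}[-1] \gets R\Gamma(k_{W},A)^{\PDual}[-2]$ and, dually, the passage from $\SDual$ to $\LDual$: one must show that $R\Gamma(k_{W},A)$ and its cohomology sheaves are \emph{torsion} (in $\Ind\Pro\Fin$), so that $\LDual$ and $\PDual$ agree up to the evident shift. For $A \in \Ind\Pro\Alg/k$ this is precisely Proposition \ref{prop: ind-profinite groups and ind-proalgebraic groups}\eqref{ass: Weil group cohomology of ind-proalgebraic groups}, which is why the second assertion of the present proposition, for $A \in \FGEt/k$, only claims the $\LDual$-quasi-isomorphism and not the $\PDual$-one: $R\Gamma(k_{W},\Z) = \Z \oplus \Z[-1]$ has a free summand, so its Pontryagin dual is not its linear dual up to shift. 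Thus the structure of the proof is: (i) devissage to $C \in \Alg/k$ and to $\Z, \Z/n\Z$; (ii) in the $\Ind\Pro\Alg/k$ case invoke the torsion statement to collapse $\LDual$ and $\PDual$; (iii) verify the $\LDual$-quasi-isomorphism on generators using Lang's theorem and the comparison of the Breen--Serre pairing with the finite Weil-group pairing; (iv) in the $\FGEt/k$ case run only steps (i) and (iii).

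I would organize the write-up as follows. Reduce to the case $A$ concentrated in a single degree by the triangulated-functoriality remark. Treat $A = \Z$ by hand using $R\Gamma(k_{W},\Z) = \Z \oplus \Z[-1]$ and $\Z^{\SDual} = \Z$, checking the composite is the identity on each summand; this already gives the $\FGEt/k$ statement for $\Z$, hence (by $\Z/n\Z$ via the triangle $\Z \xrightarrow{n} \Z \to \Z/n\Z$) for all of $D^{b}(\FGEt/k)$, using that $(\,\cdot\,)^{\SDual}$ and $(\,\cdot\,)^{\LDual}$ are triangulated. For the $\Ind\Pro\Alg/k$ statement, reduce to $C \in \Alg/k$; split $C$ via its connected-\'etale sequence $0 \to C_{0} \to C \to \pi_{0}(C) \to 0$, reducing further to $C$ connected (the \'etale part being covered by the finite case) and then, using Proposition \ref{prop: Serre duality}, to the universal unipotent and semi-abelian constituents; in each case Lang's theorem computes both $R\Gamma(k_{W},C)$ and $R\Gamma(k_{W},C^{\SDual})$ and the perfectness of the pairing between them follows from the finite-group statement $H^{1}(k_{W},\mu_{n}) = \Z/n\Z$ together with the Breen--Serre duality identified in Proposition \ref{prop: Serre duality}\eqref{ass: Serre duality for unip and etale}. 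Finally, observe that the torsion statement \eqref{ass: Weil group cohomology of ind-proalgebraic groups} of Proposition \ref{prop: ind-profinite groups and ind-proalgebraic groups} makes the canonical map $R\Gamma(k_{W},A)^{\PDual}[-1] \to R\Gamma(k_{W},A)^{\LDual}$ a quasi-isomorphism for $A \in \Ind\Pro\Alg/k$, completing the chain.
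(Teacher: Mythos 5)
Your proposal takes a genuinely different route from the paper's proof, and as written it has a real gap at its central step. The paper's argument is purely formal: writing $\Bar{f} \colon \Spec \closure{k}^{\ind\rat}_{\pro\et} \to \Spec \closure{k}_{\pro\et}$ for the map defined by the identity, both $R \Gamma(k_{W}, A^{\SDual})$ and $R \Gamma(k_{W}, A)^{\LDual}[-1]$ are identified, by pushforward and adjunction, with $\Bar{f}_{\ast} R \sheafhom_{\closure{k}^{\ind\rat}_{\pro\et}}(\;\cdot\;, \Z)$ evaluated on $[A \xrightarrow{F-1} A]$ and on $\Bar{f}^{\ast} \Bar{f}_{\ast} [A \xrightarrow{F-1} A]$ respectively, so the whole statement reduces to the single assertion $\Bar{f}^{\ast} \Bar{f}_{\ast} [A \to A] = [A \to A]$. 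That holds because the kernel and cokernel of $F - 1$ lie in $\Ind \Pro \Fin$ (Proposition \ref{prop: ind-profinite groups and ind-proalgebraic groups}, which is where Lang's theorem enters), and in the $\FGEt / k$ case because they are finitely generated constant groups; the passage from $\LDual$ to $\PDual$ is then just $R \sheafhom_{\closure{k}_{\pro\et}}(B, \Q) = 0$ for $B \in \Ind \Pro \Fin$. No pairing is ever computed and no perfectness of a finite pairing is ever checked.

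Your strategy instead devisses to $C \in \Alg / k$, $\Z$ and $\Z / n \Z$ and verifies the map on generators. The devissage is heavier than you indicate — one must commute $R \Gamma(k_{W}, \;\cdot\;)$, $\SDual$ and $\LDual$ with filtered direct limits and derived inverse limits \emph{and} check that the comparison morphisms themselves are compatible with these limits, which needs an analogue of Proposition \ref{prop: comparison thm for ind-proalgebraic groups} over $\Spec k_{\pro\zar}$ — but the genuine gap is in your step (iii). Lang's theorem computes the two finite groups $H^{i}(k_{W}, C^{\SDual})$ and $H^{j}(k_{W}, C)^{\PDual}$; it does not show that the particular morphism between them, defined via the cup-product formalism and the projection $R \Gamma(k_{W}, \Z) \to \Z[-1]$, is an isomorphism. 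The claim that compatibility of the Breen--Serre pairing with $F$-(co)invariants ``is exactly Lang's theorem on the dual side'' is essentially a restatement of the proposition rather than a proof of it: to close the argument you would have to identify the comparison map on each generator with a classical perfect pairing over the finite field $k$ (the Artin--Schreier--Witt trace pairing for $W_{n}$, the pairing of $C(k) \cong (TC)_{F}$ against $\Hom(TC, \Q/\Z)^{F}$ for semi-abelian $C$, and so on) and prove perfectness in each case. That is substantial unaddressed work, and it is precisely what the adjunction argument renders unnecessary.
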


\begin{proof}
	For the moment,
	let $A \in \Ab(k^{\ind\rat}_{\pro\et})$ be arbitrary.
	We use the identification
	$\Spec \closure{k}_{\pro\et} \cong \Spec k_{\pro\zar}$.
	Hence we view the map $f$ used in \eqref{eq: geometric points functor} as a continuous map of sites
	$\Spec k^{\ind\rat}_{\pro\et} \to \Spec \closure{k}_{\pro\et}$
	defined by the identity.
	Let $g \colon \Spec k^{\ind\rat}_{\pro\et} \to \Spec k_{\pro\et}$ be the morphism defined by the identity.
	Let $j \colon \Spec \closure{k}_{\pro\et} \to \Spec k_{\pro\et}$ be the natural morphism
	(defined by the functor
	$k' \in k_{\pro\et} \mapsto k' \tensor \closure{k} \in \closure{k}_{\pro\et}$).
	The pullback $j^{\ast}$ is the restriction functor $|_{\Spec \closure{k}}$.
	Hence $f_{\ast} = j^{\ast} g_{\ast}$ as functors
	$D(k^{\ind\rat}_{\pro\et}) \to D(\closure{k}_{\pro\et})$.
	
	We have
		\[
				R \Gamma(k_{W}, A^{\SDual})
			=
				j^{\ast} g_{\ast}
				R \sheafhom_{k^{\ind\rat}_{\pro\et}}(
					[A \stackrel{F - 1}{\to} A],
					\Z
				).
		\]
	On the other hand, the restriction $j^{\ast}$ commutes with derived sheaf-Hom's:
		\[
				j^{\ast} R \sheafhom_{k_{\pro\et}}(B, C)
			=
				R \sheafhom_{\closure{k}_{\pro\et}}(j^{\ast} B, j^{\ast} C)
		\]
	(\cite[Prop.\ 18.4.6]{KS06}).
	Together with the adjunction $g^{\ast} \leftrightarrow g_{\ast}$
	and $j^{\ast} \Z = \Z$ and $g_{\ast} \Z = \Z$,
	we have
		\begin{align*}
					R \Gamma(k_{W}, A)^{\LDual}[-1]
			&	=
					R \sheafhom_{\closure{k}_{\pro\et}}(
						j^{\ast} g_{\ast} [A \stackrel{F - 1}{\to} A],
						\Z
					)
			\\
			&	=
					j^{\ast} g_{\ast}
					R \sheafhom_{k^{\ind\rat}_{\pro\et}}(
						g^{\ast} g_{\ast} [A \stackrel{F - 1}{\to} A],
						\Z
					).
		\end{align*}
	
	Hence the morphism
		$
				R \Gamma(k_{W}, A^{\SDual})
			\to
				R \Gamma(k_{W}, A)^{\LDual}[-1]
		$
	is an isomorphism if $g^{\ast} g_{\ast} [A \to A] = [A \to A]$.
	For a profinite set $X$ viewed as an object of $k^{\ind\rat}$, we have
	$g^{\ast} g_{\ast} X = g^{\ast} X = X$
	since $X$ is representable in $k_{\pro\et}$.
	The same is true if $X$ is an ind-object of profinite sets (in particular, a set)
	since both $g^{\ast}$ and $g_{\ast}$ commute with filtered direct limits.
	We have $[A \to A] \in D^{b}(\Ind \Pro \Fin)$ if $A \in \Ind \Pro \Alg / k$
	by \eqref{prop: ind-profinite groups and ind-proalgebraic groups}
	\eqref{ass: Weil group cohomology of ind-proalgebraic groups},
	and $[A \to A] \in D^{b}(\Ab)$ if $A \in \FGEt / k$.
	Hence $g^{\ast} g_{\ast} [A \to A] = [A \to A]$ in either case.
	(Note that $g^{\ast} g_{\ast} \Ga$ is not $\Ga$
	but the \'etale group whose group of geometric points
	is the discrete $\Gal(\closure{k} / k)$-modules of the additive group $\closure{k}$).
	
	For the isomorphism
		$
				R \Gamma(k_{W}, A)^{\PDual}[-2]
			=
				R \Gamma(k_{W}, A)^{\LDual}[-1]
		$
	for $A \in \Ind \Pro \Alg / k$,
	it is enough to show that
	$R \sheafhom_{\closure{k}_{\pro\et}}(B, \Q) = 0$
	for any $B \in \Ind \Pro \Fin$.
	We can see this by the same proof as
	\eqref{thm: Ext of proalgebraic groups as sheaves}
	\eqref{ass: Ext by uniquely divisible is zero}
	or by reducing to it by
		$
				R \sheafhom_{\closure{k}_{\pro\et}}(B, \Q)
			=
				\Bar{f}_{\ast} R \sheafhom_{\closure{k}^{\ind\rat}_{\pro\et}}(B, \Q)
			=
				0
		$,
	where $\Bar{f} \colon \Spec \closure{k}^{\ind\rat}_{\pro\et} \to \closure{k}_{\pro\et}$
	is the morphism defined by the identity.
\end{proof}

Let $A \in \Ab(K_{\fppf} / k^{\ind\rat}_{\et})$.
Assume that $R \alg{\Gamma}(K, A)$ is P-acyclic.
Then
	\[
			R \Gamma(k_{W}, R \alg{\Gamma}(K, A))(k)
		=
			\bigl[
					R \Gamma(\hat{K}^{\ur}, A)
				\stackrel{F - 1}{\to}
					R \Gamma(\hat{K}^{\ur}, A)
			\bigr][-1].
	\]
This is equal to $R \Gamma(W_{K}, A(L))$
when the fppf cohomology $R \Gamma(\hat{K}^{\ur}, A)$ agrees with the \'etale cohomology
$R \Gamma(\hat{K}^{\ur}_{\et}, A)$ (such as when $A$ is a smooth group scheme over $K$),
where $W_{K}$ is the Weil group of $K$ and $L$ a separable closure of $\hat{K}^{\ur}$.
For this reason, we denote%
\footnote{
	The notation $R \Gamma(K_{W}, \;\cdot\;)$ reads
	``Weil-fppf cohomology of the scheme $\Spec K$'',
	while $R \Gamma(W_{K}, \;\cdot\;)$ is group cohomology of the topological group $W_{K}$.
	It is not good to choose the latter as general notation for the cohomology theory defined here
	since fppf cohomology is not always described as group cohomology.
}
	\begin{gather*}
				R \Gamma(K_{W}, A)
			=
				R \Gamma(k_{W}, R \Tilde{\alg{\Gamma}}(K, A))
			\in
				D(k_{\pro\zar}),
		\\
				H^{n}(K_{W}, A)
			=
				H^{n} R \Gamma(K_{W}, A)
			\in
				\Ab(k_{\pro\zar})
	\end{gather*}
for any $A \in \Ab(K_{\fppf} / k^{\ind\rat}_{\et})$.
Whenever $R \alg{\Gamma}(K, A)$ is P-acyclic and
in $D^{b}(\Ind \Pro \Alg / k)$ (resp.\ $D^{b}(\Et / k)$),
which happens in many cases by
\eqref{prop: cohomology of local fields},
the complex $R \Gamma(K_{W}, A)$
is thus an enhancement of the Weil group cohomology of $K$
as a complex of ind-profinite groups
(resp.\ finitely generated abelian groups).
The above proposition says that
the Serre duality for $R \Tilde{\alg{\Gamma}}(K, A)$ is translated into
the linear duality or Pontryagin duality
for $R \Gamma(K_{W}, A)$ up to shift.

For example, let $A$ be an abelian variety over $K$.
Then our duality theorem \eqref{thm: duality for abelian varieties}
and \eqref{prop: double dual of R Gamma of abelian variety}
show that
	\[
			\bigl[
					\invlim_{n} \alg{\Gamma}(K, A^{\vee})
				\to
					R \alg{\Gamma}(K, A^{\vee})
			\bigr]
		=
			R \alg{\Gamma}(K, A)^{\SDual}[1].
	\]
Apply $R \Gamma(k_{W}, \;\cdot\;)$.
If $\alg{\Gamma}(K, A^{\vee})_{\mathrm{sAb}}$ denotes
the semi-abelian quotient of $\alg{\Gamma}(K, A^{\vee})_{0}$,
which is quasi-algebraic,
then
	$
		R \Gamma \bigl(
			k_{W},
			\alg{\Gamma}(K, A^{\vee})_{\mathrm{sAb}}
		\bigr)
	$
is finite by
\eqref{prop: ind-profinite groups and ind-proalgebraic groups}.
Therefore
	\[
			R \Gamma \bigl(
				k_{W},
				\invlim_{n} \alg{\Gamma}(K, A^{\vee})
			\bigr)
		=
			R \invlim_{n}
			R \Gamma \bigl(
				k_{W},
				\alg{\Gamma}(K, A^{\vee})_{\mathrm{sAb}}
			\bigr)
		=
			0.
	\]
Hence \eqref{prop: Weil group cohomology of Serre dual}
shows that we have an isomorphism
	\[
			R \Gamma(K_{W}, A^{\vee})
		=
			R \Gamma(K_{W}, A)^{\PDual}[-1]
	\]
in $D^{b}(\Ind \Pro \Fin)$.
The description of $\alg{H}^{n}(K, A)$
in \eqref{prop: cohomology of local fields}
\eqref{ass: cohomology of abelian varieties over local fields}
shows that $H^{0}(K_{W}, A) = A(K)$ is a profinite group,
$H^{1}(K_{W}, A) = H^{1}(K, A)$ an indfinite (i.e., torsion) group,
and $H^{n}(K_{W}, A) = 0$ for $n \ne 0, 1$.
This recovers Tate-Milne's duality \cite{Tat58} \cite{Mil70} \cite{Mil72}
(see also \cite[I, Cor.\ 3.4 and III, Thm.\ 7.8]{Mil06}).

If $T$ is a torus over $K$, then the same procedure gives isomorphisms
	\begin{gather*}
				R \Gamma(K_{W}, T^{\CDual})
			=
				R \Gamma(K_{W}, T)^{\LDual}[-1]
		\\
				R \Gamma(K_{W}, T)
			=
				R \Gamma(K_{W}, T^{\CDual})^{\LDual}[-1]
	\end{gather*}
in $D^{b}(k_{\pro\zar})$.
We see that $H^{0}(K_{W}, T) = T(K)$
is an extension of a finite free abelian group by a profinite group,
$H^{1}(K_{W}, T)$ a finitely generated abelian group
and $H^{n}(K_{W}, T) = 0$ for $n \ne 0, 1$.
We also see that $H^{0}(K_{W}, T^{\CDual})$ and $H^{1}(K_{W}, T^{\CDual})$ are finitely generated abelian groups,
$H^{2}(K_{W}, T^{\CDual}) = H^{1}(K, \hat{T}_{\tor})$ an indfinite (i.e. torsion) group
and $H^{n} = 0$ for $n \ne 0, 1, 2$.
Let $C$ be an algebraically closed field of characteristic zero.
We view $C^{\times}$ as a constant group scheme over $k$ or $K$
and let $\Hat{T}(C) = T^{\CDual} \tensor_{\Z} C^{\times}$ be the $C$-value points of the dual torus.
Taking $\tensor_{\Z}^{L} C^{\times}$ of the above isomorphism,
we have
	\[
			R \Gamma(K_{W}, \Hat{T}(C))
		=
			R \sheafhom_{k_{\pro\zar}}(R \Gamma(K_{W}, T), C^{\times})[-1]
	\]
by the same method as the proof of
\eqref{prop: content of duality of tori}.
From $H^{1}$, we get
	\[
			H^{1}(K_{W}, \Hat{T}(C))
		=
			\sheafhom_{k_{\pro\zar}}(\Gamma(K, T), C^{\times}).
	\]
This recovers the local Langlands correspondence for tori \cite{Lan97}.

If $M$ is a $1$-motive over $K$, then
we have
	\[
			R \Gamma(
				k_{W},
				R \alg{\Gamma}(K, M)^{\SDual \SDual}
			)
		=
			R \Gamma(k_{W}, R \alg{\Gamma}(K, M))
	\]
by reducing it to the previous cases of abelian varieties, tori and lattices
by the weight filtration, and an isomorphism
	\[
			R \Gamma(K_{W}, M^{\vee})
		=
			R \Gamma(K_{W}, M)^{\LDual}
	\]
in $D^{b}(k_{\pro\zar})$.
This case, it is a little bit complicated to describe each $H^{n}(K_{W}, M)$
due to its possible uniquely divisible part.
This is essentially a result of Harari-Szamuely \cite[Thm.\ 2.3]{HS05}.

If $N$ is a finite flat group scheme over $K$,
then the B\'egueri-Bester-Kato duality
	\[
			R \alg{\Gamma}(K, N^{\CDual})
		=
			R \alg{\Gamma}(K, N)^{\SDual}
	\]
(see Thm.\ \eqref{thm: duality for finite flats over local fields} and
\cite[Thm.\ 2.7.1]{Suz13} for this form of the statement) implies
	\[
			R \Gamma(K_{W}, N^{\CDual})
		=
			R \Gamma(K_{W}, N)^{\PDual}[-2]
	\]
in $D^{b}(\Ind \Pro \Fin)$.
We see that $H^{0}(K_{W}, N) = H^{0}(K, N)$ and $H^{2}(K_{W}, N) = H^{2}(K, N)$ are finite,
$H^{1}(K_{W}, N)$ the group $H^{1}(K, N)$ endowed with an ind-profinite group structure,
and $H^{n} = 0$ for $n \ne 0, 1, 2$.
If $K$ has mixed characteristic, then $H^{1}(K_{W}, N)$ is finite.
This recovers Tate-Shatz's duality \cite[II, \S 5, Thm.\ 2]{Ser02}, \cite{Sha64}.

\begin{Rmk}
	As in \cite{Sha64},
	the group $H^{1}(K, N)$ for equal characteristic $K$ is not only ind-profinite but locally compact
	(see also \cite[III, \S 6]{Mil06} and \cite{Ces15}),
	i.e., it can be written as a filtered union $\bigcup B_{\lambda}$ of profinite groups
	such that $B_{\mu} / B_{\lambda}$ is finite for any $\mu \ge \lambda$.
	In our setting,
	we can deduce this from the fact (true for general perfect $k$) that
	$\alg{H}^{1}(K, N) \in \Ind \Pro \Alg / k$ can be written as
	a filtered union $\bigcup C_{\lambda}$ of proalgebraic groups
	such that $C_{\mu} / C_{\lambda}$ is quasi-algebraic for any $\mu \ge \lambda$.
	For the moment, let us say such an ind-proalgebraic group algebraically growing.
	To see that $\alg{H}^{1}(K, N)$ is algebraically growing,
	note that this is true for $N = \alpha_{p}, \mu_{p}, \Z / n \Z$
	as seen from the proof of
	\eqref{prop: cohomology of local fields}.
	Let $0 \to C_{1} \to C_{2} \to C_{3} \to 0$ be an exact sequence in $\Ind \Pro \Alg / k$.
	If $C_{2}$ is algebraically growing, so is $C_{1}$.
	If $C_{1}$ and $C_{3}$ are both algebraically growing, so is $C_{2}$.
	If $C_{2}$ is algebraically growing and $C_{1} \in \Pro \Alg / k$,
	then $C_{3}$ is algebraically growing.
	Then the general finite flat group case follows
	from the proof of the same proposition.
\end{Rmk}


\appendix

\numberwithin{equation}{section}
\section{The pro-fppf topology for proalgebraic proschemes}
\label{sec: The pro-fppf topology for proalgebraic proschemes}

In this appendix, we briefly explain
how \eqref{thm: Ext of proalgebraic groups as sheaves},
originally proved in \cite[\S 3]{Suz13} for affine proalgebraic groups $A$,
can be extended for arbitrary proalgebraic groups.
For abelian varieties, the same proof as \cite{Suz13} works.
Although the general case including infinite products of abelian varieties
is not needed in the main body of this paper,
we treat the general case in this appendix for completeness.
The problem is that infinite products of abelian varieties are not schemes, but proschemes.
Let $k^{\pro\alge}$ be the procategory of the category of quasi-algebraic $k$-schemes
(i.e., perfections of $k$-schemes of finite type).
We call an object of $k^{\pro\alge}$ a proalgebraic $k$-proscheme.
Examples are infinite products of perfections of proper varieties.
Proalgebraic groups are group objects in $k^{\pro\alge}$.
In \cite[\S 3.1]{Suz13}, we introduced the pro-fppf topology
on the category of perfect affine $k$-schemes.
Below we extend it to the category $k^{\pro\alge}$ of proalgebraic $k$-proschemes
in the paragraph after \eqref{prop: composite of flat of finite presentation}.
We also show in \eqref{prop: topos equivalence for proalgebraic proschemes and perfect affine schemes}
that the pro-fppf site of proalgebraic $k$-proschemes
has an equivalent topos to the pro-fppf site of perfect affine $k$-schemes.
Then most part of \cite[\S 3]{Suz13} goes without large modifications.

Generalizing \cite[\S 3.1]{Suz13} from the affine case, we say that
a morphism of quasi-compact quasi-separated perfect $k$-schemes $Y \to X$ is
\emph{of finite presentation}
if it is the perfection of a $k$-scheme morphism $Y_{0} \to X$ of finite presentation in the usual sense.
If moreover $Y_{0} \to X$ can be chosen to be (faithfully) flat,
we say that $Y \to X$ is \emph{(faithfully) flat of finite presentation} (in the perfect sense).%
\footnote{
	The notion of morphisms of finite presentation in the sense here is
	Zariski local in the source and the target,
	as shown in \cite[\S 1, Cor.\ to Prop.\ 9]{Ser60} when $X = \Spec k$ and $Y$ separated,
	and in \cite[Prop.\ 3.13]{BS17} in general.
	But it is not clear whether the notion of flat morphisms of finite presentation is Zariski local or not.
	The definition given here is sufficient
	for defining a nice enough category of sheaves.
}
A quasi-compact quasi-separated perfect $k$-scheme can naturally be viewed as an object of $k^{\pro\alge}$
since it is a filtered inverse limit of quasi-algebraic $k$-schemes with affine transition morphisms
(use the absolute noetherian approximation \cite[Thm.\ C.9]{TT90} and apply the perfection).
We say that a morphism $Y \to X$ in $k^{\pro\alge}$ is
\emph{of finite presentation}
if it can be written as the base change of a morphism of finite presentation $Y_{0} \to X_{0}$
in the above perfect sense
between quasi-compact quasi-separated perfect $k$-schemes
by some morphism $X \to X_{0}$ in $k^{\pro\alge}$.
If $Y_{0} \to X_{0}$ can be chosen to be (faithfully) flat of finite presentation (in the above perfect sense),
then we say that $Y \to X$ is \emph{(faithfully) flat of finite presentation}.
We say that a morphism $Y \to X$ in $k^{\pro\alge}$ is
\emph{(faithfully) flat of profinite presentation}
if there exists a filtered inverse system $\{Y_{\lambda}\}$ in $k^{\pro\alge}$
such that each morphism $Y_{\lambda} \to X$ is (faithfully) flat of finite presentation
and $Y$ is isomorphic to $\invlim_{\lambda} Y_{\lambda}$ over $X$ in $k^{\pro\alge}$.
Here are two basic permanence properties of morphisms (faithfully) flat of finite presentation.

\begin{Prop}
	If a morphism $Y \to X$ in $k^{\pro\alge}$ is (faithfully) flat of finite presentation,
	then it can be written as the base change of a morphism $Y_{0} \to X_{0}$
	(faithfully) flat of finite presentation between quasi-algebraic $k$-schemes
	by some morphism $X \to X_{0}$ in $k^{\pro\alge}$.
\end{Prop}

\begin{proof}
	We may assume that $X$ and $Y$ are quasi-compact quasi-separated perfect $k$-schemes.
	Write $Y \to X$ as the perfection of a morphism $Y_{0}' \to X$
	(faithfully) flat of finite presentation in the usual sense.
	By \cite[Thm.\ C.9]{TT90}, we can write $X$ as a filtered inverse limit of algebraic $k$-schemes
	with affine transition morphisms.
	Hence the permanence property of flatness (resp.\ surjectivity)
	under passage to limits \cite[Thm.\ 11.2.6]{Gro66} (resp.\ \cite[Thm.\ 8.10.5]{Gro66}) shows that
	$Y_{0}' \to X$ can be written as the base change of a morphism $Y_{00} \to X_{00}$
	(faithfully) flat of finite presentation in the usual sense
	between algebraic $k$-schemes
	by some morphism $X \to X_{00}$.
	Let $Y_{0} \to X_{0}$ be the perfection of $Y_{00} \to X_{00}$,
	which is (faithfully) flat of finite presentation (in the perfect sense)
	between quasi-algebraic $k$-schemes.
	Its base change by $X \to X_{0}$ is $Y \to X$.
\end{proof}

\begin{Prop}
	Let $\{X_{\lambda}\}$ be a filtered inverse system in $k^{\pro\alge}$
	with inverse limit $X$.
	Let $Y \to X$ be a (faithfully) flat morphism of finite presentation in $k^{\pro\alge}$.
	Then there exist an index $\lambda$, an object $Y_{\lambda}$ of $k^{\pro\alge}$
	and a (faithfully) flat morphism of finite presentation $Y_{\lambda} \to X_{\lambda}$ in $k^{\pro\alge}$
	such that the base change of $Y_{\lambda} \to X_{\lambda}$ by $X \to X_{\lambda}$
	gives the original morphism $Y \to X$.
\end{Prop}

\begin{proof}
	As in the previous proposition,
	let $Y_{0} \to X_{0}$ be a (faithfully) flat morphism of finite presentation
	between quasi-algebraic $k$-schemes
	whose base change to $X$ by some morphism $X \to X_{0}$ gives the morphism $Y \to X$.
	Then the morphism $X \to X_{0}$ factors through some $X_{\lambda}$.
	Let $Y_{\lambda} \to X_{\lambda}$ be the base change of $Y_{0} \to X_{0}$
	by the morphism $X_{\lambda} \to X_{0}$.
	Then $Y_{\lambda} \to X_{\lambda}$ is (faithfully) flat of finite presentation,
	and its base change to $X$ is the morphism $Y \to X$.
\end{proof}

A fact already used above is that
a base change of a morphism (faithfully) flat of finite presentation is also.
The same is true for a morphism (faithfully) flat of profinite presentation.
For composites, we have the following.

\begin{Prop} \label{prop: composite of flat of finite presentation}
	Let $Z \to Y \to X$ be morphisms in $k^{\pro\alge}$.
	If $Z \to Y$ and $Y \to X$ are (faithfully) flat of finite presentation,
	then so is $Z \to X$.
	If $Z \to Y$ and $Y \to X$ are (faithfully) flat of profinite presentation,
	then so is $Z \to X$.
\end{Prop}

\begin{proof}
	Assume that $Z \to Y$ and $Y \to X$ are (faithfully) flat of finite presentation.
	We first treat that case that $X, Y, Z$ are quasi-compact quasi-separated perfect $k$-schemes.
	Write $Z \to Y$ (resp.\ $Y \to X$) as the perfection
	of a morphism $Z_{0} \to Y$ (resp.\ $Y_{0} \to X$)
	(faithfully) flat of finite presentation in the usual sense.
	For any $n \ge 0$, let $Y_{0}^{(n)}$ be the scheme $Y_{0}$
	with a new $k$-scheme structure given by the composite
		\[
			Y_{0} \to \Spec k \isomto \Spec k,
		\]
	where the first morphism is the original $k$-scheme structure morphism of $Y_{0}$
	and the second morphism is the $p^{n}$-th power absolute Frobenius of $\Spec k$.
	The scheme $Y_{0}^{(n)}$ is (faithfully) flat of finite presentation
	over $X$ ($= X^{(n)}$) in the usual sense.
	Since $Y = \invlim_{n} Y_{0}^{(n)}$,
	the permanence property of flatness (resp.\ surjectivity)
	under passage to limits \cite[Thm.\ 11.2.6]{Gro66} (resp.\ \cite[Thm.\ 8.10.5]{Gro66}) shows that
	we can write $Z_{0} \to Y$ as the base change of
	some (faithfully) flat morphism of finite presentation $Z_{00} \to Y_{0}^{(n)}$ in the usual sense
	by the morphism $Y \to Y_{0}^{(n)}$.
	Then $Z \to X$ is the perfection of the composite $Z_{00} \to Y_{0}^{(n)} \to X$,
	which is (faithfully) flat of finite presentation in the usual sense.
	Hence $Z \to X$ is (faithfully) flat of finite presentation (in the perfect sense).
	
	We treat the general case.
	Write $X$ as a filtered inverse limit of quasi-algebraic $k$-schemes $X_{\lambda}$.
	Write $Y \to X$ as the base change of a morphism $Y_{\lambda} \to X_{\lambda}$
	(faithfully) flat of finite presentation (in the perfect sense)
	as in the previous proposition.
	Let $Y_{\mu} = Y_{\lambda} \times_{X_{\lambda}} X_{\mu}$ for $\mu \ge \lambda$.
	Then $Y = \invlim_{\mu \ge \lambda} Y_{\mu}$.
	Write $Z \to Y$ as the base change of a morphism $Z_{\mu} \to Y_{\mu}$
	(faithfully) flat of finite presentation
	as in the previous proposition.
	Then $Z \to X$ is the base change of the composite $Z_{\mu} \to Y_{\mu} \to X_{\mu}$,
	which is (faithfully) flat of finite presentation
	by the quasi-compact quasi-separated case treated earlier.
	
	Next assume that $Z \to Y$ and $Y \to X$ are (faithfully) flat of profinite presentation.
	Write $Z$ (resp.\ $Y$) as a filtered inverse limit
	of morphisms $Z_{\mu} \to Y$ (resp.\ $Y_{\lambda} \to X$) (faithfully) flat of finite presentation.
	By a similar argument as \cite[Lem.\ 1.5]{Swa98},
	it is enough to show that any morphism $Z \to Z'$ over $X$
	to an object $Z'$ over $X$ of finite presentation factors through
	an object over $X$ (faithfully) flat of finite presentation.
	The $X$-morphism $Z \to Z'$ factors through some $Z_{\mu}$.
	Write the morphism $Z_{\mu} \to Y$ as the base change of
	some morphism $Z_{\lambda \mu} \to Y_{\lambda}$ (faithfully) flat of finite presentation
	as in the previous proposition.
	Let $Z_{\lambda' \mu} = Z_{\lambda \mu} \times_{Y_{\lambda}} Y_{\lambda'}$ for $\lambda' \ge \lambda$.
	Then $Z_{\mu} = \invlim_{\lambda' \ge \lambda} Z_{\lambda' \mu}$.
	Then the $X$-morphism $Z \to Z'$ factors through $Z_{\lambda' \mu}$ for some $\lambda' \ge \lambda$.
	The composite $Z_{\lambda' \mu} \to Y_{\lambda'} \to X$ is (faithfully) flat of finite presentation
	by what we have proven above.
	Thus $Z \to X$ is (faithfully) flat of profinite presentation.
\end{proof}

With this, we can define the \emph{perfect fppf site of proalgebraic $k$-proschemes},
$\Spec k^{\pro\alge}_{\fppf}$,
to be the category $k^{\pro\alge}$ endowed with the topology
where a covering $\{Y_{i} \to X\}$ is a finite family of morphisms
such that $\bigsqcup Y_{i} \to X$ is faithfully flat of finite presentation
(which implies that each $Y_{i} \to X$ is flat of finite presentation).
Replacing ``faithfully flat of finite presentation'' by ``faithfully flat of profinite presentation'',
we can define the \emph{perfect pro-fppf site of proalgebraic $k$-proschemes}
$\Spec k^{\pro\alge}_{\pro\fppf}$ in the same way.

As in the main text, let $k^{\perf}$ be the category of perfect $k$-algebras or perfect affine $k$-schemes.
The identity functor defines a morphism of sites
$\Spec k^{\pro\alge}_{\pro\fppf} \to \Spec k^{\perf}_{\pro\fppf}$.

\begin{Prop} \label{prop: topos equivalence for proalgebraic proschemes and perfect affine schemes}
	The  morphism of sites
	$\Spec k^{\pro\alge}_{\pro\fppf} \to \Spec k^{\perf}_{\pro\fppf}$
	defined by the identity
	induces an equivalence on the topoi.
\end{Prop}

In particular, Ext groups and Ext sheaves between proalgebraic groups can be calculated using
either $\Spec k^{\pro\alge}_{\pro\fppf}$ or $\Spec k^{\perf}_{\pro\fppf}$.

To prove the proposition,
the key is that a proalgebraic $k$-proscheme is \emph{pro-Zariski locally affine}
(\eqref{prop: proalgebraic proschemes are pro-Zariski locally affine} below).
This is trivial for a proalgebraic $k$-proscheme $X = \invlim_{i} X_{i}$
indexed by integers $i \ge 1$:
we can find an affine ``pro-Zariski'' covering $Y = \invlim_{i} Y_{i}$ of $X$
by successively taking affine Zariski coverings $Y_{1} \onto X_{1}$,
$Y_{2} \onto Y_{1} \times_{X_{1}} X_{2}$,
$Y_{3} \onto Y_{2} \times_{X_{2}} X_{3}$ and so on.
Here we say that a surjective morphism of schemes $Y \onto Z$ is an (affine) Zariski covering
if there exists a decomposition $Y = \bigsqcup Y_{\lambda}$ into disjoint (affine) open subschemes
such that the composite $Y_{\lambda} \to Y \to Z$ is an open immersion for any $\lambda$.
The difficulties thus come from the complexity of the index set.

Recall that a poset (partially ordered set) is a pre-ordered set such that
$\lambda \le \lambda' \le \lambda$ implies $\lambda = \lambda'$.
A pre-ordered set $\Lambda$ is said to be cofinite if for every $\lambda \in \Lambda$,
there are only finitely many elements $\le \lambda$.
An element $\lambda$ of a pre-ordered set is said to be greatest
if $\lambda' \le \lambda$ for any $\lambda'$.
If $\Lambda$ is a directed set without a greatest element
and $M$ is the set of all finite subsets $\Lambda' \subset \Lambda$
having a unique greatest element $\max(\Lambda')$,
then $M$ is a cofinite directed poset by inclusion,
and the map $\Lambda' \mapsto \max(\Lambda')$ from $M$ to $\Lambda$ is cofinal
(which is implicit in the proof of \cite[I, Prop.\ 8.1.6]{AGV72a}).
If $\Lambda$ has a greatest element $\lambda$,
then $\{\lambda\} \into \Lambda$ is cofinal.
Therefore every filtered inverse system in a category is isomorphic (in the procategory)
to a system indexed by a cofinite directed poset
(again implicit in the proof of \cite[loc.\ cit.]{AGV72a}).
For a cofinite poset $\Lambda$ and an element $\lambda \in \Lambda$,
we define $n(\lambda)$ to be the largest integer $n \ge 1$ such that
there exists a chain $\lambda_{1} < \dots < \lambda_{n - 1} < \lambda$ of elements of $\Lambda$
(so $n(\lambda) = 1$ if $\lambda$ is minimal).
This function satisfies the property that if $\lambda \le \lambda'$,
then $\lambda = \lambda'$ or $n(\lambda) < n(\lambda')$.

\begin{Prop} \label{prop: proalgebraic proschemes are pro-Zariski locally affine}
	Let $\{X_{\lambda}\}_{\lambda \in \Lambda}$ be an inverse system
	of quasi-compact quasi-separated $k$-schemes.
	Assume that the index set $\Lambda$ is a cofinite poset.%
	\footnote{We do not need to assume that $\Lambda$ is directed,
	though we only need the directed case below.}
	Then there exists an inverse system $\{Y_{\lambda}\}$ of affine $k$-schemes
	and a system $\{Y_{\lambda} \to X_{\lambda}\}$ of $k$-scheme morphisms
	such that each $Y_{\lambda} \onto X_{\lambda}$ is a Zariski covering.
\end{Prop}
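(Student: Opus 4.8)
The plan is to construct the system $\{Y_\lambda\}$ by induction on the function $n(\lambda)$, which is well-defined because $\Lambda$ is a cofinite poset. First I would observe that, since $\Lambda$ is cofinite, for each $\lambda$ the set $\Lambda_{<\lambda} = \{\mu \in \Lambda \mid \mu < \lambda\}$ is finite, so an inductive construction over the layers $n(\lambda) = 1, 2, 3, \dots$ makes sense: when we reach $\lambda$, all the $Y_\mu$ for $\mu < \lambda$ have already been built, together with the transition morphisms $Y_\mu \to Y_{\mu'}$ for $\mu' \le \mu < \lambda$ and the morphisms $Y_\mu \to X_\mu$, forming a compatible finite inverse system over $\Lambda_{<\lambda}$.

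For the induction step at $\lambda$, I would form the fiber product $Z_\lambda = X_\lambda \times_{\varprojlim_{\mu < \lambda} X_\mu} \varprojlim_{\mu < \lambda} Y_\mu$, where the limits are over the finite poset $\Lambda_{<\lambda}$; concretely this is the simultaneous fiber product of $X_\lambda \to X_\mu$ and $Y_\mu \to X_\mu$ over all maximal $\mu < \lambda$, intersected with the compatibility conditions among the $Y_\mu$, all of which are finite limits in the category of quasi-algebraic $k$-schemes and hence representable by a quasi-algebraic $k$-scheme. The natural map $Z_\lambda \to X_\lambda$ is a base change of the Zariski cover $\varprojlim_{\mu<\lambda} Y_\mu \to \varprojlim_{\mu<\lambda} X_\mu$, hence a Zariski cover. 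Then I would choose a finite affine open cover of $Z_\lambda$ and let $Y_\lambda$ be its disjoint union: $Y_\lambda$ is affine and quasi-algebraic, the composite $Y_\lambda \onto Z_\lambda \onto X_\lambda$ is a Zariski covering, and the composite $Y_\lambda \to Z_\lambda \to Y_\mu$ for each maximal $\mu < \lambda$ gives the transition morphisms; one checks these are compatible over all of $\Lambda_{<\lambda}$ because $Z_\lambda$ was defined precisely as the compatible fiber product. This yields $\{Y_\lambda\}$, $\{Y_\lambda \to X_\lambda\}$ with the required properties.

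The main obstacle I anticipate is purely bookkeeping: verifying that the transition morphisms $Y_\lambda \to Y_\mu$ assembled this way are strictly functorial, i.e. that $Y_\lambda \to Y_{\mu'} $ agrees with $Y_\lambda \to Y_\mu \to Y_{\mu'}$ whenever $\mu' \le \mu \le \lambda$, and that passing to a cofinite \emph{directed} poset (the only case actually used, via the cofinal reindexing recalled before the proposition) loses nothing. The first point is handled by building the compatibility into the definition of $Z_\lambda$ as a limit over the whole finite down-set $\Lambda_{<\lambda}$ rather than merely over its maximal elements, so that the projections $Z_\lambda \to Y_\mu$ are automatically compatible; the second is immediate since the reindexing is cofinal and cofiniteness is preserved. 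I would also remark that, since each $Y_\lambda \onto X_\lambda$ is a Zariski (in particular fppf) covering compatible with transitions, the induced morphism $Y = \varprojlim Y_\lambda \to X = \varprojlim X_\lambda$ is a pro-Zariski covering with $Y$ affine, which is the form in which the result is used to reduce statements about proalgebraic proschemes to the affine case treated in \cite[\S 3]{Suz13}.
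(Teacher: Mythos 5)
Your strategy is the same induction on the level function $n(\lambda)$ that the paper uses; you organize it ``one index at a time'' (building $Y_{\lambda}$ as an affine cover of a fiber product $Z_{\lambda}$ over everything already constructed below $\lambda$), whereas the paper organizes it ``one level at a time'' via the double-indexed system $\{Y_{\lambda}^{n}\}$. Your handling of the functoriality bookkeeping, by taking the limit over the whole down-set $\Lambda_{<\lambda}$, is correct.

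There is, however, one step whose justification as written fails: the claim that $\invlim_{\mu<\lambda} Y_{\mu} \to \invlim_{\mu<\lambda} X_{\mu}$ is a Zariski cover, ``hence'' $Z_{\lambda}\to X_{\lambda}$ is one by base change. A finite inverse limit of a compatible system of Zariski coverings indexed by a poset need not even be surjective. For example, take $\Lambda_{<\lambda}=\{a,b,c\}$ with $c<a$ and $c<b$, let $X_{c}=\mathbb{P}^{1}$ with its standard cover $Y_{c}=U_{0}\sqcup U_{1}$, let $X_{a}=X_{b}=U_{0}\cap U_{1}$ with the open immersions as transition maps, and let $Y_{a}=X_{a}$ map into the $U_{0}$-component of $Y_{c}$ while $Y_{b}=X_{b}$ maps into the $U_{1}$-component. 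Each $Y_{\bullet}\to X_{\bullet}$ is a Zariski covering and the system is compatible, yet $\invlim Y_{\mu}=Y_{a}\times_{Y_{c}}Y_{b}=\emptyset$ while $\invlim X_{\mu}=U_{0}\cap U_{1}\neq\emptyset$. What saves your construction is that your $Y_{\mu}$ are not arbitrary coverings of $X_{\mu}$: by construction each $Y_{\mu}$ surjects onto $Z_{\mu}=X_{\mu}\times_{\invlim_{\nu<\mu}X_{\nu}}\invlim_{\nu<\mu}Y_{\nu}$. You must carry this stronger statement as the induction hypothesis and then verify surjectivity of $Z_{\lambda}\to X_{\lambda}$ directly, lifting a point of $X_{\lambda}$ to a compatible family $(y_{\mu})_{\mu<\lambda}$ by a secondary induction on $n(\mu)$ using the surjectivity of $Y_{\mu}\onto Z_{\mu}$ at each stage; the local-isomorphism property is then unproblematic, since $Z_{\lambda}$ is an open subscheme of the fiber product over $X_{\lambda}$ of the base-changed covers $X_{\lambda}\times_{X_{\mu}}Y_{\mu}$ (openness of the compatibility locus uses that the diagonal of a Zariski covering is an open immersion). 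The paper's formulation sidesteps this issue entirely: at each level it only forms finite fiber products of Zariski coverings over the common base $Y_{\lambda}^{n}$, which are manifestly Zariski coverings, and never takes a limit over a poset. With the strengthened induction hypothesis added, your argument is complete and essentially equivalent to the paper's.
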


\begin{proof}
	We will construct a system
		\[
			\bigl\{
					\cdots
				\onto
					Y_{\lambda}^{n + 1}
				\onto
					Y_{\lambda}^{n}
				\onto
					\cdots
				\onto
					Y_{\lambda}^{0}
				=
					X_{\lambda}
			\bigr\}_{\lambda}
		\]
	of sequences of Zariski coverings of quasi-compact quasi-separated $k$-schemes such that
	for each $n$ and $\lambda$ with $n(\lambda) \le n$,
	the scheme $Y_{\lambda}^{n}$ is affine
	and the morphisms $\dots \to Y_{\lambda}^{n + 1} \to Y_{\lambda}^{n}$ after the $n$-th term are isomorphisms.
	Then, letting $Y_{\lambda} = Y_{\lambda}^{n}$ for $n(\lambda) \le n$,
	we will obtain a required system $\{Y_{\lambda}\}$ of coverings.
	
	Fix an integer $n \ge 0$.
	Suppose that we have such a sequence
	$\{Y_{\lambda}^{n} \onto \cdots \onto Y_{\lambda}^{0}\}_{\lambda}$ up to $n$.
	For each $\lambda$ with $n(\lambda) = n + 1$, we choose an affine Zariski covering
	$Y_{\lambda}^{n + 1} \onto Y_{\lambda}^{n}$.
	For general $\lambda$, we define
		\[
				Y_{\lambda}^{n + 1}
			=
				\sideset{}{_{Y_{\lambda}^{n}}}
				\prod_{\lambda' \le \lambda,\, n(\lambda') = n + 1}
				(Y_{\lambda'}^{n + 1} \times_{Y_{\lambda'}^{n}} Y_{\lambda}^{n}),
		\]
	where $\sideset{}{_{Y_{\lambda}^{n}}} \prod$ denotes the fiber product over $Y_{\lambda}^{n}$.
	This is a finite product since $\Lambda$ is cofinite.
	The property of the function $n(\lambda)$ above shows that
	this definition is consistent with $Y_{\lambda}^{n + 1}$ taken early for $n(\lambda) = n + 1$,
	and the natural morphism $Y_{\lambda}^{n + 1} \to Y_{\lambda}^{n}$ is an isomorphism for $n(\lambda) \le n$.
	The morphism $Y_{\lambda}^{n + 1} \to Y_{\lambda}^{n}$ for general $\lambda$
	is a Zariski covering since it is a finite fiber product of Zariski coverings.
	Each $Y_{\lambda}^{n + 1}$ is a quasi-compact quasi-separated $k$-scheme
	since it is a fiber product of such.
	The morphisms $Y_{\lambda}^{n + 1} \to Y_{\lambda}^{n}$ form a system.
	By induction, we obtain a required sequence.
\end{proof}

We also need to compare the topology on $k^{\perf}$ induced from $\Spec k^{\pro\alge}_{\pro\fppf}$
and the topology of $\Spec k^{\perf}_{\pro\fppf}$
defined in \S \ref{sec: Sites and algebraic groups: setup and first properties}.
The following shows that they agree up to refinement.

\begin{Prop}
	If a morphism $\Spec S \to \Spec R$
	between perfect affine (hence quasi-compact quasi-separated) $k$-schemes
	is (faithfully) flat of profinite presentation in the sense of this appendix,
	then there exists a homomorphism $S \to T$ faithfully flat of ind-finite presentation
	in the sense of \S \ref{sec: Sites and algebraic groups: setup and first properties}
	between perfect $k$-algebras%
	\footnote{
		This means that $S \to T$ is flat of ind-finite presentation
		in the sense of \S \ref{sec: Sites and algebraic groups: setup and first properties}
		and faithfully flat.
	}
	such that $R \to T$ is (faithfully) flat of ind-finite presentation
	in the sense of \S \ref{sec: Sites and algebraic groups: setup and first properties}.
\end{Prop}

\begin{proof}
	Let $X = \Spec R$ and $Y = \Spec S$.
	Write $Y \to X$ as a filtered inverse limit of morphisms $Y_{\lambda} \to X$
	(faithfully) flat of finite presentation in the sense of this appendix
	with $Y_{\lambda}$ quasi-compact quasi-separated perfect $k$-schemes.
	We may assume that the index set $\Lambda$ is a cofinite directed poset.
	Take a system of affine Zariski coverings $\{Z_{\lambda} = \Spec T_{\lambda} \onto Y_{\lambda}\}$
	as in the previous proposition.
	The scheme $Z_{\lambda}$ for any $\lambda$ is perfect since $Y_{\lambda}$ is so.
	For each $\lambda$, the composite $Z_{\lambda} \to Y_{\lambda} \to X$ is
	(faithfully) flat of finite presentation in the sense of this appendix
	by \eqref{prop: composite of flat of finite presentation}.
	Therefore $Z_{\lambda} \to X$ can be written as the perfection of a morphism $Z_{\lambda 0} \to X$
	(faithfully) flat of finite presentation in the usual sense,
	where $Z_{\lambda 0}$ is a priori a quasi-compact quasi-separated $k$-scheme.
	Since $Z_{\lambda}$ is affine,
	we know that $Z_{\lambda 0}$ is actually affine
	by \cite[Prop.\ C.6]{TT90}.
	Hence $R \to T_{\lambda}$ is (faithfully) flat of finite presentation
	in the sense of \S \ref{sec: Sites and algebraic groups: setup and first properties}.
	Thus $T = \dirlim T_{\lambda}$ is (faithfully) flat of ind-finite presentation over $R$
	in the sense of \S \ref{sec: Sites and algebraic groups: setup and first properties}.
	Let $Z = \Spec T = \invlim Z_{\lambda}$.
	Then $Z = \invlim Z_{\lambda} \times_{Y_{\lambda}} Y$
	and $S \to T_{\lambda} \tensor_{S_{\lambda}} S$ is faithfully flat of finite presentation
	in the sense of \S \ref{sec: Sites and algebraic groups: setup and first properties}.
	Therefore $S \to T$ is faithfully flat of ind-finite presentation
	in the sense of \S \ref{sec: Sites and algebraic groups: setup and first properties}.
	(Of course this proof shows that $Z \to Y$ is actually a pro-Zariski covering.)
\end{proof}

\begin{proof}[Proof of \eqref{prop: topos equivalence for proalgebraic proschemes and perfect affine schemes}]
	Let $S = \Spec k^{\perf}_{\pro\fppf}$
	and $S' = \Spec k^{\pro\alge}_{\pro\fppf}$.
	Let $S''$ be the site on $k^{\perf}$ with topology induced from $\Spec k^{\pro\alge}_{\pro\fppf}$
	\cite[Exp.\ II, \S 3]{AGV72a}.
	We have morphisms $S' \to S'' \to S$ of sites defined by the identity.
	A sieve on an object of $k^{\perf}$ is a covering for $S$
	if and only if it is for $S''$
	by the previous proposition.
	Hence $S'' \isomto S$ as sites.
	Any object of $S'$ is covered by an object of $S''$
	by \eqref{prop: proalgebraic proschemes are pro-Zariski locally affine}.
	Hence $S' \to S''$ induces an equivalence on the topoi
	by \cite[Exp.\ II, Thm.\ 4.1]{AGV72a}.
\end{proof}

We can define the \'etale site $\Spec k^{\pro\alge}_{\et}$ and
the pro-\'etale site $\Spec k^{\pro\alge}_{\pro\et}$ on the category $k^{\pro\alge}$
in the same way.
See \cite[Prop.\ 17.7.8]{Gro67} for the necessary permanence property of \'etaleness
under passage to limits.
We can show that the morphism of sites
$\Spec k^{\pro\alge}_{\pro\et} \to \Spec k^{\perf}_{\pro\et}$
defined by the identity induces an equivalence on the topoi
by the same proof as above.

We generalize some more definitions in \cite[\S 3]{Suz13}.
A proalgebraic $k$-proscheme can be viewed as a locally ringed space
since the category of locally ringed spaces has all inverse limits
(\cite[Rmk.\ 2.1.1]{Tem11}, \cite[Cor.\ 5]{Gil11}).
If $\{X_{\lambda}\}$ is a filtered inverse system of quasi-compact quasi-separated perfect $k$-schemes,
then the underlying topological space of $X = \invlim X_{\lambda}$
is the inverse limit of the underlying topological spaces of $X_{\lambda}$.
The structure sheaf of $X$ is the direct limit of the pullbacks by $X \to X_{\lambda}$
of the structure sheaves of $X_{\lambda}$.
A morphism $Y \to X$ in $k^{\pro\alge}$ is faithfully flat of (pro)finite presentation
if and only if it is flat of (pro)finite presentation and surjective on the underlying topological spaces.
We say that $Y \to X$ is dominant if its image on the underlying topological spaces is dense in $X$.
(This definition will be needed to generalize \cite[\S 3.6]{Suz13}.)
Taking $X_{\lambda}$ to be quasi-algebraic,
we can define profinite sets of points of $X = \invlim X_{\lambda}$
and, if the transition morphisms $X_{\mu} \to X_{\lambda}$ are flat
(or more generally, send irreducible components of $X_{\mu}$
onto dense subsets of irreducible components of $X_{\lambda}$),
the generic point of $X$,
in the same way as \cite[Def.\ 3.2.1]{Suz13}.
They are $\Spec$'s of ind-rational $k$-algebras.

Then all of the discussions and results in \cite[\S 3]{Suz13} work without modifications
with perfect affine $k$-schemes generalized to proalgebraic $k$-proschemes and
with affine proalgebraic groups generalized to arbitrary proalgebraic groups.
This proves \eqref{thm: Ext of proalgebraic groups as sheaves} in full generality.


\end{document}